\documentclass[11pt]{article}

\usepackage{graphicx} 
\usepackage{amsmath}%
\usepackage{amsfonts}%
\usepackage{amssymb}%
\usepackage{graphicx}
\usepackage{color}
\usepackage{amsthm}
\usepackage{tabu}

\usepackage{pifont}
\usepackage{float} 
\usepackage{subfigure}
\usepackage{tikz}
\usetikzlibrary{shapes, arrows.meta, positioning}
\usepackage[all]{xy}
\usepackage{mathrsfs}
\usepackage{bbold}
\usepackage{amscd}
\usepackage{lipsum}
\usepackage{enumitem}
\usepackage[colorlinks=true,hyperfootnotes=false,linkcolor=blue]{hyperref}

\providecommand{\U}[1]{\protect\rule{.1in}{.1in}}
\def\theenumi{\arabic{enumi}}

\def\theenumii{\alph{enumii}}
\def\p@enumii{\theenumi.}

\def\theenumiii{\arabic{enumiii}}
\def\p@enumiii{(\theenumi)(\theenumii)}

\def\p@enumiv{\p@enumiii.\theenumiii}
\usepackage{todonotes}

\newcommand{\ii }{{\rm i} }
\newcommand{\N}{{\mathbb N}}
\newcommand{\A}{{\mathcal A}}
\newcommand{\B}{{\mathcal B}}
\newcommand{\Z}{{\mathbb Z}}
\newcommand{\R}{{\mathbb R}}

\newcommand{\C}{{\mathbb C}}
\newcommand{\K}{{\mathscr K}}
\newcommand{\E}{{\mathscr E}}
\newcommand{\F}{{\mathscr F}}
\newcommand{\G}{{\mathscr G}}
\newcommand{\h}{{\mathscr H}}
\newcommand{\bigO}{{\mathcal O}}

\def\p{\partial}
\def\supp#1{{\textrm{supp}(#1)}}
\def\Im{{\rm Im}}
\def\Re{{\rm Re}}
\newtheorem{Thm}{Theorem}[section]
\newtheorem{thm}[Thm]{Theorem}
\newtheorem{coro}[Thm]{Corollary}
\newtheorem{rem}[Thm]{Remark}
\newtheorem{lem}[Thm]{Lemma}
\newtheorem{prop}[Thm]{Proposition}
\newtheorem{defi}[Thm]{Definition}

\newtheorem{assu}[Thm]{Assumption}

\newtheorem{question}[Thm]{Problem}
\theoremstyle{definition}
\newtheorem*{exa}{Example}
\def\poscalr#1#2{\langle\langle#1,#2\rangle\rangle}
\def\poscals#1#2{\langle#1,#2\rangle}
\numberwithin{equation}{section}
\newcommand{\cqfd}
{%
\mbox{}%
\nolinebreak%
\hfill%
\rule{2mm}{2mm}%
\medbreak%
\par%
}

\title{Stability of a Korteweg–de Vries equation close to critical lengths}
\author{
  Jingrui Niu\thanks{Institute for Advanced Study in Mathematics, Harbin Institute of Technology, 150001, Harbin, China., jingrui.niu@hit.edu.cn}
\and
  Shengquan Xiang\thanks{School of Mathematical Sciences, Peking University, 100871, Beijing, China., shengquan.xiang@math.pku.edu.cn}
}
\date{}

\begin{document}

\maketitle
\begin{abstract}
In this paper, we investigate the quantitative exponential stability of the Korteweg–de Vries equation on a finite interval with its length close to the critical set. Sharp decay estimates are obtained via a constructive PDE control framework. We first introduce a novel transition–stabilization approach, combining the Lebeau–Robbiano strategy with the moment method, to establish constructive null controllability for the KdV equation. This approach is then coupled with precise spectral analysis and invariant manifold theory to characterize the asymptotic behavior of the decay rate as the length of the interval approaches the set of critical lengths. Building on our classification of the critical lengths, we show that the KdV equation exhibits distinct asymptotic behaviors in neighborhoods of different types of critical lengths.
\end{abstract}
\par\noindent\textbf{Keywords.} KdV, observability, exponential stability, spectral theory, transition-stabilization

\par\noindent\textbf{MSC (2020).} 35Q53, 93C20,93D23
	\setcounter{tocdepth}{2}
	\tableofcontents

\section{Introduction}
Let $L>0$, we are interested in the exponential stability of the following KdV system equipped with Dirichlet boundary conditions and the Neumann boundary condition on the right:
\begin{equation}\label{eq: KdV system-stability-intro}
\left\{
\begin{array}{lll}
    \p_ty+\p_x^3y+\p_xy+ y \p_x y=0 & \text{ in }(0,T)\times(0,L), \\
     y(t,0)=y(t,L)=0&  \text{ in }(0,T),\\
     \p_xy(t,L)=0&  \text{ in }(0,T),\\
     y(0,x)=y^0(x)&\text{ in }(0,L).
\end{array}
\right.    
\end{equation}
Given $T>0$, we are also concerned with the null controllability of the related KdV system equipped with Dirichlet boundary conditions and using the Neumann boundary control on the right:
\begin{equation}\label{eq: nonlinear KdV system-control-intro}
\left\{
\begin{array}{lll}
    \p_ty+\p_x^3y+\p_xy+ y \p_x y=0 & \text{ in }(0,T)\times(0,L), \\
     y(t,0)=y(t,L)=0&  \text{ in }(0,T),\\
     \p_xy(t,L)=u(t)&  \text{ in }(0,T),\\
     y(0,x)=y^0(x)&\text{ in }(0,L),
\end{array}
\right.    
\end{equation} 
Here $y$ denotes the state, $y^0$ is the initial datum and $u$ denotes the control function. 

\vspace{2mm}

The primary objective of this paper is twofold. Firstly, we propose a control theory-based method to derive sharp decay rates for the stability of (nonlinear) KdV equations. 
\begin{figure}[h]
    \centering
\begin{tikzpicture}[
    scale=0.85,
    transform shape,
    auto,
    block/.style={
        rectangle, draw, 
        text width=6em, text centered, rounded corners, minimum height=4em
    },
    arrow/.style={
        -{Latex[width=2mm,length=3mm]}, thick
    },
    Arrow/.style={
        {Latex[width=2mm,length=3mm]}-{Latex[width=2mm,length=3mm]}, thick
    }
]

\node [block, text width=8em] (L-stability) {Stability of linearized KdV \eqref{eq: linear KdV-stability-intro}};
\node [block, below= of L-stability, text width=6em] (Nl-stability) {Stability of KdV \eqref{eq: KdV system-stability-intro}};
\node [block, right=2cm of L-stability, text width=8em] (ob) {Observability $C(T,L)$};
\node [block,above= of ob] (propagation-sing) {Propogation of compactness};
\node [block, right=3cm of propagation-sing, text width=8em] (control) {Quantitative control constrcution};
\node [block, below=of control, text width=8em] (LCKdv) {Controllability of linearized KdV \eqref{eq: linearized KdV system-control-intro}};
\node [block, below=of LCKdv, text width=8em] (CKdv) {Controllability of KdV \eqref{eq: nonlinear KdV system-control-intro}};

\draw [arrow,green](propagation-sing) -- (ob);
\draw [arrow,green](L-stability) -- (Nl-stability);
\draw [arrow,red,thick](control) -- (LCKdv);
\draw [arrow](LCKdv) -- (CKdv);
\draw [arrow,green](ob)--(L-stability) ;
\draw [Arrow,red](LCKdv) -- (ob) node[midway, above] {HUM};
\end{tikzpicture} 
    \caption{Relations among different notations}
    \label{fig:placeholder}
\end{figure}
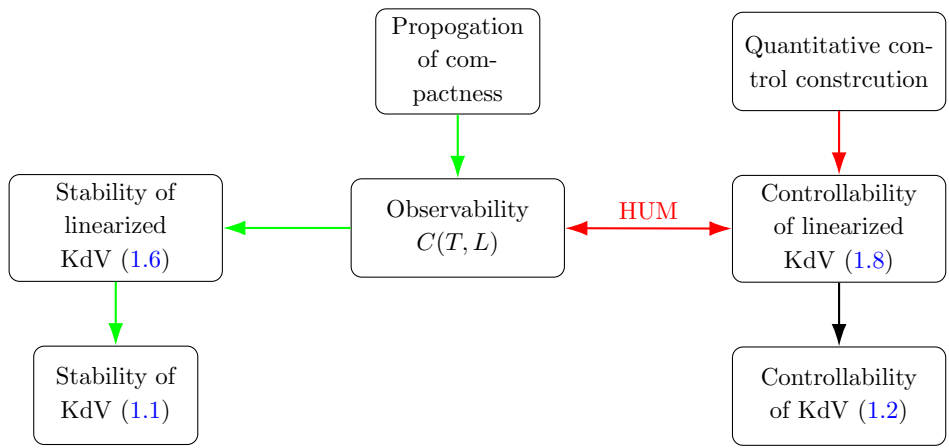

In the literature, it is classical to use observability to prove exponential stability, followed by a \textit{compactness-uniqueness method} in proving the observability (denoted by green arrows in the graph above). Due to the contradiction arguments, it is difficult to track the quantitative properties for both stability and observability. In this paper, we use the \textit{Hilbert uniqueness method}, which is a classic tool in control theory, to transform observability into a controllability problem. Then, in combination with various control ideas and several analysis techniques, we establish observability through a quantitative control construction (in red in the graph above). In general, we could expect this idea, using observability as a bridge and bringing control-theory ingredients, to apply to other PDE models. 

Moreover, our stability result also provides a precise new characterization of the exponential decay rates of the system. As $L$ approaches the critical set (see the definition in \eqref{eq: defi of critical set-intro} below), the decay rates remain uniform in a finite-codimensional invariant manifold.

\vspace{1mm}
Secondly, we introduce a novel constructive approach to tackling control problems, providing a quantitative description of an important but previously unknown constant $C(T,L)$.
This constant was initially defined in the observability inequality for linear KdV equations:
 \begin{equation}
         \int_0^L|y(0,x)|^2dx\leq C(T,L)\int_0^T |\p_x y(t, 0)|^2 dt. \notag
     \end{equation} 

Below we briefly review the literature and some related problems in Section \ref{sec: Review of the literature} and Section \ref{sec: motivations}. We provide an overview of our main results in Section \ref{sec: Statement of the results}, followed by some comments. We finish this introduction part with an outline for this article at the end.
\subsection{Review of the literature}\label{sec: Review of the literature}
The KdV equation was initially introduced by Boussinesq \cite{1877-Boussinesq} and Korteweg and de Vries \cite{KdV} as a model for the propagation of surface water waves along a channel. This equation is now commonly used to model the unidirectional propagation of small amplitude long waves in nonlinear dispersive systems. This equation also serves as a valuable nonlinear approximation model that balances weak nonlinearity and weak dispersive effects. The KdV equation has been extensively studied from various mathematical perspectives in the existing literature, including the
well-posedness, the existence and stability of solitary waves, the integrability, the
long-time behavior, etc., see e.g.~\cite{Whitham74, Kato83,Bourgain93,  CKSTT, KPV96,MV, LP15,KV-19}.

If we further focus on the stability properties of the KdV system \eqref{eq: KdV system-stability-intro}, there are also many related results. At first, by exponential stability, we refer to small data stability results, since there exists a non-trivial stationary state (see for example \cite{Doronin-Natali-2014}). Recall the so-called ``critical lengths set'' for the KdV system \eqref{eq: nonlinear KdV system-control-intro} introduced by Rosier \cite{Rosier-1997} 
\begin{equation}\label{eq: defi of critical set-intro}
     \mathcal{N}:=\{2\pi\sqrt{\frac{k^2+kl+l^2}{3}}:k,l\in\N^*  \}.
\end{equation} 
Defining the energy of this system by $E(t):= \int_0^L |y(t, x)|^2 dx$, we notice that for $f=u\equiv0$, $E(t)$ dissipates due to the absorption on the boundary  $E(0)- E(T)= 2\int_0^T |y_x(t, 0)|^2 dt$. Thus, the system is exponentially stable if and only if the following observability inequality holds
     \begin{equation}\label{eq: general Ob-intro}
         E(0)=\int_0^L|y(0,x)|^2dx\leq C(T,L)\int_0^T |y_x(t, 0)|^2 dt.
     \end{equation} 
When  $L\notin \mathcal{N}$,  Rosier proved \eqref{eq: general Ob-intro},  leading to the local exponential stability (see also \cite{MVZ,Krieger-Xiang-2021}),
\begin{equation}\label{eq: exponential stability-intro}
  E(t)\leq Ce^{-Ct}E(0), \text{ for }t\in(0,+\infty).
\end{equation}
The proofs are based on compactness arguments, and no quantitative information is provided. When $L\in\mathcal{N}$,  Rosier  showed that the linearized system
 \begin{equation}\label{eq: linear KdV-stability-intro}
\left\{
\begin{array}{lll}
    \p_t y+\p_x^3y+\p_xy=0 & \text{ in }(0,T)\times(0,L), \\
     y(t,0)=y(t,L)=\p_xy(t,L)=0&  \text{ in }(0,T),\\
     y(0,x)=y^0(x)&\text{ in }(0,L).
\end{array}
\right.
\end{equation}
 admits a family of non-trivial solutions of the form $e^{\ii\lambda t}\G_{\lambda}(x)$, where $\G_{\lambda}$ (We call it a ``\textit{Type I eigenfucntion}" and see Section \ref{sec: Eigenvalues and eigenfunctions at the critical length} for more details) is the solution to
 \begin{equation}\label{eq: type-1-eigenfunction-intro}
  \left\{
 \begin{array}{c}
      \G'''_{\lambda}+\G'_{\lambda}+\ii\lambda \G_{\lambda}=0,  \\
     \G_{\lambda}(0)=\G_{\lambda}(L)=\G'_{\lambda}(0)=\G'_{\lambda}(L)=0.   
 \end{array}
 \right.    
 \end{equation}
For critical lengths, the nonlinear term plays an important role in the local asymptotic stability of $0$. In \cite{Chu-Coron-Shang}, based on the center manifold method, Chu, Coron, and Shang first considered the special case, where $L=2k\pi$, and later considered some cases with dim $M = 2$ \cite{TCSC-2018}. Later Nguyen considered more general cases in \cite{Nguyen-2021} using the power series expansion method.
\vspace{2mm}

The controllability of the KdV system \eqref{eq: nonlinear KdV system-control-intro} has been extensively studied in the last decades, see e.g. the surveys \cite{RZ09, Cerpa14} and the references therein.  Here we only concentrate on the right Neumann control as presented in \eqref{eq: nonlinear KdV system-control-intro}.  Rosier first showed that the linearized system around $0$ in $L^2(0,L)$
\begin{equation}\label{eq: linearized KdV system-control-intro}
\left\{
\begin{array}{lll}
    \p_ty+\p_x^3y+\p_xy=0 & \text{ in }(0,T)\times(0,L), \\
     y(t,0)=y(t,L)=0&  \text{ in }(0,T),\\
     \p_xy(t,L)=u(t)&  \text{ in }(0,T),\\
     y(0,x)=y^0(x)&\text{ in }(0,L),
\end{array}
\right.    
\end{equation} 
is controllable if and only if $L\notin\mathcal{N}$, from which he deduced that \eqref{eq: nonlinear KdV system-control-intro} is locally controllable if $L\notin\mathcal{N}$. In the same paper, for $L_0\in \mathcal{N}$, he also showed decomposition of $L^2(0,L_0)=H(L_0)\oplus M(L_0)$, where $M(L_0)$ denotes the unreachable subspace for the linearized system of \eqref{eq: linearized KdV system-control-intro}  and $H$ is the reachable subspace. 
\begin{equation}\label{eq: defi-M-L-0-intro}
    M(L_0)=\mathrm{Span}\{\Re \G_{\lambda}, \Im \G_{\lambda}:\G_{\lambda} \text{ defined in \eqref{eq: type-1-eigenfunction-intro} above}\}, \text{ and }\mathrm{dim} M(L_0)=N_0,
\end{equation}
where $N_0$
is the number of different pairs of positive integers $(k_j,l_j)$ satisfying $L=2\pi\sqrt{\frac{k_j^2+k_jl_j+l_j^2}{3}}$. We shall discuss about this crucial dimension $N_0$ in Section \ref{sec: limiting analysis on different types}.

 To deal with the control problem for $L\in\mathcal{N}$, Coron and Cr\'epeau introduced the power series expansion method in \cite{CC04} and proved that the system \eqref{eq: nonlinear KdV system-control-intro} is locally controllable if $N_0=1$. Later on the controllability and stabilization of the KdV equation on critical lengths has been extensively investigated, see \cite{Cerpa07, Cerpa-Crepeau-2009, Coron-Rivas-Xiang, CKN-JEMS, NX, Nguyen-kdv-2025} and the references therein.

\subsection{Motivations}\label{sec: motivations}
In this paper, we aim to study the following questions.
\subsubsection{The constructive control approach}
Note that in \cite{Rosier-1997} Rosier used the compactness-uniqueness method to establish the observability  \eqref{eq: general Ob-intro} and the controllability of \eqref{eq: linearized KdV system-control-intro}. This method is not sufficient to provide enough information except for the existence.  The value of the observability constant has played a role in quantifying various estimates, including control costs and the rate of exponential decay etc.
\begin{question}\label{OP: constructive method}
Concerning the Neumann boundary control problem of KdV equations as presented in \eqref{eq: linearized KdV system-control-intro}, can we find a constructive approach for null controllability, thus quantitatively characterize the observability constant $C(T,L)$?
\end{question}

\subsubsection{The fast control cost}
Given $L>0$. For every $T_0>0$, the KdV system \eqref{eq: linearized KdV system-control-intro} is null controllable at time $T_0$ with some control cost $C(T,L)$. As $T\rightarrow0^+$, which corresponds to a fast control process, we expect the control cost $C(T,L)$ to blow up. Understanding the asymptotic behavior of fast controls is of great interest in itself, but it may also be applied to studying the uniform controllability in the zero-dispersion limit, e.g. see \cite{GG08}. This leads us to the following question:
\begin{question}\label{OP: C(T,L)-T-limit}
Given $L\notin \mathcal{N}$, when $T\rightarrow0^+$, how does $C(T,L)$ behave (blow up)?
\end{question}

\subsubsection{The limiting stability problem}
For both control and stability problems, we have observed different behaviors depending on whether $L\in\mathcal{N}$ or $L\notin \mathcal{N}$. Additionally, at a critical length $L\in\mathcal{N}$, the space $L^2(0,L)$ decomposes into $H\oplus M$. In the subspace $H$, the linearized KdV equations exhibit both null controllability and exponential stability. Conversely, in the subspace $M$, neither null controllability nor exponential stability is achieved due to the existence of Type I eigenfunctions of the form \eqref{eq: type-1-eigenfunction-intro}. Following Problem \ref{OP: constructive method}, it would be interesting to know
\begin{question}\label{OP: C(T,L)-L-limit}
Given $T>0$ and $L_0\in \mathcal{N}$, can we describe the asymptotic behavior of $C(T,L)$ as $L$ tends to $L_0$? Can we define a decomposition $H(L)\oplus M(L)$ such that different decay rates are observed in $H(L)$ and $M(L)$? Furthermore, how does this behavior extend to the nonlinear case?
\end{question}

\subsection{Statement of the results}\label{sec: Statement of the results}
\subsubsection{Stability results}
Firstly, our main results concern the sharp exponential stability of the KdV equations \eqref{eq: linear KdV-stability-intro}.
\begin{thm}\label{thm: main theorem linear version}
Let $L_0\in \mathcal{N}$ be a fixed critical length.  Let $I=[L_0- \delta,L_0+ \delta]$ with $\delta= \frac{\pi^2}{3L_0^2}$. For every $L\in I\setminus\{L_0\}$, the state space $L^2(0,L)$ can be decomposed as $H_{\A}(L)\oplus M_{\A}(L)$ (see Section \ref{sec: Projections and state space decomposition} for detailed definitions). Moreover, there are effective computable constants $C_0(L_0)$, $K_0(L_0)$, $m_0(L_0)$, $M_0(L_0)$, $C_u(L_0)$, and $R_{0}(L_0)$ independent of $L$ in $I$, such that   
\begin{enumerate}
    \item  for  $\forall y^0\in H_{\A}(L)$, \eqref{eq: linear KdV-stability-intro} is exponentially stable and the solution $y$ satisfies the following decay estimates
\begin{gather*}
    E(y(t))\leq e^{-C_0 t}E(y^0),\forall t\in(0, +\infty),\\
\|y^0\|^2_{L^2(0,L)} \leq  K_0e^{\frac{K_0}{\sqrt{t}}}\int_0^t|\p_x y(s,0)|^2ds, \forall t\in (0, +\infty).
\end{gather*}
    \item  for  $\forall y^0\in M_{\A}(L)$, \eqref{eq: linear KdV-stability-intro} is exponentially stable and the solution $y$ satisfies 
\begin{gather*}
    E(y(t))\leq e^{-m_0 |L-L_0|^2 t}E(y^0),\forall t\in(0, +\infty),\\
    \|y^0\|^2_{L^2(0,L)} \leq  \frac{K_0}{t|L-L_0|^2}\int_0^t|\p_x y(s,0)|^2ds, \forall t\in (0, +\infty).
\end{gather*} 
On the other hand there exists $y^0\in M_{\A}(L)$ such that the the solution satisfies 
\begin{equation*}
    E(y(t))\geq e^{-M_0 |L-L_0|^2 t}E(y^0),\forall t\in(0, +\infty).
\end{equation*} 
    \item  for  $\forall y^0\in L^2(0,L)$, \eqref{eq: linear KdV-stability-intro} is exponentially stable and the solution $y$ satisfies 
\begin{gather*}
    E(y(t))\leq C_u e^{-R_0 |L-L_0|^2 t}E(y^0),\forall t\in(0, +\infty).
\end{gather*} 
\end{enumerate}
\end{thm}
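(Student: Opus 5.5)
We organize the proof around the operator $\mathcal{A}y=-y'''-y'$ on $L^2(0,L)$ with domain $\{y\in H^3: y(0)=y(L)=y'(L)=0\}$. We first record that $\mathcal{A}$ is dissipative, with $\frac{d}{dt}E=-|y_x(t,0)|^2$. We then carry out a perturbative spectral analysis near $L_0$.

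At $L=L_0$, the operator has $N_0$ purely imaginary eigenvalues $\ii\lambda_j$, whose eigenfunctions are the Type I eigenfunctions $\G_{\lambda_j}$. By a Kato-type argument on the characteristic determinant, applied uniformly in $L\in I$ (the choice $\delta=\pi^2/(3L_0^2)$ should guarantee that no other eigenvalue enters a fixed neighbourhood of the imaginary axis), these eigenvalues perturb to $\mu_j(L)=\ii\lambda_j(L)-\sigma_j(L)$. We will show the expansion
\begin{equation*}
\sigma_j(L)=c_j|L-L_0|^2+O(|L-L_0|^3), \qquad c_j>0 .
\end{equation*}
The first-order term vanishes because dissipation enters through $|\p_x\G(0)|^2$, which is zero at $L_0$. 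The second-order coefficient equals $\|\p_x\p_L\G(0)\|^2$ divided by the norm, and we must check it is nonzero. Here the bounds $m_0$ and $M_0$ come from the minimum and maximum of the $c_j$.

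We then define $M_{\A}(L)$ as the span of the real and imaginary parts of the perturbed eigenfunctions. We define $H_{\A}(L)$ as the range of $I-P(L)$, where $P(L)$ is the Riesz projection onto those eigenvalues. Both spaces are invariant under the semigroup, and $\|P(L)\|$ is bounded uniformly in $L$.

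\textbf{Item 2.} On $M_{\A}(L)$ the semigroup acts as a finite matrix with eigenvalues $\mu_j(L)$. The perturbed eigenfunctions are nearly orthogonal, their Gram matrix being close to the one at $L_0$. Hence the energy decays like $e^{-2\min\sigma_j t}$ up to a constant. To get the constant-free estimate $E(y(t))\le e^{-m_0|L-L_0|^2t}E(y^0)$, we instead use the observability form, checked directly on the finite combination:
\begin{equation*}
\int_0^t|\p_xy(s,0)|^2ds\gtrsim t|L-L_0|^2\|y^0\|^2 ,
\end{equation*}
valid for $t$ of order one. This holds because $\p_x\G_{\lambda_j,L}(0)\approx (L-L_0)\,\p_x\p_L\G(0)$ and the frequencies $\lambda_j$ are distinct, which gives an Ingham-type bound. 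Dissipativity then upgrades this to the semigroup bound, and the statement for all $t$ follows from monotonicity of $E$. Taking $y^0$ equal to a single eigenmode gives the lower bound with $M_0$.

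\textbf{Item 1.} On $H_{\A}(L)$ we need the observability inequality with cost $K_0e^{K_0/\sqrt t}$, uniformly in $L$. Via HUM this is equivalent to a null-control construction for \eqref{eq: linearized KdV system-control-intro} with data in $H_{\A}(L)$, with cost $e^{K/\sqrt T}$. We build it by the transition--stabilization scheme:
\begin{itemize}
\item low frequencies, below a threshold $\Lambda$, are killed by the moment method using biorthogonal families to the eigenvalues of the adjoint, excluding the $N_0$ nearly-critical modes, which lie outside $H_{\A}$;
\item during free intervals, high frequencies decay like $e^{-c\Lambda^{3}t}$ or a similar rate determined by the KdV spectral gap;
\item iterating Lebeau--Robbiano style with $\Lambda_k=2^k$ and time steps $T_k\sim 2^{-k/2}$ yields the $e^{K/\sqrt t}$ cost.
\end{itemize}
Uniform observability at a fixed time $T_0$ then gives $E(T_0)\le(1-1/C)E(0)$ on the invariant space, hence $e^{-C_0t}$ decay.

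\textbf{Item 3.} For general $y^0$ we split $y^0=Py^0+(I-P)y^0$ and combine items 1 and 2 using the uniform bound on $\|P\|$. This yields the constant $C_u$ and the rate $R_0=\min(m_0,C_0/\delta^2)$ after adjusting constants.

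\textbf{Main obstacle.} The hardest step is the uniform moment-method estimate in item 1. The biorthogonal family must have bounds independent of $L$ even though the excluded eigenvalues $\mu_j(L)$ approach the imaginary axis. This requires uniform spectral gap and asymptotic eigenvalue counting for $\mathcal{A}$ across $I$. A second delicate point is verifying that $c_j\ne0$, i.e.\ that $\p_L$ of the Type I eigenfunction has nonzero derivative at $0$. I would try to establish this through an explicit computation with the exponential representation of $\G_\lambda$ in terms of $(k,l)$.
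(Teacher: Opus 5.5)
\textbf{The gap is in item 1, which is the core of the theorem.} Your Lebeau--Robbiano scheme assumes a usable spectral theory for $\A$. It needs three things:
\begin{enumerate}
\item a low/high splitting by spectral projections $P_\Lambda$ of $\A$, with norms controlled uniformly in $\Lambda$ and $L$;
\item a free-flow decay $\|S(t)(I-P_\Lambda)\|\le Ce^{-c\Lambda^3t}$;
\item biorthogonal families to the complex exponentials of the non-critical eigenvalues $\zeta_j$, with $e^{K/\sqrt T}$ bounds uniform in $L$.
\end{enumerate}
None of this is available. $\A$ is neither self-adjoint nor skew-adjoint, and its (generalized) eigenfunctions are not even complete in $L^2(0,L)$. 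The localization of its eigenvalues away from the critical ones is not known. The free flow loses energy only through the boundary flux $|\p_x y(t,0)|^2$, and your plan gives no argument for the claimed high-frequency decay. You call this the ``main obstacle'', but it is not a step to be filled in later. Without it the construction has no mechanism, and it is exactly what the paper avoids.

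\textbf{The missing idea is never to run the moment method on $\A$.} The paper passes to the skew-adjoint $\B$: same expression, with boundary conditions $\varphi(0)=\varphi(L)=0$, $\varphi'(0)=\varphi'(L)$. It has an orthonormal eigenbasis $\E_j$ and real eigenvalues $\lambda_j=(2\pi j/L)^3+\bigO(j^2)$ uniformly in $L$. So standard biorthogonal families with $e^{K/\sqrt T}$ bounds exist for the intermediate system with $\p_xz(t,L)-\p_xz(t,0)=v(t)$, and $u=\p_xz(t,L)$ is an admissible control for the original problem. Since $\p_xz(\cdot,L)\in L^2$ needs $z^0\in D(\B^2)$, each step of the iteration is:
\begin{enumerate}
\item Kato smoothing on $[0,T/2]$ to reach $D(\A^2)$;
\item subtraction of $\sum_jc_jh_{\mu_j}$, where $h_\mu'''+h_\mu'=\mu h_\mu$, $h_\mu(0)=h_\mu(L)=0$, $h_\mu'(L)-h_\mu'(0)=1$, to correct the boundary mismatch and land in $D(\B^2)\cap H_{\B}(L)$;
\item exact $\B$-moment control of $z$ to a target $\sum_m\varrho_mE_m$, chosen by the second transition map so that the state returns to $H_{\A}(L)$.
\end{enumerate}
The contraction factor $e^{-\mu T/2}$ that drives the iteration does not come from free decay of high frequencies. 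It comes from the explicit controlled trajectories $c_je^{-\mu_jt}h_{\mu_j}$, whose controls $c_je^{-\mu_jt}h_{\mu_j}'(L)$ are small.

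\textbf{Uniformity in $L$ then rests on the elliptic spectrum of $\B$, not of $\A$.} For $k\equiv l\pmod 3$, each critical eigenvalue splits into two $\B$-eigenvalues at distance $\bigO(|L-L_0|)$. This destroys the uniform gap and forces compensated biorthogonal functions, with Type 2 directions ($\E'(L)\neq 0$) used as targets. For $k\not\equiv l\pmod 3$, one has $\E_j'(L)=\bigO(|L-L_0|)$, so $1/(h_j\lambda_j)$ blows up and these directions must be removed through $H_{\B}(L)$. ``Excluding the $N_0$ nearly-critical modes'' is therefore not the right bookkeeping: the relevant space $M_{\B}(L)$ differs both from $M_{\A}(L)$ and from the elliptic subspace of $\B$.

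Your perturbative analysis of $\A$ ($\Re\zeta=-\tfrac12|\F_\zeta'(0)|^2$ with $|\F'_\zeta(0)|\sim|L-L_0|$) agrees with the paper. So do your treatment of item 2 through explicit modes and of item 3 through a uniformly bounded projection.
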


 \begin{rem}
   Note that in Theorem \ref{thm: main theorem linear version}, the constant $C(T,L)$ blows up at a rate of $\frac{1}{|L-L_0|^2}$, which is sharp. Moreover, the decomposition is sharp in the sense that the blow-up occurs only along the directions in $M_{\A}(L)$,  while observability holds uniformly in $H_{\A}(L)$. 
 \end{rem}
Our stability analysis is based on the decomposition $L^2(0,L)=M_{\A}(L)\oplus H_{\A}(L)$ related to the operator $\A$ defined by $\A\varphi:=-\varphi'''-\varphi'$ with the domain
\begin{equation*}
D(\A):=\{\varphi\in H^3(0,L): \varphi(0)=\varphi(L)=\varphi'(L)=0\}.
\end{equation*}
This provides an answer to the linear part of Problem \ref{OP: C(T,L)-L-limit}. We shall further show in Section \ref{sec: classification quasiinvariant space} that $M_{\A}(L)\sim M(L_0)+\bigO(|L-L_0|)$, where $M(L_0)$ is the unreachable subspace defined in \eqref{eq: defi-M-L-0-intro}. This result aligns well with the intuition, i.e., $C(T,L)$ blows up in $M_{\A}(L)$ when $L$ approaches the critical length $L_0$, which means we will finally lose control in the limit space $M(L_0)$.

We introduce a constructive approach to obtain the quantitative estimates. The operator $\A$ is difficult to cooperate with the moment methods, since $\A$ is neither self-adjoint nor skew-adjoint and its eigenfunctions do not form a Riesz basis. To deal with this difficulty, we benefit from another related operator $\B$ defined by $\B\varphi:=-\varphi'''-\varphi'$ with the domain
\begin{equation*}
D(\B):=\{\varphi\in H^3(0,L): \varphi(0)=\varphi(L)=0,\varphi'(0)=\varphi'(L)\}.
\end{equation*}
$\B$ is skew-adjoint with  eigenmodes $(\ii\lambda_j,\E_j)_{j\in\Z\setminus\{0\}}$ satisfying $\B\E_j=\ii\lambda_j\E_j$ (See more details in Section \ref{sec: Asymptotic behavior close to the critical lengths}). We aim to have a good {\it transition} from $\A$ to $\B$ and vice versa. We achieve this by finding a proper subspace $H_{\B}(L)$ such that $L^2(0,L)=M_{\B}(L)\oplus H_{\B}(L)$ and finding transitions between $H_{\A}(L)$ and $H_{\B}(L)$. We also expect $M_{\B}(L)\sim M(L_0)+\bigO(|L-L_0|)$\footnote{This notation is not mathematically precise, and we use it to simply provide an intuition that the two subspaces are clsoe to each other.}.

Interestingly, it turns out that the characterization of the subspace $M_{\B}(L)$ depends largely on the classification of critical lengths. We need more detailed spectral analysis of the asymptotic behaviors of the eigenvalues and eigenfunctions of $\B$ (we refer to Section \ref{sec: Asymptotic behavior close to the critical lengths}), see Section \ref{sec: classification quasiinvariant space}. To provide a brief intuition, we show some links between this characterization and the classification of critical lengths using the following typical examples.
\begin{itemize}
    \item Let $L_0=2\pi$. Then, the only possible pair is $k=l=1$. Moreover, 
    \begin{equation*}
    M(L_0)=\mathrm{Span}\{1-\cos{x}\},\;\;M_{\A}(L)=\mathrm{Span}\{1-\cos{x}+\bigO(L-2\pi)\}.
    \end{equation*}
Note that $1-\cos{x}$ is an eigenfunction of $\A$ at $L_0=2\pi$ with the eigenvalue $0$, and $(1-\cos{x})'(0)=0$. When $L$ is close to $L_0$, the related eigenfunctions of $\mathcal{A}$ and $\mathcal{B}$ can be regarded as a perturbation.  For $\B$, there are two eigenvalues $\ii\lambda_{\pm1}=\bigO(|L-2\pi|)$ close to $0$  and the associated eigenfunctions are given by  $\E_{\pm1}=\frac{1}{\sqrt{6\pi}}(1-\cos{x}\pm\sqrt{3}\ii\sin{x})+\bigO(|L-2\pi|)$ (see Fig. \ref{fig: 2pi-case-intro}). In particular, one shall notice that $\E_{1}'(L)=\E_{1}'(0)\neq0$ and this holds uniformly as $L\to2\pi$.

One can not directly define $M_{\mathcal{B}}(L)$ as $\mathrm{Span}{\E_{+1}}$ or $\mathrm{Span}{\E_{-1}}$,  due to the boundary derivatives. Indeed, $\Re\E_{+1}\sim 1-\cos{x}+\bigO(L-2\pi)$, while $\Im\E_1\sim \sin{x}+\bigO(L-2\pi)$. For the $\sin{x}$ function,
$$\B(\sin{x})=0\text{ at }L=2\pi,\text{ with } \sin'x|_{x=0}=\sin'x|_{x=2\pi}=1.
$$ 
We call this eigenfunction of Type 2, and we study more general cases in Section \ref{sec: Eigenvalues and eigenfunctions at the critical length}. 
\begin{figure}
    \centering
\tikzset{every picture/.style={line width=0.75pt}} 

\begin{tikzpicture}[x=0.75pt,y=0.75pt,yscale=-0.8,xscale=0.8]

\draw [line width=3]    (82,64.5) -- (257,63.5) ;

\draw [line width=3]    (81,197) -- (260,196.5) ;

\draw  [dash pattern={on 4.5pt off 4.5pt}]  (170,63.75) -- (114,196.75) ;

\draw  [dash pattern={on 4.5pt off 4.5pt}]  (170,63.75) -- (213,195.5) ;

\draw  [draw opacity=0][fill={rgb, 255:red, 254; green, 5; blue, 5 }  ,fill opacity=1 ] (166,63.75) .. controls (166,61.54) and (167.79,59.75) .. (170,59.75) .. controls (172.21,59.75) and (174,61.54) .. (174,63.75) .. controls (174,65.96) and (172.21,67.75) .. (170,67.75) .. controls (167.79,67.75) and (166,65.96) .. (166,63.75) -- cycle ;
\draw  [draw opacity=0][fill={rgb, 255:red, 126; green, 211; blue, 33 }  ,fill opacity=1 ] (114,196.75) .. controls (114,194.54) and (115.79,192.75) .. (118,192.75) .. controls (120.21,192.75) and (122,194.54) .. (122,196.75) .. controls (122,198.96) and (120.21,200.75) .. (118,200.75) .. controls (115.79,200.75) and (114,198.96) .. (114,196.75) -- cycle ;
\draw  [draw opacity=0][fill={rgb, 255:red, 126; green, 211; blue, 33 }  ,fill opacity=1 ] (209,195.5) .. controls (209,193.29) and (210.79,191.5) .. (213,191.5) .. controls (215.21,191.5) and (217,193.29) .. (217,195.5) .. controls (217,197.71) and (215.21,199.5) .. (213,199.5) .. controls (210.79,199.5) and (209,197.71) .. (209,195.5) -- cycle ;
\draw [color={rgb, 255:red, 208; green, 2; blue, 27 }  ,draw opacity=1 ][line width=2.25]    (439,167.5) -- (563,169.44) ;
\draw [shift={(567,169.5)}, rotate = 180.9] [color={rgb, 255:red, 208; green, 2; blue, 27 }  ,draw opacity=1 ][line width=2.25]    (17.49,-5.26) .. controls (11.12,-2.23) and (5.29,-0.48) .. (0,0) .. controls (5.29,0.48) and (11.12,2.23) .. (17.49,5.26)   ;
\draw [color={rgb, 255:red, 74; green, 144; blue, 226 }  ,draw opacity=1 ][line width=2.25]    (439,167.5) -- (439,44.5) ;
\draw [shift={(439,40.5)}, rotate = 90] [color={rgb, 255:red, 74; green, 144; blue, 226 }  ,draw opacity=1 ][line width=2.25]    (17.49,-5.26) .. controls (11.12,-2.23) and (5.29,-0.48) .. (0,0) .. controls (5.29,0.48) and (11.12,2.23) .. (17.49,5.26)   ;
\draw [color={rgb, 255:red, 126; green, 211; blue, 33 }  ,draw opacity=1 ][line width=2.25]  [dash pattern={on 6.75pt off 4.5pt}]  (437.96,166.42) -- (527.74,80.34)(440.04,168.58) -- (529.82,82.5) ;
\draw [shift={(536,74.5)}, rotate = 136.21] [color={rgb, 255:red, 126; green, 211; blue, 33 }  ,draw opacity=1 ][line width=2.25]    (17.49,-5.26) .. controls (11.12,-2.23) and (5.29,-0.48) .. (0,0) .. controls (5.29,0.48) and (11.12,2.23) .. (17.49,5.26)   ;
\draw [color={rgb, 255:red, 126; green, 211; blue, 33 }  ,draw opacity=1 ][line width=2.25]  [dash pattern={on 6.75pt off 4.5pt}]  (439.98,166.37) -- (523.43,238.81)(438.02,168.63) -- (521.47,241.08) ;
\draw [shift={(530,246.5)}, rotate = 220.96] [color={rgb, 255:red, 126; green, 211; blue, 33 }  ,draw opacity=1 ][line width=2.25]    (17.49,-5.26) .. controls (11.12,-2.23) and (5.29,-0.48) .. (0,0) .. controls (5.29,0.48) and (11.12,2.23) .. (17.49,5.26)   ;
\draw  [dash pattern={on 0.84pt off 2.51pt}]  (438,75.5) -- (536,74.5) ;
\draw [color={rgb, 255:red, 102; green, 170; blue, 30 }  ,draw opacity=1 ][line width=1.5]    (438,75.5) -- (439,167.5) ;

\draw (165,40.4) node [anchor=north west][inner sep=0.75pt]    {$0$};
\draw (105,201.4) node [anchor=north west][inner sep=0.75pt]    {$\lambda _{-1}$};
\draw (201,201.4) node [anchor=north west][inner sep=0.75pt]    {$\lambda _{+1}$};
\draw (273,187.4) node [anchor=north west][inner sep=0.75pt]    {$L$};
\draw (263,54.4) node [anchor=north west][inner sep=0.75pt]    {$L_{0} =2\pi $};
\draw (426,20.4) node [anchor=north west][inner sep=0.75pt]    {$\mathrm{i} \ \sin x$};
\draw (571,161.4) node [anchor=north west][inner sep=0.75pt]    {$1-\cos x$};
\draw (540,52.4) node [anchor=north west][inner sep=0.75pt]    {$\mathcal{E}_{1}( x)$};
\draw (541,244.4) node [anchor=north west][inner sep=0.75pt]    {$\mathcal{E}_{-1}( x)$};

\end{tikzpicture}
    \caption{$2\pi$ case: asymptotic behaviors of eigenvalues and eigenfunctions}
    \label{fig: 2pi-case-intro}
\end{figure}

In this case, to get rid of the Type 2 eigenmodes of $\B$, we define 
\begin{equation*}
M_{\B}(L)\sim \mathrm{Span}\{\Re\E_{+1}\}=M(L_0)+\bigO(|L-2\pi|).
\end{equation*}
Note that, however, the Type 2 eigenfunctions are useful in defining the transition maps. We leave this part for more explanation in Remark \ref{rem: rho}.
    \item Now we turn to a different case $L_0=2\pi\sqrt{\frac{7}{3}}$ with $k=2,l=1$. We have $\dim M(L_0)=2$ and
    \begin{equation*}
    M(L_0)=\mathrm{Span}\{\Re\G_c,\Im\G_c,\text{ where }\A(\G_c)=\ii\frac{20}{21\sqrt{21}}\G_c\},\;M_{\A}(L)\sim M(L_0)+\bigO(|L-2\pi\sqrt{\frac{7}{3}}|).
    \end{equation*}
Note that $\G_c'(0)=0$. In this case, things are more direct. There are two eigenvalues $\ii\lambda_{\pm1}$ of $\B$ such that $\ii\lambda_{\pm1}=\pm\ii\frac{20}{21\sqrt{21}}+\bigO((L-2\pi\sqrt{\frac{7}{3}})^2)$ and their associated eigenfunctions $\E_{\pm1}\sim \G_{\pm c}+\bigO(|L-2\pi\sqrt{\frac{7}{3}}|)$. Hence, here we define
\begin{equation*}
M_{\B}(L)=\mathrm{Span}\{\Re\E_1,\Im\E_1,\text{ where }\B(\E_1)=\ii\lambda_1\E_1\} \sim M(L_0)+\bigO(|L-2\pi\sqrt{\frac{7}{3}}|).
\end{equation*}
In this case, there is no Type 2 eigenmode for $\B$ at $L=2\pi\sqrt{\frac{7}{3}}$, see Section \ref{sec: Asymptotic behavior close to the critical lengths}.
\begin{figure}
    \centering
\tikzset{every picture/.style={line width=0.75pt}} 

\begin{tikzpicture}[x=0.75pt,y=0.75pt,yscale=-0.8,xscale=0.8]

\draw [line width=3]    (106,43) -- (401,43) ;
\draw [line width=3]    (109,176) -- (404,176) ;
\draw  [dash pattern={on 4.5pt off 4.5pt}]  (144,46.5) -- (170,177) ;
\draw  [draw opacity=0][fill={rgb, 255:red, 254; green, 5; blue, 5 }  ,fill opacity=1 ] (140,42.5) .. controls (140,40.29) and (141.79,38.5) .. (144,38.5) .. controls (146.21,38.5) and (148,40.29) .. (148,42.5) .. controls (148,44.71) and (146.21,46.5) .. (144,46.5) .. controls (141.79,46.5) and (140,44.71) .. (140,42.5) -- cycle ;
\draw  [draw opacity=0][fill={rgb, 255:red, 254; green, 5; blue, 5 }  ,fill opacity=1 ] (240,41.75) .. controls (240,39.54) and (241.79,37.75) .. (244,37.75) .. controls (246.21,37.75) and (248,39.54) .. (248,41.75) .. controls (248,43.96) and (246.21,45.75) .. (244,45.75) .. controls (241.79,45.75) and (240,43.96) .. (240,41.75) -- cycle ;
\draw  [draw opacity=0][fill={rgb, 255:red, 126; green, 211; blue, 33 }  ,fill opacity=1 ] (166,177) .. controls (166,174.79) and (167.79,173) .. (170,173) .. controls (172.21,173) and (174,174.79) .. (174,177) .. controls (174,179.21) and (172.21,181) .. (170,181) .. controls (167.79,181) and (166,179.21) .. (166,177) -- cycle ;
\draw  [draw opacity=0][fill={rgb, 255:red, 254; green, 5; blue, 5 }  ,fill opacity=1 ] (345,42.5) .. controls (345,40.29) and (346.79,38.5) .. (349,38.5) .. controls (351.21,38.5) and (353,40.29) .. (353,42.5) .. controls (353,44.71) and (351.21,46.5) .. (349,46.5) .. controls (346.79,46.5) and (345,44.71) .. (345,42.5) -- cycle ;
\draw  [draw opacity=0][fill={rgb, 255:red, 126; green, 211; blue, 33 }  ,fill opacity=1 ] (304,176.5) .. controls (304,174.29) and (305.79,172.5) .. (308,172.5) .. controls (310.21,172.5) and (312,174.29) .. (312,176.5) .. controls (312,178.71) and (310.21,180.5) .. (308,180.5) .. controls (305.79,180.5) and (304,178.71) .. (304,176.5) -- cycle ;
\draw [fill={rgb, 255:red, 252; green, 3; blue, 3 }  ,fill opacity=1 ] [dash pattern={on 4.5pt off 4.5pt}]  (349,42.5) -- (308,172.5) ;
\draw [color={rgb, 255:red, 208; green, 2; blue, 27 }  ,draw opacity=1 ][line width=2.25]  [dash pattern={on 2.53pt off 3.02pt}]  (466,131.5) -- (575,131.98) ;
\draw [shift={(579,132)}, rotate = 180.25] [color={rgb, 255:red, 208; green, 2; blue, 27 }  ,draw opacity=1 ][line width=2.25]    (17.49,-5.26) .. controls (11.12,-2.23) and (5.29,-0.48) .. (0,0) .. controls (5.29,0.48) and (11.12,2.23) .. (17.49,5.26)   ;
\draw [color={rgb, 255:red, 208; green, 2; blue, 27 }  ,draw opacity=1 ][line width=2.25]    (466,131.5) -- (572.67,60.22) ;
\draw [shift={(576,58)}, rotate = 146.25] [color={rgb, 255:red, 208; green, 2; blue, 27 }  ,draw opacity=1 ][line width=2.25]    (17.49,-5.26) .. controls (11.12,-2.23) and (5.29,-0.48) .. (0,0) .. controls (5.29,0.48) and (11.12,2.23) .. (17.49,5.26)   ;
\draw [color={rgb, 255:red, 74; green, 144; blue, 226 }  ,draw opacity=1 ][line width=2.25]  [dash pattern={on 6.75pt off 4.5pt}]  (464.96,130.42) -- (554.74,44.34)(467.04,132.58) -- (556.82,46.5) ;
\draw [shift={(563,38.5)}, rotate = 136.21] [color={rgb, 255:red, 74; green, 144; blue, 226 }  ,draw opacity=1 ][line width=2.25]    (17.49,-5.26) .. controls (11.12,-2.23) and (5.29,-0.48) .. (0,0) .. controls (5.29,0.48) and (11.12,2.23) .. (17.49,5.26)   ;
\draw [color={rgb, 255:red, 74; green, 144; blue, 226 }  ,draw opacity=1 ][line width=2.25]  [dash pattern={on 6.75pt off 4.5pt}]  (466.98,130.37) -- (550.43,202.81)(465.02,132.63) -- (548.47,205.08) ;
\draw [shift={(557,210.5)}, rotate = 220.96] [color={rgb, 255:red, 74; green, 144; blue, 226 }  ,draw opacity=1 ][line width=2.25]    (17.49,-5.26) .. controls (11.12,-2.23) and (5.29,-0.48) .. (0,0) .. controls (5.29,0.48) and (11.12,2.23) .. (17.49,5.26)   ;
\draw [color={rgb, 255:red, 208; green, 2; blue, 27 }  ,draw opacity=1 ][line width=2.25]    (471,133.5) -- (566.59,191.91) ;
\draw [shift={(570,194)}, rotate = 211.43] [color={rgb, 255:red, 208; green, 2; blue, 27 }  ,draw opacity=1 ][line width=2.25]    (17.49,-5.26) .. controls (11.12,-2.23) and (5.29,-0.48) .. (0,0) .. controls (5.29,0.48) and (11.12,2.23) .. (17.49,5.26)   ;
\draw [color={rgb, 255:red, 208; green, 2; blue, 27 }  ,draw opacity=1 ][line width=2.25]  [dash pattern={on 2.53pt off 3.02pt}]  (466,131.5) -- (466.95,61) ;
\draw [shift={(467,57)}, rotate = 90.77] [color={rgb, 255:red, 208; green, 2; blue, 27 }  ,draw opacity=1 ][line width=2.25]    (17.49,-5.26) .. controls (11.12,-2.23) and (5.29,-0.48) .. (0,0) .. controls (5.29,0.48) and (11.12,2.23) .. (17.49,5.26)   ;

\draw (125,15.4) node [anchor=north west][inner sep=0.75pt]    {$\lambda _{-c}$};
\draw (341,16.4) node [anchor=north west][inner sep=0.75pt]    {$\lambda _{c}$};
\draw (239,18.4) node [anchor=north west][inner sep=0.75pt]    {$0$};
\draw (158,185.4) node [anchor=north west][inner sep=0.75pt]    {$\lambda _{-1}$};
\draw (295,180.4) node [anchor=north west][inner sep=0.75pt]    {$\lambda _{+1}$};
\draw (90,167.4) node [anchor=north west][inner sep=0.75pt]    {$L$};
\draw (8,25.4) node [anchor=north west][inner sep=0.75pt]    {$L_{0} =2\pi \sqrt{\frac{7}{3}}$};
\draw (444,22.4) node [anchor=north west][inner sep=0.75pt]    {$\mathrm{i\ Im}\mathcal{G}_{c}$};
\draw (543,11.4) node [anchor=north west][inner sep=0.75pt]    {$\mathcal{E}_{1}$};
\draw (538,222.4) node [anchor=north west][inner sep=0.75pt]    {$\mathcal{E}_{-1}$};
\draw (597,122.4) node [anchor=north west][inner sep=0.75pt]    {$\mathrm{Re}\mathcal{G}_{c}$};
\draw (577,44.4) node [anchor=north west][inner sep=0.75pt]    {$\mathcal{G}_{c}$};
\draw (575,189.4) node [anchor=north west][inner sep=0.75pt]    {$\mathcal{G}_{-c}$};
\end{tikzpicture}
    \caption{$2\pi\sqrt{7/3}$ case: asymptotic behaviors of eigenvalues and eigenfunctions}
    \label{fig: 2pi7-3-intro}
\end{figure}
    \item The next example concerns the case $L_0=2\pi\sqrt{7}$ with $k=4,l=1$. We also have $\dim M(L_0)=2$ and 
    \begin{equation*}
    M(L_0)=\mathrm{Span}\{\Re\G_c,\Im\G_c,\text{ where }\A(\G_c)=\ii\frac{6\sqrt{7}}{49}\G_c\},\; M_{\A}(L) \sim M(L_0)+\bigO(|L-2\pi\sqrt{\frac{7}{3}}|).
    \end{equation*}
Note that $\G_c'(0)=0$. Even though this case looks quite similar to the second one, there are some distinct phenomena. We have four eigenvalues $\ii\lambda_{\pm2},\ii\lambda_{\pm1}$ of $\B$ such that 
$\ii\lambda_{1}=\ii\frac{6\sqrt{7}}{49}+\bigO(|L-2\pi\sqrt{7}|)$, $\ii\lambda_{2}=\ii\frac{6\sqrt{7}}{49}+\bigO(|L-2\pi\sqrt{7}|)$, and $\lambda_{-1}=-\lambda_1,\lambda_{-2}=-\lambda_2$.  Let $\{\E_j\}_{j=\pm1,\pm2}$ be the associated eigenfunctions ($\B(\E_j)=\ii\lambda\E_j$). To give a rough idea, we present Figure \ref{fig: 2pi-7-intro} in analog of previous two cases.
\begin{figure}
    \centering
\tikzset{every picture/.style={line width=0.75pt}} 

\begin{tikzpicture}[x=0.75pt,y=0.75pt,yscale=-0.8,xscale=0.8]

\draw [line width=3]    (95,53.5) -- (397,52.5) ;
\draw [line width=3]    (90,187.5) -- (406,185.5) ;
\draw [fill={rgb, 255:red, 252; green, 3; blue, 3 }  ,fill opacity=1 ] [dash pattern={on 4.5pt off 4.5pt}]  (148,53.5) -- (112,187.5) ;
\draw  [dash pattern={on 4.5pt off 4.5pt}]  (148,53.5) -- (189,187) ;
\draw  [draw opacity=0][fill={rgb, 255:red, 254; green, 5; blue, 5 }  ,fill opacity=1 ] (144,53.5) .. controls (144,51.29) and (145.79,49.5) .. (148,49.5) .. controls (150.21,49.5) and (152,51.29) .. (152,53.5) .. controls (152,55.71) and (150.21,57.5) .. (148,57.5) .. controls (145.79,57.5) and (144,55.71) .. (144,53.5) -- cycle ;
\draw  [draw opacity=0][fill={rgb, 255:red, 254; green, 5; blue, 5 }  ,fill opacity=1 ] (234,51.75) .. controls (234,49.54) and (235.79,47.75) .. (238,47.75) .. controls (240.21,47.75) and (242,49.54) .. (242,51.75) .. controls (242,53.96) and (240.21,55.75) .. (238,55.75) .. controls (235.79,55.75) and (234,53.96) .. (234,51.75) -- cycle ;
\draw  [draw opacity=0][fill={rgb, 255:red, 126; green, 211; blue, 33 }  ,fill opacity=1 ] (108,187.5) .. controls (108,185.29) and (109.79,183.5) .. (112,183.5) .. controls (114.21,183.5) and (116,185.29) .. (116,187.5) .. controls (116,189.71) and (114.21,191.5) .. (112,191.5) .. controls (109.79,191.5) and (108,189.71) .. (108,187.5) -- cycle ;
\draw  [draw opacity=0][fill={rgb, 255:red, 126; green, 211; blue, 33 }  ,fill opacity=1 ] (185,187) .. controls (185,184.79) and (186.79,183) .. (189,183) .. controls (191.21,183) and (193,184.79) .. (193,187) .. controls (193,189.21) and (191.21,191) .. (189,191) .. controls (186.79,191) and (185,189.21) .. (185,187) -- cycle ;
\draw  [draw opacity=0][fill={rgb, 255:red, 254; green, 5; blue, 5 }  ,fill opacity=1 ] (324,53.5) .. controls (324,51.29) and (325.79,49.5) .. (328,49.5) .. controls (330.21,49.5) and (332,51.29) .. (332,53.5) .. controls (332,55.71) and (330.21,57.5) .. (328,57.5) .. controls (325.79,57.5) and (324,55.71) .. (324,53.5) -- cycle ;
\draw  [draw opacity=0][fill={rgb, 255:red, 126; green, 211; blue, 33 }  ,fill opacity=1 ] (287,187.5) .. controls (287,185.29) and (288.79,183.5) .. (291,183.5) .. controls (293.21,183.5) and (295,185.29) .. (295,187.5) .. controls (295,189.71) and (293.21,191.5) .. (291,191.5) .. controls (288.79,191.5) and (287,189.71) .. (287,187.5) -- cycle ;
\draw  [draw opacity=0][fill={rgb, 255:red, 126; green, 211; blue, 33 }  ,fill opacity=1 ] (368,185.5) .. controls (368,183.29) and (369.79,181.5) .. (372,181.5) .. controls (374.21,181.5) and (376,183.29) .. (376,185.5) .. controls (376,187.71) and (374.21,189.5) .. (372,189.5) .. controls (369.79,189.5) and (368,187.71) .. (368,185.5) -- cycle ;
\draw [fill={rgb, 255:red, 252; green, 3; blue, 3 }  ,fill opacity=1 ] [dash pattern={on 4.5pt off 4.5pt}]  (328,53.5) -- (291,183.5) ;
\draw  [dash pattern={on 4.5pt off 4.5pt}]  (328,53.5) -- (372,185.5) ;
\draw [color={rgb, 255:red, 208; green, 2; blue, 27 }  ,draw opacity=1 ][line width=2.25]  [dash pattern={on 2.53pt off 3.02pt}]  (546,137.5) -- (641,138.46) ;
\draw [shift={(645,138.5)}, rotate = 180.58] [color={rgb, 255:red, 208; green, 2; blue, 27 }  ,draw opacity=1 ][line width=2.25]    (17.49,-5.26) .. controls (11.12,-2.23) and (5.29,-0.48) .. (0,0) .. controls (5.29,0.48) and (11.12,2.23) .. (17.49,5.26)   ;
\draw [color={rgb, 255:red, 208; green, 2; blue, 27 }  ,draw opacity=1 ][line width=2.25]    (546,137.5) -- (634.17,49.33) ;
\draw [shift={(637,46.5)}, rotate = 135] [color={rgb, 255:red, 208; green, 2; blue, 27 }  ,draw opacity=1 ][line width=2.25]    (17.49,-5.26) .. controls (11.12,-2.23) and (5.29,-0.48) .. (0,0) .. controls (5.29,0.48) and (11.12,2.23) .. (17.49,5.26)   ;
\draw [color={rgb, 255:red, 74; green, 144; blue, 226 }  ,draw opacity=1 ][line width=2.25]  [dash pattern={on 6.75pt off 4.5pt}]  (544.62,136.91) -- (582.71,47.12)(547.38,138.09) -- (585.48,48.29) ;
\draw [shift={(588,38.5)}, rotate = 112.99] [color={rgb, 255:red, 74; green, 144; blue, 226 }  ,draw opacity=1 ][line width=2.25]    (17.49,-5.26) .. controls (11.12,-2.23) and (5.29,-0.48) .. (0,0) .. controls (5.29,0.48) and (11.12,2.23) .. (17.49,5.26)   ;
\draw [color={rgb, 255:red, 74; green, 144; blue, 226 }  ,draw opacity=1 ][line width=2.25]  [dash pattern={on 6.75pt off 4.5pt}]  (547.3,136.75) -- (592.27,214.1)(544.7,138.25) -- (589.68,215.61) ;
\draw [shift={(596,223.5)}, rotate = 239.83] [color={rgb, 255:red, 74; green, 144; blue, 226 }  ,draw opacity=1 ][line width=2.25]    (17.49,-5.26) .. controls (11.12,-2.23) and (5.29,-0.48) .. (0,0) .. controls (5.29,0.48) and (11.12,2.23) .. (17.49,5.26)   ;
\draw [color={rgb, 255:red, 208; green, 2; blue, 27 }  ,draw opacity=1 ][line width=2.25]    (551,139.5) -- (630.92,205.94) ;
\draw [shift={(634,208.5)}, rotate = 219.74] [color={rgb, 255:red, 208; green, 2; blue, 27 }  ,draw opacity=1 ][line width=2.25]    (17.49,-5.26) .. controls (11.12,-2.23) and (5.29,-0.48) .. (0,0) .. controls (5.29,0.48) and (11.12,2.23) .. (17.49,5.26)   ;
\draw [color={rgb, 255:red, 208; green, 2; blue, 27 }  ,draw opacity=1 ][line width=2.25]  [dash pattern={on 2.53pt off 3.02pt}]  (546,137.5) -- (543.12,40.5) ;
\draw [shift={(543,36.5)}, rotate = 88.3] [color={rgb, 255:red, 208; green, 2; blue, 27 }  ,draw opacity=1 ][line width=2.25]    (17.49,-5.26) .. controls (11.12,-2.23) and (5.29,-0.48) .. (0,0) .. controls (5.29,0.48) and (11.12,2.23) .. (17.49,5.26)   ;
\draw [color={rgb, 255:red, 74; green, 144; blue, 226 }  ,draw opacity=1 ][line width=2.25]  [dash pattern={on 6.75pt off 4.5pt}]  (545.32,136.16) -- (633.4,91.67)(546.68,138.84) -- (634.75,94.35) ;
\draw [shift={(643,88.5)}, rotate = 153.2] [color={rgb, 255:red, 74; green, 144; blue, 226 }  ,draw opacity=1 ][line width=2.25]    (17.49,-5.26) .. controls (11.12,-2.23) and (5.29,-0.48) .. (0,0) .. controls (5.29,0.48) and (11.12,2.23) .. (17.49,5.26)   ;
\draw [color={rgb, 255:red, 74; green, 144; blue, 226 }  ,draw opacity=1 ][line width=2.25]  [dash pattern={on 6.75pt off 4.5pt}]  (546.55,136.11) -- (630.26,169.41)(545.45,138.89) -- (629.15,172.2) ;
\draw [shift={(639,174.5)}, rotate = 201.7] [color={rgb, 255:red, 74; green, 144; blue, 226 }  ,draw opacity=1 ][line width=2.25]    (17.49,-5.26) .. controls (11.12,-2.23) and (5.29,-0.48) .. (0,0) .. controls (5.29,0.48) and (11.12,2.23) .. (17.49,5.26)   ;
\draw [color={rgb, 255:red, 126; green, 211; blue, 33 }  ,draw opacity=1 ][line width=2.25]    (546,137.5) -- (479.4,48.7) ;
\draw [shift={(477,45.5)}, rotate = 53.13] [color={rgb, 255:red, 126; green, 211; blue, 33 }  ,draw opacity=1 ][line width=2.25]    (17.49,-5.26) .. controls (11.12,-2.23) and (5.29,-0.48) .. (0,0) .. controls (5.29,0.48) and (11.12,2.23) .. (17.49,5.26)   ;
\draw [color={rgb, 255:red, 126; green, 211; blue, 33 }  ,draw opacity=1 ][line width=2.25]    (546,137.5) -- (475.68,215.53) ;
\draw [shift={(473,218.5)}, rotate = 312.03] [color={rgb, 255:red, 126; green, 211; blue, 33 }  ,draw opacity=1 ][line width=2.25]    (17.49,-5.26) .. controls (11.12,-2.23) and (5.29,-0.48) .. (0,0) .. controls (5.29,0.48) and (11.12,2.23) .. (17.49,5.26)   ;
\draw  [dash pattern={on 0.84pt off 2.51pt}]  (515,97.5) -- (588,38.5) ;
\draw [color={rgb, 255:red, 26; green, 75; blue, 128 }  ,draw opacity=1 ][line width=2.25]    (515,97.5) -- (546,137.5) ;

\draw (134,26.4) node [anchor=north west][inner sep=0.75pt]    {$\lambda _{-c}$};
\draw (318,30.4) node [anchor=north west][inner sep=0.75pt]    {$\lambda _{c}$};
\draw (234,26.4) node [anchor=north west][inner sep=0.75pt]    {$0$};
\draw (180,194.4) node [anchor=north west][inner sep=0.75pt]    {$\lambda _{-1}$};
\draw (97,197.4) node [anchor=north west][inner sep=0.75pt]    {$\lambda _{-2}$};
\draw (360,193.4) node [anchor=north west][inner sep=0.75pt]    {$\lambda _{+2}$};
\draw (281,195.4) node [anchor=north west][inner sep=0.75pt]    {$\lambda _{+1}$};
\draw (65,183.4) node [anchor=north west][inner sep=0.75pt]    {$L$};
\draw (3,45.4) node [anchor=north west][inner sep=0.75pt]    {$L_{0} =2\pi \sqrt{7}$};
\draw (522,5.4) node [anchor=north west][inner sep=0.75pt]    {$\mathrm{i\ Im}\mathcal{G}_{c}$};
\draw (590,17.4) node [anchor=north west][inner sep=0.75pt]    {$\mathcal{E}_{1}$};
\draw (592,233.4) node [anchor=north west][inner sep=0.75pt]    {$\mathcal{E}_{-1}$};
\draw (653,127.4) node [anchor=north west][inner sep=0.75pt]    {$\mathrm{Re}\mathcal{G}_{c}$};
\draw (644,28.4) node [anchor=north west][inner sep=0.75pt]    {$\mathcal{G}_{c}$};
\draw (641,203.4) node [anchor=north west][inner sep=0.75pt]    {$\mathcal{G}_{-c}$};
\draw (654,74.9) node [anchor=north west][inner sep=0.75pt]    {$\mathcal{E}_{-2}$};
\draw (648,169.4) node [anchor=north west][inner sep=0.75pt]    {$\mathcal{E}_{2}$};
\draw (451,25.4) node [anchor=north west][inner sep=0.75pt]    {$\tilde{\mathcal{G}}_{c}$};
\draw (475,221.9) node [anchor=north west][inner sep=0.75pt]    {$\tilde{\mathcal{G}}_{-c}$};
\end{tikzpicture}
    \caption{$2\pi\sqrt{7}$ case: asymptotic behaviors of eigenvalues and eigenfunctions}
    \label{fig: 2pi-7-intro}
\end{figure}
Here we have 
\begin{align*}
\G_c=a_1\E_1+b_1\E_2+\bigO(|L-2\pi\sqrt{7}|),\widetilde{\G}_c=a_2\E_1+b_2\E_2+\bigO(|L-2\pi\sqrt{7}|),
\end{align*}
where $\widetilde{\G}_c$ is a Type 2 eigenfunction of $\B$ at $L=2\pi\sqrt{7}$ with $\widetilde{\G}_c'(0)=\widetilde{\G}_c'(2\pi\sqrt{7})\neq0$. Therefore, like $2\pi$ case, 
\begin{equation*}
M_{\B}(L)=\mathrm{Span}\{\Re(a_1\E_1+b_1\E_2),\Im(a_1\E_1+b_1\E_2)\}\sim M(L_0)+\bigO(|L-2\pi\sqrt{7}|).
\end{equation*}
We notice that for $\dim M(L_0)=2$, we also have there two distinguished situations like $k=2,l=1$ and $k=4,l=1$.
    \item  At last, we quickly mention an example of $L_0=14\pi$ with $k=11,l=2$, or $k=l=7$. Here $\dim M(L_0)=3$. One can see this as a combination of the first and third examples. For simplicity, we do not mention many details here and refer to Section \ref{sec: Asymptotic behavior close to the critical lengths}. 
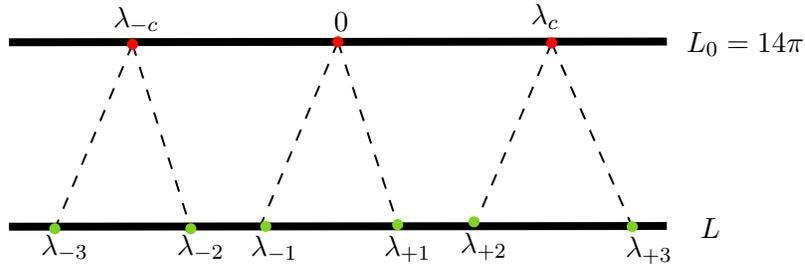
\begin{figure}
    \centering
\tikzset{every picture/.style={line width=0.75pt}} 

\begin{tikzpicture}[x=0.75pt,y=0.75pt,yscale=-0.7,xscale=0.7]

\draw [line width=3]    (101,51) -- (575,50.5) ;
\draw [line width=3]    (101,184) -- (575,183.5) ;
\draw [fill={rgb, 255:red, 252; green, 3; blue, 3 }  ,fill opacity=1 ] [dash pattern={on 4.5pt off 4.5pt}]  (190,52.5) -- (134,185.5) ;
\draw  [dash pattern={on 4.5pt off 4.5pt}]  (190,52.5) -- (232,185.5) ;
\draw  [dash pattern={on 4.5pt off 4.5pt}]  (338,50.75) -- (282,183.75) ;
\draw  [dash pattern={on 4.5pt off 4.5pt}]  (492,51.5) -- (436,184.5) ;
\draw  [dash pattern={on 4.5pt off 4.5pt}]  (338,50.75) -- (381,182.5) ;
\draw  [dash pattern={on 4.5pt off 4.5pt}]  (492,51.5) -- (550,184) ;
\draw  [draw opacity=0][fill={rgb, 255:red, 254; green, 5; blue, 5 }  ,fill opacity=1 ] (186,52.5) .. controls (186,50.29) and (187.79,48.5) .. (190,48.5) .. controls (192.21,48.5) and (194,50.29) .. (194,52.5) .. controls (194,54.71) and (192.21,56.5) .. (190,56.5) .. controls (187.79,56.5) and (186,54.71) .. (186,52.5) -- cycle ;
\draw  [draw opacity=0][fill={rgb, 255:red, 254; green, 5; blue, 5 }  ,fill opacity=1 ] (334,50.75) .. controls (334,48.54) and (335.79,46.75) .. (338,46.75) .. controls (340.21,46.75) and (342,48.54) .. (342,50.75) .. controls (342,52.96) and (340.21,54.75) .. (338,54.75) .. controls (335.79,54.75) and (334,52.96) .. (334,50.75) -- cycle ;
\draw  [draw opacity=0][fill={rgb, 255:red, 254; green, 5; blue, 5 }  ,fill opacity=1 ] (488,51.5) .. controls (488,49.29) and (489.79,47.5) .. (492,47.5) .. controls (494.21,47.5) and (496,49.29) .. (496,51.5) .. controls (496,53.71) and (494.21,55.5) .. (492,55.5) .. controls (489.79,55.5) and (488,53.71) .. (488,51.5) -- cycle ;
\draw  [draw opacity=0][fill={rgb, 255:red, 126; green, 211; blue, 33 }  ,fill opacity=1 ] (130,185.5) .. controls (130,183.29) and (131.79,181.5) .. (134,181.5) .. controls (136.21,181.5) and (138,183.29) .. (138,185.5) .. controls (138,187.71) and (136.21,189.5) .. (134,189.5) .. controls (131.79,189.5) and (130,187.71) .. (130,185.5) -- cycle ;
\draw  [draw opacity=0][fill={rgb, 255:red, 126; green, 211; blue, 33 }  ,fill opacity=1 ] (228,185.5) .. controls (228,183.29) and (229.79,181.5) .. (232,181.5) .. controls (234.21,181.5) and (236,183.29) .. (236,185.5) .. controls (236,187.71) and (234.21,189.5) .. (232,189.5) .. controls (229.79,189.5) and (228,187.71) .. (228,185.5) -- cycle ;
\draw  [draw opacity=0][fill={rgb, 255:red, 126; green, 211; blue, 33 }  ,fill opacity=1 ] (282,183.75) .. controls (282,181.54) and (283.79,179.75) .. (286,179.75) .. controls (288.21,179.75) and (290,181.54) .. (290,183.75) .. controls (290,185.96) and (288.21,187.75) .. (286,187.75) .. controls (283.79,187.75) and (282,185.96) .. (282,183.75) -- cycle ;
\draw  [draw opacity=0][fill={rgb, 255:red, 126; green, 211; blue, 33 }  ,fill opacity=1 ] (377,182.5) .. controls (377,180.29) and (378.79,178.5) .. (381,178.5) .. controls (383.21,178.5) and (385,180.29) .. (385,182.5) .. controls (385,184.71) and (383.21,186.5) .. (381,186.5) .. controls (378.79,186.5) and (377,184.71) .. (377,182.5) -- cycle ;
\draw  [draw opacity=0][fill={rgb, 255:red, 126; green, 211; blue, 33 }  ,fill opacity=1 ] (432,180.5) .. controls (432,178.29) and (433.79,176.5) .. (436,176.5) .. controls (438.21,176.5) and (440,178.29) .. (440,180.5) .. controls (440,182.71) and (438.21,184.5) .. (436,184.5) .. controls (433.79,184.5) and (432,182.71) .. (432,180.5) -- cycle ;
\draw  [draw opacity=0][fill={rgb, 255:red, 126; green, 211; blue, 33 }  ,fill opacity=1 ] (546,184) .. controls (546,181.79) and (547.79,180) .. (550,180) .. controls (552.21,180) and (554,181.79) .. (554,184) .. controls (554,186.21) and (552.21,188) .. (550,188) .. controls (547.79,188) and (546,186.21) .. (546,184) -- cycle ;

\draw (174,23.4) node [anchor=north west][inner sep=0.75pt]    {$\lambda _{-c}$};
\draw (475,21.4) node [anchor=north west][inner sep=0.75pt]    {$\lambda _{c}$};
\draw (333,27.4) node [anchor=north west][inner sep=0.75pt]    {$0$};
\draw (122,187.4) node [anchor=north west][inner sep=0.75pt]    {$\lambda _{-3}$};
\draw (220,187.4) node [anchor=north west][inner sep=0.75pt]    {$\lambda _{-2}$};
\draw (273,188.4) node [anchor=north west][inner sep=0.75pt]    {$\lambda _{-1}$};
\draw (369,188.4) node [anchor=north west][inner sep=0.75pt]    {$\lambda _{+1}$};
\draw (424,186.4) node [anchor=north west][inner sep=0.75pt]    {$\lambda _{+2}$};
\draw (543,189.4) node [anchor=north west][inner sep=0.75pt]    {$\lambda _{+3}$};
\draw (597,175.4) node [anchor=north west][inner sep=0.75pt]    {$L$};
\draw (587,41.4) node [anchor=north west][inner sep=0.75pt]    {$L_{0} =14\pi $};

\end{tikzpicture}
    \caption{$14\pi$ case: asymptotic behaviors of eigenvalues}
    \label{fig: 14pi-intro}
\end{figure}    
\end{itemize}

Our next main result concerns the stability result for the nonlinear KdV system \eqref{eq: KdV system-stability-intro}.
\begin{thm}\label{thm: nonlinear stability result}
Let $L_0\in \mathcal{N}$ be a fixed critical length.  Let $I=[L_0- \delta,L_0+ \delta]$ with $\delta= \frac{\pi^2}{3L_0^2}$.  There exist several positive constants $\epsilon_0$, $m_0$, $M_0$, and $C_0$ such that for every $L\in I\setminus\{L_0\}$, there are two connected $C^1$ manifolds $\mathcal{M},\mathcal{H}\subset L^2(0,L)$ satisfy the following. $\mathcal{H}$ and $\mathcal{M}$ are two invariant manifolds under the nonlinear KdV flow, with $\mathcal{M}$ of finite dimension and $\mathcal{H}$ of finite codimension. Let $y$ be any solution to \eqref{eq: KdV system-stability-intro} with $\|y^0\|_{L^2(0,L)}<\epsilon_0$. Then we have 
\begin{enumerate}
    \item $T_0\mathcal{H}=H_{\A}(L),T_0\mathcal{M}=M_{\A}(L)$. (Recall that $H_{\A}(L)$ and $M_{\A}(L)$ are defined in Theorem \ref{thm: main theorem linear version});
    \item For any $y^0\in\mathcal{H}$, $E(y(t))\leq e^{-C_0 t}E(y^0),\forall t\in(0, +\infty)$;
    \item For any $y^0\in\mathcal{M}$, $E(y(t))\leq e^{-m_0(L-L_0)^2t}E(y^0), \forall t\in(0, +\infty)$.
\end{enumerate}
\end{thm}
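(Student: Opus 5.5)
We plan to derive Theorem \ref{thm: nonlinear stability result} from the linear result, Theorem \ref{thm: main theorem linear version}, by an invariant-manifold construction of Lyapunov--Perron type. The construction is adapted to the splitting $L^2(0,L)=H_{\A}(L)\oplus M_{\A}(L)$. Let $P_M$ and $P_H$ be the (non-orthogonal) projections of Section \ref{sec: Projections and state space decomposition}, which commute with the semigroup $S(t)=e^{t\A}$. Their norms are bounded uniformly for $L\in I\setminus\{L_0\}$; we read this off from $M_{\A}(L)\sim M(L_0)+\bigO(|L-L_0|)$ together with the uniform transversality of $M(L_0)$ and its complement.

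The linear rates give a dichotomy-type gap. On $H_{\A}(L)$ the decay rate is $C_0$, uniformly in $L$. On the finite-dimensional space $M_{\A}(L)$ the rate is between $m_0|L-L_0|^2$ and $M_0|L-L_0|^2$; since $\A$ restricted to $M_{\A}(L)$ is a matrix, we also get a backward-in-time bound $\|S(-t)P_M\|\le Ce^{M_0|L-L_0|^2t}$. For $|L-L_0|\le\delta$ small we have $M_0|L-L_0|^2<C_0/2$, so there is a spectral gap uniform in $L$. This is exactly the setting of slow and fast (pseudo-center and stable) manifolds.

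For the nonlinear term $N(y)=-y\p_x y$ we use the Kato smoothing/Bourgain-type estimate in $B_T=C([0,T];L^2)\cap L^2(0,T;H^1)$, namely $\|\int_0^t S(t-s)N(y(s))ds\|_{B_T}\le C\|y\|_{B_T}^2$. We localise it with a smooth cutoff $\chi(\|y\|_{L^2}/\epsilon)$ so that the modified nonlinearity is globally Lipschitz with small constant, and work in exponentially weighted spaces $\sup_t e^{\eta t}\|y\|_{B_{[t,t+1]}}$, with $\eta$ chosen inside the gap.

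The construction then proceeds as follows.
\begin{enumerate}
\item \textbf{Fast manifold $\mathcal{H}$.} We solve the fixed point
$$y(t)=S(t)h-\int_t^\infty S(t-s)P_MN(y)ds+\int_0^tS(t-s)P_HN(y)ds$$
for $h\in H_{\A}(L)$, with weight $\eta$ between $M_0|L-L_0|^2$ and $C_0$. The backward integral is controlled by the backward bound on $M_{\A}(L)$. Writing $y(0)=h+\Phi(h)$ with $\Phi(h)\in M_{\A}(L)$, we define $\mathcal{H}$ as the graph of $\Phi$. This gives $\Phi(0)=0$ and $D\Phi(0)=0$ (the nonlinearity is quadratic), hence $T_0\mathcal{H}=H_{\A}(L)$.
\item \textbf{Slow manifold $\mathcal{M}$.} We solve the analogous fixed point on $(-\infty,0]$, or by graph transform, with $m\in M_{\A}(L)$ and $y(0)=m+\Psi(m)$. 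Since $M_{\A}(L)$ is finite-dimensional this is a standard center-unstable-type construction; it needs the forward decay on $H_{\A}(L)$ and the growth bound on $M_{\A}(L)$.
\item \textbf{Invariance and $C^1$.} Invariance follows from uniqueness of the fixed point under time shifts. $C^1$ regularity follows from the fibre contraction principle, which requires the gap condition $2\eta_{\text{slow}}<\eta_{\text{fast}}$; this holds by the smallness of $|L-L_0|$.
\end{enumerate}
The decay statements then follow. On $\mathcal{H}$ the fixed point gives $E(y(t))\le Ce^{-\eta t}E(y^0)$. To obtain the sharp constant $1$ in front, we exploit monotonicity of the energy, $E(0)-E(T)=2\int_0^T|y_x(t,0)|^2dt$, which holds for the nonlinear flow as well. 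Combined with the nonlinear observability inequality on $\mathcal{H}$, this gives $E(T)\le(1-c)E(0)$ and iteration closes the estimate. The observability inequality on $\mathcal{H}$ is inherited from item 1 of Theorem \ref{thm: main theorem linear version} plus a perturbation argument, with the error quadratically small relative to $E$. The same scheme on $\mathcal{M}$ uses the observability constant $K_0/(T|L-L_0|^2)$ from item 2.

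The main obstacle is uniformity in $L$. On $\mathcal{M}$, the nonlinear perturbation error is of order $\epsilon E(0)$ per time unit, while the linear decay gained is only of order $|L-L_0|^2E(0)$. A naive argument would therefore force $\epsilon_0\lesssim|L-L_0|^2$, whereas the statement asks for $\epsilon_0$ independent of $L$. To overcome this we will exploit that on $\mathcal{M}$ the quadratic term restricted to $M(L_0)$ has a structured, non-destabilising contribution. Since $E$ is nonincreasing, the nonlinearity cannot create growth. We therefore plan to compare $y$ on $\mathcal{M}$ with the linear flow on $M_{\A}(L)$ via the reduced ODE $\dot m=\A m+P_MN(m+\Psi(m))$. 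We then verify that $\frac{d}{dt}E\le-2|\p_x y(t,0)|^2$ with $|\p_x y(t,0)|^2\gtrsim|L-L_0|^2E$ uniformly, using $\p_x$ of the slow modes at $x=0$ being of order $|L-L_0|$. We expect this boundary-derivative lower bound on the graph to be the delicate step.
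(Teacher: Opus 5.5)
The genuine gap is item 3, and it is more than a missing estimate.

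\textbf{Your proposed mechanism fails for $N_0\ge 2$.} You want a pointwise bound $|\p_x y(t,0)|^2\gtrsim |L-L_0|^2E(y(t))$ on $\mathcal M$. This already fails for the linear flow on $M_{\A}(L)$. The map $m\mapsto \p_x m(0)$ is a single real linear functional on the $N_0$-dimensional real space $M_{\A}(L)$, so it vanishes on a nontrivial subspace, and for such data $\frac{d}{dt}E=0$ at $t=0$. The rate $(L-L_0)^2$ is encoded in $\Re\zeta_j=-\frac12|\F_{\zeta_j}'(0)|^2\sim-(L-L_0)^2$ (Lemma \ref{lem: basic prop of eigen of bad op}, Proposition \ref{prop: asymptotic expansion for A0}). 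It becomes visible only after integrating in time, as in item 2 of Theorem \ref{thm: main theorem linear version}. This is why the paper works on $\mathcal M$ with modal coordinates $p_j(t)$, obtained by pairing $y(t)$ with $\F_{\zeta_j}(L-\cdot)$. It uses the reduced equation $\dot p_j=-\overline{\zeta_j}p_j-\frac12\int_0^L y^2\,\overline{\p_x\F_{\zeta_j}(L-x)}\,dx+\cdots$ rather than the boundary trace.

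\textbf{For $L_0\in 2\pi\N^*$ no argument can work on a slow manifold of $L$-independent size.} A stationary solution satisfies $y''+y+\frac12y^2=C$. Testing $y'''+y'+yy'=0$ with $y$ and using $y(0)=y(L)=y'(L)=0$ gives $y'(0)=0$. Hence the solution is the orbit of $\frac12y'^2+\frac12y^2+\frac16y^3-Cy=0$ through the origin. For small $C\neq0$ this orbit is closed, with period $P(C)=2\pi-\pi C+\bigO(C^2)$.

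Solving $kP(C)=L$ near $L_0=2k\pi$ gives, for every $L\neq L_0$ close to $L_0$, a stationary state $y_L\approx\frac{L_0-L}{k\pi}(1-\cos x)$ with $\|y_L\|_{L^2(0,L)}\sim|L-L_0|$ (compare the stationary states mentioned in the introduction). It is a bounded complete orbit of the truncated equation, so it lies on your Lyapunov--Perron slow manifold. Its energy is constant, which contradicts item 3 once $|L-L_0|\ll\epsilon_0$.

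Heuristically, the projected quadratic term vanishes at $L=L_0$ (for $L_0=2\pi$, $\int_0^{2\pi}(1-\cos x)^2\sin x\,dx=0$). For $L\neq L_0$ it is of order $|L-L_0|\,p^2$, which balances $\Re\zeta_0\,p\sim-(L-L_0)^2p$ exactly at $|p|\sim|L-L_0|$. So the quadratic contribution is not ``non-destabilising''. For such $L_0$, on $\mathcal M$ you can at best handle data of size $\bigO(|L-L_0|)$, through an analysis of the reduced vector field. The paper's own step $\dot p_j=-\overline{\zeta_j}p_j+\bigO(|L-L_0|^2)$ does not close this either, since that remainder is additive rather than proportional to $|p|$.

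\textbf{Prefactor $1$.} Iterating $E(T)\le(1-c)E(0)$ gives $E(y(t))\le(1-c)^{-1}e^{-\kappa t}E(y^0)$ with $\kappa=|\ln(1-c)|/T$, not prefactor $1$. Prefactor $1$ for all $t>0$ is in fact false. Since $\mathcal H$ is a graph $h\mapsto h+\Phi(h)$ over $H_{\A}(L)$ with $\Phi(h)\in M_{\A}(L)$, it contains smooth nonzero data with $\p_x y^0(0)=0$. For these, $\frac{d}{dt}E(y(t))|_{t=0}=0$, hence $E(y(t))>e^{-C_0t}E(y^0)$ for small $t>0$. The same happens on $M_{\A}(L)$ when $N_0\ge2$, and in the paper's passage from observability to stability. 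Aim instead for $E(y(t))\le Ce^{-C_0t}E(y^0)$.

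\textbf{Cutoff.} A cutoff in $\|y\|_{L^2}$ alone does not make $y\p_xy$ globally small-Lipschitz from $C_tL^2_x\cap L^2_tH^1_x$ into $L^1_tL^2_x$, because the Lipschitz constant involves $\|y\|_{L^2_tH^1_x}$. You need, as in the paper, the a priori bound of Proposition \ref{Glocal well-posedness} and the Lipschitz estimate for the time-$t$ map in Proposition \ref{global Lipschitz}.

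With these repairs, your Lyapunov--Perron construction of both manifolds is a legitimate alternative to the paper's route (truncation, Lipschitz bound on $R(t)$, then \cite{Minh-Wu2004}). It also actually produces the strong stable manifold $\mathcal H$ with its uniform decay rate, which the paper only describes in its outline.
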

Theorem \ref{thm: main theorem linear version} is related to Problem  \ref{OP: C(T,L)-L-limit}.  As reviewed in Section \ref{sec: Review of the literature}, previous stability results focused on establishing the existence of exponential decay but lacked information on quantifying the decay rates.   
This result provides a quantitative characterization of the exponential decay rates.

Furthermore, we analyze the limiting problem when $L$ approaches the critical set $\mathcal{N}$. Through a state space decomposition $L^2(0,L)=H_{\A}(L)\oplus M_{\A}(L)$, we give a detailed description of the decay rates. 
\begin{rem}
We identify the uniform directions, namely, $H_{\A}(L)$, in which the decay rates remain uniform with respect to $L$ throughout the limiting process. Conversely, in $M_{\A}(L)$, we observe a completely different behavior, with the decay rates vanishing as $\mathrm{dist}(L,\mathcal{N})^2$ when $L$ gets close to $\mathcal{N}$.
\end{rem}
Informally, this provides a perspective on understanding the different behaviors of asymptotic stability for $L\notin\mathcal{N}$ and $L\in\mathcal{N}$. 

For the nonlinear version, Theorem \ref{thm: nonlinear stability result} demonstrates the existence of two invariant manifolds, $\mathcal{H}$ and $\mathcal{M}$. As important tools in nonlinear systems, there is a substantial body of literature on center and invariant manifolds in various settings. We refer \cite{Magal-Ruan, Carr, Nakanishi-Schlag-book} to the abstract theory in Banach spaces and also some other important models \cite{Krieger-Schlag-JAMS, Krieger-Nakanishi-Schlag-2014}. 

For our nonlinear KdV system \eqref{eq: KdV system-stability-intro}, leveraging the center manifold results from \cite{Minh-Wu2004}, we prove the existence and smoothness of an invariant manifold for \eqref{eq: KdV system-stability-intro} with $L$ near the critical set $\mathcal{N}$. Consequently, we obtain Theorem \ref{thm: nonlinear stability result} for the original system \eqref{eq: KdV system-stability-intro}. 

\begin{rem}
As we know, for $L\in\mathcal{N}$, we can only expect polynomial decay for the asymptotic stability of nonlinear KdV equations (see, for example, \cite{Chu-Coron-Shang}). However, as we presented in Theorem \ref{thm: nonlinear stability result}, we observe exponential stability for $L\notin\mathcal{N}$ with a quantitative estimate of the decay rates. Through the different characteristics of these two invariant manifolds, we see that in $\mathcal{M}$, the decay rates become progressively smaller, ultimately losing the exponential decay property at the critical length. This provides us with a perspective for understanding the transition from exponential decay to polynomial decay.
\end{rem}

In the proof of Theorem \ref{thm: main theorem linear version}, we introduce a constructive approach, \textit{transition-stabilization method} (see details in Section \ref{sec: A transition-stabilization method} and Section \ref{sec: Part II: Sharp stability analysis}). This approach allows us to explicitly construct the null-control function $u$ for the linearized control problem \eqref{eq: linearized KdV system-control-intro} and obtain quantitative estimates simultaneously. To apply this method, we need to explore the information on the spectrum of the stationary operator.

\subsubsection{Control results}
By applying this constructive approach, we summarize the following result
\begin{thm}\label{thm: control-cost}
Given $L\notin\mathcal{N}$, there exists a constant $\K$, independent of the control time $T>0$, such that $\forall y^0\in L^2(0,L)$, we are able to construct explicitly a function $u(t) \in L^2(0,T)$ such that the solution $y$ to the system \eqref{eq: linearized KdV system-control-intro} satisfies $y(T,\cdot)= 0$, and 
\begin{equation}\label{eq: est-control-L-infty-intro}
 \|u\|_{L^{\infty}(0,T)}\leq \K e^{\frac{\K}{\sqrt{T}}}\|y^{0}\|_{L^2(0,L)},\forall T>0. 
\end{equation}
\end{thm}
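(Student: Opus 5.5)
We plan to prove Theorem \ref{thm: control-cost} with the transition–stabilization scheme announced above: a Lebeau–Robbiano iteration in which each step is a moment-method control for the skew-adjoint operator $\B$, interleaved with free evolution.

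\textbf{Step 1 (transition from $\A$ to $\B$).} Work with the adjoint system. For $L\notin\mathcal N$ we have $M(L)=\{0\}$, so $\B$ has no Type~1 eigenfunctions with $\E_j'(0)=0$. Hence $\beta_j:=\E_j'(0)=\E_j'(L)\neq0$ for every $j\in\Z\setminus\{0\}$. From the asymptotics of Section \ref{sec: Asymptotic behavior close to the critical lengths}, $\lambda_j\sim (2\pi j/L)^3$ and $|\beta_j|\gtrsim |j|$ for large $|j|$, and only finitely many $\beta_j$ need to be bounded from below by an explicit computation. Next, write the Neumann control problem for $\A$ as a problem for $\B$. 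If $y$ solves \eqref{eq: linearized KdV system-control-intro}, then testing against $e^{-\ii\lambda_j(T-t)}\overline{\E_j}$ and integrating by parts gives moment identities of the form
\[
\langle y(T),\E_j\rangle = e^{\ii\lambda_jT}\langle y^0,\E_j\rangle + c\,\overline{\beta_j}\int_0^T e^{\ii\lambda_j(T-t)}\bigl(u(t)-\p_x y(t,0)\bigr)\,dt .
\]
The extra boundary term $\p_xy(t,0)$ comes from the mismatch between the boundary conditions of $D(\A)$ and $D(\B)$. We handle it by treating $v:=u-\p_xy(\cdot,0)$ as the effective control. Once $v$ is chosen, $u=v+\p_xy(\cdot,0)$ is recovered by solving a Volterra-type fixed point equation, and the free KdV smoothing effect bounds $\p_xy(\cdot,0)$ in $L^2$ on short intervals.

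\textbf{Step 2 (low-frequency control via biorthogonal families).} For a cutoff $\mu>0$ and an interval of length $\tau$, construct $v$ killing all modes $|\lambda_j|\le\mu$. Since the gaps $\lambda_{j+1}-\lambda_j\gtrsim j^2$ grow, Müntz/Paley–Wiener type constructions (as in Fattorini–Russell, or Tenenbaum–Tucsnak for cubic spectra) give a biorthogonal family $\{q_j\}$ to $\{e^{\ii\lambda_jt}\}$ on $(0,\tau)$ with $\|q_j\|_{L^\infty}\le Ce^{C|\lambda_j|^{1/3}+C/\tau^{1/2}}$. The exponent $\tau^{-1/2}$ is what the cubic growth yields: the counting function $N(r)\sim r^{1/3}$ gives cost $\exp(C\tau^{-1/(3-1)})$. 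Dividing by $\beta_j$ and summing over $|\lambda_j|\le\mu$ gives
\[
\|v\|_{L^\infty(0,\tau)}\le Ce^{C\mu^{1/3}+C\tau^{-1/2}}\|y^0\|_{L^2}.
\]

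\textbf{Step 3 (stabilization phase).} Let the high modes evolve freely, with $u=0$, on a second interval of length $\tau$. The dissipation $E(0)-E(\tau)=2\int_0^\tau|\p_xy(t,0)|^2\,dt$ combined with the Kato smoothing effect gives decay of the high-frequency component. The difficulty is that $\A$-dynamics mix $\B$-modes. We must show that the $\B$-high-frequency part (frequencies $>\mu$) of the free $\A$-solution decays like $e^{-c\mu^{1/3}\tau}$ or at least $e^{-c\mu^{\alpha}\tau}$, uniformly, while the low part stays bounded. This dissipation estimate for high $\B$-modes under the $\A$-flow is the main obstacle. We would attempt it by a resolvent/parabolic-type estimate for $\A$ on $(\mathrm{Span}\{\E_j\}_{|\lambda_j|\le\mu})^\perp$, exploiting the fact that the eigenvalues of $\A$ satisfy $\Re\,\sigma_j\approx -c\,|\sigma_j|^{1/3}$ asymptotically.

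\textbf{Step 4 (Lebeau–Robbiano iteration).} Split $(0,T)$ into intervals of lengths $\tau_k=c\,2^{-k}T$ and take cutoffs $\mu_k=2^{\gamma k}\mu_0$ with $\gamma$ chosen so that $\mu_k^{1/3}\lesssim\mu_k^{\alpha}\tau_k$, the exponent $\alpha$ being the decay exponent from Step 3. Then the product of control costs $e^{C\mu_k^{1/3}+C\tau_k^{-1/2}}$ and decay factors $e^{-c\mu_k^\alpha\tau_k}$ converges. It remains to choose $\mu_0\sim T^{-3/2}$, so that $\mu_0^{1/3}\sim T^{-1/2}$. Then $y(T)=0$ and
\[
\|u\|_{L^\infty(0,T)}\le\K e^{\K/\sqrt T}\|y^0\|_{L^2(0,L)} .
\]
For large $T$ we simply control on $(0,1)$ and set $u=0$ afterwards, which keeps $\K$ independent of $T$.
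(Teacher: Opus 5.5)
Your moment identity, the fact that $\E_j'(L)\neq 0$ for all $j$ when $L\notin\mathcal{N}$, and the $e^{C/\sqrt{\tau}}$ cost of the biorthogonal family are all correct. The gap is Step~3. The estimate you need there is not merely unproved: in the form Step~4 requires, it is false. The free $\A$-flow does not preserve the span of the high $\B$-modes, and high modes leak into slowly decaying low $\A$-modes with an amplitude that is only polynomially small.

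To see this, let $\psi$ be an eigenfunction of $\A^*$. Concretely, $\psi(0)=\psi(L)=\psi'(0)=0$ and $\psi'''+\psi'=\bar\sigma\psi$; for instance $\psi=\F(L-\cdot)$ with $\F$ an eigenfunction of $\A$. Then $\psi'(L)=-\F'(0)\neq0$ because $L\notin\mathcal{N}$. One integration by parts gives $(\ii\lambda_J-\sigma)\langle \E_J,\psi\rangle=\E_J'(L)\,\overline{\psi'(L)}$, and hence
\[
\|S(t)\E_J\|_{L^2(0,L)}\ \ge\ \frac{|\langle \E_J,S(t)^*\psi\rangle|}{\|\psi\|}\ =\ e^{t\Re\sigma}\,\frac{|\E_J'(L)|\,|\psi'(L)|}{|\ii\lambda_J-\sigma|\,\|\psi\|}\ \gtrsim\ e^{t\Re\sigma}\,J^{-2}.
\]
Take $|\lambda_J|$ just above $\mu$. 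Then on $\overline{\mathrm{Span}}\{\E_j:|\lambda_j|>\mu\}$, which is exactly where your Step~2 leaves the state, $\|S(\tau)\|\gtrsim\mu^{-2/3}$ for every $\tau\in(0,1]$.

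This lower bound is incompatible with any bound $Ce^{-c\mu^\alpha\tau}$ once $\mu^\alpha\tau\gg\log\mu$. That holds in particular in the regime $\mu_k^\alpha\tau_k\gtrsim\mu_k^{1/3}$ that Step~4 requires. A polynomial gain $\mu^{-2/3}$ cannot absorb the factor $e^{C\mu^{1/3}}$ in your Step~2 cost.

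Two further problems with Step~3 and Step~4:
\begin{itemize}
\item Since $\tau_k\to0$, the balance $\mu_k^{1/3}\lesssim\mu_k^{\alpha}\tau_k$ forces $\alpha>1/3$. So even your best-case rate $e^{-c\mu^{1/3}\tau}$ would not close the iteration, and the fallback to a smaller $\alpha$ certainly would not.
\item The spectral heuristic cannot repair this. The subspace $(\mathrm{Span}\{\E_j\}_{|\lambda_j|\le\mu})^\perp$ is not $\A$-invariant. The localization of the spectrum of $\A$ is not known, and its root vectors are not even complete. Lemma~\ref{lem: basic prop of eigen of bad op} also gives infinitely many real eigenvalues, for which $\Re\sigma=-|\sigma|$, not $-c|\sigma|^{1/3}$.
\end{itemize}

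Step~1 also fails as written for $L^2$ data. If $u=v+\p_xy(\cdot,0)$, then $y$ is the $\B$-solution with jump $v$. So $u=\p_xy(\cdot,L)$ contains the trace $\sum_j\langle y^0,\E_j\rangle\E_j'(L)e^{\ii\lambda_jt}$. By Ingham, its $L^2(0,\tau)$-norm is comparable to $\bigl(\sum_j|\E_j'(L)|^2|\langle y^0,\E_j\rangle|^2\bigr)^{1/2}$, an $H^1$-type quantity that is infinite for general $y^0\in L^2$. Moreover:
\begin{itemize}
\item The map $u\mapsto\p_xy[u](\cdot,0)$ has norm one. This follows from the energy identity together with the existence of nonzero $u$ with $y(0)=y(\tau)=0$. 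So no Volterra contraction is available.
\item Appealing to free-flow smoothing is circular, because $y$ depends on $u$.
\item Even after smoothing into $D(\A)$, the mismatch $y'(0)\neq y'(L)=0$ means $\langle y,\E_j\rangle$ decays only like $|y'(0)|\,|\E_j'(L)|/|\lambda_j|$. That costs powers of $\mu$ in $v'$ and $v''$.
\end{itemize}

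The paper is built to avoid both problems, and it never uses high-frequency dissipation of the free $\A$-flow. Its steps are:
\begin{enumerate}
\item Run the free flow on the first half-interval. This brings the state into $D(\A^2)$ at cost $\tau^{-3}$.
\item Subtract modulated functions $c_1h_{\mu}+c_2h_{2\mu}$, so that the remainder lies in $D(\B^2)$.
\item Steer that remainder exactly to zero by the moment method, killing all $\B$-modes and not only the low ones.
\item What is left is $\sum_j c_je^{-\mu_j(t-\tau/2)}h_{\mu_j}$. This is an explicit solution of the Neumann problem with the tiny control $c_je^{-\mu_j(t-\tau/2)}h_{\mu_j}'(L)$. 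It has size $O(e^{-\mu\tau/2})$ at the end of an interval of length $\tau$, at only polynomial cost in $\mu$.
\end{enumerate}
This exponential-in-$\mu$ decay of the compensators is what makes the iteration converge, with $\mu_n\sim Q\,2^{\frac32(n_0+n)}$ and intervals of length $2^{-n}T$.
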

Theorem \ref{thm: control-cost} provides  an answer to Problem \ref{OP: constructive method}. This result is a concise summary, with detailed information on the concrete construction of the control $u$ available in Proposition \ref{prop: iteration schemes}. According to the result given in \cite[Proposition 3.1]{GG08}, one should expect that for KdV equations, the cost of fast controls is bounded by $C e^{\frac{C}{\sqrt{T}}}$ due to the
weights used in the Carleman estimates. In \cite{Lissy-2014}, Lissy proved the optimality of the power of $T$ in $C e^{\frac{C}{\sqrt{T}}}$. However, in \cite{GG08,Lissy-2014}, they did not deal with the right Neumann boundary control as in \eqref{eq: linearized KdV system-control-intro}.  Note that, very recently and independently, Nguyen also studied the fast control problem for this model and obtained the same result \cite{Nguyen-cost}. His proof is based on a novel semigroup approach for unbounded operators that he recently developed in \cite{Nguyen-sta-cocv, Nguyen-cost-SH}, and it also exploits the idea of transitioning between two operators.

\subsection{Organization of the paper}
\subsection*{Notations}
For the sake of clarity and ease of reference, here we introduce some basic notations used throughout the paper (see Table \ref{tab: basic notations}). Our aim is to provide a coherent framework that supports the elucidation of the solutions, properties, and implications of the KdV equations.
\begin{table}[h]
    \centering
    \begin{tabular}{|p{5cm}|p{5cm}|p{5cm}|}
\hline
 \textbf{Parameters and variables} & \textbf{Solutions and special functions}& \textbf{Constants.} \\
\hline
$T>0$: fixed time; & $y,z,w$: solutions to KdV systems & $C,\mathcal{K},r$: constants  \\
\hline
$L$: a non-critical length
& $u,v$: control functions& $C=C(L)$: specify the significant dependence on parameters  \\
\hline
$L_0$: a critical length & $\{\phi_j\}, \{\vartheta_j\}$: the {\it bi-orthogonal family} (see Section \ref{sec: bi-orthogonal family} for details) & $N_0$: dimension of the  unreachable subspace \\
\hline
$(t,x)$: the time variable and the spatial variable  & $h$:  the {\it modulated functions} (defined in Section \ref{sec: preliminary}) & $N_E(L)$: dimension of the elliptic subspace (see Definition \ref{defi: elliptic/hyperbolic index and subspaces}).  \\
\hline
\end{tabular}
    \caption{Basic notations}
    \label{tab: basic notations}
\end{table}
We also want to introduce the following \textbf{stationary KdV operators.} We investigate KdV operators with different boundary conditions, $\mathcal{A}, \mathcal{B}$. 
\begin{enumerate}
    \item We define $\B$ via 
\begin{equation}\label{eq: defi-B-op}
\begin{array}{c}
\B\varphi:=-\varphi'''-\varphi', \text{ with the domain }\\
D(\B):=\{\varphi\in H^3(0,L): \varphi(0)=\varphi(L)=0,\varphi'(0)=\varphi'(L)\}.
\end{array}
\end{equation}
Under this definition, we know that $\B$ is skew-adjoint, with its spectrum $\{\ii\lambda_j\}_{j\in\Z}\subset\ii\R$ and associated eigenfunctions denoted by $\E_j$ (i.e., $\B\E_j=\ii\lambda_j\E_j$, where $\E_j(0)=\E_j(L)=0,\E_j'(0)=\E_j'(L)$).
\item We define $\A$ via
\begin{equation}\label{eq: defi-A-op}
\begin{array}{c}
\A\varphi:=-\varphi'''-\varphi', \text{ with the domain }\\
D(\A):=\{\varphi\in H^3(0,L): \varphi(0)=\varphi(L)=\varphi'(L)=0\}.
\end{array}
\end{equation}
Under this definition, we know that $\A$ is neither self-adjoint nor skew-adjoint. We denote its eigenmodes by $(\zeta,\F_{\zeta})\in\C\times L^2((0,L);\C)$, where  $\A\F_{\zeta}=\zeta\F_{\zeta}$, with $\F_{\zeta}(0)=\F_{\zeta}(L)=\F_{\zeta}'(L)=0$.
\item In particular, by \cite{Rosier-1997}, we know the existence of critical lengths $L_0\in\mathcal{N}$. At the critical length $L_0=2\pi\sqrt{\frac{k^2+kl+l^2}{3}}$, we are interested in a finite number of special eigenmodes of $\A$ that satisfy the following (see \cite{Rosier-1997}):
\begin{equation*}
\begin{array}{c}
\A\G=\ii\lambda_{c}\G, \text{ with }\lambda_{c}=\frac{(2k+l)(k-l)(2l+k)}{3\sqrt{3}(k^2+kl+l^25)^{\frac{3}{2}}},\\
\G(0)=\G(L)=\G'(L)=0.
\end{array}
\end{equation*}
Indeed, we point out that here one also has $\G'(0)=0$.
\end{enumerate}

\section{Strategy of the proof}
\subsection{Outlines}
Our proof can be divided into three stages. 
\begin{enumerate}
    \item \textbf{Stage 1: establish the \textit{transition-stabilization method}. }\label{it: 1}In this stage, we introduce our transition-stabilization approach. In application, we prove a weak form of Theorem \ref{thm: main theorem linear version} and Theorem \ref{thm: control-cost}.
    The main difficulty here is the ``bad" spectral behaviors of the stationary operator $\A$, thus the classic moment method fails. 

   We divide the interval $(0,T)$ into $\cup_{n=1}^{\infty}(T_{n-1},T_n)$, and then construct an iterative scheme where, in each $(T_{n-1},T_n)$, design a control function $u_n$ to achieve a quantitative estimate of the energy decay 
\begin{gather*}
\|y(T_n)\|_{L^2(0,L)}\leq C_n\|y(T_{n-1})\|_{L^2(0,L)}, \textrm{ with } \lim_{t\rightarrow T}\|y(t)\|_{L^2(0,L)}=0.
\end{gather*}

    \item \textbf{Stage 2:  sharp stability analysis. }In this stage, we mainly perform a quantitative and sharp analysis of the exponential stability of the linear KdV system \eqref{eq: linear KdV-stability-intro} and refine the transition-stabilization method to prove Theorem \ref{thm: main theorem linear version}. \\
    The key result in Theorem \ref{thm: main theorem linear version} is the uniform decay rate in $H_{\A}(L)$. Thanks to the HUM in a projective space, it suffices to prove a null controllability result in $H_{\A}(L)$. To refine our transition-stabilization method, the main difficulty here is to characterize a related subspace $H_{\B}(L)$. It is reduced to characterize a finite-dimensional ``quasi-invariant'' subspace $M_{\B}(L)$, which satisfies $L^2(0,L)=M_{\B}(L)\oplus H_{\B}(L)$.
We additionally introduce two transition maps.

    \item \textbf{Stage 3: invariant manifolds for nonlinear KdV. } Based on the results in Theorem \ref{thm: main theorem linear version} concerning the linearized KdV equation, we adapt the invariant manifold arguments for nonlinear KdV to obtain Theorem \ref{thm: nonlinear stability result}. 
\subsection{Stage 1: establish the transition-stabilization method}\label{sec: details-stage-1}
The central part of our proof is constructing a null control for \eqref{eq: linearized KdV system-control-intro}. Concerning this null control problem, there are many classical and typical attempts such as the moment method \cite{Tucsnak-Weiss-2009,LRZ-2010}, Carleman estimates, and harmonic analysis tools \cite{nwx-measure}, etc. For example, the moment method is based on the orthonormal basis and good spectral analysis.  However,  the eigenfunctions of the stationary operator $\A$, as well as generalized eigenfunctions, do not form a basis, since the operator is neither self-adjoint nor skew-adjoint. In fact, they are not even complete in $L^2(0,L)$ (see \cite[Appendix]{xiang-2019-Sicon}). This fact becomes an obstacle to constructing a null control for \eqref{eq: linearized KdV system-control-intro}.

Therefore, we first ``regularize"  $\A$ to a skew-adjoint operator $\B$. This simple observation is the key of the analysis. 
Introduce the auxiliary system associated with the stationary operator $\B$
\begin{equation}\label{eq: z-system-sketch-proof}
\left\{
\begin{array}{lll}
    \p_tz+\p_x^3z+\p_xz=0 , \\
     z(t,0)=z(t,L)=0, \\
     \p_xz(t,L)-\p_xz(t,0)=v(t),\\
     z|_{t=0}=z^0.
\end{array}
\right.
\end{equation}
Thanks to the good spectral properties of $\B$ and the moment method, we can construct the control function $v$ explicitly steering any initial state $z^0$ to $0$. We expect that if $y^0=z^0$, taking $u(t)=\p_xz(t,0)+v(t)$ could help us drive $y^0$ to $0$. But this simple direct approach is not enough 
\begin{equation*}
    z^0=y^0\in L^2(0,L)\not\Rightarrow \partial_x z(t, L)=\p_xz(t,0)+v(t)\in L^2(0,T).
\end{equation*}
Only the transition from $\A$ to $\B$ does not directly solve the problem. 
Then we are inspired by the strategy of iteratively decreasing the energy, as discussed in works like  Lebeau-Robbiano strategy \cite{Lebeau-Robbiano-1995}, and the finite time stabilization \cite{CoronNguyen, Coron-Xiang-2021, Xiang-scl, xiang2020quantitative},
\begin{equation*}
\text{Quantitative enhanced energy dissipation} \xrightarrow{\text{fast iteration}} \text{Null controllability}.
\end{equation*}
This becomes the \textit{transition-stabilization method}, which incorporates various techniques such as smoothing effects, the moment method, and high-frequency energy dissipation. 
Specifically, when focusing on the constructive approach in $(0, T_0)$, this \textit{transition-stabilization method} is composed by four steps. The package during the time interval  $(0,T_0)$ works as follows:\\

\begin{tikzpicture}[
    scale=0.9,
    transform shape,
    auto,
    block/.style={
        rectangle, draw, 
        text width=9em, text centered, rounded corners, minimum height=4em
    },
    arrow/.style={
        -{Latex[width=2mm,length=3mm]}, thick
    },
    Arrow/.style={
        {Latex[width=2mm,length=3mm]}-{Latex[width=2mm,length=3mm]}, thick
    }
]

\node (initial)[block, text width=3em] {$y_0$ };
\node (kato) [block, right= 3cm of initial] {$y(\frac{T_0}{2})\in D(\A^2)$};
\node (z) [block, above=of kato] {$z^0\in D(\B^2)$};
\node (h)[block, below=of kato]{$c^1 h_{\mu}+c^2 h_{2\mu}$};
\node (0)[block, right= 4.4cm of z,text width=2em]{0};
\node (size)[block,right=3cm of h]{$\bigO(\|y^0\|_{L^2}e^{-\mu\frac{T_0}{2}})$};
\node (final)[block, right= 3cm of kato]{$y(T_0)$};

\draw [arrow] (initial) -- (kato) node[midway, above] {Smoothing effects} node[midway, below] {Step 1};
\draw [arrow] (kato) -- (z) node[midway, left] {Step 2};
\draw [arrow] (kato) -- (h) node[midway, left] {Step 2};
\draw [arrow] (z)--(0) node[midway, above] {Constructive null-control} node[midway, below] {Step 3};
\draw [arrow] (h)--(size) node[midway, above] {Free flow} node[midway, below] {Step 3};
\draw[arrow](0)--(final) ;
\draw[arrow](size)--(final);
\end{tikzpicture}\\

\noindent\textbf{Step 1: Regularization. }Based on a priori estimates of the intermediate system \eqref{eq: z-system-sketch-proof} (details in Section \ref{sec: estimates for the control}), we obtain
\begin{gather*}
    z^0\in L^2(0,L)\Rightarrow \p_xz(t,0)+v(t)\in H^{-2}(0,T).
\end{gather*}
Notice that improving the regularity of $z^0$ can ensure $\p_xz(t,0)+v(t)\in L^2$. More precisely,
\begin{gather*}
    z^0\in D(\B^2)\subset H^6(0,L)\Rightarrow \p_xz(t,0)+v(t)\in L^2(0,T).
\end{gather*}
Meanwhile, recall the smoothing effects of free KdV flow w.r.t. $\A$ (see Proposition \ref{prop: Xiang-Krieger gain 1}). This motivates us to split $[0,T_0]=[0,\frac{T_0}{2}]\cup[\frac{T_0}{2},T_0]$. In $[0,\frac{T_0}{2}]$, using free KdV flow and due to the smoothing effects, we obtain $y(\frac{T_0}{2})\in D(\A^2)$ satisfying
\[
\|y(\frac{T_0}{2})\|_{H^6(0,L)}\leq \frac{C}{T_0^3}\|y^0\|_{L^2};
\]
\noindent\textbf{Step 2: Transition. }After Step 1, we observe that $y(\frac{T_0}{2})\in D(\A^2)\subset H^6(0,L)$ is almost a well-prepared initial state for \eqref{eq: z-system-sketch-proof} except that the boundary conditions do not match. Hence, we need to transition from $D(\A^2)$ to $ D(\B^2)$, with a focus on reconciling the differing boundary conditions between $\A$ and $\B$. Employ the following decomposition:
\begin{equation*}
        y(\frac{T_0}{2})=z^0+c_1h_{\mu}+c_2h_{2\mu},
    \end{equation*}
where $z^0\in D(\B^2)$ and $h_{\mu}$ and $h_{2\mu}$ are two modulated functions defined in \eqref{eq: static KdV with nonvanishing boundary condition}. $h_{\mu}$ and $h_{2\mu}$ are designed to compensate for the boundary difference. Then consider
\begin{equation*}
\left\{
\begin{array}{l}
    \p_t z+\p_x^3z+\p_xz=0 \\
     z(t,0)=z(t,L)=0\\
     \p_xz(t,L)-\p_xz(t,0)=v(t)\\
     z\left|_{t=\frac{T_0}{2}}\right.=z^0
\end{array}
\right.
\end{equation*}
\begin{equation*}
\begin{array}{cc}
\left\{
\begin{array}{l}
    \p_t z_{\mu}+\p_x^3z_{\mu}+\p_xz_{\mu}=0 \\
     z_{\mu}(t,0)=z_{\mu}(t,L)=0\\
     \p_xz_{\mu}(t,L)=c_1e^{-\mu(t-\frac{T_0}{2})}h'_{\mu}(L)\\
     z_{\mu}\left|_{t=\frac{T_0}{2}}\right.=c_1h_{\mu}
\end{array}
\right. \;\;\;\; \;\;\;\;\;\;\;\; \;\;\;\; \;\;\;\; &\left\{
\begin{array}{l}
    \p_t z_{2\mu}+\p_x^3z_{2\mu}+\p_xz_{2\mu}=0 \\
     z_{2\mu}(t,0)=z_{2\mu}(t,L)=0\\
     \p_xz_{2\mu}(t,L)=c_2e^{-2\mu(t-\frac{T_0}{2})}h'_{2\mu}(L)\\
     z_{2\mu}\left|_{t=\frac{T_0}{2}}\right.=c_2h_{2\mu}
\end{array}
\right.
\end{array}
\end{equation*}
and let $y=z+z_{\mu}+z_{2\mu}$. We easily verify that $y$ is a solution to \eqref{eq: linearized KdV system-control-intro} in $[\frac{T_0}{2},T_0]$ with 
\begin{equation}\label{eq: form-u-control}
u(t)=\p_xz(t,L)+c_1e^{-\mu(t-\frac{T_0}{2})}h'_{\mu}(L)+c_2e^{-2\mu(t-\frac{T_0}{2})}h'_{2\mu}(L).  
\end{equation}

\noindent\textbf{Step 3: Dissipation. }For the modulated compensate terms, the free solutions $z_{\mu}=c_1e^{-\mu(t-\frac{T_0}{2})}h_{\mu}$ and $z_{2\mu}=c_2e^{-2\mu(t-\frac{T_0}{2})}h_{2\mu}$ have fast decay. Therefore, we only need to focus on the construction of $v$ and $z$. For \eqref{eq: z-system-sketch-proof}, we use the moment method to construct $v$ explicitly steering $z^0$ to $0$ with quantitative energy estimates\footnote{Using the moment method to construct a null control is a standard process, as outlined in \cite[Appendix]{Beauchard-Laurent}. However, deriving quantitative estimates is a more delicate issue. For further details, we refer to Section \ref{sec: A transition-stabilization method}.}. Then we use $u(t)$ in \eqref{eq: form-u-control} as a control such that the solution $y$ to \eqref{eq: linearized KdV system-control-intro} satisfies that $y|_{t=0}=y^0$ and $y|_{t=T_0}=y(T_0)=c_1e^{-\mu\frac{T_0}{2}}h_{\mu}+c_2e^{-\mu T_0}h_{2\mu}$ with
\begin{gather*}
\|y(T_0)\|_{L^2(0,L)}\leq C\frac{e^{-\mu\frac{T_0}{2}}}{T_0^3}\|y^0\|_{L^2},\\
\|u\|_{L^2(0,T_0)}\leq \frac{C}{|L-L_0|}\frac{e^{\frac{C}{\sqrt{T}}}\mu^{\frac{5}{2}}+e^{-\frac{\mu^{\frac{1}{3}}}{2}L}}{T_0^3}\|y^0\|_{L^2(0,L)}.
\end{gather*}

\noindent\textbf{Step 4: Iteration. }Repeat the preceding three steps in each interval $[T_{n-1},T_n]$. After a good choice of the parameters $\mu_n$ and $T_n$, we conclude that $\|y(T)\|_{L^2}=\lim_{n\rightarrow\infty}\|y(T_n)\|_{L^2}=0$.
\begin{rem}
We emphasize the following two points.
\begin{enumerate}
    \item This transition-stabilization scheme is specifically designed to address the null control problem both quantitatively and constructively. With the help of quantitative estimates, we can track the asymptotic behavior as $T\rightarrow0^+$ and thereby prove Theorem \ref{thm: control-cost}.
    \item Conversely, tracking the asymptotic behavior as $L\rightarrow\mathcal{N}$ presents a more complex challenge. A weak version of Theorem \ref{thm: main theorem linear version} can be proven directly using the transition-stabilization method. However, proving the sharp version of the theorem requires a more delicate analysis on the eigenmodes of $\B$ (see Section 7).
\end{enumerate}
\end{rem}
\subsection{Stage 2:  sharp stability analysis}\label{sec: details-stage-2}
In this stage, we prove Theorem \ref{thm: main theorem linear version}.  More precisely, we characterize two subspaces $M_{\A}(L)$ and $H_{\A}(L)$ such that $L^2(0,L)=M_{\A}(L)\oplus H_{\A}(L)$. In $H_{\A}(L)$, we prove exponential stability with a uniform decay rate for $L$ near $L_0$, while in $M_{\A}(L)$, exponential stability holds with a decay rate$\sim|L-L_0|^2$.

We aim to prove the sharp quantitative exponential stability for \eqref{eq: linear KdV-stability-intro} by establishing the corresponding observability. Since $M_{\A}(L)$ is finite-dimensional, we perform a direct approach to prove observability in $M_{\A}(L)$. Hence, we concentrate on the uniform observability in $H_{\A}(L)$. Thanks to the duality argument (see Section \ref{sec: preliminary}), establishing observability is reduced to a null controllability problem for \eqref{eq: linearized KdV system-control-intro} in $H_{\A}(L)$. 

Naturally, we expect to apply our transition-stabilization method to solve this null control problem. Recall that in the transition-stabilization package, 
\begin{gather*}
\forall y^0\in L^2(0,L), \textrm{ we construct a transition from }D(\A^2) \textrm{ to }D(\B^2).
\end{gather*}
In analog, we expect to find a proper subspace $H_{\B}(L)$ as well as a transition for $y^0\in H_{\A}(L)$:
\begin{gather*}
 D(\A^2)\cap H_{\A}(L) \longrightarrow  D(\B^2)\cap H_{\B}(L).
\end{gather*}
It is natural to consider $L^2(0,L)=M_{\B}(L)\oplus H_{\B}(L)$. At $L_0\in\mathcal{N}$, it is well-known that $M(L_0)$ is the unreachable space for \eqref{eq: linearized KdV system-control-intro}. As $L\rightarrow L_0$, we expect the finite-dimensional subspaces $M_{\A}(L)$ and $M_{\B}(L)$ as perturbations of $M(L_0)$. However, we will face three main questions:
\begin{itemize}
    \item how to characterize the subspace $M_{\B}(L)$?
    \item   how to define the transition on $D(\A^2)\cap H_{\A}(L)$?
    \item how to construct the null-control in the subspace $H_{\A}(L)$?
\end{itemize}
 A key aspect of deriving uniform quantitative estimates lies in characterizing the quasi-invariant subspace $M_{\B}(L)$. For $\A$,  due to its spectral properties, $M_{\A}(L)$ is spanned by eigenfunctions of $\A$ and can be approximated by $M(L_0)+\bigO(|L-L_0|)$. However, the situation for $\B$ is considerably more complex, as it turns out to depend on the novel classification of the critical lengths set $\mathcal{N}$ \cite{NX}.  For a detailed classification of the eigenmodes of $\B$ and the precise definition of  \textit{quasi-invariant subspace} $M_{\B}(L)$, we refer to  Section \ref{sec: Sharp spectral analysis and classification of critical lengths} and Section \ref{sec: quasi-invariant subsapce}. \\

The second question is relatively straightforward. Similar to the approach discussed in Section \ref{sec: details-stage-1}, we expect the transition can be achieved through a combination of modulated functions, $h_{\mu}$. We introduce $\mathcal{T}_c$, defined in Section \ref{sec: Projections and state space decomposition}, as follows. For $\forall y^0\in H_{\A}(L)\cap D(\A^2)$, we construct $z^0$ by
\begin{equation}\label{eq: 1-proj-intro}
    z^0=y^0-\sum_{j=1}^{N_0+2}c_jh_{\mu_j}\in H_{\B}(L)\cap D(\B^2),
\end{equation}
which achieves the transition between $y^0\in H_{\A}\rightarrow z^0\in H_{\B}$. It remains to determine the exact modulated functions and to characterize the coefficients. \\

However, even with a well-prepared quasi-invariant subspace $M_{\B}(L)$ and a transition map from $H_{\A}(L)$ to $H_{\B}(L)$, these elements are not sufficient to construct a null control in $H_{\A}(L)$. It is still necessary to introduce {\it another transition map}: $\mathcal{T}_{\varrho}$. \\
Indeed, utilizing the same transition-stabilization package in $[0,T_0]$, after a transition $\mathcal{T}_c$, we construct a control such that the final state of $z$ is $0$. Therefore, using $u(t)=\p_xz(t,L)+$``modulated part", we obtain
\[
y^0\in H_{\A}(L)\xrightarrow[]{u(t)}y(T_0)=\sum_{j=1}^{N_0+2}c_je^{-\mu_j\frac{T_0}{2}}h_{\mu_j}\notin H_{\A}(L).
\]
This disrupts the interaction scheme.  As a consequence, we need to introduce an additional transition map $\mathcal{T}_{\varrho}$ to adjust our method. Specifically, this transition map $\mathcal{T}_{\varrho}$ generates a well-chosen final target $z^{T_0}=\sum_{m}\varrho_m E_m$, where $\{E_m\}$ are $N_0$ well-prepared directions in $H_{\B}(L)$ (as detailed in Section \ref{sec: Projections and state space decomposition}), such that
\[
z^{T_0}+\sum_{j=1}^{N_0+2}c_je^{-\mu_j T_0}h_{\mu_j}\in H_{\A}(L),
\]
with $c_j$ being the exactly same as \eqref{eq: 1-proj-intro}. This transition is specifically designed to bring the final state $y(T_0)$ back into the same space as the initial state, $H_{\A}(L)$, through the adjustment of $z^{T_0}$. 
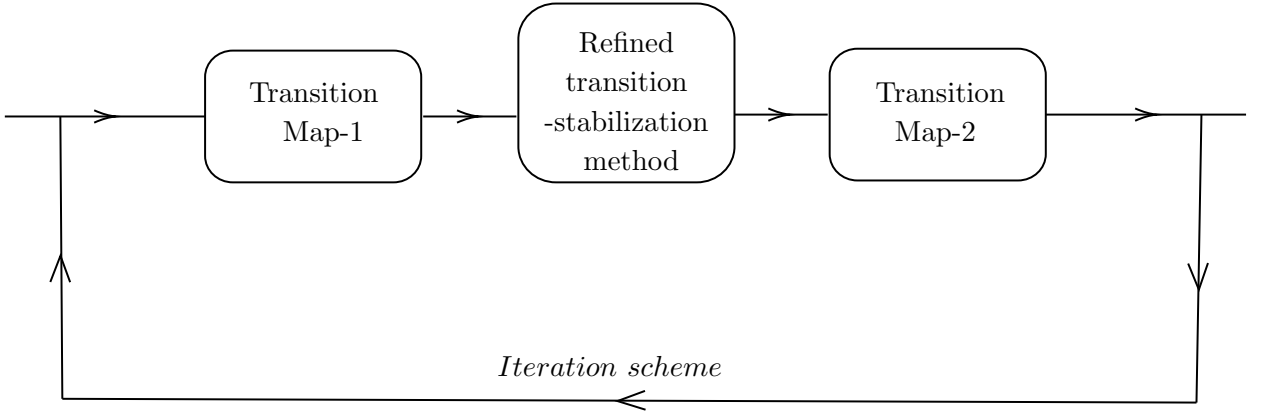
\begin{figure}[h]
    \centering
\tikzset{every picture/.style={line width=0.75pt}} 

\begin{tikzpicture}[x=0.75pt,y=0.75pt,yscale=-0.95,xscale=1
]

\draw   (106,45) .. controls (106,37.27) and (112.27,31) .. (120,31) -- (201,31) .. controls (208.73,31) and (215,37.27) .. (215,45) -- (215,87) .. controls (215,94.73) and (208.73,101) .. (201,101) -- (120,101) .. controls (112.27,101) and (106,94.73) .. (106,87) -- cycle ;
\draw    (216,66) -- (242,66) ;
\draw [shift={(244,66)}, rotate = 180] [color={rgb, 255:red, 0; green, 0; blue, 0 }  ][line width=0.75]    (10.93,-3.29) .. controls (6.95,-1.4) and (3.31,-0.3) .. (0,0) .. controls (3.31,0.3) and (6.95,1.4) .. (10.93,3.29)   ;
\draw   (264,25) .. controls (264,14.51) and (272.51,6) .. (283,6) -- (354,6) .. controls (364.49,6) and (373,14.51) .. (373,25) -- (373,82) .. controls (373,92.49) and (364.49,101) .. (354,101) -- (283,101) .. controls (272.51,101) and (264,92.49) .. (264,82) -- cycle ;
\draw    (244,66) -- (263,66) ;
\draw    (5,66) -- (59,66) ;
\draw [shift={(61,66)}, rotate = 180] [color={rgb, 255:red, 0; green, 0; blue, 0 }  ][line width=0.75]    (10.93,-3.29) .. controls (6.95,-1.4) and (3.31,-0.3) .. (0,0) .. controls (3.31,0.3) and (6.95,1.4) .. (10.93,3.29)   ;
\draw    (61,66) -- (106,66) ;
\draw    (530,65) -- (584,65) ;
\draw [shift={(586,65)}, rotate = 180] [color={rgb, 255:red, 0; green, 0; blue, 0 }  ][line width=0.75]    (10.93,-3.29) .. controls (6.95,-1.4) and (3.31,-0.3) .. (0,0) .. controls (3.31,0.3) and (6.95,1.4) .. (10.93,3.29)   ;
\draw    (586,65) -- (631,65) ;
\draw    (373,65) -- (399,65) ;
\draw [shift={(401,65)}, rotate = 180] [color={rgb, 255:red, 0; green, 0; blue, 0 }  ][line width=0.75]    (10.93,-3.29) .. controls (6.95,-1.4) and (3.31,-0.3) .. (0,0) .. controls (3.31,0.3) and (6.95,1.4) .. (10.93,3.29)   ;
\draw   (421,44) .. controls (421,36.27) and (427.27,30) .. (435,30) -- (516,30) .. controls (523.73,30) and (530,36.27) .. (530,44) -- (530,86) .. controls (530,93.73) and (523.73,100) .. (516,100) -- (435,100) .. controls (427.27,100) and (421,93.73) .. (421,86) -- cycle ;
\draw    (401,65) -- (420,65) ;
\draw    (33,66) -- (34,216) ;
\draw    (34,216) -- (606,218) ;
\draw    (608.5,65) -- (606,218) ;
\draw   (611.8,143.86) -- (607.2,158) -- (601.8,144.14) ;
\draw   (328.14,221.8) -- (314,217.2) -- (327.86,211.8) ;
\draw   (27.94,153.95) -- (33.06,140) -- (37.94,154.05) ;

\draw (269,15.4) node [anchor=north west][inner sep=0.75pt]    {$ \begin{array}{c}
 \text{Refined}\\
 \text{transition}\\
\text{-stabilization}\\
\text{ method}
\end{array}$};
\draw (436,42) node [anchor=north west][inner sep=0.75pt]    {$ \begin{array}{l}
\text{Transition}\\
\ \ \text{Map-2}
\end{array}$};
\draw (252,191.4) node [anchor=north west][inner sep=0.75pt]    {$Iteration\ scheme$};
\draw (120,42) node [anchor=north west][inner sep=0.75pt]  [font=\normalsize] [align=left]{$ \begin{array}{c}
\text{Transition}\\
\ \ \text{Map-1}
\end{array}$};
\end{tikzpicture}
    \caption{Refined transition-stabilization method}
        \label{fig: refined method}
\end{figure}
Consequently, in this revised transition-stabilization method, we concentrate on the following six steps:

\noindent\textbf{Step 0: Characterization of $M_{\B}(L)$. }At the beginning, we shall construct our quasi-invariant subspaces $M_{\B}(L)$ and $H_{\B}(L)$. 

\noindent\textbf{Step 1: Regularization. }Same as before, in $[0,\frac{T_0}{2}]$, due to the smoothing effects, for $y^0\in H_{\A}(L)$, we obtain $y(\frac{T_0}{2})\in H_{\A}(L)\cap D(\A^2)$.

\noindent\textbf{Step 2: Transition map-1. } After Step 1, using the first transition projection, we obtain
\begin{equation*}
         z^0=y(\frac{T_0}{2})-\sum_{j=1}^{N_0+2}c_jh_{\mu_j}\in H_{\B}(L)\cap D(\B^2),
\end{equation*}
and we also prove the coefficients $\{c_j\}_{1\leq j\leq N_0+2}$ are uniformly bounded. As before, we consider
\begin{gather*}
    y=z+\sum_{j=1}^{N_0+2}z_{\mu_j},\;
    u(t)=\p_x z(t,L)+\sum_{j=1}^{N_0+2}c_j e^{-\mu_j(t-\frac{T_0}{2})}h'_{\mu_j}(L);
\end{gather*}

\noindent\textbf{Step 3: Stabilization. }For the modulated terms, we use the free solutions $c_je^{-\mu_j(t-\frac{T_0}{2})}h_{\mu_j}$ for $1\leq j\leq N_0+2$. Therefore, at $t=T_0$,
\[
\|\sum_{j=1}^{N_0+2}c_je^{-\mu_j(t-\frac{T_0}{2})}h_{\mu_j}\|_{L^2}\sim \bigO(e^{-\min{\mu_j}\frac{T_0}{2}})\|y^0\|_{L^2}.
\]
Hence, it suffices to focus on the construction of $z$ and $v$.

\noindent\textbf{Step 4: Transition map-2. } After that, using the second transition projection, we obtain a well-prepared  final state $z^{T_0}=\sum_{m}\varrho_m E_m$ for \eqref{eq: z-system-sketch-proof} such that
\begin{equation*}
         z^{T_0}+\sum_{j=1}^{N_0+2}c_je^{-\mu_j T_0}h_{\mu_j}\in H_{\A}(L), \textrm{ with estimates }\varrho_m=\bigO(e^{-C T_0}).
\end{equation*}
where $c_j$ are the same as \eqref{eq: 1-proj-intro}. Using a revised moment method, we construct $v$ explicitly steering $z^0$ to $z^{T_0}$. \\
Combining Step 3 and Step 4, in $[0,T_0]$, we use
\(
u(t)=\p_xz(t,L)+\sum_{j=1}^{N_0+2}c_je^{-\mu_j(t-\frac{T_0}{2})}h'_{\mu_j}(L)
\)
as a control function such that the solution $y=z+\sum_{j=1}^{N_0+2}z_{\mu_j}$ satisfies that $y|_{t=0}=y^0\in H_{\A}(L)$ with
\begin{gather*}
y|_{t=T_0}=z^{T_0}+\sum_{j=1}^{N_0+2}c_je^{-\mu_j T_0}h_{\mu_j}\in H_{\A}(L),\\
\|y|_{t=T_0}\|_{L^2(0,L)}\leq C\frac{e^{-\mu_0\frac{T_0}{2}}}{T_0^3}\|y^0\|_{L^2},\\
\|u\|_{L^2(0,T_0)}\leq \frac{e^{\frac{C}{\sqrt{T}}}\mu_0^{4}+e^{-\frac{\mu_0^{\frac{1}{3}}}{4}L}}{T_0^3}\|y^0\|_{L^2(0,L)};
\end{gather*}
with a constant $\mu_0>0$.
\begin{rem}\label{rem: rho}
Note that we are always able to choose good parameters $\{\varrho_m\}$ such that $z^{T_0}$ is a real-valued function. Later in Section \ref{sec: classification quasiinvariant space} and Section \ref{sec: Part II: Sharp stability analysis}, we shall see that the directions $E_m$ are just the real parts and imaginary parts of a special linear combination of eigenfunctions of $\B$. More precisely, 
\begin{itemize}
    \item If $N_0=1$, this means that we are in a case $\mathcal{N}^1$ (See Definition \ref{def:new:classification}). Then, $z^{T_0}\sim2\ii\sin{kx}$, we refer to Subsection \ref{sec: Around Type I unreachable pair} for more details. Here $z^{T_0}$ is approximated by the Type 2 eigenfunctions of $\B$ (See Section \ref{sec: Eigenvalues and eigenfunctions at the critical length}).
    \item If $N_0\geq 3$ odd, then we are in  $\mathcal{N}^3$, apart from the direction $\sin{kx}$, we refer to Subsection \ref{sec: Around Type II unreachable pair} for construction of other directions and Subsection \ref{sec: limiting analysis on different types}. Here, roughly speaking, $E_m$ is approximated by the real parts and imaginary parts of Type 2 eigenfunctions of $\B$.
    
    We take an example of $L_0=14\pi$ with $N_0=3$. There are three Type 2 eigenfunctions $\widetilde{\G}_{\pm1}$, and $\sin{7x}$, which can be approximated by 
    \begin{equation*}
    \widetilde{\G}_1=a_2\E_2+b_2\E_3+\bigO(|L-14\pi|), 2\ii\sin{7x}=\E_{-1}-\E_1+\bigO(|L-14\pi|).
    \end{equation*}
Then, in this situation, we have $\{E_1,E_2,E_3\}=\{\Re(a_2\E_2+b_2\E_3),\Im(a_2\E_2+b_2\E_3),\E_{-1}-\E_1\}$.
    \item If $N_0$ is even, we have two different cases: $\mathcal{N}^2$ (see more details in Subsection \ref{sec: Around Type III unreachable pair}) and $\mathcal{N}^3$ (see more details in Subsection \ref{sec: Around Type II unreachable pair}). For $\mathcal{N}^2$ case, $E_m$ is approximated by eigenfunctions in the hyperbolic regime (see Proposition \ref{prop: asymptotic eigenvalues} and Subsection \ref{sec: limiting analysis on different types}); while $\mathcal{N}^2$ case, $E_m$ is approximated by Type 2 eigenfunctions of $\B$.

    We only take $L_0=2\pi\sqrt{7}$ as an example. There are two Type 2 eigenfunctions $\widetilde{\G}_{\pm1}$, which can be approximated by 
    \begin{equation*}
\widetilde{\G}_1=a_2\E_1+b_2\E_2+\bigO(|L-14\pi|).
    \end{equation*}
Then, in this situation, we have $\{E_1,E_2\}=\{\Re(a_2\E_1+b_2\E_2),\Im(a_2\E_1+b_2\E_2)\}$.
\end{itemize}
\end{rem}

\noindent\textbf{Step 5: Iteration. }Repeat the preceding three steps in each interval $[T_{n-1},T_n]$, and we obtain the final target by $\|y(T)\|_{L^2}=\lim_{n\rightarrow\infty}\|y(T_n)\|_{L^2}=0$.
\subsection{Stage 3: invariant manifolds for nonlinear KdV}
In this stage, we prove Theorem \ref{thm: nonlinear stability result}. Our proof builds upon the framework established by Chu, Coron, and Shang \cite{Chu-Coron-Shang}, but adapts their approach for $L$ near $L_0\in\mathcal{N}$ but $L\not\in\mathcal{N}$. The proof proceeds in two main steps:
\begin{enumerate}
    \item \textit{Construction of Invariant Manifolds:} Inspired by \cite{Chu-Coron-Shang}, after a smooth truncation to ensure the nonlinear perturbation is globally Lipschitz near the origin, we decompose $L^2(0,L)=H\oplus M$, where $H$ ( or $M$) is infinite-dimensional (or finite-dimensional) stable subspace. Then, we define a stable manifold $\mathcal{H}$ (tangent to $H$) and a finite-dimensional invariant manifold $\mathcal{M}$ (tangent to $M$). For initial data on $\mathcal{H}$, the dynamic is governed by the strongly dissipative linear semigroup, yielding a fast, uniform exponential decay bounded by $e^{-C_0 t}$.
    
    \item \textit{Simplified Dynamics on $\mathcal{M}$:} Unlike the critical case ($L=L_0$) studied in \cite{Chu-Coron-Shang}, our case retains a weak linear exponential dissipation proportional to $(L-L_0)^2$. Combined with the asymptotic behaviors $\bigO(|L-L_0|^2)$ of the eigenfunctions, this simplification allows us to directly close the energy estimate using a differential inequality of the form
    $$
    \frac{d}{dt}E \lesssim -(L-L_0)^2 E+\bigO(|L-L_0|^3),
    $$
    rigorously establishing the $e^{-m_0(L-L_0)^2 t}$ exponential decay in $\mathcal{M}$.
\end{enumerate}
\end{enumerate}

\section{Preliminary}\label{sec: preliminary}
In this section, we review some basic properties of the KdV system. There is a large literature on this classic topic. Here we mainly refer to \cite{coron-book, chapouly-2009, Krieger-Xiang-2021, Rosier-1997}.

\subsection*{Smoothing effects}\label{sec: smoothing effects}
In this part, we would like to introduce the smoothing effects related to the operator $\A$ (Recall its definition in \eqref{eq: defi-A-op}). 
We denote by $S(t)$ the corresponding semi-group of $\A$. Now we focus on the smoothing effects of the linear KdV flow generated by $S(t)$. In Rosier's paper \cite[Proposition 3.2]{Rosier-1997}, he observed the following Kato type regularizing effects of the solution: $ \|y\|_{L^2((0,T);H^1(0,L))}\lesssim \|y^0\|_{L^2(0,L)}$.

In fact, we are able to develop several properties concerning the smoothing effects of $S(t)$. Due to some compatibility issues, we need to define the following adapted Sobolev spaces (one can also see \cite{Krieger-Xiang-2021})
\begin{equation*}
\begin{aligned}
H^0_{(0)}(0,L)&:=L^2(0,L),\; \;\;
H^1_{(0)}(0,L):=\{\varphi\in H^1(0,L):\varphi(0)=\varphi(L)=0\};\\
H^i_{(0)}(0,L)&:=\{\varphi\in H^i(0,L):\varphi(0)=\varphi(L)=\varphi'(L)=0\}, \textrm{ for } i= 2,3;\\
H^4_{(0)}(0,L)&:=\{\varphi\in H^4(0,L)\cap H^3_{(0)}(0,L):(\A\varphi)(0)=(\A\varphi)(L)=0\};\\
H^j_{(0)}(0,L)&:=\{\varphi\in H^j(0,L)\cap H^3_{(0)}(0,L):(\A\varphi)(0)=(\A\varphi)(L)=(\A\varphi)'(L)=0\}, \textrm{ for } j= 5,6.
\end{aligned}    
\end{equation*}
with the same Sobolev norm: $\|\varphi\|^2_{H^k(0,L)}:=\int_0^L\left(|\varphi^{(k)}(x)|^2+|\varphi(x)|^2\right)dx$.
\begin{prop}{\cite[Lemma 2.2]{Krieger-Xiang-2021}}\label{prop: Xiang-Krieger gain 1}
For $k\in\{0,1,2,3,4,5,6\}$, if $y^0\in H^k_{(0)}(0,L)$, then the flow $S(t)y^0$ stays in $C([0,T];H^k_{(0)}(0,L))\cap L^2((0,T);H^{k+1}_{(0)}(0,L))$. Moreover, there exist constants $C^k_0(L)$ and $\Tilde{C}_k(L)$ independent of the choice of the initial data $y^0\in H^k_{(0)}(0,L)$ and $T\in(0,L]$ such that
\begin{gather*}
 \|S(t)y^0\|_{C([0,T];H^k_{(0)}(0,L))}+ \|S(t)y^0\|_{L^2([0,T];H^{k+1}_{(0)}(0,L))} \leq C^k_0\|y^0\|_{H^k_{(0)}(0,L)}, \\
\|S(t)y^0\|_{H^k_{(0)}(0,L))} \leq \frac{\Tilde{C}_k(L)}{t^{\frac{k}{2}}}\|y^0\|_{L^2(0,L)},  \forall t\in (0,T],T\in (0, L].
\end{gather*}
\end{prop}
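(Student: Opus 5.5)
The plan is to prove the two estimates of Proposition \ref{prop: Xiang-Krieger gain 1} by energy (multiplier) methods at the lowest levels and then lift to higher $k$ using the equation $\p_t y=\A y$. The adapted spaces $H^k_{(0)}$ are built so that $\A$ commutes with the flow and the compatibility conditions propagate.

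\textbf{The base cases $k=0$ and $k=1$.} For $k=0$, I multiply the equation by $y$ and integrate. Using $y(0)=y(L)=\p_xy(L)=0$ gives
\[
\frac{d}{dt}\|y\|_{L^2}^2=-|\p_xy(t,0)|^2\le 0,
\]
so the $C([0,T];L^2)$ bound is immediate. Next I multiply by $x\,y$, which is Rosier's Kato multiplier. This yields
\[
3\int_0^T\!\!\int_0^L|\p_xy|^2\le \int_0^L x|y^0|^2+\int_0^T\!\!\int_0^L|y|^2,
\]
and hence the $L^2H^1$ bound, with constants uniform for $T\le L$. For $k=1$, I would rather not do a direct $H^1$ energy estimate, which produces bad boundary terms. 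Instead I interpolate between $k=0$ and $k=3$.

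\textbf{Lifting to $k=3$ and $k=6$.} For $y^0\in H^3_{(0)}=D(\A)$, set $w=\p_ty=\A y$. Then $w$ solves the same system with data $\A y^0\in L^2$, so the $k=0$ bounds apply to $w$. The norm $\|\A\varphi\|_{L^2}+\|\varphi\|_{L^2}$ is equivalent to $\|\varphi\|_{H^3}$ on $D(\A)$ by elliptic regularity of the ODE $\varphi'''+\varphi'=f$ with three boundary conditions. Likewise $\|\A\varphi\|_{H^1}+\|\varphi\|_{L^2}\simeq\|\varphi\|_{H^4}$ on $H^4_{(0)}$. This gives both estimates for $k=3$. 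Iterating once more with $D(\A^2)=H^6_{(0)}$ gives $k=6$. The intermediate $k=1,2,4,5$ then follow by complex interpolation, once I check that $H^k_{(0)}$ coincide with the interpolation spaces $[D(\A^j),D(\A^{j+1})]_\theta$. That identification is routine but fiddly, because the compatibility conditions are dropped or added exactly at the right half-integer thresholds.

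\textbf{The smoothing estimate $t^{-k/2}$.} From the Kato bound,
\[
\int_0^t\|y(s)\|_{H^1}^2\,ds\le C\|y^0\|^2 .
\]
So there is some $s\le t$ with $\|y(s)\|_{H^1}^2\le C\|y^0\|^2/t$. Propagating forward with the $k=1$ bound gives $\|y(t)\|_{H^1}\le Ct^{-1/2}\|y^0\|$. I repeat this from level $k$ to level $k+1$ on the dyadic pieces $[t(1-2^{-j}),\,t(1-2^{-j-1})]$, each step costing a factor $(t/2^{j+1})^{-1/2}$. After $k$ steps this gives $t^{-k/2}$ with a constant depending only on $k$ and $L$. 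The one subtlety is that the selected intermediate state must lie in $H^{k+1}_{(0)}$, not just in $H^{k+1}$. This holds because the $L^2H^{k+1}_{(0)}$ part of the first estimate already records the compatibility conditions.

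\textbf{Main obstacle.} I expect the hardest step to be the first estimate at the even-to-odd transitions, for example $y^0\in H^2_{(0)}$ giving $L^2H^3_{(0)}$. There one must show that the Kato gain of one derivative respects the nonlocal boundary constraints. I would try to obtain it by interpolating the $L^2H^{1}\to L^2H^4$ gains from the $k=0$ and $k=3$ cases, together with the identification of the adapted spaces as interpolation spaces.
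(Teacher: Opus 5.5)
Your proposal is correct and follows the standard route behind this result. The paper gives no proof of its own and only cites Krieger--Xiang, and the argument there is of the same kind: Rosier's multipliers $y$ and $xy$ at $k=0$; the $k=0$ bounds applied to $\partial_t y=\A y$ and to $\A^2 y$ to reach $k=3,6$; interpolation for $k=1,2,4,5$; and time-averaging on dyadic pieces plus forward propagation for the $t^{-k/2}$ bound. Two points need writing out, though neither is a gap: the identification of the interpolated target spaces (e.g.\ $[H^1_{(0)},H^4_{(0)}]_{1/3}=H^2_{(0)}$) follows by reiteration in the scale $D((I-\A)^{s/3})$, which is a complex interpolation scale because $I-\A$ is m-accretive on $L^2$ and so has bounded imaginary powers; and for $k=4,5,6$ you must define $H^7_{(0)}=\{\varphi\in H^7\cap H^6_{(0)}:(\A^2\varphi)(0)=(\A^2\varphi)(L)=0\}$, which the paper leaves undefined.
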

\subsection*{Duality Argument: Hilbert Uniqueness Method}\label{sec: duality arguments and HUM}
In this section, we recall the Hilbert Uniqueness Method (HUM), introduced by J.-L. Lions \cite{HUM, Lions-1988}. This method presents the duality between the controllability and the observability; see also \cite{dolecki-russell-1977}. We also introduce the equivalence between  exponential stability and  observability for KdV equations. Thus, in order to give a quantitative exponential stability, we need a quantitative observability inequality. To establish a quantitative observability inequality, thanks to the duality arguments or HUM, it suffices to prove a quantitative controllability result. 

Introduce the following notations and definitions. Fix the $L^2((0,L)\times(0,T);\C)$, $L^2((0,L);\C)$ and $L^2((0,T);\C)$ with the duality relation respectively
\begin{equation}\label{eq: duality relations}
\begin{aligned}
    \poscalr{u}{v}_{(0,L)\times(0,T)}&:=\int_0^T\int_0^L u(t,x)\overline{v(T-t,L-x)}dxdt,\forall u,v\in L^2((0,L)\times(0,T);\C)\\
    \poscalr{u}{v}_{(0,L)}&:=\int_0^L u(x)\overline{v(L-x)}dx, \forall u,v\in L^2(0,L;\C),\\
     \poscalr{u}{v}_{(0,T)}&:=\int_0^T u(t)\overline{v(T-t)}dt, \forall u,v\in L^2(0,T;\C).
\end{aligned}    
\end{equation}
\begin{rem}
In general, this duality relation is different from the usual $L^2-$inner product. 
\end{rem}
We consider a controlled KdV system in $(0,L)$ with a Neumann control acting on the right endpoint \eqref{eq: linearized KdV system-control-intro}.
Define the energy function as $E(y(t)):=\int_0^L|y(t,x)|^2dx$. The following KdV system is called the adjoint system of \eqref{eq: linearized KdV system-control-intro}  under the duality relation \eqref{eq: duality relations}:
\begin{equation}\label{eq: adjoint linear KdV-HUM}
\left\{
\begin{array}{lll}
    \p_tw+\p_x^3w+\p_xw=0 & \text{ in }(0,T)\times(0,L), \\
     w(t,0)=w(t,L)= \p_xw(t,L)=0&  \text{ in }(0,T),\\
     w(0,x)=w^0(x)&\text{ in }(0,L).
\end{array}
\right.
\end{equation}
Using the definition of the operator $\A$, we can denote the above system by 
\begin{equation}\label{eq: controlled linear KdV-HUM-A_0}
\left\{
\begin{array}{lll}
    \p_t w-\A w=0 & \text{ in }(0,T)\times(0,L), \\
     w(0,x)=w^0(x)&\text{ in }(0,L).
\end{array}
\right.
\end{equation} 
To derive exponential stability, we introduce the following duality argument. 
\begin{defi}[Symmetric finite co-dimensional projector]\label{defi: finite-co-dimensional projector}
Let $\varphi_1,\cdots,\varphi_n\in L^2(0,L;\C)$ be eigenfunctions of the operator $\A$. Let $H$ be a subspace defined by 
\begin{equation}
H:=\{u\in L^2(0,L;\C):\poscalr{u}{\varphi_j}_{(0,L)}=\poscalr{u}{\overline{\varphi}_j}_{(0,L)}=0,j=1,2,\cdots,n\}
\end{equation}
Then $L^2(0,L;\C)= H\oplus \rm{Span}_{\C}\{\varphi_1,\cdots,\varphi_n,\overline{\varphi}_1,\cdots,\overline{\varphi}_n\}$, with a canonical projector 
\begin{equation}
\begin{aligned}
    \Pi_H:L^2(0,L;\C)&\rightarrow H,\\ 
        u&\mapsto \Tilde{u}=\Pi_H(u).
\end{aligned}
\end{equation}
\end{defi}
Based on the definition of the projector $\Pi_H$, we have the following generalized null controllability.
\begin{defi}[Projective null controllability]\label{defi: quantitative null controllability}Let $H$ and $\Pi_H$ be the same as in Definition \ref{defi: finite-co-dimensional projector}.  
We say the system \eqref{eq: linearized KdV system-control-intro} is quantitatively null controllable if and only if there exists a control function $f\in L^2(0,T)$ such that for $\forall y^0\in L^2(0,L)$, the solution $y$ to the system \eqref{eq: linearized KdV system-control-intro} satisfies that $\Pi_Hy(T)=0$, where
\begin{equation}
    \|u\|_{L^2(0,T)}\leq C\|y^0\|_{L^2(0,L)},
\end{equation}
with a constant $C$.
\end{defi}
In analogy with the quantitative projective null controllability defined in Definition \ref{defi: quantitative null controllability}, we have the following definition for Quantitative Projective Observability.
\begin{defi}[Projective observability]\label{defi: Quantitative Observability}
Let $H$ and $\Pi_H$ be the same as in Definition \ref{defi: finite-co-dimensional projector}. We say the system \eqref{eq: adjoint linear KdV-HUM} is quantitatively observable if and only if for $\forall w^0\in H$, the solution $w$ to the system \eqref{eq: adjoint linear KdV-HUM} satisfies the quantitative observability
\begin{equation}\label{eq: quantitative ob-HUM}
\|S(T)w^0\|^2_{L^2(0,L)}\leq C^2\int_0^T|\p_xw(t,0)|^2dt,  
\end{equation}
with a constant $C$. Here $S(t)$ is the semi-group generated by the operator $\A$.
\end{defi}
Last but not least, we define the exponential stability for the system \eqref{eq: adjoint linear KdV-HUM}.
\begin{defi}[Projective exponential stability]\label{defi: quantitative exponential stability}
We say the system \eqref{eq: adjoint linear KdV-HUM} is exponentially stable if and only if there exist two effectively computable constants $C_1$ and $C_2$ such that for $\forall w^0\in H$ the solution $w$ to the system \eqref{eq: adjoint linear KdV-HUM} satisfies that
\begin{equation}
    E(w(t))\leq C_1e^{-C_2t}E(w^0) \forall t\in (0, +\infty).
\end{equation}
\end{defi}
We are now in a position to show the relations among these definitions. All the proofs can be found in Appendix \ref{sec: proof in duality arguments}.
\begin{enumerate}
\item \textbf{Stability\&Observability. }
    In this part, we give a detailed description of the relations between exponential stability and observability. In the following proposition, we prove that there exists $T_0>0$ such that for $T>T_0$, exponential stability implies observability.
\begin{prop}\label{prop: stability to ob}
Assume that the system \eqref{eq: adjoint linear KdV-HUM} is exponentially stable. Let $ w^0\in H$ and $w$ be a solution to the system \eqref{eq: adjoint linear KdV-HUM}. Then, there exists a constant $C$ such that the system \eqref{eq: adjoint linear KdV-HUM} is quantitatively observable with 
\begin{equation*}
\|S(T)w^0\|^2_{L^2(0,L)}\leq C^2\int_0^T|\p_xw(t,0)|^2dt,  
\end{equation*}
where $S(t)$ is the semi-group generated by the operator $\A$.
\end{prop}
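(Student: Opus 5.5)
We derive observability from the energy identity combined with the exponential decay hypothesis. The proof is a short energy argument and needs nothing beyond the dissipation law of the adjoint system \eqref{eq: adjoint linear KdV-HUM} and the definition of projective exponential stability.

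\textbf{Step 1: energy identity.} Multiply \eqref{eq: adjoint linear KdV-HUM} by $\overline{w}$, take real parts and integrate by parts on $(0,L)$. The boundary conditions $w(t,0)=w(t,L)=\p_xw(t,L)=0$ give
\[
\frac{d}{dt}E(w(t))=-|\p_xw(t,0)|^2 .
\]
(Up to the normalization constant, this is the dissipation law recalled in the introduction.) We prove it first for $w^0\in D(\A)$ and extend it by density, using the Kato smoothing of Proposition \ref{prop: Xiang-Krieger gain 1}, which makes the trace $\p_xw(\cdot,0)$ meaningful in $L^2(0,T)$. Integrating from $0$ to $T$ yields
\[
E(w^0)-E(w(T))=\int_0^T|\p_xw(t,0)|^2dt .
\]
In particular $t\mapsto E(w(t))$ is nonincreasing.

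\textbf{Step 2: absorb the final energy.} The system is exponentially stable on $H$, so $E(w(T))\le C_1e^{-C_2T}E(w^0)$ for $w^0\in H$. We do not need to know whether $H$ is invariant under $S(t)$, because the stability hypothesis is stated directly for initial data in $H$. Choose $T_0$ with $C_1e^{-C_2T_0}\le\frac12$, that is, $T_0=\max\{0,\ln(2C_1)/C_2\}$. For $T\ge T_0$ we get $E(w(T))\le\frac12E(w^0)$. Inserting this into the identity of Step 1 gives $\frac12E(w^0)\le\int_0^T|\p_xw(t,0)|^2dt$.

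\textbf{Step 3: conclude.} Monotonicity of the energy gives $\|S(T)w^0\|^2_{L^2}=E(w(T))\le E(w^0)$. Hence
\[
\|S(T)w^0\|^2_{L^2}\le 2\int_0^T|\p_xw(t,0)|^2dt ,
\]
so the observability inequality holds with $C^2=2$ for all $T\ge T_0$.

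For $T<T_0$ we handle the short-time case directly, without the decay hypothesis. The identity of Step 1 applied on the interval $[t,T]$ gives $E(w(T))=E(w(t))-\int_t^T|\p_xw|^2$, and in particular $\|S(T)w^0\|^2_{L^2}\le E(w(T))$. Observability with respect to the final state $S(T)w^0$ is therefore what one gets for any $T$ in the exponentially stable setting; the short-time case is governed by the statement's remark that one takes $T$ beyond $T_0$.

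\textbf{Main obstacle.} The only delicate point is justifying the energy identity for $L^2$ data, that is, making sense of the boundary trace $\p_xw(\cdot,0)$. We handle this through the regularity and density argument of Step 1. The constant $C$ that comes out of the argument is explicit in terms of $C_1$ and $C_2$.
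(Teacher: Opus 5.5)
Your argument for $T\ge T_0$ is correct and is essentially the paper's: the energy identity combined with the decay bound on $E(w(T))$. The one difference is that you absorb via $E(w(T))\le\frac12E(w^0)$ to get the $T$-independent constant $C^2=2$, whereas the paper keeps $C^2=\frac{2C_1}{e^{C_2T}-C_1}$, valid whenever $e^{C_2T}>C_1$.

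Drop the short-time paragraph, though. The relation $\|S(T)w^0\|^2_{L^2}=E(w(T))$ gives no observability for $T<T_0$, and neither the statement nor the paper claims anything in that range.
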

On the other hand, in this article, we use the observability to prove the exponential stability. 
\begin{prop}\label{prop: ob to stability}
Assume that the system \eqref{eq: adjoint linear KdV-HUM} is quantitatively observable. Let $ w^0\in H$ and $w$ be a solution to the system \eqref{eq: adjoint linear KdV-HUM}. Then, there exists a constant $C$ such that the system \eqref{eq: adjoint linear KdV-HUM} is exponentially stable.
\end{prop}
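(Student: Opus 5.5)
The plan is the classical ``observability $\Rightarrow$ exponential decay'' argument, adapted to the invariant subspace $H$. The first step is to check that $H$ is invariant under $S(t)$. Its defining functionals are $\poscalr{\cdot}{\varphi_j}_{(0,L)}$ and $\poscalr{\cdot}{\overline{\varphi}_j}_{(0,L)}$, with $\varphi_j$ eigenfunctions of $\A$. Under the reflected pairing $\poscalr{u}{v}_{(0,L)}=\int_0^L u(x)\overline{v(L-x)}dx$, the change of variables $x\mapsto L-x$ shows that $\A$ is adjoint to itself on the relevant domains. Integrating by parts, the boundary terms vanish because both functions satisfy $\varphi(0)=\varphi(L)=\varphi'(L)=0$ after reflection, and any leftover terms involve $\varphi'(0)$. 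Hence
\[
\frac{d}{dt}\poscalr{S(t)w^0}{\varphi_j}_{(0,L)}=\overline{\zeta_j}\,\poscalr{S(t)w^0}{\varphi_j}_{(0,L)},
\]
and the same holds for $\overline{\varphi}_j$. If the boundary terms do not vanish in general, I would instead define $H$ through eigenfunctions of the dual operator, which is what the duality relation is built for. In either case $S(t)H\subset H$ for all $t\ge0$.

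Next I use the energy identity. For $w^0\in D(\A)$, multiplying \eqref{eq: adjoint linear KdV-HUM} by $\overline{w}$ and integrating by parts with $w(t,0)=w(t,L)=\p_xw(t,L)=0$ gives
\[
E(w(t))-E(w(s))=-\int_s^t|\p_xw(\tau,0)|^2d\tau,
\]
and this extends to all $w^0\in L^2$ by density and Rosier's Kato smoothing. (The introduction uses the normalization $E(0)-E(T)=2\int_0^T|y_x(t,0)|^2dt$; the constant factor does not affect the argument.) In particular $E(w(t))$ is nonincreasing.

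Now combine this with the observability hypothesis. For $w^0\in H$,
\[
E(w(T))=\|S(T)w^0\|^2\le C^2\int_0^T|\p_xw(t,0)|^2dt=C^2\bigl(E(w^0)-E(w(T))\bigr),
\]
so
\[
E(w(T))\le \frac{C^2}{1+C^2}E(w^0)=:\gamma E(w^0),\qquad \gamma<1.
\]
By invariance $S(nT)w^0\in H$, and since the system is autonomous the same inequality applies on each interval $[nT,(n+1)T]$, giving $E(w(nT))\le\gamma^nE(w^0)$. For $t\in[nT,(n+1)T)$, monotonicity gives $E(w(t))\le\gamma^{n}E(w^0)\le\gamma^{-1}e^{-t\ln(1/\gamma)/T}E(w^0)$. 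So the conclusion holds with $C_1=\gamma^{-1}=1+C^{-2}$ and $C_2=T^{-1}\ln(1+C^{-2})$, both explicit in $C$ and $T$.

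The main obstacle is the first step, the invariance of $H$ under $S(t)$, because the pairing is the reflected one rather than the $L^2$ inner product. I would verify carefully that the boundary terms cancel under the stated boundary conditions, using $\G'(0)=0$ where eigenfunctions are involved. If exact invariance fails, I would fall back to applying observability to $\Pi_H S(nT)w^0$ and controlling the finite-dimensional complement separately.
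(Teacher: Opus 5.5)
Your proof is correct and follows the paper's own argument: the energy identity combined with observability on $[0,T]$ gives $E(w(T))\le\frac{C^2}{1+C^2}E(w^0)$, and this is iterated using the semigroup property and the monotonicity of $E$. You are in fact more careful than the paper on two points. First, the paper uses $S(t)H\subset H$ without checking it; your reflected-pairing computation is right, and it needs neither $\G'(0)=0$ nor the fallback, because the only boundary terms are $u'(0)\overline{v'(L)}$ and $u'(L)\overline{v'(0)}$, both of which vanish on $D(\A)$. Second, the paper asserts $C_1=1$, which cannot hold: for a nonzero $u\in D(\A)\cap H$ with $u'(0)=0$, the derivative of $E(S(t)u)$ at $t=0$ is $-|u'(0)|^2=0$, whereas $C_1=1$ would force it to be at most $-C_2E(u)<0$. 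So your $C_1=\gamma^{-1}$ is what the argument actually yields.
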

\item \textbf{Hilbert Uniqueness Method: Observability\&Controllability. }In this part, we prove that the quantitative controllability of the system \eqref{eq: linearized KdV system-control-intro} is equivalent to the quantitative observability of the system \eqref{eq: adjoint linear KdV-HUM} with the same effectively computable constant $C$. We introduce the following lemma to show that null controllability is equivalent to trajectory controllability.
\begin{lem}\label{lem: trajectoory controllability}
Let $H$ and $\Pi_H$ be the same as in Definition \ref{defi: finite-co-dimensional projector}. The system \eqref{eq: linearized KdV system-control-intro} is null controllable in the sense of Definition \ref{defi: quantitative null controllability} with a control function $f$ and $\|f\|_{L^2(0,T)}\leq C\|y^0\|_{L^2(0,L)}$
if and only if there exists a control function $\Hat{f}\in L^2(0,T)$ such that for $\forall y^0\in L^2(0,L)$, the solution $\Hat{y}$ to the system 
\begin{equation}\label{eq: controlled linear KdV-HUM-zero-initial}
\left\{
\begin{array}{lll}
    \p_t\Hat{y}+\p_x^3\Hat{y}+\p_x\Hat{y}=0 & \text{ in }(0,T)\times(0,L), \\
     \Hat{y}(t,0)=\Hat{y}(t,L)=0&  \text{ in }(0,T),\\
     \p_x\Hat{y}(t,L)=\Hat{f}(t)&\text{ in }(0,T),\\
     \Hat{y}(0,x)=0&\text{ in }(0,L).
\end{array}
\right.
\end{equation} 
satisfies that $\Pi_H\Hat{y}(T)=\Pi_HS(T)y^0$, where $S(t)$ is the corresponding semi-group generated by $\A$. In addition, with a same constant $C$, $\Hat{f}$ satisfies $\|\Hat{f}\|_{L^2(0,T)}\leq C\|y^0\|_{L^2(0,L)}$. 
\end{lem}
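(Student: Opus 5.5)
The proof rests on linearity and superposition. Fix $y^0\in L^2(0,L)$ and a control $u\in L^2(0,T)$. Let $y$ be the solution of \eqref{eq: linearized KdV system-control-intro} with initial datum $y^0$ and boundary input $u$. Let $\Hat y$ be the solution of \eqref{eq: controlled linear KdV-HUM-zero-initial} with zero initial datum and boundary input $\Hat f$. By well-posedness of the linear boundary value problem (Rosier's estimates for the Neumann control on the right), the map $(y^0,u)\mapsto y(T)$ is linear and continuous from $L^2(0,L)\times L^2(0,T)$ into $L^2(0,L)$. The free part is $S(T)y^0$, since with $u=0$ the system is exactly $\p_t y=\A y$. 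Hence
\begin{equation*}
y(T)=S(T)y^0+\Hat y_u(T),
\end{equation*}
where $\Hat y_u$ denotes the solution of \eqref{eq: controlled linear KdV-HUM-zero-initial} with $\Hat f=u$. I would state this decomposition first and justify it by uniqueness of solutions to the linear problem: the difference $y-\Hat y_u$ solves the homogeneous system with initial datum $y^0$.

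For the direction ($\Rightarrow$), suppose $f$ steers $y^0$ so that $\Pi_H y(T)=0$ with $\|f\|_{L^2}\le C\|y^0\|$. Set $\Hat f:=-f$. By linearity $\Hat y_{-f}=-\Hat y_f$. Applying the linear projector $\Pi_H$ to the decomposition gives
\begin{equation*}
0=\Pi_H S(T)y^0+\Pi_H\Hat y_f(T),
\end{equation*}
so $\Pi_H\Hat y_{\Hat f}(T)=\Pi_H S(T)y^0$, and $\|\Hat f\|=\|f\|\le C\|y^0\|$.

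The direction ($\Leftarrow$) is symmetric. Given $\Hat f$ with $\Pi_H\Hat y(T)=\Pi_H S(T)y^0$, take $f:=-\Hat f$. Then
\begin{equation*}
\Pi_H y(T)=\Pi_H S(T)y^0-\Pi_H\Hat y_{\Hat f}(T)=0,
\end{equation*}
with the same constant.

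The only genuine point to check is that $\Pi_H$ is a well-defined bounded linear projector on $L^2(0,L;\C)$, so that it commutes with the linear combination. This needs the direct sum in Definition \ref{defi: finite-co-dimensional projector}, i.e.\ the nondegeneracy of the pairing $\poscalr{\cdot}{\cdot}_{(0,L)}$ restricted to the finite family $\{\varphi_j,\overline{\varphi}_j\}$. I take this as given by the definition, as the paper does. A second point is that the solutions are understood in the transposition/mild sense, so that $y(T)\in L^2$ for $u\in L^2(0,T)$; this is the standard Rosier well-posedness, which I would invoke rather than reprove. I expect no real obstacle here: the lemma is essentially a bookkeeping statement, and the constant is preserved exactly because $f\mapsto-f$ is an isometry.
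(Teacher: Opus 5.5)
Your proposal is correct and follows the same route as the paper. The paper sets $\Hat{y}(t):=S(t)y^0-y(t)$, which is your superposition identity with $\Hat{f}=-f$. It then obtains $\Pi_H\Hat{y}(T)=\Pi_H S(T)y^0$ with the same constant, and handles the converse in the same way.
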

\begin{prop}\label{prop: control and ob}
Let $H$ and $\Pi_H$ be the same as in Definition \ref{defi: finite-co-dimensional projector}. The following statements are equivalent
\begin{enumerate}
    \item the system \eqref{eq: linearized KdV system-control-intro} is null controllable in the sense of Definition \ref{defi: quantitative null controllability} with a control function $f$ and
\begin{equation}\label{eq: control-cost-HUM}
\|f\|_{L^2(0,T)}\leq C\|y^0\|_{L^2(0,L)}    
\end{equation}
\item for the same constant $C$, we have 
\begin{equation}\label{eq: ob-cost-HUM}
    \|S(T)w^0\|_{L^2(0,L)}\leq C\|\p_x w(t,0)\|_{L^2(0,T)}, \forall w^0\in H,
\end{equation}
where $w$ is a solution to the system \eqref{eq: adjoint linear KdV-HUM}.
\end{enumerate}
\end{prop}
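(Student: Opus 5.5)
The plan is to reduce the equivalence to an abstract range-inclusion (Douglas) lemma, using Lemma \ref{lem: trajectoory controllability} to replace null controllability by trajectory controllability. The adjoints are computed with respect to the reflected pairings \eqref{eq: duality relations}. Concretely, I would introduce two bounded operators with values in the closed subspace $H$:
\begin{equation*}
\mathcal{F}_T: L^2(0,T)\to H,\quad \Hat f\mapsto \Pi_H\Hat y(T),\qquad \mathcal{G}_T: L^2(0,L)\to H,\quad y^0\mapsto \Pi_H S(T)y^0,
\end{equation*}
where $\Hat y$ solves \eqref{eq: controlled linear KdV-HUM-zero-initial}. By Lemma \ref{lem: trajectoory controllability}, statement (a) is exactly the following: for every $y^0$ there is $\Hat f$ with $\mathcal{F}_T\Hat f=\mathcal{G}_T y^0$ and $\|\Hat f\|\le C\|y^0\|$. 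Douglas' lemma then says this holds if and only if $\|\mathcal{G}_T^*w^0\|\le C\|\mathcal{F}_T^*w^0\|$ for all $w^0\in H$, with the same constant $C$. It therefore remains to identify the two adjoints.

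\textbf{Step 1 (duality identity).} For $w$ solving \eqref{eq: adjoint linear KdV-HUM}, set $\tilde w(t,x):=w(T-t,L-x)$. A direct check shows that $\tilde w$ solves the same equation $\p_t\tilde w+\p_x^3\tilde w+\p_x\tilde w=0$, with $\tilde w(t,0)=\tilde w(t,L)=0$ and $\p_x\tilde w(t,0)=0$, and that $\p_x\tilde w(t,L)=-\p_xw(T-t,0)$. I would multiply the $\Hat y$-equation by $\overline{\tilde w}$ and integrate by parts three times. Every boundary term vanishes except $-[\p_x\Hat y\,\overline{\p_x\tilde w}]_0^L$, and using $\p_x\Hat y(t,L)=\Hat f(t)$ this gives
\begin{equation*}
\poscalr{\Hat y(T)}{w^0}_{(0,L)}=-\poscalr{\Hat f}{\p_xw(\cdot,0)}_{(0,T)}.
\end{equation*}
The same computation with the control switched off and with initial datum $y^0$ yields $\poscalr{S(T)y^0}{w^0}_{(0,L)}=\poscalr{y^0}{S(T)w^0}_{(0,L)}$. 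In other words, $\A$ is symmetric for the reflected pairing. All of this is first done for smooth data and then extended by density, using the Kato smoothing in Proposition \ref{prop: Xiang-Krieger gain 1} to justify the trace $\p_xw(\cdot,0)\in L^2(0,T)$.

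\textbf{Step 2 (role of $\Pi_H$).} The reflected pairing is Hermitian, since substituting $x\mapsto L-x$ gives $\poscalr{u}{v}=\overline{\poscalr{v}{u}}$. Hence, by Definition \ref{defi: finite-co-dimensional projector}, $H$ is exactly the $\poscalr{\cdot}{\cdot}$-annihilator of $\mathrm{Span}\{\varphi_j,\overline{\varphi}_j\}$. Consequently $\poscalr{u}{w^0}=\poscalr{\Pi_Hu}{w^0}$ for all $w^0\in H$. Moreover, the symmetry from Step 1 together with the invariance of the eigenspace span under $S(T)$ shows that $S(T)$ leaves $H$ invariant. Since $v\mapsto v(L-\cdot)$ (respectively $v\mapsto v(T-\cdot)$) is unitary, Step 1 then identifies the adjoints: $\mathcal{F}_T^*w^0$ is the reflection of $-\p_xw(\cdot,0)$, and $\mathcal{G}_T^*w^0$ is the reflection of $S(T)w^0$. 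Therefore $\|\mathcal{F}_T^*w^0\|=\|\p_xw(\cdot,0)\|_{L^2(0,T)}$ and $\|\mathcal{G}_T^*w^0\|=\|S(T)w^0\|_{L^2(0,L)}$, so the Douglas criterion is literally \eqref{eq: ob-cost-HUM} with the same $C$.

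The main obstacle I expect is Step 2: one has to make sure that the projector $\Pi_H$ really is dual to the annihilator structure under the \emph{reflected} pairing rather than the $L^2$ inner product, and that $S(T)$ maps $H$ into itself. If that step becomes delicate, a fallback is the classical HUM argument. One minimizes $J(w^0)=\frac12\|\p_xw(\cdot,0)\|^2_{L^2(0,T)}+\Re\poscalr{S(T)y^0}{w^0}$ over the completion of $H$ for the seminorm $\|\p_xw(\cdot,0)\|_{L^2(0,T)}$. This functional is coercive by (b), and its minimizer gives $\Hat f$ as the reflection of $-\p_xw(\cdot,0)$, again with cost at most $C\|y^0\|$. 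Only the direction (b)$\Rightarrow$(a) needs this minimization; (a)$\Rightarrow$(b) follows immediately from the identity in Step 1 by taking the supremum over $\|y^0\|=1$.
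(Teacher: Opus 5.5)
Your proposal is correct and follows the paper's proof essentially step by step. Like you, the paper reduces (a) to trajectory controllability via Lemma~\ref{lem: trajectoory controllability}, applies the Dolecki--Russell range-inclusion lemma (Lemma~\ref{lem: inclusion lemma}) to $C_2=\Pi_H S(T)$ and $C_3=\mathscr{F}_T$, identifies both adjoints by testing against the reflected adjoint solution, and proves (a)$\Rightarrow$(b) directly by testing the controlled solution with $\Pi_H y(T)=0$ and taking the supremum over $\|y^0\|_{L^2(0,L)}=1$. Your explicit checks that the reflected pairing is Hermitian and that $H$ is its annihilator (so $\Pi_H$ drops out against $w^0\in H$) make precise what the paper uses implicitly, while the invariance $S(T)H\subset H$ and the HUM fallback are not needed.
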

\end{enumerate}

\subsection*{Modulated functions}
In this section, we fix a parameter $\mu>0$. We aim to give a detailed description of the solutions to the following system
\begin{equation}\label{eq: static KdV with nonvanishing boundary condition}
\left\{
\begin{aligned}
     &h'''+h'=\mu h(x),  \\
     &h(0)=h(L)=0,\\
     &h'(L)-h'(0)=1.
\end{aligned}
\right.
\end{equation}
\begin{lem}\label{lem: existence of modulated functions}
For each $\mu>0$, there exists a unique solution to the system \eqref{eq: static KdV with nonvanishing boundary condition}.
\end{lem}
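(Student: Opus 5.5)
The statement is about a linear third-order ODE with three linear boundary conditions, so I will reduce it to a $3\times 3$ linear algebra problem. Then existence follows from uniqueness, and uniqueness follows from the fact that the homogeneous problem is an eigenvalue problem for the skew-adjoint operator $\B$ at a real nonzero spectral parameter.

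\textbf{Step 1: reduction to linear algebra.} Fix $\mu>0$. The solutions of $h'''+h'-\mu h=0$ on $[0,L]$ form a three-dimensional complex vector space $V$, with basis given by the solutions having initial data $(h,h',h'')(0)$ equal to the three unit vectors. Consider the linear map
\[
\Phi: V\to\C^3,\qquad \Phi(h):=\bigl(h(0),\,h(L),\,h'(L)-h'(0)\bigr).
\]
Solving \eqref{eq: static KdV with nonvanishing boundary condition} means finding $h\in V$ with $\Phi(h)=(0,0,1)$. Since $\dim V=3$, existence and uniqueness both follow once $\Phi$ is injective, i.e.\ once the homogeneous problem
\[
h'''+h'=\mu h,\qquad h(0)=h(L)=0,\qquad h'(0)=h'(L)
\]
has only the trivial solution.

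\textbf{Step 2: triviality of the homogeneous problem.} This is the only real content of the proof. The homogeneous problem says $h\in D(\B)$ and $\B h=-\mu h$. Since $\B$ is skew-adjoint, its spectrum lies in $\ii\R$, so the real number $-\mu\neq0$ cannot be an eigenvalue. To keep the argument self-contained I will verify this directly. Multiply the equation by $\overline h$, integrate over $(0,L)$, and take real parts. Using $h(0)=h(L)=0$,
\[
\int_0^L h'''\overline h\,dx=-\int_0^L h''\overline{h'}\,dx,
\qquad\text{and}\qquad
\Re\int_0^L h''\overline{h'}\,dx=\tfrac12\bigl(|h'(L)|^2-|h'(0)|^2\bigr)=0,
\]
where the last equality uses $h'(L)=h'(0)$. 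Likewise,
\[
\Re\int_0^L h'\overline h\,dx=\tfrac12\bigl(|h(L)|^2-|h(0)|^2\bigr)=0.
\]
Hence the real part of the left-hand side vanishes, which gives $\mu\|h\|_{L^2}^2=0$. Since $\mu>0$, we conclude $h\equiv0$.

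\textbf{Step 3: conclusion and regularity.} By Step 2, $\Phi$ is injective, hence bijective, so there is exactly one $h\in V$ with $\Phi(h)=(0,0,1)$. This $h$ is smooth, since it solves a constant-coefficient ODE. Because the ODE and the boundary data are real, $\overline h$ solves the same problem; uniqueness then gives $h=\overline h$, so $h$ is real-valued. Explicitly, $h=\sum_{i=1}^3 a_i e^{r_i x}$, where $r_i$ are the roots of $r^3+r-\mu=0$ and the coefficients $a_i$ are determined by the $3\times3$ system. Step 2 guarantees that this system is solvable. I expect no serious obstacle here: the only nontrivial point is the energy identity in Step 2, and the mismatch of derivative boundary conditions there is exactly absorbed by the periodic-type condition $h'(0)=h'(L)$ built into $D(\B)$.
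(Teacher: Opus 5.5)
Your proof is correct, and it takes a different route from the paper's. The paper argues by direct computation. It takes the real root $\omega>0$ of $\xi^3+\xi-\mu=0$, so that $\mu=\omega^3+\omega$ and the other two roots are $-\frac{\omega}{2}\pm\ii\frac{\sqrt{4+3\omega^2}}{2}$. It writes the general real solution in terms of $e^{\omega x}$, $e^{-\omega x/2}\cos\bigl(\tfrac{\sqrt{4+3\omega^2}}{2}x\bigr)$ and $e^{-\omega x/2}\sin\bigl(\tfrac{\sqrt{4+3\omega^2}}{2}x\bigr)$, imposes the three boundary conditions, and solves the resulting $3\times 3$ system in closed form. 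This yields an explicit formula for $h_\mu$, which is what the paper uses later for the asymptotics in Proposition \ref{prop: h_mu-est-uniform}. What the paper never checks is that this system is nonsingular, i.e.\ that the denominator of its formula,
\[
\sqrt{4+3\omega^2}\cosh(\omega L)+3\omega\sinh\bigl(\tfrac{\omega L}{2}\bigr)\sin\bigl(\tfrac{\sqrt{4+3\omega^2}}{2}L\bigr)-\sqrt{4+3\omega^2}\cosh\bigl(\tfrac{\omega L}{2}\bigr)\cos\bigl(\tfrac{\sqrt{4+3\omega^2}}{2}L\bigr),
\]
never vanishes. This is not evident by inspection: the expression tends to $2(1-\cos L)$ as $\omega\to0^+$. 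Your Step 2, the energy identity (equivalently $-\mu\notin\sigma(\B)\subset\ii\R$), supplies exactly this missing justification. It excludes a nontrivial homogeneous solution for every $\mu>0$ and every $L>0$, so the denominator is nonzero. It also gives uniqueness and real-valuedness without any computation. On the other hand, your argument produces no formula, so the later quantitative estimates on $h_\mu$ still require the paper's explicit computation; the two approaches complement each other.
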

We put the proof into the Appendix \ref{sec: modulated functions-appendix}.
Now we analyze the asymptotic behavior of the solution $h_{\mu}$ as $\mu\rightarrow+\infty$. For the details of proof, we refer to the Appendix \ref{sec: modulated functions-appendix}. 
\begin{prop}\label{prop: h_mu-est-uniform}
For $\mu>0$, the solution $h_{\mu}$ to the system \eqref{eq: static KdV with nonvanishing boundary condition} satisfies the following properties:
\begin{enumerate}
    \item $\|h_{\mu}\|_{L^{\infty}(0,L)}\sim \frac{1}{\mu^{\frac{1}{3}}}$, as $\mu\rightarrow+\infty$.
    \item $\|h'_{\mu}\|_{L^{\infty}(0,L)}\lesssim 1$, and $\|h'_{\mu}\|_{L^{2}(0,L)}\sim \mu^{-\frac{1}{6}}$ as $\mu\rightarrow+\infty$.
    \item $\lim_{\omega\rightarrow+\infty}h_{\mu}'(0)=1,\quad \lim_{\omega\rightarrow+\infty}h_{\mu}'(L)=0$.
\end{enumerate}
\end{prop}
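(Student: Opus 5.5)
The plan is to solve \eqref{eq: static KdV with nonvanishing boundary condition} explicitly and read off all three asymptotics from a boundary-layer expansion at $x=0$. Existence and uniqueness for every $\mu>0$ is already given by Lemma \ref{lem: existence of modulated functions}, so only large $\mu$ needs analysis.

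\textbf{Roots.} The characteristic equation is $r^3+r-\mu=0$. For $\mu\to+\infty$ it has one real root $r_0=\mu^{1/3}+\bigO(\mu^{-1/3})>0$. It also has two complex conjugate roots
\[
r_{1,2}=\mu^{1/3}e^{\pm 2\pi\ii/3}+\bigO(\mu^{-1/3}),
\]
so that $\Re r_{1,2}=-\tfrac12\mu^{1/3}+\bigO(\mu^{-1/3})$ and $|r_1-r_2|=\sqrt3\,\mu^{1/3}(1+o(1))$. These expansions follow from the implicit function theorem applied to $s^3+\mu^{-2/3}s-1=0$ with $r=\mu^{1/3}s$.

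\textbf{Ansatz and linear system.} To keep every exponential bounded, I write
\[
h=a_0e^{r_0(x-L)}+a_1e^{r_1x}+a_2e^{r_2x}.
\]
The three boundary conditions give the $3\times3$ linear system
\begin{gather*}
a_0e^{-r_0L}+a_1+a_2=0,\qquad a_0+a_1e^{r_1L}+a_2e^{r_2L}=0,\\
a_0r_0(1-e^{-r_0L})+a_1r_1(e^{r_1L}-1)+a_2r_2(e^{r_2L}-1)=1.
\end{gather*}
Dropping the terms of size $e^{-c\mu^{1/3}L}$ decouples this system. The limiting system $a_0=0$, $a_1+a_2=0$, $-(a_1r_1+a_2r_2)=1$ has the explicit solution
\[
a_1=-\frac{1}{r_1-r_2},\qquad a_2=\frac{1}{r_1-r_2}.
\]
I will then justify by Cramer's rule that the true coefficients satisfy
\[
a_j=a_j^{\lim}+\bigO\!\left(\mu^{-1/3}e^{-c\mu^{1/3}L}\right).
\]
The point is that the full determinant equals the limiting determinant, which is of size $|r_1-r_2|\sim\mu^{1/3}$, plus exponentially small entries. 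In particular $a_0$ is exponentially small.

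\textbf{Reading off the estimates.} Up to exponentially small errors, $h$ is the boundary layer
\[
h\approx\frac{e^{r_2x}-e^{r_1x}}{r_1-r_2}=-\frac{2\ii\,e^{\Re r_1\,x}\sin(\Im r_1\,x)}{r_1-r_2}.
\]
\begin{enumerate}
\item Upper bound for $h$: this gives $\|h_\mu\|_{L^\infty}\lesssim\mu^{-1/3}$. For the matching lower bound, evaluate at $x=x_\ast\mu^{-1/3}$, where $x_\ast$ is a fixed point at which the rescaled profile $e^{-x/2}\sin(\tfrac{\sqrt3}{2}x)$ is nonzero.
\item Derivative: we have $h'\approx(r_2e^{r_2x}-r_1e^{r_1x})/(r_1-r_2)$, which is bounded by $2|r_1|/|r_1-r_2|\lesssim1$. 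Computing the $L^2$ norm, $|h'|^2\lesssim e^{-\mu^{1/3}x}$ and this is comparable to it on $[0,\mu^{-1/3}]$, so $\|h'_\mu\|_{L^2}^2\sim\mu^{-1/3}$, that is $\|h'_\mu\|_{L^2}\sim\mu^{-1/6}$.
\item Boundary values: $h'(L)$ consists only of terms $a_0r_0$ and $a_je^{r_jL}r_j$, all exponentially small, so $h'_\mu(L)\to0$. Then $h'(0)=h'(L)-1$ converges to a limit of modulus $1$ by the boundary condition. Its sign depends on the orientation convention $h'(L)-h'(0)=1$ versus $h'(0)-h'(L)=1$ in \eqref{eq: static KdV with nonvanishing boundary condition}. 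I will track this carefully, since with the condition as written one gets $h'_\mu(0)\to-1$. The statement should then be read as $|h'_\mu(0)|\to1$, or the normalization flipped, which is harmless for the later use of $h_\mu$.
\end{enumerate}

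\textbf{Expected obstacle.} The main difficulty is making the Cramer's-rule perturbation uniform in $\mu$. This requires the limiting determinant to be bounded below by $c\,\mu^{1/3}$ while the perturbing entries, which include factors $r_je^{r_jL}$, are $\bigO(\mu^{1/3}e^{-c\mu^{1/3}L})$. The second delicate point is the lower bound in item 1, which needs the leading profile not to be swamped by the error terms; this holds because the error terms are exponentially smaller than $\mu^{-1/3}$.
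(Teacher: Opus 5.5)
Your proposal is correct and is essentially the paper's argument. The paper also builds $h_\mu$ from the roots $\omega$, $-\frac{\omega}{2}\pm\frac{\ii}{2}\sqrt{4+3\omega^2}$ of $\xi^3+\xi-\mu=0$ (with $\mu=\omega^3+\omega$) and reads everything off the boundary layer $h_\mu\approx-\frac{2}{\sqrt{4+3\omega^2}}e^{-\omega x/2}\sin\left(\frac{\sqrt{4+3\omega^2}}{2}x\right)$ at $x=0$. The only difference is that it solves the $3\times3$ system in closed form rather than perturbing the decoupled limit via Cramer's rule; your evaluation at $x_\ast\mu^{-1/3}$ also gives a cleaner lower bound in item 1 than the paper's pointwise claim $|h_\mu(x)|\sim 1/\omega$ on all of $(0,L)$.

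Your sign remark is right and exposes a slip in the statement. In the paper's own formula, the numerator of $h'_\mu(0)$ is dominated by $-\frac{\sqrt{4+3\omega^2}}{2}e^{\omega L}$ and the denominator by $\frac{\sqrt{4+3\omega^2}}{2}e^{\omega L}$. Hence $h'_\mu(0)\to-1$, and item 3 holds only in the form $|h'_\mu(0)|\to1$. This is harmless, since the later constructions use only $h'_\mu(L)-h'_\mu(0)=1$ and the smallness of $h'_\mu(L)$.
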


\section{Spectral analysis of the stationary operator}\label{sec: spectral analysis of A}
In this section, we shall provide detailed spectral information of the stationary operators $\B$ (defined by \eqref{eq: defi-B-op}) and $\A$ (defined by \eqref{eq: defi-A-op}). To enhance readability, it is important to note that only the asymptotic expansion for eigenvalues and eigenfunctions, along with their estimates, will be used in proving the main results. If preferred, this section can be treated as a black box up to these statements. All the technical details in this section can be found in Appendix \ref{sec: appendix-spectrum-A-B}.

\subsection{Eigenvalues and eigenfunctions for $\B$}
 We start by analyzing the eigenvalues and eigenfunctions of the operator $\B$. We shall first investigate the spectral information at a fixed length $L$, where both $L\notin\mathcal{N}$ and $L\in\mathcal{N}$ are involved. Then we analyze the limiting process as $L$ tends to $\mathcal{N}$.

\subsubsection{Eigenmodes at a critical length}\label{sec: Eigenvalues and eigenfunctions at the critical length}

Before we state our result on the asymptotic behaviors near the critical length, it is natural to provide a brief description of the eigenmodes at the critical length. By \cite{Rosier-1997}, we are interested in the eigenmodes that could generate the unreachable subspace $M(L_0)$.
\begin{prop}\label{prop: type-1-2}
For a fixed critical length $L_0\in \mathcal{N}$,  there exists a finite number of pairs $\{k_m,l_m\}\subset \N^*\times \N^*$, with $k_m\geq l_m$ such that $L_0=2\pi\sqrt{\frac{k_m^2+k_m l_m+l_m^2}{3}}$. Moreover, $\{\ii\lambda_{c,m},\G_m\}$ satisfies the eigenvalue problem
\begin{equation}\label{eq: eigenvalue problem with critical length}
\left\{
\begin{array}{ll}
     \G'''+\G'+\ii\lambda_{c}\G=0,  & \text{ in }(0,L_0),\\
     \G(0)=\G(L_0)=\G'(0)-\G'(L_0)=0.& 
\end{array}
\right.       
\end{equation}
with explicit formulas:
\begin{gather}
\lambda_{c,m}:=\lambda_{c,m}(k_m,l_m)=\frac{(2k_m+l_m)(k_m-l_m)(2l_m+k_m)}{3\sqrt{3}(k_m^2+k_m l_m+l_m^2)^{\frac{3}{2}}}\label{eq: defi of lambda_c}\\
\G_m(x):=\pi\sqrt{\frac{2}{3L_0^3}}(-l_m e^{\ii x\frac{\sqrt{3} (2 k_m + l_m) }{3\sqrt{k_m^2 + k_m l_m + l_m^2}}}  -k_m e^{-\ii x\frac{\sqrt{3} (k_m + 2 l_m) }{3 \sqrt{k_m^2 + k_m l_m + l_m^2}}}  + (k_m+l_m)e^{\ii x\frac{\sqrt{3} (-k_m + l_m)}{ 3\sqrt{k_m^2 + k_m l_m + l_m^2}}}).\label{eq: exact formula for critical eigenfunctions}
\end{gather}
Moreover, if $k_m-l_m\equiv0\mod{3}$, we observe another type of eigenfunctions:
\begin{equation}\label{eq: defi of tilde-G}
\Tilde{\G}_m(x):= \frac{1}{\sqrt{2L_0}}\left(e^{\ii x\frac{\sqrt{3} (2 k_m + l_m) }{3\sqrt{k_m^2 + k_m l_m + l_m^2}}}  - e^{-\ii x\frac{\sqrt{3} (k_m + 2 l_m) }{3 \sqrt{k_m^2 + k_m l_m + l_m^2}}}\right).   
\end{equation}
Additionally, $\G_m$ and $\Tilde{\G}_m$ are linearly independent, while $\Tilde{\G}_m'(0)=\Tilde{\G}_m'(L_0)=\frac{\ii\sqrt{2}\pi(k_m+l_m)}{L_0^{3/2}}\neq0$.
\end{prop}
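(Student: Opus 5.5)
The plan is to reduce the eigenvalue problem \eqref{eq: eigenvalue problem with critical length} to exponential sums on the real characteristic roots, as in \cite{Rosier-1997}. The one change is that we impose the $\B$-boundary condition $\G'(0)=\G'(L_0)$ instead of the two conditions $\G'(0)=\G'(L_0)=0$.

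\textbf{Finiteness and characteristic roots.} Finiteness of the pairs is immediate, since $k_m^2+k_ml_m+l_m^2=3L_0^2/(4\pi^2)$ is fixed. Seek solutions $\G=\sum_{j=1}^3 a_j e^{\ii\xi_j x}$. Substituting into $\G'''+\G'+\ii\lambda\G=0$ gives the characteristic equation $\xi^3-\xi=\lambda$. Its roots satisfy
\[
\xi_1+\xi_2+\xi_3=0,\qquad \xi_1\xi_2+\xi_2\xi_3+\xi_3\xi_1=-1,\qquad \xi_1\xi_2\xi_3=\lambda.
\]
Set $c=\frac{\sqrt3}{3\sqrt{k^2+kl+l^2}}$, so that $3c=2\pi/L_0$. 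Take
\[
\xi_1=(2k+l)c,\qquad \xi_2=-(k+2l)c,\qquad \xi_3=(l-k)c.
\]
These sum to $0$, and a direct computation gives $\sum_{i<j}\xi_i\xi_j=-3(k^2+kl+l^2)c^2=-1$. Hence they are exactly the roots for
\[
\lambda=\xi_1\xi_2\xi_3=(2k+l)(k+2l)(k-l)c^3,
\]
which is \eqref{eq: defi of lambda_c} because $c^3=\frac{1}{3\sqrt3(k^2+kl+l^2)^{3/2}}$. The converse direction (that only such triples with $e^{\ii\xi_jL_0}$ all equal occur) I take from Rosier's analysis, which I treat as known. I expect this to be the only genuinely delicate point if one wants a self-contained argument; it goes through the entire-function/Paley–Wiener argument of \cite{Rosier-1997}.

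\textbf{Boundary conditions.} The key observation is that $\xi_1-\xi_2=2\pi(k+l)/L_0$ and $\xi_2-\xi_3=-2\pi l/L_0$ are integer multiples of $2\pi/L_0$. Hence $e^{\ii\xi_jL_0}=\omega$ is the same for all $j$. It follows that $\G(L_0)=\omega\G(0)$ and $\G'(L_0)=\omega\G'(0)$. The conditions therefore reduce to $\sum a_j=0$ together with $(1-\omega)\sum a_j\xi_j=0$. There are two cases.
\begin{enumerate}
\item Imposing $\sum a_j\xi_j=0$ as well forces $a\propto(\xi_2-\xi_3,\xi_3-\xi_1,\xi_1-\xi_2)=3c(-l,-k,k+l)$. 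This gives the formula \eqref{eq: exact formula for critical eigenfunctions}, and also $\G'(0)=0$.
\item If $\omega=1$, the second condition is void. Now $\omega=e^{2\pi\ii(2k+l)/3}$, and $\omega=1$ exactly when $3\mid 2k+l$, i.e.\ when $k\equiv l \bmod 3$. In that case the whole plane $\{\sum a_j=0\}$ consists of eigenfunctions, and we may choose $\Tilde\G_m=(e^{\ii\xi_1x}-e^{\ii\xi_2x})/\sqrt{2L_0}$.
\end{enumerate}
Linear independence of $\G_m$ and $\Tilde\G_m$ holds because $\Tilde\G_m$ has zero third coefficient while $\G_m$ has third coefficient $k+l\neq0$.

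\textbf{Normalisations and derivative values.} The exponentials $e^{\ii\xi_jx}$ are mutually orthogonal in $L^2(0,L_0)$, since their frequency differences are nonzero multiples of $2\pi/L_0$. Therefore
\[
\|\G_m\|^2=\frac{2\pi^2}{3L_0^3}\,L_0\,\bigl(l^2+k^2+(k+l)^2\bigr)=\frac{4\pi^2(k^2+kl+l^2)}{3L_0^2}=1,
\]
and similarly $\|\Tilde\G_m\|=1$. Finally,
\[
\Tilde\G_m'(0)=\frac{\ii(\xi_1-\xi_2)}{\sqrt{2L_0}}=\frac{\ii\,2\pi(k+l)}{L_0\sqrt{2L_0}}=\frac{\ii\sqrt2\pi(k+l)}{L_0^{3/2}},
\]
and $\Tilde\G_m'(L_0)=\omega\,\Tilde\G_m'(0)$ with $\omega=1$, so the two derivative values coincide. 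These checks are routine.
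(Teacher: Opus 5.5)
Your argument is correct and is essentially the paper's proof: you expand $\G$ in the three exponentials $e^{\ii\xi_jx}$, use that $e^{\ii\xi_jL_0}=e^{2\pi\ii(2k+l)/3}$ for every $j$ to reduce the boundary conditions to $\sum a_j=0$ and $(1-e^{2\pi\ii(2k+l)/3})\sum a_j\xi_j=0$, and split according to whether $k\equiv l \bmod 3$; your Vieta check, orthogonality-based normalisation and derivative computation supply details the paper leaves implicit. The appeal to Rosier for a ``converse'' is unnecessary, since the statement only asserts that the displayed functions solve the problem (your computation in fact already gives the whole eigenspace of $\ii\lambda_{c,m}$), and in any case such a converse could only hold for the overdetermined condition $\G'(0)=\G'(L_0)=0$, not for the $\B$-condition alone.
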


We call these $\G_m$ the \textit{Type 1} eigenfunctions.  As is mentioned in \cite[Remark 3.6]{Rosier-1997}, for $L_0\in\mathcal{N}$, the eigenfunction $\G_m$ can generate a unreachable state for \eqref{eq: linearized KdV system-control-intro}. That is the reason that we call this type of eigenfunctions unreachable directions. We denote by $N_0$ the number of Type 1 eigenfunctions, which is also the dimension of the unreachable subspace.

We emphasize that for $k_m-l_m\not\equiv0\mod{3}$, $\G_m$ is the only solution to \eqref{eq: eigenvalue problem with critical length}. When $k_m-l_m\equiv0\mod{3}$, we call the eigenfunctions $\Tilde{\G}_m$ the \textit{Type 2 eigenfunctions}. Moreover, if we find a Type 2 eigenfunction $\widetilde{\G}_m$, then simple observation implies that any linear combination of $\widetilde{\G}_m$ and $\G_m$ is also of Type 2, where $\G_m$ is defined by \eqref{eq: exact formula for critical eigenfunctions}. Therefore, if the Type 2 eigenfunctions exist, we can always find a normalized Type 2 eigenfunction that is orthogonal to $\G_m$.

\begin{exa}
Let us take $k=l=1$ as an example. In such case, $L_0=2\pi\in\mathcal{N}$. $\G(x)=2(1-\cos{x})$ represents the Type $1$ eigenfunction, while $\Tilde{\G}(x)=2\ii \sin{x}$ represents the Type $2$ eigenfunction.    
\end{exa}

\subsubsection{Eigenmodes for $\B$ a fixed non-critical length $L$}
It is well-known that $\B$ is a skew-adjoint operator with compact resolvent (see \cite{coron-lv2014}). Furthermore, if $L\notin \mathcal{N}$, all the eigenvalues $\{\ii\lambda_j\}_{j\in\Z}$ are simple and $\{\lambda_j\}_{j\in\Z}$ could be organized in the way $\cdots< \lambda_{-2}< \lambda_{-1}<0<\lambda_{1}< \lambda_{2}<\cdots$. Let us denote the normalized eigenfunction corresponding to the eigenvalue $\ii\lambda_j$ by $\E_j$ with $\|\E_j\|_{L^2(0,L)}=1$. Then $\{\E_j\}_{j\in\Z\backslash\{0\}}$ forms an orthonormal basis of $L^2(0,L)$. In particular, we point out that $\E_j(0)=\E_j(L)=0$ and $\E'_j(0)=\E'_j(L)\neq0$ for $L\notin\mathcal{N}$. In the following proposition, we would like to analyze the asymptotic behaviors of the eigenvalues and eigenfunctions as $|j|$ tends to infinity for a fixed non-critical length.

\begin{prop}\label{prop: asymptotic eigenvalues}
Let $\{(\ii \lambda_j, \E_j)\}_{j\in\Z\setminus\{0\}}$ be the eigenmodes of the operator $\B$:
\begin{equation*}
\left\{
\begin{array}{l}
     \E'''+\E'+\ii\lambda\E=0,  \\
     \E(0)=\E(L)=\E'(0)-\E'(L)=0.
\end{array}
\right.   
\end{equation*}
Then they satisfy 
\begin{gather*}
    \lambda_j=(\frac{2j\pi}{L})^3+\frac{40\pi^2}{3}j^2+O(j)\text{ as }j\rightarrow+\infty,\\
\lambda_j=(\frac{2j\pi}{L})^3+\frac{8\pi^2}{3}j^2+O(j)\text{ as }j\rightarrow-\infty, \\
\|\E'_j\|_{L^{\infty}(0,L)}=\bigO(|j|).
\end{gather*}
\end{prop}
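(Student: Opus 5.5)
The plan is to reduce the eigenvalue problem for $\B$ to an explicit transcendental equation via the characteristic roots, and then solve that equation asymptotically for $|\lambda|$ large.

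\textbf{Step 1: the characteristic determinant.} For $\lambda\in\R$, write the general solution of $\E'''+\E'+\ii\lambda\E=0$ as $\E(x)=\sum_{k=0}^{2}a_k e^{\mu_k x}$. Here $\mu_0,\mu_1,\mu_2$ are the roots of $\mu^3+\mu+\ii\lambda=0$, which are distinct for $|\lambda|$ large. The three boundary conditions $\E(0)=\E(L)=0$ and $\E'(0)=\E'(L)$ form a $3\times 3$ linear system for $(a_0,a_1,a_2)$. Hence $\ii\lambda$ is an eigenvalue if and only if
\[
D(\lambda):=\det\begin{pmatrix}1&1&1\\ e^{\mu_0L}&e^{\mu_1L}&e^{\mu_2L}\\ \mu_0(1-e^{\mu_0L})&\mu_1(1-e^{\mu_1L})&\mu_2(1-e^{\mu_2L})\end{pmatrix}=0 .
\]

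\textbf{Step 2: expansion of the roots.} Take $\lambda\to+\infty$ and set $\nu=\lambda^{1/3}$. The roots admit the convergent expansion
\[
\mu_k=\omega_k\nu-\frac{1}{3\omega_k\nu}+\bigO(\nu^{-3}),
\]
where $\omega_k$ runs over the cube roots of $-\ii$: one root is $\ii$, and the other two are $\pm\frac{\sqrt3}{2}-\frac{\ii}{2}$. Consequently one characteristic root is nearly purely imaginary, $\mu_0=\ii\nu-\frac{1}{3\ii\nu}+\dots$. Of the other two, one has real part $\sim\frac{\sqrt3}{2}\nu$ and the other $\sim-\frac{\sqrt3}{2}\nu$.

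\textbf{Step 3: asymptotic form of $D(\lambda)=0$ and solving it.} Divide $D$ by the dominant exponential $e^{\mu_+L}$, where $\mu_+$ is the root with large positive real part. Terms containing $e^{\mu_-L}$ or $e^{(\mu_--\mu_+)L}$ are $\bigO(e^{-c\nu})$. The equation then reduces to an identity of the form
\[
e^{\mu_0 L}=\Phi(\omega_0,\omega_\pm)+\bigO(\nu^{-1}),
\]
where $\Phi$ is an explicit unimodular constant built from the ratios $\omega_k/\omega_l$. Taking logarithms gives
\[
\nu L-\frac{L}{3\nu}=2j\pi+\theta_\infty+\bigO(\nu^{-1}),
\]
with an explicit phase $\theta_\infty$. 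Inverting this gives $\nu=\frac{2j\pi}{L}+\frac{\theta_\infty}{L}+\bigO(j^{-1})$. Cubing yields $\lambda_j=(\frac{2j\pi}{L})^3+3(\frac{2j\pi}{L})^2\frac{\theta_\infty}{L}+\bigO(j)$, and the $j^2$ coefficient is read off from this. The case $\lambda\to-\infty$ is handled the same way with $\omega_k$ the cube roots of $+\ii$. The phase differs there, which explains the different constants $\frac{40\pi^2}{3}$ and $\frac{8\pi^2}{3}$. I will recheck the constant carefully against the normalization of $j$, since an off-by-one shift in the labeling changes the $j^2$ coefficient.

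\textbf{Step 4: labeling.} To make sure the $j$-th root of the asymptotic equation really is $\lambda_j$ in the ordering $\cdots<\lambda_{-1}<0<\lambda_1<\cdots$, I will use a Rouché/argument-principle count on the entire function $D$. On large circles in the $\nu$-plane avoiding the asymptotic zeros, compare $D$ with its leading part. This shows there is exactly one eigenvalue near each asymptotic root and no others beyond a finite set. The finitely many low modes are then matched by continuity in $L$, or by a direct count on a fixed contour.

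\textbf{Step 5: the bound $\|\E_j'\|_{L^\infty}=\bigO(|j|)$.} Solve the reduced linear system for $(a_k)$ and normalize in $L^2$. The result is $\E_j(x)=\frac{1}{\sqrt L}\bigl(e^{\mu_0x}+\alpha e^{\mu_+(x-L)}+\beta e^{\mu_-x}\bigr)+\bigO(\nu^{-1})$, with $\alpha,\beta$ bounded. Each exponential factor is bounded by $1$ on $[0,L]$, and differentiation multiplies each term by $|\mu_k|\lesssim\nu\sim|j|$. This gives the bound.

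The main obstacle is Step 3: controlling the $\bigO(\nu^{-1})$ corrections uniformly so that the constant-order phase $\theta_\infty$, and hence the exact $j^2$ coefficient, is correct. Steps 4 and 5 are routine once this is done.
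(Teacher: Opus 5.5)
Your strategy is the paper's: write the eigenfunction as a combination of the three characteristic exponentials, impose the boundary conditions, and let the root with large positive real part dominate. The paper instead uses the exact parametrization $\lambda=2\tau(4\tau^2-1)$, with roots $2\tau$ and $-\tau\pm\ii\sqrt{3\tau^2-1}$, and reduces the resulting real equation to $\cos(\tau L-\frac{\pi}{3})\approx 0$. Your reduced determinant gives $e^{\mu_0L}=\frac{\mu_0-\mu_-}{\mu_0-\mu_+}$ up to exponentially small terms, so $\Phi=\frac{\omega_0-\omega_-}{\omega_0-\omega_+}=e^{-\ii\pi/3}$, which is the same condition $2\tau L\equiv\frac{5\pi}{3}$ modulo $2\pi$. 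Your Step 5 is the paper's bound $|\E_j'(x)|\lesssim|\tau|(e^{-sx}+e^{s(x-L)}+1)$ with $s=\sqrt{3\tau^2-1}$. Your Step 4 goes beyond the paper, which only asserts one root per interval $[\frac{j\pi}{L},\frac{(j+1)\pi}{L})$ and the label $N_L+j$. For that count, apply the argument principle to $D/\prod_{k<l}(\mu_k-\mu_l)$, which is entire in $\lambda$, rather than to $D$ itself. Also, $\mu_0=\ii(\nu+\frac{1}{3\nu})+\dots$, so the $\frac{L}{3\nu}$ term enters with a plus sign; this is harmless.

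The step that cannot be completed is the last one in Step 3: matching the $j^2$ coefficients with $\frac{40\pi^2}{3}$ and $\frac{8\pi^2}{3}$. Your computation gives $\nu L=2\pi n-\frac{\pi}{3}+\bigO(n^{-1})$, hence $\lambda=(\frac{2n\pi}{L})^3-\frac{4\pi^3}{L^3}n^2+\bigO(n)$. Writing $n=j+m$, with the integer offset $m$ fixed by the labeling, gives $\lambda_j=(\frac{2j\pi}{L})^3+\frac{(24m-4)\pi^3}{L^3}j^2+\bigO(j)$. This coefficient carries the factor $L^{-3}$, already visible in your $3(\frac{2\pi}{L})^2\frac{\theta_\infty}{L}$. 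So no normalization of $j$ makes it equal to the $L$-independent constant $\frac{40\pi^2}{3}$ for all $L$.

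Moreover, the boundary conditions are real, so $\overline{\E}$ is an eigenfunction for $-\lambda$ whenever $\E$ is one for $\lambda$. With the ordering $\cdots<\lambda_{-1}<0<\lambda_1<\cdots$ this gives $\lambda_{-j}=-\lambda_j$ (Remark \ref{rem: pm-sign for eigenvalues}). That forces the coefficient at $-\infty$ to be the negative of the one at $+\infty$, so a ``different phase'' at $-\infty$ cannot produce two positive constants. The constants in the statement are not what the asymptotics give.

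The paper's proof has the same defect. Its $\tau_{N_L+j}=\frac{j\pi}{L}+\frac{5\pi}{6}+\bigO(\frac1j)$ should read $\frac{5\pi}{6L}$, and this yields $\lambda_{N_L+j}=(\frac{2j\pi}{L})^3+\frac{20\pi^3}{L^3}j^2+\bigO(j)$, not the stated constant. What your argument, like the paper's, actually proves is $\lambda_j=(\frac{2j\pi}{L})^3+\bigO(j^2)$, together with $\|\E_j'\|_{L^\infty(0,L)}=\bigO(|j|)$. Once Step 4 fixes $m$, you also get the explicit $L$-dependent $j^2$ coefficient above. Only the weaker form is used later, in Proposition \ref{prop: Asymp in L} and Lemma \ref{lem: size of h_j}.
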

To solve the eigenvalue problem for $\B$, we consider the characteristic equation $ (\ii\xi)^3+\ii\xi+\ii \lambda=0$. Introducing a parameter $\tau\in\R$, let $\lambda=2\tau(4\tau^2-1)$. Therefore, the three roots read as $\xi_1=-\tau+\ii\sqrt{3\tau^2-1},\xi_2=-\tau-\ii\sqrt{3\tau^2-1},\xi_3=2\tau$. We distinguish two different regimes:
\begin{itemize}
    \item \textbf{Elliptic regime}:  $3\tau^2-1<0$.
    \item \textbf{Hyperbolic regime}:  $3\tau^2-1>0$.
\end{itemize}
In the proof, we find finite ($2N_L$) elliptic eigenvalues and infinite hyperbolic eigenvalues. We shall see more different features for eigenvalues and eigenfunctions in these two regimes later.
\begin{proof}[Proof of Proposition \ref{prop: asymptotic eigenvalues}]
After the computation of eigenvalues of $\B$, we continue to explore the localization of these eigenvalues and the general form of the associated eigenfunctions. We are now in a position to complete the proof of Proposition \ref{prop: asymptotic eigenvalues}.

We distinguish three different cases.
\begin{enumerate}
\item \textit{Case 1: Elliptic Regime i.e. $3\tau^2-1<0$}.\\
    In this case, we obtain the eigenfunctions in the following form:
\begin{equation}
\E(x)=r_1 e^{-\ii(\tau+\sqrt{1-3\tau^2})x}+r_2 e^{\ii(\sqrt{1-3\tau^2}-\tau)x}+r_3e^{2\ii \tau x}.
\end{equation}
In addition, we are able to compute the derivative of $\E$ in the following form:
\begin{equation}
\E'(x)=-\ii r_1(\tau+\sqrt{1-3\tau^2})e^{-\ii(\tau+\sqrt{1-3\tau^2})x}+\ii r_2(\sqrt{1-3\tau^2}-\tau) e^{\ii(\sqrt{1-3\tau^2}-\tau)x}+2\ii\tau r_3e^{2\ii \tau x}.
\end{equation}
Combining with the boundary conditions, the coefficients $r_1$, $r_2$, and $r_3$ satisfy the following equations:
\begin{align*}
r_2=r_1\frac{ e^{2\ii \tau L}-e^{-\ii(\tau+\sqrt{1-3\tau^2})L}}{e^{\ii(\sqrt{1-3\tau^2}-\tau)L}-e^{2\ii \tau L}},\quad r_3=-r_1\left(1+\frac{ e^{2\ii \tau L}-e^{-\ii(\tau+\sqrt{1-3\tau^2})L}}{e^{\ii(\sqrt{1-3\tau^2}-\tau)L}-e^{2\ii \tau L}}\right).
\end{align*}
Simplifying the above equations, we know that $\tau$ must satisfy the following equation: 
\begin{equation}\label{eq: t-L defined eq-1}
     2\sqrt{1-3\tau^2}\cos{(2\tau L)}-(\sqrt{1-3\tau^2}+3\tau)\cos{((\sqrt{1-3\tau^2}-\tau)L)}+(3\tau-\sqrt{1-3\tau^2})\cos{((\sqrt{1-3\tau^2}+\tau)L)}=0.
\end{equation}
The number of parameters $\tau$ satisfying the equation \eqref{eq: t-L defined eq-1} is finite and depends on $L$. A simple observation shows that if $\tau$ satisfies \eqref{eq: t-L defined eq-1}, then $-\tau$ also does. And we also notice that there exist two trivial solutions $\tau=\pm \frac{\sqrt{3}}{6}$, such that for any $L>0$,
{\footnotesize
\begin{align*}
 &2\sqrt{1-3\tau^2}\cos{(2\tau L)}-(\sqrt{1-3\tau^2}+3\tau)\cos{((\sqrt{1-3\tau^2}-\tau)L)}+(3\tau-\sqrt{1-3\tau^2})\cos{((\sqrt{1-3\tau^2}+\tau)L)}|_{\tau=\frac{\sqrt{3}}{6}}\\
 =&\sqrt{3}\cos{(\frac{\sqrt{3}}{3} L)}-(\frac{\sqrt{3}}{2}+\frac{\sqrt{3}}{2})\cos{(\frac{\sqrt{3}}{2}-\frac{\sqrt{3}}{6}) L}+0\cdot\cos{(\frac{\sqrt{3}}{2}+\frac{\sqrt{3}}{6}) L}
 \equiv0.\\
  &2\sqrt{1-3\tau^2}\cos{(2\tau L)}-(\sqrt{1-3\tau^2}+3\tau)\cos{((\sqrt{1-3\tau^2}-\tau)L)}+(3\tau-\sqrt{1-3\tau^2})\cos{((\sqrt{1-3\tau^2}+\tau)L)}|_{\tau=-\frac{\sqrt{3}}{6}}\\
 =&\sqrt{3}\cos{(\frac{\sqrt{3}}{3} L)}-0\cdot\cos{(\frac{\sqrt{3}}{2}+\frac{\sqrt{3}}{6}) L}-(\frac{\sqrt{3}}{2}+\frac{\sqrt{3}}{2})\cdot\cos{(\frac{\sqrt{3}}{2}-\frac{\sqrt{3}}{6}) L}
 \equiv0.
\end{align*} }
Now we plug $\tau=\frac{\sqrt{3}}{6}$ into the equations to verify the boundary condition. After simplifying the equations, we obtain
\begin{equation*}
\left\{
\begin{array}{l}
     r_1+r_2+r_3=0,  \\
     r_1 e^{-\ii\frac{2\sqrt{3}}{3}L}+r_2 e^{\ii\frac{\sqrt{3}}{3}L}+r_3e^{\ii \frac{\sqrt{3}}{3} L}=0,\\
    -\ii\frac{2\sqrt{3}}{3} r_1+\ii \frac{\sqrt{3}}{3} r_2+\ii\frac{\sqrt{3}}{3} r_3     =-\ii \frac{2\sqrt{3}}{3}r_1 e^{-\ii\frac{2\sqrt{3}}{3}L}+\ii\frac{\sqrt{3}}{3} r_2 e^{\ii\frac{\sqrt{3}}{3}L}+\ii\frac{\sqrt{3}}{3} r_3e^{\ii \frac{\sqrt{3}}{3} L}.
\end{array}
\right.    
\end{equation*}
This implies that $r_1=0$ and $r_2+r_3=0$. Hence, in this case, $\E(x)\equiv0$, which implies that the candidate function fails to be an eigenfunction. Thus, we exclude these two trivial solutions for $\tau$. Therefore, we deduce that in this case, we find $2N_L$ eigenvalues (see more details in Lemma \ref{lem: invariance of elliptic subspace} below) 
$$\{\lambda_{-N_L},\cdots,\lambda_{-1},\lambda_1,\cdots,\lambda_{N_L}\}.$$ Based on the equation above, we obtain the form of $\E_j(x)$, $|j|\leq N_L$,
{\footnotesize
\begin{equation*}
\E_j(x)= \alpha_j\frac{2\ii\left(e^{\ii\tau_j(2L-x)}\sin{(x\sqrt{1-3\tau_j^2})}+e^{-\ii\tau_j(L+x)}\sin{(\sqrt{1-3\tau_j^2}(L-x))}-e^{\ii\tau_j(2x-L)}\sin{(L\sqrt{1-3\tau_j^2})}\right)}{e^{\ii(\sqrt{1-3\tau_j^2}-\tau_j)L}-e^{2\ii \tau_j L}},
\end{equation*}
}
where $\alpha_j(L)$ is a normalized constant such that $\|\E_j\|_{L^2(0,L)}=1$.
\item \textit{Case 2: $3\tau^2-1=0$}.\\ 
In this case, we have $ \xi_1=-\frac{\sqrt{3}}{3},\xi_2=-\frac{\sqrt{3}}{3},\xi_3=\frac{2\sqrt{3}}{3}$, or $\xi_1=\frac{\sqrt{3}}{3},\xi_2=\frac{\sqrt{3}}{3},\xi_3=-\frac{2\sqrt{3}}{3}$. We obtain the eigenfunctions in the form (the other case is similar) $\E(x)=r_1 e^{-\ii\frac{\sqrt{3}}{3}x}+r_2 e^{-\ii\frac{\sqrt{3}}{3}x}+r_3e^{\ii \frac{2\sqrt{3}}{3} x}$. To verify the boundary condition, we know that 
{\footnotesize
\begin{equation*}
\left\{
\begin{array}{l}
     \E(0)=r_1+r_2+r_3=0,  \\
     \E(L)=r_1 e^{-\ii\frac{\sqrt{3}}{3}L}+r_2 e^{-\ii\frac{\sqrt{3}}{3}L}+r_3e^{\ii \frac{2\sqrt{3}}{3} L}=0,\\
     \E'(0)=-\ii\frac{\sqrt{3}}{3}r_1 -\ii\frac{\sqrt{3}}{3}r_2 +\frac{2\sqrt{3}}{3}r_3
     =-\ii\frac{\sqrt{3}}{3}r_1 e^{-\ii\frac{\sqrt{3}}{3}L}-\ii\frac{\sqrt{3}}{3}r_2 e^{-\ii\frac{\sqrt{3}}{3}L}+\frac{2\sqrt{3}}{3}r_3e^{\ii \frac{2\sqrt{3}}{3} L}=\E'(L)
\end{array}
\right.
\end{equation*}
}
The first two equations imply that 
$r_3=0$ and $r_1+r_2=0$. Hence, in this case, $\E'(0)=\E'(L)=0$, which implies that the candidate function cannot satisfy the boundary conditions. As a consequence, the candidate function fails to be an eigenfunction.
\item \textit{Case 3: Hyperbolic Regime i.e. $3\tau^2-1>0$ }.\\
This case is quite similar to the first case, for the details, one can check in the Appendix. This case can also be found in \cite{Cerpa-Crepeau-2009,coron-lv2014}.
\end{enumerate}

In the remaining case $3\tau^2-1>0$, we obtain the eigenfunction $\E$ in the following form:
\begin{equation}
\E(x)=e^{-\ii\tau x}\left[r_1\cosh{(\sqrt{3\tau^2-1}x)}+r_2\sinh{(\sqrt{3\tau^2-1}x)}\right]+r_3e^{2\ii \tau x}.
\end{equation}
In addition, we are able to compute the derivative of $\E$ in the following form:
\begin{equation}
\begin{aligned}
\E'(x)&=-\ii\tau e^{-\ii\tau x}\left[r_1\cosh{(\sqrt{3\tau^2-1}x)}+r_2\sinh{(\sqrt{3\tau^2-1}x)}\right]+2\ii \tau r_3e^{2\ii \tau x}  \\
+&e^{-\ii\tau x}\left[(\sqrt{3\tau^2-1}r_1\sinh{(\sqrt{3\tau^2-1}x)}+\sqrt{3\tau^2-1}r_2\cosh{(\sqrt{3\tau^2-1}x)}\right].
\end{aligned}
\end{equation}
Combining with the boundary conditions, the coefficients $r_1$, $r_2$, and $r_3$ satisfy the following equations:
\begin{equation*}
\left\{
\begin{array}{l}
     \E(0)=r_1+r_3=0,  \\
     \E(L)=e^{-\ii\tau L}\left[r_1\cosh{(\sqrt{3\tau^2-1}L)}+r_2\sinh{(\sqrt{3\tau^2-1}L)}\right]+r_3e^{2\ii \tau L}=0,\\
     \E'(0)=-\ii\tau r_1+2\ii \tau r_3+\sqrt{3\tau^2-1}r_2\\
     =-\ii\tau e^{-\ii\tau L}\left[r_1\cosh{(\sqrt{3\tau^2-1}L)}+r_2\sinh{(\sqrt{3\tau^2-1}L)}\right]+2\ii \tau r_3e^{2\ii \tau L}  \\
+e^{-\ii\tau L}\left[\sqrt{3\tau^2-1}r_1\sinh{(\sqrt{3\tau^2-1}L)}+\sqrt{3\tau^2-1}r_2\cosh{(\sqrt{3\tau^2-1}L)}\right]=\E'(L).
\end{array}
\right.
\end{equation*}
From the first two equations, we obtain
\begin{align*}
r_2&=r_1\frac{ e^{2\ii \tau L}-e^{-\ii\tau L}\cosh{\sqrt{3\tau^2-1})L}}{\sinh{\sqrt{3\tau^2-1})L}},\\
r_3&=-r_1.
\end{align*}
Simplifying the above equations, we know that $\tau$ must satisfy the following equation: 
\begin{equation}\label{eq: t-L defined eq-2}
     \sqrt{3\tau^2-1}\cos{(2\tau L)}-3\tau \sin{(\tau L)}\sinh{(\sqrt{3\tau^2-1}L)}-\sqrt{3\tau^2-1}\cos{(\tau L)}\cosh{(\sqrt{3\tau^2-1}L)}=0.
\end{equation}
As $|\tau|\rightarrow\infty$, we know that
\begin{equation*}
\sinh{(\sqrt{3\tau^2-1}L)}\sim\cosh{(\sqrt{3\tau^2-1}L)}\sim \frac{1}{2}e^{\sqrt{3\tau^2-1}L}\sim \frac{1}{2}e^{|\tau|L}.
\end{equation*}
This implies that
\begin{align*}
e^{\sqrt{3\tau^2-1}L}&=\frac{\cos{2\tau L}}{2\cos(\tau L-\frac{\pi}{3})}+\bigO(1), \text{ as }\tau\rightarrow+\infty,\\
e^{\sqrt{3\tau^2-1}L}&=\frac{\cos{2\tau L}}{2\cos(\tau L+\frac{\pi}{3})}+\bigO(1), \text{ as }\tau\rightarrow-\infty.
\end{align*}
Hence, for $|j|$ large enough, there exists a unique solution $\tau_{N_L+j}$ (respectively $\tau_{-N_L-j}$) in each interval $[\frac{j\pi}{L},\frac{(j+1)\pi}{L})$ (respectively $[-\frac{(j+1)\pi}{L},-\frac{j\pi}{L})$)
\begin{align*}
\tau_{N_L+j}=\frac{j\pi}{L}+\frac{5\pi}{6}+O(\frac{1}{j})\text{ as }j\rightarrow+\infty,\\
\tau_{-N_L-j}=-\frac{j\pi}{L}+\frac{\pi}{6}+O(\frac{1}{j})\text{ as }j\rightarrow+\infty.
\end{align*}
As a consequence, the eigenvalue has the following asymptotic expansion:
\begin{align*}
\lambda_{N_L+j}=(\frac{2j\pi}{L})^3+\frac{40\pi^2}{3}j^2+O(j)\text{ as }j\rightarrow+\infty,\\
\lambda_{-N_L-j}=-(\frac{2j\pi}{L})^3+\frac{8\pi^2}{3}j^2+O(j)\text{ as }j\rightarrow+\infty.
\end{align*}
and the associated eigenfunction $\E_{N_L+j}(x)=$
\begin{equation*}
\alpha_j(L)e^{-\ii\tau_{N_L+j} x}\left[\cosh{(\sqrt{3\tau_{N_L+j}^2-1}x)}+\frac{e^{3\ii\tau_{N_L+j} L}-\cosh{(\sqrt{3\tau_{N_L+j}^2-1}L)}}{\sinh{(\sqrt{3\tau_{N_L+j}^2-1}L)}}\sinh{(\sqrt{3\tau_{N_L+j}^2-1}x)}\right]-\alpha_je^{2\ii \tau_{N_L+j} x},
\end{equation*}
where $\alpha_j(L)$ is a normalized constant such that $\|\E_{N_L+j}\|_{L^2}=1$. In fact, 
\begin{equation*}
\left(\int_0^L|\E_{N_L+j}(x)-\alpha_j\left[e^{-\ii\tau_{N_L+j} x}\frac{2e^{3\ii\tau_{N_L+j} L}+e^{\sqrt{3\tau_{N_L+j}^2-1}(L-x)}}{2\sinh{(\sqrt{3\tau_{N_L+j}^2-1}L)}}-e^{2\ii \tau_{N_L+j} x}\right]|^2dx\right)^{\frac{1}{2}}=\bigO(|\alpha_j|e^{-\sqrt{3\tau_{N_L+j}^2-1}L})
\end{equation*}
which implies that $\alpha_j\rightarrow\frac{1}{\sqrt{L}}$. Moreover, the derivative of $\E_{N_L+j}$ is in the following form
\begin{align*}
|\E'_{N_L+j}(x)|&\lesssim|\tau_{N_L+j}||\alpha_j|\left[|\cosh{(\sqrt{3\tau_{N_L+j}^2-1}x)}+\frac{e^{3\ii\tau_{N_L+j} L}-\cosh{(\sqrt{3\tau_{N_L+j}^2-1}L)}}{\sinh{(\sqrt{3\tau_{N_L+j}^2-1}L)}}\sinh{(\sqrt{3\tau_{N_L+j}^2-1}x)}|\right.\\
+&\left.|\sinh{(\sqrt{3\tau_{N_L+j}^2-1}x)}+\frac{e^{3\ii\tau_{N_L+j} L}-\cosh{(\sqrt{3\tau_{N_L+j}^2-1}L)}}{\sinh{(\sqrt{3\tau_{N_L+j}^2-1}L)}}\cosh{(\sqrt{3\tau_{N_L+j}^2-1}x)}|+1\right]\\
&\lesssim|\tau_{N_L+j}||\alpha_j|\left[|\frac{\sinh{(\sqrt{3\tau_{N_L+j}^2-1}(L-x))}+e^{3\ii\tau_{N_L+j} L}\sinh{(\sqrt{3\tau_{N_L+j}^2-1}x)}}{\sinh{(\sqrt{3\tau_{N_L+j}^2-1}L)}}|\right.\\
+&\left.|\frac{e^{3\ii\tau_{N_L+j} L}\cosh{(\sqrt{3\tau_{N_L+j}^2-1}x)}-\cosh{(\sqrt{3\tau_{N_L+j}^2-1}(L-x))}}{\sinh{(\sqrt{3\tau_{N_L+j}^2-1}L)}}|+1\right]\\
&\lesssim|\tau_{N_L+j}||\alpha_j|\left(e^{-\sqrt{3\tau_{N_L+j}^2-1}x}+e^{\sqrt{3\tau_{N_L+j}^2-1}(x-L)}+1\right).
\end{align*}
Since $x\in(0,L)$, we obtain $\|\E'_{N_L+j}\|_{L^{\infty(0,L)}}\lesssim|\tau_{N_L+j}|\lesssim |j|$. In particular, at $x=L$, we obtain that
\begin{align*}
\E'_{N_L+j}(L)&=-3\ii\tau_{N_L+j}\alpha_je^{2\ii\tau_{N_L+j} L}
+\alpha_j(\sqrt{3\tau_{N_L+j}^2-1})e^{-\ii\tau_{N_L+j} L}\left[\sinh{(\sqrt{3\tau_{N_L+j}^2-1}L)}\right.\\
&\left.+\frac{e^{3\ii\tau_{N_L+j} L}-\cosh{(\sqrt{3\tau_{N_L+j}^2-1}L)}}{\sinh{(\sqrt{3\tau_{N_L+j}^2-1}L)}}\cosh{(\sqrt{3\tau_{N_L+j}^2-1}L)}\right]\\
&=\alpha_j(-3\ii\tau_{N_L+j}+\sqrt{3\tau_{N_L+j}^2-1}L)e^{2\ii\tau_{N_L+j} L}+\alpha_j\bigO(e^{-\sqrt{3\tau_{N_L+j}^2-1}L})\\
&\sim |j|.
\end{align*} 
\end{proof}
\begin{lem}\label{lem: invariance of elliptic subspace}
Let  $L_0\in\mathcal{N}$ be a fixed critical length. Suppose that $0<\delta<\frac{L_0}{2}$ is sufficiently small. Let $I=[L_0-\delta,L_0+\delta]$ be a small compact interval such that $I\cap \mathcal{N}=\{L_0\}$. For any $L\in I\setminus \{L_0\}$, in the elliptic regime, there are $2N_L$ eigenfunctions (we refer to Proposition \ref{prop: asymptotic eigenvalues}). The number of eigenfunctions in the elliptic regime and $N_L$ are both invariant for $L\in I$.
\end{lem}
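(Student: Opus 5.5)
The number $N_L$ of elliptic eigenvalues should be locally constant in $L$ on $I$. The idea is to view the elliptic eigenvalues as zeros of a real-analytic function of $(\tau,L)$ and count them, showing zeros can neither appear nor disappear as $L$ varies in $I$. Set
\[
F(\tau,L):=2\sqrt{1-3\tau^2}\cos(2\tau L)-(\sqrt{1-3\tau^2}+3\tau)\cos((\sqrt{1-3\tau^2}-\tau)L)+(3\tau-\sqrt{1-3\tau^2})\cos((\sqrt{1-3\tau^2}+\tau)L).
\]
By the proof of Proposition \ref{prop: asymptotic eigenvalues}, the elliptic eigenvalues at length $L$ correspond exactly to the zeros $\tau\in(-\frac{1}{\sqrt3},\frac{1}{\sqrt3})\setminus\{\pm\frac{\sqrt3}{6}\}$ of $F(\cdot,L)$. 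Since $F(-\tau,L)=F(\tau,L)$, it suffices to count zeros with $\tau\ge0$. Case 2 of that proof shows $\tau^2=\frac13$ never gives an eigenfunction.

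The first step is to factor out the trivial roots. The zero of $F$ at $\tau=\frac{\sqrt3}{6}$ holds identically in $L$, so write $F(\tau,L)=(\tau-\frac{\sqrt3}{6})(\tau+\frac{\sqrt3}{6})G(\tau,L)$ with $G$ continuous in $(\tau,L)$ on $[-\frac1{\sqrt3},\frac1{\sqrt3}]\times I$. Since the eigenvalues of $\B$ are simple for $L\notin\mathcal N$, and $\lambda=2\tau(4\tau^2-1)$ is strictly monotone on each elliptic branch, the eigenvalue count is the number of zeros of $G(\cdot,L)$. It is convenient to reparametrize the elliptic regime by an angle, $\tau=\frac{1}{\sqrt3}\sin\theta$, so that $\sqrt{1-3\tau^2}=\cos\theta$ and $G$ becomes analytic in $\theta$ with no square-root singularity.

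The second step is to exclude zeros escaping through the boundary of the elliptic regime. I would check that $G(\pm\frac1{\sqrt3},L)\neq0$ for all $L\in I$. At $\tau=\frac1{\sqrt3}$, the function $F$ reduces to $\sqrt3(\cos(\frac{L}{\sqrt3})-\cos(\frac{2L}{\sqrt3}))$, so this amounts to a computation showing $\cos(L/\sqrt3)\ne\cos(2L/\sqrt3)$ on $I$. This is where the size $\delta=\frac{\pi^2}{3L_0^2}$ (or just smallness of $\delta$) enters: the values of $L$ where the identity holds are $L\in 2\pi\sqrt3\,\Z$ and $L\in\frac{2\pi}{\sqrt3}\Z$. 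These are the degenerate lengths with $k=l$ or with the boundary case $l=0$. If such an $L$ lies in $I$, one needs a finer argument, namely the exact multiplicity of the root at the endpoint as a function of $\theta$; I would handle this by Taylor expansion in $\theta$ near $\pi/2$.

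The third step, which I expect to be the main obstacle, is to exclude collisions of zeros in the interior, i.e.\ a zero of $G$ crossing a trivial root or two zeros merging. A merger at some $L\ne L_0$ would produce a double eigenvalue of $\B$; but simplicity of the spectrum only holds off $\mathcal N$, and $I\cap\mathcal N=\{L_0\}$. At $L_0$ itself several eigenvalues may coalesce, for example the pairs $\lambda_{\pm1}\to0$ at $2\pi$, or $\lambda_1,\lambda_2$ at $2\pi\sqrt7$. Counted with multiplicity, however, zeros of an analytic function vary continuously (Hurwitz/Rouch\'e applied to $G(\cdot,L)$ in $\theta$). Hence the multiplicity-counted number of zeros is constant on $I$, and for $L\ne L_0$ all zeros are simple, so $2N_L$ is constant on $I\setminus\{L_0\}$. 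At $L_0$, the corresponding count with multiplicity (Type 1 plus Type 2 eigenfunctions) agrees, which gives invariance on all of $I$.

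The delicate points are verifying that the trivial roots $\pm\frac{\sqrt3}{6}$ never coincide with a genuine zero of $G$. I would check this by evaluating $\partial_\tau F$ at $\pm\frac{\sqrt3}6$ and showing it is not zero on $I$, which is a trigonometric inequality in $L$. The other point is the endpoint analysis of the second step.
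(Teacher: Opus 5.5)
Your route differs from the paper's (which invokes Kato's stability of the total multiplicity of the spectrum enclosed by a contour, for the analytic family $\B(L)=\B(L_0)+(L-L_0)\mathcal R(L)$). As written, it contains two computational errors and leaves open the one condition on which everything hinges.

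\textbf{The two errors.} First, $F(\pm\frac1{\sqrt3},L)=0$ for \emph{every} $L$. Set $s=\sqrt{1-3\tau^2}$ and $\xi_1=-\tau-s$, $\xi_2=-\tau+s$, $\xi_3=2\tau$. Then $F=(\xi_2-\xi_1)\cos\xi_3L+(\xi_1-\xi_3)\cos\xi_2L+(\xi_3-\xi_2)\cos\xi_1L$, a constant multiple of the boundary determinant and hence alternating in the roots. It therefore vanishes identically wherever two roots collide: at $\tau=\pm\frac{\sqrt3}{6}$ and also at $\tau=\pm\frac1{\sqrt3}$. The meaningful endpoint quantity is
$\lim_{\tau\to1/\sqrt3}F/(2s)=\cos\frac{2L}{\sqrt3}-\cos\frac{L}{\sqrt3}-\sqrt3L\sin\frac{L}{\sqrt3}=-2\sin\frac{L}{2\sqrt3}\bigl(\sin\frac{\sqrt3L}{2}+\sqrt3L\cos\frac{L}{2\sqrt3}\bigr)$.
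Your expression drops the term coming from $\sin(sL)/s\to L$.

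Second, the parametrization is three-to-one. With $\tau=\frac1{\sqrt3}\sin\theta$ one gets $\lambda=-\frac{2\sqrt3}{9}\sin3\theta$. So each elliptic eigenvalue gives three zeros of $G$, and the points $\theta=\pm\frac\pi6,\pm\frac\pi2$ are \emph{all} preimages of the threshold $|\lambda|=\frac{2\sqrt3}{9}$.

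\textbf{The fix.} Divide $F$ by the full Vandermonde $(\xi_1-\xi_2)(\xi_2-\xi_3)(\xi_3-\xi_1)$. The quotient is symmetric in the roots, hence an entire function $\Delta(\lambda,L)$ whose zeros are the (real) eigenvalues. Hurwitz then makes your Step 3 automatic, since coalescence at $L_0$ is harmless when counting with multiplicity. Your ``trivial-root'' check and your ``endpoint'' check become one and the same condition: $\pm\ii\frac{2\sqrt3}{9}\notin\sigma(\B(L_0))$. The paper's Kato argument tacitly needs exactly this as well, namely a contour through $\pm\ii\frac{2\sqrt3}{9}$ lying in the resolvent set.

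\textbf{The genuine gap.} That condition is not established, and no finer Taylor expansion can close the gap, because the condition can fail and then the count really jumps. You rely on Case 2 of the proof of Proposition~\ref{prop: asymptotic eigenvalues}, but that case has two problems:
\begin{enumerate}
\item It uses the degenerate ansatz $(r_1+r_2)e^{-\ii x/\sqrt3}+r_3e^{2\ii x/\sqrt3}$, whereas the true general solution contains $xe^{-\ii x/\sqrt3}$.
\item Even within that ansatz, the deduction $r_3=0$ fails when $e^{\ii\sqrt3L}=1$.
\end{enumerate}

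\textbf{A counterexample.} For $L\in2\sqrt3\pi\N^*$ (the zeros of the factor $\sin\frac{L}{2\sqrt3}$ above), the function $\E=e^{-\ii x/\sqrt3}-e^{2\ii x/\sqrt3}$ satisfies $\E(0)=\E(L)=0$, $\E'(0)=\E'(L)=-\ii\sqrt3$ and $\B\E=\ii\frac{2\sqrt3}{9}\E$. Moreover, $14\sqrt3\pi=2\pi\sqrt{(15^2+15\cdot9+9^2)/3}$ is a critical length. This eigenvalue is simple. Rescaling to $(0,1)$ and applying Hellmann--Feynman gives
$\lambda'(L)=-\frac1L\bigl(3\lambda+\frac{2}{\ii}\frac{\langle\E',\E\rangle}{\|\E\|^2}\bigr)$,
which equals $-\frac{\sqrt3}{L_0}\neq0$ at $L_0=14\sqrt3\pi$. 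So this eigenvalue and its mirror at $-\frac{2\sqrt3}{9}$ cross into the elliptic regime as $L$ passes $L_0$. Consequently $2N_L$ is not constant on $I$, however small $\delta$ is.

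\textbf{Conclusion.} The statement needs the extra hypothesis $\pm\ii\frac{2\sqrt3}{9}\notin\sigma(\B(L_0))$. Under that hypothesis, your argument rewritten with $\Delta(\lambda,L)$ is a correct and more explicit alternative to the appeal to Kato. Without it, neither your argument nor the paper's can work.
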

\begin{proof}
Let us denote the stationary KdV operator $\B$ in $(0,L)$ by $\B(L)$. Under the condition of $I$, for $\forall L\in I$, $\B(L)=\B(L_0)+(L-L_0)\mathcal{R}(L)$, with $\mathcal{R}(L)=\frac{L_0^2+L_0L+L^2}{L_0^3}\B(L_0)-\frac{L(L+L_0)}{L_0^3}\p_x$ and $|L-L_0|\leq\delta$. For $\forall L\in I$, by Definition \ref{defi: elliptic/hyperbolic index and subspaces}, $L^2(0,L)=U_E(L)\oplus U_H(L)$ and $U_E(L)$ is finite-dimensional. Following \cite[Chapter 4 \textsection 3.4, Theorem 3.16 and Chapter 4 \textsection 3.5]{Kato-book}, we know that $N_E(L):=dim(U_E(L))$ is invariant for $L\in I$. Since $U_E(L)$ is spanned by eigenfunctions in the elliptic regime of $\B(L)$, the number of eigenfunctions in the elliptic regime of $\B(L)$, i.e. $2N_L$, is invariant for $L\in I$.  
\end{proof}
\begin{rem}
As we presented in Section \ref{sec: Eigenvalues and eigenfunctions at the critical length}, $N_0$, the dimension of the unreachable subspace, is related to the number of eigenfunctions of Type 1 in the elliptic regime. We shall see the explicit relation between $N_E(L)$ and $N_0$ in Proposition \ref{prop: Index set M-E}.
\end{rem}
\begin{rem}
Lemma \ref{lem: invariance of elliptic subspace} provides invariant quantities of the limiting process as $L$ gets close to the critical set $\mathcal{N}$. However, following this approach, it is hard to obtain a quantitative explicit description of the eigenvalues and eigenfunctions. Therefore, we adopt a different method to derive quantitative estimates and asymptotic behaviors of the eigenmodes.
\end{rem}

\begin{defi}\label{defi: elliptic/hyperbolic index and subspaces}
We define two index sets 
\begin{gather*}
    \Lambda_E:=\{j\in\Z:\ii\lambda_j \text{ is in the elliptic regime}\}\\
    \Lambda_H:=\{j\in\Z:\ii\lambda_j \text{ is in the hyperbolic regime}\}.
\end{gather*}
Furthermore, we define the elliptic subspace and the hyperbolic subspace by 
\begin{gather*}
    U_E(L):=Span\{\E_j: j\in \Lambda_E\},\;\;\; 
    U_H(L):=Span\{\E_j: j\in\Lambda_H\}.
\end{gather*}
In particular, we know that $N_E(L):=dim(U_E(L))=2N_L<\infty$.
\end{defi}
\begin{rem}\label{rem: link-elliptic-unreacheable}
Recall the unreachable eigenvalues $\ii\lambda_{c,m}$ in \eqref{eq: defi of lambda_c} and we notice that $\ii\lambda_{c,m}$ is in the elliptic regime. This motivates us to investigate the relation between ``the elliptic subspace" and ``the unreachable space".  More details are presented in Section \ref{sec: limiting analysis on different types}. 
\end{rem}

\subsubsection{Asymptotic behaviors for $\B$ as $L$ approaches $\mathcal{N}$}\label{sec: Asymptotic behavior close to the critical lengths}
In this section, we would like to analyze the quantitative asymptotic behaviors of eigenmodes:
\begin{equation*}
\{(\ii \lambda_j, \E_j)(L)\}_{j\in\Z\setminus\{0\}} \textrm{ as } L \textrm{ tends to } L_0.
\end{equation*}
There are many results in this section, which can be sorted as follows:
\begin{enumerate}
    \item For the hyperbolic regime, asymptotic behaviors of eigenvalues are described in Proposition \ref{prop: Asymp in L}, while the uniform estimates of eigenfunctions are presented in Proposition \ref{prop: High-frequency behaviors: uniform estimates} and Proposition \ref{prop: Low-frequency behaviors: uniform estimates};
    \item For the elliptic regime, the asymptotic expansion of eigenvalues is presented in Proposition \ref{prop: Asymp in L-low}, while the singularity of eigenfunctions is described in Proposition \ref{prop: Low-frequency behaviors: Singular limits}. 
\end{enumerate}
Before we state our asymptotic propositions, we introduce the following assumption~:

\smallskip {\bf (C)} \textit{Let  $L_0=2\pi\sqrt{\frac{k^2+kl+l^2}{3}}$ be a fixed critical length. Suppose that $0<\delta<\frac{L_0}{2}$ is sufficiently small. Let $I=[L_0-\delta,L_0+\delta]$ be a small compact interval such that $I\cap \mathcal{N}=\{L_0\}$.}
\vspace{1em}

$\bullet$ \textbf{Hyperbolic regime.} To analyze the asymptotic behaviors as $L$ approaches the critical length $L_0$, we begin with the following proposition, which describes the eigenvalue asymptotic behaviors in the hyperbolic regime as $|j|\rightarrow+\infty$.
\begin{prop}[Hyperbolic eigenvalue localization]\label{prop: Asymp in L}
Let $I$ satisfy the condition {\bf (C)}. Then for every $L\in I$, the sequence  $\{\lambda_j(L)\}_{j\in\Z\backslash\{0\}}$ uniformly satisfies 
    \begin{align*}
        \lambda_{j}=(\frac{2j\pi}{L})^3+O(j^2)\text{ as }j\rightarrow+\infty,\;
\lambda_{j}=(\frac{2j\pi}{L})^3+O(j^2)\text{ as }j\rightarrow-\infty.
    \end{align*}
\end{prop}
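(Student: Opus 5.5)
The plan is to redo the hyperbolic-regime analysis from the proof of Proposition \ref{prop: asymptotic eigenvalues}, this time tracking uniformity in $L\in I$. Write $\lambda=2\tau(4\tau^2-1)$ and $s=\sqrt{3\tau^2-1}$. Then $\tau$ solves the dispersion relation \eqref{eq: t-L defined eq-2}. Divide this relation by $\tfrac12 s\,e^{sL}$ and rewrite it as
\begin{equation*}
F(\tau,L):=\cos(\tau L)+\frac{3\tau}{s}\sin(\tau L)=2e^{-sL}\cos(2\tau L)-\frac{3\tau}{s}e^{-2sL}\sin(\tau L)+\cos(\tau L)\,e^{-2sL}.
\end{equation*}
Since $L\geq L_0-\delta\geq L_0/2$, the right-hand side is bounded by $Ce^{-sL_0/2}$ uniformly in $L\in I$. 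The constant $C$ depends only on $L_0$.

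As $\tau\to+\infty$ we have $\frac{3\tau}{s}=\sqrt3+O(\tau^{-2})$. Hence the left-hand side equals $2\cos(\tau L-\frac{\pi}{3})+O(\tau^{-2})$, again uniformly in $L\in I$. The analogous expansion with $+\frac{\pi}{3}$ holds as $\tau\to-\infty$.

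The key step is localization via Rouché's theorem, or equivalently the intermediate value theorem plus monotonicity. Fix $\tau_*$ large, depending only on $L_0$ and $\delta$. On each interval $\tau L-\frac{\pi}{3}\in[m\pi,(m+1)\pi]$ with $\tau\geq\tau_*$, the function $2\cos(\tau L-\frac{\pi}{3})$ changes sign. Its derivative $-2L\sin(\tau L-\frac{\pi}{3})$ has modulus at least $L_0$ near the zeros. The perturbation and its $\tau$-derivative are $O(\tau^{-2})$, uniformly in $L$, because $\partial_\tau$ of the error terms only brings down factors of $L\leq 2L_0$. So there is exactly one root in each such interval, and it satisfies
\begin{equation*}
\tau L=\tfrac{\pi}{3}+\tfrac{\pi}{2}+m\pi+O(m^{-2}),
\end{equation*}
with the $O$-constant independent of $L\in I$. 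The same argument applies for $\tau\to-\infty$.

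The main obstacle is the \emph{indexing}: I must show that the root in the $m$-th interval is $\tau_j$ with $j=m+c$ for a shift $c$ that does not depend on $L$. For this I would invoke Lemma \ref{lem: invariance of elliptic subspace}, which says the number $2N_L$ of elliptic eigenvalues is constant on $I$. Then I need the number of hyperbolic roots with $\sqrt3/3<\tau<\tau_*$ to be constant on $I$. This follows because the roots depend continuously on $L$; they are simple, being eigenvalues of $\B(L)$, which are simple off $\mathcal{N}$, and at $L_0$ one argues via the Kato perturbation used in Lemma \ref{lem: invariance of elliptic subspace}. Roots cannot escape through $\tau=\tau_*$, since by the localization above none lies near the chosen endpoint, say $\tau_*L-\frac{\pi}{3}\in\pi\Z+\frac{\pi}{2}$ fails to be a root for all $L$. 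One has to choose $\tau_*$ depending on $L$ (with $\tau_*L$ fixed) to make this work. Roots also cannot cross $\tau=\sqrt3/3$, by Case 2 of Proposition \ref{prop: asymptotic eigenvalues}.

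Granting the indexing, $\tau_j=\frac{j\pi}{L}+O(1)$ uniformly. Therefore
\begin{equation*}
\lambda_j=8\tau_j^3-2\tau_j=\left(\frac{2j\pi}{L}\right)^3+O(j^2)
\end{equation*}
uniformly in $L\in I$, and similarly for $j\to-\infty$. The finitely many remaining indices $|j|\lesssim\tau_*L_0$ are absorbed into the $O(j^2)$ constant, since $|\lambda_j|$ is bounded there uniformly by the same counting.
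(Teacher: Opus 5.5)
Your route is the paper's own: you reduce \eqref{eq: t-L defined eq-2} to $\cos(\tau L\mp\frac{\pi}{3})=O(\tau^{-2})$, with errors uniform in $L$ because $L\ge L_0/2$. Your one-root-per-interval localization (sign change plus a derivative bound, both uniform in $L$) makes rigorous what the paper only asserts with ``Hence''. The flipped signs on the two $e^{-2sL}$ terms in your rearranged identity are harmless.

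The indexing step, however, needs repair, for two reasons.
\begin{itemize}
\item The safe endpoint must satisfy $\tau_*L-\frac{\pi}{3}\in\pi\Z$, where the leading term is $\pm2$. Your choice $\tau_*L-\frac{\pi}{3}\in\pi\Z+\frac{\pi}{2}$ puts it exactly on the roots.
\item More importantly, your constancy of the shift rests on Lemma \ref{lem: invariance of elliptic subspace} and on Case~2 of Proposition \ref{prop: asymptotic eigenvalues}, and Case~2 is wrong. At $3\tau^2=1$ the root $-\tau$ is double. In fact $e^{-\ii x/\sqrt3}-e^{2\ii x/\sqrt3}$ is an eigenfunction of $\B$ with $\lambda=\frac{2\sqrt3}{9}$ for every $L\in2\sqrt3\pi\N^*$.
\end{itemize}
This failure occurs at an actual critical length. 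Take $L_0=14\sqrt3\pi$, which lies in $\mathcal N$ via $(k,l)=(15,9)$. A Hellmann--Feynman computation on the rescaled skew-adjoint family gives $\frac{d\lambda}{dL}=-\frac{\sqrt3}{L_0}\neq0$ for that branch. So an eigenvalue crosses the threshold $\tau=\frac{\sqrt3}{3}$ as $L$ passes this $L_0$. Consequently $N_L$ and your hyperbolic count below $\tau_*$ each jump there; only their sum stays constant.

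The fix is to ask for less. A bounded index shift changes $(2j\pi/L)^3$ by only $O(j^2)$. So you only need
\[
\sup_{L\in I}\#\{j:0<\lambda_j(L)\le R\}<\infty
\]
for some fixed $R\ge\sup_{L\in I}2\tau_*(4\tau_*^2-1)$. This follows from Rouch\'e's theorem applied to the characteristic determinant of $\B(L)$, which is entire in $\lambda$ and continuous in $L$. For each $L'\in I$, pick a radius in $(R,R+1)$ carrying no zeros at $L'$; the zero count is then locally constant near $L'$, and compactness of $I$ gives a uniform bound. This argument needs neither the lemma nor Case~2.
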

\begin{proof}[Proof of Proposition \ref{prop: Asymp in L}]
The uniform asymptotic behavior of the eigenvalues is quite similar to that in Proposition \ref{prop: asymptotic eigenvalues}. We know that $(\tau,L)$ satisfies the equation,
\begin{equation*}
     \sqrt{3\tau^2-1}\cos{(2\tau L)}-3\tau \sin{(\tau L)}\sinh{(\sqrt{3\tau^2-1}L)}-\sqrt{3\tau^2-1}\cos{(\tau L)}\cosh{(\sqrt{3\tau^2-1}L)}=0.
\end{equation*}
As $|\tau|\rightarrow\infty$, we know that
\begin{equation*}
\frac{1}{4}e^{\sqrt{3\tau^2-1}L}\leq\sinh{(\sqrt{3\tau^2-1}L)}\leq \frac{1}{2}e^{\sqrt{3\tau^2-1}L},\frac{1}{4}e^{\sqrt{3\tau^2-1}L}\leq\cosh{(\sqrt{3\tau^2-1}L)}\leq \frac{1}{2}e^{\sqrt{3\tau^2-1}L}
\end{equation*}
Therefore, we obtain, uniformly in $L$,
\begin{align*}
e^{\sqrt{3\tau^2-1}L}&=\frac{\cos{2\tau L}}{2\cos(\tau L-\frac{\pi}{3})}+\bigO(1),\text{ as }\tau\rightarrow+\infty,\;
e^{\sqrt{3\tau^2-1}L}&=\frac{\cos{2\tau L}}{2\cos(\tau L+\frac{\pi}{3})}+\bigO(1), \text{ as }|\tau|\rightarrow+\infty.
\end{align*}
Hence, for $|j|$ large enough, the eigenvalue has the following asymptotic expansion uniformly in $L$:
\begin{align*}
\lambda_{j}=(\frac{2j\pi}{L})^3+O(j^2)\text{ as }j\rightarrow+\infty,\;
\lambda_{j}=(\frac{2j\pi}{L})^3+O(j^2)\text{ as }j\rightarrow-\infty.
\end{align*}
\end{proof}
The following lemma gives a description on the eigenvalue bounds.
\begin{lem}\label{lem: bound of eigenvalues}
Let $I$ satisfy the condition {\bf (C)}. For any integer $J>0$ there exists an effectively constant $K$ such that for any $L\in I$, any $|j|\leq J$, the eigenvalues  $\lambda_j$  of $\B$ satisfies $ |\lambda_j|\leq K$. In particular, for any $L\in I\backslash\{L_0\}$, any $k\in \N^*$, we know that $|\lambda_{-N_L-k}|=|\lambda_{N_L+k}|\geq \frac{2\sqrt{3}}{9}$.
\end{lem}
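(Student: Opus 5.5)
The lemma has two parts: a uniform bound on the finitely many low eigenvalues for $L\in I$, and a uniform lower bound $\frac{2\sqrt3}{9}$ on hyperbolic eigenvalues. Both follow from the parametrization $\lambda=2\tau(4\tau^2-1)=8\tau^3-2\tau$ together with the elliptic/hyperbolic split.

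\textbf{Lower bound.} For $L\in I\setminus\{L_0\}$, an index $|j|\ge N_L+1$ is hyperbolic. By Lemma~\ref{lem: invariance of elliptic subspace}, $N_L$ is constant on $I$ and the $2N_L$ elliptic eigenvalues carry the indices $|j|\le N_L$. So $|\lambda_{\pm(N_L+k)}|$ equals $|\lambda|$ for some real $\tau$ with $3\tau^2>1$.

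Set $f(\tau)=8\tau^3-2\tau$. Then $f'(\tau)=24\tau^2-2>0$ for $\tau^2>\frac1{12}$, so $f$ is increasing on $(\frac{1}{\sqrt3},\infty)$, and $f$ is odd. Hence for $|\tau|>\frac1{\sqrt3}$,
$$|\lambda|>f\Big(\tfrac{1}{\sqrt3}\Big)=\tfrac{8}{3\sqrt3}-\tfrac{2}{\sqrt3}=\tfrac{2}{3\sqrt3}=\tfrac{2\sqrt3}{9}.$$

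Case 2 of the proof of Proposition~\ref{prop: asymptotic eigenvalues} excludes the borderline $3\tau^2=1$, so the inequality is strict, which gives the claim. The symmetry $|\lambda_{-N_L-k}|=|\lambda_{N_L+k}|$ would need a separate justification via $\tau\mapsto-\tau$. That map preserves \eqref{eq: t-L defined eq-2} only up to the $\sin$ term: I would check it by conjugation. Since $\B$ is real, $\overline{\E}$ is an eigenfunction for $-\ii\lambda$, so the spectrum is symmetric, and the ordering then matches indices. This is the point needing care.

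\textbf{Uniform upper bound for $|j|\le J$.} In the elliptic regime $|\tau|<\frac1{\sqrt3}$, so directly $|\lambda|\le\frac{2\sqrt3}{9}$. For hyperbolic indices $N_L<|j|\le J$, I would use the localization from the proof of Proposition~\ref{prop: Asymp in L}: uniformly for $L\in I$, every root $\tau$ of \eqref{eq: t-L defined eq-2} beyond a fixed threshold $\tau^*$ lies one per interval of length $\pi/L\ge \pi/(L_0+\delta)$.

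The roots with $|\tau|\le\tau^*$ are zeros of a nontrivial real-analytic function of $\tau$ on a compact set, depending continuously on $L\in I$ (compact). Their number is uniformly bounded, for instance by Jensen's formula or Rouché applied to a holomorphic extension in $\tau$, with constants uniform over $L\in I$.

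So the $j$-th hyperbolic root satisfies $|\tau_j|\le \tau^*+(|j|+C)\pi/(L_0-\delta)$. This gives $|\lambda_j|\le f(\tau^*+(J+C)\pi/(L_0-\delta))=:K$, which is explicit.

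The main obstacle is the uniform count of roots in the compact window $|\tau|\le\tau^*$ over $L\in I$. I would handle it with the uniform bounds on $\sinh$ and $\cosh$ from Proposition~\ref{prop: Asymp in L}, or alternatively by continuity of the eigenvalues in $L$ (Kato perturbation, as in Lemma~\ref{lem: invariance of elliptic subspace}) on the compact $I$.
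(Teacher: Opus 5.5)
Your proposal is correct and follows the paper's route. Elliptic eigenvalues are bounded by $\frac{2\sqrt3}{9}$, and hyperbolic ones are bounded below because $3\tau^2>1$ and above through the $\pi/L$-spaced localization of the hyperbolic roots of \eqref{eq: t-L defined eq-2}. Restricting that localization to $\tau\ge\tau^*$ is more careful than the paper, which places $\tau_{N_L+k}$ in $[\frac{k\pi}{L},\frac{(k+1)\pi}{L})$ for every $k\ge1$ although the cited asymptotics only give this for large $k$.

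Two small corrections. First, the uniform count of roots in the window $|\tau|\le\tau^*$, which you call the main obstacle, is not needed. Having at least one root in each interval $[\frac{m\pi}{L},\frac{(m+1)\pi}{L})$ beyond $\tau^*$ already puts the $k$-th positive hyperbolic root below $\tau^*+(k+1)\frac{\pi}{L_0-\delta}$, however many roots lie in the window. Second, \eqref{eq: t-L defined eq-2} is exactly even in $\tau$, since the term $\tau\sin(\tau L)$ is even. So the symmetry $\lambda_{-j}=-\lambda_j$ follows from that evenness as well as from your conjugation argument.
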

\begin{proof}
For our punctured interval $I\setminus\{L_0\}$, we have the decomposition $I\setminus\{L_0\}=I^1_J\cup I^2_J$, with 
\begin{equation*}
I^1_J=\{L\in I\setminus\{L_0\} \text{ such that }N_L< J\}\quad I^2_J=\{L\in I\setminus\{L_0\} \text{ such that }N_L\geq J\}.
\end{equation*}
Therefore, $I=I^1_J\cup I^2_J\cup\{L_0\}$.
\begin{enumerate}
    \item First, for the critical length $L_0\in I$, by the proof of Lemma \ref{lem: critical eigenvalues} in Appendix \ref{sec: Proof of Lemma-lem: critical eigenvalues}, we know that $|\lambda_{c}(k_j,l_j)|<\frac{2\sqrt{3}}{9}$.
    \item For any $L\in I^2_J$, $|j|\leq J\leq N_L$, we deduce that $3\tau_j^2<1$. Hence, we know that $|\lambda_j|<\frac{2\sqrt{3}}{9}$.
    \item For any $L\in I^1_J$, we know that $|\lambda_j|<\frac{2\sqrt{3}}{9}$ holds for $|j|\leq N_L$. For $N_L<|j|\leq J$, we are in the hyperbolic regime, i.e., $3\tau_j^2>1$. Due to $\lambda_j=2\tau_j(4\tau_j^2-1)$, we have
    \begin{equation*}
        |\lambda_j|=2|\tau_j|(\tau_j^2+3\tau_j^2-1)>2|\tau_j|^3>2(\frac{\sqrt{3}}{3})^3=\frac{2\sqrt{3}}{9}.
    \end{equation*}
    However, by the proof of Proposition \ref{prop: asymptotic eigenvalues}, there are finite $\tau_{N_L+k}$ (respectively $\tau_{-N_L-k}$), $1\leq k\leq J-N_L$, such that $\tau_{N_L+k}\in [\frac{k\pi}{L},\frac{(k+1)\pi}{L})$ (respectively $\tau_{-N_L-k}\in [-\frac{(k+1)\pi}{L},-\frac{k\pi}{L})$). Hence, 
    \begin{equation*}
        |\lambda_{N_L+k}|=|2\tau_{N_L+k}(4\tau_{N_L+k}^2-1)|\leq |8(\frac{(k+1)\pi}{L})^3|\leq \frac{8J^3\pi^3}{(L_0-\delta)^3}.
    \end{equation*}
\end{enumerate}
In summary, let us define a constant $K=K(I,J)=\frac{8J^3\pi^3}{(L_0-\delta)^3}+\frac{2\sqrt{3}}{9}$. Then, for any $L\in I$, any $|j|\leq J$, the eigenvalues $\lambda_j$ of $\B$ satisfies $|\lambda_j|\leq K:=\frac{8J^3\pi^3}{(L_0-\delta)^3}+\frac{2\sqrt{3}}{9}$.
\end{proof}
\begin{rem}
Combining with Lemma \ref{lem: bound of eigenvalues}, one can use the eigenvalue bound $\frac{2\sqrt{3}}{9}$ or equivalently the index $N_L$ to divide the spectrum of $\B$ into elliptic/hyperbolic regimes. We can use either of them for convenience.
\begin{figure}[h]
\centering
\includegraphics[width=0.7\textwidth]{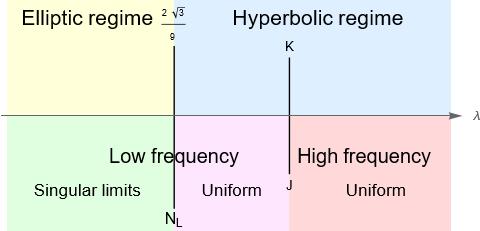} 
\caption{Spectral Division of $\B$}
\label{fig: spectrum divide} 
\end{figure}
\end{rem}
\begin{prop}[High-frequency behaviors of hyperbolic eigenfunctions: uniform estimates]\label{prop: High-frequency behaviors: uniform estimates}
Let $I$ satisfy the condition {\bf (C)}. There exists an  integer $J$ and a  constant $\gamma$ such that for any $L\in I$, any eigenfunction $\E_j$ with $|j|>J$ of the operator $\B$ satisfies that 
  \begin{equation}
        |\E'_{j}(0)|= |\E'_{j}(L)|\geq \gamma|j|.
  \end{equation}
\end{prop}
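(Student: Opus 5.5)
The plan is to repeat, uniformly in $L\in I$, the explicit hyperbolic computation carried out in the proof of Proposition \ref{prop: asymptotic eigenvalues}. First choose $J$ so large that every $\E_j$ with $|j|>J$ lies in the hyperbolic regime for every $L\in I$. This is possible because $N_L$ is constant on $I$ by Lemma \ref{lem: invariance of elliptic subspace}. Moreover, by Proposition \ref{prop: Asymp in L} and the localization $\tau_{\pm(N_L+k)}\in \pm[\frac{k\pi}{L},\frac{(k+1)\pi}{L})$, we have $|\tau_j|\ge c|j|$ uniformly for $L\in I$, since $L\le L_0+\delta$. In particular, for $|j|>J$ the quantity $s_j:=\sqrt{3\tau_j^2-1}\ge \sqrt2|\tau_j|$ is large, and $e^{-s_jL}\le e^{-c|j|(L_0-\delta)}$ uniformly.

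Next, use the explicit eigenfunction formula. For the hyperbolic regime the normalized eigenfunction is
\[
\E_j=\alpha_j\Bigl(e^{-\ii\tau_j x}\bigl[\cosh(s_jx)+\beta_j\sinh(s_jx)\bigr]-e^{2\ii\tau_jx}\Bigr),\qquad \beta_j=\frac{e^{3\ii\tau_jL}-\cosh(s_jL)}{\sinh(s_jL)}.
\]
Evaluating at $x=0$ gives
\[
\E_j'(0)=\alpha_j\bigl(-\ii\tau_j+s_j\beta_j-2\ii\tau_j\bigr)=\alpha_j\bigl(-3\ii\tau_j+s_j\beta_j\bigr).
\]
Since $\beta_j=-1+\bigO(e^{-s_jL})$ uniformly for $L\in I$, this yields
\[
\E_j'(0)=-\alpha_j\bigl(s_j+3\ii\tau_j\bigr)+\alpha_j\bigO(|j|e^{-s_jL}).
\]
The main term has modulus $|\alpha_j|\sqrt{s_j^2+9\tau_j^2}=|\alpha_j|\sqrt{12\tau_j^2-1}\ge 3|\alpha_j||\tau_j|$. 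Since $\E_j\in D(\B)$ we have $\E_j'(0)=\E_j'(L)$, so the same bound holds at $x=L$ automatically. The paper's computation at $x=L$ can serve as a consistency check.

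The remaining step, which I expect to be the main obstacle, is a uniform lower bound $|\alpha_j|\ge \frac{1}{2\sqrt{L_0+\delta}}$ for $|j|>J$ and all $L\in I$. I would reuse the approximation from the proof of Proposition \ref{prop: asymptotic eigenvalues}: up to an $L^2$ error $\bigO(|\alpha_j|e^{-s_jL})$, the bracket equals
\[
e^{-\ii\tau_jx}\,\frac{2e^{3\ii\tau_jL}+e^{s_j(L-x)}}{2\sinh(s_jL)}-e^{2\ii\tau_jx}.
\]
The first piece has $L^2$ norm $\bigO(s_j^{-1/2})$ uniformly: it is a boundary layer at $x=0$ plus an exponentially small term. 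The last piece has norm exactly $\sqrt L$. Hence the bracket's $L^2$ norm lies in $[\sqrt L-Cj^{-1/2},\sqrt L+Cj^{-1/2}]$ with $C$ independent of $L\in I$. From $\|\E_j\|=1$ we get $|\alpha_j|^{-1}\le \sqrt{L}+C|j|^{-1/2}$, which gives the required lower bound after possibly enlarging $J$.

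Finally, absorb the error term. Enlarge $J$ so that $|j|e^{-s_jL}\le \tfrac12|\tau_j|$ and $|\tau_j|\ge c|j|$. Then
\[
|\E_j'(0)|\ge \tfrac{3}{2}|\alpha_j||\tau_j|\ge \frac{3c}{4\sqrt{L_0+\delta}}\,|j|=:\gamma|j|.
\]
Throughout, the uniformity in $L$ rests only on $L\in[L_0-\delta,L_0+\delta]$ being bounded away from $0$ and $\infty$, together with Proposition \ref{prop: Asymp in L}.
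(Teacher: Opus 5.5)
Your argument is correct and follows the paper's route: the explicit hyperbolic eigenfunction, the exact boundary derivative, $|\tau_j|\gtrsim|j|$, and a uniform lower bound on $|\alpha_j|$. The differences are that you evaluate at $x=0$ and use the full modulus $\sqrt{12\tau_j^2-1}$, whereas the paper evaluates at $x=L$ and keeps only a real part; moreover, the paper merely asserts the lower bound on $|\alpha_j|$, and you actually prove it.

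One small repair. The approximation you borrow from the proof of Proposition~\ref{prop: asymptotic eigenvalues} is not accurate near $x=L$. Exactly, $\cosh(s_jx)+\beta_j\sinh(s_jx)=\frac{\sinh(s_j(L-x))+e^{3\ii\tau_jL}\sinh(s_jx)}{\sinh(s_jL)}$, which also contains an $O(1)$ boundary layer at $x=L$. Nevertheless, the two pieces are bounded pointwise by $e^{-s_jx}$ and $e^{-s_j(L-x)}$, so each has $L^2(0,L)$ norm at most $(2s_j)^{-1/2}$. Your bound $|\alpha_j|^{-1}\le\sqrt L+C|j|^{-1/2}$ therefore follows directly from the exact formula.
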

\begin{proof}[Proof of Proposition \ref{prop: High-frequency behaviors: uniform estimates}]
Let $J>2N_0+J_0(L_0)$ with some $J_0\geq \frac{L_0}{\pi}-1$. We know that for $|j|>J$, $|\lambda_j|>\frac{2\sqrt{3}}{9}$ Furthermore, we know that
\begin{align*}
\E'_{N_L+j}(L)&=-3\ii\tau_{N_L+j}\alpha_je^{2\ii\tau_{N_L+j} L}
+\alpha_j(\sqrt{3\tau_{N_L+j}^2-1})e^{-\ii\tau_{N_L+j} L}\left[\sinh{(\sqrt{3\tau_{N_L+j}^2-1}L)}\right.\\
&\left.+\frac{e^{3\ii\tau_{N_L+j} L}-\cosh{(\sqrt{3\tau_{N_L+j}^2-1}L)}}{\sinh{(\sqrt{3\tau_{N_L+j}^2-1}L)}}\cosh{(\sqrt{3\tau_{N_L+j}^2-1}L)}\right]\\
&=-3\ii\tau_{N_L+j}\alpha_je^{2\ii\tau_{N_L+j} L}
+\alpha_j(\sqrt{3\tau_{N_L+j}^2-1})e^{-\ii\tau_{N_L+j} L}\left[\frac{e^{3\ii\tau_{N_L+j} L}\cosh{(\sqrt{3\tau_{N_L+j}^2-1}L)}-1}{\sinh{(\sqrt{3\tau_{N_L+j}^2-1}L)}}\right].
\end{align*}
Hence, we obtain the estimates
\begin{align*}
|e^{2\ii\tau_{N_L+j} L}\E'_{N_L+j}(L)|&=\left|-3\ii\tau_{N_L+j}\alpha_j
+\alpha_j\sqrt{3\tau_{N_L+j}^2-1}\frac{\cosh{(\sqrt{3\tau_{N_L+j}^2-1}L)}-e^{-3\ii\tau_{N_L+j} L}}{\sinh{(\sqrt{3\tau_{N_L+j}^2-1}L)}}\right|\\
&\geq \left|\alpha_j\sqrt{3\tau_{N_L+j}^2-1}\frac{\cosh{(\sqrt{3\tau_{N_L+j}^2-1}L)}-\cos{3\tau_{N_L+j} L}}{\sinh{(\sqrt{3\tau_{N_L+j}^2-1}L)}}\right|\\
\end{align*}
Since $N_L+j\geq J>2N_0+J_0(L_0)$, we deduce that $j>J_0(L_0)$. This implies that $\tau_{N_L+j}\geq\frac{(J_0+1)\pi}{L}>\frac{2}{3}>\frac{\sqrt{3}}{3}$. Therefore, for any $j>J_0(L_0)$, $\tau_{N_L+j}>\frac{2}{3}>\frac{\sqrt{3}}{3}$, and there exists a constant $\gamma$ such that
\begin{equation*}
\left|\alpha_j\frac{\cosh{(\sqrt{3\tau_{N_L+j}^2-1}L)}-\cos{3\tau_{N_L+j} L}}{\sinh{(\sqrt{3\tau_{N_L+j}^2-1}L)}}\right|>\epsilon_*.    
\end{equation*}
Let $\gamma=\epsilon_*\frac{\sqrt{3}\pi}{3L_0}$. Since $\frac{\sqrt{3\tau_{N_L+j}^2-1}}{|j|}\geq \frac{\sqrt{3(\frac{j\pi}{L})^2-1}}{|j|}\geq \frac{\sqrt{3(\frac{j\pi}{L})^2-\frac{9}{4}(\frac{j\pi}{L})^2}}{|j|}\geq \frac{\sqrt{3}\pi}{3L_0}$, we conclude that 
\begin{equation*}
        |\E'_{j}(0)|= |\E'_{j}(L)|\geq \gamma|j|.
  \end{equation*}
\end{proof} 
\begin{prop}[Low-frequency behaviors of hyperbolic eigenfunctions: uniform estimates]\label{prop: Low-frequency behaviors: uniform estimates}
 Let $I$ satisfy the condition {\bf (C)}. Let  $K>\frac{2\sqrt{3}}{9}$. There exists $\gamma= \gamma(K)>0$ such that for any $L\in I$ and for any eigenfunction $\E_j$ associated to $\lambda_j$ satisfying $|\lambda_j|\leq K$ and $j\notin\Lambda_E$, we obtain $|\E_j'(0)|= |\E_j'(L)|\geq \gamma$.
\end{prop}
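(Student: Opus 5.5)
The plan is a compactness argument in the parameters $(L,\tau)$. Any eigenvalue with $|\lambda_j|\le K$ and $j\notin\Lambda_E$ lies in the hyperbolic regime $3\tau^2>1$. Since $\lambda=2\tau(4\tau^2-1)$ is strictly monotone for $|\tau|\ge\frac{\sqrt3}{3}$, the bound $|\lambda_j|\le K$ confines $\tau_j$ to a compact set $\frac{\sqrt3}{3}<|\tau|\le\tau_K$. The lower limit $\frac{\sqrt3}{3}$ is excluded, because Case 2 in the proof of Proposition \ref{prop: asymptotic eigenvalues} shows that $3\tau^2=1$ gives no eigenfunction.

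\textbf{Step 1: reduce to nonvanishing of a continuous function.} I use the explicit hyperbolic eigenfunction from the proof of Proposition \ref{prop: asymptotic eigenvalues}, with $s=\sqrt{3\tau^2-1}$ and $r_1=1$:
\[
\E=e^{-\ii\tau x}\bigl[\cosh(sx)+\rho\sinh(sx)\bigr]-e^{2\ii\tau x},\qquad \rho=\frac{e^{3\ii\tau L}-\cosh(sL)}{\sinh(sL)}.
\]
Then $\E'(0)=-3\ii\tau+s\rho$, and the normalized eigenfunction is $\E_j=\E/\|\E\|_{L^2}$. Both $\E'(0)$ and $\|\E\|_{L^2}$ are continuous in $(L,\tau)$ on the open region $s>0$.

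\textbf{Step 2: show $\E'(0)\neq0$ at every eigenpair.} Suppose $\E'(0)=0$. Then $\E'(L)=\E'(0)=0$, so $\E$ satisfies four boundary conditions, $\E(0)=\E(L)=\E'(0)=\E'(L)=0$. I plan to rule this out by the Rosier-type argument. Multiply $\E'''+\E'+\ii\lambda\E=0$ by $e^{-\ii\xi x}$ and integrate; the boundary terms vanish, so $\widehat{\E}(\xi)\,(\ldots)$ must be an entire function of exponential type. Rosier's analysis shows this forces the exponents to be real, i.e. purely oscillatory. Equivalently, solutions of this overdetermined problem exist only in the elliptic regime: they are exactly the Type 1 eigenfunctions at $L\in\mathcal{N}$, whose frequencies $\xi_i$ are real.

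A more elementary route uses Step 1 directly. $\E'(0)=0$ means $s\rho=3\ii\tau$. Since $s>0$ and $\tau\ne0$, this requires $\Re\rho=0$, which means $\cos(3\tau L)=\cosh(sL)>1$. That is impossible because $sL>0$. This elementary route is the one I would actually use.

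\textbf{Step 3: get a uniform bound.} From Step 2,
\[
|s\rho|\ \ge\ s\,\frac{\cosh(sL)-\cos(3\tau L)}{\sinh(sL)} ,
\]
and this quantity is positive. The difficulty is uniformity as $s\to0^+$, i.e. $\tau\to\pm\frac{\sqrt3}{3}$, where the expression may degenerate.

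\textbf{The main obstacle: eigenvalues approaching the regime boundary.} As $s\to0$, $\frac{s}{\sinh(sL)}\to\frac1L$, so the lower bound tends to
\[
\frac{1-\cos(\sqrt3L)}{L}.
\]
This limit can vanish when $\sqrt3L\in2\pi\Z$. I would handle this as follows:
\begin{enumerate}
\item Use the eigenvalue equation \eqref{eq: t-L defined eq-2} to show that hyperbolic roots $\tau(L)$ for $L\in I$ stay a uniform distance from $\pm\frac{\sqrt3}{3}$. Along a sequence with $s\to0$, the equation \eqref{eq: t-L defined eq-2} converges to the condition from Case 2, which has no nontrivial solution.
\item Control the normalization $\|\E\|_{L^2}$. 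It is bounded above since $|\rho|\lesssim 1+1/s$ and $s$ stays bounded below by item 1. It is bounded below as well.
\end{enumerate}

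With $s\ge s_0>0$ on the compact set $\{(L,\tau):L\in I,\ s_0\le s\le\sqrt{3\tau_K^2-1}\}$, the continuous positive function $|\E'(0)|/\|\E\|_{L^2}$ attains a minimum $\gamma(K)>0$. Finally, $|\E_j'(L)|=|\E_j'(0)|$ holds by the boundary condition.
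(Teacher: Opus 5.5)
Your Step~2 is correct, and your overall route (the closed-form hyperbolic eigenfunction plus continuity and compactness in $(\tau,L)$) is genuinely different from the paper's proof and much lighter. The paper instead extends $\E_j$ by zero and runs a Paley--Wiener/argument-principle contradiction in the style of Krieger--Xiang. Note that your inequality $\Re\,\E'(0)=s\,\Re\rho<0$ holds for every $(\tau,L)$ with $s>0$, eigenpair or not; it is the same inequality the paper uses for high frequencies.

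The gap is item~1 of Step~3, and relatedly the exclusion of $|\tau|=\frac{\sqrt3}{3}$ in Step~1. Both rest on Case~2 in the proof of Proposition~\ref{prop: asymptotic eigenvalues}, which is not correct:
\begin{itemize}
\item At $3\tau^2=1$ the root $-\tau$ is double, so the general solution is $(r_1+r_2x)e^{-\ii\tau x}+r_3e^{2\ii\tau x}$, not the ansatz used there.
\item Even within that ansatz, the deduction $r_3=0$ fails when $e^{\ii\sqrt3 L}=1$.
\item \eqref{eq: t-L defined eq-2} tends to $0=0$ as $s\to0$. The meaningful limit is that equation divided by $s$, namely $\cos 2a-\cos a-3a\sin a=0$ with $a=\tau L$, and it has solutions, e.g.\ $a\in 2\pi\Z$.
\end{itemize}
Concretely, take $L_0=14\sqrt3\,\pi\in\mathcal{N}$ (pair $(15,9)$). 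Then $e^{-\ii x/\sqrt3}-e^{2\ii x/\sqrt3}$ is an eigenfunction of $\B$ on $(0,L_0)$ with eigenvalue $\ii\frac{2\sqrt3}{9}$. The implicit function theorem for the divided equation gives $\frac{d\tau}{dL}=-\frac{1}{6\tau_*L_0}<0$ at $\tau_*=\frac{\sqrt3}{3}$. So for $L<L_0$ this eigenvalue is hyperbolic, with $s\to0^+$ as $L\to L_0^-$. Item~1 is therefore false for this $L_0$, and no choice of $\delta$ helps. Along this branch your lower bound $s\,\frac{\cosh(sL)-\cos(3\tau L)}{\sinh(sL)}$ tends to $\frac{1-\cos(42\pi)}{L_0}=0$, while $|\E'(0)|$ itself tends to $\sqrt3$. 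So it is your bound, not the statement, that degenerates.

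The repair is easy and makes item~1 unnecessary: keep the full modulus of $\E'(0)$ and include $s=0$ in the compact set.
\begin{itemize}
\item Write $s\rho=\frac{s}{\sinh(sL)}\bigl(e^{3\ii\tau L}-\cosh(sL)\bigr)$ and $\rho\sinh(sx)=\bigl(e^{3\ii\tau L}-\cosh(sL)\bigr)\frac{\sinh(sx)}{\sinh(sL)}$. Both $\E'(0)$ and $\E$ then extend continuously to $s=0$; the limit is the double-root solution with $r_2=\frac{e^{3\ii\tau L}-1}{L}$.
\item The bound $|\rho\sinh(sx)|\le 1+\cosh(sL)$ controls $\|\E\|_{L^2}$ from above with no lower bound on $s$.
\item At $s=0$, $\E'(0)=-3\ii\tau+\frac{e^{3\ii\tau L}-1}{L}$ vanishes only if $\cos(3\tau L)=1$ and $\sin(3\tau L)=3\tau L$, which forces $\tau L=0$.
\item $\E\not\equiv0$ for all $s\ge0$, because the coefficient of $e^{2\ii\tau x}$ is $-1$.
\end{itemize}
Hence $|\E'(0)|/\|\E\|_{L^2(0,L)}$ is continuous and positive on the compact set $\{(\tau,L): L\in I,\ \frac13\le\tau^2\le\tau_K^2\}$, and its minimum is an admissible $\gamma(K)$. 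For $3\tau^2\ge1$ the conditions $\E(0)=\E(L)=0$ determine $r_2,r_3$ from $r_1$. So every normalized $\E_j$ with $j\notin\Lambda_E$ and $|\lambda_j|\le K$ is a unimodular multiple of $\E/\|\E\|_{L^2}$, and you need no information about where the eigenvalues lie. With this change your argument is a complete, short alternative to the paper's, which avoids closed formulas at the cost of a long chain of contour estimates.
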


\begin{proof}[Sketch of the proof of Proposition \ref{prop: Low-frequency behaviors: uniform estimates}]
Here we only state the sketch of the proof and more technical details can be found in Appendix \ref{sec: Remainders of the proof-uniform-low}. Indeed, this proof is inspired by the proof of \cite[Proposition 1.2]{Krieger-Xiang-2021}.
We argue by contradiction. Suppose that there exists $j_0\notin \Lambda_E$ and $|\lambda_{j_0}|< K$ such that $|\E'_{j_0}(0)|=|\E'_{j_0}(L)|\leq\gamma$. For simplicity, we denote by $\Lambda=\lambda_{j_0}$ in this proof. We first extend the function $\E_{j_0}$ trivially past the endpoints of the interval $[0,L]$, we obtain a function 
$f(x)=\E_{j_0}(x)$, for $x\in[0,L]$, and $f(x)=0$, for $x\notin[0,L]$. Then, the extended function $f$, via Fourier transform, further satisfies the equation:
\begin{equation}
\Hat{f}(\xi)\cdot((\ii\xi)^3+\ii \xi+\ii\Lambda)=2\alpha-2\beta e^{-\ii\xi L}+\ii\theta\xi(1-e^{-\ii\xi L}),
\end{equation}
where $\alpha=\E_{j_0}''(0),\beta=\E_{j_0}''(L),\theta=\E_{j_0}'(0)=\E_{j_0}'(L)$. In addition, we know that $|\theta|\leq \gamma$. By Palay-Wiener theorem, it is easy to see that $\Hat{f}$ is a holomorphic function when we extend $\xi$ to complex values, as $f$ is compactly supported in $\R$. Therefore, away from the zeros of the polynomial $(\ii\xi)^3+\ii \xi+\ii\Lambda$, we have the following expression
\begin{equation}
    \Hat{f}(\xi)=\ii\frac{2\alpha-2\beta e^{-\ii\xi L}+\ii\theta\xi(1-e^{-\ii\xi L})}{\xi^3-\xi-\Lambda}.
\end{equation}
Let $\gamma_*=\frac{R^{\frac{5}{2}}}{(1+18e^{12 L_0 R})^{\frac{1}{2}}}$. Thus, provided that $\gamma<\frac{(2\pi-1)R^{\frac{3}{2}}}{\sqrt{3(3+2e^{12R L_0})}}$, we can deduce that
$|\alpha|+|\beta|>\gamma_*$ similarly as in \cite{Krieger-Xiang-2021} (See more details in Appendix \ref{sec: Remainders of the proof-uniform-low}).  Then, let $\varepsilon_*=\varepsilon_*(K)=\frac{1}{2}e^{-\frac{2}{L_0}\sqrt{\frac{3}{4}(1+3/2K)^{\frac{2}{3}}-1}}$.
Similarly as in \cite{Krieger-Xiang-2021}  (See more details in Appendix \ref{sec: Remainders of the proof-uniform-low}), we can verify that $\varepsilon_*|\beta|\leq |\alpha| \leq \varepsilon_*^{-1}|\beta|$ and $\min\{|\alpha|,|\beta|\}\geq \frac{\varepsilon_*}{\varepsilon_*+1}\gamma_*$. Thanks to this fact, all the roots of $2\alpha-2\beta e^{-\ii L\xi}$ are of the form: $\mu_0+\frac{2\pi n}{L},\text{ with }|\Re{\mu_0}|\leq \frac{\pi}{L},\Im{\mu_0}\leq \frac{1}{L}\ln{\frac{1}{\varepsilon_*}}$.
Using Cauchy's argument principle, provided that $\gamma$ satisfies $48(1+\varepsilon_*)\gamma\left(\frac{96 R e^{2RL_0}(1+e^{2R L_0})}{r\varepsilon_*^2\gamma_*L_0}+\frac{1+e^{2R L_0}+ 2R L_0 e^{2R L_0}}{\varepsilon_*\gamma_*L_0}\right)<1$, all zeros of the numerator $2\alpha-2\beta e^{-\ii\xi L}+\ii\theta\xi(1-e^{-\ii\xi L})$ in $D_R$ are of the form $\mu_0+\frac{2k\pi}{L}+\bigO(r),k\in\Z$, where $|\mu_0|\leq \frac{2\pi}{L}$ and $r\leq\min\{\frac{\pi}{L},R\}$. Because all zeros of the denominator should also be the zeros of the numerator, assuming that $\xi_0^3-\xi_0=\Lambda$, we know that the roots of the denominator are of the form
\begin{equation*}
     \xi_1=\xi_0, \; 
    \xi_2=\xi_0+\frac{2k\pi}{L}+2\bigO(r), \;
    \xi_3=\xi_0+\frac{2(k+l)\pi}{L}+2\bigO(r),
\end{equation*}
where $k,l$ are positive integers. 
Since $\xi_1+\xi_2+\xi_3=0$,
we obtain $3\xi_0+(2k+l)\frac{2\pi}{L}+4\bigO(r)=0$.
Therefore, we know that $|\xi_0+(2k+l)\frac{2\pi}{3L}|\leq \frac{4}{3}r$. Since $j\notin \Lambda_E$, we know that $\min_{k,l}|\xi_0+(2k+l)\frac{2\pi}{3L}|>0$.
In particular, if we require that $\frac{4}{3}r< \min_{k,l}|\xi_0+(2k+l)\frac{2\pi}{3L}|$. Then, we obtain a contradiction, which implies that there exists a constant $\gamma=\gamma(K)>0$ such that for any eigenfunction $\E_j$ associated to $\lambda_j$ satisfying $|\lambda_j|\leq K$ and $j\notin\Lambda_E$, we obtain $|\E_j'(0)|= |\E_j'(L)|\geq \gamma$.
\end{proof}
$\bullet$ \textbf{Elliptic regime.} In the following proposition, we study the eigenvalue asymptotic behaviors in the elliptic regime.
\begin{prop}[Eigenvalue asymptotic behaviors: elliptic regime]\label{prop: Asymp in L-low}
Let $I$ satisfy the condition {\bf (C)}. Then for every $L\in I$, and $j\in \Lambda_E(L)$, there exists a critical eigenvalue $\ii\lambda_c(L_0,j)$ such that $\lambda_j(L)$ has the following asymptotic expansion 
    \begin{align*}
       \lambda_j= \lambda_c(L_0,j)+O(|L- L_0|).
    \end{align*}
\end{prop}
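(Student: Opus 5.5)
The plan is to treat elliptic eigenvalues as roots of a real-analytic dispersion relation in $(\tau,L)$ and then apply a perturbation argument around the critical length. With the parametrization $\lambda=2\tau(4\tau^2-1)$ from the proof of Proposition \ref{prop: asymptotic eigenvalues}, an elliptic eigenvalue $\ii\lambda_j(L)$, $j\in\Lambda_E(L)$, corresponds to a root $\tau_j(L)\in(-\frac{\sqrt3}{3},\frac{\sqrt3}{3})\setminus\{\pm\frac{\sqrt3}{6}\}$ of
\[
F(\tau,L):=2\sqrt{1-3\tau^2}\cos(2\tau L)-(\sqrt{1-3\tau^2}+3\tau)\cos((\sqrt{1-3\tau^2}-\tau)L)+(3\tau-\sqrt{1-3\tau^2})\cos((\sqrt{1-3\tau^2}+\tau)L)=0 .
\]
The map $\tau\mapsto 2\tau(4\tau^2-1)$ is smooth with bounded derivative on this compact range. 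It therefore suffices to show that every root $\tau_j(L)$ lies within $\bigO(|L-L_0|)$ of some root $\tau_c$ of $F(\cdot,L_0)$ corresponding to a critical eigenvalue $\ii\lambda_c(L_0,j)$. The candidates $\lambda_c$ at $L_0$ are exactly the elliptic eigenvalues of $\B(L_0)$ described in Proposition \ref{prop: type-1-2}: the $\lambda_{c,m}$ of \eqref{eq: defi of lambda_c}. Type 2 eigenfunctions share the same eigenvalue, so they add no new points.

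\textbf{Step 1: uniform separation from the spurious roots.} The roots $\pm\frac{\sqrt3}{6}$ (and the edge $\pm\frac{\sqrt3}{3}$) are zeros of $F$ for every $L$ and must be factored out. I would write $F(\tau,L)=(\tau^2-\frac1{12})\,G(\tau,L)$, possibly also dividing by a power of $\sqrt{1-3\tau^2}$, with $G$ real-analytic in $(\tau,L)$ on $[-\frac{\sqrt3}{3},\frac{\sqrt3}{3}]\times I$. Case 2 of that proof, together with the computation at $\tau=\pm\frac{\sqrt3}{6}$, shows these points never carry eigenfunctions. Hence genuine elliptic eigenvalues are exactly the zeros of $G(\cdot,L)$, possibly up to a finite explicit exceptional set I would check by hand. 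At $L=L_0$ the zeros of $G(\cdot,L_0)$ are the finitely many $\tau_{c,m}$, which are isolated.

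\textbf{Step 2: Weierstrass preparation / Rouché.} Near each $\tau_{c,m}$, $G(\cdot,L_0)$ has a zero of some finite order $p_m\ge1$. Real analyticity gives $G(\tau,L)=(\tau-\tau_{c,m})^{p_m}a(\tau)+(L-L_0)b(\tau,L)$ with $a(\tau_{c,m})\ne0$ and $b$ bounded. Rouché, or directly Weierstrass preparation, places the zeros of $G(\cdot,L)$ in a small disc around $\tau_{c,m}$ within $C|L-L_0|^{1/p_m}$ of $\tau_{c,m}$. Away from these discs, $|G(\cdot,L_0)|\ge c>0$ on the compact remaining range, so for $\delta$ small there are no further zeros. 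Lemma \ref{lem: invariance of elliptic subspace} (constancy of $N_L$) keeps the count consistent, so every $j\in\Lambda_E(L)$ is assigned a $\lambda_c(L_0,j)$.

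\textbf{Step 3: improving to the linear rate — the main obstacle.} Step 2 only gives $\bigO(|L-L_0|^{1/p_m})$, so the crux is to upgrade this to $\bigO(|L-L_0|)$ even when $p_m>1$. This happens in the degenerate cases from the introduction ($2\pi$, $2\pi\sqrt7$, $14\pi$), where two branches $\lambda_{\pm1}$ or $\lambda_1,\lambda_2$ meet. My approach would avoid the scalar dispersion relation there and use self-adjoint perturbation theory. By the rescaling in the proof of Lemma \ref{lem: invariance of elliptic subspace}, $\B(L)$ is unitarily equivalent to $\B(L_0)+(L-L_0)\mathcal R(L)$, a skew-adjoint analytic family on a fixed space. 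The perturbation is relatively bounded, since $\mathcal R$ involves $\B(L_0)$ and $\p_x$. Kato's theory for selfadjoint holomorphic families (Rellich) then says the eigenvalues near an isolated eigenvalue of finite multiplicity of $\B(L_0)$ are analytic functions of $L$, despite the crossings. Hence each $\lambda_j(L)$ is Lipschitz, which gives $\lambda_j=\lambda_c(L_0,j)+\bigO(|L-L_0|)$. The constants are uniform on $I$ because only finitely many isolated critical eigenvalues are involved, bounded by Lemma \ref{lem: bound of eigenvalues}. The point to verify is the relative boundedness of $\mathcal R$ with small relative bound, since $\B(L_0)$ appears in it with coefficient $\bigO(1)$ multiplied by $L-L_0$; I would handle this by writing $\B(L)=(1+\bigO(L-L_0))\B(L_0)+\bigO(L-L_0)\p_x$ and factoring out the scalar.
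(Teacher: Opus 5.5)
Your Step 3 takes a genuinely different route from the paper's, and for the $\bigO(|L-L_0|)$ rate it is the cleaner one. The paper stays with the scalar dispersion function $F(t,L)$. At each root $\tau_c$ attached to a pair $(k,l)$ it computes $\nabla F$ and $\nabla^2F$ explicitly. It then applies the Morse lemma when $k\equiv l\pmod 3$ (vanishing gradient, nondegenerate Hessian, two branches), and the implicit function theorem otherwise ($\p_tF\neq0$, $\p_LF=0$).

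Your concern about relative bounds disappears after the exact unitary rescaling $(U\varphi)(y)=\sqrt{L/L_0}\,\varphi(Ly/L_0)$. It gives $U\B(L)U^{-1}=(L_0/L)^3\bigl(\B(L_0)+\epsilon(L)\p_y\bigr)$ on the fixed domain $D(\B(L_0))$, with $\epsilon(L)=1-(L/L_0)^2$. There $\p_y$ is skew-symmetric and $\B(L_0)$-bounded with relative bound $0$. So Rellich's theorem gives analytic, hence Lipschitz, branches through every eigenvalue of $\B(L_0)$, crossings included. No case distinction or nondegeneracy check is needed, and your Steps 1--2 and the appeal to Lemma \ref{lem: invariance of elliptic subspace} become unnecessary.

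What the paper's computation buys is the exact first-order data. This is the splitting $\pm|L-L_0|/(\pi\sqrt{k^2+kl+l^2})$ in \eqref{eq: pm-eigenvalues}, hence the lower bound $|\lambda_{\sigma^+(q)}-\lambda_{\sigma^-(q)}|\sim|L-L_0|$ that the uniform-in-$L$ bounds of Corollary \ref{coro: uniform est for bi-family} need, and also the quadratic rate when $k\not\equiv l\pmod 3$. In your framework, getting these means diagonalizing the compression of $\p_y$ (plus the scaling term) to the $L_0$-eigenspace, which is essentially the same computation.

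The genuine gap is identifying the limit with a \emph{critical} eigenvalue. You assert at the outset that the elliptic eigenvalues of $\B(L_0)$ are exactly the $\lambda_{c,m}$ of Proposition \ref{prop: type-1-2}, and again in Step 2 that $G(\cdot,L_0)$ has no zeros besides the $\tau_{c,m}$. That proposition only describes the eigenfunctions belonging to the $\lambda_{c,m}$. It does not say they exhaust the elliptic spectrum of $\B(L_0)$, and in general they do not.

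Take $L_0=4\pi$. The only pair is $(2,2)$, so the only critical eigenvalue is $0$. Yet $F(0.16,4\pi)\approx-0.09<0<0.51\approx F(0.17,4\pi)$. The resulting root $\tau_*\approx0.1616$ has three distinct real frequencies. Hence $\B(4\pi)$ has the elliptic eigenvalues $\pm\ii\lambda_*$ with $|\lambda_*|=|2\tau_*(4\tau_*^2-1)|\approx0.289$, whose eigenfunctions have nonzero boundary derivative. Their Rellich branches give indices $j\in\Lambda_E(L)$ with $\lambda_j(L)\to\pm\lambda_*\neq0$.

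So your argument proves, completely, that each $\lambda_j(L)$ with $j\in\Lambda_E(L)$ lies within $C|L-L_0|$ of \emph{some eigenvalue of $\B(L_0)$}. If $\lambda_c(L_0,j)$ is read as a Rosier critical eigenvalue, the statement is false in general. The paper's proof has the same blind spot. It is purely local near the points $(\tau_c,L_0)$ and never excludes other zeros of $F(\cdot,L_0)$ in the elliptic strip. The count $\#\Lambda_E=2N_0$ in Proposition \ref{prop: Index set M-E} rests on the same unproved assumption.
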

\begin{proof}
Before we present our proof, we note that a more precise correspondence of the index is stated in Proposition \ref{prop: Index set M-E}.\\
In this proof, we omit some calculation details and put them into the Appendix \ref{sec: Remainders of the proof of Proposition-prop: Asymp in L-low}. For $|j|\leq N_L$,  we recall the equation \eqref{eq: t-L defined eq-1}. 
\begin{equation*}
     2\sqrt{1-3\tau_j^2}\cos{(2\tau_j L)}-(\sqrt{1-3\tau_j^2}+3\tau_j)\cos{((\sqrt{1-3\tau_j^2}-\tau_j)L)}+(3\tau_j-\sqrt{1-3\tau_j^2})\cos{((\sqrt{1-3\tau_j^2}+\tau_j)L)}=0.
\end{equation*}
We define a function $F$ by 
\begin{equation}\label{eq: defi of function F(t,L)}
F(t,L)=2\sqrt{1-3t^2}\cos{(2t L)}-(\sqrt{1-3t^2}+3t)\cos{((\sqrt{1-3t^2}-t)L)}+(3t-\sqrt{1-3t^2})\cos{((\sqrt{1-3t^2}+t)L)}. 
\end{equation}
For the critical length $L_0=2\pi\sqrt{\frac{k^2+kl+l^2}{3}}$ with $k\geq l$, there are three different roots for $2\tau_c(4\tau_c^2-1)=\lambda_c(k,l)$. We denote by 
\begin{equation}
    \tau_{c,1}=\frac{\pi}{L_0}\frac{2k+l}{3},\tau_{c,2}=\tau_{c,1}-\frac{k\pi}{L_0},\tau_{c,3}=\tau_{c,2}-\frac{l\pi}{L_0}
\end{equation}
Here we take $\frac{\pi}{L_0}\frac{2k+l}{3}$ for example. Other cases can be treated similarly. In particular, we observe that $F(\frac{\pi}{L_0}\frac{2k+l}{3}, L_0)=0$. 
 Then we look at the first derivative of $F$ at the point $(\frac{\pi}{L_0}\frac{2k+l}{3}, L_0)$ (for general explicit formulas of $\p_t F$ and $\p_L F$, one can refer to the Appendix \ref{sec: Remainders of the proof of Proposition-prop: Asymp in L-low}). There are different cases. 
\begin{enumerate}
    \item \textbf{Case 1:} $\exists m\in \N$ such that $k-l=3m$. In this case, we know that $\nabla F(\frac{\pi}{L_0}\frac{2k+l}{3}, L_0)=0$. For general explicit formulas, one can refer to Appendix \ref{sec: Remainders of the proof of Proposition-prop: Asymp in L-low}. Thus, we obtain the Hessian $\nabla^2 F$ at the point $(\frac{\pi}{L_0}\frac{2k+l}{3}, L_0)$ as follows 
    \begin{equation*}
   \nabla^2 F(\frac{\pi}{L_0}\frac{2k+l}{3}, L_0)=\left(
   \begin{array}{cc}
        24\pi\frac{(k^2+kl)L_0}{l}&\frac{8\pi^2(k-l)(k+2l)(2k+l)}{3lL_0}  \\
        \frac{8\pi^2(k-l)(k+2l)(2k+l)}{3lL_0}& -\frac{8\pi^3kl(k+l)}{L_0^3}
   \end{array}
   \right).
    \end{equation*}
    Now we know that the determinant of $\nabla^2 F(\frac{\pi}{L_0}\frac{2k+l}{3}, L_0)$ is 
    \begin{equation*}
        \det{\nabla^2 F}(\frac{\pi}{L_0}\frac{2k+l}{3}, L_0)=-\frac{128\pi^4(-2k^2-2k l+l^2)(k^2+4k l+l^2)(-k^2+2k l+2l^2)}{9l^2L_0^2}.
    \end{equation*}
    Since $k=3m+l$ with $m\in\Z$ and $m\geq0$, it is easy to check that $k^2+4k l+l^2>0$ and $(-2k^2-2k l+l^2)<0$. In addition, $-k^2+2k l+2l^2=3(l^2-3m^2)$. Since there is no solution $(m,l)\in\N^*\times\N^*$ such that $l^2-3m^2=0$, we deduce that the Hessain $\nabla^2 F$ is non-degenerate at the point $(\frac{\pi}{L_0}\frac{2k+l}{3}, L_0)$.

    In summary, at the point $(\frac{\pi}{L_0}\frac{2k+l}{3}, L_0)$, we have that 
    \begin{equation*}
F(\frac{\pi}{L_0}\frac{2k+l}{3}, L_0)=\p_tF(\frac{\pi}{L_0}\frac{2k+l}{3}, L_0)=\p_LF(\frac{\pi}{L_0}\frac{2k+l}{3}, L_0)=0,
    \end{equation*}
    while the Hessian $\nabla^2F(\frac{\pi}{L_0}\frac{2k+l}{3}, L_0)$ is non-degenerate. By Morse Lemma, we know that there exists a neighborhood $U$ of $(\frac{\pi}{L_0}\frac{2k+l}{3}, L_0)$ and a change of coordinates $\kappa: B_r((0,0))\rightarrow U$ defined in a neighborhood of $(0,0)$, with 
    \begin{gather*}
\kappa(0,0)=(\frac{\pi}{L_0}\frac{2k+l}{3}, L_0),\nabla\kappa=Id, \text{ such that }\\
F(t,L)=\frac{1}{2}\langle \kappa^{-1}(t,L),\nabla^2F(\frac{\pi}{L_0}\frac{2k+l}{3}, L_0)\kappa^{-1}(t,L)\rangle \text{ holds in }U.
    \end{gather*}
After simple computation, we have the following expansion :
\begin{equation*}
\left|\tau_j(L)-\frac{\pi}{L_0}\frac{2k+l}{3}+\frac{(k-l)(k+2l)(2k+l)}{12\pi k(k+l)(k^2+kl+l^2)}(L-L_0)\right|=\frac{\sqrt{k^2+kl+l^2}}{6\pi k(k+l)}\left|L-L_0\right|+\bigO((L-L_0)^2),
\end{equation*}
which implies that $\tau^{\pm}_{j}(L)=\frac{\pi}{L_0}\frac{2k+l}{3}-\frac{(k-l)(k+2l)(2k+l)}{12\pi k(k+l)(k^2+kl+l^2)}(L-L_0)\pm\frac{\sqrt{k^2+kl+l^2}}{6\pi k(k+l)}\left|L-L_0\right|+\bigO((L-L_0)^2)$. Hence, using $\lambda_j(L)=2\tau_j(4\tau_j^2-1)$, $\tau^+_j$ and $\tau^-_j$ generate two eigenvalues $\ii\lambda^{\pm}_{j}$ of $\B$ that approach the same critical eigenvalues $\ii\lambda_{c}(k,l)$. More precisely, we have the following asymptotic expansion:
\begin{equation}\label{eq: pm-eigenvalues}
\lambda^{\pm}_j(L)=\lambda_{c}(k,l)-\frac{(k-l)(k+2l)(2k+l)}{2\pi (k^2+kl+l^2)^2}(L-L_0)\pm\frac{\left|L-L_0\right|}{\pi \sqrt{k^2+kl+l^2}}+\bigO((L-L_0)^2).
\end{equation}
From the preceding formula one observes that the first order does not vanish, thanks to the following 
\begin{lem}\label{lem: critical eigenvalues}
$    (k-l)(k+2l)(2k+l)\neq 2 (k^2+kl+l^2)^{\frac{3}{2}}$.
\end{lem}
One can find its proof in Appendix \ref{sec: Proof of Lemma-lem: critical eigenvalues}.

\item \textbf{Case 2: }$\exists m\in \N$ such that $k-l=3m+1$, $(\p_t F)(\frac{\pi}{L_0}\frac{2k+l}{3}, L_0)=4\sqrt{3}k\pi\neq0$. In this case, we also observe that $\p_L F(\frac{\pi}{L_0}\frac{2k+l}{3}, L_0)=0$. Moreover, $\p^2_L F(\frac{\pi}{L_0}\frac{2k+l}{3}, L_0)=\frac{4l(2k+l)^2\pi^3}{9L_0^3}\neq0$ (see details in the Appendix \ref{sec: Remainders of the proof of Proposition-prop: Asymp in L-low}). By implicit function theorem, we know that there exists a neighborhood of $L=L_0$ such that $\tau_j=\tau_j(L)$ and $F(\tau_j(L),L)=0$ holds near $L=L_0$. After simple computation, we have the following expansion :
\begin{equation}
\tau_j(L)=\frac{\pi}{L_0}\frac{2k+l}{3}-\frac{l(2k+l)^2\pi}{216k^2L_0^3}(L-L_0)^2+\bigO((L-L_0)^3).
\end{equation}
Hence, $\lambda_j(L)=\lambda_{c}(L_0)-\frac{l(k+l)(2k+l)^2}{27kL_0^5}(L-L_0)^2+\bigO((L-L_0)^3)$.
\item \textbf{Case 3: }$\exists m\in \N$ such that $k-l=3m-1$. This case is the same to the second case. We do not repeat the procedure.
\end{enumerate}
\end{proof}

The asymptotic behaviors for the elliptic eigenfunctions are very different from the hyperbolic ones. They could vanish on the boundary. More precisely, we have the following proposition.
\begin{prop}[Low-frequency behaviors: vanishing limits]\label{prop: Low-frequency behaviors: Singular limits}
Let $I$ satisfy the condition {\bf (C)}. Let $j\in\Lambda_E$. There are two cases: 
\begin{enumerate}
    \item If $k\not\equiv l\mod 3$, $\E_j(x)=\G_j(x)+\bigO(|L-L_0|)$ and $|\E'_j(0)|=|\E'_j(L)|=\bigO(L-L_0)$. 
    \item If $k\equiv l\mod 3$, $\E^{\pm}_j(x)=\left( \frac{\sqrt{3} L_0\G_j(x)-(2\pi(k - l)\pm\sqrt{3} L_0) \widetilde{\G}_j(x)}{\sqrt{6L_0^2\pm 2\sqrt{3}\pi L_0(k - l)}}\right)+\bigO(|L-L_0|)$. Here $\E_j^{\pm}$ denotes the associated eigenfunctions for $\ii\lambda_j^{\pm}$ (defined in \eqref{eq: pm-eigenvalues}) and $\G_j,\widetilde{\G}_j$ are in the form of \eqref{eq: exact formula for critical eigenfunctions} and \eqref{eq: defi of tilde-G}. Moreover, $(\E_j^{\pm})'(0)=(\E_j^{\pm})'(L)=\epsilon(L_0)+\bigO(L-L_0)$.
\end{enumerate}
\end{prop}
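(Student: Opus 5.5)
We propose to prove the statement by perturbation theory around $L_0$, using the explicit form of the elliptic eigenfunctions from the proof of Proposition \ref{prop: asymptotic eigenvalues} together with the eigenvalue expansions from Proposition \ref{prop: Asymp in L-low}.

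\textbf{Setup.} For $j\in\Lambda_E$ the eigenfunction is an explicit combination of three exponentials $e^{\ii\xi_i(\tau_j)x}$. The coefficients $(r_1,r_2,r_3)$ span the kernel of a $3\times3$ boundary matrix $\mathbb{M}(\tau,L)$, and $\det \mathbb{M}(\tau,L)$ is proportional to $F(\tau,L)$ from \eqref{eq: defi of function F(t,L)}. Both $\mathbb{M}$ and $\tau_j(L)$ are smooth (or Lipschitz) in $L$. So the task is to understand the kernel of $\mathbb{M}(\tau_j(L),L)$ to first order, then normalize in $L^2(0,L)$ and estimate the boundary derivative.

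\textbf{Case $k\not\equiv l \pmod 3$.} By the proof of Proposition \ref{prop: Asymp in L-low}, $\partial_tF\neq0$ at the critical point. Hence $\tau_j(L)=\tau_c+\bigO(|L-L_0|)$, and $\mathbb{M}(\tau_c,L_0)$ has rank exactly $2$: its kernel is one-dimensional by Proposition \ref{prop: type-1-2}, which says $\G_j$ is the only solution there. A rank-$2$ matrix depending Lipschitz-continuously on parameters has a kernel vector, built from cofactors, that depends Lipschitz-continuously on them. Normalizing this vector gives
\[
\E_j=\G_j+\bigO(|L-L_0|)
\]
in $C^1([0,L])$, after rescaling $x\mapsto xL_0/L$ to compare functions on different intervals. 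Since $\G_j'(0)=0$, we get $\E_j'(0)=\bigO(|L-L_0|)$.

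\textbf{Case $k\equiv l \pmod 3$.} Here $\nabla F=0$ at the critical point, and $\mathbb{M}(\tau_c,L_0)$ has rank $1$, with kernel spanned by the coefficient vectors of $\G_j$ and $\widetilde{\G}_j$. Any limit eigenfunction therefore lies in $\mathrm{Span}\{\G_j,\widetilde{\G}_j\}$, and we must find which combination is selected. We plan to expand
\[
\mathbb{M}(\tau^\pm_j(L),L)=\mathbb{M}_0+(L-L_0)\mathbb{M}_1^\pm+\bigO(|L-L_0|^2),
\]
where $\tau_j^\pm$ are the two branches from the Morse-lemma analysis, so $\mathbb{M}_1^\pm$ involves the slopes $\frac{d\tau_j^\pm}{dL}$. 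Write the kernel vector as $v=v_0+(L-L_0)v_1$ with $v_0\in\ker\mathbb{M}_0$. Solvability requires $\mathbb{M}_1^\pm v_0\in\mathrm{Ran}\,\mathbb{M}_0$, i.e.\ $w^\top\mathbb{M}_1^\pm v_0=0$, where $w$ spans the two-dimensional left kernel of $\mathbb{M}_0$. This is a $2\times2$ degenerate-eigenvector problem. Its determinant condition reproduces the quadratic equation solved by the two slopes, and each slope selects one direction $v_0^\pm$. Computing these directions explicitly in the $(\G_j,\widetilde{\G}_j)$ basis should give the coefficients $\sqrt3L_0$ and $-(2\pi(k-l)\pm\sqrt3L_0)$. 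Using $\|\G_j\|=\|\widetilde{\G}_j\|=1$ and the inner product $\langle\G_j,\widetilde{\G}_j\rangle$ from the explicit formulas, the normalization constant should come out to $\sqrt{6L_0^2\pm2\sqrt3\pi L_0(k-l)}$.

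\textbf{Boundary derivative and main obstacle.} The boundary derivative then follows from $\G_j'(0)=0$ and the value $\widetilde{\G}_j'(0)=\ii\sqrt2\pi(k+l)L_0^{-3/2}$ given in Proposition \ref{prop: type-1-2}. This yields
\[
\epsilon(L_0)=-\frac{(2\pi(k-l)\pm\sqrt3L_0)\,\widetilde{\G}_j'(0)}{\sqrt{6L_0^2\pm2\sqrt3\pi L_0(k-l)}},
\]
plus $\bigO(|L-L_0|)$. The main obstacle is this degenerate second case. We must justify that the kernel vector really is Lipschitz along each branch $\tau^\pm_j$, which we plan to do via the non-degeneracy of the Hessian, and we must carry out the explicit first-order algebra correctly. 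The remaining steps are routine $C^1$ perturbation bounds.
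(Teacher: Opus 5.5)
Your argument is correct in substance, but it is organized differently from the paper's proof.

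\textbf{How the two routes differ.} The paper inserts the expansions of $\tau_j(L)$, respectively $\tau_j^\pm(L)$, from Proposition~\ref{prop: Asymp in L-low} into the closed-form elliptic eigenfunction (the one parametrized by $r_1$) and expands by brute force. That closed form is a $0/0$ expression at $(\tau_c,L_0)$: both $e^{\ii(\sqrt{1-3\tau^2}-\tau)L}-e^{2\ii\tau L}$ and the numerator vanish there, and the ``painstaking computation'' has to resolve this.

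You work instead with the boundary matrix $\mathbb{M}$. One checks that $\det\mathbb{M}$ is a constant multiple of $F$, so the two descriptions agree.
\begin{enumerate}
\item For $k\not\equiv l$, the rank stays exactly $2$. The cofactor built from the rows encoding $\E(0)=0$ and $\E'(0)=\E'(L)$ then gives a Lipschitz kernel vector.
\item For $k\equiv l$, you reduce, Lyapunov--Schmidt style, to the $2\times 2$ pencil $D(s)=W^\top(s\,\partial_\tau\mathbb{M}+\partial_L\mathbb{M})V_0$. Its determinant is a nonzero multiple of the Hessian quadratic of $F$. Simplicity of its roots forces $D(s^\pm)$ to have rank one, which is exactly what makes the limit direction well defined and Lipschitz along each branch.
\end{enumerate}

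\textbf{What each route buys.} Your route gives a rigorous, transparent reason why precisely two orthogonal directions in $\mathrm{Span}\{\G_j,\widetilde{\G}_j\}$ are selected, and it never touches the singular closed form. What it does not give, unless you also compute the corrector $v_1$, is the explicit first-order coefficient of $\E_j'(L)$ in the case $k\not\equiv l$. The statement only asks for the upper bound $\bigO(|L-L_0|)$, which you prove. The paper's computation also yields $|\E_j'(L)|\sim|L-L_0|$, and it uses that lower bound later.

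\textbf{Track the side of $L_0$ in the $2\times 2$ algebra.} Since $\tau_j^\pm$ contain the term $\pm|L-L_0|$, the slope entering $\mathbb{M}_1^+$ for $L>L_0$ is the one entering $\mathbb{M}_1^-$ for $L<L_0$. So each label $\pm$ is attached to different directions on the two sides.

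You can find the directions either from the pencil or, faster, by first-order perturbation theory for the rescaled skew-adjoint family. On the eigenspace, $\B'(L_0)$ acts as $-L_0^{-1}(2\partial_x+3\ii\lambda_c)$, so the limit directions are the eigenvectors of the compression of $\partial_x$ to $\mathrm{Span}\{\G_j,\widetilde{\G}_j\}$. Either way, $\sqrt3L_0\G_j-(2\pi(k-l)+\sqrt3L_0)\widetilde{\G}_j$ is the limit along the branch with the smaller $d\lambda/dL$. It is therefore the limit of $\E_j^+$ only for $L<L_0$, and of $\E_j^-$ for $L>L_0$.

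A concrete check at $L_0=2\pi$: solve the boundary system to first order with $\tau=\tfrac12+s(L-2\pi)$, $s=\pm\frac{\sqrt3}{12\pi}$. The branch $s>0$, which carries the positive eigenvalue when $L>2\pi$, has eigenfunction proportional to $(1-\cos x)+\sqrt3\,\ii\sin x$, i.e.\ to $\G+\widetilde{\G}$. The introduction has this pairing. By contrast, the formula in the statement with $k=l$, and the example in Section~\ref{sec: quasi-invariant subsapce}, pair the positive eigenvalue with $\G-\widetilde{\G}$; that is correct only for $L<2\pi$.

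So your sentence that the computation ``should give'' the stated coefficients holds only on one side of $L_0$. On the other side the labels must be swapped, and your method will show this once you carry the one-sided slopes through.
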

\begin{proof}[Proof of Proposition \ref{prop: Low-frequency behaviors: Singular limits}]
As we presented in the proof of Proposition \ref{prop: Asymp in L-low}, there are two different cases for the eigenvalues of the operator $\B$. 
\begin{enumerate}
    \item \textbf{Case 1:} $k\not\equiv l\mod 3$. The eigenvalues $\{\lambda_j(L)\}_{j\in \Lambda_E}$ near $L=L_0$ satisfy that 
    \begin{equation*}
        \tau_j(L)=\frac{\pi}{L_0}\frac{2k+l}{3}-\frac{l(2k+l)^2\pi}{216k^2L_0^3}(L-L_0)^2+\bigO((L-L_0)^3).
    \end{equation*}
    Recall the expression of $\E_j(x)$ and $\E_j'(L)$, we obtain
    {\footnotesize
    \begin{equation*}
    \begin{aligned}
    \E_j(x)&= \alpha_j\frac{2\ii\left(e^{\ii\tau_j(2L-x)}\sin{(x\sqrt{1-3\tau_j^2})}+e^{-\ii\tau_j(L+x)}\sin{(\sqrt{1-3\tau_j^2}(L-x))}-e^{\ii\tau_j(2x-L)}\sin{(L\sqrt{1-3\tau_j^2})}\right)}{e^{\ii(\sqrt{1-3\tau_j^2}-\tau_j)L}-e^{2\ii \tau_j L}},\\
    \E_j'(L)&=\alpha_j\frac{2\ii \left(-e^{-2\ii L \tau_j} \sqrt{1 - 3\tau_j^2} + e^{\ii L \tau_j} \sqrt{1 - 3\tau_j^2} \cos\left(L \sqrt{1 - 3\tau_j^2}\right) - 3\ii e^{\ii L \tau_j} \tau_j \sin\left(L \sqrt{1 - 3\tau_j^2}\right)\right)}{-e^{2\ii L \tau_j} + e^{\ii L(-\tau_j + \sqrt{1 - 3\tau_j^2})}}.    
    \end{aligned}
    \end{equation*}
    }
By inserting $\tau_j(L)=\frac{\pi}{L_0}\frac{2k+l}{3}-\frac{l(2k+l)^2\pi}{216k^2L_0^3}(L-L_0)^2+\bigO((L-L_0)^3)$ into the first equation, and after a painstaking computation, we arrive at a new formulation which can be revealed as follows, for two normalized eigenfunctions $\E_j$ and $\G_j$:
\begin{equation}
\E_j(x)=\G_j(x)+\ii(L-L_0)g_j(x)+\bigO((L-L_0)^2),   
\end{equation}
where $g_j$ is a uniformly bounded function. Then, we use the expression of $\E_j'(L)$, and let $\tau_j(L)=\frac{\pi}{L_0}\frac{2k+l}{3}+c_j(k,l)(L-L_0)^2+\bigO((L-L_0)^3)$. After a painful but similar  computation, we obtain an asymptotic formula for $\E_j'(L)$ as follows:
\begin{equation*}
    \E_j'(L)=-\alpha_j\frac{e^{-\frac{2}{3} \ii (k - l) \pi} \left(2 k l^2 (k + l) \pi^4 + 3 \ii c_j(k,l) L_0^5\right)}{k l \pi^2 L_0^2}(L-L_0)+\bigO((L-L_0)^2),
\end{equation*}
which implies that $|\E_j'(L)|\sim |L-L_0|$.
\item \textbf{Case 2:} $k\equiv l\mod 3$. 
We write $k=l+3m$ with $m\in \N$. The eigenvalues $\{\lambda_j(L)\}_{j\in \Lambda_E}$ near $L=L_0$ satisfy that 
    \begin{align*}
     \tau^{\pm}_{j}(L)&=\frac{\pi}{L_0}\frac{2k+l}{3}-\frac{(k-l)(k+2l)(2k+l)}{12\pi k(k+l)(k^2+kl+l^2)}(L-L_0)\pm\frac{\sqrt{k^2+kl+l^2}}{6\pi k(k+l)}\left|L-L_0\right|+\bigO((L-L_0)^2).
    \end{align*}
    By inserting $\tau^-_j(L)=\frac{\pi}{L_0}\frac{2k+l}{3}-\frac{(k-l)(k+2l)(2k+l)}{12\pi k(k+l)(k^2+kl+l^2)}(L-L_0)-\frac{\sqrt{k^2+kl+l^2}}{6\pi k(k+l)}\left|L-L_0\right|+\bigO((L-L_0)^2)$ into the expression of $\E_j(x)$, without loss of generality, we consider $L>L_0$, and after a painstaking computation, we arrive at a new formulation which can be revealed as follows:
\begin{align*}
\E^{\pm}_j(x)=\frac{\sqrt{3} L_0}{\sqrt{6L_0^2\pm 2\sqrt{3}\pi L_0(k - l)}}\G_j(x)-\frac{2\pi(k - l)\pm\sqrt{3} L_0}{\sqrt{6L_0^2\pm 2\sqrt{3}\pi L_0(k - l)}}\widetilde{\G}_j(x)+\Tilde{g}_j(x)(L-L_0)+\bigO((L-L_0)^2), 
\end{align*}
where $\Tilde{g}_j$ is a uniformly bounded function.
Then, we use the expression of $\E_j(x)=\alpha_j\Tilde{f}_j(x)+\alpha_j\Tilde{g}_j(x)(L-L_0)+\bigO((L-L_0)^2)$ to derive $\E_j'(L)$ as follows: 
\begin{equation*}
    \E_j'(L)=-\ii\frac{\sqrt{2}\pi(k+l)}{L_0^{\frac{3}{2}}}\frac{2\pi(k - l)\pm\sqrt{3} L_0}{\sqrt{6L_0^2\pm 2\sqrt{3}\pi L_0(k - l)}}+\bigO(L-L_0).
\end{equation*}
\end{enumerate}
\end{proof}
\begin{rem}\label{rem: rotation of eigenmodes}
As we mentioned in Proposition \ref{prop: type-1-2}, $\widetilde{\G}_j=\frac{1}{\sqrt{2L_0}}\left(e^{\ii x\frac{\sqrt{3} (2 k_j + l_j) }{3\sqrt{k_j^2 + k_j l_j + l_j^2}}}  - e^{-\ii x\frac{\sqrt{3} (k_j + 2 l_j) }{3 \sqrt{k_j^2 + k_j l_j + l_j^2}}}\right)$ is just a typical example of Type 2 eigenfunctions. In fact, any linear combination of $\widetilde{\G}_j$ and $\G_j$ can be a Type 2 eigenfunction. We notice that 
\begin{equation*}
\int_0^{L_0}\G_j(x)\overline{\widetilde{\G}_j(x)}\mathrm{d}x=\frac{\pi(k_j-l_j)}{\sqrt{3}L_0},
\end{equation*}
which is nonzero when $k_j\neq l_j$. For later application, by a standard Gram--Schmidt process, we obtain a normalized Type 2 eigenfunction that is orthogonal to $\G_j$ defined by 
\begin{equation}\label{eq: defi-orthogonal-G}
\widetilde{G}_j:=- \frac{k_j-l_j}{\sqrt{3}(k_j+l_j)}\G_j + \frac{L_0}{\pi(k_j+l_j)}\widetilde{\G}_j
\end{equation}
Using $\G_j$ and $G_j$, we have asymptotic expansions for $\E_j^{\pm}$ as follows:
\begin{equation*}
\begin{pmatrix}
\E_j^+\\
\E_j^-
\end{pmatrix}=
\begin{pmatrix}
C^+_1&C^+_2\\
C^-_1&C^-_2
\end{pmatrix}
\begin{pmatrix}
\G_j\\
G_j
\end{pmatrix}+\bigO(|L-L_0|),
\end{equation*}
where we define the coefficients via
\begin{gather*}
C_{1}^{\pm}:=-\frac{-2\pi^2(k_j^2 + 4k_jl_j + l_j^2) \pm \sqrt{3}\pi L_0(k_j-l_j)}{\sqrt{3} L_0 \sqrt{6L_0^2 \pm 2\sqrt{3}\pi L_0(k_j-l_j)}},\;\;C^{\pm}_{2}:=- \frac{\pi(k_j+l_j)(2\pi(k_j-l_j) \pm \sqrt{3}L_0)}{L_0 \sqrt{6L_0^2 \pm 2\sqrt{3}\pi L_0(k_j-l_j)}}
\end{gather*}
Interestingly, if we define an angle $\theta_j=\theta(k_j,l_j)\in (\frac{\pi}{3},\frac{\pi}{2})$ by $\cos{\theta}=\frac{\pi(k_j-l_j)}{\sqrt{3}L_0}$, which can be understood as the angle between $\G_j$ and $\widetilde{\G}_j$. Then, we have
\begin{equation*}
\begin{pmatrix}
\E_j^+\\
\E_j^-
\end{pmatrix}=
\begin{pmatrix}
-\cos{\frac{3\theta_j}{2}}&-\sin{\frac{3\theta_j}{2}}\\
\sin{\frac{3\theta_j}{2}}&-\cos{\frac{3\theta_j}{2}}
\end{pmatrix}
\begin{pmatrix}
\G_j\\
G_j
\end{pmatrix}+\bigO(|L-L_0|),
\end{equation*}
which implies that our perturbed eigenfunctions $\E_j^{\pm}$ are the rotation of a Type 1 eigenfunction $\G_j$ and its orthogonal Type 2 eigenfunction $\widetilde{G}_j$ and the rotation angle is determined by $\G_j$ the choice of typical Type 2 eigenfunction $\widetilde{\G}_j$. We put the computation details in Appendix \ref{sec: rotation structure of eigenmodes}.
\end{rem}

\subsection{Eigenvalues and eigenfunctions for $\A$}
In this section, we aim to provide some basic spectral information for the operator $\A$. After describing the spectrum of $\A$ for $L\notin\mathcal{N}$ (Sec. \ref{sec: Eigenmodes of A at a non-critical length}), we present an asymptotic analysis for eigenvalues and eigenfunctions of $\A$ as $L\rightarrow\mathcal{N}$ in Sec. \ref{sec: Asymptotic behaviors for A}.
\subsubsection{Eigenmodes of $\A$ at a non-critical length}\label{sec: Eigenmodes of A at a non-critical length}
We consider the eigenvalue problem:
\begin{equation}\label{eq: eigenvalue problem with Neumann}
\left\{
\begin{array}{l}
     \F'''(x)+\F'(x)+\zeta \F(x)=0,x\in(0,L),  \\
     \F(0)=\F(L)= \F'(L)=0.
\end{array}
\right.
\end{equation} 
At first glance at the eigenvalue problem, we notice that the eigenvalues of the operator $\A$  are symmetrically distributed about the real axis and located on the left side of the imaginary axis. 
\begin{lem}\label{lem: basic prop of eigen of bad op}
Let $\F$ be a normalized eigenfunction of $\A$ associated with an eigenvalue $\zeta$. Then, the following properties hold
\begin{enumerate}
    \item  $\Re \zeta\leq 0$. In particular, if $L\notin \mathcal{N}$, $\Re \zeta< 0$. 
    \item If $L\notin\mathcal{N}$, there is a unique eigenfunction associated with each eigenvalue.
    \item If $L\notin\mathcal{N}$ and $\Im \lambda= 0$, then there are infinitely many eigenvalues.
\end{enumerate}
\end{lem}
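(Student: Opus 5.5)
For parts (1) and (2) I would use the energy identity together with Rosier's characterization of $\mathcal{N}$. For part (3) I would write $\zeta=\ii\lambda$, in the same convention as the $\B$-eigenproblem $\G'''+\G'+\ii\lambda\G=0$, and argue with an entire characteristic function in $\zeta$.

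\textbf{Part (1).} I multiply $\F'''+\F'+\zeta\F=0$ by $\overline{\F}$, integrate over $(0,L)$ and take real parts. Since $\F(0)=\F(L)=\F'(L)=0$, integration by parts gives
\begin{equation*}
\Re\zeta\,\|\F\|_{L^2}^2=-\tfrac12|\F'(0)|^2 .
\end{equation*}
This yields $\Re\zeta\le0$ at once. Suppose now $\Re\zeta=0$, so $\zeta=\ii\lambda$ with $\lambda\in\R$. Then $\F'(0)=0$ as well, and $\F$ solves $\F'''+\F'+\ii\lambda\F=0$ with all four conditions $\F(0)=\F(L)=\F'(0)=\F'(L)=0$. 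This is exactly the Type~1 problem \eqref{eq: type-1-eigenfunction-intro}. By Rosier's result \cite{Rosier-1997} it has a nontrivial solution only if $L\in\mathcal{N}$. Hence $\Re\zeta<0$ whenever $L\notin\mathcal{N}$.

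\textbf{Part (2).} Fix $\zeta$. The solutions of the ODE with $\F(0)=0$ form a $2$-dimensional space, parametrized by $(\F'(0),\F''(0))$. Assume there are two linearly independent eigenfunctions for $\zeta$. Then a nontrivial combination of them can be chosen with $\F'(0)=0$, and it is still an eigenfunction. The identity of part (1) then gives $\Re\zeta=0$, which is impossible for $L\notin\mathcal{N}$. So each eigenvalue has a unique eigenfunction up to normalization.

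\textbf{Part (3).} Here $\lambda$ is the spectral parameter, $\zeta=\ii\lambda$. The condition $\Im\lambda=0$ means $\zeta\in\ii\R$. By part (1), for $L\notin\mathcal{N}$ no eigenvalue satisfies this, so every eigenvalue has $\Im\lambda\neq0$. I read the substantive content of (3) as the infinitude of the spectrum, and I will prove that the set of $\zeta$ (equivalently $\lambda$) in the spectrum is infinite.

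1. Let $\xi_1,\xi_2,\xi_3$ be the roots of $\xi^3+\xi+\zeta=0$. Define $\Delta(\zeta)$ as the $3\times3$ determinant of the boundary conditions $\F(0)=\F(L)=\F'(L)=0$ applied to $\sum_i r_ie^{\xi_ix}$, divided by the Vandermonde factor $\prod_{i<j}(\xi_i-\xi_j)$. This quotient is symmetric in the roots, so it is an entire function of $\zeta$. Its zeros are exactly the eigenvalues of $\A$.

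2. Next I check that $\Delta\not\equiv0$. The operator $\A$ is dissipative and generates a contraction semigroup, so every $\zeta$ with $\Re\zeta>0$ lies in the resolvent set.

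3. I then bound the growth. Since $|\xi_i|\sim|\zeta|^{1/3}$, we get $|\Delta(\zeta)|\le Ce^{C|\zeta|^{1/3}}$, so $\Delta$ has order at most $1/3<1$.

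4. By Hadamard's factorization, an entire function of order $<1$ with finitely many zeros is a polynomial. To exclude this, I restrict $\Delta$ to the real axis $\zeta\to+\infty$ or $\zeta\to-\infty$. There one root has real part $\sim c|\zeta|^{1/3}$ of fixed sign, so $\Delta$ grows like $e^{c|\zeta|^{1/3}}$, faster than any polynomial. Hence $\Delta$ has infinitely many zeros, that is, $\A$ has infinitely many eigenvalues.

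The main obstacle is this last step. One must show the exponential term really dominates along some ray, i.e. its coefficient in the expansion of $\Delta$ does not vanish identically. I would check this by expanding the determinant explicitly, as in the hyperbolic-regime computation in the proof of Proposition \ref{prop: asymptotic eigenvalues}.
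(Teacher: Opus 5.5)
\textbf{Parts (1) and (2).} These are correct and follow the paper: the identity $\Re\zeta\,\|\F\|_{L^2}^2=-\tfrac12|\F'(0)|^2$, Rosier's characterization of $\mathcal{N}$ once $\F'(0)=0$, and for simplicity a nontrivial combination of two eigenfunctions with vanishing $\F'(0)$ (the paper uses $\F_1-\frac{\F_1'(0)}{\F_2'(0)}\F_2$).

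\textbf{Part (3) has a genuine gap: you prove a different statement.} In item (3) the letter $\lambda$ denotes the eigenvalue of $\A$ itself. The paper's proof starts from $\lambda\int_0^L\F\overline{\F}\,dx=\int_0^L\A\F\,\overline{\F}\,dx$ and then says ``$\lambda\in\R$ and $\lambda<0$''. So the claim is that for $L\notin\mathcal{N}$ the operator $\A$ has infinitely many \emph{real} eigenvalues, necessarily negative by (1).

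You noticed that under the convention $\zeta=\ii\lambda$ the hypothesis is vacuous. That was the signal that the reading was wrong, not a license to replace the conclusion by ``the spectrum is infinite''. Your Hadamard argument is sound for infinitude of the spectrum in $\C$: order at most $1/3$, $\Delta\neq0$ on $\Re\zeta>0$, and super-polynomial growth along a ray. But it gives no information on where the zeros of $\Delta$ lie, so it cannot produce even one real eigenvalue.

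\textbf{The missing step: work directly on the negative real axis,} as the paper does. Write $\zeta=-2\tau(4\tau^2+1)$ with $\tau>0$. The roots of $s^3+s+\zeta=0$ are $2\tau$ and $-\tau\pm\ii\sqrt{1+3\tau^2}$; they are distinct, and the boundary determinant vanishes if and only if
\[
\sqrt{1+3\tau^2}\cos\bigl(L\sqrt{1+3\tau^2}\bigr)-3\tau\sin\bigl(L\sqrt{1+3\tau^2}\bigr)=\sqrt{1+3\tau^2}\,e^{-3L\tau}.
\]
Equivalently, $\cos\bigl(L\sqrt{1+3\tau^2}+\theta(\tau)\bigr)=e^{-3L\tau}\sqrt{1+3\tau^2}/\sqrt{1+12\tau^2}$, where $\tan\theta(\tau)=3\tau/\sqrt{1+3\tau^2}$.

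The right-hand side lies in $(0,1)$, while the phase $L\sqrt{1+3\tau^2}+\theta(\tau)$ increases continuously to $+\infty$. By the intermediate value theorem there is a root between the points where the phase equals $2k\pi$ and $(2k+1)\pi$, for every $k$ with $2k\pi>L$. Since $\tau\mapsto2\tau(4\tau^2+1)$ is injective, distinct roots give distinct eigenvalues, hence infinitely many negative eigenvalues.

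Your own step 4 already contains this. On the negative real axis your normalized $\Delta$ is real-valued, with leading term proportional to $e^{L\tau}\sqrt{1+12\tau^2}\cos\bigl(L\sqrt{1+3\tau^2}+\theta(\tau)\bigr)$. Its sign changes plus the intermediate value theorem yield the real zeros directly, so Hadamard's factorization is not needed.
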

We omit its proof and one can find proof details in Appendix \ref{sec: proof-basic-properties-A}.

\subsubsection{Asymptotic behaviors for $\A$ as $L\rightarrow\mathcal{N}$}\label{sec: Asymptotic behaviors for A}
In Section \ref{sec: Asymptotic behavior close to the critical lengths}, we present a detailed asymptotic analysis of the eigenvalues of the operator $\B$ near the critical lengths. However, one cannot expect a similarly detailed analysis for the operator $\A$. To the best of our knowledge, the localization of the eigenvalues for the operator $\A$ remains unclear. We only have some partial information for the spectrum of $\A$. In this part, we concentrate on the analysis of the eigenvalues and eigenfunctions associated with the operator $\A$ in the interval $(0,L)$, with $L$ very close to the critical length $L_0\in\mathcal{N}$. More precisely, we are interested in the eigenmodes $(\zeta,\F_{\zeta})\in\C\times L^2(0,L)$, which are solutions to the eigenvalue problem \eqref{eq: eigenvalue problem with Neumann} for $L$ near $L_0$, with $(\zeta,\F_{\zeta})$ is a perturbation of the eigenmodes $(\ii\lambda_c,\G_c)$ (defined in defined in  \eqref{eq: defi of lambda_c} and \eqref{eq: eigenvalue problem with critical length}). 
\begin{prop}\label{prop: asymptotic expansion for A0}
Let $I$ satisfy the condition {\bf (C)}. Then for every $L\in I$, the eigenmodes $(\zeta_j,\F_{\zeta_j})$ have the following asymptotic expansion: 
\begin{gather*}
     \zeta_j= \ii\lambda_{c,j}(L_0)+O((L- L_0)^2),  \\
       |\F_{\zeta_j}'(0)|=\bigO(|L-L_0|).
\end{gather*}
\end{prop}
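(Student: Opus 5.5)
I would treat the eigenvalue problem for $\A$ as a perturbation of the critical eigenmode $(\ii\lambda_{c,j},\G_j)$ of \eqref{eq: eigenvalue problem with critical length}, using the fact that $\G_j$ satisfies all four boundary conditions $\G_j(0)=\G_j(L_0)=\G_j'(0)=\G_j'(L_0)=0$. The first step is to rescale to a fixed interval. Set $x=Ls$ with $s\in(0,1)$. The problem \eqref{eq: eigenvalue problem with Neumann} then becomes $\F'''+L^2\F'+L^3\zeta\F=0$ on $(0,1)$ with $\F(0)=\F(1)=\F'(1)=0$. Write $\zeta=\ii\sigma$ with $\sigma\in\C$, and let $\xi_1,\xi_2,\xi_3$ be the roots of $\xi^3-\xi-\sigma=0$, so that $\F=\sum_m r_m e^{\ii\xi_m x}$. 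The three boundary conditions form a $3\times3$ linear system in $(r_1,r_2,r_3)$. Its determinant gives an analytic characteristic function $D(\sigma,L)$, and the eigenvalues of $\A$ are exactly its zeros. Since $\ii\lambda_{c,j}$ is an eigenvalue of $\A$ at $L_0$, we have $D(\lambda_{c,j},L_0)=0$.

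The second step is to show that $\partial_\sigma D(\lambda_{c,j},L_0)\neq0$ and $\partial_L D(\lambda_{c,j},L_0)=0$. The implicit function theorem then gives a smooth branch $\sigma_j(L)$ with $\sigma_j'(L_0)=-\partial_LD/\partial_\sigma D=0$, i.e. $\zeta_j=\ii\lambda_{c,j}+\bigO((L-L_0)^2)$. I would prove $\partial_L D=0$ structurally rather than by brute force, through the Hadamard-type variation formula for a boundary-value problem on a moving interval. Differentiate the eigen-relation in $L$. Pair the result with the adjoint eigenfunction, which is the eigenfunction of $\A^*$ (domain $\varphi(0)=\varphi(L)=\varphi'(0)=0$) at $-\ii\lambda_{c,j}$, and which at $L_0$ is $\overline{\G_j}$ up to reflection $x\mapsto L_0-x$ by the duality \eqref{eq: duality relations}. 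This yields
\[
\sigma'(L_0)\,\poscalr{\G_j}{\G_j^*}\;=\;c\,\G_j'(L_0)\,\overline{(\G_j^*)'(L_0)}+\text{(terms in }\G_j''(L_0)\G_j^*(L_0),\dots),
\]
where every boundary term contains a factor $\G_j'(L_0)$ or $\G_j(L_0)$. All of these vanish because $\G_j$ is Type 1. The remaining condition is nondegeneracy: $\poscalr{\G_j}{\G_j^*}\neq0$, equivalently simplicity of $\ii\lambda_{c,j}$ as an eigenvalue of $\A(L_0)$. When $N_0>1$ with distinct $\lambda_{c,m}$ this is separate for each $j$. When eigenvalues coincide, or in the case $k\equiv l\bmod 3$ where a Type 2 function also appears, I would verify it against the explicit formula \eqref{eq: exact formula for critical eigenfunctions}. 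Note that the Type 2 function satisfies the $\B$ conditions but not $\F'(L)=0$, so it is not an eigenfunction of $\A$ and does not create multiplicity.

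The third step is the eigenfunction estimate. Normalize $\F_{\zeta_j}$ by fixing one coefficient, for example $r_1$, through Cramer's rule on a $2\times2$ minor of the boundary system. The coefficients $r_m(L)$ are then smooth in $L$, and $\F_{\zeta_j}=\G_j+\bigO(|L-L_0|)$ in $C^1$. Because $\G_j'(0)=0$, it follows that $\F'_{\zeta_j}(0)=\G_j'(0)+\bigO(|L-L_0|)=\bigO(|L-L_0|)$.

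I expect the main obstacle to be the nondegeneracy $\partial_\sigma D\neq0$, i.e. algebraic simplicity of $\ii\lambda_{c,j}$ for the non-self-adjoint $\A$. This matters especially where two pairs $(k,l)$ give the same $L_0$, or where the roots $\xi_m$ might collide. My first attempt would be to compute $\poscalr{\G_j}{\G_j^*}$ explicitly from the exponential formula, which reduces to a nonzero finite sum of exponentials. If that fails in degenerate cases, I would fall back on Kato's analytic perturbation theory for the Riesz projection of the isolated eigenvalue, as in Lemma \ref{lem: invariance of elliptic subspace}.
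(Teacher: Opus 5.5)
Your strategy is sound, and it takes a genuinely different route from the paper's. The paper uses no adjoint at all. It writes the characteristic equation explicitly in $\tau$ (with $\ii\zeta=2\tau(4\tau^2-1)$) and splits it into real and imaginary parts $G_r,G_i$. By direct trigonometric computation it checks that at $(\frac{\pi(2k+l)}{3L_0},0,L_0)$ the Jacobian in $(t_r,t_i)$ is antidiagonal with entries $\pm4\pi(k^2+kl+l^2)/l$, and that $\partial_LG=0$ there. It then computes $\partial_L^2G$ to get explicit second-order coefficients, and reads off $\F_\zeta'(0)$ from the explicit eigenfunction formula.

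Your first-variation argument gives a structural reason for the quadratic shift and explains why only Type 1 modes behave this way, with much less computation. The paper's route buys explicit nonzero constants: the $(L-L_0)^2$ coefficient of $\Re\zeta$ and the two-sided bound $|\F_\zeta'(0)|\sim|L-L_0|$. The later sharp two-sided rates on $M_\A(L)$ use these, whereas your argument gives only the upper bounds (which is all this statement asks for).

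Two points in your second step need correcting.

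\textbf{The boundary term is misidentified.} The identities $\G_j(L_0)=\G_j'(L_0)=0$ hold for every eigenfunction of $\A$ at every length. If those were the only factors, your argument would give $\zeta'\equiv0$ for all eigenvalues and all $L$, which is false. Differentiating the moving-boundary problem gives $\dot\F(0)=\dot\F(L)=0$ and $\dot\F'(L)=-\F''(L)$. Pair this with $\varphi(x)=\overline{\F(L-x)}$, the eigenfunction of $\A^*$ at $\overline{\zeta}$, which satisfies $\varphi(0)=\varphi(L)=\varphi'(0)=0$. Exactly one term survives, for every $L$:
\[
\dot\zeta\int_0^L\F(x)\F(L-x)\,dx=\F''(L)\,\F'(0).
\]
Since $\F''(L)\neq0$ for a nontrivial eigenfunction, the derivative vanishes at $L_0$ precisely because $\G_j'(0)=0$. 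This enters through $\varphi'(L_0)=-\overline{\G_j'(0)}$, not through $\G_j(L_0)$ or $\G_j'(L_0)$.

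\textbf{The nondegeneracy is easy.} At $L_0$ all the $e^{\ii\xi_mL_0}$ equal a common value $\omega$, and $(\xi_m-\xi_n)L_0\in2\pi\Z\setminus\{0\}$ for $m\neq n$. So for $\G_j=\sum_ma_me^{\ii\xi_mx}$ the cross terms integrate to zero, and
\[
\int_0^{L_0}\G_j(x)\G_j(L_0-x)\,dx=\omega L_0\sum_ma_m^2.
\]
This is nonzero, since $(a_m)\propto(-l,-k,k+l)$ gives $\sum_ma_m^2\propto2(k^2+kl+l^2)$.

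The same identities $\F(L_0)=\omega\F(0)$ and $\F'(L_0)=\omega\F'(0)$ show the eigenvalue is geometrically simple. Moreover, $\sigma$ determines the roots $\xi_m$ and hence the pair $(k,l)$, so different pairs never share an eigenvalue; the degenerate cases you worry about do not arise. Finally, a Jordan chain would force the pairing above to vanish. This gives algebraic simplicity, and with it the smooth branch, either via Kato or via $\partial_\sigma D\neq0$.
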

\begin{proof}[Proof of Proposition \ref{prop: asymptotic expansion for A0}]
We shall use the same trick as before. Let $\ii\zeta=2\tau(4\tau^2-1)$. Then the three roots of $ (\ii\xi)^3+\ii\xi+\zeta=0$, read as $\xi_1=\tau+\sqrt{1-3\tau^2},\xi_2=\tau-\sqrt{1-3\tau^2},\xi_3=-2\tau$.
Thus, $\F(x)=r_1 e^{\ii(\tau+\sqrt{1-3\tau^2})x}+r_2 e^{\ii(\tau-\sqrt{1-3\tau^2})x}+r_3e^{-2\ii\tau x}$. Using boundary conditions, we obtain a equation for $(\tau,L)\in \C\times I$ as follows:
\begin{equation*}
-\sqrt{1-3\tau^2} \cos{3L\tau} + \sqrt{1-3\tau^2}\cos{L\sqrt{1-3\tau^2}}- \ii \sqrt{1-3\tau^2}\sin{3L\tau} + 3\ii \tau\sin{L \sqrt{1-3\tau^2}}=0.
\end{equation*}
After simplification, the eigenfunctions are in the form
\begin{equation}\label{eq: general form for eigenfunction-bad op}
\F(x)=r_1\frac{2\ii\left(e^{-2 \ii \tau x} \sin{L\sqrt{1-3\tau^2}}-e^{\ii \tau x} \sin{\sqrt{1 - 3 \tau^2} (L - x)} - e^{-3 \ii L \tau + \ii \tau x} \sin{\sqrt{1 - 3 \tau^2} x}\right)}{-e^{-3 \ii L \tau} + e^{-\ii L \sqrt{1 - 3 \tau^2}}}
\end{equation}
Let $\tau=t_r+\ii t_i$. Define the following two functions
\begin{align*}
G_r(t_r,t_i,L)&=\Re{\left(-\sqrt{1-3\tau^2} \cos{3L\tau} + \sqrt{1-3\tau^2}\cos{L\sqrt{1-3\tau^2}}- \ii \sqrt{1-3\tau^2}\sin{3L\tau} + 3\ii \tau\sin{L \sqrt{1-3\tau^2}}\right)},\\
G_i(t_r,t_i,L)&=\Im{\left(-\sqrt{1-3\tau^2} \cos{3L\tau} + \sqrt{1-3\tau^2}\cos{L\sqrt{1-3\tau^2}}- \ii \sqrt{1-3\tau^2}\sin{3L\tau} + 3\ii \tau\sin{L \sqrt{1-3\tau^2}}\right)}.
\end{align*}
Then, we define the function $G:\R^2\times I\rightarrow \R^2$ by $G(t_r,t_i,L):=\left(
\begin{array}{c}
    G_r(t_r,t_i,L)\\
    G_i(t_r,t_i,L)
\end{array}\right)$. 
For the Jacobian matrix of $G$ with respect to $(\tau_r,\tau_i)$ at the point $(\frac{\pi}{L_0}\frac{2k+l}{3},0,L_0)$, 
\begin{align*}
J_{G,\tau}(\frac{\pi}{L_0}\frac{2k+l}{3},0,L_0)=\left(
\begin{array}{cc}
    \frac{\p G_r}{\p t_r} &\frac{\p G_r}{\p t_i}  \\
     \frac{\p G_i}{\p t_r}& \frac{\p G_i}{\p t_i}
\end{array}
\right)|_{t_r=\frac{\pi}{L_0}\frac{2k+l}{3},t_i=0,L=L_0}=\left(
\begin{array}{cc}
    0&-4\pi\frac{k^2+kl+l^2}{l}  \\
    4\pi\frac{k^2+kl+l^2}{l}& 0
\end{array}
\right).    
\end{align*}
This implies that $J_{G,\tau}(\frac{\pi}{L_0}\frac{2k+l}{3},0,L_0)$ is invertible. Thus, we deduce that there exists a neighborhood $(L_0-\delta, L_0+\delta)$, with $\delta>0$ sufficiently small, such that there exists a unique continuously differentiable map $(t_r(\cdot),t_i(\cdot)):(L_0-\delta, L_0+\delta)\rightarrow\R^2$ such that $(t_r(L_0),t_i(L_0))=(\frac{\pi}{L_0}\frac{2k+l}{3},0)$ and $G(t_r(L),t_i(L),L)=0$ for all $L\in(L_0-\delta, L_0+\delta)$. In addition, since $\frac{\p G_r}{\p L}(\frac{\pi}{L_0}\frac{2k+l}{3},0,L_0)=\frac{\p G_i}{\p L}(\frac{\pi}{L_0}\frac{2k+l}{3},0,L_0)=0$, the first derivatives for $t_r$ and $t_i$ vanish at the point $L=L_0$. Thus, $t_r$ and $t_i$ have the following expansions
\begin{align*}
t_r(L)=\frac{\pi}{L_0}\frac{2k+l}{3}+\bigO((L-L_0)^3),\;t_i(L)=(-1)^{l+1}\frac{\pi^2kl^2(k+l)}{2(k^2+kl+l^2)}(L-L_0)^2+\bigO((L-L_0)^3).
\end{align*}
As a consequence, we deduce that
\begin{equation}
\zeta=-\ii\frac{(2k+l)(k-l)(2l+k)}{3\sqrt{3}(k^2+k l+l^2)^{\frac{3}{2}}}+(-1)^{l+1}\frac{9\sqrt{3}k^2l^2(k+l)^2 }{8\pi(k^2+kl+l^2)^{\frac{7}{2}}}(L-L_0)^2+\bigO((L-L_0)^3).
\end{equation}
We plug $\tau=\frac{\pi}{L_0}\frac{2k+l}{3}+(-1)^{l+1}\frac{\pi^2kl^2(k+l)}{2(k^2+kl+l^2)}(L-L_0)^2+\bigO((L-L_0)^3)$ in the formula \eqref{eq: general form for eigenfunction-bad op}, then we derive the following expansion for the eigenfunction
\begin{equation}
\F_{\zeta}(x)=r_1\G_{c}(x) +  r_1 \Tilde{f}(x)(L-L_0)+\bigO((L-L_0)^2),
\end{equation}
where $\G_{c}(x)=\frac{1}{k}e^{-\frac{\ii (2k + l) x}{\sqrt{3}\sqrt{k^2 + kl + l^2}}}\left(l -(k+l) e^{\frac{\ii \sqrt{3} k x}{\sqrt{k^2 + kl + l^2}}} + k e^{\frac{\ii \sqrt{3} (k + l) x}{\sqrt{k^2 + kl + l^2}}}\right)$ is the eigenfunction associated with $\lambda_{c}=-\ii\frac{(2k+l)(k-l)(2l+k)}{3\sqrt{3}(k^2+k l+l^2)^{\frac{3}{2}}}$ at the critical length $L_0$.\footnote{we have an explicit formula for $\Tilde{f}$ see in Appendix \ref{sec: proof of expansion of A0}} Moreover, 
\begin{equation*}
|\F'_{\zeta}(0)|\sim|\frac{3l(k+l)(k^2+kl+l^2-6\ii(-1)^lk(k+l))}{2(k^2+kl+l^2)^2}
r_1(L-L_0)|\sim |L-L_0|.
\end{equation*}
For the computation details, we put them into the Appendix \ref{sec: proof of expansion of A0}.
\end{proof}

\section{Part I: A transition-stabilization method}\label{sec: A transition-stabilization method}
In this section, we provide a detailed description of our method: \textit{transition-stabilization method}. In application, we prove Theorem \ref{thm: control-cost} and a weak version of Theorem \ref{thm: main theorem linear version}.

\subsection{The intermediate system I: construction}\label{sec: construction of the control}
In this sequel, we aim to construct the control function $v\in C^1(0,T)$ explicitly for the following intermediate system 
\begin{equation}\label{eq: KdV boundary derivative difference}
\left\{
\begin{array}{lll}
    \p_tz+\p_x^3z+\p_xz=0 & \text{ in }(0,T)\times(0,L), \\
     z(t,0)=z(t,L)=0&  \text{ in }(0,T),\\
     \p_xz(t,L)-\p_xz(t,0)=v(t)&\text{ in }(0,T),\\
     z(0,x)=z^0(x)&\text{ in }(0,L),
\end{array}
\right.
\end{equation}
with the constraints $v(0)=v(T)=0$.
A useful and typical technique to transform an inhomogeneous  boundary problem into a homogenous one is to define a new function $\Tilde{z}(t,x)=z(t,x)-v(t)h(x)$, where $h$ solves the equation 
\begin{equation}\label{eq: defi of h}
\left\{
\begin{array}{ll}
    h'''+h'=0, & \text{ in }(0,L), \\
     h(0)=h(L)=0,\\
     h'(L)-h'(0)=1.
\end{array}
\right.    
\end{equation}
In addition, we know that $h$ solves the equation \eqref{eq: defi of h}. We are able to obtain the exact expression of $h$.
\begin{lem}\label{lem: formula h}
For $L\notin\mathcal{N}$, there exists a unique solution to the equation \eqref{eq: defi of h}. Moreover, the unique solution is 
$h(x)=-\frac{e^{\ii L}+1}{2\ii (e^{\ii L}-1)}+\frac{e^{\ii x}}{2\ii (e^{\ii L}-1)}+\frac{e^{\ii(L-x)}}{2\ii (e^{\ii L}-1)}$. Moreover, $h\in C^{\infty}(0,L)$. $h^{(2m)}(L)=h^{(2m)}(0)=0$, and $h^{(2m+1)}(L)-h^{(2m+1)}(0)=(-1)^m$, $\forall m\in\N$.
\end{lem}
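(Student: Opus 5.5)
We solve \eqref{eq: defi of h} explicitly as a constant-coefficient linear ODE and then read off every claim from the closed form. The characteristic polynomial of $h'''+h'=0$ is $r^3+r=r(r^2+1)$, whose roots are $0,\pm\ii$. The general solution is therefore
\[
h(x)=a+b e^{\ii x}+c e^{-\ii x}=a+be^{\ii x}+c'e^{\ii(L-x)}, \qquad c'=ce^{-\ii L},
\]
which is entire, so $h\in C^{\infty}$ is automatic. The three boundary conditions $h(0)=0$, $h(L)=0$, $h'(L)-h'(0)=1$ form a $3\times3$ linear system in $(a,b,c')$.

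\textbf{Step 1: solve the system.} Subtracting the two Dirichlet conditions gives $(b-c')(e^{\ii L}-1)=0$. The derivative condition reads
\[
\ii b(e^{\ii L}-1)+\ii c'(e^{\ii L}-1)=1 .
\]
So whenever $e^{\ii L}\neq1$ we get the unique solution
\[
b=c'=\frac{1}{2\ii(e^{\ii L}-1)},\qquad a=-(1+e^{\ii L})\,b .
\]
This is exactly the announced formula. If instead $e^{\ii L}=1$, the last equation reads $0=1$ and there is no solution. Hence existence and uniqueness hold if and only if $L\notin 2\pi\N^*$.

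\textbf{Step 2: the hypothesis $L\notin\mathcal N$ suffices.} Take $k=l=n$ in \eqref{eq: defi of critical set-intro}; this gives $2\pi\sqrt{3n^2/3}=2\pi n\in\mathcal N$. Therefore $2\pi\N^*\subset\mathcal N$, and $L\notin\mathcal N$ implies $e^{\ii L}\neq 1$, so Step 1 applies.

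\textbf{Step 3: derivative identities.} Since $\partial_x^2$ acts as $-1$ on both exponentials, for $m\ge0$ we have
\[
h^{(2m+1)}=(-1)^m h',\qquad h^{(2m)}=(-1)^m(h-a)\ \text{ for } m\ge1 .
\]
The odd case is immediate: $h^{(2m+1)}(L)-h^{(2m+1)}(0)=(-1)^m\bigl(h'(L)-h'(0)\bigr)=(-1)^m$.

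For the even orders, the key observation is that $h$ is invariant under $x\mapsto L-x$, because the two exponential terms are exchanged and have equal coefficients $b=c'$. It follows that all even derivatives take the same value at $0$ and at $L$, that is, $h^{(2m)}(L)=h^{(2m)}(0)$.

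\textbf{Main obstacle.} The only real point of care is the even-order claim. For $m=0$ it gives $h(0)=h(L)=0$, as required. For $m\ge1$, however, we find
\[
h^{(2m)}(0)=-(-1)^m a=(-1)^m\tfrac12\cot(L/2),
\]
which is in general nonzero. So the statement "$h^{(2m)}(L)=h^{(2m)}(0)=0$" can only hold when $m=0$. What is true, and what will be used in the compatibility conditions for $D(\B^2)$, is the matching $h^{(2m)}(L)-h^{(2m)}(0)=0$. I would prove it in that form and flag the correction.
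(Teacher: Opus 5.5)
Your proof is correct, and for existence and uniqueness it takes a different route from the paper. The paper checks that the displayed formula satisfies $h'''+h'=0$, $h(0)=h(L)=0$, $h'(L)=\tfrac12$ and $h'(0)=-\tfrac12$. It then gets uniqueness by saying that the difference $\tilde h$ of two solutions would be an eigenfunction for the eigenvalue $0$, which is excluded. The paper attributes this to $\A$. But $\tilde h$ satisfies $\tilde h'(L)=\tilde h'(0)$ rather than $\tilde h'(L)=0$, so it is really the spectrum of $\B$ that matters, and that fact is quoted rather than proved. Solving the $3\times3$ system directly gives existence and uniqueness at once. It also shows that the system is invertible exactly when $e^{\ii L}\neq1$, i.e.\ $L\notin2\pi\N^*$, which is a weaker hypothesis than $L\notin\mathcal N$. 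As a by-product, your computation proves that $0$ is not an eigenvalue of $\B$ away from $2\pi\N^*$.

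Your objection to the even-order clause is also right: for $m\ge1$ the statement as written is false. The paper gives no argument for it; it is stated as a corollary ``thanks to the expression of $h$''. In real form $h(x)=\frac{\cos(L/2)-\cos(x-L/2)}{2\sin(L/2)}$, so $h''(0)=h''(L)=\tfrac12\cot(L/2)$, which vanishes only when $L\equiv\pi\pmod{2\pi}$.

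There is one sign slip. Since $a=\tfrac12\cot(L/2)$, you have $h^{(2m)}(0)=-(-1)^m a=(-1)^{m+1}\tfrac12\cot(L/2)$, not $(-1)^m\tfrac12\cot(L/2)$. This does not affect the conclusion. The corrected statement $h^{(2m)}(L)=h^{(2m)}(0)$ is exactly what your reflection argument gives.

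Finally, your guess about where this clause is used does not match the paper. Downstream the paper only needs $h\in L^2$, $h(0)=h(L)=0$, $h'(L)-h'(0)=1$ and $h'''+h'=0$. For instance, in $h_j=-\overline{\E_j'(L)}/(\ii\lambda_j)$ the $h''$ boundary term drops because $\E_j$ vanishes at the endpoints. The $D(\B^2)$ compatibility is handled with the modulated functions $h_\mu$, not with $h$. So the erroneous clause is harmless.
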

The proof is straightforward and we put it in the Appendix \ref{sec: modulated functions-appendix}.  
As a consequence, we know that $\Tilde{z}$ satisfies the following control system:
\begin{equation}\label{eq: KdV source control}
\left\{
\begin{array}{lll}
    \p_t\Tilde{z}+\p_x^3\Tilde{z}+\p_x\Tilde{z}=-v'\otimes h & \text{ in }(0,\infty)\times(0,L), \\
     \Tilde{z}(t,0)=\Tilde{z}(t,L)=0&  \text{ in }(0,\infty),\\
     \p_x\Tilde{z}(t,L)-\p_x\Tilde{z}(t,0)=0&\text{ in }(0,\infty),\\
     \Tilde{z}(0,x)=z^0(x)&\text{ in }(0,L),
\end{array}
\right.
\end{equation}
According to Duhamel's formula, we write the solution $\Tilde{z}$ in the form:
\begin{equation}\label{eq: duhamel}
    \Tilde{z}(t,x)=e^{t\B}z^0(x)-\int_0^te^{(t-s)\B}v'(s)h(x)ds.
\end{equation}
Since $\{\E_j\}_{j\in\Z\backslash\{0\}}$ forms an orthonormal basis of $L^2(0,L)$, we write the initial data $z^0(x)=\sum_{j\in\Z\backslash\{0\}}z_j^0\E_j(x)$ and $h(x)=\sum_{j\in\Z\backslash\{0\}}h_j\E_j(x)$. Using these expansions, we obtain 
\begin{align*}
e^{t\B}z^0=\sum_{j\in\Z\backslash\{0\}}z_j^0e^{\ii\lambda_j t}\E_j(x),\;
\int_0^te^{(t-s)\B}v'(s)h(x)ds=\sum_{j\in\Z\backslash\{0\}}\int_0^te^{\ii(t-s)\lambda_j}v'(s)h_j\E_j(x)ds.
\end{align*}
For the latter term, we integrate by parts, and thanks to $v(0)=0$, we obtain 
\begin{equation*}
    \int_0^te^{\ii(t-s)\lambda_j}v'(s)h_j\E_j(x)ds=h_jv(t)\E_j(x)-\ii h_j\lambda_j\int_0^te^{\ii(t-s)\lambda_j}v(s)\E_j(x)ds.
\end{equation*} 
Then the solution \eqref{eq: duhamel} is in the following form:
\begin{equation}\label{eq: formula for tilde_z}
    \Tilde{z}(t,x)=\sum_{j\in\Z\backslash\{0\}}\left(z_j^0e^{\ii\lambda_j t}-h_jv(t)-\ii h_j\lambda_j\int_0^te^{\ii(t-s)\lambda_j}v(s)ds\right)\E_j(x).
\end{equation}
Now we apply the moment method. At $t=T$, null controllability implies that $\Tilde{z}(T,x)=0$. By $v(T)=0$, we simplify the condition into 
\begin{equation}\label{eq: moment equation of v}
z_j^0-\ii h_j\lambda_j\int_0^Te^{-\ii s\lambda_j}v(s)ds=0.
\end{equation}
Using this family of conditions, we are able to construct the control function $v$ based on the bi-orthogonal family.
\subsubsection{Bi-orthogonal family}\label{sec: bi-orthogonal family}
In this sequel, we construct a family of functions that is bi-orthogonal to $\{e^{-\ii s\lambda_j}\}_{j\in\Z\backslash\{0\}}$. This type of construction has been developed in several different settings, for example, Schr\"odinger equations and heat equations in \cite{Tenenbaum-Tucsnak2007}, KdV equations and fractional Schr\"odinger equations in \cite{Lissy-2014}.
\begin{prop}\label{prop: biorthogonal family}
There exists a family of functions $\{\phi_j\}_{j\in\Z\backslash\{0\}}$ such that 
\begin{enumerate}
    \item $\supp{\phi_j}\subset [-\frac{T}{2},\frac{T}{2}],\forall j\in\Z\backslash\{0\}$;\label{eq: compact support of phi_j}
    \item $\int_{-\frac{T}{2}}^{\frac{T}{2}}\phi_j(s)e^{-\ii \lambda_ks}ds=\delta_{jk},\forall j,k\in\Z\backslash\{0\}$;\label{eq: biorthorgonal}
    \item For $N\in \N$, there exists a constant $K=K(N)$ such that $\|\phi^{(m)}_j\|_{L^{\infty}(\R)}\leq Ce^{\frac{K}{\sqrt{T}}}|\lambda_j|^m,\forall j\in\Z\backslash\{0\}, \forall m\in\{0,1,\cdots,N\}$. Here the constant $C$ appearing in the inequality might depend on $N$ but not on $T$ and $j$.\label{eq: bound for derivative of phi}
\end{enumerate}
\end{prop}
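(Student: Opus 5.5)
The plan is to follow the classical Paley--Wiener construction of biorthogonal families, in the version of Tenenbaum--Tucsnak and of Lissy for KdV-type spectra. The first step is to build an entire function $\Phi$ of exponential type vanishing exactly at the frequencies $\lambda_k$. I would use the canonical product
\begin{equation*}
P(z)=\prod_{k\in\Z\setminus\{0\}}\Bigl(1-\frac{z}{\lambda_k}\Bigr),
\end{equation*}
which converges absolutely: by Proposition \ref{prop: asymptotic eigenvalues} (or the uniform version, Proposition \ref{prop: Asymp in L}) we have $|\lambda_k|\sim (2\pi|k|/L)^3$, so $\sum 1/|\lambda_k|<\infty$. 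For $L\notin\mathcal{N}$ all $\lambda_k$ are simple, and they are nonzero. The counting function satisfies $n(r)\sim c\,r^{1/3}$, so $P$ has order $1/3$ and $\log|P(x)|\lesssim |x|^{1/3}$ on $\R$. The candidate is
\begin{equation*}
\Phi_j(z)=\frac{P(z)}{P'(\lambda_j)(z-\lambda_j)},
\end{equation*}
so that $\Phi_j(\lambda_k)=\delta_{jk}$. Since $\Phi_j$ is not of exponential type $T/2$, nor in $L^2(\R)$, it will be multiplied by a compensating multiplier.

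The second step is to construct that multiplier: an entire function $M_T$ of exponential type at most $T/2$ with $M_T(\lambda_j)=1$ (after normalizing by the value $M_T(\lambda_j)$) and decay $|M_T(x)|\lesssim e^{K/\sqrt{T}}\exp(-c|x|^{1/3}-(N+2)\log(1+|x|))$ on $\R$. This would come from the Ingham/Beurling--Malliavin-type construction used by Tenenbaum--Tucsnak, namely the Fourier transform of a smooth compactly supported function built from an infinite convolution of normalized indicators. With widths chosen as $\sim T$ times a geometric-type sequence, the optimization of the trade-off between the type and the decay rate $|x|^{1/3}$ yields the constant $e^{K/\sqrt{T}}$, because $\sup_x\bigl(c|x|^{1/3}-\tfrac{T}{2}\,\cdot\bigr)$ optimizes to $T^{-1/2}$, exactly as in Lissy's KdV computation. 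I then set $\widehat{\phi_j}(z)=\Phi_j(z)M_T(z)/M_T(\lambda_j)$. Paley--Wiener gives $\supp{\phi_j}\subset[-T/2,T/2]$, and the interpolation property gives item \ref{eq: biorthorgonal} with the sign convention $e^{-\ii\lambda_k s}$.

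The third step is the bounds. The bound $\|\phi_j^{(m)}\|_{L^\infty}\le\int|\xi|^m|\widehat{\phi_j}(\xi)|d\xi$ reduces item \ref{eq: bound for derivative of phi} to a pointwise estimate of $|\Phi_j(x)|(1+|x|)^m$. Away from $\lambda_j$ I write $|x|^m\le C(|x-\lambda_j|^m+|\lambda_j|^m)$, which produces the factor $|\lambda_j|^m$. The $(N+2)$ polynomial decay of $M_T$ absorbs the remaining powers, so integrability holds.

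The main obstacle is a lower bound for $|P'(\lambda_j)|$ that is uniform in $j$ (at most polynomially bad), together with the control $|P(x)/(x-\lambda_j)|\lesssim e^{c|x|^{1/3}}$. For this I use the gap condition: by Proposition \ref{prop: asymptotic eigenvalues} the gap is $\lambda_{k+1}-\lambda_k\gtrsim k^2$ for large $|k|$, and there are finitely many simple low eigenvalues with a positive minimal gap since $L\notin\mathcal{N}$. I would split the product into $|k|\le 2|j|$ and $|k|>2|j|$ and compare with the model sequence $(2\pi k/L)^3$, for which the product is explicit via $\sin$-type functions of $z^{1/3}$. This should give $|P'(\lambda_j)|^{-1}\le C e^{c|\lambda_j|^{1/3}}$, and the factor $e^{c|\lambda_j|^{1/3}}$ is absorbed by choosing the decay rate of $M_T$ slightly larger. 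All constants depend on $N$ and $L$ but not on $T$ or $j$.
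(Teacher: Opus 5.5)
Your interpolating function is the right one, and your overall architecture matches the paper's: an interpolating product times a multiplier of exponential type $T/2$, then Paley--Wiener, then $\|\phi_j^{(m)}\|_{L^\infty}\le\|x^m\widehat{\phi_j}\|_{L^1}$. Indeed $P(z)/\bigl(P'(\lambda_j)(z-\lambda_j)\bigr)=\prod_{k\neq j}\bigl(1-\frac{z-\lambda_j}{\lambda_k-\lambda_j}\bigr)$, so your $\Phi_j$ is exactly $\Psi_j(z-\lambda_j)$ from Lemma \ref{lem: defi of phi}. The gap is in how you pair $\Phi_j$ with the multiplier, and it breaks item 3.

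You centre $M_T$ at the origin and normalise by $M_T(\lambda_j)$. You then control $\Phi_j$ through $|P(x)|\lesssim e^{c|x|^{1/3}}$ and $|P'(\lambda_j)|^{-1}\le Ce^{c|\lambda_j|^{1/3}}$. The best this gives is
$|\widehat{\phi_j}(x)|\lesssim e^{K/\sqrt{T}}\,e^{c|\lambda_j|^{1/3}}\,|M_T(\lambda_j)|^{-1}\,e^{(c-c')|x|^{1/3}}$.
The prefactor $e^{c|\lambda_j|^{1/3}}|M_T(\lambda_j)|^{-1}$ does not depend on $x$, so no larger decay rate $c'$ can absorb it. On the contrary, if $|M_T(x)|\lesssim e^{-c'|x|^{1/3}}$ on $\R$, then $|M_T(\lambda_j)|^{-1}$ is at least of order $e^{c'|\lambda_j|^{1/3}}$, so faster decay makes things worse.

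You therefore end up with a bound of the form $e^{K/\sqrt T}e^{C|j|}$ (recall $|\lambda_j|^{1/3}\sim|j|$) instead of $Ce^{K/\sqrt T}|\lambda_j|^m$. That loss would also ruin every later use of the family, e.g.\ the sum $\sum_j|z^0_j|\,\|\phi_j\|_{L^\infty}/|j|$ in Lemma \ref{lem: H^m-estimate of v}. Dividing by $M_T(\lambda_j)$ is also not legitimate without a lower bound you do not have. The Tenenbaum--Tucsnak multiplier is an infinite product of $\sin$-type factors with many real zeros, so $M_T(\lambda_j)$ may vanish.

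The missing idea is to centre the multiplier at $\lambda_j$ and estimate $\Phi_j$ in the shifted variable $w=z-\lambda_j$. Your bound on $P'(\lambda_j)$ points the wrong way: $|P'(\lambda_j)|$ is actually exponentially large, of size $e^{c|\lambda_j|^{1/3}}$ with the same $c$ as the growth of $|P|$ on $\R$. Exploiting this cancellation yields the uniform estimate $|\Psi_j(w)|\le K_1e^{K_2|w|^{1/3}}P(|w|)$, with $K_1,K_2$ independent of $j$ (Lemma \ref{lem: defi of phi}, from Lissy). It rests on the counting function of $\{\lambda_k-\lambda_j\}_{k\neq j}$ being $\lesssim 1+r^{1/3}$ uniformly in $j$, which follows from the gap condition and the cubic growth of the eigenvalues.

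The paper then sets $\widehat{\phi_j}(z)=\Psi_j(z-\lambda_j)\Sigma_\beta(\lambda_j-z)$, where $\Sigma_\beta$ is the Fourier transform of the normalised Gevrey bump $e^{-\beta/(1-x^2)}$. This multiplier has three properties:
\begin{itemize}
\item $\Sigma_\beta(0)=1$, so there is no division by a possibly small number;
\item it has type $\nu_\delta(\beta)=T(1-\delta)/2$;
\item it decays like $K_3\sqrt{\beta+1}\,e^{3\beta/4-K_2(1+\delta/2)|x|^{1/3}}$ on $\R$, and the extra $\delta/2$ beats the growth of $\Psi_j$.
\end{itemize}
This gives $|\widehat{\phi_j}(x)|\le Ke^{K/\sqrt T}(1+|x-\lambda_j|^{2N})^{-1}$ uniformly in $j$, where the choice $\beta\sim T^{-1/2}$ produces $e^{3\beta/4}\le e^{K/\sqrt T}$. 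Your final step, using $|x|^m\le C(|x-\lambda_j|^m+|\lambda_j|^m)$, then gives item 3.
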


\begin{coro}\label{coro: uniform est for bi-family}
Let $I$ satisfy the condition {\bf (C)}. Then for every $L\in I\setminus\{L_0\}$,
\begin{enumerate}
    \item if $k\not\equiv l\mod 3$, all estimates in Lemma \ref{prop: biorthogonal family} are uniform in $L$.
    \item if $k\equiv l\mod 3$, for $j\notin\Lambda_E$, the estimates are uniform in $L$. Moreover, 
    \begin{equation}
    \|\phi^{(m)}_{\sigma^{\pm}(q)}\|_{L^{\infty}(\R)}\leq \frac{C}{|\lambda_{\sigma^+(q)}-\lambda_{\sigma^-(q)}|}e^{\frac{K}{\sqrt{T}}}|\lambda_j|^m,\quad |q|\leq N_0.   
    \end{equation}
\end{enumerate}
\end{coro}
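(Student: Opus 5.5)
The proof will go back to the construction behind Proposition \ref{prop: biorthogonal family}, which is the standard Tenenbaum--Tucsnak / Lissy scheme. One builds a Weierstrass-type product
\begin{equation*}
P(z)=\prod_{k\in\Z\setminus\{0\}}\Bigl(1-\frac{z}{\lambda_k}\Bigr),
\end{equation*}
multiplies it by a compactly supported multiplier $M_T$ of exponential type at most $T/2$, and sets
\begin{equation*}
\Phi_j(z)=\frac{P(z)}{P'(\lambda_j)(z-\lambda_j)}\,\frac{M_T(z)}{M_T(\lambda_j)}.
\end{equation*}
Then $\phi_j$ is the inverse Fourier transform of $\Phi_j$, and Paley--Wiener gives the support property. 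The quantities that enter the estimates are only three: the gap between eigenvalues, the counting function of $\{\lambda_k\}$, and the lower bound on $|P'(\lambda_j)|$, that is, $\prod_{k\neq j}|1-\lambda_j/\lambda_k|$. So the plan is to track how each of these depends on $L\in I\setminus\{L_0\}$.

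The first step handles the high frequencies. By Proposition \ref{prop: Asymp in L}, $\lambda_j=(2j\pi/L)^3+\bigO(j^2)$ uniformly in $L\in I$. Hence there is a uniform $J$ such that for $|j|>J$ the gaps $\lambda_{j+1}-\lambda_j\gtrsim j^2$ hold, and the counting function satisfies $N(r)\le C r^{1/3}$ with $C$ uniform. These two facts are what produce the factor $e^{K/\sqrt T}$, with $K$ independent of $L$, for the high-frequency part of $|P'(\lambda_j)|^{-1}$ and for the $M_T$ normalization.

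For $|j|\le J$, Lemma \ref{lem: bound of eigenvalues} bounds $|\lambda_j|\le K$ uniformly. It then remains to get a uniform lower bound on the finitely many gaps among these low eigenvalues; together with the derivative bounds this gives $|\lambda_j|^m$, which I expect to follow uniformly from the construction.

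The second step is the case analysis on the low eigenvalues, and it is the main obstacle.
\begin{itemize}
\item Hyperbolic low eigenvalues stay bounded away from each other and from the elliptic ones. This follows from continuity in $L$ and from simplicity at $L_0$ outside the critical cluster.
\item If $k\not\equiv l \bmod 3$, Proposition \ref{prop: Asymp in L-low} (Case 2/3) shows that each elliptic eigenvalue is the unique branch $\lambda_c(k,l)+\bigO((L-L_0)^2)$. Distinct critical pairs give distinct $\lambda_c$, possibly with signs $\pm$. I need to verify that the distinct $\lambda_{c,m}$ are separated from each other and from the hyperbolic eigenvalues. Granted that, every gap is uniformly positive, and all estimates are uniform.
\item If $k\equiv l\bmod 3$, formula \eqref{eq: pm-eigenvalues} shows that two eigenvalues $\lambda^{\pm}_{\sigma^\pm(q)}$ emanate from the same $\lambda_c$, with separation $|\lambda^+-\lambda^-|\sim |L-L_0|/(\pi\sqrt{k^2+kl+l^2})$. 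This is nonzero for $L\ne L_0$ but tends to $0$.
\end{itemize}
Lemma \ref{lem: critical eigenvalues} is used to ensure that the linear coefficient produces a genuine two-branch splitting rather than a collision with another cluster.

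The third step quantifies the effect of the collapsing pair. In $P'(\lambda_{\sigma^+(q)})$ exactly one factor degenerates, namely $|1-\lambda_{\sigma^+(q)}/\lambda_{\sigma^-(q)}|$, which is of order $|\lambda^+-\lambda^-|/|\lambda_c|$. If $\lambda_c=0$, as in the $k=l$ case, one instead uses the normalization $(z-\lambda_k)$ directly rather than $1-z/\lambda_k$. Every other factor is uniformly bounded below by the first two steps. Hence $|P'(\lambda_{\sigma^\pm(q)})|^{-1}\le C|\lambda_{\sigma^+(q)}-\lambda_{\sigma^-(q)}|^{-1}$ times the uniform part, and this inserts exactly the factor stated in the corollary into $\|\phi^{(m)}_{\sigma^\pm(q)}\|_{L^\infty}$.

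For the remaining $\phi_j$ with $j\notin\Lambda_E$, the near-collision appears only in the numerator. There the factor $P(z)$ contains $(1-z/\lambda^+)(1-z/\lambda^-)$, which stays bounded. So the bounds for these $\phi_j$ remain uniform in $L$.
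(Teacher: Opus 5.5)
Your proposal is correct and follows essentially the paper's route. The paper obtains the corollary by feeding the uniform product bounds of Lemma~\ref{lem: defi of phi} into the construction $\psi_j(z)=\Psi_j(z-\lambda_j)\Sigma_{\beta}(\lambda_j-z)$ of Proposition~\ref{prop: biorthogonal family}: the sequence is a uniform regular $3$-sequence away from the colliding pairs, and a single degenerate factor produces $|\lambda_{\sigma^+(q)}-\lambda_{\sigma^-(q)}|^{-1}$ in $\Psi_{\sigma^{\pm}(q)}$, which is exactly the bookkeeping you describe.

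Two corrections are needed.
\begin{itemize}
\item The multiplier must be recentred at $\lambda_j$, as above. A fixed quotient $M_T(z)/M_T(\lambda_j)$ would not give the $j$-uniform bound $Ce^{K/\sqrt{T}}|\lambda_j|^m$. It is also this multiplier, through its normalization $e^{3\beta/4}$ with $\beta\sim T^{-1/2}$, that produces the factor $e^{K/\sqrt{T}}$, not $|P'(\lambda_j)|^{-1}$.
\item The high-frequency gap $\lambda_{j+1}-\lambda_j\gtrsim j^2$ does not follow from $\lambda_j=(2j\pi/L)^3+\bigO(j^2)$ alone, since the error is of the same order as the gap. It follows instead from the uniform $\bigO(1/j)$ localization of the roots $\tau_j$ in the proofs of Propositions~\ref{prop: asymptotic eigenvalues} and~\ref{prop: Asymp in L}.
\end{itemize}
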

\begin{rem}
Our construction can be seen as an extension of the classic one, with a particular focus on tracking the $L-$dependence. Proposition \ref{prop: biorthogonal family} provides the existence and estimates for the bi-orthogonal family. If preferred, this part can be skipped temporarily. The readers can come back and review its proof later when they encounter proofs that rely on the results derived herein. 

The notations $\sigma^{\pm}(q)$ for $q\in\Lambda_{E}$ will be specified in Proposition \ref{prop: Index set M-E} and Appendix \ref{sec: proof of corollary index-lables}. In fact, as one noticed in Proposition \ref{prop: Asymp in L-low}, for $k\equiv l\mod 3$, every elliptic eigenvalue $\ii\lambda_c(k,l)$ at the critical length will split to two eigenvalues $\ii\lambda_{\pm}$ at non-critical lengths. This $\sigma^{\pm}$ is just a relabeling of the indices.
\end{rem}

\subsubsection{Formal construction of the control function: moment method}\label{sec: Formal construction of the control function: moment method}
Now we could construct our control thanks to the family $\{\phi_j\}_{j\in\Z\backslash\{0\}}$. Let
\begin{equation}\label{eq: defi of control v}
    v(t)=\sum_{k\in\Z\backslash\{0\}}\frac{z_k^0}{\ii h_k\lambda_k}e^{\ii \frac{T}{2}\lambda_k}\phi_k(t-\frac{T}{2}).
\end{equation}
By Proposition \ref{prop: biorthogonal family}, it is easy to verify that \eqref{eq: moment equation of v} holds for this $v$. 
As a consequence, we are also able to construct the solution $\Tilde{z}$ formally thanks to the family $\{\phi_j\}_{j\in\Z\backslash\{0\}}$. We plug \eqref{eq: defi of control v} into \eqref{eq: formula for tilde_z} and due to \eqref{eq: moment equation of v}, $\Tilde{z}(t,x)$ is simplified into 
\begin{equation*}
\Tilde{z}(t,x)=\sum_{j\in\Z\backslash\{0\}}\left(-h_jv(t)+ \sum_{k\in\Z\backslash\{0\}}e^{\ii \frac{T}{2}\lambda_k}\frac{h_j\lambda_jz_k^0}{ h_k\lambda_k}\int_t^Te^{\ii(t-s)\lambda_j}\phi_k(s-\frac{T}{2})ds\right)\E_j(x).   
\end{equation*}
In addition, by $z(t,x)=\Tilde{z}(t,x)+v(t)h(x)$ and $h(x)=\sum_{j\in\Z\backslash\{0\}}h_j\E_j(x)$, we obtain the formal solution $z(t,x)$ to the KdV system \eqref{eq: KdV boundary derivative difference}.
\begin{equation}\label{eq: formal sol z}
z(t,x)=\sum_{j,k\in\Z\backslash\{0\}}\left(e^{\ii \frac{T}{2}\lambda_k}\frac{h_j\lambda_jz_k^0}{ h_k\lambda_k}\int_t^Te^{\ii(t-s)\lambda_j}\phi_k(t-\frac{T}{2})ds\right)\E_j(x)
\end{equation}
Moreover, it is easy to verify that $\p_x z(t,L)$ has the following expansion 
\begin{equation}
\p_xz(t,L)=\sum_{j,k\in\Z\backslash\{0\}}\left(e^{\ii \frac{T}{2}\lambda_k}e^{\ii t \lambda_j}\frac{h_j\lambda_jz_k^0}{h_k\lambda_k}\int_t^Te^{-\ii s\lambda_j}\phi_k(s-\frac{T}{2})ds\right)\E'_j(L).
\end{equation}
To derive uniform estimates in $L$ later, we define 
\begin{align}
v_b(t)&=\sum_{k\notin\Lambda_E}\frac{z_k^0}{\ii h_k\lambda_k}e^{\ii \frac{T}{2}\lambda_k}\phi_k(t-\frac{T}{2}),\;
v_s(t)=\sum_{k\in\Lambda_E}\frac{z_k^0}{\ii h_k\lambda_k}e^{\ii \frac{T}{2}\lambda_k}\phi_k(t-\frac{T}{2}),\\
z_b(t,x)&=\sum_{j\in\Z\backslash\{0\},k\notin\Lambda_E}\left(e^{\ii \frac{T}{2}\lambda_k}\frac{h_j\lambda_jz_k^0}{ h_k\lambda_k}\int_t^Te^{\ii(t-s)\lambda_j}\phi_k(t-\frac{T}{2})ds\right)\E_j(x),\label{eq: z-uniform}\\
z_s(t,x)&=\sum_{j\in\Z\backslash\{0\},k\in\Lambda_E}\left(e^{\ii \frac{T}{2}\lambda_k}\frac{h_j\lambda_jz_k^0}{ h_k\lambda_k}\int_t^Te^{\ii(t-s)\lambda_j}\phi_k(t-\frac{T}{2})ds\right)\E_j(x).\label{eq: z-singular}
\end{align}

\subsection{The intermediate system II: a priori estimates}\label{sec: estimates for the control}
In this section, we aim to give an appropriate bound of the $L^2-$norm of $\p_xz(t,L)$. Before introducing the main estimate, we first estimate the size of $h_j$ as $|j|$ tends to $\infty$.
\begin{lem}\label{lem: size of h_j}
$h_j=-\frac{\overline{\E'_j(L)}}{\ii\lambda_j}$. Moreover, 
$|h_j|\sim |j|^{-2}$, as $|j|\rightarrow+\infty$.
\end{lem}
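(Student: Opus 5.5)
The plan is to compute $h_j=\langle h,\E_j\rangle_{L^2(0,L)}$ by integrating by parts against the eigenfunction equation, and then to read off the size from the asymptotics of $\E_j'(L)$ and $\lambda_j$ established in Proposition \ref{prop: asymptotic eigenvalues}.

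\textbf{The formula for $h_j$.} Since $\B$ is skew-adjoint, $\overline{\E_j}$ satisfies $\overline{\E_j}'''+\overline{\E_j}'=\ii\lambda_j\overline{\E_j}$. Hence
\[
\ii\lambda_j\, h_j=\int_0^L h\,\overline{(-\E_j'''-\E_j')}\,dx
=\ii\lambda_j\int_0^L h\overline{\E_j}\,dx ,
\]
with the sign convention fixed by $h_j=\int_0^L h\overline{\E_j}\,dx$. We integrate by parts three times in $\int_0^L h\,(-\overline{\E_j}'''-\overline{\E_j}')\,dx$ and move all derivatives onto $h$. The interior term becomes $\int_0^L(h'''+h')\overline{\E_j}\,dx=0$ by \eqref{eq: defi of h}. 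Next we track the boundary terms:
\begin{itemize}
\item terms involving $h$ at $0,L$ vanish because $h(0)=h(L)=0$;
\item terms involving $\overline{\E_j}$ at $0,L$ vanish because $\E_j(0)=\E_j(L)=0$;
\item the only surviving term is $-\big[h'\,\overline{\E_j'}\big]_0^L$ (up to sign).
\end{itemize}
Using $\E_j'(0)=\E_j'(L)$, the surviving term equals $-(h'(L)-h'(0))\overline{\E_j'(L)}=-\overline{\E_j'(L)}$. Dividing by $\ii\lambda_j$ gives
\[
h_j=-\frac{\overline{\E_j'(L)}}{\ii\lambda_j}.
\]
Here $\lambda_j\neq0$, since $j\neq0$ and the eigenvalues are simple and nonzero for $L\notin\mathcal N$. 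I will carefully check the sign of the boundary term against the stated formula.

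\textbf{The size of $h_j$.} Proposition \ref{prop: asymptotic eigenvalues} gives $|\lambda_j|\sim |j|^3$. Its proof also gives, in the hyperbolic regime, the expansion
\[
\E'_{N_L+j}(L)=\alpha_j\big(-3\ii\tau+\sqrt{3\tau^2-1}\big)e^{2\ii\tau L}+\bigO(e^{-c|j|}),
\]
with $\alpha_j\to L^{-1/2}$ and $\tau\sim j\pi/L$. Its modulus is therefore $\sim|\alpha_j|\,|\tau|\sqrt{9+3}\sim|j|$. This is the lower bound already recorded at the end of that proof; the upper bound $\bigO(|j|)$ is the stated estimate $\|\E_j'\|_{L^\infty}=\bigO(|j|)$. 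Combining the two, $|h_j|\sim |j|/|j|^3=|j|^{-2}$.

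\textbf{Main obstacle.} The main obstacle is the lower bound $|\E_j'(L)|\gtrsim|j|$, that is, making sure the leading coefficient $-3\ii\tau+\sqrt{3\tau^2-1}$ does not cancel. It cannot: its real and imaginary parts are both of size $|\tau|$, so its modulus is $\sqrt{12\tau^2-1}\sim|\tau|$. The exponentially small remainders are then harmless for large $|j|$. The same argument applies to the negative indices $j\to-\infty$.
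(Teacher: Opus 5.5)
Your proof follows the paper's route. You integrate by parts against the conjugated eigenvalue equation to get $h_j=-\overline{\E_j'(L)}/(\ii\lambda_j)$, and then combine $|\lambda_j|\sim|j|^3$ with $|\E_j'(L)|\sim|j|$ from the hyperbolic expansion. Your explicit non-cancellation check $|-3\ii\tau+\sqrt{3\tau^2-1}|=\sqrt{12\tau^2-1}$ is more careful than the paper, which simply cites Proposition \ref{prop: asymptotic eigenvalues}. (The displayed expansion of $\E'_{N_L+j}(L)$ there carries a stray factor $L$ on $\sqrt{3\tau^2-1}$, which does not affect the size.)

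One bookkeeping fix is needed in the first part. Since $\overline{\ii\lambda_j}=-\ii\lambda_j$, the correct identity is $\int_0^L h\,\overline{(-\E_j'''-\E_j')}\,dx=-\ii\lambda_j h_j$, not $+\ii\lambda_j h_j$. The integration by parts then produces $+\bigl[h'\,\overline{\E_j'}\bigr]_0^L=\overline{\E_j'(L)}$, not $-\bigl[h'\,\overline{\E_j'}\bigr]_0^L$. Your two sign slips cancel, which is why you still arrive at the right formula.
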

\begin{proof}
In fact, $h_j=\int_0^Lh(x)\overline{\E_j(x)}dx$. Since $\E_j(x)$ is the eigenfunction of the operator $\B$ corresponding to the eigenvalue $\ii\lambda_j$, we know that $\ii\lambda_j\E_j=\B\E_j=-\E'''_j-\E'_j$. Hence, by integrating by parts, we obtain
\begin{align*}
h_j&=\int_0^Lh(x)\overline{\E_j(x)}dx
   =\frac{1}{\ii \lambda_j}\int_0^Lh(x)\overline{(\E'''_j(x)+\E'_j(x))}dx=-\frac{h'(L)\overline{\E'_j(L)}-h'(0)\overline{\E'_j(0)}}{\ii \lambda_j}.
\end{align*}
Other boundary terms vanish because $h(0)=h(L)=\E_j(0)=\E_j(L)=0$. Since $h'(L)-h'(0)=1$ and $\E_j(L)=\E_j(0)$, we know $h'(L)\overline{\E'_j(L)}-h'(0)\overline{\E'_j(0)}=\overline{\E'_j(L)}$. Therefore, we obtain $h_j=-\frac{\overline{\E'_j(L)}}{\ii\lambda_j}$. 
By Proposition \ref{prop: asymptotic eigenvalues}, we know that $|\E'_j(L)|\sim|j|$ and $|\lambda_j|\sim|j|^3$. Therefore, $|h_j|=\frac{|\E'_j(L)|}{|\lambda_j|}\sim|j|^{-2}$.
\end{proof}
In addition, we know that $h$ solves the equation \eqref{eq: defi of h}. We are able to obtain the exact expression of $h$ (See Lemma \ref{lem: formula h}).

\begin{lem}\label{lem: H^m-estimate of v}
Suppose that $z^0\in D(\B^2)$. Then the control function $v$ defined in \eqref{eq: defi of control v} satisfies $v(0)=v(T)=0$ and $v\in C^2(0,T)$. Moreover, we have the following estimates
\begin{align}\label{eq: L^infty of v}
\|v^{(m)}\|_{L^{\infty}(0,T)}\lesssim e^{\frac{K}{\sqrt{T}}}\|z^0\|_{H^{3m}(0,L)},m=0,1,2.
\end{align}
\end{lem}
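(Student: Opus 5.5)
The plan is to work directly from the series \eqref{eq: defi of control v}, differentiate it term by term, and bound each term using Proposition \ref{prop: biorthogonal family} together with Lemma \ref{lem: size of h_j}. The endpoint conditions come first and are essentially free. Each $\phi_k$ is supported in $[-\frac{T}{2},\frac{T}{2}]$ and is at least $C^N$ on $\R$ by item \ref{eq: bound for derivative of phi}. Hence $\phi_k(t-\frac{T}{2})$ and its derivatives of order up to $N-1$ vanish at $t=0$ and $t=T$. So once the series (and its twice-differentiated version) converges uniformly, $v\in C^2$ and $v(0)=v(T)=0$ follow; we fix $N\geq 3$.

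Next comes the coefficient estimate. By Lemma \ref{lem: size of h_j}, $\ii h_k\lambda_k=-\overline{\E_k'(L)}$, so the $k$-th term of $v^{(m)}$ has size
\begin{equation*}
\frac{|z_k^0|}{|\E_k'(L)|}\,\|\phi_k^{(m)}\|_{L^\infty}\lesssim e^{\frac{K}{\sqrt{T}}}\frac{|\lambda_k|^m|z_k^0|}{|\E_k'(L)|}.
\end{equation*}
For a fixed non-critical $L$, every $\E_k'(L)\neq0$. Proposition \ref{prop: asymptotic eigenvalues} gives $|\E_k'(L)|\sim|k|$ for large $|k|$, so $|\E_k'(L)|\geq c|k|$ for all $k$ with some $c=c(L)>0$.

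Then I would sum using the regularity of $z^0$. Since $z^0\in D(\B^2)$ and $\B$ is skew-adjoint with orthonormal eigenbasis, $\B^m z^0=\sum_k(\ii\lambda_k)^m z_k^0\E_k$. This gives $\sum_k|\lambda_k|^{2m}|z_k^0|^2=\|\B^mz^0\|_{L^2}^2\lesssim\|z^0\|_{H^{3m}}^2$ for $m=0,1,2$, because $\B^m$ is a differential operator of order $3m$ on $D(\B^m)$. Applying Cauchy--Schwarz,
\begin{equation*}
\sum_k\frac{|\lambda_k|^m|z_k^0|}{|k|}\leq\Big(\sum_k k^{-2}\Big)^{1/2}\Big(\sum_k|\lambda_k|^{2m}|z_k^0|^2\Big)^{1/2}\lesssim\|z^0\|_{H^{3m}}.
\end{equation*}
This yields absolute and uniform convergence of the series for $v^{(m)}$, $m\le2$, and hence the estimate \eqref{eq: L^infty of v}. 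It also justifies the term-by-term differentiation used above.

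The main obstacle is the lower bound on $|\E_k'(L)|$ for the finitely many low modes, especially if one wants constants uniform as $L\to L_0$. In the elliptic regime, Proposition \ref{prop: Low-frequency behaviors: Singular limits} shows that $\E_k'(L)$ may be $\bigO(|L-L_0|)$. For the statement at a fixed $L\notin\mathcal{N}$ this is harmless: only finitely many $k$ are involved, and each has $\E_k'(L)\neq0$, so they contribute a finite constant times $|z_k^0|\leq\|z^0\|_{L^2}$. In the uniform version, I would split $v=v_b+v_s$. The part $v_b$ is controlled uniformly via Propositions \ref{prop: High-frequency behaviors: uniform estimates} and \ref{prop: Low-frequency behaviors: uniform estimates}. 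The part $v_s$ carries the factor $1/|\E_k'(L)|$, which is of order $|L-L_0|^{-1}$, as reflected in Corollary \ref{coro: uniform est for bi-family}.
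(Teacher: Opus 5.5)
Your proposal is correct and is essentially the paper's proof: bound the $k$-th term of $v^{(m)}$ by $|z_k^0|\,\|\phi_k^{(m)}\|_{L^\infty}/|\E_k'(L)|\lesssim e^{\frac{K}{\sqrt{T}}}|\lambda_k|^m|z_k^0|/|k|$ using Lemma \ref{lem: size of h_j} and Proposition \ref{prop: biorthogonal family}, sum by Cauchy--Schwarz against $\sum_k|\lambda_k|^{2m}|z_k^0|^2\lesssim\|z^0\|_{H^{3m}(0,L)}^2$, and read off $v(0)=v(T)=0$ from the supports; your skew-adjointness and M-test justifications make explicit what the paper calls easy to verify. Your closing side remark, which the lemma does not need, has one inaccuracy: when $k\equiv l \bmod 3$, Proposition \ref{prop: Low-frequency behaviors: Singular limits} keeps $|\E_k'(L)|$ bounded below on $\Lambda_E$, so the $|L-L_0|^{-1}$ loss in $v_s$ comes from $\phi_{\sigma^{\pm}(q)}$ via Corollary \ref{coro: uniform est for bi-family}, not from $1/|\E_k'(L)|$.
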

\begin{proof}
By our assumption $z^0\in H^{6}(0,L)$ and $z^0(x)=\sum_{j\in\Z\backslash\{0\}}z_j^0\E_j(x)$, it is easy to verify that 
\begin{equation*}
    \sum_{j\in\Z\backslash\{0\}}|\lambda_j|^{2m}|z^0_j|^2\lesssim \|z^0\|^2_{H^{3m}(0,L)}.
\end{equation*}
By the definition \eqref{eq: defi of control v}, $v(t)=\sum_{k\in\Z\backslash\{0\}}\frac{z_k^0}{\ii h_k\lambda_k}e^{\ii \frac{T}{2}\lambda_k}\phi_k(t-\frac{T}{2})$. Therefore, we know that 
\begin{equation*}
\|v\|_{L^{\infty}(0,T)}\leq \sum_{j\in\Z\backslash\{0\}}\left|\frac{z_j^0}{ h_j\lambda_j}\right|\|\phi_j(\cdot-\frac{T}{2})\|_{L^{\infty}(0,T)}.
\end{equation*}
Applying Lemma \ref{lem: size of h_j}, we obtain $|h_j\lambda_j|=|\E'_j(L)|\sim |j|$. By Proposition \ref{prop: biorthogonal family}, $\|\phi_j\|_{L^{\infty}(-\frac{T}{2},\frac{T}{2})}\lesssim e^{\frac{K}{\sqrt{T}}}$. Combining these two estimates, we are able to show that
\begin{equation*}
\|v\|_{L^{\infty}(0,T)}\lesssim \sum_{j\in\Z\backslash\{0\}}\frac{|z_j^0|}{ |j|}e^{\frac{K}{\sqrt{T}}}\lesssim e^{\frac{K}{\sqrt{T}}}\left(\sum_{j\in\Z\backslash\{0\}}|z_j^0|^2\right)^{\frac{1}{2}} .   
\end{equation*}
In conclusion, we obtain $\|v\|_{L^{\infty}(0,T)}\lesssim e^{\frac{K}{\sqrt{T}}}\|z^0\|_{L^2(0,L)}$, 
where the implicit constant is independent of $T$. Moreover, for $m=1,2$ and for the $m-$th derivative of $v$, we have the following similar estimate:
\begin{equation*}
\|v^{(m)}\|_{L^{\infty}(0,T)}\leq \sum_{j\in\Z\backslash\{0\}}\left|\frac{z_j^0}{ h_j\lambda_j}\right|\|\phi^{(m)}_j(\cdot-\frac{T}{2})\|_{L^{\infty}(0,T)}\lesssim e^{\frac{K}{\sqrt{T}}}\|z^0\|_{H^{3m}(0,L)}.    
\end{equation*}
Here all implicit constants are independent of $T$. By Proposition \ref{prop: biorthogonal family}, $\phi_j \in C^{2}(0,T)$. Hence, we deduce that $v\in C^{2}(0,T)$. Thanks to the compact support of $\phi_j$, we know that $v(0)=v(T)=0$. 
\end{proof}
\begin{coro}
Let $I$ satisfy the condition {\bf (C)}. Then for every $L\in I\setminus\{L_0\}$, uniformly in $L$, 
\begin{align}
\|v_b^{(m)}\|_{L^{\infty}(0,T)}&\lesssim e^{\frac{K}{\sqrt{T}}}\|z^0\|_{H^{3m}(0,L)},m=0,1,2,\label{eq: L^infty of derivative v-uniform}\\
\|v_s^{(m)}\|_{L^{\infty}(0,T)}&\lesssim \frac{e^{\frac{K}{\sqrt{T}}}}{|L-L_0|}\|z^0\|_{H^{3m}(0,L)},m=0,1,2.\label{eq: L^infty of derivative v-signular}
\end{align}
\end{coro}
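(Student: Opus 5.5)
The plan is to repeat the argument of Lemma \ref{lem: H^m-estimate of v} term by term, after splitting the sum defining $v$ according to whether $k\in\Lambda_E$. This splitting is exactly $v=v_b+v_s$. We then track which ingredient loses uniformity in $L$. Three ingredients enter:
\begin{enumerate}
\item the identity $|h_k\lambda_k|=|\E_k'(L)|$ from Lemma \ref{lem: size of h_j};
\item the bounds $\|\phi_k^{(m)}\|_{L^\infty}\lesssim e^{K/\sqrt{T}}|\lambda_k|^m$ from Proposition \ref{prop: biorthogonal family}, together with their $L$-dependence recorded in Corollary \ref{coro: uniform est for bi-family};
\item the inequality $\sum_k|\lambda_k|^{2m}|z_k^0|^2\lesssim\|z^0\|^2_{H^{3m}}$.
\end{enumerate}
For the third ingredient we check that the implicit constant is uniform for $L\in I$. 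It comes from $\|\B^m z^0\|_{L^2}\lesssim\|z^0\|_{H^{3m}}$, and the coefficients of $\B$ do not depend on $L$, so this holds uniformly.

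\textbf{Step 1: the regular part $v_b$.} Here $k\notin\Lambda_E$. We split these indices into $|k|>J$ and $|k|\le J$, with $J$ as in Proposition \ref{prop: High-frequency behaviors: uniform estimates}.
\begin{itemize}
\item For $|k|>J$, Proposition \ref{prop: High-frequency behaviors: uniform estimates} gives $|\E_k'(L)|\ge\gamma|k|$ uniformly in $L$.
\item For $|k|\le J$, Lemma \ref{lem: bound of eigenvalues} gives $|\lambda_k|\le K(J)$. Proposition \ref{prop: Low-frequency behaviors: uniform estimates} then gives $|\E_k'(L)|\ge\gamma(K)>0$ uniformly.
\end{itemize}
By Corollary \ref{coro: uniform est for bi-family}, the estimates on $\phi_k$ are uniform for $k\notin\Lambda_E$ in both cases $k\equiv l$ and $k\not\equiv l \pmod 3$. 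Cauchy--Schwarz then gives
\begin{align*}
\|v_b^{(m)}\|_{L^\infty}\lesssim e^{\frac{K}{\sqrt T}}\sum_{k\notin\Lambda_E}\frac{|\lambda_k|^m|z_k^0|}{\max(1,|k|)}\lesssim e^{\frac{K}{\sqrt T}}\Big(\sum_k|\lambda_k|^{2m}|z_k^0|^2\Big)^{1/2},
\end{align*}
which is the uniform bound \eqref{eq: L^infty of derivative v-uniform}.

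\textbf{Step 2: the singular part $v_s$.} This is a finite sum of $2N_L$ terms, and $N_L$ is fixed on $I$ by Lemma \ref{lem: invariance of elliptic subspace}. Also $|\lambda_k|<\frac{2\sqrt3}{9}$, so the factor $|\lambda_k|^m$ is harmless. We treat the two cases of Proposition \ref{prop: Low-frequency behaviors: Singular limits} separately.
\begin{itemize}
\item \emph{Case $k\not\equiv l\pmod 3$.} The bi-orthogonal bounds are uniform by Corollary \ref{coro: uniform est for bi-family}. The loss comes from $|h_k\lambda_k|=|\E_k'(L)|\sim|L-L_0|$, which produces one factor $|L-L_0|^{-1}$.
\item \emph{Case $k\equiv l\pmod 3$.} Now $|\E_k'(L)|=|\epsilon(L_0)|+\bigO(|L-L_0|)$ is bounded below. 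The loss instead comes from $\phi_{\sigma^\pm(q)}$, whose bound carries $|\lambda_{\sigma^+(q)}-\lambda_{\sigma^-(q)}|^{-1}$. By \eqref{eq: pm-eigenvalues}, $|\lambda^+-\lambda^-|=\frac{2|L-L_0|}{\pi\sqrt{k^2+kl+l^2}}+\bigO((L-L_0)^2)$, which is comparable to $|L-L_0|$ on $I$.
\end{itemize}
In either case each term is bounded by $\frac{C}{|L-L_0|}e^{K/\sqrt T}|z_k^0|$. Summing the finitely many terms, and using $|z_k^0|\le\|z^0\|_{L^2}\le\|z^0\|_{H^{3m}}$, gives \eqref{eq: L^infty of derivative v-signular}.

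\textbf{Main obstacle.} The delicate point is the case $k\equiv l\pmod 3$. We must confirm that the lower bound $|\epsilon(L_0)|>0$ for $|\E_k'(L)|$ holds for both branches $\E^\pm$, so that only the single factor from the eigenvalue gap appears and the two losses do not combine into $|L-L_0|^{-2}$. From the formula for $\E_j'(L)$ in the proof of Proposition \ref{prop: Low-frequency behaviors: Singular limits}, this reduces to checking $2\pi(k-l)\pm\sqrt3L_0\neq0$. That holds because $\sqrt3L_0=2\pi\sqrt{k^2+kl+l^2}>2\pi|k-l|$.

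A second point needs care: the constants $C$, $J$ and $\gamma$ must be uniform over the whole punctured interval $I\setminus\{L_0\}$, not only for $L$ near $L_0$. The $\bigO(|L-L_0|)$ expansions are local, so away from $L_0$ we would fall back on compactness of $I$ and continuity of the eigenmodes, since $I$ contains no other critical length.
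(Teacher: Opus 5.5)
Your proposal is correct and follows the paper's proof essentially step for step. You use the same split $v=v_b+v_s$, the identity $|h_k\lambda_k|=|\E_k'(L)|$ together with the uniform high- and low-frequency lower bounds and the uniform bi-orthogonal estimates for $k\notin\Lambda_E$, and the same two-case treatment of $v_s$: the loss comes from $|\E_k'(L)|\sim|L-L_0|$ when $k\not\equiv l\pmod 3$, and from the eigenvalue gap in the bound on $\phi_{\sigma^\pm(q)}$ when $k\equiv l\pmod 3$. Your checks that $2\pi(k-l)\pm\sqrt3L_0\neq0$, so that $|\E_k'(L)|$ stays bounded below on both branches, and that Lemma \ref{lem: bound of eigenvalues} is what allows Proposition \ref{prop: Low-frequency behaviors: uniform estimates} to be applied for $|k|\le J$, supply details the paper only asserts.
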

\begin{proof}
We first look at the uniform estimates for $v_b$. The proof of these two estimates is similar to what we did in the proof of Lemma \ref{lem: H^m-estimate of v}. We directly use that  
\begin{equation*}
\|v_b\|_{L^{\infty}(0,T)}\leq \sum_{j\notin\mathcal{M}_E}\left|\frac{z_j^0}{ h_j\lambda_j}\right|\|\phi_j(\cdot-\frac{T}{2})\|_{L^{\infty}(0,T)}.
\end{equation*}
Applying Lemma \ref{lem: size of h_j}, we obtain $|h_j\lambda_j|=|\E'_j(L)|$. By Proposition \ref{prop: High-frequency behaviors: uniform estimates}, we know that for $|j|>J=\max N_L+J_0(L_0)$, $|\E'_{j}(0)|= |\E'_{j}(L)|\geq \gamma|j|$. 
By Proposition \ref{prop: Low-frequency behaviors: uniform estimates}, since $j\notin\Lambda_E$, we obtain that $|h_j\lambda_j|=|\E'_j(L)|>\gamma$. Hence,
\begin{equation*}
\|v_b\|_{L^{\infty}(0,T)}\leq \sum_{|j|\leq J,j\notin\Lambda_E}\left|\frac{z_j^0}{ \gamma}\right|\|\phi_j(\cdot-\frac{T}{2})\|_{L^{\infty}(0,T)}+\sum_{|j|\geq J}\left|\frac{z_j^0}{ \gamma|j|}\right|\|\phi_j(\cdot-\frac{T}{2})\|_{L^{\infty}(0,T)}.
\end{equation*}
Using Corollary \ref{coro: uniform est for bi-family}, we know that for $j\notin\Lambda_E$, uniformly in $L$, $\|\phi^{(m)}_j\|_{L^{\infty}(\R)}\leq C e^{\frac{K}{\sqrt{T}}}|\lambda_j|^m$. 
Therefore, we obtain uniform estimates for $v_b$ as follows
\begin{equation*}
    \|v_b\|_{L^{\infty}(0,T)}\lesssim e^{\frac{K}{\sqrt{T}}}\|z^0\|_{L^2(0,L)}.
\end{equation*}
The same arguments hold for uniform estimates of $m-$order derivatives. We shall not repeat it. For the estimates of $v_s$, we have two situations as usual. 
\begin{enumerate}
    \item If $k\not\equiv l\mod 3$, by Proposition \ref{prop: Low-frequency behaviors: Singular limits}, we know that $|\E_j'(L)|=\bigO(L-L_0)$ for $j\in\Lambda_E$. Thus, for $j\in \Lambda_E$, applying Lemma \ref{lem: size of h_j}, we obtain $|h_j\lambda_j|=|\E'_j(L)|=\bigO(L-L_0)$. In this situation, by Corollary \ref{coro: uniform est for bi-family}, $\|\phi^{(m)}_j\|_{L^{\infty}(\R)}\leq C e^{\frac{K}{\sqrt{T}}}|\lambda_j|^m$ holds uniformly in $L$. Thus, we obtain 
    \begin{equation*}
     \|v_s\|_{L^{\infty}(0,T)}\leq \sum_{j\in\Lambda_E}\left|\frac{z_j^0}{ |L-L_0|}\right|\|\phi_j(\cdot-\frac{T}{2})\|_{L^{\infty}(0,T)}\lesssim \frac{e^{\frac{K}{\sqrt{T}}}}{ |L-L_0|}\|z_0\|_{L^2(0,L)} .
    \end{equation*}
    \item If $k\equiv l\mod 3$, in this situation, by Corollary \ref{coro: uniform est for bi-family}, we know that for $j\in\Lambda_E$, 
    \begin{equation*}
    \|\phi^{(m)}_{\sigma^{\pm}(q)}\|_{L^{\infty}(\R)}\leq \frac{C}{|\lambda_{\sigma^+(q)}-\lambda_{\sigma^-(q)}|}e^{\frac{K}{\sqrt{T}}}|\lambda_j|^m,\quad |q|\leq N_0.   
    \end{equation*}
    Using Proposition \ref{prop: Index set M-E}, we know that $|\lambda_{\sigma^+(q)}-\lambda_{\sigma^-(q)}|=\bigO(L-L_0)$. By Proposition \ref{prop: Low-frequency behaviors: Singular limits}, we know that $|\E_j'(L)|>\gamma>0$ for $j\in\Lambda_E$. Therefore, we conclude that
    \begin{equation*}
     \|v_s\|_{L^{\infty}(0,T)}\leq \sum_{j\in\Lambda_E}\left|\frac{z_j^0}{ \gamma}\right|\|\phi_j(\cdot-\frac{T}{2})\|_{L^{\infty}(0,T)}\lesssim \frac{e^{\frac{K}{\sqrt{T}}}}{ |L-L_0|}\|z_0\|_{L^2(0,L)} .
    \end{equation*}
\end{enumerate}
The same arguments hold for uniform estimates of $m-$order derivatives.
\end{proof}
Now we turn to the estimates of the solution $z$.
\begin{lem}\label{lem: H^m-estimate tilde_z}
Suppose that $z^0\in D(\B^{2})$. There exists a unique solution $\Tilde{z}\in C([0,T];L^2(0,L))$ to the equation \eqref{eq: KdV source control}. Moreover, $\Tilde{z}$ has the following expansion:
\begin{equation}\label{eq: expansion of tilde-z}
\Tilde{z}(t,x)=\sum_{j\in\Z\backslash\{0\}}\left(-h_jv(t)+ \sum_{k\in\Z\backslash\{0\}}e^{\ii \frac{T}{2}\lambda_k}\frac{h_j\lambda_jz_k^0}{ h_k\lambda_k}\int_t^Te^{\ii(t-s)\lambda_j}\phi_k(s-\frac{T}{2})ds\right)\E_j(x).    
\end{equation}
In addition, $\p_x \Tilde{z}(t,L)\in L^2(0,T)$ and for fixed $L\notin\mathcal{N}$
\begin{equation}\label{eq: H^m-estimate of derivative of tilde-z}
\|\p_x\Tilde{z}(\cdot,L)\|_{L^2(0,T)}\lesssim e^{\frac{2K}{\sqrt{T}}}\|z^0\|_{H^{6}}.  
\end{equation}
\end{lem}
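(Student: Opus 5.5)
Since $z^0\in D(\B^2)$, Lemma \ref{lem: H^m-estimate of v} gives $v\in C^2$ with $v(0)=v(T)=0$, so $v'\otimes h\in C^1([0,T];L^2(0,L))$. Standard semigroup theory for the skew-adjoint generator $\B$ then gives a unique mild solution $\Tilde z\in C([0,T];L^2(0,L))$ of \eqref{eq: KdV source control}, namely the Duhamel formula \eqref{eq: duhamel}. To get the expansion \eqref{eq: expansion of tilde-z}, I project \eqref{eq: duhamel} onto the orthonormal basis $\{\E_j\}$, integrate by parts in $s$ to obtain \eqref{eq: formula for tilde_z}, insert $v$ from \eqref{eq: defi of control v}, and use the biorthogonality (Proposition \ref{prop: biorthogonal family}) in the form $z^0_j=\ii h_j\lambda_j\int_0^Te^{-\ii s\lambda_j}v(s)ds$. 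This turns $\int_0^t$ into $-\int_t^T$ and cancels the free term $z_j^0e^{\ii\lambda_jt}$. All of this is algebra already sketched in Section \ref{sec: Formal construction of the control function: moment method}; what remains is to justify exchanging sums and integrals, which follows from the absolute convergence established below.

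The core is the estimate on $\p_x\Tilde z(t,L)$. Differentiating termwise,
\[
\p_x\Tilde z(t,L)=\sum_j \E_j'(L)\Big(-h_jv(t)-\ii h_j\lambda_j\int_0^te^{\ii(t-s)\lambda_j}v(s)ds+z_j^0e^{\ii\lambda_jt}\Big).
\]
Summing against $\E_j'(L)$ has no a priori meaning for $L^2$ data, since $|\E_j'(L)|\sim|j|$, so I will gain decay by integrating by parts twice more in $s$. This uses $v(0)=0$, $v'(0)=0$ (arranged because each $\phi_k$ vanishes to order two at the support boundary) and $v\in C^2$:
\[
\ii\lambda_j\int_0^te^{\ii(t-s)\lambda_j}v\,ds=-v(t)+\frac{v'(t)}{\ii\lambda_j}-\frac{1}{\ii\lambda_j}\int_0^te^{\ii(t-s)\lambda_j}v''(s)ds .
\]
The $v(t)$ terms cancel. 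The coefficient of $\E_j'(L)$ becomes
\[
z_j^0e^{\ii\lambda_jt}-\frac{h_j}{\ii\lambda_j}v'(t)+\frac{h_j}{\ii\lambda_j}\int_0^te^{\ii(t-s)\lambda_j}v''\,ds .
\]
By Lemma \ref{lem: size of h_j}, $|h_j|/|\lambda_j|\sim|j|^{-5}$, and $|\E_j'(L)|\lesssim|j|$ by Proposition \ref{prop: asymptotic eigenvalues}. So the last two pieces are bounded in $L^\infty(0,T)$ by $\sum_j|j|^{-4}\big(\|v'\|_\infty+T\|v''\|_\infty\big)\lesssim e^{K/\sqrt T}\|z^0\|_{H^6}$, using Lemma \ref{lem: H^m-estimate of v}.

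The free part $\sum_j z_j^0e^{\ii\lambda_jt}\E_j'(L)$ is the trace $\p_x(e^{t\B}z^0)(L)$. It is bounded in $L^\infty_t$ by $\sum_j|j||z_j^0|\le\big(\sum_j|\lambda_j|^4|z_j^0|^2\big)^{1/2}\big(\sum_j |j|^2|\lambda_j|^{-4}\big)^{1/2}\lesssim\|z^0\|_{H^6}$, via $\|\B^2z^0\|\simeq\|z^0\|_{H^6}$ on $D(\B^2)$. Collecting terms and converting $L^\infty(0,T)$ to $L^2(0,T)$ gives a factor $\sqrt T$. Since $T\lesssim e^{K/\sqrt T}$, the products fit under $e^{2K/\sqrt T}$, which is \eqref{eq: H^m-estimate of derivative of tilde-z}.

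The main obstacle is the bookkeeping of $v'(0)=0$. If the $\phi_k$ only have $v(0)=0$ and not $v'(0)=0$, a boundary term $h_jv'(0)e^{\ii\lambda_jt}/(\ii\lambda_j)$ survives. However, it carries the same $|j|^{-5}$ weight, so it is harmless. Low modes and the finitely many small $\lambda_j$ are handled trivially for fixed $L\notin\mathcal N$, since $\lambda_j\neq0$ there.
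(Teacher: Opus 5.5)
Your argument is correct, but it is organised differently from the paper's. The paper bases the estimate on the moment-reduced expansion. After one integration by parts the $-h_jv(t)$ term cancels, and one gets $\tilde z(t)=\sum_j h_j\int_t^Te^{\ii(t-s)\lambda_j}v'(s)\,ds\,\E_j$. This is the backward Duhamel formula, available because $\tilde z(T)=0$. A second integration by parts, using $\phi_k'(\frac T2)=0$, leaves only $\phi_k'(t-\frac T2)$ and $\int_t^Te^{\ii(t-s)\lambda_j}\phi_k''(s-\frac T2)\,ds$. The resulting double sums $I_1,I_2$ are then bounded directly with Proposition \ref{prop: biorthogonal family}, and no free-evolution term ever appears.

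You instead keep the forward formula and never use the moment identities in the estimate. The price is the free trace $\p_x(e^{t\B}z^0)(L)$, which you bound separately; it actually needs only $\|\B z^0\|\lesssim\|z^0\|_{H^3}$, since $\sum_j|j|^2|\lambda_j|^{-2}<\infty$.

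The remaining weights coincide. The paper's $I_1,I_2$ factor as $\sum_j|h_j||\E_j'(L)|/|\lambda_j|$ times precisely the series that bound $\|v'\|_{L^\infty}$ and $\|v''\|_{L^\infty}$ in Lemma \ref{lem: H^m-estimate of v}. Both arguments also absorb the factor $T$ into $e^{K/\sqrt T}$ only for bounded $T$.

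Your route yields a general trace estimate for the $\B$-system with source $-v'\otimes h$. It holds for any $C^2$ control with $v(0)=0$, whatever the final state, so it would also cover nonzero prescribed targets. The paper's route stays inside the biorthogonal family, which is the form it reuses when it splits $k\in\Lambda_E$ from $k\notin\Lambda_E$ to track the $L$-dependence.

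Two minor points. First, the two integrations by parts actually give
\[
\ii\lambda_j\int_0^te^{\ii(t-s)\lambda_j}v(s)\,ds=-v(t)-\frac{v'(t)}{\ii\lambda_j}+\frac{v'(0)e^{\ii\lambda_jt}}{\ii\lambda_j}+\frac{1}{\ii\lambda_j}\int_0^te^{\ii(t-s)\lambda_j}v''(s)\,ds,
\]
so your $v'$ and $v''$ terms have the wrong sign; this is immaterial for the bound. Second, to justify termwise differentiation, use $\|\E_j'\|_{L^\infty(0,L)}\lesssim|j|$ on all of $[0,L]$, not just at $x=L$. Then the derivative series converges uniformly, and the partial sums, which vanish at $x=0$, converge in $C^1$. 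Hence the value of the series at $x=L$ is indeed the trace $\p_x\tilde z(t,L)$.
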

\begin{proof}
By the assumption $z^0\in D(\B^{2})$ and Lemma \ref{lem: H^m-estimate of v}, we know that $v\in C^{2}(0,T)$. Combining with Lemma \ref{lem: formula h}, we deduce that $v'\otimes h\in L^1((0,T);L^2(0,L))$. Thus, there exists a unique solution $\Tilde{z}\in C([0,T];L^2(0,L))$ to the equation \eqref{eq: KdV source control}. Therefore, we obtain the expansion in the basis $\{\E_j\}_{j\in\Z\backslash\{0\}}$ of $L^2(0,L)$:
\begin{align*}
\Tilde{z}(t,x)&=\sum_{j\in\Z\backslash\{0\}}\left(-h_jv(t)+ \sum_{k\in\Z\backslash\{0\}}e^{\ii \frac{T}{2}\lambda_k}\frac{h_j\lambda_jz_k^0}{ h_k\lambda_k}\int_t^Te^{\ii(t-s)\lambda_j}\phi_k(s-\frac{T}{2})ds\right)\E_j(x).   
\end{align*} 
Integrating by parts, we also obtain  $\Tilde{z}(t,x)=\sum_{j,k\in\Z\backslash\{0\}}e^{\ii \frac{T}{2}\lambda_k}\frac{h_jz_k^0}{\ii h_k\lambda_k}\int_t^Te^{\ii(t-s)\lambda_j}\phi'_k(s-\frac{T}{2})ds\E_j(x)$.
Therefore, taking derivative and integrating by parts, we obtain 
 \begin{equation*}
\p_x\Tilde{z}(t,x)
=\sum_{j,k\in\Z\backslash\{0\}}e^{\ii \frac{T}{2}\lambda_k}\left(-\frac{h_jz_k^0}{ h_k\lambda_k\lambda_j}\phi'_k(t-\frac{T}{2})-\frac{h_jz_k^0}{ h_k\lambda_k\lambda_j}\int_t^Te^{\ii(t-s)\lambda_j}\phi''_k(s-\frac{T}{2})ds\right)\E'_j(x).
\end{equation*}
We write $\p_x\Tilde{z}(t,x)=I_1+I_2$, with
\begin{align*}
I_1=-\sum_{j,k\in\Z\backslash\{0\}}e^{\ii \frac{T}{2}\lambda_k}\frac{h_jz_k^0}{ h_k\lambda_k\lambda_j}\phi'_k(t-\frac{T}{2})\E'_j(x), \\
I_2=-\sum_{j,k\in\Z\backslash\{0\}}e^{\ii \frac{T}{2}\lambda_k}\frac{h_jz_k^0}{ h_k\lambda_k\lambda_j}\int_t^Te^{\ii(t-s)\lambda_j}\phi''_k(s-\frac{T}{2})ds\E'_j(x).
\end{align*}
We estimate them respectively. For $I_1$, 
\begin{align*}
|I_1|\leq \sum_{j,k\in\Z\backslash\{0\}}\left|e^{\ii \frac{T}{2}\lambda_k}\frac{h_jz_k^0}{ h_k\lambda_k\lambda_j}\phi'_k(t-\frac{T}{2})\right||\E'_j(x)|
     \lesssim  \sum_{j,k\in\Z\backslash\{0\}}\frac{|j|^{-2}|z_k^0|}{ |k||j|^3}|\phi'_k(t-\frac{T}{2})||\E'_j(x)|.
\end{align*}
By Proposition \ref{prop: asymptotic eigenvalues} and Proposition \ref{prop: biorthogonal family}, we know that $|\E'_j(x)|\lesssim |j|$ and $|\phi'_k(t-\frac{T}{2})|\lesssim e^{\frac{K}{\sqrt{T}}}|\lambda_k|$. Hence,
\begin{equation}\label{eq: estimate I_1}
|I_1|     \lesssim e^{\frac{K}{\sqrt{T}}}\sum_{j,k\in\Z\backslash\{0\}}\frac{|\lambda_k||z_k^0|}{ |j|^4}\cdot\frac{1}{|k|}
     \lesssim e^{\frac{K}{\sqrt{T}}}\left(\sum_{k\in\Z\backslash\{0\}}|\lambda_k|^2|z_k^0|^2\right)^{\frac{1}{2}}\lesssim e^{\frac{K}{\sqrt{T}}}\|z^0\|_{H^3(0,L)}
\end{equation}
For $I_2$, we have
\begin{align*}
|I_2|&\lesssim\sum_{j,k\in\Z\backslash\{0\}}\left|e^{\ii \frac{T}{2}\lambda_k}\frac{h_jz_k^0}{ h_k\lambda_k\lambda_j}\int_t^Te^{\ii(t-s)\lambda_j}\phi''_k(s-\frac{T}{2})ds\right||\E'_j(x)|\\
     &\lesssim \sum_{j,k\in\Z\backslash\{0\}}\frac{|j|^{-2}|z_k^0|}{ |k||j|^3}\int_t^T|\phi''_k(s-\frac{T}{2})|ds|\E'_j(x)|\\
     &\lesssim \sum_{j,k\in\Z\backslash\{0\}}\frac{|z_k^0|}{ |k||j|^5}\int_t^T|\phi''_k(s-\frac{T}{2})|ds|\E'_j(x)|.
\end{align*}
Again applying Proposition \ref{prop: asymptotic eigenvalues} and Proposition \ref{prop: biorthogonal family}, here we use that $|\phi''_k(t-\frac{T}{2})|\lesssim e^{\frac{K}{\sqrt{T}}}|\lambda_k|^2$. Therefore, we deduce that
\begin{equation}\label{eq: estimate I_2}
|I_2|     \lesssim Te^{\frac{K}{\sqrt{T}}}\sum_{j,k\in\Z\backslash\{0\}}\frac{|\lambda_k|^2|z_k^0|}{ |j|^4}\cdot\frac{1}{|k|}
     \lesssim Te^{\frac{K}{\sqrt{T}}}\left(\sum_{k\in\Z\backslash\{0\}}|\lambda_k|^4|z_k^0|^2\right)^{\frac{1}{2}}\lesssim Te^{\frac{K}{\sqrt{T}}}\|z^0\|_{H^6(0,L)}.
\end{equation}
Combining the two estimates \eqref{eq: estimate I_1} and \eqref{eq: estimate I_2} above, we know that  
\begin{equation*}
\|\p_x\Tilde{z}\|_{L^{\infty}((0,T)\times(0,L))}\lesssim e^{\frac{2K}{\sqrt{T}}}\|z^0\|_{H^6(0,L)}.
\end{equation*}
In particular, we have $\|\p_x\Tilde{z}(\cdot,L)\|_{L^{\infty}(0,T)}\lesssim e^{\frac{2K}{\sqrt{T}}}\|z^0\|_{H^6(0,L)}$.
\end{proof}

By our construction, $z(t,x)=\Tilde{z}(t,x)+v(t)h(x)$. Then we have the following lemma.
\begin{lem}
Suppose that $z^0\in D(\B^{2})$. There exists a unique solution $z\in C([0,T];L^2(0,L))$ to the equation \eqref{eq: KdV boundary derivative difference}. Moreover, $z$ has the following expansion:
\begin{equation}\label{eq: expansion of z}
z(t,x)=\sum_{j\in\Z\backslash\{0\}}\sum_{k\in\Z\backslash\{0\}}e^{\ii \frac{T}{2}\lambda_k}\frac{h_j\lambda_jz_k^0}{ h_k\lambda_k}\int_t^Te^{\ii(t-s)\lambda_j}\phi_k(s-\frac{T}{2})ds\E_j(x).    
\end{equation}
In addition, $\p_x z(t,L)\in L^2(0,T)$ and for fixed $L\notin\mathcal{N}$
\begin{equation}\label{eq: H^m-estimate of derivative of z}
\|\p_x z(\cdot,L)\|_{L^2(0,T)}\lesssim e^{\frac{2K}{\sqrt{T}}}\|z^0\|_{H^{6}}.  
\end{equation}
\end{lem}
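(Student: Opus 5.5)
The plan is to reduce everything to the previous lemma, Lemma \ref{lem: H^m-estimate tilde_z}, through the identity $z(t,x)=\Tilde{z}(t,x)+v(t)h(x)$. Existence and uniqueness come first. Given $z^0\in D(\B^2)$, Lemma \ref{lem: H^m-estimate of v} provides $v\in C^2(0,T)$ with $v(0)=v(T)=0$. Lemma \ref{lem: H^m-estimate tilde_z} provides a unique $\Tilde{z}\in C([0,T];L^2(0,L))$ solving \eqref{eq: KdV source control}. By Lemma \ref{lem: formula h}, $h$ is smooth, so $vh\in C([0,T];L^2(0,L))$, and therefore $z$ lies in the same space.

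Next I check that $z$ solves \eqref{eq: KdV boundary derivative difference}. Since $h'''+h'=0$, we have
\[
\p_t z+\p_x^3 z+\p_x z=\p_t\Tilde z+\p_x^3\Tilde z+\p_x\Tilde z+v'h=0 .
\]
The condition $h(0)=h(L)=0$ preserves the Dirichlet conditions. The conditions $\p_x\Tilde z(t,L)=\p_x\Tilde z(t,0)$ and $h'(L)-h'(0)=1$ together give $\p_xz(t,L)-\p_xz(t,0)=v(t)$. Finally $v(0)=0$ gives $z(0)=z^0$.

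For uniqueness, take the difference $w$ of two solutions with the same $v$. It solves the homogeneous problem with generator $\B$, so $w=e^{t\B}0=0$. Strictly, uniqueness holds once the boundary data are fixed; I would state it in the weak (transposition) sense, where the boundary term is handled by the same lifting by $vh$.

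For the expansion \eqref{eq: expansion of z}, I add $v(t)h(x)=\sum_j h_jv(t)\E_j(x)$ to \eqref{eq: expansion of tilde-z}. The term $-h_jv(t)$ cancels exactly. The series converges in $L^2$ because both pieces converge there: the $h$-series does since $h\in L^2$, and the $\Tilde z$-series does by Lemma \ref{lem: H^m-estimate tilde_z}.

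For the estimate, I write
\[
\p_xz(t,L)=\p_x\Tilde z(t,L)+v(t)h'(L).
\]
The proof of Lemma \ref{lem: H^m-estimate tilde_z} actually gives the $L^\infty$ bound $\|\p_x\Tilde z(\cdot,L)\|_{L^\infty(0,T)}\lesssim e^{2K/\sqrt T}\|z^0\|_{H^6}$. Lemma \ref{lem: H^m-estimate of v} gives $\|v\|_{L^\infty}\lesssim e^{K/\sqrt T}\|z^0\|_{L^2}$, and $|h'(L)|$ is a constant depending only on $L\notin\mathcal N$. Passing from $L^\infty(0,T)$ to $L^2(0,T)$ costs a factor $\sqrt T$. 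This factor is harmless for bounded $T$, and for small $T$ it only improves the bound. Since $e^{K/\sqrt T}\le e^{2K/\sqrt T}$, the claimed bound \eqref{eq: H^m-estimate of derivative of z} follows.

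I expect no real obstacle here. The only delicate point is justifying that $\p_x$ of the series may be evaluated at $x=L$ term by term. That is already covered by the absolutely convergent bounds on $I_1,I_2$ in the previous proof, using $|\E_j'|\lesssim|j|$ and $|h_j|\sim|j|^{-2}$, which make the $j$-sum summable like $|j|^{-4}$.
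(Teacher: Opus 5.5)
Your proposal is correct and follows the same route as the paper, which likewise obtains the lemma directly from the decomposition $z=\Tilde{z}+v(t)h(x)$ together with Lemma~\ref{lem: H^m-estimate of v}, Lemma~\ref{lem: H^m-estimate tilde_z} and Lemma~\ref{lem: formula h}. You spell out what the paper leaves implicit: the check of the equation and boundary conditions, the cancellation of the $-h_jv(t)$ terms in the expansion, and the extra boundary contribution $v(t)h'(L)$ in $\p_x z(t,L)$ (here $h'(L)=\tfrac12$), whose bound $e^{K/\sqrt{T}}\|z^0\|_{L^2}$ is absorbed into the final estimate for bounded $T$.
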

\begin{proof}
The proof is just a combination of the estimates for $v$ and the estimates for $\Tilde{z}$. By Lemma \ref{lem: formula h}, Lemma \ref{lem: H^m-estimate of v}, and Lemma \ref{lem: H^m-estimate tilde_z}, it is easy to deduce that $z\in C([0,T];L^2(0,L))$. The expansion \eqref{eq: expansion of z} is a direct result thanks to the expansion \eqref{eq: expansion of tilde-z} of $\Tilde{z}$ and $z(t,x)=\Tilde{z}(t,x)+v(t)h(x)$. The estimate \eqref{eq: H^m-estimate of derivative of z} follows the estimate \eqref{eq: H^m-estimate of derivative of tilde-z}.  
\end{proof}
\begin{coro}\label{coro: uniform and singular estimates for derivative-z}
Let $I$ satisfy the condition {\bf (C)}. Then for every $L\in I\setminus\{L_0\}$, uniformly in $L$, 
\begin{align}
\|\p_xz_b(\cdot,L)\|_{L^{\infty}(0,T)}&\lesssim e^{\frac{2K}{\sqrt{T}}}\|z^0\|_{H^{6}} ,\label{eq: H^m of z'(L)-uniform}\\
\|\p_xz_s(\cdot,L)\|_{L^{\infty}(0,T)}&\lesssim \frac{e^{\frac{K}{\sqrt{T}}}}{|L-L_0|}\|z^0\|_{H^{6}(0,L)}.\label{eq: H^m of z'(L)-signular}
\end{align}
\end{coro}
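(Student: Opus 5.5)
We run the proof of Lemma \ref{lem: H^m-estimate tilde_z} twice, once with the index $k$ restricted to $k\notin\Lambda_E$ and once to $k\in\Lambda_E$. The $j$-sum in \eqref{eq: z-uniform}--\eqref{eq: z-singular} stays over all of $\Z\setminus\{0\}$.

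The first step is to write $\p_x z_b(t,L)$ and $\p_x z_s(t,L)$ as sums over $j,k$. By linearity of the moment construction, $z_b$ and $z_s$ are the solutions of \eqref{eq: KdV boundary derivative difference} with controls $v_b$ and $v_s$ and data $\sum_{k\notin\Lambda_E}z^0_k\E_k$ and $\sum_{k\in\Lambda_E}z^0_k\E_k$. So we may set $\tilde z_b=z_b-v_bh$ and $\tilde z_s=z_s-v_sh$ and integrate by parts in $s$ twice, exactly as for $\tilde z$. This splits $\p_x\tilde z_b(t,L)$ into the analogues $I_1^b$ and $I_2^b$, whose coefficients are
\begin{equation*}
\frac{h_j z_k^0}{h_k\lambda_k\lambda_j}\,\phi_k'(t-\tfrac{T}{2})\,\E_j'(L)
\end{equation*}
and the corresponding integrals involving $\phi_k''$. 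The $v_bh'(L)$ term is handled by \eqref{eq: L^infty of derivative v-uniform} with $m=0$, since $h$ does not depend on $k$ and $|h'(L)|$ stays bounded on $I$.

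Next we bound the $j$-factor $h_j\E_j'(L)/\lambda_j$ uniformly in $L\in I$. By Lemma \ref{lem: size of h_j}, $h_j\E_j'(L)/\lambda_j=-|\E_j'(L)|^2/(\ii\lambda_j^2)$. For $|j|>J$ we use Proposition \ref{prop: Asymp in L} and the uniform bound $\|\E_j'\|_{L^\infty}\lesssim|j|$ from the proof of Proposition \ref{prop: asymptotic eigenvalues}, which holds uniformly on $I$. This gives the summable bound $|j|^{-4}$. For the finitely many $|j|\le J$, $\lambda_j\ne0$ varies continuously in $L$ and $|\E_j'(L)|$ is bounded. The one danger is $\lambda_j\to0$ when $L_0$ carries the pair $k=l$, where $\lambda_c=0$. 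There the Type 1/Type 2 expansions of Proposition \ref{prop: Low-frequency behaviors: Singular limits} give $|\lambda_j|\sim|L-L_0|$ while $|\E_j'(L)|\gtrsim1$. I expect this to be the main obstacle. I would first try to show it is harmless, because the factor $1/\lambda_j$ enters only after integration by parts and can be undone: keep $\int_t^Te^{\ii(t-s)\lambda_j}\phi_k(s-\frac T2)\,ds$ without integrating by parts for those finitely many $j$, so that no $1/\lambda_j$ appears. Their contribution is then bounded by $T\|\phi_k\|_\infty$ directly.

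The last step sums over $k$. For $k\notin\Lambda_E$, Propositions \ref{prop: High-frequency behaviors: uniform estimates} and \ref{prop: Low-frequency behaviors: uniform estimates} give $|h_k\lambda_k|=|\E_k'(L)|\ge\gamma\max(1,|k|)$ uniformly. Corollary \ref{coro: uniform est for bi-family} gives $\|\phi_k^{(m)}\|_\infty\lesssim e^{K/\sqrt T}|\lambda_k|^m$ uniformly. Cauchy--Schwarz against $\sum|\lambda_k|^4|z_k^0|^2\lesssim\|z^0\|_{H^6}^2$ then yields \eqref{eq: H^m of z'(L)-uniform}, with the extra $e^{K/\sqrt T}$ and $T\le L$ absorbed as in \eqref{eq: estimate I_2}. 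For $k\in\Lambda_E$, a finite set of size $2N_L$ that is invariant on $I$ by Lemma \ref{lem: invariance of elliptic subspace}, there are two cases. If $k\not\equiv l\bmod 3$, then $|h_k\lambda_k|\sim|L-L_0|$ while the $\phi_k$ bounds are uniform. If $k\equiv l\bmod 3$, then $|h_k\lambda_k|\ge\gamma$ while $\|\phi_k^{(m)}\|\lesssim e^{K/\sqrt T}/|L-L_0|$. In either case the product loses exactly one factor $|L-L_0|^{-1}$, and $|\lambda_k|\le K$ by Lemma \ref{lem: bound of eigenvalues}, so $\|z^0\|_{L^2}\le\|z^0\|_{H^6}$ suffices. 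Together with \eqref{eq: L^infty of derivative v-signular} this gives \eqref{eq: H^m of z'(L)-signular}.
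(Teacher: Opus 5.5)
Your route is the paper's: two integrations by parts in $s$ for the generic modes, and the same treatment of the modes $j=\pm j_0$ with $\lambda_{\pm j_0}=\bigO(|L-L_0|)$. These occur exactly when $L_0\in 2\pi\N^*$, i.e. when a pair $k=l$ is present, and you leave them un-integrated and bound them by $T\|\phi_k\|_{L^\infty(\R)}$. You also use the same dichotomy $k\not\equiv l$ versus $k\equiv l \bmod 3$ for $z_s$. You are more careful than the paper's write-up on one point: the paper sets $\p_xz_b(t,L)=\tilde I_1+\tilde I_2$ and drops the term $v_b(t)h'(L)$, which you keep. That term equals $\tfrac12 v_b(t)$ by the explicit formula in Lemma \ref{lem: formula h}.

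One step is missing where these two choices meet. You split $z_b=\tilde z_b+v_bh$ with the full $h$. Hence the $\pm j_0$ coefficients of $\tilde z_b$ are the un-integrated coefficients of $z_b$ minus $v_b(t)h_{\pm j_0}$. Reverting those two modes therefore leaves the residual $-v_b(t)\bigl(h_{j_0}\E'_{j_0}(L)+h_{-j_0}\E'_{-j_0}(L)\bigr)$ in $\p_x\tilde z_b(t,L)$, and your bound by $T\|\phi_k\|_{L^\infty}$ does not cover it.

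By Lemma \ref{lem: size of h_j}, $h_{\pm j_0}\E'_{\pm j_0}(L)=\ii|\E'_{\pm j_0}(L)|^2/\lambda_{\pm j_0}$. Since $|\E'_{\pm j_0}(L)|\gtrsim 1$, each term has size $|L-L_0|^{-1}$. Estimating them separately through \eqref{eq: L^infty of derivative v-uniform} destroys the uniform bound for $z_b$. For $z_s$, going through \eqref{eq: L^infty of derivative v-signular} costs $|L-L_0|^{-2}$.

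The two terms in fact cancel exactly. First, $\lambda_{-j_0}=-\lambda_{j_0}$ by Remark \ref{rem: pm-sign for eigenvalues}. Second, $\B$ has real coefficients and simple spectrum for $L\notin\mathcal{N}$, so $\E_{-j_0}$ is a unimodular multiple of $\overline{\E_{j_0}}$ and $|\E'_{-j_0}(L)|=|\E'_{j_0}(L)|$.

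The same identity gives $h_j\E'_j(L)+h_{-j}\E'_{-j}(L)=0$ for every $j$. So the termwise series $\sum_j h_j\E'_j(L)$ cannot reproduce $h'(L)=\tfrac12$, which is why keeping $v_bh'(L)$ separately, as you do, is the right bookkeeping. State this cancellation and the argument closes.
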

\begin{proof}
 As we defined in \eqref{eq: z-uniform}, we know that
\begin{equation}\label{eq: z-uniform-derivative}
\p_x z_b(t,L)=\sum_{j\in\Z\backslash\{0\},k\notin\Lambda_E}\left(e^{\ii \frac{T}{2}\lambda_k}\frac{h_j\lambda_jz_k^0}{ h_k\lambda_k}\int_t^Te^{\ii(t-s)\lambda_j}\phi_k(t-\frac{T}{2})ds\right)\E'_j(L).    
\end{equation}
If $k\neq l$, there exists a constant $\epsilon_0$ such that $|\lambda_j|>\epsilon_0$, for any $j\in\Z\setminus\{0\}$. After integrating by parts, we write $\p_x z_b(t,L)=\Tilde{I}_1+\Tilde{I}_2$, with 
\begin{align*}
\Tilde{I}_1=-\sum_{j\in\Z\backslash\{0\},k\notin\Lambda_E}e^{\ii \frac{T}{2}\lambda_k}\frac{h_jz_k^0}{ h_k\lambda_k\lambda_j}\phi'_k(t-\frac{T}{2})\E'_j(L),\\
\Tilde{I}_2=-\sum_{j\in\Z\backslash\{0\},k\notin\Lambda_E}e^{\ii \frac{T}{2}\lambda_k}\frac{h_jz_k^0}{ h_k\lambda_k\lambda_j}\int_t^Te^{\ii(t-s)\lambda_j}\phi''_k(s-\frac{T}{2})ds\E'_j(L).   
\end{align*}
For the term $\Tilde{I}_1$, we know that 
\begin{align*}
|\Tilde{I}_1|&\leq \sum_{j\in\Z\backslash\{0\},k\notin\mathcal{M}_E}|\frac{h_jz_k^0}{ h_k\lambda_k\lambda_j}|\|\phi'_k(\cdot-\frac{T}{2})\|_{L^{\infty}(\R)}|\E'_j(L)| \\
&\leq \sum_{j\in\Z\backslash\{0\},k\notin\mathcal{M}_E}\frac{|z_k^0|}{ |\E'_k(L)||\lambda_j|^2}\|\phi'_k(\cdot-\frac{T}{2})\|_{L^{\infty}(\R)}|\E'_j(L)|^2.
\end{align*}
Since $k\notin\Lambda_E$, using Corollary \ref{coro: uniform est for bi-family}, we know that for $j\notin\Lambda_E$, uniformly in $L$, $\|\phi^{(m)}_k\|_{L^{\infty}(\R)}\leq C e^{\frac{K}{\sqrt{T}}}|\lambda_k|^m$. 
Moreover, by Proposition \ref{prop: High-frequency behaviors: uniform estimates}, we know that for $|k|>J=\max N_L+J_0(L_0)$, $|\E'_{k}(0)|= |\E'_{k}(L)|\geq \gamma|k|$. 
By Proposition \ref{prop: Low-frequency behaviors: uniform estimates}, since $k\notin\Lambda_E$, we obtain that $|\E'_k(L)|>\gamma$. Hence,
\begin{align*}
|\Tilde{I}_1|
&\lesssim e^{\frac{K}{\sqrt{T}}}\left(\sum_{j\in\Z\backslash\{0\},k\notin\Lambda_E}\frac{|\lambda_k z_k^0|}{ |\gamma||\lambda_j|^2}|\E'_j(L)|^2+ \sum_{j\in\Z\backslash\{0\},k>J}\frac{|\lambda_kz_k^0|}{\gamma |k||\lambda_j|^2}|\E'_j(L)|^2\right)\\
&\lesssim e^{\frac{K}{\sqrt{T}}}\|z^0\|_{H^3(0,L)}\sum_{j\in\Z\backslash\{0\}}\frac{1}{\gamma |\lambda_j|^2}|\E'_j(L)|^2.
\end{align*}
By Proposition \ref{prop: Asymp in L}, we know that $\lambda_{j}=(\frac{2j\pi}{L})^3+O(j^2)$ uniformly in $L$ and $|\E'_j(L)|\lesssim |j|$. Thus, we obtain uniform estimates for 
\begin{equation*}
    |\Tilde{I}_1|\lesssim e^{\frac{K}{\sqrt{T}}}\|z^0\|_{H^3(0,L)}.
\end{equation*}
For the term $\Tilde{I}_2$, the procedure is similar and we obtain
\begin{align*}
|\Tilde{I}_2|
 \lesssim  e^{\frac{K}{\sqrt{T}}}\left(\sum_{j\in\Z\backslash\{0\},k\notin\Lambda_E}\frac{|\lambda_k|^2| z_k^0|}{ |\gamma||\lambda_j|^2}|\E'_j(L)|^2+ \sum_{j\in\Z\backslash\{0\},k>J}\frac{|\lambda_k|^2|z_k^0|}{\gamma |k||\lambda_j|^2}|\E'_j(L)|^2\right).
\end{align*}
Using again Proposition \ref{prop: High-frequency behaviors: uniform estimates}, Proposition \ref{prop: Low-frequency behaviors: uniform estimates} and Proposition \ref{prop: Asymp in L}, we obtain $\Tilde{I}_2\lesssim e^{\frac{K}{\sqrt{T}}}\|z^0\|_{H^6(0,L)}$. 
If $k=l$, let $j_0$ denote the index of the eigenvalue that satisfies $\lambda_{j_0}=\bigO(L-L_0)$. Thus, $\lambda_{-j_0}=\bigO(L-L_0)$. In the expression \eqref{eq: z-uniform-derivative} of $\p_xz_b(t,L)$, we look at the term 
\begin{align*}
&\;\;\; \; |\sum_{k\notin\Lambda_E}\left(e^{\ii \frac{T}{2}\lambda_k}\frac{h_{j_0}\lambda_{j_0}z_k^0}{ h_k\lambda_k}\int_t^Te^{\ii(t-s)\lambda_{j_0}}\phi_k(t-\frac{T}{2})ds\right)\E'_{j_0}(L)| \\ 
&\leq T\sum_{k\notin\Lambda_E}\frac{|h_{j_0}\lambda_{j_0}||z_k^0|}{| h_k\lambda_k|}\|\phi_k(\cdot-\frac{T}{2})\|_{L^{\infty}(\R)}|\E'_{j_0}(L)|.
\end{align*}
Since $k\notin\Lambda_E$, using Corollary \ref{coro: uniform est for bi-family}, we know that for $k\notin\Lambda_E$, uniformly in $L$, $\|\phi^{(m)}_k\|_{L^{\infty}(\R)}\leq C e^{\frac{K}{\sqrt{T}}}|\lambda_k|^m$. 
Moreover, by Proposition \ref{prop: High-frequency behaviors: uniform estimates} and Proposition \ref{prop: Low-frequency behaviors: uniform estimates}, we know
\begin{align*}
&\;\;\; \; |\sum_{k\notin\Lambda_E}\left(e^{\ii \frac{T}{2}\lambda_k}\frac{h_{j_0}\lambda_{j_0}z_k^0}{ h_k\lambda_k}\int_t^Te^{\ii(t-s)\lambda_{j_0}}\phi_k(t-\frac{T}{2})ds\right)\E'_{j_0}(L)|\\
&\lesssim Te^{\frac{K}{\sqrt{T}}}\sum_{k\notin\Lambda_E}\frac{|z_k^0|}{| \gamma|}|\E'_{j_0}(L)|^2+\sum_{k>J}\frac{|z_k^0|}{| \gamma k|}|\E'_{j_0}(L)|^2\\
&\lesssim e^{\frac{K}{\sqrt{T}}}\|z_0\|_{L^2(0,L)}.
\end{align*}
We could deal with all other terms using integration by parts to derive uniform estimates. Consequently, we conclude that 
\begin{equation*}
\|\p_xz_b(\cdot,L)\|_{L^{\infty}(0,T)}\lesssim e^{\frac{2K}{\sqrt{T}}}\|z^0\|_{H^{6}}
\end{equation*}
Next we look at $\p_xz_s(t,L)$,
\begin{equation*}
\p_x z_s(t,L)=\sum_{j\in\Z\backslash\{0\},k\in\Lambda_E}\left(e^{\ii \frac{T}{2}\lambda_k}\frac{h_j\lambda_jz_k^0}{ h_k\lambda_k}\int_t^Te^{\ii(t-s)\lambda_j}\phi_k(t-\frac{T}{2})ds\right)\E'_j(L).    
\end{equation*}
After integrating by parts, we write $\p_x z_s(t,L)=\Tilde{J}_1+\Tilde{J}_2$, with 
\begin{align*}
\Tilde{J}_1&=-\sum_{j\in\Z\backslash\{0\},k\in\Lambda_E}e^{\ii \frac{T}{2}\lambda_k}\frac{h_jz_k^0}{ h_k\lambda_k\lambda_j}\phi'_k(t-\frac{T}{2})\E'_j(L),\\
\Tilde{J}_2&=-\sum_{j\in\Z\backslash\{0\},k\in\Lambda_E}e^{\ii \frac{T}{2}\lambda_k}\frac{h_jz_k^0}{ h_k\lambda_k\lambda_j}\int_t^Te^{\ii(t-s)\lambda_j}\phi''_k(s-\frac{T}{2})ds\E'_j(L).   
\end{align*}
We prove the estimates in two situations.
\begin{enumerate}
    \item If $k\not\equiv l\mod 3$, by Proposition \ref{prop: Low-frequency behaviors: Singular limits}, we know that $|\E_k'(L)|=\bigO(L-L_0)$ for $k\in\Lambda_E$. Thus, for $k\in \Lambda_E$, we obtain $|h_k\lambda_k|=|\E'_k(L)|=\bigO(L-L_0)$. In this situation, by Corollary \ref{coro: uniform est for bi-family}, $\|\phi^{(m)}_k\|_{L^{\infty}(\R)}\leq C e^{\frac{K}{\sqrt{T}}}|\lambda_k|^m$ holds uniformly in $L$. Thus, 
    \begin{equation*}
        |\Tilde{J}_1|\lesssim \frac{e^{\frac{K}{\sqrt{T}}}}{|L-L_0|}\|z^0\|_{H^{3}(0,L)},\; |\Tilde{J}_2|\lesssim \frac{e^{\frac{K}{\sqrt{T}}}}{|L-L_0|}\|z^0\|_{H^{6}(0,L)}.
    \end{equation*}
\item If $k\equiv l\mod 3$ and $k\neq l$, in this situation, by Corollary \ref{coro: uniform est for bi-family}, we know that for $j\in\Lambda_E$, 
    \begin{equation*}
    \|\phi^{(m)}_{\sigma^{\pm}(q)}\|_{L^{\infty}(\R)}\leq \frac{C}{|\lambda_{\sigma^+(q)}-\lambda_{\sigma^-(q)}|}e^{\frac{K}{\sqrt{T}}}|\lambda_j|^m,\quad |q|\leq N_0.   
    \end{equation*}
    Using Proposition \ref{prop: Index set M-E}, we know that $|\lambda_{\sigma^+(q)}-\lambda_{\sigma^-(q)}|=\bigO(L-L_0)$. By Proposition \ref{prop: Low-frequency behaviors: Singular limits}, we know that $|\E_j'(L)|>\gamma>0$ for $j\in\Lambda_E$. Therefore,
    \begin{align*}
     |\Tilde{J}_1|
     &\lesssim e^{\frac{K}{\sqrt{T}}}\sum_{j\in\Z\backslash\{0\},k\in\Lambda_E}\frac{|\lambda_kz_k^0|}{\gamma |L-L_0||\lambda_j|^2}|\E'_j(L)|^2
     \lesssim\frac{e^{\frac{K}{\sqrt{T}}}}{|L-L_0|}\|z^0\|_{H^{3}(0,L)}.
    \end{align*}
Similarly, we also have $|\Tilde{J}_2|\lesssim \frac{e^{\frac{K}{\sqrt{T}}}}{|L-L_0|}\|z^0\|_{H^{6}(0,L)}$.
\item If $k=l$, we only need to analyze the term $\sum_{k\in\Lambda_E}\left(e^{\ii \frac{T}{2}\lambda_k}\frac{h_{j_0}\lambda_{j_0}z_k^0}{ h_k\lambda_k}\int_t^Te^{\ii(t-s)\lambda_{j_0}}\phi_k(s-\frac{T}{2})ds\right)\E'_{j_0}(L)$. It is easy to verify that
\begin{align*}
& \;\;\;\; |\sum_{k\in\Lambda_E}\left(e^{\ii \frac{T}{2}\lambda_k}\frac{h_{j_0}\lambda_{j_0}z_k^0}{ h_k\lambda_k}\int_t^Te^{\ii(t-s)\lambda_{j_0}}\phi_k(s-\frac{T}{2})ds\right)\E'_{j_0}(L)| \\
&\leq T\sum_{k\in\mathcal{M}_E}\frac{|z_k^0|}{ |\E_k'(L)|}\|\phi_k(\cdot-\frac{T}{2})ds\|_{L^{\infty}(\R)}|\E'_{j_0}(L)|^2\\
&\lesssim T e^{\frac{K}{\sqrt{T}}}\sum_{k\in\Lambda_E}\frac{|z_k^0|}{ \gamma |L-L_0|}|\E'_{j_0}(L)|^2\\
&\lesssim\frac{e^{\frac{K}{\sqrt{T}}}}{|L-L_0|}\|z^0\|_{L^2(0,L)}.
\end{align*}
\end{enumerate}
As a consequence, we conclude that $\|\p_xz_s(\cdot,L)\|_{L^{\infty}(0,T)}\lesssim \frac{e^{\frac{K}{\sqrt{T}}}}{|L-L_0|}\|z^0\|_{H^{6}(0,L)}$.
\end{proof}
Armed with all the estimates in this section, we could prove the following lemma.
\begin{lem}\label{lem: control-toy}
Let $T>0$.Let $I$ satisfy the condition {\bf (C)} and $z^0\in D(\B^2)$. There exists a unique solution $z\in C([0,T];L^2(0,L))$ to the equation \eqref{eq: KdV boundary derivative difference} 
such that $z(T,x)\equiv0$. Then for every $L\in I\setminus\{L_0\}$, there are two constants $K_1$ and $K_2$, independent of $L$, such that 
$\p_x z(\cdot,L)\in L^2(0,T)$ and 
\begin{equation}\label{eq: L^2-estimate of derivative of z-toy}
\|\p_x z(\cdot,L)\|_{L^{\infty}(0,T)}\leq \frac{K_1}{|L-L_0|}e^{\frac{2K}{\sqrt{T}}}\|z^0\|_{H^{6}(0,L)}.
\end{equation}
and for any $t\in(0,T]$, we have the following estimate
\begin{equation}\label{eq: L^2-estimate of z(t)-toy}
\|z(t,\cdot)\|_{L^2(0,L)}\leq \frac{K_2}{|L-L_0|} e^{\frac{2K}{\sqrt{T}}}\|z^0\|_{H^{3}(0,L)}
\end{equation}
\end{lem}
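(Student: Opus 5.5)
The plan is to take the control $v$ built by the moment method in \eqref{eq: defi of control v}, let $z=\Tilde z+vh$, and check the required properties from the estimates already established. Existence and uniqueness of $z\in C([0,T];L^2(0,L))$, and the expansion \eqref{eq: expansion of z}, follow from the previous lemma. Vanishing at $t=T$ follows from the moment identities \eqref{eq: moment equation of v} and $v(T)=0$: every coefficient in \eqref{eq: formula for tilde_z} vanishes at $t=T$, so $z(T)=\Tilde z(T)=0$.

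For \eqref{eq: L^2-estimate of derivative of z-toy}, I will split $z=z_b+z_s$ as in \eqref{eq: z-uniform}--\eqref{eq: z-singular} and apply Corollary \ref{coro: uniform and singular estimates for derivative-z}. The part $\p_x z_b(\cdot,L)$ is bounded uniformly by $e^{2K/\sqrt T}\|z^0\|_{H^6}$, and $\p_x z_s(\cdot,L)$ by $|L-L_0|^{-1}e^{K/\sqrt T}\|z^0\|_{H^6}$. Since $|L-L_0|\le\delta$, the uniform bound is at most $\delta|L-L_0|^{-1}$ times itself. The two bounds then combine to $K_1|L-L_0|^{-1}e^{2K/\sqrt T}\|z^0\|_{H^6}$, with $K_1$ independent of $L$. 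The $L^2(0,T)$ membership follows from the $L^\infty$ bound on a bounded interval.

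For \eqref{eq: L^2-estimate of z(t)-toy}, I will estimate the coefficients of $z(t)$ in the orthonormal basis $\{\E_j\}$ and then use Parseval. Integrating by parts once in $s$, as in the proof of Lemma \ref{lem: H^m-estimate tilde_z}, gives
\[
z_j(t)=\frac{h_j}{\ii}\sum_{k} e^{\ii\frac T2\lambda_k}\frac{z_k^0}{h_k\lambda_k}\int_t^T e^{\ii(t-s)\lambda_j}\phi_k'(s-\tfrac T2)\,ds+h_jv(t).
\]
With $|h_j|\lesssim|j|^{-2}$ (Lemma \ref{lem: size of h_j}, holding uniformly in $L$ for hyperbolic $j$ and bounded for the finitely many others), this yields
\[
|z_j(t)|\lesssim |h_j|\Big(T\sum_k\frac{|z_k^0|\,\|\phi_k'\|_\infty}{|\E_k'(L)|}+\|v\|_\infty\Big).
\]
For $k\notin\Lambda_E$, Propositions \ref{prop: High-frequency behaviors: uniform estimates} and \ref{prop: Low-frequency behaviors: uniform estimates} together with Corollary \ref{coro: uniform est for bi-family} bound the corresponding part of the $k$-sum by $e^{K/\sqrt T}\sum_k|\lambda_k||z_k^0|/|k|\lesssim e^{K/\sqrt T}\|z^0\|_{H^3}$. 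For $k\in\Lambda_E$ there are finitely many terms. Each carries a factor $|L-L_0|^{-1}$: in the case $k\not\equiv l \pmod 3$ this comes from $|\E_k'(L)|\sim|L-L_0|$, and in the case $k\equiv l \pmod 3$ from the bi-orthogonal bound in Corollary \ref{coro: uniform est for bi-family}. Combining with the bounds on $v_b,v_s$ from the preceding corollary and summing $\sum_j|h_j|^2<\infty$ gives the claim, with $T\le e^{K/\sqrt T}$ absorbed into $e^{2K/\sqrt T}$.

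The main obstacle is uniformity in $L$. One issue is the small eigenvalues in the case $k=l$ (where $\lambda_{\pm j_0}=\bigO(L-L_0)$): there I would avoid dividing by $\lambda_j$ and instead use the un-integrated form with the factor $h_j\lambda_j=-\overline{\E_j'(L)}$, as done in Corollary \ref{coro: uniform and singular estimates for derivative-z}. A second issue is that $|h_j|$ must be controlled for $j\in\Lambda_E$ without dividing by $\lambda_j$. I would handle this through the bound $|h_j|\le\|h\|_{L^2}$, but this must hold uniformly near $L_0$. Since $h$ in Lemma \ref{lem: formula h} blows up only when $e^{\ii L}=1$, I need that $L_0\notin 2\pi\N$ or else the same splitting trick. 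I expect this to be the delicate point, so I would first verify it in the $2\pi$ case.
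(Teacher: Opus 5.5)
Your proposal is correct and follows essentially the paper's route. The ingredients match: the once-integrated-by-parts form of the coefficients, the split of the $k$-sum into $k\notin\Lambda_E$ and $k\in\Lambda_E$ (with the factor $|L-L_0|^{-1}$ coming either from $|\E_k'(L)|$ or from the bi-orthogonal functions), the un-integrated form with $h_{j}\lambda_{j}=-\overline{\E_{j}'(L)}$ for the two small eigenvalues $\lambda_{\pm j_0}$ when $L_0\in 2\pi\N$, and Corollary~\ref{coro: uniform and singular estimates for derivative-z} for the boundary trace; the only cosmetic difference is that you use Parseval in $j$ where the paper uses the triangle inequality.

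Your ``second issue'' is not an obstacle, and you should drop the $\|h\|_{L^2}$ route, since $\|h\|_{L^2}\sim|L-L_0|^{-1}$ when $L_0\in2\pi\N$. For $j\in\Lambda_E\setminus\{\pm j_0\}$ one has $\lambda_j\to\lambda_{c,m}\neq0$ by Proposition~\ref{prop: Asymp in L-low}, so $|h_j|=|\E_j'(L)|/|\lambda_j|$ is uniformly bounded via Lemma~\ref{lem: size of h_j}. The coefficients of $\E_{\pm j_0}$, written in un-integrated form, contain no term $h_{\pm j_0}v(t)$ at all. This is also why your coefficientwise bookkeeping is preferable to estimating $|v(t)|\,\|h\|_{L^2}$ as a single term.
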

\begin{proof}
Recall that $z(t,x)=v(t)h(x)+\sum_{j,k\in\Z\backslash\{0\}}e^{\ii \frac{T}{2}\lambda_k}\frac{h_jz_k^0}{\ii h_k\lambda_k}\int_t^Te^{\ii(t-s)\lambda_j}\phi'_k(s-\frac{T}{2})ds\E_j(x)$.
Hence, for any $t\in (0,T)$, 
\begin{align*}
\|z(t,\cdot)\|_{L^2(0,L)}&\leq |v(t)|\|h\|_{L^2(0,L)}+\sum_{j,k\in\Z\backslash\{0\}}|e^{\ii \frac{T}{2}\lambda_k}\frac{h_jz_k^0}{\ii h_k\lambda_k}\int_t^Te^{\ii(t-s)\lambda_j}\phi'_k(s-\frac{T}{2})ds|\\
&\leq \|v\|_{L^{\infty}(0,T)}\|h\|_{L^2(0,L)}+T\sum_{j,k\in\Z\backslash\{0\}}\frac{|h_j||z_k^0|}{|\E_k'(L)|}\|\phi'_k\|_{L^{\infty}(\R)}.
\end{align*}
We write that $I_1+I_2=\sum_{j,k\in\Z\backslash\{0\}}\frac{|h_j||z_k^0|}{|\E_k'(L)|}\|\phi'_k\|_{L^{\infty}(\R)}$, with
\begin{equation*}
    I_1=\sum_{j\in\Z\backslash\{0\}}\sum_{k\notin\Lambda_E}\frac{|h_j||z_k^0|}{|\E_k'(L)|}\|\phi'_k\|_{L^{\infty}(\R)},\;
I_2=\sum_{j\in\Z\backslash\{0\}}\sum_{k\in\Lambda_E}\frac{|h_j||z_k^0|}{|\E_k'(L)|}\|\phi'_k\|_{L^{\infty}(\R)}.
\end{equation*}
By Proposition \ref{prop: High-frequency behaviors: uniform estimates} and Proposition \ref{prop: Low-frequency behaviors: uniform estimates}, we know
\begin{align*}
I_1&\lesssim e^{\frac{K}{\sqrt{T}}}\sum_{j\in\Z\backslash\{0\},k\notin\Lambda_E}\frac{|h_j||z_k^0|}{\gamma}|\lambda_k|+\sum_{j\in\Z\backslash\{0\},k>J}\frac{|\E'_j(L)||z_k^0|}{\gamma|k|}|\lambda_k|
\lesssim e^{\frac{K}{\sqrt{T}}}\|z^0\|_{H^3(0,L)}\sum_{j}\frac{|\E_j'(L)|}{|\lambda_j|}.\\
I_2&\lesssim \frac{e^{\frac{K}{\sqrt{T}}}}{|L-L_0|}\|z^0\|_{H^3(0,L)}\sum_{j}\frac{|\E_j'(L)|}{|\lambda_j|}
\end{align*}
If $L_0\neq 2l\pi$, then $\sum_{j}\frac{|\E_j'(L)|}{|\lambda_j|}<\infty$ is uniformly bounded. Thus, we obtain $\|z(t,\cdot)\|_{L^2(0,L)}\leq \frac{K_2}{|L-L_0|}e^{\frac{2K}{\sqrt{T}}}\|z^0\|_{H^3(0,L)}$. 
If $L_0=2l\pi$, let $j_0$ denote the index of the eigenvalue that satisfies $\lambda_{j_0}=\bigO(L-L_0)$. Thus, $\lambda_{-j_0}=\bigO(L-L_0)$. We only need to analyze the term
$$\sum_{k\in\Z\backslash\{0\}}e^{\ii \frac{T}{2}\lambda_k}\frac{h_{j_0}z_k^0}{\ii h_k\lambda_k}\int_t^Te^{\ii(t-s)\lambda_{j_0}}\phi'_k(s-\frac{T}{2})ds\E_{j_0}(x).$$ Integrating by parts, 
\begin{align*}
&\;\;\;\; \sum_{k\in\Z\backslash\{0\}}e^{\ii \frac{T}{2}\lambda_k}\frac{h_{j_0}z_k^0}{\ii h_k\lambda_k}\int_t^Te^{\ii(t-s)\lambda_{j_0}}\phi'_k(s-\frac{T}{2})ds\E_{j_0}(x) \\
&=\sum_{k\in\Z\backslash\{0\}}e^{\ii \frac{T}{2}\lambda_k}\frac{\lambda_{j_0}h_{j_0}z_k^0}{ h_k\lambda_k}\int_t^Te^{\ii(t-s)\lambda_{j_0}}\phi_k(s-\frac{T}{2})ds\E_{j_0}(x)\\
&-\sum_{k\in\Z\backslash\{0\}}e^{\ii \frac{T}{2}\lambda_k}\frac{\lambda_{j_0}h_{j_0}z_k^0}{ h_k\lambda_k}\phi_k(t-\frac{T}{2})\E_{j_0}(x)
\end{align*}
For the term $\sum_{k\in\Z\backslash\{0\}}e^{\ii \frac{T}{2}\lambda_k}\frac{\lambda_{j_0}h_{j_0}z_k^0}{ h_k\lambda_k}\phi_k(t-\frac{T}{2})\E_{j_0}(x)$, we perform similar estimates, 
\begin{align*}
\|\sum_{k\in\Z\backslash\{0\}}e^{\ii \frac{T}{2}\lambda_k}\frac{\lambda_{j_0}h_{j_0}z_k^0}{ h_k\lambda_k}\phi_k(t-\frac{T}{2})\E_{j_0}(\cdot)\|_{L^2(0,L)}&\leq \sum_{k\in\Z\backslash\{0\}}|\frac{|\E_{j_0}'(L)||z_k^0|}{ |\E'_k(L)|}\|\phi_k(\cdot)\|_{L^{\infty}(\R)}\lesssim \frac{K_2}{|L-L_0|}e^{\frac{2K}{\sqrt{T}}}\|z^0\|_{H^3(0,L)}.
\end{align*}
Moreover, for the term $\sum_{k\in\Z\backslash\{0\}}e^{\ii \frac{T}{2}\lambda_k}\frac{\lambda_{j_0}h_{j_0}z_k^0}{ h_k\lambda_k}\int_t^Te^{\ii(t-s)\lambda_{j_0}}\phi_k(s-\frac{T}{2})ds\E_{j_0}(x)$, we obtain 
\begin{align*}
\|\sum_{k\in\Z\backslash\{0\}}e^{\ii \frac{T}{2}\lambda_k}\frac{\lambda_{j_0}h_{j_0}z_k^0}{ h_k\lambda_k}\int_t^Te^{\ii(t-s)\lambda_{j_0}}\phi_k(s-\frac{T}{2})ds\E_{j_0}(\cdot)\|_{L^2(0,L)}&\leq T\sum_{k\in\Z\backslash\{0\}}\frac{|\E_{j_0}'(L)||z_k^0|}{ |\E'_k(L)|}\|\phi_k(\cdot-\frac{T}{2})\|_{L^{\infty}(\R)}\\
&\lesssim \frac{K_2}{|L-L_0|}e^{\frac{2K}{\sqrt{T}}}\|z^0\|_{H^3(0,L)}.
\end{align*}
In summary, $\|\p_x z(\cdot,L)\|_{L^{\infty}(0,T)}\leq \frac{K_1}{|L-L_0|}e^{\frac{2K}{\sqrt{T}}}\|z^0\|_{H^{6}(0,L)},\text{ and }
    \|z(t,\cdot)\|_{L^2(0,L)}\leq \frac{K_2}{|L-L_0|} e^{\frac{2K}{\sqrt{T}}}\|z^0\|_{H^{3}(0,L)}$.
\end{proof}
\begin{coro}
Under the same assumptions in Lemma \ref{lem: control-toy}, we obtain following estimates for $z_b$ and $z_s$ at $t\in(0,T]$, 
\begin{equation*}
\|z_b(t,\cdot)\|_{L^2(0,L)}\leq K_2 e^{\frac{2K}{\sqrt{T}}}\|z^0\|_{H^{3}(0,L)},\; \|z_s(t,\cdot)\|_{L^2(0,L)}\leq \frac{K_2}{|L-L_0|} e^{\frac{2K}{\sqrt{T}}}\|z^0\|_{H^{3}(0,L)}.  
\end{equation*}
\end{coro}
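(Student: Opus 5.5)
The corollary follows by rerunning the proof of Lemma \ref{lem: control-toy}, with $z$ replaced by $z_b$ and $z_s$ in turn. The two pieces come from splitting the index $k$ into $k\notin\Lambda_E$ and $k\in\Lambda_E$, i.e. from \eqref{eq: z-uniform} and \eqref{eq: z-singular}. Set $z_b=v_bh+\tilde z_b$ and $z_s=v_sh+\tilde z_s$, where
$$\tilde z_\ast(t,x)=\sum_{j,k}e^{\ii\frac T2\lambda_k}\frac{h_jz_k^0}{\ii h_k\lambda_k}\int_t^Te^{\ii(t-s)\lambda_j}\phi_k'(s-\tfrac T2)\,ds\,\E_j(x),$$
with the sum over $k$ restricted to the corresponding index set. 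As in the lemma, this gives
$$\|z_\ast(t)\|_{L^2}\le\|v_\ast\|_{L^\infty}\|h\|_{L^2}+T\sum_j|h_j|\sum_k\frac{|z_k^0|}{|\E_k'(L)|}\|\phi_k'\|_{L^\infty}.$$
We need $\|h\|_{L^2}$ to be bounded on $I$. Lemma \ref{lem: formula h} gives this for $L_0\neq 2l\pi$. In the case $L_0=2l\pi$ the $\E_{\pm j_0}$ modes are treated separately, exactly as in the lemma.

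\emph{The $z_b$ part.} The first term is bounded by \eqref{eq: L^infty of derivative v-uniform}, which gives a bound $e^{K/\sqrt T}\|z^0\|_{L^2}$ uniform in $L$. For the second term, only $k\notin\Lambda_E$ appears. Proposition \ref{prop: Low-frequency behaviors: uniform estimates} and Proposition \ref{prop: High-frequency behaviors: uniform estimates} give $|\E_k'(L)|\ge\gamma$, and $|\E_k'(L)|\ge\gamma|k|$ for $|k|>J$. Corollary \ref{coro: uniform est for bi-family} gives the uniform bound $\|\phi_k'\|_{L^\infty}\le Ce^{K/\sqrt T}|\lambda_k|$. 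Cauchy--Schwarz in $k$ then gives $e^{K/\sqrt T}\|z^0\|_{H^3}$. It remains to show that $\sum_j|h_j|=\sum_j|\E_j'(L)|/|\lambda_j|$ is bounded uniformly in $L$. For large $|j|$ this follows from Proposition \ref{prop: Asymp in L}: the terms are $O(|j|^{-2})$. For the finitely many low modes we use Lemma \ref{lem: bound of eigenvalues} together with a lower bound on $|\lambda_j|$ away from $0$, which holds when $L_0\neq2l\pi$.

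\emph{The case $L_0=2l\pi$.} Here the modes $j=\pm j_0$ with $\lambda_{\pm j_0}=O(L-L_0)$ must be handled separately. I would integrate by parts back from $\phi_k'$ to $\phi_k$, as in the lemma. This replaces $h_{j_0}$ by $h_{j_0}\lambda_{j_0}=-\overline{\E'_{j_0}(L)}$, which is bounded. For $k\notin\Lambda_E$ the ratio $|\E'_{j_0}(L)|/|\E'_k(L)|$ is then bounded uniformly, so no factor $|L-L_0|^{-1}$ appears. This gives the first inequality with constant $K_2$, after possibly enlarging $K_2$.

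\emph{The $z_s$ part.} The same decomposition applies. The only loss comes from $k\in\Lambda_E$. There are two cases. If $k\not\equiv l\bmod 3$, then $|\E'_k(L)|\sim|L-L_0|$ by Proposition \ref{prop: Low-frequency behaviors: Singular limits}, and $\phi_k$ is bounded uniformly. If $k\equiv l\bmod 3$, then $|\E_k'(L)|\ge\gamma$, but Corollary \ref{coro: uniform est for bi-family} only gives $\|\phi_k'\|_{L^\infty}\lesssim|L-L_0|^{-1}e^{K/\sqrt T}$. In either case exactly one factor $|L-L_0|^{-1}$ appears. The same holds for $v_s$ by \eqref{eq: L^infty of derivative v-signular}. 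Since $\Lambda_E$ is finite with size independent of $L$ (Lemma \ref{lem: invariance of elliptic subspace}), the sum over $k$ is harmless.

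The main obstacle is the uniform control of $\sum_j|h_j|$ near small eigenvalues. The $2l\pi$ case, where $\lambda_{\pm j_0}\to0$, is the delicate point. There the integration-by-parts trick is what prevents an extra $|L-L_0|^{-1}$ from entering the $z_b$ estimate.
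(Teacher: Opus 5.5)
Your proposal is correct and follows the paper's route: the corollary is read off from the proof of Lemma \ref{lem: control-toy} by keeping the $k\notin\Lambda_E$ part of the sum ($I_1$ there) for $z_b$ and the $k\in\Lambda_E$ part ($I_2$) for $z_s$, with the $\E_{\pm j_0}$ modes handled separately when $L_0\in 2\pi\N^*$. Your remark that $\|h\|_{L^2}\sim|L-L_0|^{-1}$ in that case, since $e^{\ii L}-1\to 0$, is a real refinement over the paper. Make one step explicit: integrating by parts back from $\phi_k'$ to $\phi_k$ produces a boundary term $-h_{\pm j_0}v_\ast(t)\E_{\pm j_0}$, and this cancels exactly the $\E_{\pm j_0}$-component of $v_\ast h$. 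So for those two modes you should use the $\phi_k$-form \eqref{eq: z-uniform}--\eqref{eq: z-singular} directly, and estimate in $L^2$ only $h$ with its $\E_{\pm j_0}$-components removed.
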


\subsection{Iteration scheme}
In this section, we perform an infinite-iteration scheme based on quantitative rapid stabilization to obtain a fast cost estimate. See also \cite{xiang2020quantitative} for the heat equations, \cite{Xiang-NS} for Navier-Stokes equations, and \cite{Coron-Xiang-2025} for the heat flow.
\begin{prop}\label{prop: estimates on fixed interval-iteration preparation}
  Let $T>0$, and two positive parameters $\mu_2>\mu_1>1$. Let $I$ satisfy the condition {\bf (C)}. For every $L\in I\setminus\{L_0\}$, and every $y^0\in L^2(0, L)$, there exists a function $u\in L^2(0, T)$ satisfying $u= u_1+ u_2+u_3$ in $(0, T)$, and $u_1(t)=u_2(t)=u_3(t)=0, \forall t\in (0, T/2)$ and
\begin{gather*}
\|u_1\|_{L^{\infty}(0, T)}\leq \frac{1}{|L-L_0|}\frac{e^{\frac{2\sqrt{2}K}{\sqrt{T}}}}{T^3}\frac{\mu_2^{\frac{7}{2}}}{\mu_2-\mu_1}\|y^0\|_{L^2(0,L)}\\
\|u_j\|_{L^{\infty}(0, T)}\leq Ce^{-\frac{\mu_1^{\frac{1}{3}}}{2}L}\frac{1}{T^3}\frac{\mu_2}{(\mu_2-\mu_1)}\|y^0\|_{L^2(0,L)},j=2,3,
\end{gather*}
such that the unique solution $y$ of \eqref{eq: linearized KdV system-control-intro} satisfies 
$y(t)=
\left\{
\begin{array}{ll}
    S(t) y^0, & t\in (0, T/2), \\
     y_1(t)+ y_2(t)+y_3(t),& t\in (T/2, T),
\end{array}
\right.
$ where $y_j(t)$ solves the equation \eqref{eq: defi-y_j-eqaution}. Furthermore, 
\begin{gather*}
\|y_1(t, \cdot)\|_{L^2(0, L)}\leq \frac{C}{|L-L_0|} \frac{e^{\frac{2\sqrt{2}K}{\sqrt{T}}}}{T^3} \frac{C\mu_2^{\frac{7}{2}}}{(\mu_2-\mu_1)}\|y^0\|_{L^2(0,L)};\\
\|y_j(t, \cdot)\|_{L^2(0, L)}\leq C\frac{e^{-\mu_1(t-\frac{T}{2})}}{T^3}\frac{\mu_{2}+1}{\mu_1^{\frac{1}{2}}|\mu_1-\mu_2|}\|y^0\|_{L^2(0,L)},j=2,3.
\end{gather*} 
All constants appearing in this proposition are independent of $L$.
\end{prop}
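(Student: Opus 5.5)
We follow the four-step package of Section 2.2 on the single interval $(0,T)$.

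On $(0,T/2)$ we take $u\equiv0$, so $y(t)=S(t)y^0$. By Proposition \ref{prop: Xiang-Krieger gain 1} with $k=6$ (after rescaling if $T>L$, using contractivity of $S$), $y(T/2)\in H^6_{(0)}(0,L)=D(\A^2)$ with
\[
\|y(T/2)\|_{H^6}\le \tilde C_6 (T/2)^{-3}\|y^0\|_{L^2}.
\]
The constant is uniform for $L\in I$, since $I$ is compact.

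\textbf{Transition.} We write $y(T/2)=z^0+c_1h_{\mu_1}+c_2h_{\mu_2}$, where $h_{\mu}$ solves \eqref{eq: static KdV with nonvanishing boundary condition}, and we choose $c_1,c_2$ so that $z^0\in D(\B^2)$. Since $h_\mu(0)=h_\mu(L)=0$ and $h_\mu'''+h_\mu'=\mu h_\mu$, we have $\B h_\mu=-\mu h_\mu$, so $\B h_\mu$ also vanishes at the endpoints. The two remaining conditions are $[\p_x z^0]:=\p_xz^0(L)-\p_xz^0(0)=0$ and $[\p_x\B z^0]=0$. The first gives $c_1+c_2=-[\p_x y(T/2)]=\p_xy(T/2,0)$. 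The second gives $\mu_1c_1+\mu_2c_2=[\p_x\A y(T/2)]$, which uses $(\A y)'(L)=0$ from the $H^6_{(0)}$ conditions. This $2\times2$ system has determinant $\mu_2-\mu_1$. Hence
\[
|c_1|+|c_2|\lesssim \frac{\mu_2}{\mu_2-\mu_1}\,\|y(T/2)\|_{H^6},
\]
by the trace bound $|\p_x\A y(0)|\lesssim\|y\|_{H^6}$. We then need $\|h_\mu\|_{H^6}$, or at least $\|z^0\|_{H^6}$. Using $h_\mu'''=\mu h_\mu-h_\mu'$ repeatedly together with Proposition \ref{prop: h_mu-est-uniform}, we get $\|h_\mu\|_{H^6}\lesssim\mu^{5/2}$ up to a lower-order power, hence
\[
\|z^0\|_{H^6}\lesssim \frac{\mu_2^{7/2}}{\mu_2-\mu_1}\,T^{-3}\|y^0\|_{L^2}.
\]
This is the source of the $\mu_2^{7/2}$ factor.

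\textbf{Control and dissipation.} On $(T/2,T)$ we apply Lemma \ref{lem: control-toy} with horizon $T/2$. This replaces $e^{2K/\sqrt T}$ by $e^{2\sqrt2K/\sqrt T}$ and yields $v$ and $z=:y_1$ with $z(T)=0$,
\[
\|\p_xz(\cdot,L)\|_{L^\infty}\le \frac{K_1}{|L-L_0|}\,e^{2\sqrt2K/\sqrt T}\|z^0\|_{H^6},
\]
and the matching $L^2$ bound for $y_1(t)$. We set $u_1=\p_xz(t,L)$. Next, $y_{j+1}:=c_je^{-\mu_j(t-T/2)}h_{\mu_j}$ for $j=1,2$ solves the KdV equation, since $\p_t y_{j+1}=-\mu_j y_{j+1}=-(\p_x^3+\p_x)y_{j+1}$. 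It has zero Dirichlet data and Neumann data $u_{j+1}:=c_je^{-\mu_j(t-T/2)}h_{\mu_j}'(L)$. Then $y=y_1+y_2+y_3$ solves \eqref{eq: linearized KdV system-control-intro} with $u=u_1+u_2+u_3$. The required bound on $u_1$ follows by combining the estimates above.

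\textbf{Main obstacle.} For $u_{2},u_3$ we need the sharper fact $|h_\mu'(L)|\lesssim e^{-\mu^{1/3}L/2}$. Proposition \ref{prop: h_mu-est-uniform} only gives $h_\mu'(L)\to0$, so this requires inspecting the explicit root structure of $\xi^3+\xi-\mu=0$. The roots have real parts $\approx\mu^{1/3}$ and $\approx-\mu^{1/3}/2$, and the boundary layer sits at $x=0$. This decay estimate is the step I expect to need the most care. A second point is the $L^2$ bound on $y_{2},y_3$, which requires $\|h_{\mu}\|_{L^2}\lesssim\mu^{-1/3}\le\mu^{-1/2}$-type control. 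With a cruder $c_j$ bound in $\|y(T/2)\|_{H^3}$-type norms, this should give the stated $\mu_1^{-1/2}$ factor and decay $e^{-\mu_1(t-T/2)}$. All constants are uniform in $L\in I$ by the uniform estimates cited in Corollary \ref{coro: uniform and singular estimates for derivative-z} and Lemma \ref{lem: control-toy}.
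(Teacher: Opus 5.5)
Your proposal follows the paper's proof essentially step by step. You use the free flow with $H^6$ smoothing on $(0,T/2)$, then the $2\times2$ system with determinant $\mu_2-\mu_1$ for the coefficients of $h_{\mu_1},h_{\mu_2}$ that puts $z^0$ in $D(\B^2)$. You then apply Lemma~\ref{lem: control-toy} on $(T/2,T)$ with horizon $T/2$ and add the explicit solutions $c_je^{-\mu_j(t-T/2)}h_{\mu_j}$. As you anticipated, the bound on $h_\mu'(L)$ comes from the explicit formula: $|h_\mu'(L)|\sim e^{-\omega L/2}$ with $\mu=\omega^3+\omega$ (see \eqref{eq: est-h_mu'(L)}). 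This is $\lesssim e^{-\mu^{1/3}L/2}$ because $0<\mu^{1/3}-\omega\le 1/(3\omega)$.

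Three local points need fixing.

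\textbf{The factor $\mu_1^{-1/2}$.} In the bounds on $y_2,y_3$ this factor requires the boundary-layer estimate $\|h_\mu\|_{L^2(0,L)}\lesssim\mu^{-1/2}$, which the paper proves and uses. From $|h_\mu(x)|\lesssim\omega^{-1}e^{-\omega x/2}$ one gets $\|h_\mu\|_{L^2}^2\lesssim\omega^{-3}$. Your chain $\mu^{-1/3}\le\mu^{-1/2}$ is backwards for $\mu>1$. The $L^\infty$-based bound $\|h_\mu\|_{L^2}\lesssim\mu^{-1/3}$ would only give $\mu_1^{-1/3}$.

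\textbf{The coefficients $c_j$.} These cannot be controlled by $H^3$-type norms of $y(T/2)$. The second compatibility condition involves the fourth-order trace $\p_x(\p_x^3+\p_x)y(T/2,0)$. You must use the $H^6$ smoothing bound, as you in fact did in your transition step and as in \eqref{eq: smoothing estimate-toy}.

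\textbf{Large $T$.} For $T>2L$, contractivity of $S$ only gives $\|y(T/2)\|_{H^6}\le\tilde C_6L^{-3}\|y^0\|_{L^2}$, not $\tilde C_6(T/2)^{-3}\|y^0\|_{L^2}$. So the factor $T^{-3}$ should be read as $(\min\{T,2L\})^{-3}$. This is harmless because the iteration scheme only uses $T<1<2L$, and the paper is silent on this point as well.
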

\begin{proof}
For simplicity, set $\mathcal{P}=\p_x^3+\p_x$. $\mathcal{P}$ is a differential operator. We first split our time interval $[0,T]$ into two parts $[0,\frac{T}{2}]$ and $[\frac{T}{2},T]$. In the first part, we only use the free KdV flow \eqref{eq: linear KdV-stability-intro}. 
By smoothing effects, we know that $y(\frac{T}{2},\cdot)\in D(\A^2)\subset H^6(0,L)$ and have the following estimate
\begin{equation}\label{eq: smoothing estimate-toy}
\|y(\frac{T}{2},\cdot)\|_{H^6(0,L)}+|\p_x y(\frac{T}{2},0)|+|\p_x\mathcal{P} y(\frac{T}{2},0)|\leq \frac{C_1}{T^3}\|y^0\|_{L^2(0,L)}.    
\end{equation}
Then we consider the second time interval $[\frac{T}{2},T]$. We notice that our initial datum $y(\frac{T}{2},\cdot)\in H^6(0,L)$. As we have presented in the previous section (see Section \ref{sec: preliminary}), given two positive parameters $\mu_1$ and $\mu_2$, we can construct two modulated functions $h_{\mu_1}$ and $h_{\mu_2}$.
For $j=1,2$, $h_{\mu_j}$ solves the following system respectively:
\begin{equation*}
\left\{
\begin{aligned}
     &h'''+h'=\mu_j h,\text{ in }(0,L)  \\
     &h(0)=h(L)=0,\\
     &h'(L)-h'(0)=1.
\end{aligned}
\right.    
\end{equation*}
And we have the estimates for $h_{\mu_j}$, $\|h_{\mu_j}\|_{L^2(0,L)}\leq C^h_j|\mu_j|^{-\frac{1}{2}}$ and $\|h_{\mu_j}\|_{H^6(0,L)}\leq C^h_j|\mu_j|^{\frac{5}{2}}$. 
Using the functions $h_{\mu_1}$ and $h_{\mu_2}$, we could write $y(\frac{T}{2},x)$ into three parts by 
\begin{gather*}
y(\frac{T}{2},x)=z^0_{\frac{T}{2}}(x)+F_1(\mu_1,\mu_2)h_{\mu_1}(x)+F_2(\mu_1,\mu_2)h_{\mu_2}(x),\text{ where }\\
F_1(\mu_1,\mu_2)=\frac{\mu_2}{\mu_1-\mu_2}\p_x y(\frac{T}{2},0)+\frac{1}{\mu_2-\mu_1}(\p_x\mathcal{P}y)(\frac{T}{2},0),\\
    F_2(\mu_1,\mu_2)=\frac{\mu_1}{\mu_2-\mu_1}\p_x y(\frac{T}{2},0)+\frac{1}{\mu_1-\mu_2}(\p_x\mathcal{P}y)(\frac{T}{2},0).
\end{gather*}
By \eqref{eq: smoothing estimate-toy}, the following estimates hold for $F_1$ and $F_2$
\begin{equation}\label{eq: es F1 and F2}
|F_1(\mu_1,\mu_2)|\leq C_1\frac{\mu_2+1}{|\mu_1-\mu_2|T^3}\|y^0\|_{L^2(0,L)},\quad 
|F_2(\mu_1,\mu_2)|\leq C_1\frac{\mu_1+1}{|\mu_1-\mu_2|T^3}\|y^0\|_{L^2(0,L)}.   
\end{equation}
By the definitions of $F_1$ and $F_2$, we can verify that $z^0_{\frac{T}{2}}\in D(\B^2)\subset H^6(0,L)$. Indeed,
\begin{align*}
\p_x z^0_{\frac{T}{2}}(L)-\p_xz^0_{\frac{T}{2}}(0)&=\p_x y(\frac{T}{2},L)-\p_xy(\frac{T}{2},0)+F_1(\mu_1,\mu_2)h'_{\mu_1}(0)+F_2(\mu_1,\mu_2)h'_{\mu_2}(0)\\
-&F_1(\mu_1,\mu_2)h'_{\mu_1}(L)-F_2(\mu_1,\mu_2)h'_{\mu_2}(L)\\
&=-\p_xy(\frac{T}{2},0)+\p_xy(\frac{T}{2},0)\\
&=0.
\end{align*}
Similarly, we also have $\p_x\mathcal{P} z^0_{\frac{T}{2}}(L)-\p_x\mathcal{P}z^0_{\frac{T}{2}}(0)=0$. Using Lemma \ref{lem: control-toy}, we are able to construct a continuous function $v$ such that $z$ is a solution to \eqref{eq: KdV boundary derivative difference} in $(\frac{T}{2},T)\times(0,L)$ with the initial condition $z(\frac{T}{2},x)=z^0_{\frac{T}{2}}(x)$ and 
 $z(T,x)\equiv0$. Furthermore, we have
\begin{align*}
\|\p_x z(\cdot,L)\|_{L^{\infty}(\frac{T}{2},T)}\leq  \frac{K_3}{|L-L_0|}e^{\frac{2\sqrt{2}K}{\sqrt{T}}}\|z^0_{\frac{T}{2}}\|_{H^{6}},\\
\|z(t,\cdot)\|_{L^2(\frac{T}{2},T)}\leq  \frac{K_2}{|L-L_0|}e^{\frac{2\sqrt{2}K}{\sqrt{T}}}\|z^0_{\frac{T}{2}}\|_{H^{6}}.
\end{align*}
Using the estimates \eqref{eq: smoothing estimate-toy} and \eqref{eq: es F1 and F2}, 
\begin{align*}
\|z^0_{\frac{T}{2}}\|_{H^{6}}&\leq  \|y(\frac{T}{2},\cdot)\|_{H^6(0,L)}+|F_1(\mu_1,\mu_2)|\|h_{\mu_1}\|_{H^6(0,L)}+|F_2(\mu_1,\mu_2)|\|h_{\mu_2}\|_{H^6(0,L)}\\
&\leq \frac{C_1}{T^3}\left(1+\frac{\mu_2+1}{|\mu_1-\mu_2|}C^h_2\mu_1^{\frac{5}{2}}+
\frac{\mu_1+1}{|\mu_1-\mu_2|}C^h_2\mu_2^{\frac{5}{2}}\right)\|y^0\|_{L^2(0,L)}\\
&\leq \frac{C_1\left(|\mu_1-\mu_2|+C^h_2\mu_1\mu_2^{\frac{5}{2}}+C^h_2\mu_2^{\frac{5}{2}}+C^h_2\mu_2\mu_1^{\frac{5}{2}}+C^h_2\mu_1^{\frac{5}{2}}\right)}{|\mu_1-\mu_2|T^3}\|y^0\|_{L^2(0,L)}.
\end{align*}
Therefore, we obtain
\begin{align*}
\|\p_x z(\cdot,L)\|_{L^{\infty}(\frac{T}{2},T)}&\leq \frac{K_3}{|L-L_0|}e^{\frac{2\sqrt{2}K}{\sqrt{T}}}\frac{C_1\left(|\mu_1-\mu_2|+C^h_2\mu_1\mu_2^{\frac{5}{2}}+C^h_2\mu_2^{\frac{5}{2}}+C^h_2\mu_2\mu_1^{\frac{5}{2}}+C^h_2\mu_1^{\frac{5}{2}}\right)}{|\mu_1-\mu_2|T^3}\|y^0\|_{L^2(0,L)},\\
\|z(t,\cdot)\|_{L^{2}(0,L)}&\leq \frac{K_2}{|L-L_0|}e^{\frac{2\sqrt{2}K}{\sqrt{T}}}\frac{C_1\left(|\mu_1-\mu_2|+C^h_2\mu_1\mu_2^{\frac{5}{2}}+C^h_2\mu_2^{\frac{5}{2}}+C^h_2\mu_2\mu_1^{\frac{5}{2}}+C^h_2\mu_1^{\frac{5}{2}}\right)}{|\mu_1-\mu_2|T^3}\|y^0\|_{L^2(0,L)}.
\end{align*}
For the other part, for $j=1,2$, we define a function $z_{\mu_j}(t,x)$ by $z_{\mu_j}(t,x)=e^{-\mu_j(t-\frac{T}{2})}F_j(\mu_1,\mu_2)h_{\mu_j}(x)$. Then $z_{\mu_j}$ satisfies the equation with initial conditions $z_{\mu_j}(\frac{T}{2},x)=F_j(\mu_1,\mu_2)h_{\mu_j}(x)$
\begin{equation*}
\left\{
\begin{array}{lll}
    \p_tz_{\mu_j}+\p_x^3z_{\mu_j}+\p_xz_{\mu_j}=-\mu_j z_{\mu_j}+\mu_j z_{\mu_j}=0 & \text{ in }(\frac{T}{2},T)\times(0,L), \\
     z_{\mu_j}(t,0)=z_{\mu_j}(t,L)=0&  \text{ in }(\frac{T}{2},T),\\
     \p_xz_{\mu_j}(t,L)=e^{-\mu_j(t-\frac{T}{2})}F_j(\mu_1,\mu_2)h'_{\mu_j}(L)&\text{ in }(\frac{T}{2},T),
\end{array}
\right.
\end{equation*}
Then it is easy to see $\|z_{\mu_j}(t,\cdot)\|_{L^2(0,L)}
\leq C_1C^h_1e^{-\mu_j(t-\frac{T}{2})}\frac{\mu_{3-j}+1}{\mu_j^{\frac{1}{2}}|\mu_1-\mu_2|T^3}\|y^0\|_{L^2(0,L)}$. 
And the control cost\\ $\|\p_xz_{\mu_j}(\cdot,L)\|_{L^{\infty}(\frac{T}{2},T)}
\leq C_2e^{-\frac{\mu_j^{\frac{1}{3}}}{2}L}\frac{\mu_{3-j}+1}{|\mu_1-\mu_2|T^3}\|y^0\|_{L^2(0,L)}$. 
We set 
\begin{equation}
\begin{array}{ll}
  u_1(t)=\left\{
  \begin{array}{ll}
     0  &t\in[0,\frac{T}{2}), \\
     \p_x z(t,L)  &t\in[\frac{T}{2},T],
  \end{array}
\right.   &u_{j+1}(t)=\left\{
  \begin{array}{ll}
     0  &t\in[0,\frac{T}{2}), \\
     \p_x z_{\mu_j}(t,L)  &t\in[\frac{T}{2},T], 
  \end{array}
\right. j=1,2.
\end{array}
\end{equation}
 We consider the solution $y_j$ to 
\begin{equation}\label{eq: defi-y_j-eqaution}
\left\{
\begin{array}{lll}
    \p_ty_j+\p_x^3y_j+\p_xy_j=0 & \text{ in }(\frac{T}{2},T)\times(0,L), \\
     y_j(t,0)=y_j(t,L)=0&  \text{ in }(\frac{T}{2},T),\\
     \p_x y_j(t,L)= u_j(t)&\text{ in }(\frac{T}{2},T),\\
     y_j(\frac{T}{2},x)=y_j^0(x)&\text{ in }(0,L),
\end{array}
\right.    
\end{equation}
where 
$y_1^0(x)=z^0_{\frac{T}{2}}(x)$, $y_2^0(x)=F_1(\mu_1,\mu_2)h_{\mu_1}(x)$, and $y_3^0(x)=F_2(\mu_1,\mu_2)h_{\mu_2}(x)$. We have the following properties:
\begin{enumerate}
    \item $y(\frac{T}{2},x)=y_1^0(x)+y_2^0(x)+y_3^0(x)$;
    \item By the uniqueness, we know that $y_1(t,x)=z(t,x)$, $y_2(t,x)=z_{\mu_1}(t,x)$, and $y_3(t,x)=z_{\mu_2}(t,x)$ in $(\frac{T}{2},T)\times(0,L)$.
\end{enumerate}
Now let us consider the solutions in the time interval $[0,T]$. Define
\begin{equation*}
  Y(t,x)=\left\{
  \begin{array}{ll}
     y(t,x)  &t\in[0,\frac{T}{2}]\times(0,L), \\
     y_1(t,x)+y_2(t,x)+y_3(t,x)  &t\in[\frac{T}{2},T]\times(0,L).
  \end{array}
\right. 
\end{equation*}
Then $Y$ solves the equation \eqref{eq: linearized KdV system-control-intro} with $u(t)=u_1(t)+u_2(t)+u_3(t)$.
Indeed, $Y\in C([0,T],L^2(0,L))$ and in particular, $Y$ is continuous at the time $t=\frac{T}{2}$. For $t\in[0,\frac{T}{2}]$, by energy estimates, $\|Y(t,\cdot)\|_{L^2(0,L)}=\|y(t,\cdot)\|_{L^2(0,L)}\leq \|y^0\|_{L^2(0,L)}$. 
For $t\in[\frac{T}{2},T]$, we collect the estimates above,
\begin{align}
\|y_1(t,\cdot)\|_{L^2(0,L)}&\leq\frac{K_2}{|L-L_0|}e^{\frac{2\sqrt{2}K}{\sqrt{T}}}\frac{C_1\left(|\mu_1-\mu_2|+C^h_2\mu_1\mu_2^{\frac{5}{2}}+C^h_2\mu_2^{\frac{5}{2}}+C^h_2\mu_2\mu_1^{\frac{5}{2}}+C^h_2\mu_1^{\frac{5}{2}}\right)}{|\mu_1-\mu_2|T^3}\|y^0\|_{L^2(0,L)}\label{eq: L^2-y_1-single}\\
\|y_2(t,\cdot)\|_{L^2(0,L)}&\leq C_1C^h_1\frac{e^{-\mu_1(t-\frac{T}{2})}(\mu_{2}+1)}{\mu_1^{\frac{1}{2}}|\mu_1-\mu_2|T^3}\|y^0\|_{L^2(0,L)},\;
\|y_3(t,\cdot)\|_{L^2(0,L)}\leq C_1C^h_1\frac{e^{-\mu_2(t-\frac{T}{2})}(\mu_{1}+1)}{\mu_2^{\frac{1}{2}}|\mu_1-\mu_2|T^3}\|y^0\|_{L^2(0,L)}.\notag
\end{align}
As for the control cost
\begin{align}
\|u_1\|_{L^{\infty}(0,T)}&\leq \frac{K_3}{|L-L_0|}e^{\frac{2\sqrt{2}K}{\sqrt{T}}}\frac{C_1\left(|\mu_1-\mu_2|+C^h_2\mu_1\mu_2^{\frac{5}{2}}+C^h_2\mu_2^{\frac{5}{2}}+C^h_2\mu_2\mu_1^{\frac{5}{2}}+C^h_2\mu_1^{\frac{5}{2}}\right)}{|\mu_1-\mu_2|T^3}\|y^0\|_{L^2(0,L)},\label{eq: cost estimate-w-1}\\
\|u_2\|_{L^{\infty}(0,T)}&\leq C_2e^{-\frac{\mu_1^{\frac{1}{3}}}{2}L}\frac{\mu_{2}+1}{|\mu_1-\mu_2|T^3}\|y^0\|_{L^2(0,L)},\notag\\
\|u_3\|_{L^{\infty}(0,T)} 
&\leq C_2e^{-\frac{\mu_2^{\frac{1}{3}}}{2}L}\frac{\mu_{1}+1}{|\mu_1-\mu_2|T^3}\|y^0\|_{L^2(0,L)}.
\end{align}
 Thanks to the condition $\mu_2>\mu_1>0$, we derive the desired estimates outlined in the proposition.
\end{proof}
\begin{coro}\label{coro: simplified estimates for iteration scheme}
Under the assumptions of Proposition \ref{prop: estimates on fixed interval-iteration preparation}. If we choose $\mu_2=2\mu_1$ and $\mu_1>1$, we are able to simplify the estimates above. There exists a constant $\mathcal{C}$ such that the solution to \eqref{eq: linearized KdV system-control-intro} satisfies 
\begin{equation}\label{eq: simplified inequality for y and u}
\begin{aligned}
\|y(t,\cdot)\|_{L^2(0,L)}&\leq \frac{\mathcal{C}}{|L-L_0|}\frac{e^{\frac{2\sqrt{2}K}{\sqrt{T}}}\mu_1^{\frac{5}{2}}+ e^{-\mu_1(t-\frac{T}{2})}}{T^3} \|y^0\|_{L^2(0,L)},t\in (\frac{T}{2},T)\\
\|u\|_{L^{\infty}(0,T)}&\leq \frac{\mathcal{C}}{|L-L_0|}\frac{e^{\frac{2\sqrt{2}K}{\sqrt{T}}}\mu_1^{\frac{5}{2}}+e^{-\frac{\mu_1^{\frac{1}{3}}}{2}L}}{T^3}\|y^0\|_{L^2(0,L)}.
\end{aligned}
\end{equation}
\end{coro}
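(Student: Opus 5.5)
The corollary follows by specializing Proposition \ref{prop: estimates on fixed interval-iteration preparation} to $\mu_2=2\mu_1$ and collecting the $\mu$-dependent factors. With $\mu_2=2\mu_1$ we have $\mu_2-\mu_1=\mu_1$ and $\mu_2+1\le 3\mu_1$, using $\mu_1>1$. Each $\mu$-factor in the proposition then becomes an explicit power of $\mu_1$ times an absolute constant.

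\textbf{The bounds on $y$.} For $y_1$, the factor $\frac{\mu_2^{7/2}}{\mu_2-\mu_1}$ becomes $\frac{2^{7/2}\mu_1^{7/2}}{\mu_1}=2^{7/2}\mu_1^{5/2}$. This gives
\begin{equation*}
\|y_1(t)\|_{L^2}\lesssim \frac{1}{|L-L_0|}\,\frac{e^{2\sqrt2K/\sqrt T}\mu_1^{5/2}}{T^3}\,\|y^0\|_{L^2}.
\end{equation*}
Equivalently, in \eqref{eq: L^2-y_1-single} the numerator is $O(\mu_1^{7/2})$ and the denominator is $\mu_1T^3$.

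For $j=2,3$, the factor $\frac{\mu_2+1}{\mu_1^{1/2}|\mu_1-\mu_2|}$ is at most $\frac{3\mu_1}{\mu_1^{3/2}}\le 3$. For $y_3$ the decay is $e^{-2\mu_1(t-T/2)}\le e^{-\mu_1(t-T/2)}$. Hence
\begin{equation*}
\|y_j(t)\|_{L^2}\lesssim \frac{e^{-\mu_1(t-T/2)}}{T^3}\,\|y^0\|_{L^2}.
\end{equation*}
Since $L\in I$, we have $|L-L_0|\le\delta$, so a factor $\frac{\delta}{|L-L_0|}\ge1$ can be inserted in front. This lets both terms share the prefactor $\frac{\mathcal C}{|L-L_0|}$. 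Summing with the triangle inequality, since $y=y_1+y_2+y_3$ on $(T/2,T)$, gives the first estimate.

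\textbf{The bound on $u$.} The same substitution in the bound for $u_1$ gives $\frac{1}{|L-L_0|}\frac{e^{2\sqrt2K/\sqrt T}\mu_1^{5/2}}{T^3}$. For $u_2$, the factor $\frac{\mu_2+1}{\mu_2-\mu_1}$ is at most $3$, giving $e^{-\mu_1^{1/3}L/2}\,T^{-3}$.

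For $u_3$, the factor $\frac{\mu_1+1}{\mu_1}$ is at most $2$. The exponential satisfies $e^{-(2\mu_1)^{1/3}L/2}\le e^{-\mu_1^{1/3}L/2}$. Again we insert $\frac{\delta}{|L-L_0|}\ge1$ and use $\|u\|_\infty\le\sum_j\|u_j\|_\infty$.

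\textbf{Constants.} The only point needing care is that every constant stays independent of $L$ and $T$. The proposition already guarantees this, and $\delta$ depends only on $L_0$. So $\mathcal C$ can be taken as the maximum of the resulting numerical multiples of $C$, $C_1$, $C_2$, $C^h_j$, $K_2$, $K_3$ and $\delta$.
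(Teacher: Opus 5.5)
Your proposal is correct and follows essentially the same route as the paper's proof. You substitute $\mu_2=2\mu_1$ so that $|\mu_1-\mu_2|=\mu_1$, use $\mu_1>1$ to absorb lower powers into $\mu_1^{7/2}$, and use $e^{-2\mu_1(t-T/2)}\le e^{-\mu_1(t-T/2)}$ and $e^{-(2\mu_1)^{1/3}L/2}\le e^{-\mu_1^{1/3}L/2}$. Like the paper, you then insert a factor $\ge 1$ of the form $c/|L-L_0|$ to put the $L$-uniform terms under the common prefactor; the paper takes $c=\frac{L_0}{2}+1$ where you take $c=\delta$.
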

\begin{proof}
We plug $\mu_2=2\mu_1$ into the estimates \eqref{eq: L^2-y_1-single}, \eqref{eq: cost estimate-w-1},  then $|\mu_1-\mu_2|=\mu_1$ and we can find a constant $\mathcal{C}=\mathcal{C}(L_0,C_1,C_2,C_2^h,\mu_1)$ such that estimates \eqref{eq: simplified inequality for y and u} hold. As for the explicit computation, one can refer to Appendix \ref{sec: Proof of Corollary coro: simplified estimates for iteration scheme}.
\end{proof}
Based on the previous estimates, we could construct the following iteration schemes.

\begin{prop}[Iteration schemes]\label{prop: iteration schemes}
Let $T>0$.  Let $I$ satisfy the condition {\bf (C)}. For every $L\in I\setminus\{L_0\}$, there exists a constant $\epsilon=\epsilon(T,L)\sim\left|\frac{1}{\sqrt{T}\ln{|L-L_0|}}\right|>0$ such that for $\forall y^0\in L^2(0,L)$, there exists a function $u(t) \in L^2(0,T)$ such that the solution $y$ to the system \eqref{eq: linearized KdV system-control-intro}   
satisfies $\lim_{t\rightarrow T^-}\|y(t, \cdot)\|_{L^2(0, L)}= 0$, 
    and there exists a constant $\mathcal{K}$ such that
\begin{equation}\label{eq: est-control-L-infty}
 \|u\|_{L^{\infty}(0,T)}\leq \frac{\mathcal{K}}{|L-L_0|^{1+\epsilon}}\|y^{0}\|_{L^2(0,L)}.   
\end{equation}
In particular, $\epsilon(T,L)$ tends to $0$ as $L$ approaches $L_0$.
\end{prop}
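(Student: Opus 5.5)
We will iterate Corollary \ref{coro: simplified estimates for iteration scheme} on a dyadic partition of $[0,T)$. Set $T_0=0$ and $T_n=T\sum_{m=1}^n 2^{-m}$, so that each step has length $\tau_n:=T_n-T_{n-1}=T2^{-n}$. On $[T_{n-1},T_n]$ we apply the corollary with time $\tau_n$, parameter $\mu_1=\mu_n$ (and $\mu_2=2\mu_n$), and initial datum $y(T_{n-1})$. This produces a control $u^{(n)}$ that vanishes on the first half of the step. At the right endpoint the corollary gives
\begin{equation*}
\|y(T_n)\|_{L^2}\le \frac{\mathcal{C}}{|L-L_0|}\,\frac{e^{2\sqrt2K/\sqrt{\tau_n}}\mu_n^{5/2}+e^{-\mu_n\tau_n/2}}{\tau_n^3}\,\|y(T_{n-1})\|_{L^2},
\end{equation*}
together with the analogous bound for $\|u^{(n)}\|_{L^\infty}$.

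\textbf{A caveat on the estimate.} As literally written, the term $e^{2\sqrt2K/\sqrt{\tau_n}}\mu_n^{5/2}$ does not decay at $t=T_n$. The correct reading is that the $z$-part has been steered exactly to $0$ at $T_n$, so at the right endpoint only the modulated parts $y_2,y_3$ survive. I will therefore first sharpen the bound at $t=T_n$ to
\begin{equation*}
\|y(T_n)\|_{L^2}\le \mathcal{C}\,\tau_n^{-3}e^{-\mu_n\tau_n/2}\,\|y(T_{n-1})\|_{L^2},
\end{equation*}
which has no factor $|L-L_0|^{-1}$, since Proposition \ref{prop: estimates on fixed interval-iteration preparation} bounds $y_2,y_3$ with $L$-independent constants. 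The large factor $|L-L_0|^{-1}e^{C/\sqrt{\tau_n}}\mu_n^{5/2}$ remains only in the control cost and in the intermediate states.

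\textbf{Choice of parameters.} The control on step $n$ is bounded by
\begin{equation*}
\frac{\mathcal{C}}{|L-L_0|}\,\tau_n^{-3}\big(e^{C/\sqrt{\tau_n}}\mu_n^{5/2}+1\big)\,\|y(T_{n-1})\|,
\end{equation*}
and $\|y(T_{n-1})\|\le\|y^0\|\prod_{m<n}\mathcal{C}\tau_m^{-3}e^{-\mu_m\tau_m/2}$. To absorb the super-exponential losses $e^{C2^{n/2}/\sqrt T}$, I choose $\mu_n\tau_n=A\,2^{n/2}/\sqrt T$ for a large constant $A$, that is $\mu_n=A2^{3n/2}T^{-3/2}$. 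Then the product $\prod_{m<n}e^{-\mu_m\tau_m/2}\le e^{-cA2^{n/2}/\sqrt T}$ dominates $e^{C2^{n/2}/\sqrt T}\mu_n^{5/2}\tau_n^{-3}\prod_{m<n}\mathcal{C}\tau_m^{-3}$ once $A$ is large, since the polynomial-in-$2^n$ factors only cost $e^{O(n^2)}$, which is subexponential in $2^{n/2}$. It follows that $\sum_n\|u^{(n)}\|_{L^\infty}$ converges and is bounded by $\frac{C(T)}{|L-L_0|}\|y^0\|$, and that $\|y(t)\|\to0$ as $t\to T^-$: on each step the intermediate states are bounded by the same decaying quantities, so the concatenated $u$ yields $\lim_{t\to T^-}\|y(t)\|=0$.

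\textbf{The $|L-L_0|^{1+\epsilon}$ bookkeeping.} To get the stated form with $\epsilon\sim|1/(\sqrt T\ln|L-L_0|)|$ instead of merely $C(T)/|L-L_0|$, I will write the $T$-dependent constant $e^{K'/\sqrt T}$ as $|L-L_0|^{-\epsilon}$ with $\epsilon=K'/(\sqrt T\,|\ln|L-L_0||)$. I may also let the number of "expensive'' early steps depend on $L$, balancing $e^{C/\sqrt{\tau}}$ against $|L-L_0|^{-1}$, so that $\mathcal{K}$ is independent of $L$ and $\epsilon\to0$ as $L\to L_0$.

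The main obstacle is the first point: justifying carefully that the singular factor $|L-L_0|^{-1}$ does not propagate multiplicatively through the states $y(T_n)$. If it did, the iteration would produce $|L-L_0|^{-n}$. Everything hinges on the exact null-controllability of the $z$-component from Lemma \ref{lem: control-toy}, which isolates all $L$-singularity in the control and never in the transmitted state.
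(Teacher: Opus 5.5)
Your argument is correct, and it is the paper's scheme. You use the same dyadic steps $\tau_n=T2^{-n}$ (the paper's $T=2^{-n_0}$, $T_n=2^{-n_0}(1-2^{-n})$) and the same choice $\mu_n=A2^{3n/2}T^{-3/2}$ (the paper's $\mu_{1,n}=Q2^{\frac32(n_0+n)}$). You also absorb $e^{C/\sqrt{\tau_n}}$ and the polynomial factors by the accumulated decay $\prod_{m<n}e^{-\mu_m\tau_m/2}$, exactly as the paper does.

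The one genuine difference is how the factor $|L-L_0|^{-1}$ is handled.

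\textbf{The paper's route.} Its recursion for $\|y^n\|$ keeps $\mathcal{C}/|L-L_0|$ at every step, so after induction the state bound carries $\mathcal{C}^n|L-L_0|^{-n}$. This is absorbed in Appendix~\ref{sec: app-choice-Q} by rewriting $|L-L_0|^{-1}=e^{2^{n_0/2}\mathcal{K}/\epsilon}$ via $T=\mathcal{K}^2/(\epsilon^2\ln^2|L-L_0|)$. In effect this ties the size of $Q$ to $\mathcal{K}/\epsilon=\sqrt{T}\,|\ln|L-L_0||$, so the parameters depend on $L$.

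\textbf{Your route.} You use that $y_1(T_n)=z(T_n)=0$ exactly, and that $y_2,y_3$ in Proposition~\ref{prop: estimates on fixed interval-iteration preparation} have $L$-independent bounds. This sharpened endpoint estimate is exactly the bound for $\|y(T,\cdot)\|$ recorded, without $|L-L_0|^{-1}$, in the appendix proof of Corollary~\ref{coro: simplified estimates for iteration scheme}. So what you call the main obstacle is already settled there.

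\textbf{What your route buys.} The singular factor now enters only once per step, through the control and the intermediate states. Hence $A$ can be chosen independent of $L$ and $T$, and you get $\|u\|_{L^\infty}\le\mathcal{K}e^{\mathcal{K}/\sqrt{T}}|L-L_0|^{-1}\|y^0\|$ with $\mathcal{K}$ genuinely independent of $L$.
\begin{itemize}
\item You should assume $T\le 1$, as the paper does, so that $\tau_n\le L$ for the smoothing estimate and $\mu_n>1$ for the corollary.
\item The factors $T^{-3(n-1)}$ are absorbed using $\ln(1/T)\lesssim T^{-1/2}$.
\item The stated form then follows by setting $\epsilon=\mathcal{K}/(\sqrt{T}\,|\ln|L-L_0||)$, as in the paper's last line. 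Letting the number of expensive steps depend on $L$ is unnecessary.
\end{itemize}

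Your claim that everything ``hinges'' on the singular factor not propagating is too strong. The paper shows the doubly exponential decay $e^{-cQ2^{n/2}/\sqrt{T}}$ can also beat $|L-L_0|^{-n}$, at the price of $L$-dependent parameters. Still, your bookkeeping is cleaner and makes the uniformity of $\mathcal{K}$ in $L$ transparent.
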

\begin{proof}
For every $L\in I\setminus \{L_0\}$, without loss of generality, we set $T\in(0,1)$. We define the constant $\epsilon$ by $\epsilon(T,L)=\left|\frac{\mathcal{K}}{\sqrt{T}\ln{|L-L_0|}}\right|$, with $\mathcal{K}$ to be specified later. Suppose that $T=2^{-n_0}$. Let $T_n=2^{-n_0}(1-2^{-n})$ and $\mathcal{I}_n=[T_{n-1},T_n)$, $n\in\N$. Let us also take a constant $Q>0$ that is independent of $T$ and $L$. And $Q$ will be fixed later on. Now our concerned time interval $[0,T)$ has a partition $[0,T)=\bigcup_{n=1}^{\infty}\mathcal{I}_n$.
We fix our choice of $\mu_{1,n}= Q 2^{\frac{3}{2}(n_0+ n)}, n= 1,2,...$ and $\mu_{2,n}=2\mu_{1,n}$. On each time interval $\mathcal{I}_n$, we construct the control function $w_n(t)\in L^2(\mathcal{I}_n)$ the unique solution $y_n$ of the Cauchy problem with the initial condition $y_n(T_{n-1},x)=y^{n-1}(x)$,
\begin{equation*}
\left\{
\begin{array}{lll}
    \p_ty_n+\p_x^3y_n+\p_xy_n=0 & \text{ in }\mathcal{I}_n\times(0,L), \\
     y_n(t,0)=y_n(t,L)=0&  \text{ in }\mathcal{I}_n,\\
     \p_x y_n(t,L)= u^n(t)&\text{ in }\mathcal{I}_n,
\end{array}
\right.
\end{equation*}
satisfies 
\begin{align*}
\|y(t,\cdot)\|_{L^2(0,L)}&\leq \|y^{n-1}\|_{L^(0,L)},t\in(T_{n-1},\frac{T_{n-1}+T_n}{2}],\\
\|y_n(t,\cdot)\|_{L^2(0,L)}&\leq \frac{\mathcal{C}}{|L-L_0|}\frac{e^{\frac{2\sqrt{2}K}{\sqrt{T_n-T_{n-1}}}}\mu_{1,n}^{\frac{5}{2}}+ e^{-\mu_{1,n}(t-\frac{T_{n-1}+T_n}{2})}}{(T_{n-1}-T_n)^3} \|y^{n-1}\|_{L^2(0,L)},t\in (\frac{T_{n-1}+T_n}{2},T_{n})\\
\|u^n\|_{L^{\infty}(\mathcal{I}_n)}&\leq \frac{\mathcal{C}}{|L-L_0|}\frac{e^{\frac{2\sqrt{2}K}{\sqrt{T_n-T_{n-1}}}}\mu_{1,n}^{\frac{5}{2}}+e^{-\frac{\mu_{1,n}^{\frac{1}{3}}}{2}L}}{(T_n-T_{n-1})^3}\|y^{n-1}\|_{L^2(0,L)}.
\end{align*}
This construction implies that the solution $y$ to the equation \eqref{eq: linearized KdV system-control-intro} is defined by $y(t,x)|_{\mathcal{I}_n}=y_n(t,x)$. In the meanwhile, the control function $u(t)|_{\mathcal{I}_n}=u^n(t)$. Consider the estimate at $t=T_{n}$, we obtain
\begin{equation*}
\|y^n\|_{L^2(0,L)}=\|y^{n}(T_n,\cdot)\|_{L^2(0,L)}\leq \frac{\mathcal{C}}{|L-L_0|}\frac{ e^{-\frac{(T_n-T_{n-1})\mu_{1,n}}{2}}}{(T_n-T_{n-1})^3} \|y^{n-1}\|_{L^2(0,L)}.
\end{equation*}
Using that $T_n-T_{n-1}=2^{-n}$, we simplify the estimates above
\begin{align}
\|y(t,\cdot)\|_{L^2(0,L)}&\leq \|y^{n-1}\|_{L^(0,L)},t\in(T_{n-1},\frac{T_{n-1}+T_n}{2}],\label{eq: L^2-y-first half}\\
\|y_n(t,\cdot)\|_{L^2(0,L)}&\leq \frac{\mathcal{C}}{|L-L_0|}\left(e^{2\sqrt{2}K2^{\frac{n_0+n}{2}}}Q^{\frac{5}{2}}2^{\frac{15}{4}(n_0+n)}+ 1\right)2^{3(n_0+n)}\|y^{n-1}\|_{L^2(0,L)},t\in (\frac{T_{n-1}+T_n}{2},T_{n})\label{eq: L^2-y-second half}\\
\|u^n\|_{L^{\infty}(\mathcal{I}_n)}&\leq \frac{\mathcal{C}}{|L-L_0|}\left(e^{2\sqrt{2}K2^{\frac{n_0+n}{2}}}Q^{\frac{5}{2}}2^{\frac{15}{4}(n_0+n)}+e^{-\frac{Q^{\frac{1}{3}}2^{\frac{n_0+n}{2}}}{2}L}\right)2^{3(n_0+n)}\|y^{n-1}\|_{L^2(0,L)},\label{eq: L^2-w-all}\\
\|y^n\|_{L^2(0,L)}&\leq \frac{\mathcal{C}}{|L-L_0|}e^{-Q2^{\frac{n_0+n}{2}-1}}2^{3(n_0+n)} \|y^{n-1}\|_{L^2(0,L)}\label{eq: L^2 for y^n}.
\end{align}
Using the inequality \eqref{eq: L^2 for y^n}, by induction, we obtain 
\begin{equation*}
\|y^n\|_{L^2(0,L)}\leq \frac{\mathcal{C}^n}{|L-L_0|^n}e^{-\frac{Q}{2}\sum_{j=0}^{n-1}2^{\frac{n_0+n-j}{2}}}2^{3\sum_{j=0}^{n-1}(n_0+n-j)} \|y^{0}\|_{L^2(0,L)},   
\end{equation*}
which implies that
\begin{equation}\label{eq: decay for y^n}
\|y^n\|_{L^2(0,L)}\leq \frac{\mathcal{C}^n}{|L-L_0|^n}e^{-\frac{Q2^{\frac{n_0}{2}}(2^{\frac{n}{2}}-1)}{2-\sqrt{2}}}2^{3n_0 n+\frac{3n(1+n)}{2}} \|y^{0}\|_{L^2(0,L)},   
\end{equation}
With the help of the good choice of $Q$ (see Appendix \ref{sec: app-choice-Q}), we simplify the estimates for $n>1$,
\begin{align}
\|y_n(t,\cdot)\|_{L^2(0,L)}&\leq  e^{-\frac{Q2^{\frac{n_0}{2}}(2^{\frac{n-1}{2}}-1)}{2(2-\sqrt{2})}}\|y^{0}\|_{L^2(0,L)},t\in(T_{n-1},\frac{T_{n-1}+T_n}{2}],\\
\|y_n(t,\cdot)\|_{L^2(0,L)}&\leq (Q^{\frac{5}{2}}+1)e^{-\frac{Q2^{\frac{n_0}{2}}(2^{\frac{n-1}{2}}-1)}{2(2-\sqrt{2})}} \|y^{0}\|_{L^2(0,L)},t\in (\frac{T_{n-1}+T_n}{2},T_{n})\\
\|u^n\|_{L^{\infty}(\mathcal{I}_n)}&\leq 2e^{-\frac{Q2^{\frac{n_0}{2}}(2^{\frac{n-1}{2}}-1)}{2(2-\sqrt{2})}} \|y^{0}\|_{L^2(0,L)}\label{eq: final estimate for w_n}. 
\end{align}
Therefore, we know that for any $L\in I\setminus\{L_0\}$, $\lim_{t\rightarrow T^{-}}\|y(t,\cdot)\|_{L^2(0,L)}=0$.
Moreover, for $n=1$, we have the following estimates
\begin{align*}
\|y_1(t,\cdot)\|_{L^2(0,L)}&\leq \|y^{0}\|_{L^2(0,L)},t\in(0,\frac{1}{2^{n_0+2}}],\\
\|y_1(t,\cdot)\|_{L^2(0,L)}&\leq \frac{\mathcal{C}}{|L-L_0|}\left(e^{2\sqrt{2}K2^{\frac{n_0}{2}}}Q^{\frac{5}{2}}2^{\frac{15}{4}(n_0)}+ 1\right)2^{3(n_0+1)} \|y^{0}\|_{L^2(0,L)},t\in (\frac{1}{2^{n_0+2}},\frac{1}{2^{n_0+1}})\\
\|\p_xy_1(\cdot,L)\|_{L^{\infty}(\mathcal{I}_1)}&\leq \frac{\mathcal{C}}{|L-L_0|}\left(e^{2\sqrt{2}K2^{\frac{n_0}{2}}}Q^{\frac{5}{2}}2^{\frac{15}{4}(n_0)}+e^{-\frac{Q^{\frac{1}{3}}2^{\frac{n_0}{2}}}{2}L}\right)2^{3(n_0+1)} \|y^{0}\|_{L^2(0,L)} . 
\end{align*}
Since $T=2^{-n_0}$, there exists a constant $\mathcal{K}$ such that  
\begin{equation*}
\|\p_xy_1(\cdot,L)\|_{L^{\infty}(\mathcal{I}_1)}\leq \frac{\mathcal{K}}{|L-L_0|}e^{\frac{\mathcal{K}}{\sqrt{T}}}\|y^{0}\|_{L^2(0,L)}    
\end{equation*}
Combing with the inequality \eqref{eq: final estimate for w_n}, we know that
\begin{equation*}
    \|u\|_{L^{\infty}(0,T)}\leq \frac{\mathcal{K}}{|L-L_0|}e^{\frac{\mathcal{K}}{\sqrt{T}}}\|y^{0}\|_{L^2(0,L)}\leq \frac{\mathcal{K}}{|L-L_0|^{1+\epsilon}}\|y^{0}\|_{L^2(0,L)} .
\end{equation*}
\end{proof}

\subsection{Quantitative observability and exponential stability}
In this section, we first prove Theorem \ref{thm: control-cost}.
\begin{proof}[Proof of Theorem \ref{thm: control-cost}]
Applying Proposition \ref{prop: iteration schemes}, for $\forall y^0\in L^2(0,L)$, there exists a control function $u\in L^2(0,T)$ such that the solution $y$ to the equation \eqref{eq: linearized KdV system-control-intro} achieve null controllability and 
\begin{equation*}
\|u\|_{L^{\infty}(0,T)}\leq \frac{\mathcal{K}e^{\frac{\mathcal{K}}{\sqrt{T}}}}{|L-L_0|}\|y^{0}\|_{L^2(0,L)}.    
\end{equation*}
Given $L\notin\mathcal{N}$, let $\mathrm{d}:=\mathrm{dist}(L,\mathcal{N})>0$. Therefore, defining $\mathscr{K}:=\max\{\frac{\mathcal{K}}{\mathrm{d}},\mathcal{K}\}$,
\begin{equation*}
\|u\|_{L^{\infty}(0,T)}\leq \mathscr{K}e^{\frac{\mathscr{K}}{\sqrt{T}}}\|y^{0}\|_{L^2(0,L)}.    
\end{equation*}
\end{proof}
Besides Theorem \ref{thm: control-cost}, we are also able to prove a weak version of Theorem \ref{thm: main theorem linear version} as follows.
\begin{thm}\label{thm: main result with L-L0}
Let $T>0$. Let $I$ satisfy the condition {\bf (C)}. For every $L\in I\setminus\{L_0\}$, there exist two effectively computable constants $\epsilon=\epsilon(T,L)=\left|\frac{\mathcal{K}}{\sqrt{T}\ln{|L-L_0|}}\right|$ and $C=C(T,L)=\frac{T\mathcal{K}}{|L-L_0|^{1+\epsilon}}$ such that for $\forall y^0\in L^2(0,L)$, the following quantitative observability inequality 
\begin{equation}\label{eq: quantitative-ob-main}
   \|y^0\|^2_{L^2(0,L)} \leq  C(T,L)^2\int_0^T|\p_x y(t,0)|^2dt
\end{equation}
holds for any solution $y$ to the KdV system \eqref{eq: linear KdV-stability-intro}. In addition, for $T>0$, there exists $R_e=R_e(L)=\ln{(1+\frac{2|L-L_0|^{2+2\epsilon}}{\mathcal{K}^2})}$ such that for $\forall y^0\in L^2(0,L)$, the KdV system \eqref{eq: linear KdV-stability-intro} 
\begin{equation*}
    E(y(t))\leq e^{-t R_e(L)}E(y^0),\forall t\in(0,\infty).
\end{equation*}
\end{thm}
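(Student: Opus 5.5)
The plan is to obtain \eqref{eq: quantitative-ob-main} from the constructive null controllability of Proposition \ref{prop: iteration schemes} via the HUM duality of Proposition \ref{prop: control and ob}, with $H=L^2(0,L)$ (no projection). Then the exponential decay will follow from the energy identity, as in the argument behind Proposition \ref{prop: ob to stability}.

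\textbf{Step 1 (control bound).} Fix $T>0$ and $L\in I\setminus\{L_0\}$. Proposition \ref{prop: iteration schemes} gives, for each $y^0$, a control $u$ with $\lim_{t\to T^-}\|y(t)\|=0$ and $\|u\|_{L^\infty(0,T)}\le \mathcal{K}|L-L_0|^{-1-\epsilon}\|y^0\|_{L^2}$. This yields
\[
\|u\|_{L^2(0,T)}\le \sqrt{T}\,\|u\|_{L^\infty(0,T)}\le \frac{\sqrt T\mathcal{K}}{|L-L_0|^{1+\epsilon}}\|y^0\|_{L^2(0,L)} .
\]
Since $T$ may be assumed $\le 1$, enlarging $\mathcal K$ absorbs the power of $T$, giving the constant $C(T,L)=T\mathcal{K}|L-L_0|^{-1-\epsilon}$ up to renaming. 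For $T\ge 1$ I would control on $[0,1]$ and then extend the control by zero.

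\textbf{Step 2 (HUM).} I will apply Proposition \ref{prop: control and ob} with $\Pi_H=\mathrm{Id}$. This gives $\|S(T)w^0\|\le C\|\partial_xw(\cdot,0)\|_{L^2(0,T)}$ for solutions of \eqref{eq: adjoint linear KdV-HUM}, which is the same system as \eqref{eq: linear KdV-stability-intro}. Observability of the final state must then be upgraded to observability of the initial state. For this I would use the backward-uniqueness/reversibility structure implicit in the duality \eqref{eq: duality relations}, namely the change of variables $(t,x)\mapsto(T-t,L-x)$. It maps the Neumann trace at $L$ for the control problem to the trace at $0$ for the adjoint. In that pairing, null controllability from arbitrary $y^0$ corresponds exactly to estimating $w^0$ itself, since the pairing $\poscalr{y(T)}{w^0}-\poscalr{y^0}{w(T)}$ is expressed through the boundary terms. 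I therefore expect the derivation to produce \eqref{eq: quantitative-ob-main} directly in the form $\|w^0\|^2\le C^2\int_0^T|\partial_xw(t,0)|^2dt$. The key identity is obtained by multiplying the controlled equation by the reflected adjoint solution and integrating by parts. All boundary terms vanish except $\int_0^T u(t)\overline{\partial_x w(T-t,0)}\,dt$, and Cauchy--Schwarz then closes the estimate.

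\textbf{Step 3 (decay).} Multiplying \eqref{eq: linear KdV-stability-intro} by $\bar y$ gives $E(0)-E(T)=\int_0^T|\partial_xy(t,0)|^2dt$ (with the paper's normalization). Combined with \eqref{eq: quantitative-ob-main}, this yields $E(T)\le (1-C^{-2})E(0)$, i.e.\ $E(0)\le \frac{C^2}{C^2-1}\cdot\ldots$, or more conveniently $E(T)\le (1+C^{-2})^{-1}E(0)$ when the inequality is written as $E(0)\le C^2(E(0)-E(T))$ rearranged suitably. Iterating over the intervals $[nT,(n+1)T]$ by the semigroup property, and using that $E$ is nonincreasing between grid points, gives exponential decay at rate $\frac1T\ln(1+C^{-2})$. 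Fixing $T$ (for example $T=1$, or choosing $T$ so the $T$-factors match the stated $R_e$) and inserting $C^{-2}\sim |L-L_0|^{2+2\epsilon}/\mathcal K^2$ gives $R_e(L)=\ln(1+2|L-L_0|^{2+2\epsilon}/\mathcal K^2)$, where the factor $2$ comes from the dissipation identity $E(0)-E(T)=2\int_0^T|y_x(t,0)|^2dt$.

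\textbf{Main obstacle.} The delicate point is Step 2: the duality pairing must correctly convert null controllability into observability of $y^0$, rather than only of $S(T)y^0$. In Step 1, I also need to confirm that $\lim_{t\to T^-}\|y(t)\|=0$ is enough to give $y(T)=0$ in $C([0,T];L^2)$, which follows from continuity of the solution once $u\in L^2$. The remaining work is bookkeeping of the $T$-dependence in the constants.
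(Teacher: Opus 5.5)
There is a gap in Step 2, though it closes in one line. Steps 1 and 3 follow the paper's proof: the control bound from Proposition \ref{prop: iteration schemes}, then HUM, then the dissipation identity at $T=1$. The mechanism you propose in Step 2 for reaching the stated form of \eqref{eq: quantitative-ob-main} does not work.

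Testing the controlled system against the reflected adjoint solution gives, as in the proof of Proposition \ref{prop: control and ob},
\[
0=\poscalr{y(T)}{w^0}_{(0,L)}-\poscalr{y^0}{w(T)}_{(0,L)}+\poscalr{u}{\p_x w(\cdot,0)}_{(0,T)} .
\]
A null control kills the first term. What you can then estimate is $\poscalr{y^0}{S(T)w^0}_{(0,L)}$, i.e.\ the final state $S(T)w^0$, not $w^0$. Bounding $w^0$ by duality would require killing the second term instead, that is, reaching arbitrary targets $y(T)$ from $y^0=0$. This is exact controllability, and Proposition \ref{prop: iteration schemes} does not provide it.

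Reversibility does not help either. The reflection $(t,x)\mapsto(T-t,L-x)$ preserves the equation but not the boundary configuration: the control moves to $\p_x y(t,0)$ and the trace at $L$ becomes free. So your expectation that the pairing ``directly'' yields $\|w^0\|^2\le C^2\int_0^T|\p_x w(t,0)|^2dt$ is unfounded.

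The repair uses the identity you already have in Step 3. For solutions of \eqref{eq: adjoint linear KdV-HUM},
\[
\|w^0\|^2_{L^2(0,L)}=\|S(T)w^0\|^2_{L^2(0,L)}+2\int_0^T|\p_x w(t,0)|^2dt\le (C^2+2)\int_0^T|\p_x w(t,0)|^2dt ,
\]
and the extra $2$ is absorbed by enlarging $\mathcal K$. The paper's own decay argument only uses the final-state version that HUM actually delivers. At $T=1$ it writes $E(y^0)-E(y(1))\ge 2C^{-2}\|S(1)y^0\|^2$, which gives $E(y(1))\le(1+2C^{-2})^{-1}E(y^0)$ and hence exactly the stated $R_e$.

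Your Step 3 algebra is slightly tangled on this point. Rearranging the initial-state inequality $E(0)\le \tfrac{C^2}{2}(E(0)-E(T))$ gives the factor $1-2C^{-2}$, not $(1+C^{-2})^{-1}$. With the upgraded constant $C^2+2$, that factor becomes $1-\tfrac{2}{C^2+2}=(1+2C^{-2})^{-1}$, the same as the paper's.
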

\begin{proof}
By the Hilbert Uniqueness Method, it suffices to analyze the solution $\Tilde{y}$ to the controlled KdV system \eqref{eq: linearized KdV system-control-intro} with initial data $\Tilde{y}^0(x)$ and control $u(t)$.
The quantitative observability \eqref{eq: quantitative-ob-main} is equivalent to the following estimate
\begin{equation}
    \|u\|_{L^2(0,T)}\leq C(T,L)\|\Tilde{y}^0\|_{L^2(0,L)}.
\end{equation}
Using Proposition \ref{prop: iteration schemes} and the estimate \eqref{eq: est-control-L-infty}, we obtain that 
\begin{equation*}
 \|u\|_{L^2(0,T)}\leq T \|u\|_{L^{\infty}(0,T)}\leq \frac{T\mathcal{K}}{|L-L_0|^{1+\epsilon}}\|\Tilde{y}^0\|_{L^2(0,L)},
\end{equation*}
which ensures the quantitative observability \eqref{eq: quantitative-ob-main}. Following the standard approach, the exponential stability is a direct consequence of the observability \eqref{eq: quantitative-ob-main}. Without loss of generality, we set $T=1$. Using the fact that
\begin{equation*}
\frac{d}{dt}E(y(t))=-2 |\p_xy(t,0)|^2, 
\end{equation*}
we obtain $E(y(1))-E(y^0)=-2\int_0^1|\p_xy(t,0)|^2dt$. By the observability \eqref{eq: quantitative-ob-main}, we know that 
\begin{align*}
E(y^0)-E(y(1))\geq \frac{2}{\frac{\mathcal{K}^2}{|L-L_0|^{2+2\epsilon}}}\|S(1)y^0\|^2_{L^2(0,L)}
=\frac{2|L-L_0|^{2+2\epsilon}}{\mathcal{K}^2}E(y(1)),
\end{align*}
which implies that $E(y(1))\leq e^{-\ln{(1+\frac{2|L-L_0|^{2+2\epsilon}}{\mathcal{K}^2})}}E(y^0)$. Then for any $t\in(0,+\infty)$, we  deduce that
\begin{equation*}
    E(y(t))\leq e^{-t\ln{(1+\frac{2|L-L_0|^{2+2\epsilon}}{\mathcal{K}^2})}}E(y^0):=e^{-tR_e(L)}E(y^0).
\end{equation*}
In particular, $R_e(L)\sim |L-L_0|^{2+2\epsilon}$.
\end{proof}

\begin{rem}
At this point, we have established an exponential stability result of the KdV system \eqref{eq: adjoint linear KdV-HUM} with the exponential decay rate $R_e(L)\sim |L-L_0|^{2+2\epsilon}$. Here we notice that there is a loss of order $\epsilon$. However, we know that $\lim_{L\rightarrow L_0}\epsilon(L)=0$. This motivates us to do a more careful analysis of the exponential stability to obtain a sharp exponential decay rate $R_e(L)\sim |L-L_0|^{2}$. Furthermore, we can obtain a decomposition of $L^2(0,L)=H_{\A}(L)\oplus M_{\A}(L)$ such that on $\h$ we obtain exponential stability with a uniform decay rate, while on $M_{\A}(L)$ we obtain exponential stability with a decay rate $\sim|L-L_0|^2$. 
\end{rem}

\section{Classification of critical lengths, invariant manifolds}\label{sec: classification quasiinvariant space}
From the definition of the critical length, $L_0=2\pi\sqrt{\frac{k^2+k l+l^2}{3}}$ if $L_0\in\mathcal{N}$. We introduce the following classification index
\begin{equation}
\mathcal{I}_C(L_0):=3(\frac{L_0}{2\pi})^2\in\N^*,\forall L_0\in\mathcal{N},
\end{equation}
This motivates us to consider the following simple Diophantine Equation 
\begin{equation}\label{integersolution}
    a^2+ab+b^2=n,
\end{equation}
or equivalently
\begin{equation}\label{eq:klIC}
    k^2+ kl+ l^2= \mathcal{I}_C(L_0).
\end{equation}
The solutions of the preceding algebraic equations lead to description of $N_0$, i.e., the dimension of the unreachable subspace.  

Depending on the solutions we can classify different critical lengths, and  distinguish the solutions $(k, l)$ for each given length.  The details  are presented in Section \ref{sec: Sharp spectral analysis and classification of critical lengths}.

As we noticed in Section \ref{sec: spectral analysis of A}, $k-l\equiv0\mod{3}$ is a crucial condition to distinguish different asymptotic behaviors of eigenvalues and eigenfunctions. For $L\notin\mathcal{N}$ close to $L_0\in\mathcal{N}$, we conduct a comparative analysis of the asymptotic properties in Section \ref{sec: limiting analysis on different types} related to different classes of $L_0$.

\subsection{Classification of critical lengths and unreachable pairs}\label{sec: Sharp spectral analysis and classification of critical lengths}
For each $L_0\in\mathcal{N}$,  Equation \eqref{eq:klIC} may admit more than one solution, we have the following natural classification for those solutions $(k,l)$ (We also refer to \cite{NX} for more details).
\begin{defi}\label{defi: types of unreachable pairs}
Let $L_0\in \mathcal{N}$. We define the following sets for the unreachable pairs $(k,l)$:
\begin{align*}
    \mathcal{S}_1(L_0)&:=\{(k,l) \textrm{ solution of } \eqref{eq:klIC}: k=l\}, \\
    \mathcal{S}_2(L_0)&:=\{(k,l) \textrm{ solution of } \eqref{eq:klIC}: k\equiv l\mod{3}, \; k\neq l\}\\
    \mathcal{S}_3(L_0)&:=\{(k,l) \textrm{ solution of } \eqref{eq:klIC}: k\not\equiv l\mod{3}\}.
\end{align*}
\end{defi}
One easily observe that 
\begin{equation*}
     \mathcal{S}_3(L_0)\cap  \mathcal{S}_1(L_0)=  \mathcal{S}_3(L_0)\cap  \mathcal{S}_2(L_0)= \emptyset.
\end{equation*}
Later, we call $\mathcal{S}_1(L_0)$ ($\mathcal{S}_2(L_0),\mathcal{S}_3(L_0)$) the Type I (Type II, Type III) unreachable pairs, respectively. 
Let $L_0\in\mathcal{N}$. As we presented in Section \ref{sec: Eigenvalues and eigenfunctions at the critical length},  for a pair $(k,l)$,  we obtain two eigenvalues $\pm\ii\lambda_{c}=\pm\ii\frac{(2k+l)(k-l)(2l+k)}{3\sqrt{3}(k^2+k l+l^2)^{\frac{3}{2}}}$. Their associated eigenfunctions satisfy
\begin{equation*}
\left\{
\begin{array}{ll}
     \G'''+\G'+\ii\lambda_{c}\G=0,  & \text{ in }(0,L_0),\\
     \G(0)=\G(L_0)=\G'(0)-\G'(L_0)=0.& 
\end{array}
\right.       
\end{equation*}
As noticed in Proposition \ref{prop: type-1-2}, when $(k,l)\in \mathcal{S}_1\cup \mathcal{S}_2$, we observe two types of eigenfunctions $\G_c$ and $\Tilde{\G}_c$ (see their explicit formulas in \eqref{eq: exact formula for critical eigenfunctions} and \eqref{eq: defi of tilde-G}). We distinguish different types of $(k,l)$ by their different behaviors concerning eigenmodes as follows. 
\begin{table}[]
\begin{tabular}{|c|c|c|c|}
\hline
 &$\mathcal{S}_1$ & $\mathcal{S}_2$ & $\mathcal{S}_3$ \\
\hline
$0$ is  eigenvalue&YES & NO & NO \\
\hline
Eigenfunctions & Two eigenfucntions  & Two eigenfucntions & A unique eigenfucntion\\
for $\B$ w.r.t $\ii\lambda_{c}$& $\G_{c}$ and $\Tilde{\G}_{c}$&  $\G_{c}$ and $\Tilde{\G}_{c}$& $\G_{c}$\\
\hline
\end{tabular}
     \caption{Eigenmodes for different types of $(k,l)$}
     \label{tab: (k,l)-type}
 \end{table}

As noticed in \cite{NX}, based on the features of the integer pairs $(k,l)$ solving \eqref{eq:klIC}, we classify the critical lengths under three cases:
\begin{defi}\label{def:new:classification}
We have the following types of critical lengths,
\begin{align*}
    \mathcal{N}^1&:=\{L_0\in \mathcal{N}: \textrm{ there exists only one pair $(k, l)$ solving \eqref{eq:klIC}, which belongs to } \mathcal{S}_1(L_0)\}, \\
    \mathcal{N}^2&:= \{L_0\in \mathcal{N}: \textrm{ all solutions $(k, l)$ of \eqref{eq:klIC}  belong to } \mathcal{S}_3(L_0)\}\\
       \mathcal{N}^3&:=
    \{L_0\in \mathcal{N}: \textrm{ there exists pair $(k, l)$ solving \eqref{eq:klIC}, which belongs to } \mathcal{S}_2(L_0)\}.
\end{align*}
\end{defi}
Clearly, these sets are disjoint and 
\begin{equation}
    \mathcal{N}=\mathcal{N}^1\cup \mathcal{N}^2\cup\mathcal{N}^3. 
\end{equation}
\begin{rem}
We notice that for $L_0\in\mathcal{N}$
\begin{itemize}
    \item $L_0\in\mathcal{N}^1$, then $\mathcal{I}_C(L_0)\equiv0\mod{3}$, and $\mathcal{N}^1=2\pi\N^*$.
    \item $L_0\in\mathcal{N}^2$, then $\mathcal{I}_C(L_0)\not\equiv0\mod{3}$.
    \item $L_0\in\mathcal{N}^3$, then $\mathcal{I}_C(L_0)\equiv0\mod{3}$. However, $\mathcal{N}^3$ may contain lengths $L_0$ with  $\sqrt{\frac{\mathcal{I}_C(L_0)}{3}}\not\in \N^*$.
\end{itemize}
\end{rem}
\begin{exa}
For each case above, we could choose a model to catch a glimpse of its features. We choose $L_0=2\pi$ as a model for $\mathcal{N}^1$, $L_0=2\pi\sqrt{\frac{7}{3}}$ for $\mathcal{N}^2$, and $L_0=2\pi\sqrt{7}$ for $\mathcal{N}^3$.
\end{exa}
Then we have the following proposition to describe the characteristics of dimensions of unreachable subspaces, which can be found in \cite[Proposition 2.9]{NX} and we omit its proof here.
\begin{prop}
For any $d\in\N^*$, there are infinitely many $L_0\in \mathcal{N}$ such that the dimension of the unreachable subspace at $L_0$ is exactly $d$.
\end{prop}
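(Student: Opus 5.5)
The plan is to reduce the statement to counting representations by the norm form of the Eisenstein integers, and then to pick an explicit family of integers $n$ with the prescribed count. By \eqref{eq: defi-M-L-0-intro}, $N_0=\dim M(L_0)$ is the number of ordered pairs $(k,l)\in\N^*\times\N^*$ solving \eqref{eq:klIC}, i.e. $k^2+kl+l^2=\mathcal{I}_C(L_0)$. For instance, for $L_0=2\pi\sqrt{7/3}$ the solutions are $(2,1),(1,2)$, giving dimension $2$, and for $L_0=2\pi$ only $(1,1)$ occurs. The map $L_0\mapsto \mathcal{I}_C(L_0)=3(L_0/2\pi)^2$ is injective. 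So it suffices to show that for every $d\in\N^*$ there are infinitely many $n\in\N^*$ for which $a^2+ab+b^2=n$ has exactly $d$ solutions in positive integers.

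The first step is the classical count. For the form $a^2+ab+b^2=|a-b\omega|^2$ with $\omega=e^{2\pi \ii/3}$, the total number of integer solutions is
\[
r(n)=6\sum_{m\mid n}\chi(m),
\]
where $\chi$ is the nontrivial character mod $3$. The unit group $\{\pm1,\pm\omega,\pm\omega^2\}$ acts freely on nonzero solutions, and a fundamental domain is the sextant $\{a>0,\ b\ge 0\}$. Hence the number of solutions with $a,b\ge1$ equals $r(n)/6$ minus the number of solutions with $b=0$, and the latter is $1$ if $n$ is a perfect square and $0$ otherwise. I will state this as a standard fact from the arithmetic of $\Z[\omega]$ and cite it rather than reprove it.

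The second step is the choice of family: take $n=3p^{d-1}$ with $p\equiv1\pmod 3$ prime. Since $\chi(3)=0$ and $\chi(p)=1$, we get $\sum_{m\mid n}\chi(m)=\sum_{i=0}^{d-1}\chi(p^i)=d$. Moreover $n$ is not a square, because $3$ divides it to the first power. Therefore the number of positive solutions is exactly $d$. Consistency checks: $d=1$ gives $n=3$, i.e. $L_0=2\pi$; and $d=2$, $p=7$ gives $n=21$, i.e. $L_0=2\pi\sqrt7$, with pairs $(4,1),(1,4)$. Both match the examples in the paper.

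By Dirichlet's theorem there are infinitely many primes $p\equiv1\pmod 3$, and distinct primes give distinct $n$, hence distinct $L_0=2\pi\sqrt{n/3}\in\mathcal{N}$. This concludes the argument.

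The only delicate point is the bookkeeping: one must confirm that the unreachable dimension really counts ordered pairs (each ordered pair contributing one real dimension, with conjugate eigenvalues $\pm\ii\lambda_c$ paired appropriately). That is exactly the convention fixed by Rosier's definition of $N_0$ recalled in \eqref{eq: defi-M-L-0-intro}, so I take it as given. Once this is fixed, the rest reduces to the divisor-sum formula above.
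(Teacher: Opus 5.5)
The paper gives no proof of this proposition; it defers to \cite[Proposition 2.9]{NX}. Your divisor-sum argument is therefore a self-contained route, and most of it is correct.

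The correct parts are these. The ordered-pair convention for $\dim M(L_0)$ matches the paper's examples ($2\pi\sqrt{7/3}$ has dimension $2$, and $14\pi$ has dimension $3$). The sextant $\{a>0,\ b\ge 0\}$ is a fundamental domain for the six units. The number of pairs $(k,l)\in\N^*\times\N^*$ with $k^2+kl+l^2=n$ is $\sum_{m\mid n}\chi(m)$, minus $1$ when $n$ is a perfect square. For $n=3p^{d-1}$ with $p\equiv 1 \pmod 3$ this count is $d$.

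The gap is the last step when $d=1$. Then $n=3p^{0}=3$ for every prime $p$, so your family produces only the single length $L_0=2\pi$. The claim that distinct primes give distinct $n$ fails exactly there, so you do not obtain infinitely many lengths with $\dim M(L_0)=1$.

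The same machinery repairs this. Write $s(n)=\sum_{m\mid n}\chi(m)$; it is multiplicative, with
\[
s(3)=1,\qquad s(2^{2j})=\sum_{i=0}^{2j}(-1)^i=1,\qquad s(7^{d-1})=d .
\]
Now take $n_j=3\cdot 4^{j}\cdot 7^{d-1}$ for $j\ge 0$. Then $s(n_j)=d$, and $n_j$ is not a square because $3$ divides it exactly once. Hence each $n_j$ has exactly $d$ positive solutions, and the $n_j$ are pairwise distinct.

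This handles every $d\ge 1$ uniformly. For $d=1$ it gives $L_0=2^{j+1}\pi$, each with the single pair $(2^j,2^j)$. It also removes the need for Dirichlet's theorem.
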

Based on this classification, in \cite{NX}, we also obtained a negative result on the small-time local controllability of a KdV system for critical lengths in $\mathcal{N}^3$.

\subsection{Classifiaction of elliptic eigenmodes}\label{sec: limiting analysis on different types}
In the preceding section, we distinguish three different types of critical lengths. This distinction allows us to classify the asymptotic behaviors of elliptic eigenmodes as $L\rightarrow L_0\in\mathcal{N}$. Indeed, we shall characterize the relation between the unreachable space $M(L_0)$ and the elliptic subspace $U_E(L)$ (see also Remark \ref{rem: link-elliptic-unreacheable}). We point out that in general, these two spaces are of different dimensions (see Proposition \ref{prop: Index set M-E}) and contain different directions (see two examples in Section \ref{sec: quasi-invariant subsapce}). To provide a better approximation of $M(L_0)$, we introduce a well-prepared subspace $M_{\B}(L)$, which is the \textit{quasi-invariant subspace} for $\B$ (defined explicitly in \eqref{eq: defi-quasi-space-B}). \\
\begin{figure}
    \centering
    \includegraphics[width=0.5\linewidth]{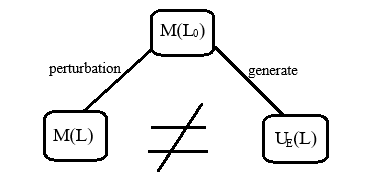}
    \caption{Relations among spaces}
    \label{fig: quasi-invariant}
\end{figure}
However, we have already noticed different asymptotic behaviors of elliptic eigenmodes corresponding to different types of $(k,l)$ in Proposition \ref{prop: Asymp in L-low}, \ref{prop: Low-frequency behaviors: Singular limits}, and \ref{prop: asymptotic expansion for A0}. Therefore, it is more convenient for us to classify elliptic eigenmodes using different types of $(k,l)$ as a criterion. 

\subsubsection{Dimension of the elliptic subspace}

Firstly, we characterize the elliptic index set $\Lambda_{E}$, which directly describes the dimension of the elliptic subspace.
\begin{prop}\label{prop: Index set M-E}
Let $I$ satisfy the condition {\bf (C)} and $N_0$ be the dimension of the unreachable subspace. Then, the elliptic index set $\Lambda_E=\Lambda_E(L_0)\subset[- N_0,N_0]\cap\Z$ (defined in Definition \ref{defi: elliptic/hyperbolic index and subspaces}) such that 
\begin{enumerate}
    \item If $L_0\in \mathcal{N}^1$, then $\Lambda_E=\{-N_0,\cdots,-1,1,\cdots,N_0\}$ with $N_L=N_0$. Moreover, for each critical eigenvalue $\ii\lambda_{c,m},1\leq |m|\leq \frac{N_0-1}{2}$, there are two perturbed elliptic eigenvalues $\ii\lambda_{2m}$ and $\ii\lambda_{2m+1}$ such that
 $\lim_{L\rightarrow L_0}\lambda_{2m}(L)=\lim_{L\rightarrow L_0}\lambda_{2m+1}(L)=\lambda_{c,m}$. Additionally, for eigenvalue $0$,  $\lim_{L\rightarrow L_0}\lambda_{-1}(L)=\lim_{L\rightarrow L_0}\lambda_{1}(L)=0$.
\begin{figure}[h]
    \centering
\tikzset{every picture/.style={line width=0.75pt}} 

\begin{tikzpicture}[x=0.75pt,y=0.75pt,yscale=-0.65,xscale=0.65]

\draw [line width=3]    (101,51) -- (575,50.5) ;
\draw [line width=3]    (101,184) -- (575,183.5) ;
\draw [fill={rgb, 255:red, 252; green, 3; blue, 3 }  ,fill opacity=1 ] [dash pattern={on 4.5pt off 4.5pt}]  (190,52.5) -- (134,185.5) ;
\draw  [dash pattern={on 4.5pt off 4.5pt}]  (190,52.5) -- (232,185.5) ;
\draw  [dash pattern={on 4.5pt off 4.5pt}]  (338,50.75) -- (282,183.75) ;
\draw  [dash pattern={on 4.5pt off 4.5pt}]  (492,51.5) -- (436,184.5) ;
\draw  [dash pattern={on 4.5pt off 4.5pt}]  (338,50.75) -- (381,182.5) ;
\draw  [dash pattern={on 4.5pt off 4.5pt}]  (492,51.5) -- (550,184) ;
\draw  [draw opacity=0][fill={rgb, 255:red, 254; green, 5; blue, 5 }  ,fill opacity=1 ] (186,52.5) .. controls (186,50.29) and (187.79,48.5) .. (190,48.5) .. controls (192.21,48.5) and (194,50.29) .. (194,52.5) .. controls (194,54.71) and (192.21,56.5) .. (190,56.5) .. controls (187.79,56.5) and (186,54.71) .. (186,52.5) -- cycle ;
\draw  [draw opacity=0][fill={rgb, 255:red, 254; green, 5; blue, 5 }  ,fill opacity=1 ] (334,50.75) .. controls (334,48.54) and (335.79,46.75) .. (338,46.75) .. controls (340.21,46.75) and (342,48.54) .. (342,50.75) .. controls (342,52.96) and (340.21,54.75) .. (338,54.75) .. controls (335.79,54.75) and (334,52.96) .. (334,50.75) -- cycle ;
\draw  [draw opacity=0][fill={rgb, 255:red, 254; green, 5; blue, 5 }  ,fill opacity=1 ] (488,51.5) .. controls (488,49.29) and (489.79,47.5) .. (492,47.5) .. controls (494.21,47.5) and (496,49.29) .. (496,51.5) .. controls (496,53.71) and (494.21,55.5) .. (492,55.5) .. controls (489.79,55.5) and (488,53.71) .. (488,51.5) -- cycle ;
\draw  [draw opacity=0][fill={rgb, 255:red, 126; green, 211; blue, 33 }  ,fill opacity=1 ] (130,185.5) .. controls (130,183.29) and (131.79,181.5) .. (134,181.5) .. controls (136.21,181.5) and (138,183.29) .. (138,185.5) .. controls (138,187.71) and (136.21,189.5) .. (134,189.5) .. controls (131.79,189.5) and (130,187.71) .. (130,185.5) -- cycle ;
\draw  [draw opacity=0][fill={rgb, 255:red, 126; green, 211; blue, 33 }  ,fill opacity=1 ] (228,185.5) .. controls (228,183.29) and (229.79,181.5) .. (232,181.5) .. controls (234.21,181.5) and (236,183.29) .. (236,185.5) .. controls (236,187.71) and (234.21,189.5) .. (232,189.5) .. controls (229.79,189.5) and (228,187.71) .. (228,185.5) -- cycle ;
\draw  [draw opacity=0][fill={rgb, 255:red, 126; green, 211; blue, 33 }  ,fill opacity=1 ] (282,183.75) .. controls (282,181.54) and (283.79,179.75) .. (286,179.75) .. controls (288.21,179.75) and (290,181.54) .. (290,183.75) .. controls (290,185.96) and (288.21,187.75) .. (286,187.75) .. controls (283.79,187.75) and (282,185.96) .. (282,183.75) -- cycle ;
\draw  [draw opacity=0][fill={rgb, 255:red, 126; green, 211; blue, 33 }  ,fill opacity=1 ] (377,182.5) .. controls (377,180.29) and (378.79,178.5) .. (381,178.5) .. controls (383.21,178.5) and (385,180.29) .. (385,182.5) .. controls (385,184.71) and (383.21,186.5) .. (381,186.5) .. controls (378.79,186.5) and (377,184.71) .. (377,182.5) -- cycle ;
\draw  [draw opacity=0][fill={rgb, 255:red, 126; green, 211; blue, 33 }  ,fill opacity=1 ] (432,180.5) .. controls (432,178.29) and (433.79,176.5) .. (436,176.5) .. controls (438.21,176.5) and (440,178.29) .. (440,180.5) .. controls (440,182.71) and (438.21,184.5) .. (436,184.5) .. controls (433.79,184.5) and (432,182.71) .. (432,180.5) -- cycle ;
\draw  [draw opacity=0][fill={rgb, 255:red, 126; green, 211; blue, 33 }  ,fill opacity=1 ] (546,184) .. controls (546,181.79) and (547.79,180) .. (550,180) .. controls (552.21,180) and (554,181.79) .. (554,184) .. controls (554,186.21) and (552.21,188) .. (550,188) .. controls (547.79,188) and (546,186.21) .. (546,184) -- cycle ;

\draw (174,23.4) node [anchor=north west][inner sep=0.75pt]    {$\lambda _{c,-1}$};
\draw (475,21.4) node [anchor=north west][inner sep=0.75pt]    {$\lambda _{c,+1}$};
\draw (333,27.4) node [anchor=north west][inner sep=0.75pt]    {$0$};
\draw (122,187.4) node [anchor=north west][inner sep=0.75pt]    {$\lambda _{-3}$};
\draw (220,187.4) node [anchor=north west][inner sep=0.75pt]    {$\lambda _{-2}$};
\draw (273,188.4) node [anchor=north west][inner sep=0.75pt]    {$\lambda _{-1}$};
\draw (369,188.4) node [anchor=north west][inner sep=0.75pt]    {$\lambda _{+1}$};
\draw (424,186.4) node [anchor=north west][inner sep=0.75pt]    {$\lambda _{+2}$};
\draw (543,189.4) node [anchor=north west][inner sep=0.75pt]    {$\lambda _{+3}$};
\draw (597,175.4) node [anchor=north west][inner sep=0.75pt]    {$L$};
\draw (587,41.4) node [anchor=north west][inner sep=0.75pt]    {$L_{0} \in \mathcal{N}^{1}$};
\end{tikzpicture}
\caption{Index relation for $L\rightarrow L_0\in \mathcal{N}^1$}
\label{fig: index relation-1}
\end{figure}
    \item If $L_0\in \mathcal{N}^3$, then $\Lambda_E=\{-N_0,\cdots,-1,1,\cdots,N_0\}$ with $N_L=N_0$. Moreover, 
 for each critical eigenvalue $\ii\lambda_{c,m},1\leq |m|\leq \frac{N_0}{2}$, there are two perturbed elliptic eigenvalues $\ii\lambda_{2m-1}$ and $\ii\lambda_{2m}$ such that
 $\lim_{L\rightarrow L_0}\lambda_{2m-1}(L)=\lim_{L\rightarrow L_0}\lambda_{2m}(L)=\lambda_{c,m}$.
\begin{figure}[h]
    \centering
\tikzset{every picture/.style={line width=0.75pt}} 
\begin{tikzpicture}[x=0.75pt,y=0.75pt,yscale=-0.65,xscale=0.65]

\draw [line width=3]    (101,51) -- (575,50.5) ;
\draw [line width=3]    (101,184) -- (575,183.5) ;
\draw [fill={rgb, 255:red, 252; green, 3; blue, 3 }  ,fill opacity=1 ] [dash pattern={on 4.5pt off 4.5pt}]  (204,51.5) -- (134,185.5) ;
\draw  [dash pattern={on 4.5pt off 4.5pt}]  (204,51.5) -- (264,186) ;
\draw  [draw opacity=0][fill={rgb, 255:red, 254; green, 5; blue, 5 }  ,fill opacity=1 ] (200,51.5) .. controls (200,49.29) and (201.79,47.5) .. (204,47.5) .. controls (206.21,47.5) and (208,49.29) .. (208,51.5) .. controls (208,53.71) and (206.21,55.5) .. (204,55.5) .. controls (201.79,55.5) and (200,53.71) .. (200,51.5) -- cycle ;
\draw  [draw opacity=0][fill={rgb, 255:red, 254; green, 5; blue, 5 }  ,fill opacity=1 ] (334,50.75) .. controls (334,48.54) and (335.79,46.75) .. (338,46.75) .. controls (340.21,46.75) and (342,48.54) .. (342,50.75) .. controls (342,52.96) and (340.21,54.75) .. (338,54.75) .. controls (335.79,54.75) and (334,52.96) .. (334,50.75) -- cycle ;
\draw  [draw opacity=0][fill={rgb, 255:red, 126; green, 211; blue, 33 }  ,fill opacity=1 ] (130,185.5) .. controls (130,183.29) and (131.79,181.5) .. (134,181.5) .. controls (136.21,181.5) and (138,183.29) .. (138,185.5) .. controls (138,187.71) and (136.21,189.5) .. (134,189.5) .. controls (131.79,189.5) and (130,187.71) .. (130,185.5) -- cycle ;
\draw  [draw opacity=0][fill={rgb, 255:red, 126; green, 211; blue, 33 }  ,fill opacity=1 ] (260,186) .. controls (260,183.79) and (261.79,182) .. (264,182) .. controls (266.21,182) and (268,183.79) .. (268,186) .. controls (268,188.21) and (266.21,190) .. (264,190) .. controls (261.79,190) and (260,188.21) .. (260,186) -- cycle ;
\draw  [draw opacity=0][fill={rgb, 255:red, 254; green, 5; blue, 5 }  ,fill opacity=1 ] (468,51.5) .. controls (468,49.29) and (469.79,47.5) .. (472,47.5) .. controls (474.21,47.5) and (476,49.29) .. (476,51.5) .. controls (476,53.71) and (474.21,55.5) .. (472,55.5) .. controls (469.79,55.5) and (468,53.71) .. (468,51.5) -- cycle ;
\draw  [draw opacity=0][fill={rgb, 255:red, 126; green, 211; blue, 33 }  ,fill opacity=1 ] (398,185.5) .. controls (398,183.29) and (399.79,181.5) .. (402,181.5) .. controls (404.21,181.5) and (406,183.29) .. (406,185.5) .. controls (406,187.71) and (404.21,189.5) .. (402,189.5) .. controls (399.79,189.5) and (398,187.71) .. (398,185.5) -- cycle ;
\draw  [draw opacity=0][fill={rgb, 255:red, 126; green, 211; blue, 33 }  ,fill opacity=1 ] (524,186) .. controls (524,183.79) and (525.79,182) .. (528,182) .. controls (530.21,182) and (532,183.79) .. (532,186) .. controls (532,188.21) and (530.21,190) .. (528,190) .. controls (525.79,190) and (524,188.21) .. (524,186) -- cycle ;
\draw [fill={rgb, 255:red, 252; green, 3; blue, 3 }  ,fill opacity=1 ] [dash pattern={on 4.5pt off 4.5pt}]  (472,51.5) -- (402,185.5) ;
\draw  [dash pattern={on 4.5pt off 4.5pt}]  (472,51.5) -- (532,186) ;

\draw (174,23.4) node [anchor=north west][inner sep=0.75pt]    {$\lambda _{c,-1}$};
\draw (475,21.4) node [anchor=north west][inner sep=0.75pt]    {$\lambda _{c,+1}$};
\draw (333,27.4) node [anchor=north west][inner sep=0.75pt]    {$0$};
\draw (251,186.4) node [anchor=north west][inner sep=0.75pt]    {$\lambda _{-1}$};
\draw (120,187.4) node [anchor=north west][inner sep=0.75pt]    {$\lambda _{-2}$};
\draw (515,188.4) node [anchor=north west][inner sep=0.75pt]    {$\lambda _{+2}$};
\draw (389,189.4) node [anchor=north west][inner sep=0.75pt]    {$\lambda _{+1}$};
\draw (597,175.4) node [anchor=north west][inner sep=0.75pt]    {$L$};
\draw (587,41.4) node [anchor=north west][inner sep=0.75pt]    {$L_{0} \in \mathcal{N}^{3}$};
\end{tikzpicture}
\caption{Index relation for $L\rightarrow L_0\in \mathcal{N}^3$}
\label{fig: index relation-2}
\end{figure}

    \item If $L_0\in \mathcal{N}^2$, then $\Lambda_E=\{-\frac{N_0}{2},\cdots,-1,1.\cdots,\frac{N_0}{2}\}$ with $N_L=\frac{N_0}{2}$. Moreover, for each critical eigenvalue $\ii\lambda_{c,m},1\leq |m|\leq \frac{N_0-1}{2}$, there is a unique perturbed elliptic eigenvalue $\ii\lambda_{m}$ such that 
 $\lim_{L\rightarrow L_0}\lambda_m(L)=\lambda_{c,m}$, for $1\leq |j|\leq \frac{N_0}{2}$.
 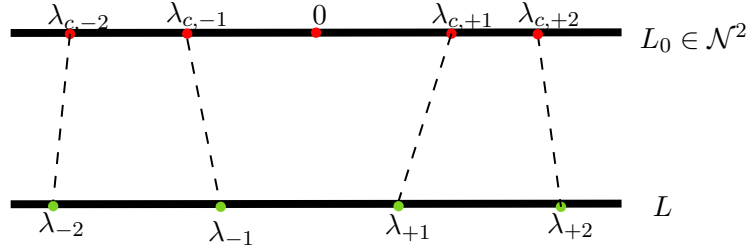
\begin{figure}[h]
\centering
\tikzset{every picture/.style={line width=0.75pt}} 

\begin{tikzpicture}[x=0.75pt,y=0.75pt,yscale=-0.65,xscale=0.65]

\draw [line width=3]    (101,51) -- (575,50.5) ;
\draw [line width=3]    (101,184) -- (575,183.5) ;
\draw  [dash pattern={on 4.5pt off 4.5pt}]  (238,55.5) -- (264,186) ;
\draw  [draw opacity=0][fill={rgb, 255:red, 254; green, 5; blue, 5 }  ,fill opacity=1 ] (234,51.5) .. controls (234,49.29) and (235.79,47.5) .. (238,47.5) .. controls (240.21,47.5) and (242,49.29) .. (242,51.5) .. controls (242,53.71) and (240.21,55.5) .. (238,55.5) .. controls (235.79,55.5) and (234,53.71) .. (234,51.5) -- cycle ;
\draw  [draw opacity=0][fill={rgb, 255:red, 254; green, 5; blue, 5 }  ,fill opacity=1 ] (334,50.75) .. controls (334,48.54) and (335.79,46.75) .. (338,46.75) .. controls (340.21,46.75) and (342,48.54) .. (342,50.75) .. controls (342,52.96) and (340.21,54.75) .. (338,54.75) .. controls (335.79,54.75) and (334,52.96) .. (334,50.75) -- cycle ;
\draw  [draw opacity=0][fill={rgb, 255:red, 126; green, 211; blue, 33 }  ,fill opacity=1 ] (130,185.5) .. controls (130,183.29) and (131.79,181.5) .. (134,181.5) .. controls (136.21,181.5) and (138,183.29) .. (138,185.5) .. controls (138,187.71) and (136.21,189.5) .. (134,189.5) .. controls (131.79,189.5) and (130,187.71) .. (130,185.5) -- cycle ;
\draw  [draw opacity=0][fill={rgb, 255:red, 126; green, 211; blue, 33 }  ,fill opacity=1 ] (260,186) .. controls (260,183.79) and (261.79,182) .. (264,182) .. controls (266.21,182) and (268,183.79) .. (268,186) .. controls (268,188.21) and (266.21,190) .. (264,190) .. controls (261.79,190) and (260,188.21) .. (260,186) -- cycle ;
\draw  [draw opacity=0][fill={rgb, 255:red, 254; green, 5; blue, 5 }  ,fill opacity=1 ] (439,51.5) .. controls (439,49.29) and (440.79,47.5) .. (443,47.5) .. controls (445.21,47.5) and (447,49.29) .. (447,51.5) .. controls (447,53.71) and (445.21,55.5) .. (443,55.5) .. controls (440.79,55.5) and (439,53.71) .. (439,51.5) -- cycle ;
\draw  [draw opacity=0][fill={rgb, 255:red, 126; green, 211; blue, 33 }  ,fill opacity=1 ] (398,185.5) .. controls (398,183.29) and (399.79,181.5) .. (402,181.5) .. controls (404.21,181.5) and (406,183.29) .. (406,185.5) .. controls (406,187.71) and (404.21,189.5) .. (402,189.5) .. controls (399.79,189.5) and (398,187.71) .. (398,185.5) -- cycle ;
\draw  [draw opacity=0][fill={rgb, 255:red, 126; green, 211; blue, 33 }  ,fill opacity=1 ] (524,186) .. controls (524,183.79) and (525.79,182) .. (528,182) .. controls (530.21,182) and (532,183.79) .. (532,186) .. controls (532,188.21) and (530.21,190) .. (528,190) .. controls (525.79,190) and (524,188.21) .. (524,186) -- cycle ;
\draw [fill={rgb, 255:red, 252; green, 3; blue, 3 }  ,fill opacity=1 ] [dash pattern={on 4.5pt off 4.5pt}]  (443,51.5) -- (402,181.5) ;
\draw  [dash pattern={on 4.5pt off 4.5pt}]  (510,52) -- (528,186) ;
\draw  [draw opacity=0][fill={rgb, 255:red, 254; green, 5; blue, 5 }  ,fill opacity=1 ] (506,52) .. controls (506,49.79) and (507.79,48) .. (510,48) .. controls (512.21,48) and (514,49.79) .. (514,52) .. controls (514,54.21) and (512.21,56) .. (510,56) .. controls (507.79,56) and (506,54.21) .. (506,52) -- cycle ;
\draw  [draw opacity=0][fill={rgb, 255:red, 254; green, 5; blue, 5 }  ,fill opacity=1 ] (143,51.75) .. controls (143,49.54) and (144.79,47.75) .. (147,47.75) .. controls (149.21,47.75) and (151,49.54) .. (151,51.75) .. controls (151,53.96) and (149.21,55.75) .. (147,55.75) .. controls (144.79,55.75) and (143,53.96) .. (143,51.75) -- cycle ;
\draw  [dash pattern={on 4.5pt off 4.5pt}]  (147,51.75) -- (134,181.5) ;

\draw (219,24.4) node [anchor=north west][inner sep=0.75pt]    {$\lambda _{c,-1}$};
\draw (425,25.4) node [anchor=north west][inner sep=0.75pt]    {$\lambda _{c,+1}$};
\draw (333,27.4) node [anchor=north west][inner sep=0.75pt]    {$0$};
\draw (252,194.4) node [anchor=north west][inner sep=0.75pt]    {$\lambda _{-1}$};
\draw (120,187.4) node [anchor=north west][inner sep=0.75pt]    {$\lambda _{-2}$};
\draw (515,188.4) node [anchor=north west][inner sep=0.75pt]    {$\lambda _{+2}$};
\draw (389,189.4) node [anchor=north west][inner sep=0.75pt]    {$\lambda _{+1}$};
\draw (597,175.4) node [anchor=north west][inner sep=0.75pt]    {$L$};
\draw (587,41.4) node [anchor=north west][inner sep=0.75pt]    {$L_{0} \in \mathcal{N}^{2}$};
\draw (127,26.4) node [anchor=north west][inner sep=0.75pt]    {$\lambda _{c,-2}$};
\draw (492,24.4) node [anchor=north west][inner sep=0.75pt]    {$\lambda _{c,+2}$};
\end{tikzpicture}
\caption{Index relation for $L\rightarrow L_0\in\mathcal{N}^2$}
\label{fig: index relation-3} 
\end{figure} 
\end{enumerate}
\end{prop}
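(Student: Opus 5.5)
The plan is to count the elliptic eigenvalues of $\B(L)$ near $L=L_0$ by tracking the roots of $F(t,L)=0$ from \eqref{eq: defi of function F(t,L)} in the elliptic window $3t^2<1$. By Lemma \ref{lem: invariance of elliptic subspace}, $N_L$ is constant on $I\setminus\{L_0\}$, so it suffices to understand a neighbourhood of $L_0$.

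The first step is to find all zeros of $F(\cdot,L_0)$ with $3t^2<1$ other than the trivial $t=\pm\frac{\sqrt3}{6}$. I would show, by the Fourier/Paley--Wiener argument already used for Proposition \ref{prop: Low-frequency behaviors: uniform estimates} (following \cite{Rosier-1997}), that an elliptic eigenfunction of $\B(L_0)$ forces the three characteristic roots to be spaced by multiples of $2\pi/L_0$. Hence its $\tau$ is one of the three values $\tau_{c,1},\tau_{c,2},\tau_{c,3}$ attached to a pair $(k,l)$ solving \eqref{eq:klIC}, or one of their negatives. Since all three $\tau_{c,i}$ of a pair produce the same $\lambda_c$, I would keep one representative $\tau$ per eigenvalue. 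This gives exactly the critical eigenvalues $\pm\ii\lambda_{c,m}$, one pair for each solution $(k,l)$ (up to the symmetry $k\leftrightarrow l$, which flips the sign of $\lambda_c$).

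The second step continues each such root into $L\neq L_0$ using the local analysis in the proof of Proposition \ref{prop: Asymp in L-low}.
\begin{enumerate}
\item For a pair in $\mathcal{S}_3$, $\partial_tF\neq0$, so the implicit function theorem yields exactly one nearby root. This is simple continuation.
\item For a pair in $\mathcal{S}_1\cup\mathcal{S}_2$, the critical point is a nondegenerate saddle of $F$. By the Morse lemma, $F=0$ is locally two transversal curves, so for $L\neq L_0$ there are exactly two nearby roots $\tau^\pm_j(L)$. These are distinct because the term $\pm|L-L_0|/(\pi\sqrt{k^2+kl+l^2})$ in \eqref{eq: pm-eigenvalues} does not vanish, so each such critical eigenvalue splits in two.
\item For $k=l$ one has $\lambda_c=0$, and the two curves give $\lambda_{\pm1}\to0$.
\end{enumerate}

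The third step excludes any other elliptic roots for $L$ near $L_0$. On the compact set $\{3t^2\le 1-\eta\}$ minus small neighbourhoods of the critical $\tau$'s and of $\pm\frac{\sqrt3}{6}$, the function $F(\cdot,L_0)$ has no zeros, hence $|F|\ge c>0$, and by continuity this persists for $L$ close to $L_0$. Near $3t^2=1$, Case 2 of Proposition \ref{prop: asymptotic eigenvalues} together with the double zero structure at the trivial points should give that no eigenvalues enter; I would verify this by a local expansion of $F(t,L)/(t\mp\frac{\sqrt3}{6})$. I expect this step to be the main obstacle: controlling roots of $F$ near the boundary of the elliptic regime and near the trivial roots uniformly in $L$.

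The final step is counting.
\begin{itemize}
\item For $L_0\in\mathcal{N}^1$, the single pair $k=l$ contributes $\pm$ branches at $0$. Together with the $\mathcal{S}_2/\mathcal{S}_1$ doubling, this gives $2N_L=2N_0$.
\item For $L_0\in\mathcal{N}^3$, every split pair contributes two eigenvalues, again giving $N_L=N_0$.
\item For $L_0\in\mathcal{N}^2$, only simple continuation occurs, so $2N_L=N_0$, i.e.\ $N_L=\frac{N_0}{2}$.
\end{itemize}
In each case I would then order the eigenvalues increasingly to obtain the stated labelling and limits.
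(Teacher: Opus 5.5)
\paragraph{Step 1 fails, and the conclusion it aims for is false.}
For an eigenfunction of $\B(L_0)$ the Paley--Wiener identity reads $\hat f(\xi)=\ii\,\frac{2\alpha-2\beta e^{-\ii\xi L_0}+\ii\theta\xi(1-e^{-\ii\xi L_0})}{\xi^3-\xi-\Lambda}$ with $\theta=\E'(0)=\E'(L_0)$.

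\begin{itemize}
\item Rosier's lattice conclusion uses $\theta=0$, i.e.\ the fourth boundary condition that singles out eigenfunctions of $\A$.
\item In Proposition \ref{prop: Low-frequency behaviors: uniform estimates}, the zeros of the numerator are pinned near $\mu_0+\frac{2\pi}{L}\Z$ only because $|\theta|\le\gamma$ is assumed small.
\item A general eigenfunction of $\B(L_0)$ has $\theta$ of order one. Then nothing ties its characteristic roots to $\frac{2\pi}{L_0}\Z$.
\end{itemize}

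Non-critical elliptic eigenvalues do occur. The $3\times3$ boundary determinant equals $-2F(t,L)$. So, via $\lambda=2t(4t^2-1)$, the zeros of $F(\cdot,L)$ on $(\frac12,\frac1{\sqrt3})$ are exactly the eigenvalues of $\B(L)$ in $(0,\frac{2\sqrt3}{9})$.

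\paragraph{Counterexample at $L_0=2\pi\sqrt7\in\mathcal{N}^3$ (the paper's model case).}
\begin{itemize}
\item For $L$ near $L_0$, $F(\frac12,L)=2\cos L-2<0$.
\item By Case 1 of Proposition \ref{prop: Asymp in L-low}, $\nabla F=0$ and $\partial_t^2F>0$ at $(\tau_{c,1},L_0)$. Here $\tau_{c,1}=\frac{3}{2\sqrt7}$ is the only critical value of $t$ in $(\frac12,\frac1{\sqrt3})$.
\item Hence $F(\tau_{c,1}-\epsilon,L)>0$ for fixed small $\epsilon$ and all $L$ near $L_0$.
\end{itemize}
So $\B(L)$ has an elliptic eigenvalue in $(0,\lambda_c)$ that stays away from $\lambda_c$; numerically $t\approx 0.528$, $\lambda\approx 0.12$ at $L_0$. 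This is in addition to the two branches tending to $\lambda_c$, so $N_L\ge 3>N_0=2$.

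The same happens for $L_0=4\pi\in\mathcal{N}^1$: $F(\cdot,4\pi)$ is positive just to the right of its double zero $t=\frac12$, while $F(0.57,4\pi)<0$. Heuristically this is unavoidable. By Weyl asymptotics, the number of eigenvalues of $\B(L)$ in the fixed window $|\lambda|<\frac{2\sqrt3}{9}$ grows linearly in $L$, whereas $N_0$ does not.

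Consequently your Step 3 cannot be carried out, and your final count is false in general.

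\paragraph{The threshold $3t^2=1$.}
Your appeal to Case 2 of Proposition \ref{prop: asymptotic eigenvalues} is also unsafe. That computation omits the mode $xe^{-\ii x/\sqrt3}$, and its deduction $r_3=0$ fails when $e^{\ii\sqrt3 L}=1$. Indeed, $e^{-\ii x/\sqrt3}-e^{2\ii x/\sqrt3}$ is an eigenfunction of $\B$ with $\lambda=\frac{2\sqrt3}{9}$ whenever $L\in 2\pi\sqrt3\,\N^*$. Such $L$ can be critical: $14\sqrt3\pi\in\mathcal{N}$ via $(k,l)=(15,9)$.

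\paragraph{Comparison with the paper.}
The paper's own proof consists only of your Step 2. It reads off the branches at each $\lambda_{c,m}$ from Proposition \ref{prop: Asymp in L-low} and then declares $\Lambda_E$, with no argument that these branches exhaust the elliptic spectrum. You were right that an exclusion step is needed; it simply is not true.

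What the local analysis does establish is the structure near the critical eigenvalues: two branches for each pair in $\mathcal{S}_1\cup\mathcal{S}_2$, and one for each pair in $\mathcal{S}_3$. Any further elliptic eigenvalue near $L_0$ continues a non-critical eigenvalue of $\B(L_0)$. Such an eigenvalue is simple with $\E'\neq 0$: a two-dimensional eigenspace would contain an eigenfunction with $\E'(0)=0$, which would be critical. Hence it keeps a uniform gap and a lower bound on $|\E'(L)|$. It should therefore be grouped with the regular modes rather than counted through $N_0$.
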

Here we omit its proof and one can find it in the Appendix \ref{sec: proof of corollary index-lables}. Moreover, for the definitions of $\sigma^{\pm}(j)$ appearing in Corollary \ref{coro: uniform est for bi-family}, we also refer to the proof in the Appendix \ref{sec: proof of corollary index-lables}.\\

When $L$ approaches $L_0\in\mathcal{N}$, for different stationary KdV operators $\A$ and $\B$, we observe different behaviors. For $\A$, recall the eigenmodes $(\zeta,\F_{\zeta})$ satisfying
\begin{equation*}
\left\{
\begin{array}{l}
     \F_{\zeta}'''(x)+\F_{\zeta}'(x)+\zeta \F_{\zeta}(x)=0,x\in(0,L),  \\
     \F_{\zeta}(0)=\F_{\zeta}(L)= \F_{\zeta}'(L)=0.
\end{array}
\right.
\end{equation*}
Due to Proposition \ref{prop: asymptotic expansion for A0}, the asymptotic behavior for $(\zeta,\F_{\zeta})$  is consistent across all types of $(k,l)$, 
\begin{itemize}
    \item For any $(k,l)$ solving \eqref{eq:klIC}, thanks to Proposition \ref{prop: asymptotic expansion for A0},
    \[
    \zeta=\ii\frac{(2k+l)(k-l)(2l+k)}{3\sqrt{3}(k^2+k l+l^2)^{\frac{3}{2}}}+\bigO((L-L_0)^2).
    \]
    Furthermore, for boundary derivatives, we also have $|\F_{\zeta}'(0)|=\bigO(|L-L_0|)$.
\end{itemize}
Whereas for $\B$, the situation becomes more complex. Recall the elliptic eigenmode $(\ii\lambda_{j},\E_j)_{j\in\Lambda_E}$ for $\B$
\begin{equation*}
\left\{
\begin{array}{l}
     \E_j'''+\E_j'+\ii\lambda_j\E_j=0,  \\
     \E_j(0)=\E_j(L)=\E_j'(0)-\E_j'(L)=0.
\end{array}
\right.   
\end{equation*}
Let $(k_m,l_m)$ solve \eqref{eq:klIC} and $\lambda_{c,m}=\frac{(2k_m+l_m)(k_m-l_m)(2l_m+k_m)}{3\sqrt{3}(k_m^2+k_m l_m+l_m^2)^{\frac{3}{2}}}$. $\G_{c,m}$ and $\Tilde{\G}_{c,m}$ are defined in \eqref{eq: exact formula for critical eigenfunctions} and \eqref{eq: defi of tilde-G}. Due to Proposition \ref{prop: Index set M-E}, we characterize elliptic eigenmodes based on different types of $(k_m,l_m)$.
\begin{enumerate}
    \item If $(k_m,l_m)\in\mathcal{S}_3$,  $\lambda_m=\lambda_{c,m}+\bigO((L-L_0)^2)$ and for boundary derivatives, $|\E_m'(L)|=|\E_m'(0)|=\bigO(|L-L_0|)$.
    \item If $(k_m,l_m)\in\mathcal{S}_2$, $\lambda_{2m-1}=\lambda_{c,m}+\bigO(|L-L_0|)$ and $\lambda_{2m}=\lambda_{c,m}+\bigO(|L-L_0|)$. Furthermore, for boundary derivatives, $|\E'_{2m-1}(L)|=\bigO(1)$ and $|\E'_{2m}(L)|=\bigO(1)$. 
    \item If $(k_m,l_m)\in\mathcal{S}_1$, in particular, we obtain two eigenvalues close to $0$, i.e., $\ii\lambda_{\pm1}=\pm\frac{\left|L-L_0\right|}{\pi \sqrt{3}k_m}$.  Furthermore, for boundary derivatives, we also have $|\E_1'(L)|=\bigO(1)$ and $|\E_{-1}'(L)|=\bigO(1)$. 
\end{enumerate}

In summary, we obtain the following two tables to show the asymptotic behaviors of eigenvalues and eigenfunctions of $\B$ and $\A$ near critical lengths.
\begin{table}[]
    \centering
\begin{tabular}{|c|c|c|c|}
\hline
  &$\mathcal{S}_1$ & $\mathcal{S}_2$ & $\mathcal{S}_3$ \\
\hline
Eigenvalues &$\bigO(|L-L_0|)$ & $\bigO(|L-L_0|)$ & $\bigO((L-L_0)^2)$\\
\hline
Boundary derivatives &$|\E'(L)|=\bigO(1)$& $|\E'(L)|=\bigO(1)$ &  $|\E'(L)|=\bigO(|L-L_0|)$ \\
\hline
\end{tabular}
    \caption{Different asymptotic behaviors for eigenmodes of $\B$}
    \label{tab: asym-B}
\end{table}
\begin{table}[]
    \centering
\begin{tabular}{|c|c|c|c|}
\hline
 $\A$ &$\mathcal{S}_1$  & $\mathcal{S}_2$  &$\mathcal{S}_3$   \\
\hline
Eigenvalues &$\bigO((L-L_0)^2)$ & $\bigO((L-L_0)^2)$ & $\bigO((L-L_0)^2)$ \\
\hline
Boundary derivative &$|\F_{\zeta}'(0)|=\bigO(|L-L_0|)$ & $|\F_{\zeta}'(0)|=\bigO(|L-L_0|)$& $|\F_{\zeta}'(0)|=\bigO(|L-L_0|)$ \\
\hline
\end{tabular}
    \caption{Consistent asymptotic behaviors for eigenmodes of $\A$}
    \label{tab: asym-A}
\end{table}

\subsubsection{Perturbation of the unreachable space}\label{sec: quasi-invariant subsapce}
Now we turn to comparing the eigenvectors in the elliptic subspace and the unreachable subspace. Depending on different types of $(k_m,l_m)$ at the critical length, we have different eigenfunction approximations due to Proposition \ref{prop: Low-frequency behaviors: Singular limits}. We give the following two examples:
\begin{exa}[$L_0=2\pi\sqrt{\frac{7}{3}}$]
We begin with a simple case. For $L_0=2\pi\sqrt{\frac{7}{3}}$, it is in $\mathcal{N}^2$ and only one pair solve \eqref{eq:klIC}, i.e. $k=2,l=1$. At $2\pi\sqrt{\frac{7}{3}}$, we have two unreachable eigenmodes 
\begin{gather*}
(\ii\lambda_{c,1},\G_1)=(\ii\frac{20}{21\sqrt{21}},e^{\frac{5 \ii x}{\sqrt{21}}}-3 e^{-\frac{\ii x}{\sqrt{21}}} + 2 e^{-\frac{4 \ii x}{\sqrt{21}}})\\
(\ii\lambda_{c,-1},\G_{-1})=(-\ii\frac{20}{21\sqrt{21}},e^{-\frac{5 \ii x}{\sqrt{21}}}-3 e^{\frac{\ii x}{\sqrt{21}}} + 2 e^{\frac{4 \ii x}{\sqrt{21}}})
\end{gather*}
 For $L$ near $2\pi\sqrt{\frac{7}{3}}$, there are two perturbed elliptic eigenmodes: $(\ii\lambda_1,\E_1)$ and $(\ii\lambda_{-1},\E_{-1})$ satisfying
\begin{gather*}
\lambda_{1}=\lambda_{c,1}+\bigO(|L-2\pi\sqrt{\frac{7}{3}}|^2),\;\lambda_{-1}=-\lambda_{c,-1}+\bigO(|L-2\pi\sqrt{\frac{7}{3}}|^2),\\
 \E_{1}(x)=\frac{\alpha_{1}}{2}\G_1+\bigO(|L-2\pi\sqrt{\frac{7}{3}}|) ,\;
    \E_{-1}(x)=\frac{\alpha_{1}}{2}\G_{-1} +\bigO(|L-2\pi\sqrt{\frac{7}{3}}|).
\end{gather*} 
In this case, the unreachable space is $M(L_0)=\textrm{Span}\{\Re\G_1,\Im\G_{1}\}$, and the elliptic subspace for $L$ near $2\pi\sqrt{\frac{7}{3}}$ is $U_E(L)=\textrm{Span}\{\Re\E_1,\Im\E_{1}\}\approx M(L_0)+\bigO(|L-2\pi\sqrt{\frac{7}{3}}|)$. 
\end{exa}
\begin{exa}[$L_0=2\pi$]
We observe a different situation. For $L_0=2\pi$, it is in $\mathcal{N}^1$ and the unique unreachable eigenmode is $(0,\frac{1-\cos{x}}{\sqrt{3\pi}})$. However, there are two perturbed elliptic eigenmodes: $(\ii\lambda_1,\E_1)$ and $(\ii\lambda_{-1},\E_{-1})$ satisfying
\begin{gather*}
\lambda_{1}=\frac{1}{\sqrt{3}\pi}|L-2\pi|+\bigO(|L-2\pi|^2),\;\lambda_{-1}=-\frac{1}{\sqrt{3}\pi}|L-2\pi|+\bigO(|L-2\pi|^2),\\
 \E_{1}(x)=\frac{\sqrt{3}(1-\cos{x})-3\ii\sin{x}}{3\sqrt{2\pi}}+\bigO(|L-2\pi|) ,\;
    \E_{-1}(x)=\frac{\sqrt{3}(1-\cos{x})+3\ii\sin{x}}{3\sqrt{2\pi}}+\bigO(|L-2\pi|).
\end{gather*} 
In this case, the eigenfunctions $\E_{\pm1}$ involve not only the Type 1 eigenfunction $1-\cos{x}$ but also the Type 2 eigenfunction
$\ii\sin{x}$ (see Section \ref{sec: Eigenvalues and eigenfunctions at the critical length}). For $L$ near $2\pi$, we have
\[
U_E(L)=\textrm{Span}\{\E_1,\E_{-1}\}=\textrm{Span}\{\E_1+\E_{-1},\E_{1}-\E_{-1}\}\approx \textrm{Span}\{1-\cos{x},\ii\sin{x}\}+\bigO(|L-2\pi|).
\]
On the other hand, the unreachable space $M(L_0)=\textrm{Span}\{1-\cos{x}\}$. Thus, in this simple case, $U_E(L)$ contains twice as many directions as $M(L_0)$, which implies that $U_E(L)$ is not a perturbation of $M(L_0)$.

In order to get a perturbed subspace of $M(L_0)$, we need to delete the irrelevant directions in $U_E(L)$. In this case, we can easily observe that $\E_1+\E_{-1}=\frac{2\sqrt{3}(1-\cos{x})}{3\sqrt{2\pi}}+\bigO(|L-2\pi|)$ and $\E_1-\E_{-1}=\frac{2\ii\sin{x}}{\sqrt{2\pi}}+\bigO(|L-2\pi|)$. Thus, we drop the direction $\E_1-\E_{-1}$ and consider a subspace generated by $\E_1+\E_{-1}$, i.e., $\textrm{Span}\{\E_1+\E_{-1}\}\approx M(L_0)+\bigO(|L-2\pi|)$.
\end{exa}
Through these two examples, we summarize 
\begin{enumerate}
    \item For $L_0\in \mathcal{N}^2$, the elliptic subspace $U_E(L)$ can be seen as a perturbation of $M(L_0)$;
    \item For $L_0\in \mathcal{N}^1\cup \mathcal{N}^3$, due to the involvement of Type 2 eigenfunctions, we need to define the quasi-invariant subspace $M_{\B}(L)$ (see below) to denote the perturbation of $M(L_0)$.
\end{enumerate}
As we all know, the unreachable subspace is generated only by Type 1 eigenfunctions $\{\G_{m}\}_{m\in\Lambda_E}$. Therefore, for each pair $(k_m,l_m)\in \mathcal{S}_1\cup\mathcal{S}_2$, we define the atom subspace as follows:
\begin{equation}\label{eq: defi-atom-space}
V_{k_m,l_m}:=\left\{
\begin{array}{cc}
   \rm{Span}\{\E_{1}+\E_{-1}\},  &(k_m,l_m)\in  \mathcal{S}_1 ,\\
    \textrm{Span}\{a^+_m\E_{2m-1}+b^+_m\E_{2m},\overline{a^+_m}\E_{-2m+1}+\overline{b^+_m}\E_{-2m}\}, & (k_m,l_m)\in  \mathcal{S}_2,\\
    \textrm{Span}\{\E_m,\E_{-m}\},&(k_m,l_m)\in  \mathcal{S}_3.
\end{array}
\right.
\end{equation}
Here the coefficients $a^+_m,b^+_m$ are well-chosen such that $a^+_m\E_{2m-1}+b^+_m\E_{2m}\sim\G_{m}$ and $\overline{a^+_m}\E_{-2m+1}+\overline{b^+_m}\E_{-2m}\sim\G_{-m}$. We point out that for each $(k_m,l_m)$, the atom space $V_{k_m,l_m}$ is not an eigenspace. Moreover, for $(k_m,l_m)\in \mathcal{S}_1\cup\mathcal{S}_2$, $V_{k_m,l_m}$ is not even an invariant subspace for $\B$.

Near different critical lengths, we define the quasi-invariant subspaces for the operator $\B$
\begin{equation}\label{eq: defi-quasi-space-B}
M_{\B}(L):=\oplus_{m\in\Lambda_E(L_0)} V_{k_m,l_m},\text{ for $L$ near $L_0$}.
\end{equation}
In general, $M_{\B}(L)$ is not an invariant subspace for $\B$, but as $L\rightarrow L_0$, its limit is the unreachable subspace, which is invariant for $\B$. That is a reason we call it \textit{quasi-invariant}.

In addition, similarly, we can also define the quasi-invariant subspaces for the operator $\A$
\begin{equation}\label{eq: defi-quasi-space-A}
M_{\A}(L):=\oplus_{m\in\Lambda_E(L_0)}\textrm{Span}\{\F_{\zeta_m}: \zeta_m=\ii\lambda_{c,m}(k_m,l_m)+\bigO((L-L_0)^2)\}.
\end{equation}
We shall see that these quasi-invariant subspaces play an important role when we derive the sharp decay rates in the next section.

\section{Part II: Sharp stability analysis}\label{sec: Part II: Sharp stability analysis}
In this section, we refine our transition-stabilization method to prove Theorem \ref{thm: main theorem linear version}. At first, we present our revised transition-stabilization package in $[0,T_0]$ as follows:\\

\begin{tikzpicture}[
    auto,
    block/.style={
        rectangle, draw, 
        text width=9em, text centered, rounded corners, minimum height=4em
    },
    arrow/.style={
        -{Latex[width=2mm,length=3mm]}, thick
    },
    Arrow/.style={
        {Latex[width=2mm,length=3mm]}-{Latex[width=2mm,length=3mm]}, thick
    }
]

\node (initial)[block, text width=6em] {$y_0\in H_{\A}(L)$ };
\node (kato) [block, right= 3cm of initial] {$y(\frac{T_0}{2})\in D(\A^2)$};
\node (z) [block, above=of kato] {$z^0\in H_{\B}(L)\cap D(\B^2)$};
\node (h)[block, below=of kato]{$\sum_{j=1}^{N_0+2}c_j h_{\mu_j}$};
\node (0)[block, right= 3.5cm of z,text width=6em]{$z^{T_0}\in H_{\B}(L)$};
\node (size)[block,right=3cm of h]{$\sum_{j=1}^{N_0+2}c_j e^{-\mu_j\frac{T_0}{2}}h_{\mu_j}$};
\node (final)[block, right= 3cm of kato]{$y(T_0)\in H_{\A}(L)$ 
};

\draw [arrow] (initial) -- (kato) node[midway, above] {Smoothing effects} ;
\draw [arrow] (kato) -- (z) node[midway, left] {\textcolor{red}{$\mathcal{T}_c$}};
\draw [arrow] (kato) -- (h) node[midway, left] {\textcolor{red}{$\mathcal{T}_c$}};
\draw [arrow] (z)--(0) node[midway, above] {Constructive } node[midway,below] {\textcolor{red}{exact}-control};
\draw [arrow] (h)--(size) node[midway, above] {Free flow} ;
\draw[arrow](0)--(final) node[midway, right] {\textcolor{red}{$\mathcal{T}_{\varrho}$}};
\draw[arrow](size)--(final) node[midway, right] {\textcolor{red}{$\mathcal{T}_{\varrho}$}};
\end{tikzpicture}\\
Details are provided in Section \ref{sec: details-stage-2}.

We begin by introducing the definitions of our transition maps in Section \ref{sec: Projections and state space decomposition}. Following this, we present three model cases to illustrate our refined transition-stabilization method.
\begin{enumerate}
    \item In section \ref{sec: Around Type I unreachable pair}, we specifically focus on the case involving a Type I unreachable pair\footnote{This is defined in Definition \ref{defi: types of unreachable pairs}.}. Choosing $2\pi$ as a model, the typical feature of this case is the presence of $0$ as an eigenvalue at the critical length, which makes this case unique in two distinct aspects.\\
   As discussed in Section \ref{sec: limiting analysis on different types}, there are two perturbed eigenvalues $\ii\lambda_{\pm1}$, close to $0$. Technically, during integration by parts, we may encounter the factor $\frac{1}{\ii\lambda_{\pm1}}$, which diverges to $\infty$. This indicates that we must exercise extra caution in the proof compared to other cases. \\
    Furthermore, the perturbed elliptic eigenfunctions, $\E_{\pm1}$ exhibit additional symmetry not present in other cases, which makes the construction of the bi-orthogonal family distinct from other situations.
    \item In section \ref{sec: Around Type II unreachable pair}, we focus on the model case $2\pi\sqrt{7}$ involving a Type II unreachable pair. Here, the dimension of the elliptic subspace $U_E(L)$ is twice that of the unreachable space $M(L_0)$. This distinction suggests a different method for constructing the bi-orthogonal family compared to the approach used in Section \ref{sec: A transition-stabilization method}.
    \item In section \ref{sec: Around Type III unreachable pair}, a model case $2\pi\sqrt{\frac{7}{3}}$ is considered. In this case, the quasi-invariant subspace $M_{\B}(L)$ coincides with $U_E(L)$. We employ the same bi-orthogonal family as in Section \ref{sec: A transition-stabilization method} with a particular focus on its dependence on $L$. 
\end{enumerate}

\subsection{Transition projections and state space decomposition}\label{sec: Projections and state space decomposition}
 As we presented in Section \ref{sec: duality arguments and HUM}, we have a decomposition for $L^2(0,L)$ depending on a projection operator $\Pi$, as defined in \eqref{defi: finite-co-dimensional projector}. 
We use the definition of quasi-invariant subspaces $M_{\B}(L)$ in \eqref{eq: defi-quasi-space-B} and $M_{\A}(L)$ in \eqref{eq: defi-quasi-space-A}. We have the following two decompositions for $L^2(0,L)$.
\begin{equation}
L^2(0,L)=M_{\A}(L)\oplus H_{\A}(L) \textrm{ and }
L^2(0,L)=M_{\B}(L)\oplus H_{\B}(L),
\end{equation}
defined for $\A$ and $\B$ respectively. Roughly speaking, we shall prove the exponential stability for \eqref{eq: linear KdV-stability-intro} in subspace $H_{\A}(L)$ with a uniform decay rate independent of $L$, while in $M_{\A}(L)$, with a sharp decay rate $\sim|L-L_0|^2$. By applying our transition-stabilization method, we need to establish a link between the original KdV system and the intermediate system using the modulated functions $h_{\mu}$. Therefore, we introduce the following two transition projections $\mathcal{T}_c$ and $\mathcal{T}_{\varrho}$, which specify the relation between $D(\A^2)\cap H_{\A}(L)$ and $D(\B^2)\cap H_{\B}(L)$. 
\begin{prop}\label{prop: transition projection-c}
Let  $\{h_{\mu_j}\}_{1\leq j\leq N_0+2}$ be modulated functions defined in Section \ref{sec: preliminary}. The transition projector $\mathcal{T}_c$ is defined by
\begin{gather*}
\mathcal{T}_c: D(\A^2)\cap H_{\A}(L)\rightarrow \C^{N_{0}+2},\\
z\mapsto (c_j)_{1\leq j\leq N_0+2},
\end{gather*}
such that $z-\sum_j c_jh_{\mu_j}\in D(\B^2)\cap H_{\B}(L)$. Moreover, the coefficients $(c_j)_{1\leq j\leq N_0+2}$ are uniformly bounded w.r.t $L$.
\end{prop}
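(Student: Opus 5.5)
The plan is to count the linear conditions that membership of $z-\sum_j c_jh_{\mu_j}$ in $D(\B^2)\cap H_{\B}(L)$ imposes, and then show that the resulting $(N_0+2)\times(N_0+2)$ system for $(c_j)$ is uniformly invertible. Take $z\in D(\A^2)\cap H_{\A}(L)$. Since $z\in D(\A^2)$, it already satisfies $z(0)=z(L)=0$ and $(\A z)(0)=(\A z)(L)=0$. Each $h_{\mu_j}$ satisfies $\B$-type Dirichlet conditions, and $\mathcal{P}h_{\mu_j}=\mu_jh_{\mu_j}$ vanishes at the endpoints. Hence $w:=z-\sum_jc_jh_{\mu_j}$ lies in $D(\B^2)$ as soon as two jump conditions hold.

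The first is $w'(L)-w'(0)=0$, which reads $\sum_j c_j=-\partial_xz(0)$, using $z'(L)=0$ and $h'_{\mu_j}(L)-h'_{\mu_j}(0)=1$. The second is $(\mathcal{P}w)'(L)-(\mathcal{P}w)'(0)=0$, which reads $\sum_j\mu_jc_j=-(\partial_x\mathcal{P}z)(0)$, exactly as in the proof of Proposition \ref{prop: estimates on fixed interval-iteration preparation}.

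Membership in $H_{\B}(L)$ amounts to $N_0$ further conditions, namely the vanishing of the $N_0$ real coordinates of $w$ along $M_{\B}(L)$. I will write these through the dual family to the basis of $M_{\B}(L)$ built from the atom spaces $V_{k_m,l_m}$ in \eqref{eq: defi-atom-space}, viewed inside the orthonormal basis $\{\E_j\}$:
\[
\sum_j c_j\,\ell_m(h_{\mu_j})=\ell_m(z),\qquad m=1,\dots,N_0 .
\]
Altogether this gives a square system $\mathbb{M}c=b$. The right-hand side satisfies $|b|\lesssim\|z\|_{H^6}$ by the trace estimates in Proposition \ref{prop: Xiang-Krieger gain 1}.

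The next step is to compute the $\ell_m(h_{\mu})$ explicitly. The integration by parts of Lemma \ref{lem: size of h_j} gives
\[
\langle h_\mu,\E_j\rangle=-\frac{\overline{\E_j'(L)}}{\ii\lambda_j+\mu}.
\]
So $\ell_m(h_\mu)$ is a linear combination of the terms $\overline{\E_j'(L)}/(\mu+\ii\lambda_j)$ over the finitely many elliptic indices $j$ entering $M_{\B}(L)$. For the $\mathcal{S}_3$ directions, $\E_j'(L)=\bigO(|L-L_0|)$ by Proposition \ref{prop: Low-frequency behaviors: Singular limits}, so these entries are small.

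This is where the hypothesis $z\in H_{\A}(L)$ enters. I expect the corresponding coordinates $\ell_m(z)$ to be small of the same order $\bigO(|L-L_0|)\|z\|$, because $M_{\A}(L)$ and $M_{\B}(L)$ both equal $M(L_0)+\bigO(|L-L_0|)$ and $z$ is annihilated by the $\A$-dual directions. After rescaling those rows by $|L-L_0|^{-1}$, they should converge to nondegenerate limits. For the $\mathcal{S}_1$ and $\mathcal{S}_2$ directions, the combination $a\E_{2m-1}+b\E_{2m}\sim\G_m$ is chosen so that its boundary derivative tends to $\G_m'(0)=0$, which brings these rows into the same scaling.

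The main obstacle is proving that the rescaled matrix has a determinant bounded away from zero, uniformly in $L\in I$. The first two rows are $(1,\dots,1)$ and $(\mu_1,\dots,\mu_{N_0+2})$. The remaining rows have the Cauchy-type form
\[
\sum_j\frac{\beta_{m,j}}{\mu+\ii\lambda_j}\quad\text{evaluated at }\mu=\mu_1,\dots,\mu_{N_0+2},
\]
with distinct poles $-\ii\lambda_{c,m}$ in the limit $L\to L_0$. The $\mathcal{S}_1$ case with pole $0$ needs extra care.

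I would choose distinct, well-separated $\mu_j$, for instance $\mu_j=2^{j-1}\mu$ as in Corollary \ref{coro: simplified estimates for iteration scheme}. Clearing denominators then turns the determinant into a generalized Vandermonde/Cauchy determinant, which is nonzero provided the limiting coefficients $\beta_{m}$ are nonzero. I would verify that nonvanishing from the explicit first-order expansions in Propositions \ref{prop: Asymp in L-low} and \ref{prop: Low-frequency behaviors: Singular limits}. Continuity in $L$ then gives a uniform bound on the inverse matrix, and hence a uniform bound on $(c_j)$.
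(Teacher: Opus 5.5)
\textbf{Verdict: same route as the paper, but the decisive nondegeneracy step is wrong as stated.}

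\textbf{Where you agree with the paper.} Your set-up is the one the paper uses in its model cases. See Lemma~\ref{lem: defi of coeff-1-2-3} and its analogues, with the uniform bound carried out in the proof of Proposition~\ref{prop: estimates on fixed interval-iteration preparation-2pi}; the general case is only asserted there. You share with it the two jump conditions, the $N_0$ orthogonality conditions and the formula for $\langle h_\mu,\E_j\rangle$. You also share the compensation mechanism: the $M_{\B}$-rows are of size $|L-L_0|$, while $z\in H_{\A}(L)$ makes the right-hand sides $\ell_m(z)=\bigO(|L-L_0|)\|z\|$.

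For that last point, note that the $L^2$-annihilator of $H_{\A}(L)$ is spanned by the reflected functions $\F_{\zeta_j}(L-\cdot)$. What you actually need is that $M(L_0)$ is invariant under $x\mapsto L_0-x$. This holds, because the reflection maps Type~1 eigenfunctions for $\ii\lambda_c$ to Type~1 eigenfunctions for $-\ii\lambda_c$. The paper uses exactly $E_1(x)=\tilde r_1\F_{\zeta_0}(L-x)+\bigO(|L-2\pi|)$.

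\textbf{The gap: the limiting rows are confluent.} For $\mathcal{S}_1$ and $\mathcal{S}_2$ pairs, the two eigenvalues in an atom space coalesce. The $\bigO(|L-L_0|)$ size of the row then has two sources: the combined boundary derivative, and the splitting $\lambda_{2m}-\lambda_{2m-1}$ multiplied by the individual $\bigO(1)$ derivatives. So the rescaled limit row is $A/(\mu+\ii\lambda_c)+B/(\mu+\ii\lambda_c)^2$, which has a double pole. It is not a Cauchy row with distinct simple poles. For such rows, nonzero coefficients do not give a nonzero determinant for every choice of distinct $\mu_j$.

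Already for $L_0=2\pi$ the rescaled third row tends to $-(a\mu_j+b)/\mu_j^2$. Here $a=\lim E_1'(L)/(L-2\pi)$, and $b$ comes from $\lambda_1E'_{-1}(L)$. The limiting determinant is a nonzero multiple of
\[
\prod_{i<j}(\mu_j-\mu_i)\,\bigl(a\,\mu_1\mu_2\mu_3+b\,(\mu_1\mu_2+\mu_1\mu_3+\mu_2\mu_3)\bigr).
\]
For your choice $\mu_j=2^{j-1}\mu$ this vanishes at $\mu=-7b/(4a)$ whenever $a$ and $b$ have opposite signs, and nothing in your argument excludes that. In fact $a$ is not intrinsic: replacing $\E_1$ by $e^{\ii\theta(L)}\E_1$ with $\theta(L_0)=0$ shifts $a$ by $-2\theta'(L_0)\Im\E_1'(L_0)\neq0$ and leaves $b$ unchanged.

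This is consistent with the paper's own formula. There the denominator of $\det M^{-1}$ is, to leading order, $-(E_1'(L)e_3+\ii E'_{-1}(L)\lambda_1e_2)$. The paper gets $|F_j|\sim(\mu^3|L-2\pi|)^{-1}$ only by implicitly taking $\mu$ large and using $\epsilon_0\neq0$.

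\textbf{How to repair it.} Your Cauchy--Vandermonde argument is correct for the $\mathcal{S}_3$ rows, and there it works for any distinct $\mu_j$. In general you should fix the ratios $\mu_j/\mu$ and take $\mu$ large. The limiting rows, together with $1$ and $\mu$, are linearly independent rational functions of $\mu$, for two reasons.
\begin{itemize}
\item Their pole sets $\{-\ii\lambda_{c,m}\}$, $\{\ii\lambda_{c,m}\}$ and $\{0\}$ are pairwise disjoint, since the $\lambda_{c,m}$ are distinct and nonzero for $k\neq l$.
\item Each principal part is nonzero. For $\mathcal{S}_1$ and $\mathcal{S}_2$ the double-pole coefficient is, up to a nonzero factor, the first-order splitting times the boundary derivative of one branch, and that branch has a nonzero Type~2 component.
\end{itemize}
Hence the limiting determinant is a rational function of $\mu$ that is not identically zero: its leading term at $\mu=\infty$ is a generalized Vandermonde in the nodes $\mu_j/\mu$. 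It is therefore nonzero for all large $\mu$. Continuity as $L\to L_0^{\pm}$ then gives your uniform bound.
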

Before the introduction of the second transition map, we recall the quasi-invariant subspace $M_{\B}(L)$ (see \eqref{eq: defi-atom-space} and \eqref{eq: defi-quasi-space-B}). For different situations, we choose different well-prepared directions in $H_{\B}(L)$. More precisely, for $L_0\in\mathcal{N}$ and $L$ near $L_0$,
\begin{enumerate}
    \item If $L_0\in \mathcal{N}^2$, $N_0$ is even and we choose $N_0$ eigenfunctions of $\B$, i.e. $\{E_m\}_{1\leq |m|\leq \frac{N_0}{2}}=\{\E_j,\E_{-j}\}_{1\leq j\leq \frac{N_0}{2}}$.
    \item If $L_0\in \mathcal{N}^3$ and $N_0$ is even, then we set $\{E_m\}_{1\leq |m|\leq \frac{N_0}{2}}=\{a^-_n\E_{2n-1}+b^-_n\E_{2n},-\overline{a^-_n}\E_{1-2n}-\overline{b^-_n}\E_{-2n}\}_{1\leq n\leq \frac{N_0}{2}}$.
    \item If $L_0\in \mathcal{N}^3$ and $N_0$ is odd, then we set $E_0=\E_{1}-\E_{-1}$ and $\{E_m\}_{1\leq |m|\leq \frac{N_0-1}{2}}=\{a^-_n\E_{2n}+b^-_n\E_{2n+1},-\overline{a^-_n}\E_{-2n}-\overline{b^-_n}\E_{-2n-1}\}_{1\leq n\leq \frac{N_0-1}{2}}$.
    \item If $L_0\in \mathcal{N}^3$, $N_0=1$, and $E_0=\E_{1}-\E_{-1}$.
\end{enumerate}
Here the coefficients $a^-_n,b^-_n$ are well-chosen such that $a^-_n\E_{2n-1}+b^-_n\E_{2n}\sim\widetilde{\G_n}$, etc.
\begin{prop}
Let  $\{E_{m}\}_{|m|\leq \frac{N_0}{2}}\subset H_{\B}(L)$ be linearly independent in $H_{\B}(L)$ (see Lemma \ref{lem: construction of v-2pi}, \ref{lem: construction of v-2pi-sqrt-7} and \ref{lem: construction of v-7/3pi} for explicit constructions of $E_j$ in different situations). Let  $\{h_{\mu_j}\}_{1\leq j\leq N_0+2}$ be the modulated functions defined in Proposition \ref{prop: transition projection-c}. The transition projector $\mathcal{T}_{\varrho}$ is defined by
\begin{gather*}
\mathcal{T}_{\varrho}: D(\B^2)\cap H_{\B}(L)\rightarrow \C^{N_0},\\
z=\sum_{|j|\leq \frac{N_0}{2}}\varrho_j E_j\mapsto (\varrho_j)_{|j|\leq \frac{N_0}{2}},
\end{gather*}
such that such that $z+\sum_j c_je^{-\mu_j T}h_{\mu_j}\in D(\A^2)\cap H_{\A}(L)$. Moreover, the coefficients $(\varrho_j)_{|j|\leq \frac{N_0}{2}}$ are uniformly bounded w.r.t $L$.
\end{prop}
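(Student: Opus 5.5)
The plan is to reduce the statement to a finite-dimensional linear system for $(\varrho_j)_{|j|\le N_0/2}$ and show that its matrix is uniformly invertible in $L$. Fix $z\in D(\B^2)\cap H_{\B}(L)$ and the coefficients $(c_j)$ produced by $\mathcal{T}_c$. We look for $w:=\sum_j\varrho_jE_j+\sum_j c_je^{-\mu_jT}h_{\mu_j}$ lying in $H_{\A}(L)$. By Definition \ref{defi: finite-co-dimensional projector}, $H_{\A}(L)$ is cut out by $N_0$ linear functionals $\ell_m(u)=\poscalr{u}{\F_{\zeta_m}}_{(0,L)}$, together with their conjugate counterparts, where $\F_{\zeta_m}$ are the $\A$-eigenfunctions spanning $M_{\A}(L)$ (cf. \eqref{eq: defi-quasi-space-A}). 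Membership $w\in H_{\A}(L)$ is therefore equivalent to
\begin{equation*}
\sum_{j}\varrho_j\,\ell_m(E_j)=-\sum_{i}c_ie^{-\mu_iT}\ell_m(h_{\mu_i}),\qquad 1\le |m|\le N_0/2 .
\end{equation*}
Thus the proposition follows once the $N_0\times N_0$ Gram-type matrix $\mathbb{G}(L)=(\ell_m(E_j))$ is shown to be invertible with $\|\mathbb{G}(L)^{-1}\|$ bounded uniformly for $L\in I\setminus\{L_0\}$. The right-hand side is uniformly bounded by Proposition \ref{prop: transition projection-c} (the $c_j$ are bounded) and by Proposition \ref{prop: h_mu-est-uniform} (the $h_{\mu_i}$ are bounded in $L^2$). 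It in fact carries the small factor $e^{-\min\mu_jT}$, which is what gives the estimate $\varrho_m=\bigO(e^{-CT_0})$ quoted in Section \ref{sec: details-stage-2}.

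Next I address the domain condition. Each $E_j$ is a combination of $\B$-eigenfunctions, so it lies in $D(\B^2)$. The functions $h_{\mu}$ satisfy $h_\mu(0)=h_\mu(L)=0$ but have nonzero boundary derivatives. So $w\in D(\A^2)$ is a genuine additional requirement, namely $w'(L)=0$ and the corresponding conditions on $\A w$. These would be imposed by enlarging the unknowns, or more likely by having $\mathcal{T}_c$ choose the $c_j$ (there are $N_0+2$ of them, i.e. two more than $N_0$) so that the boundary traces already match. My plan is to fold these two extra boundary conditions into the choice of the $c_j$ made in Proposition \ref{prop: transition projection-c}, and to verify compatibility there. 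I flag this as a point needing care, but not as the core difficulty.

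The main step is the uniform invertibility of $\mathbb{G}(L)$, which I would get by passing to the limit $L\to L_0$. By Proposition \ref{prop: asymptotic expansion for A0}, $\F_{\zeta_m}=\G_m+\bigO(|L-L_0|)$. By Proposition \ref{prop: Low-frequency behaviors: Singular limits} and Remark \ref{rem: rotation of eigenmodes}, the directions $E_j$ converge as follows, case by case:
\begin{itemize}
\item for $\mathcal{N}^3$ they tend to the Type 2 eigenfunctions $\widetilde G_n$, and also $\E_1-\E_{-1}\to 2\ii\sin(kx)$ up to normalization;
\item for $\mathcal{N}^2$ they tend to the Type 1 eigenfunctions $\G_{\pm m}$ themselves.
\end{itemize}
Hence $\mathbb{G}(L)=\mathbb{G}_0+\bigO(|L-L_0|)$, where $\mathbb{G}_0$ is the matrix of the twisted pairing $\poscalr{\cdot}{\cdot}_{(0,L_0)}$ between these limiting functions and $\G_m$. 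Note that $\poscalr{u}{v}=\int u(x)\overline{v(L-x)}dx$ involves the reflection $x\mapsto L-x$. For the explicit exponentials in \eqref{eq: exact formula for critical eigenfunctions} and \eqref{eq: defi of tilde-G}, this reflection maps the $\A$-eigenfunction to an eigenfunction of the adjoint problem. The pairing matrix is then block diagonal over distinct pairs $(k_m,l_m)$, because distinct frequencies $\lambda_{c,m}$ pair to zero, and each block is an explicit $1\times1$ or $2\times2$ determinant computed from the exponential formulas.

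The main obstacle is showing that each such block has nonzero determinant, particularly in the $\mathcal{N}^3$ case, where the $E_j$ approximate Type 2 rather than Type 1 functions. My first attempt would be a direct computation with the three-exponential formulas, aiming to express the block entry as a nonvanishing polynomial in $(k,l)$, in the spirit of Lemma \ref{lem: critical eigenvalues}. If that proves messy, I would fall back on the structural fact that $\G_m$ spans the unreachable kernel while the Type 2 functions have $\widetilde\G_m'(0)\neq0$, and extract nondegeneracy from an integration-by-parts identity relating the pairing to boundary derivatives. Once $\det\mathbb{G}_0\neq0$, shrinking $\delta$ if necessary gives $\|\mathbb{G}(L)^{-1}\|\le 2\|\mathbb{G}_0^{-1}\|$ on $I$, which yields the uniform bound on $(\varrho_j)$.
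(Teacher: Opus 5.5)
The central step fails. The limiting matrix $\mathbb{G}_0$ is identically zero, so $\mathbb{G}(L)$ cannot be uniformly invertible for any choice of $E_j\in H_{\B}(L)$.

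\textbf{Non-resonant entries.} Take $u$ an eigenfunction of $\B$ or $u=h_\mu$, and integrate by parts against $x\mapsto\overline{\F_\zeta(L-x)}$. This function vanishes at $0$ and $L$ and has zero derivative at $x=0$. One gets $\poscalr{u}{\F_\zeta}_{(0,L)}=\overline{\F_\zeta'(0)}\,u'(L)/d$, with $d$ a difference of spectral parameters; these are the formulas used in Lemmas \ref{lem: construction of v-2pi} and \ref{lem: construction of v-7/3pi}. Since $\F_\zeta'(0)=\bigO(|L-L_0|)$ by Proposition \ref{prop: asymptotic expansion for A0}, every non-resonant entry is $\bigO(|L-L_0|)$.

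\textbf{Resonant entries.} Here the limit is a twisted pairing of a Type 2 function with a Type 1 function, and it also vanishes. For $L_0=2\pi$ it is a multiple of $\int_0^{2\pi}\sin x\,(1-\cos x)\,dx=0$. For $L_0=2\pi\sqrt7$ one checks $\G_{c,1}(L_0-x)=\overline{\G_{c,1}(x)}$, so the pairing reduces to $\poscals{\widetilde\G_{c,1}}{\G_{c,1}}_{L^2(0,L_0)}=0$.

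\textbf{Why this is structural.} At $L=L_0$, each $\G_m(L_0-x)$ is a unimodular multiple of $\overline{\G_m(x)}$. Hence the twisted annihilator of $M(L_0)$ equals its $L^2$-orthogonal complement. So $H_{\A}(L)$ and $H_{\B}(L)$ have the same limit, and every $E_j\in H_{\B}(L)$ is asymptotically in $H_{\A}(L)$.

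\textbf{Your $\mathcal{N}^2$ description.} The claim $E_m\to\G_{\pm m}$ is incompatible with $E_m\in H_{\B}(L)$. Indeed $H_{\B}(L)$ is $L^2$-orthogonal to $M_{\B}(L)=U_E(L)$, whose elements tend to $\G_{\pm m}$. Lemma \ref{lem: construction of v-7/3pi} instead uses hyperbolic eigenfunctions $\E_{\pm s}$, whose pairings are again $\bigO(|L-L_0|)$. So with only a bounded right-hand side you would get $|\varrho|\lesssim|L-L_0|^{-1}$, not a uniform bound.

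\textbf{The missing idea.} The right-hand side $\poscalr{h_{\mu_i}}{\F_{\zeta_m}}_{(0,L)}$ carries exactly the same small factor: it equals $\overline{\F_{\zeta_m}'(0)}\,h_{\mu_i}'(L)$ divided by a quantity bounded away from zero. You must divide each equation by $\overline{\F'_{\zeta_m}(0)}$ and prove uniform invertibility of the \emph{normalized} matrix. Its entries are combinations of $E_j'(L)$ over eigenvalue differences, some of which are themselves $\bigO(|L-L_0|)$.

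This is a next-order statement that is invisible at $L=L_0$. For $L_0=2\pi$ the normalized coefficient is $(\zeta_0E'_{-1}(L)-\ii\lambda_1E'_1(L))/(\zeta_0^2+\lambda_1^2)$. Its numerator and denominator are both of order $(L-2\pi)^2$. Its non-vanishing uses the second-order expansions of $\zeta_0$, $\lambda_{\pm1}$ and $E_1'(L)$, namely the condition $\frac23-\epsilon_0\neq0$ in Lemma \ref{lem: est for varrho-2pi}. In the $\mathcal{N}^2$ model the normalized $2\times2$ system is solved explicitly in Lemma \ref{lem: construction of v-7/3pi}; see also Lemma \ref{lem: est for varrho-2pi-7/3}.

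Your fallback integration-by-parts identity is the right tool. Used correctly, however, it shows $\det\mathbb{G}_0=0$, not $\neq0$; its role is to extract and cancel the common factor.

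\textbf{The $D(\A^2)$ conditions.} These cannot be folded into $\mathcal{T}_c$. There, the $N_0+2$ coefficients $c_j$ are already fixed by two boundary conditions and $N_0$ orthogonality conditions. The paper only verifies membership in $H_{\A}(L)$. That is all the iteration needs, since each step restarts with the smoothing effect.
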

\subsection{Around Type I unreachable pair $(k, l)$}\label{sec: Around Type I unreachable pair}
In this section, we present the features near $\mathcal{N}^1$ critical lengths through an example modal case $L_0=2\pi$. The main result in this part is the following:
\begin{thm}\label{thm: main result in modal case-2pi}
Let $T>0$ and $I=[2\pi- 1,2\pi+ 1]$.  For every $L\in I\setminus\{2\pi\}$, and $\forall y^0\in H_{\A}(L)$, there exists a constant $C=C(T)=\Tilde{\mathcal{K}}e^{\frac{\mathcal{K}}{\sqrt{T}}}$, which is independent of $L$, such that the following quantitative observability inequality 
\begin{equation}\label{eq: quantitative-ob-main-2pi}
   \|S(T)y^0\|^2_{L^2(0,L)} \leq  C^2\int_0^T|\p_x y(t,0)|^2dt
\end{equation}
holds for any solution $y$ to \eqref{eq: linear KdV-stability-intro}.
\end{thm}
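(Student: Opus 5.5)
By Proposition \ref{prop: control and ob} (HUM in the projective setting), the observability inequality \eqref{eq: quantitative-ob-main-2pi} on $H_{\A}(L)$ is equivalent to a projective null controllability statement with the same constant. So the plan is to construct explicitly, for every $y^0$, a control $u$ with $\|u\|_{L^2(0,T)}\le \Tilde{\mathcal{K}}e^{\mathcal{K}/\sqrt{T}}\|y^0\|_{L^2}$ whose trajectory stays in $H_{\A}(L)$ at the iteration times and tends to $0$. We work with the dual pairing \eqref{eq: duality relations}.

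The construction is the refined transition-stabilization package on one interval $[0,T_0]$, followed by the iteration of Proposition \ref{prop: iteration schemes}. On $[0,T_0/2]$ we let the free flow act. Proposition \ref{prop: Xiang-Krieger gain 1} then gives $y(T_0/2)\in D(\A^2)\cap H_{\A}(L)$ with $H^6$ norm $\lesssim T_0^{-3}\|y^0\|$; invariance of $H_{\A}(L)$ under $S(t)$ holds because $M_{\A}(L)$ is spanned by eigenfunctions. We then apply $\mathcal{T}_c$ with $N_0+2=3$ modulated functions $h_{\mu},h_{2\mu},h_{3\mu}$. Two of them fix the boundary mismatch, as in Proposition \ref{prop: estimates on fixed interval-iteration preparation}. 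The third removes the component along $\E_1+\E_{-1}$, so that $z^0\in D(\B^2)\cap H_{\B}(L)$.

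Since $L_0=2\pi$ is of type $\mathcal{N}^1$, we have $M_{\B}(L)=\mathrm{Span}\{\E_1+\E_{-1}\}$. The singular eigenvalues $\lambda_{\pm1}=\bigO(|L-2\pi|)$ therefore only enter through the symmetric combination, which is excluded from $z^0$. The antisymmetric direction $E_0=\E_1-\E_{-1}\approx 2\ii\sin x/\sqrt{2\pi}$ is of Type 2 and has $|\E'_{\pm1}(L)|\gtrsim 1$ uniformly (Proposition \ref{prop: Low-frequency behaviors: Singular limits}). On the intermediate system \eqref{eq: KdV boundary derivative difference}, the moment method then steers $z^0$ to the target $z^{T_0}=\varrho_0E_0$. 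Here $\varrho_0$ is given by $\mathcal{T}_\varrho$ so that $y(T_0)=\varrho_0E_0+\sum_j c_je^{-\mu_jT_0/2}h_{\mu_j}\in H_{\A}(L)$; $\varrho_0$ is bounded since it equals a bounded multiple of the exponentially small modulated part.

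The moment equations for $j=\pm1$ are imposed only in the combination
\[
\int_0^{T_0}\big(e^{-\ii s\lambda_1}-e^{-\ii s\lambda_{-1}}\big)v(s)\,ds .
\]
This uses a modified biorthogonal element $\phi_{1}-\phi_{-1}$ adapted to the pair. Its $L^\infty$ bound does not carry the factor $1/|\lambda_1-\lambda_{-1}|$ from Corollary \ref{coro: uniform est for bi-family}, because a difference quotient in $\lambda$ stays bounded as $\lambda_{\pm1}\to0$. All other modes are hyperbolic and obey the uniform bounds of Propositions \ref{prop: High-frequency behaviors: uniform estimates} and \ref{prop: Low-frequency behaviors: uniform estimates}.

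The resulting analogues of Lemma \ref{lem: control-toy} and Corollary \ref{coro: simplified estimates for iteration scheme} hold with $L$-independent constants: the $1/|L-L_0|$ losses disappear, and every integration by parts that would divide by $\lambda_{\pm1}$ is replaced by a direct $T$-bound, as in the $k=l$ branches already treated. The dyadic iteration with $\mu_{1,n}=Q2^{\frac32(n_0+n)}$ then converges exactly as in Proposition \ref{prop: iteration schemes}, giving cost $\Tilde{\mathcal{K}}e^{\mathcal{K}/\sqrt{T}}$, and HUM concludes.

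\textbf{Main obstacle.} The hardest step is the uniform (in $L$) bounded invertibility of $\mathcal{T}_c$ and $\mathcal{T}_\varrho$. It amounts to the $3\times3$ linear system for $(c_j)$: the two boundary conditions plus the orthogonality to $M_{\B}(L)$. Its determinant involves $\langle h_{\mu_j},\E_1+\E_{-1}\rangle$ and $h'_{\mu_j}$. I would compute these in the limit $L\to2\pi$ using $\E_1+\E_{-1}\to c(1-\cos x)$ and the explicit $h_\mu$, and check the determinant is bounded away from $0$ for the chosen $\mu_j$.

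A second delicate point is that $H_{\A}(L)$ and $H_{\B}(L)$ are defined through different directions ($\F_\zeta$ versus $\E_1+\E_{-1}$). These are $\bigO(|L-2\pi|)$-close, so the required compatibility should follow perturbatively from Proposition \ref{prop: asymptotic expansion for A0}.
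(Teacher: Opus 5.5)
Your architecture (HUM on $H_{\A}(L)$, free flow on $[0,\frac{T}{2}]$, three modulated functions, steering $z^0$ to a multiple of $E_{-1}=\E_1-\E_{-1}$, dyadic iteration) matches the paper's. However, both claims that are supposed to give uniformity in $L$ fail.

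\textbf{First gap: the $3\times3$ system is not uniformly invertible.} This holds for every fixed choice of distinct $\mu_j$. At $L=2\pi$ the function $g=1-\cos x$ satisfies $g'''+g'=0$ and $g(0)=g(L)=g'(0)=g'(L)=0$. Integrating $h_\mu'''+h_\mu'=\mu h_\mu$ against $g$ therefore gives $\poscals{h_\mu}{g}=0$ exactly, so the limit computation you propose returns a zero third row. For $L\neq 2\pi$ one gets, up to sign, $\poscals{h_{\mu_j}}{E_1}=\frac{\mu_j\overline{E_1'(L)}-\ii\lambda_1\overline{E_{-1}'(L)}}{\mu_j^2+\lambda_1^2}=\bigO(|L-2\pi|)$, because $E_1'(L)$ and $\lambda_1$ are both $\bigO(|L-2\pi|)$. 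Hence the determinant tends to $0$ like $|L-2\pi|$; this is the paper's $|F_j|\sim\mu^{-3}|L-2\pi|^{-1}$ in the proof of Proposition \ref{prop: estimates on fixed interval-iteration preparation-2pi}.

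The $c_j$ stay bounded only because the data are small in the matching way. In Cramer's rule, $\p_xy(\frac{T}{2},0)$ and $\p_x(\mathcal{P}y)(\frac{T}{2},0)$ are multiplied by cofactors containing the third row. The third datum obeys $|\poscals{y(\frac{T}{2})}{E_1}|\lesssim|L-2\pi|\,\|y(\frac{T}{2})\|_{L^\infty}$, and this is exactly where $y(\frac{T}{2})\in H_{\A}(L)$ enters. The reflection symmetry $\E_1(L-x)=\E_{-1}(x)$ gives $E_1(x)=\tilde r_1\F_{\zeta_0}(L-x)+\bigO(|L-2\pi|)$. So $\poscals{y}{E_1}$ equals $\tilde r_1\poscalr{y}{\F_{\zeta_0}}_{(0,L)}=0$ up to $\bigO(|L-2\pi|)$. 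The closeness of $H_{\A}(L)$ and $H_{\B}(L)$ is thus not a side compatibility issue: it is what compensates the vanishing determinant.

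The same $0/0$ structure sits in $\mathcal{T}_\varrho$. There $\varrho$ carries the factor $(\zeta_0^2+\lambda_1^2)/(\zeta_0E_{-1}'(L)-\ii\lambda_1E_1'(L))$, a ratio of two quantities of order $(L-2\pi)^2$. Calling it ``a bounded multiple'' requires checking that the leading coefficient of the denominator does not vanish (Lemma \ref{lem: est for varrho-2pi}).

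\textbf{Second gap: $\phi_1-\phi_{-1}$ cannot be uniformly bounded, and it is the wrong combination.} Since $\lambda_{-1}=-\lambda_1$, every $\phi$ supported in $[-\frac{T}{2},\frac{T}{2}]$ satisfies $\bigl|\int(e^{-\ii s\lambda_1}-e^{-\ii s\lambda_{-1}})\phi(s)\,ds\bigr|\le T|\lambda_1|\,\|\phi\|_{L^1}$. For $\phi_1-\phi_{-1}$ the left side equals $2$, so $\|\phi_1-\phi_{-1}\|_{L^1}\ge 2/(T|\lambda_1|)\to\infty$. No modification with the same interpolation values avoids this. In the product formula for $\psi_{\pm1}$ from the proof of Proposition \ref{prop: biorthogonal family}, the prefactor $1/(\lambda_1-\lambda_{-1})$ is compensated by a divided difference in the sum $\psi_1+\psi_{-1}$, not in the difference. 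A bounded element equal to $1$ at both $\lambda_{\pm1}$ is exactly the paper's $\vartheta_0$ (Lemma \ref{lem: bi-orthogonal family-2pi}, with $C_1=C_2=(1+\Sigma_\beta(2\lambda_1))^{-1}\in[\frac{1}{2},\frac{2}{3}]$).

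The symmetric combination is also what your target needs. The control contributes $\overline{\E_{\pm1}'(L)}\int_0^Te^{\ii(T-s)\lambda_{\pm1}}v(s)\,ds$ to the $\E_{\pm1}$-coefficients of $z(T)$. Since $\E_{\pm1}'(L)=\frac{1}{2}(E_1'(L)\pm E_{-1}'(L))$ with $E_1'(L)=\bigO(|L-2\pi|)$, the $E_{-1}$-coordinate of $z(T)$, which must equal $\varrho_0$, is driven by the sum of the two moments. Their difference only moves the $E_1$-coordinate, and by at most $\bigO(|L-2\pi|)\,\|v\|_{L^1}$. Imposing only the antisymmetric combination therefore leaves $\varrho_0$ uncontrolled, and using it to produce $\bigO(1)$ effects costs $|L-2\pi|^{-1}$ in $v$. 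The antisymmetric element is harmless only after multiplication by $\lambda_1-\lambda_{-1}$, i.e.\ for absorbing $\bigO(|L-2\pi|)$ residuals along $E_1$.

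In the paper, the two $\pm1$ equations are reduced to one equation for the coefficient of $\vartheta_0$, using $\lambda_{-1}=-\lambda_1$, $\E_{-1}=\overline{\E_1}$ and $z^0_-,\varrho\in\ii\R$. That coefficient is divided by $\ii\lambda_1h_1=-\overline{\E_1'(L)}$, which is bounded below as you correctly noted (Lemma \ref{lem: construction of v-2pi}).
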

\subsubsection{Quasi-invariant subspaces and transition maps}
We begin by recalling some basic properties. 
\begin{prop}
For $L_0=2\pi$, there is a unique eigenmode $(\lambda_c,\G_c)=(0,\frac{1-\cos{x}}{\sqrt{3}\pi})$ such that $\G_c'''(x)+\G_c'(x)=0$ with $\G_c(0)=\G_c(2\pi)=\G_c'(0)=\G_c'(2\pi)=0$.
\end{prop}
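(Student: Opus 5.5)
The plan is to reduce the statement to an elementary ODE computation, and to use the arithmetic description of critical lengths from Section~\ref{sec: Eigenvalues and eigenfunctions at the critical length} only to rule out any other eigenmode.

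First I handle the uniqueness of the eigenvalue. By Proposition~\ref{prop: type-1-2} and Rosier's characterization \cite{Rosier-1997}, every eigenmode $(\ii\lambda,\G)$ of $\G'''+\G'+\ii\lambda\G=0$ satisfying all four conditions $\G(0)=\G(L_0)=\G'(0)=\G'(L_0)=0$ comes from a pair $(k,l)\in\N^*\times\N^*$ with $L_0=2\pi\sqrt{(k^2+kl+l^2)/3}$, and its eigenvalue is $\lambda_{c}(k,l)$ from \eqref{eq: defi of lambda_c}. For $L_0=2\pi$ the Diophantine equation \eqref{eq:klIC} reads $k^2+kl+l^2=3$. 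Since $k,l\ge 1$ forces $k^2+kl+l^2\ge 3$, with equality only when $k=l=1$, the only pair is $(1,1)$. It lies in $\mathcal{S}_1(2\pi)$, so $2\pi\in\mathcal{N}^1$. The factor $(k-l)$ in \eqref{eq: defi of lambda_c} vanishes, so $\lambda_c=0$, and this is the only admissible eigenvalue.

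Next I identify the eigenfunction directly rather than through \eqref{eq: exact formula for critical eigenfunctions}. For $\lambda=0$ the characteristic roots of $(\ii\xi)^3+\ii\xi=0$ are $\xi\in\{0,\pm1\}$. Hence every solution of $\G'''+\G'=0$ has the form $\G(x)=a+b\cos x+c\sin x$.
\begin{itemize}
\item The condition $\G(0)=0$ gives $a+b=0$.
\item The condition $\G'(0)=0$ gives $c=0$.
\item The conditions at $x=2\pi$ then hold automatically by $2\pi$-periodicity.
\end{itemize}
So the solution space is exactly $\mathrm{Span}\{1-\cos x\}$, which is one-dimensional. This confirms uniqueness of the eigenfunction up to scalars, and is consistent with $N_0=\dim M(2\pi)=1$ in \eqref{eq: defi-M-L-0-intro}. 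As a cross-check, substituting $k=l=1$ into \eqref{eq: exact formula for critical eigenfunctions} gives frequencies $1,-1,0$ and a multiple of $2-e^{\ii x}-e^{-\ii x}=2(1-\cos x)$, in agreement with the Example after Proposition~\ref{prop: type-1-2}.

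Finally I fix the normalization, which is the only point needing care:
\[
\int_0^{2\pi}(1-\cos x)^2\,dx=\int_0^{2\pi}\bigl(1-2\cos x+\cos^2x\bigr)\,dx=2\pi+\pi=3\pi .
\]
Thus the $L^2(0,2\pi)$-normalized eigenfunction is $(1-\cos x)/\sqrt{3\pi}$, so the constant in the statement should read $\sqrt{3\pi}$; I will record this as a normalization remark. The factor plays no role in what follows, since $\G_c$ is only used through the direction it spans.

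There is no genuine obstacle here. The only substantive input is the completeness of Rosier's classification, which excludes eigenvalues $\lambda\neq 0$ at $L_0=2\pi$, and that is taken from the earlier results.
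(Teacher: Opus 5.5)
Your proposal is correct and is essentially the paper's argument: the paper records this statement as the $k=l=1$ case of Proposition~\ref{prop: type-1-2}, with Rosier's classification ruling out any other pair, and your direct solution of $\G'''+\G'=0$ is just an explicit way to read off the same eigenfunction. Your normalization remark is also right: $\|1-\cos x\|_{L^2(0,2\pi)}=\sqrt{3\pi}$, which agrees with \eqref{eq: exact formula for critical eigenfunctions} at $k=l=1$ and with the paper's own $L_0=2\pi$ example in Section~\ref{sec: quasi-invariant subsapce}, so the $\sqrt{3}\pi$ in the statement should read $\sqrt{3\pi}$ (this is harmless, since eigenfunctions are only determined up to scalars).
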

For $L_0=2\pi$, the Condition (C) becomes 
\begin{assu}\label{ass: Compact intervals including 2pi}
Suppose that $0<\delta<\pi$ is sufficiently small. Let $I=[2\pi-\delta,2\pi+\delta]$ be a small compact interval such that $I\cap \mathcal{N}=\{2\pi\}$.   
\end{assu}
\begin{lem}\label{lem: real eigenvalue close to 0}
Suppose that Assumption \ref{ass: Compact intervals including 2pi} holds. Suppose that $\zeta_0\in\R$ is the first eigenvalue close to $0$, then $\zeta_0$ and its associated real eigenfunction $\F_{\zeta_0}$ satisfy that $\zeta_0\sim(L-2\pi)^2,\;
    |\F_{\zeta_0}'(0)|\sim|L-2\pi|$.
\end{lem}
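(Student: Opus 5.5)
The plan is to specialize Proposition \ref{prop: asymptotic expansion for A0} to the pair $k=l=1$, $L_0=2\pi$, and to check that the eigenvalue branch it produces is real. With $k=l$, the parametrization $\ii\zeta=2\tau(4\tau^2-1)$ has a critical root $\tau_c=\frac{\pi}{L_0}\frac{2k+l}{3}=\frac{1}{2}$. At this root $\sqrt{1-3\tau^2}=\frac12$, and the three frequencies are $1,0,-1$. These reproduce $\G_c\propto 1-\cos x$ with $\zeta=0$.

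I would work directly with the characteristic equation
\[
G(\tau,L)=-\sqrt{1-3\tau^2}\cos 3L\tau+\sqrt{1-3\tau^2}\cos L\sqrt{1-3\tau^2}-\ii\sqrt{1-3\tau^2}\sin 3L\tau+3\ii\tau\sin L\sqrt{1-3\tau^2}=0 .
\]
The proof of Proposition \ref{prop: asymptotic expansion for A0} shows that $\p_\tau G(\tfrac12,2\pi)\neq0$ (the Jacobian entry $4\pi(k^2+kl+l^2)/l=12\pi$) and that $\p_L G(\tfrac12,2\pi)=0$. The implicit function theorem then gives a unique smooth branch $\tau(L)=\tfrac12+t_r(L)+\ii t_i(L)$, with $t_r=\bigO((L-2\pi)^3)$ and $t_i=c(L-2\pi)^2+\bigO((L-2\pi)^3)$. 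Here $c=(-1)^{l+1}\frac{\pi^2kl^2(k+l)}{2(k^2+kl+l^2)}\neq0$ for $k=l=1$.

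To see that $\zeta_0=-\ii\,2\tau(4\tau^2-1)$ is real, I would use symmetry. The map $\zeta\mapsto\bar\zeta$ preserves the spectrum of the real operator $\A$, so $\bar\zeta_0$ is also an eigenvalue near $0$. Uniqueness of the implicit branch near $(\tfrac12,2\pi)$, together with the simplicity statement in Lemma \ref{lem: basic prop of eigen of bad op}, then forces $\zeta_0=\bar\zeta_0$. This matches the expansion: since $\frac{d}{d\tau}[2\tau(4\tau^2-1)]=24\tau^2-2=4$ at $\tau=\tfrac12$, we get
\[
\zeta_0=-\ii\,4\bigl(t_r+\ii t_i\bigr)+\dots=4t_i+\bigO((L-2\pi)^3)\in\R .
\]
So $\zeta_0\sim(L-2\pi)^2$, and its sign is negative by Lemma \ref{lem: basic prop of eigen of bad op}. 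Because $\zeta_0$ is real and simple, the eigenfunction can be normalized to be real: take the real part of $\F_{\zeta_0}$, which solves the same real ODE with the same boundary conditions. The first eigenvalue near $0$ is exactly this branch, since any other eigenvalue close to $0$ would have to lie on the same implicit branch.

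For the boundary derivative I would differentiate the explicit formula \eqref{eq: general form for eigenfunction-bad op} at $x=0$ and expand in $L-2\pi$. At $L=2\pi$, $\F'(0)=0$ because $\G_c'(0)=0$. The first-order coefficient is the one in the proof of Proposition \ref{prop: asymptotic expansion for A0},
\[
\frac{3l(k+l)\bigl(k^2+kl+l^2-6\ii(-1)^lk(k+l)\bigr)}{2(k^2+kl+l^2)^2}\,r_1 .
\]
For $k=l=1$ this equals $\frac{6(3+12\ii)}{18}r_1\neq0$. Hence $|\F_{\zeta_0}'(0)|\sim|L-2\pi|$ once $r_1$ is fixed by normalization, which is uniform because $\|\G_c\|_{L^2}\neq0$.

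The main obstacle is the normalization step. The prefactor $\bigl(-e^{-3\ii L\tau}+e^{-\ii L\sqrt{1-3\tau^2}}\bigr)^{-1}$ in \eqref{eq: general form for eigenfunction-bad op} degenerates at $(\tfrac12,2\pi)$, since both exponentials equal $e^{-3\pi\ii}=-1$ there. So $r_1$ must be rescaled by the vanishing denominator, of size $\bigO(L-2\pi)$, before comparing with $\G_c$. I would handle this by removing the common factor explicitly, using the numerator's own vanishing at $L=2\pi$. I would then verify, by a second-order Taylor expansion, that the rescaled first-order derivative coefficient does not vanish.
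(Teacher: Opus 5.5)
Your eigenvalue argument follows the paper's route: the paper's proof just specializes Proposition~\ref{prop: asymptotic expansion for A0} and asserts $\zeta_0\in\R$. Your conjugation argument usefully supplies the reality the paper only states, but two points there need repair.

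First, local uniqueness of the eigenvalue near $0$ does not come from simplicity of eigenspaces. It comes from three facts:
\begin{itemize}
\item $G(\tau,L)=\frac12 e^{\ii\tau L}D(\tau,L)$, where $D$ is the determinant of the three boundary conditions, reduced using $e^{\ii L(\xi_1+\xi_2+\xi_3)}=1$;
\item the exponents $\xi_j$ stay near the distinct values $1,0,-1$;
\item $g(\tau)=2\tau(4\tau^2-1)$ has $g'(\tfrac12)=4\neq0$.
\end{itemize}
Together these make eigenvalues in a fixed neighbourhood of $0$ correspond bijectively to zeros of $G(\cdot,L)$ near $\tfrac12$, and the implicit function theorem gives exactly one such zero.

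Second, do not import the paper's closed form for $c$. For $k=l=1$ it gives $c=\pi^2/3>0$, hence $\zeta_0\approx\frac{4\pi^2}{3}(L-2\pi)^2>0$, which contradicts the sign you invoke one line later. Use instead the paper's own intermediate values $\p_\tau G(\tfrac12,2\pi)=12\pi\ii$ and $\p_L^2G(\tfrac12,2\pi)=-1$. These give $\tau''(2\pi)=-\p_L^2G/\p_\tau G=-\frac{\ii}{12\pi}$, so $t_i=-\frac{(L-2\pi)^2}{24\pi}+\bigO(|L-2\pi|^3)$ and $\zeta_0=-\frac{(L-2\pi)^2}{6\pi}+\bigO(|L-2\pi|^3)$. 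Only $c\neq0$ matters for the claim, and that is now actually justified.

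The genuine gap is the boundary-derivative estimate. You quote the first-order coefficient from the proof of Proposition~\ref{prop: asymptotic expansion for A0}. But, as you observe, \eqref{eq: general form for eigenfunction-bad op} is $0/0$ at $(\tfrac12,2\pi)$; its numerator also vanishes identically in $x$ there. The rescaled expansion that would justify the coefficient is left undone. The quoted coefficient is not reliable either: reading $r_1$ as the coefficient of $\G_c=2\cos x-2$ and normalizing, it predicts $|\F'(0)|\approx\frac{\sqrt{153}}{6}\frac{|L-2\pi|}{\sqrt{3\pi}}$, which disagrees with the exact value below.

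The missing idea is that no eigenfunction expansion is needed. The integration by parts in Lemma~\ref{lem: basic prop of eigen of bad op}, which you already use for the sign, gives $\Re\zeta\,\|\F\|_{L^2}^2=-\frac12|\F'(0)|^2$. So for the normalized real eigenfunction, $|\F_{\zeta_0}'(0)|^2=-2\zeta_0=\frac{(L-2\pi)^2}{3\pi}(1+o(1))$, which is exactly the second claim.

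If you want the eigenfunction anyway, note where the degeneracy comes from: eliminating $r_2,r_3$ via $\F(0)=\F(L)=0$, whose rows coincide at the base point. Solve instead $\F(0)=\F'(L)=0$, which has rank $2$ near the base point. With $\epsilon=L-2\pi$ this gives $\F\approx r_1\bigl(e^{\ii x}-1-e^{2\ii\epsilon}+e^{2\ii\epsilon}e^{-\ii x}\bigr)$. Hence $\F'(0)\approx 2r_1\epsilon$ with $|r_1|\to(12\pi)^{-1/2}$, confirming $|\F_{\zeta_0}'(0)|\approx|L-2\pi|/\sqrt{3\pi}$.
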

\begin{proof}
We can just apply Proposition \ref{prop: asymptotic expansion for A0} to this particular case $L_0=2\pi$. However, there is a slight difference. Here we can obtain the perturbed eigenvalue $\zeta_0\in\R$ instead of roughly saying that $\zeta_0\in\C$.
\end{proof}
Let us denote this real eigenvalue $\zeta_0$ and the associated eigenfunction $\F_{\zeta_0}$. By Proposition \ref{prop: Index set M-E}, we know that $\lambda_{1}$ and $\lambda_{-1}$ are two perturbed elliptic eigenvalues of $\B$ in the interval $(0,L)$ with $L\in [2\pi-\delta,2\pi+\delta]\setminus \{2\pi\}$. Moreover, we know that 
\begin{equation*}
\lambda_{1}=\frac{1}{\sqrt{3}\pi}|L-2\pi|+\bigO(|L-2\pi|^2),\;\lambda_{-1}=-\frac{1}{\sqrt{3}\pi}|L-2\pi|+\bigO(|L-2\pi|^2).
\end{equation*} 
We define $E_1$ and $E_{-1}$ by 
\begin{equation}\label{rem: defi-of E+ and E-}
E_1=\E_{1}+\E_{-1}=2\Re{\E_{1}},\;  E_{-1}=\E_{1}-\E_{-1}= 2\ii\Im{\E_{1}}. 
\end{equation}
\begin{prop}\label{coro: expansion of E+ and E-}
Suppose that Assumption \ref{ass: Compact intervals including 2pi} holds. Then, the following statements hold: 
\begin{enumerate}
    \item $E_1(x)$ and $E_{-1}(x)$ have the following asymptotic expansions near $2\pi$ 
    \begin{align*}
    E_1(x)=\sqrt{\frac{2}{3\pi}}(1-\cos{x})+\bigO(|L-2\pi|),\;
    E_{-1}(x)=-\ii\sqrt{\frac{2}{\pi}} \sin{x}+\bigO(|L-2\pi|).
    \end{align*}
    \item $E_1'(L)=\epsilon_0
(L-2\pi)+\bigO((L-2\pi)^2)$ and $E_{-1}'(L)=-\ii\sqrt{\frac{2}{\pi}}+\bigO(|L-2\pi|)$, where\\
$\epsilon_0=\frac{3 \ii \sqrt{2} - (8 + 13 \ii) \sqrt{6} - (12 - 128 \ii) \sqrt{2} \pi + 64 \sqrt{6} \pi^2}{192 \pi^{5/2}}\neq0$.
\end{enumerate}
\end{prop}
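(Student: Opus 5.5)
We will derive both parts from the asymptotic expansion of the elliptic eigenfunctions $\E_{\pm1}$ of $\B$ near $L_0=2\pi$. That expansion is the case $(k,l)=(1,1)\in\mathcal{S}_1$ of Proposition \ref{prop: Low-frequency behaviors: Singular limits}, and it was made explicit in the $L_0=2\pi$ example of Section \ref{sec: quasi-invariant subsapce}:
\begin{equation*}
\E_{\pm1}(x)=\frac{\sqrt{3}(1-\cos x)\mp 3\ii\sin x}{3\sqrt{2\pi}}+\bigO(|L-2\pi|).
\end{equation*}
Adding and subtracting gives $E_1=\E_1+\E_{-1}=\frac{2\sqrt3}{3\sqrt{2\pi}}(1-\cos x)+\bigO(|L-2\pi|)=\sqrt{\frac{2}{3\pi}}(1-\cos x)+\bigO(|L-2\pi|)$. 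Similarly, $E_{-1}=\E_1-\E_{-1}=-\frac{2\ii}{\sqrt{2\pi}}\sin x+\bigO(|L-2\pi|)=-\ii\sqrt{\frac{2}{\pi}}\sin x+\bigO(|L-2\pi|)$, which is item 1.

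For this to hold, the phases and normalizations $\alpha_{\pm1}(L)$ of $\E_{\pm1}$ must be fixed consistently. We fix them by the symmetry $\E_{-1}=\overline{\E_1}$. This symmetry is legitimate because $\lambda_{-1}=-\lambda_1$ and $\B$ commutes with complex conjugation. With it, $E_1=2\Re\E_1$ and $E_{-1}=2\ii\Im\E_1$, as in \eqref{rem: defi-of E+ and E-}. We also choose the phase of $\E_1$ so that its $L\to2\pi$ limit is exactly the one displayed above. We must further check that the $\bigO(|L-2\pi|)$ remainder is uniform in $H^3$ (or at least in $C^1$), so that derivatives at $x=L$ may be taken. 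This follows from the explicit elliptic formula for $\E_j$ in the proof of Proposition \ref{prop: asymptotic eigenvalues}: it is analytic in $(\tau,L,x)$, and $\tau_{\pm1}(L)$ is Lipschitz in $L$ by the Morse-lemma expansion of Proposition \ref{prop: Asymp in L-low}, Case 1.

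For item 2, the derivative of $E_{-1}$ is immediate. Item 1 with $C^1$ control gives $E_{-1}'(L)=-\ii\sqrt{2/\pi}\cos(2\pi)+\bigO(|L-2\pi|)$, also using $\cos L=1+\bigO((L-2\pi)^2)$. This is consistent with Proposition \ref{prop: Low-frequency behaviors: Singular limits}(2) and with $\widetilde{\G}'(0)\ne0$.

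The main obstacle is $E_1'(L)$. At order zero it vanishes, since $(1-\cos x)'=\sin x$ and $\sin L=\bigO(L-2\pi)$, so the first-order coefficient $\epsilon_0$ has to be computed exactly. I would substitute $\tau^\pm(L)=\pm\frac{\sqrt3}{6}\cdot(\ldots)$ into $\E_j'(L)$. The starting data are the elliptic root $\tau_{c}=\frac{\pi}{L_0}=\frac12$ and the expansion $\tau^{\pm}=\frac12\pm\frac{\sqrt3}{12\pi}|L-2\pi|+\bigO((L-2\pi)^2)$ from Proposition \ref{prop: Asymp in L-low} with $k=l=1$. I would take this expansion to one order further, using the explicit Hessian and the third derivatives of $F$ at $(\frac12,2\pi)$. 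That extra order is needed because the zeroth-order terms of $\E_{\pm1}'(L)$ cancel in $2\Re\E_1'(L)$, so the second-order Taylor data of $\tau$ and of $\alpha_1(L)$ enter the $\bigO(L-2\pi)$ coefficient.

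Concretely, we will expand the explicit formula for $\E_1'(L)$ in the proof of Proposition \ref{prop: Low-frequency behaviors: Singular limits}, Case 2, together with $\alpha_1=\|\cdot\|^{-1}_{L^2}$, to first order in $s=L-2\pi$, and take twice its real part in the chosen gauge. We then simplify, which should produce the stated $\epsilon_0$, and we check $\epsilon_0\ne0$ directly: its real and imaginary parts are explicit polynomials in $\pi$ with $\sqrt2,\sqrt6$ coefficients, and they are nonzero numerically. The final caveat is that the result depends on the sign of $s$ through $|L-2\pi|$. I would verify that the $|s|$-odd terms cancel in $E_1'(L)$, by the $\pm$ symmetry of $\tau^\pm$, leaving a genuinely linear $\epsilon_0 s$.
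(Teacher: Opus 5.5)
\textbf{Item 1 matches the paper.} Your treatment of item 1 is what the paper does: its proof is only a citation of Proposition~\ref{prop: Low-frequency behaviors: Singular limits} at $k=l=1$.

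\textbf{The $E_1'(L)$ step cannot succeed.} In the gauge you adopt, $\E_{-1}=\overline{\E_1}$, the function $E_1=2\Re\E_1$ is real-valued. Hence $E_1'(L)=2\Re\E_1'(L)\in\R$, and the paper itself uses $E_1'(L)\in\R$ in Lemmas~\ref{lem: defi of coeff-1-2-3} and~\ref{lem: construction of v-2pi}. But the displayed constant is not real:
\[
\Im\epsilon_0=\frac{3\sqrt2-13\sqrt6+128\sqrt2\,\pi}{192\pi^{5/2}}\approx 0.16 .
\]
So no expansion of $2\Re\E_1'(L)$ can ``produce the stated $\epsilon_0$''. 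In this gauge the asserted expansion is false, so this part of the statement cannot be proved as written.

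\textbf{The real slope is not determined either.} Your normalization fixes the phase of $\E_1$ only in the limit $L\to2\pi$. Replace $\E_1$ by $e^{\ii c(L-2\pi)}\E_1$, and $\E_{-1}$ by its conjugate. This preserves item 1 and the conjugation structure. Since $\E_1'(L)\to\mp\ii/\sqrt{2\pi}$, it changes $E_1'(L)$ by $\pm c\sqrt{2/\pi}\,(L-2\pi)+\bigO((L-2\pi)^2)$. Choosing the $\bigO(|L-2\pi|)$ phase that makes $\E_1'(L)\in\ii\R$ even gives $E_1'(L)\equiv0$. What you can actually prove is $E_1'(L)=\bigO(|L-2\pi|)$, with a real slope that depends on a first-order phase condition you must state explicitly.

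\textbf{The two sides of $2\pi$ behave differently.} Transport to $(0,2\pi)$ by $x=\frac{L}{2\pi}y$. Then $\ii\B(L)$ becomes an analytic self-adjoint family on a fixed domain. At $L=2\pi$ it has the double eigenvalue $0$, with eigenvectors $\hat f=(1-\cos x)/\sqrt{3\pi}$ and $\hat g=\sin x/\sqrt\pi$. This eigenvalue splits into two analytic branches $\mu(L)$ and $-\mu(L)$, with $\mu(L)=\frac{L-2\pi}{\sqrt3\pi}+\bigO((L-2\pi)^2)$. The branches cross, and their orthonormal analytic eigenvectors converge, up to phase, to $\frac{1}{\sqrt2}(\hat f+\ii\hat g)$ and $\frac{1}{\sqrt2}(\hat f-\ii\hat g)$. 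This crossing is why $\lambda_{\pm1}$ involve $|L-2\pi|$.

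\textbf{Consequences for item 1 and for your checks.} Since $\lambda_1>0$ switches branch at $L=2\pi$, the limits of $\E_1$ from $L>2\pi$ and from $L<2\pi$ are orthogonal. So no phase choice makes $\E_1$ converge to the displayed function from both sides. With $\E_{-1}=\overline{\E_1}$ and $E_1\to\sqrt{2/(3\pi)}(1-\cos x)$, on one side you get $E_{-1}\to+\ii\sqrt{2/\pi}\sin x$ and $E_{-1}'(L)\to+\ii\sqrt{2/\pi}$. The cited proposition only treats one side (``without loss of generality, we consider $L>L_0$''). For the same reason, your check that the $|s|$-odd terms cancel ``by the $\pm$ symmetry of $\tau^{\pm}$'' compares two different analytic branches.

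\textbf{The $C^1$ remainder needs a different justification.} Joint analyticity of the explicit elliptic formula in $(\tau,L,x)$ does not give the bound. That formula is $0/0$ at $(\tau,L)=(\frac12,2\pi)$: both the numerator and the denominator $e^{\ii(\sqrt{1-3\tau^2}-\tau)L}-e^{2\ii\tau L}$ vanish there. The bound should instead come from analytic perturbation theory along each branch separately.
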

\begin{proof}
Here we simply apply Proposition \ref{prop: Low-frequency behaviors: Singular limits} in our particular case $L_0=2\pi$.
\end{proof}
Using the notations defined in Section \ref{sec: Projections and state space decomposition}, we have 
\begin{gather*}
    M_{\B}=Span\{E_1\},\;M_{\A}=Span\{F_{\zeta_0}\},\;M(L_0)=Span\{1-\cos{x}\}.
\end{gather*}
For $s\geq0$, we define the following Hilbert subspaces 
\begin{equation}\label{eq: defi for space h0 and h}
    H_{\A}^s:=\{u\in D(\A^s):\poscalr{u}{\F_{\zeta_0}}_{(0,L)}=0\},\;    H_{\B}^s:=\{u\in D(\B^s):\poscals{u}{E_1}_{L^2(0,L)}=0\}.
\end{equation}
\textbf{Compensate bi-orthogonal family}
We perform a similar procedure as we prove Theorem \ref{thm: main result with L-L0}. First, we recall the bi-orthogonal family to $e^{-\ii t\lambda_j}$ defined in Proposition \ref{prop: biorthogonal family}. Then, recalling Proposition \ref{prop: Index set M-E}, for the modal case $L_0=2\pi$, there exist two eigenvalues $\lambda_{\pm 1}$ such that $\lim_{L\rightarrow 2\pi}\lambda_{1}=\lim_{L\rightarrow 2\pi}\lambda_{-1}=0$ and $\lambda_{-1}=-\lambda_{1}$.
\begin{lem}\label{lem: bi-orthogonal family-2pi}
There exists a family of functions $\{\vartheta_j\}_{j\in\Z\backslash\{\pm 1\}}$ such that 
\begin{enumerate}
    \item For $j\neq 0,\pm 1$, $\vartheta_j=\phi_j$, where $\phi_j$ is defined in Proposition \ref{prop: biorthogonal family}.
    \item $\supp{\vartheta_j}\subset [-\frac{T}{2},\frac{T}{2}],\forall j\in\Z\backslash\{\pm 1\}$.\label{eq: compact support of theta_j-1}
    \item $\int_{-\frac{T}{2}}^{\frac{T}{2}}\vartheta_j(s)e^{-\ii \lambda_ks}ds=\delta_{jk},\forall j\in\Z\backslash\{0,\pm 1\}$ and $\forall k\in\Z\backslash\{0\}$. Moreover, 
    \begin{equation}
         \int_{-\frac{T}{2}}^{\frac{T}{2}}\vartheta_0(s)e^{-\ii \lambda_k s}ds=0,\forall k\neq 0, \pm 1,\;         \int_{-\frac{T}{2}}^{\frac{T}{2}}\vartheta_0(s)e^{-\ii s\lambda_{1}}ds=\int_{-\frac{T}{2}}^{\frac{T}{2}}\vartheta_0(s)e^{-\ii s\lambda_{-1}}ds=1.
    \end{equation}
    \label{eq: biorthorgonal-2pi}
    \item For $N\in \N$, there exists a constant $K=K(N)$ such that $\|\phi^{(m)}_j\|_{L^{\infty}(\R)}\leq Ce^{\frac{K}{\sqrt{T}}}|\lambda_j|^m,\forall j\in\Z\backslash\{\pm 1\}, \forall m\in\{0,1,\cdots,N\}$. Here the constant $C$ appearing in the inequality might depend on $N$ but not on $T$, $L$ and $j$.\label{eq: bound for derivative of theta-1}
\end{enumerate}
\end{lem}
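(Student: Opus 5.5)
The plan is to keep the existing family $\{\phi_j\}_{j\neq 0,\pm1}$ from Proposition \ref{prop: biorthogonal family} and construct only the single new function $\vartheta_0$, which replaces the pair $\phi_{\pm1}$. The natural candidate is
\[
\vartheta_0=\phi_1+\phi_{-1}+\sum_{k\neq \pm1} a_k\phi_k ,
\]
with the coefficients $a_k$ chosen so that the moment conditions hold.

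With this choice, by Proposition \ref{prop: biorthogonal family}(2) we get $\int\vartheta_0 e^{-\ii\lambda_{\pm1}s}ds=1$. Moreover, $\int\vartheta_0 e^{-\ii\lambda_k s}ds=a_k$ for $k\neq\pm1$, so we simply take $a_k=0$ and $\vartheta_0=\phi_1+\phi_{-1}$. The support condition and properties (1)–(3) then follow immediately.

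The real issue is (4): the estimate must be uniform in $L$. By Corollary \ref{coro: uniform est for bi-family}, $\phi_{\pm1}$ individually blow up like $1/|\lambda_1-\lambda_{-1}|\sim 1/|L-2\pi|$. This is exactly why we take the symmetric sum and not $\phi_{\pm1}$ separately.

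So the key step is to revisit the construction of Proposition \ref{prop: biorthogonal family}. There, one takes an entire function
\[
\Phi_j(z)=\prod_{k\neq j}\frac{1-z/\lambda_k}{1-\lambda_j/\lambda_k}\cdot M(z)
\]
times a Tenenbaum–Tucsnak type multiplier $M$, and $\phi_j$ is its inverse Fourier transform. We then write
\[
\widehat{\vartheta_0}(z)=P(z)\,\frac{\widehat{\phi_1}(z)(z-\lambda_1)\dots}{\dots}
\]
more concretely, as $\widehat{\vartheta_0}(z)=\Psi(z)\,R(z)$. Here $\Psi(z)=\prod_{k\neq\pm1}(1-z/\lambda_k)\,M(z)/M_0$ vanishes at every $\lambda_k$, $k\neq\pm1$, and $R$ is the unique polynomial of degree $\le 1$ with $\Psi(\lambda_{\pm1})R(\lambda_{\pm1})=1$. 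Since $\lambda_{-1}=-\lambda_1\to0$, the Lagrange interpolation $R(z)=A+Bz$ has $A=\tfrac12\bigl(1/\Psi(\lambda_1)+1/\Psi(\lambda_{-1})\bigr)$. The slope $B$ is a divided difference of $1/\Psi$ at $\pm\lambda_1$, which stays bounded by $\sup_{|z|\le K}|(1/\Psi)'|$ because $\Psi$ is smooth and nonvanishing near $0$. This last point uses the uniform gap $|\lambda_k|\ge c$ for $k\neq\pm1$ from Lemma \ref{lem: bound of eigenvalues} and Proposition \ref{prop: Asymp in L}.

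Hence $\widehat{\vartheta_0}$ is bounded uniformly in $L$, and the Paley–Wiener bound for $\|\vartheta_0^{(m)}\|_{L^\infty}$, obtained by integrating $|z|^m|\widehat{\vartheta_0}|$ on $\R$, follows as in Proposition \ref{prop: biorthogonal family}. The extra factor $|z|$ coming from $R$ is absorbed by the multiplier at the cost of enlarging $K$. Note that for $j=0$ the bound $|\lambda_0|^m$ must be read as $O(1)$.

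The main obstacle is checking that all the constants in the product construction, namely the convergence of $\prod(1-z/\lambda_k)$ and the lower bounds on $|\Psi(\lambda_{\pm1})|$, are uniform for $L\in I$. I would verify this using the uniform asymptotics $\lambda_j=(2j\pi/L)^3+O(j^2)$ and the uniform separation of the remaining eigenvalues.
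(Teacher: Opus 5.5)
Your proof is correct, but it is organized differently from the paper's. The paper never uses $\phi_{\pm1}$. It takes
\[
\widehat{\vartheta_0}(z)=C_1\prod_{k\neq\pm1}\Bigl(1-\frac{z-\lambda_1}{\lambda_k-\lambda_1}\Bigr)\Sigma_\beta(\lambda_1-z)+C_2\prod_{k\neq\pm1}\Bigl(1-\frac{z-\lambda_{-1}}{\lambda_k-\lambda_{-1}}\Bigr)\Sigma_\beta(\lambda_{-1}-z).
\]
These are two reduced functions with shifted multipliers; both $\lambda_{\pm1}$ are removed from the product, so the small denominator $\lambda_1-\lambda_{-1}$ never appears. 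The paper then solves the $2\times2$ system for $C_1,C_2$ explicitly. The exact symmetry $\lambda_{-k}=-\lambda_k$ makes the cross products $\prod_{l\neq\pm1}\frac{\lambda_l\mp\lambda_1}{\lambda_l\pm\lambda_1}$ equal to $1$. The evenness of $\sigma_\beta$ gives $\Sigma_\beta(2\lambda_1)=\Sigma_\beta(-2\lambda_1)\in[\frac12,1]$, hence $C_1=C_2=(1+\Sigma_\beta(2\lambda_1))^{-1}$.

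You instead take one reduced function $\Psi$ with a single centered multiplier and multiply it by the Lagrange interpolant $R$ of $1/\Psi$ at $\lambda_{\pm1}$. With a common multiplier this is exactly $\widehat{\phi_1}+\widehat{\phi_{-1}}$, which makes your cancellation heuristic precise. The only dangerous coefficient, the slope $B$, is a divided difference controlled by the mean value theorem.

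In the present case $\Psi$ is even: pair $k$ with $-k$ in the product, and take $M=\Sigma_\beta$, which is even. So in fact $B=0$ and $R\equiv 1/\Psi(\lambda_1)$. This is the same symmetry mechanism the paper exploits, implemented there with the symmetric pair $\Sigma_\beta(\pm\lambda_1-z)$. What your formulation buys is independence from the symmetry. It only needs the two eigenvalues to cluster at a point where $\Psi$ does not vanish, with the rest of the spectrum uniformly separated. It therefore applies unchanged to non-symmetric clusters such as $\lambda_1,\lambda_2\to\lambda_{c,1}$ near $2\pi\sqrt7$. The paper's version buys explicit constants $C_1=C_2\in[\frac12,\frac23]$ with no derivative bounds on $\Psi$ or $\Sigma_\beta$.

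A few points to tighten.
\begin{itemize}
\item Define $\vartheta_0$ directly by $\widehat{\vartheta_0}:=\Psi R$. With the shifted multipliers $\Sigma_\beta(\lambda_{\pm1}-z)$ of Proposition \ref{prop: biorthogonal family}, the sum $\phi_1+\phi_{-1}$ is not literally $\Psi R$.
\item The supremum controlling $B$ must be taken over the segment between $\lambda_{-1}$ and $\lambda_1$ (or a small disc avoiding the other zeros of $\Psi$). It cannot be taken over $|z|\le K$, where $\Psi$ may vanish.
\item The lower bound on $\Psi$ on that segment comes from $\Sigma_\beta(x)\ge 1-\nu_\delta(\beta)^2x^2/2$ with $\nu_\delta(\beta)\approx T/2$. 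So it requires $|\lambda_1|\lesssim 1/T$; the paper imposes the same condition $|\lambda_1|\le T^{-1}$. Also, $|\Sigma_\beta'|\le\nu_\delta(\beta)$ adds a harmless factor $1+T$ to $|B|$.
\item Your reading of the $j=0$ case of item 4 as an $O(e^{K/\sqrt T})$ bound is the right one. The bound $Ce^{K/\sqrt T}|\lambda_1|^m$ written at the end of the paper's proof cannot hold for $m\ge1$ and $L$ close to $2\pi$. A function supported in $[-\frac T2,\frac T2]$ satisfies $|\widehat{\vartheta_0}(\lambda_1)|\le T^2\|\vartheta_0'\|_{L^\infty}$, while $\widehat{\vartheta_0}(\lambda_1)=1$.
\end{itemize}
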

\begin{proof}
We put the proof in Appendix \ref{sec: app-compensate-bi-2pi}.    
\end{proof}

\noindent\textbf{Transition maps near Type I critical lengths}
As we already know, if $y$ is a solution to \eqref{eq: linear KdV-stability-intro}, 
using the smoothing effects, for $y^0\in H_{\A}$, we obtain that $y(\frac{T}{2})\in H_{\A}^2$. Then we have the following lemma to compensate for the boundary condition of the KdV operator.
\begin{lem}\label{lem: defi of coeff-1-2-3}
Let $\mu_1$, $\mu_2$, and $\mu_3$ be three distinct real positive constants, for any real function $z\in  H_{\A}^2\subset H_{\A}$, there exist real constants $c_1$, $c_2$ and $c_3$ such that $z- c_1 h_{\mu_1}- c_2 h_{\mu_2}-c_3 h_{\mu_3}\in H_{\B}^2$. 
Here $h_{\mu_j}(1\leq j\leq3)$ are modulated functions.
\end{lem}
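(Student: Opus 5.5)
The plan is to reduce membership in $H_{\B}^2$ to a $3\times 3$ linear system for $(c_1,c_2,c_3)$ and show it is uniquely solvable with real solution. Take $w=z-\sum_{j=1}^3 c_jh_{\mu_j}$. The modulated functions are smooth and satisfy $h_{\mu_j}(0)=h_{\mu_j}(L)=0$, $h'_{\mu_j}(L)-h'_{\mu_j}(0)=1$ and $\mathcal{P}h_{\mu_j}=\mu_jh_{\mu_j}$, where $\mathcal{P}=\p_x^3+\p_x$. Hence $\mathcal{P}^2h_{\mu_j}=\mu_j^2h_{\mu_j}$, and every boundary trace of $\mathcal{P}h_{\mu_j}$ is just $\mu_j$ times the corresponding trace of $h_{\mu_j}$. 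Membership $w\in D(\B^2)$ amounts to six conditions: $w(0)=w(L)=0$, $w'(L)=w'(0)$, $(\mathcal{P}w)(0)=(\mathcal{P}w)(L)=0$, and $(\mathcal{P}w)'(L)=(\mathcal{P}w)'(0)$. The Dirichlet conditions hold automatically. Because $z\in D(\A^2)$, we have $(\mathcal{P}z)(0)=(\mathcal{P}z)(L)=0$, $z'(L)=0$ and $(\mathcal{P}z)'(L)=0$. So only two conditions remain:
\begin{equation*}
\sum_{j}c_j=-z'(0),\qquad \sum_j \mu_j c_j=-(\p_x\mathcal{P}z)(0).
\end{equation*}
This is exactly the two-function computation behind the $F_1,F_2$ in Proposition \ref{prop: estimates on fixed interval-iteration preparation}.

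The third unknown is used to enforce orthogonality to $M_{\B}=\mathrm{Span}\{E_1\}$, which gives the third equation
\begin{equation*}
\sum_j c_j\poscals{h_{\mu_j}}{E_1}_{L^2(0,L)}=\poscals{z}{E_1}_{L^2(0,L)}.
\end{equation*}
The resulting system has a matrix with rows $(1,1,1)$, $(\mu_1,\mu_2,\mu_3)$ and $(\beta_1,\beta_2,\beta_3)$, where $\beta_j=\poscals{h_{\mu_j}}{E_1}$. Reality of the $c_j$ should follow from a few facts. First, $E_1=2\Re\E_1$ is real (conjugation maps $\E_1$ to $\E_{-1}$ up to phase normalization, which one fixes). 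Second, $h_{\mu_j}$ is real, since it is the unique solution of a real problem by Lemma \ref{lem: existence of modulated functions}. Third, $z$ is real.

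To compute the $\beta_j$, pair with $E_1$ and integrate by parts, as in Lemma \ref{lem: size of h_j}. Write $E_1=\E_1+\E_{-1}$ with $\B\E_{\pm1}=\ii\lambda_{\pm1}\E_{\pm1}$. Using $\mathcal{P}h_{\mu}=\mu h_\mu$ and the boundary conditions, one gets
\begin{equation*}
\poscals{h_\mu}{\E_{k}}=\frac{\overline{\E_k'(L)}}{\mu+\ii\lambda_k}
\end{equation*}
up to sign conventions. Therefore $\beta_j=f(\mu_j)$, where $f(\mu)=\frac{a}{\mu+\ii\lambda_1}+\frac{\bar a}{\mu-\ii\lambda_1}$ and $a=\overline{\E_1'(L)}$.

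The determinant is then a divided-difference-type expression in $\mu_1,\mu_2,\mu_3$. After clearing denominators it reduces to a Vandermonde factor $(\mu_1-\mu_2)(\mu_2-\mu_3)(\mu_3-\mu_1)$ times the second divided difference of $f$ (up to sign). Since $f$ is a sum of simple rational functions, that divided difference equals
\begin{equation*}
\frac{a}{\prod_j(\mu_j+\ii\lambda_1)}+\frac{\bar a}{\prod_j(\mu_j-\ii\lambda_1)}.
\end{equation*}
This quantity equals $2\Re\bigl(a\prod_j(\mu_j-\ii\lambda_1)\bigr)/\prod_j(\mu_j^2+\lambda_1^2)$.

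The main obstacle is showing this is nonzero, and ideally bounded below uniformly in $L$. As $\lambda_1=\bigO(|L-2\pi|)$, it behaves like $2\Re(a)\prod\mu_j^{-1}+\bigO(|L-2\pi|)$. Now $2\Re a=\overline{E_1'(L)}$, which is only of size $\epsilon_0(L-2\pi)$ by Proposition \ref{coro: expansion of E+ and E-}. The next-order term is $-\lambda_1\Im(a)\sum_j\mu_j^{-1}\prod\mu_j^{-1}$, which is also of order $|L-2\pi|$, with $\Im a\approx\sqrt{2/\pi}$. So the first thing to check is that the combination
\begin{equation*}
\Re\epsilon_0\,(L-2\pi)\pm\frac{|L-2\pi|}{\sqrt3\pi}\sqrt{\frac{2}{\pi}}\sum_j\mu_j^{-1}
\end{equation*}
does not vanish for the chosen $\mu_j$. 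That can be arranged by picking the $\mu_j$ so that $\sum\mu_j^{-1}$ avoids one bad value. Failing that, I would use the fact that $\poscals{z}{E_1}$ is itself $\bigO(|L-2\pi|)\|z\|$ for $z\in H_{\A}$: $E_1$ and $\F_{\zeta_0}$ both approach $1-\cos x$, and the duality pairing against $\F_{\zeta_0}$ vanishes. The two $\bigO(|L-2\pi|)$ factors then cancel, giving bounded coefficients, which is the uniformity claimed in Proposition \ref{prop: transition projection-c}.
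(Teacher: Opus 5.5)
Your proposal is correct and is essentially the paper's proof. It uses the same two boundary equations $\sum_j c_j=-z'(0)$ and $\sum_j\mu_jc_j=-(\p_x\mathcal{P}z)(0)$ plus orthogonality to $E_1$, the same integration by parts for $\langle h_{\mu_j},\E_{\pm1}\rangle$, and your divided-difference form of $\det M$ agrees up to sign with the paper's explicit determinant.

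The one point where you go beyond the paper is the non-vanishing of the factor $\Re\bigl(a\prod_j(\mu_j-\ii\lambda_1)\bigr)$, which the paper simply asserts for every triple of distinct positive $\mu_j$. Your caution is justified: this is a nontrivial polynomial in the $\mu_j$ (as $\E_1'(L)\neq0$), yet it does vanish for some such triples (e.g.\ $\mu_j=jt$ with a suitable $t\in(0,\lambda_1]$). So the statement must be read for a generic choice of the $\mu_j$.
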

\begin{proof}
For any $\mu_1$, $\mu_2$, and $\mu_3$,  we are able to construct the real modulated functions $h_{\mu_1}$, $h_{\mu_2}$, and $h_{\mu_3}$.
Let $f:= z- c_1 h_{\mu_1}- c_2 h_{\mu_2}-c_3 h_{\mu_3}$. Then it is easy to check that $f(0)=f(L)=0$. For the boundary derivative, we ask that $ c_1+ c_2+c_3=-\p_xz(0)$ and $c_1\mu_1+ c_2\mu_2+c_3\mu_3=-\p_x(\mathcal{P}z)(0)$. 
    Next, for the projection on the direction $E_1$, we have $\poscals{z- c_1 h_{\mu_1}- c_2h_{\mu_2}-c_3 h_{\mu_3}}{E_1}_{L^2(0,L)}= 0$,
    which is equivalent to 
    \begin{gather*}
        c_1  \poscals{h_{\mu_1}}{E_1}_{L^2(0,L)}+ c_2 \poscals{h_{\mu_2}}{E_1}_{L^2(0,L)}+c_3\poscals{h_{\mu_3}}{E_1}_{L^2(0,L)}=  \poscals{z}{E_1}_{L^2(0,L)}
    \end{gather*}
    For the term $\poscals{h_{\mu_j}}{E_1}_{L^2(0,L)}$, we obtain 
\begin{align*}
\poscals{h_{\mu_j}}{\E_{\pm 1}}_{L^2(0,L)}=\int_0^Lh_{\mu_j}(x)\overline{\E_{\pm 1}(x)}dx
=\int_0^Lh_{\mu_j}(x)\overline{\E_{\pm 1}(x)}dx
=-\frac{\overline{\E'_{\pm 1}(L)}}{\ii\lambda_{\pm 1}}-\frac{\mu_j}{\ii\lambda_{\pm 1}}\int_0^Lh_{\mu_j}(x)\overline{\E_{\pm 1}(x)}dx.
\end{align*}
Thus, we obtain $\poscals{h_{\mu_j}}{\E_{1}}_{L^2(0,L)}=\frac{\overline{\E'_{1}(L)}}{\mu_j+\ii\lambda_{1}},\; \poscals{h_{\mu_j}}{\E_{-1}}_{L^2(0,L)}=\frac{\overline{\E'_{-1}(L)}}{\mu_j+\ii\lambda_{-1}}$,
which implies that 
\begin{align*}
\poscals{h_{\mu_j}}{E_1}_{L^2(0,L)}=\frac{\overline{\E'_{1}(L)}}{\mu_j+\ii\lambda_{1}}+\frac{\overline{\E'_{-1}(L)}}{\mu_j+\ii\lambda_{-1}}=\frac{\mu_j\overline{E'_1(L)}-\ii\lambda_{1}\overline{E'_{-1}(L)}}{\mu_j^2+\lambda_{1}^2},\;
\poscals{h_{\mu_j}}{E_{-1}}_{L^2(0,L)}
=\frac{\mu_j\overline{E'_{-1}(L)}-\ii\lambda_{1}\overline{E'_1(L)}}{\mu_j^2+\lambda_{1}^2}.
\end{align*}
Recalling the definitions in Remark \ref{rem: defi-of E+ and E-},  $E_1=\E_{1}+\E_{-1}=2\Re{\E_{1}}$ is a real function. However, $E_{-1}=\E_{1}-\E_{-1}=2\ii\Im{\E_{1}}$ is a purely imaginary function. Thus, the coefficient 
\begin{equation*}
\poscals{h_{\mu_j}}{E_1}_{L^2(0,L)}=\frac{\mu_j\overline{E'_1(L)}-\ii\lambda_{1}\overline{E'_{-1}(L)}}{\mu_j^2+\lambda_{1}^2}=\frac{\mu_jE'_1(L)+\ii\lambda_{1}E'_{-1}(L)}{\mu_j^2+\lambda_{1}^2} \in \R.   
\end{equation*}
Since $z$ is also a real function, we obtain $\poscals{z}{E_1}_{L^2(0,L)}\in\R$. 
Let 
\begin{equation*}
    M(c_1,c_2,c_3)=\left(\begin{array}{ccc}
     1&1&1  \\
     \mu_1&\mu_2&\mu_3\\
     \frac{\mu_1\overline{E'_1(L)}-\ii\lambda_{1}\overline{E'_{-1}(L)}}{\mu_1^2+\lambda_{1}^2}&\frac{\mu_2\overline{E'_1(L)}-\ii\lambda_{1}\overline{E'_{-1}(L)}}{\mu_2^2+\lambda_{1}^2}&\frac{\mu_3\overline{E'_1(L)}-\ii\lambda_{1}\overline{E'_{-1}(L)}}{\mu_3^2+\lambda_{1}^2}
\end{array}
\right).
\end{equation*}
Therefore, we obtain a linear system for the coefficients $(c_1,c_2,c_3)$, we obtain
\begin{equation*}
\left(\begin{array}{ccc}
     1&1&1  \\
     \mu_1&\mu_2&\mu_3\\
     \frac{\mu_1\overline{E'_1(L)}-\ii\lambda_{1}\overline{E'_{-1}(L)}}{\mu_1^2+\lambda_{1}^2}&\frac{\mu_2\overline{E'_1(L)}-\ii\lambda_{1}\overline{E'_{-1}(L)}}{\mu_2^2+\lambda_{1}^2}&\frac{\mu_3\overline{E'_1(L)}-\ii\lambda_{1}\overline{E'_{-1}(L)}}{\mu_3^2+\lambda_{1}^2}
\end{array}
\right)
\left(\begin{array}{c}
     c_1  \\
     c_2\\
     c_3
\end{array}\right)=
\left(\begin{array}{c}
     -\p_xz(0)  \\
     -\p_x(\mathcal{P}z)(0)\\
     -\poscals{z}{E_1}_{L^2(0,L)}
\end{array}\right).
\end{equation*}
It is easy to compute that 
\begin{multline*}
\det{M^{-1}}\\=\frac{ (\lambda_{1}^2 + \mu_1^2) (\lambda_{1}^2 + \mu_2^2) (\lambda_{1}^2 + \mu_3^2)}{(\mu_1 - \mu_2) (\mu_1 - \mu_3) (\mu_2 - \mu_3) \left(\ii E'_{-1}(L) \lambda_{1} (\lambda_{1}^2 - \mu_2 \mu_3 - \mu_1 (\mu_2 + \mu_3)) + E'_1(L) (-\mu_1 \mu_2 \mu_3 + \lambda_{1}^2 (\mu_1 + \mu_2 + \mu_3))\right)}.
\end{multline*}
Choosing three distinct positive real numbers $(\mu_1,\mu_2,\mu_3)$, the real matrix $M$ is invertible and we find the real coefficients $(c_1,c_2,c_3)$ that compensate the boundary conditions.
\end{proof}
\begin{lem}\label{lem: construction of v-2pi}
For any initial state $z^0\in H_{\B}^2$ and any final state $z^T\in H_{\B}^2$ (as defined in \eqref{eq: defi for space h0 and h}), there exists a function $v\in C^1(0,T)$ such that the solution $z$ to the controlled KdV system \eqref{eq: KdV boundary derivative difference} with $L\in[2\pi-\delta,2\pi+\delta]$, 
satisfies that $z(T)=z^T\in H_{\B}^2$. In particular, we can choose that $z(T,x)=\varrho(T)E_{-1}(x)$. Furthermore, $\varrho$ such that 
\begin{equation*}
     z(T)+ c_1 h_{\mu_1} e^{-\mu_1 \frac{T}{2}}+ c_2 h_{\mu_2} e^{-\mu_2 \frac{T}{2}}+c_3 h_{\mu_3} e^{-\mu_3 \frac{T}{2}}\in H_{\A}.
\end{equation*}
\end{lem}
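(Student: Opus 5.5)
We will follow the moment-method construction of Section \ref{sec: construction of the control}, now steering $z^0$ to a prescribed final state rather than to $0$. We set $\Tilde z=z-v(t)h$ as in \eqref{eq: KdV source control}, expand $z^0=\sum_j z^0_j\E_j$ and $z^T=\sum_j z^T_j\E_j$, and impose $v(0)=v(T)=0$. Then the Duhamel formula \eqref{eq: formula for tilde_z} turns the condition $z(T)=z^T$ into the moment problem
\[
\ii h_j\lambda_j\int_0^T e^{-\ii s\lambda_j}v(s)\,ds = z^0_j-e^{-\ii\lambda_jT}z^T_j,\qquad j\in\Z\setminus\{0\}.
\]
Recall that $\ii h_j\lambda_j=-\overline{\E_j'(L)}$.

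For $|j|\ge 2$ these equations are solved exactly as in \eqref{eq: defi of control v}, with the coefficients $\vartheta_j=\phi_j$ of Lemma \ref{lem: bi-orthogonal family-2pi}. The pair $j=\pm1$ is the delicate one. Since $\lambda_{-1}=-\lambda_1=\bigO(|L-2\pi|)$, the two exponentials $e^{-\ii s\lambda_{\pm1}}$ nearly coincide, so a standard bi-orthogonal family degenerates. For this reason we use only the single compensating function $\vartheta_0$, which has equal moments against $e^{-\ii s\lambda_{\pm1}}$. In the $E_{\pm1}$ basis this means $v$ only excites the direction in which $\vartheta_0$ acts symmetrically. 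The two $\pm1$ moment equations then reduce to one scalar equation along the real combination, plus one free parameter.

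Here the hypotheses enter. Because $z^0,z^T\in H_{\B}^2$, both are orthogonal to $E_1=\E_1+\E_{-1}$, so $z^0_1=-z^0_{-1}$ and likewise for $z^T$. The right-hand sides of the $\pm1$ equations are therefore antisymmetric, up to the small factors $e^{\mp\ii\lambda_1T}=1+\bigO(|L-2\pi|)$ and the boundary values $\E'_{\pm1}(L)$. We will handle this mismatch by writing $z^T=\varrho(T)E_{-1}+\tilde z^T$ and using the scalar $\varrho$ to absorb the discrepancy. Concretely, we choose $v=\sum_{|j|\ge2}(\cdots)\phi_j+\beta\vartheta_0$ and solve the resulting $2\times2$ linear system for $(\beta,\varrho)$. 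Its determinant is controlled by $E'_{-1}(L)=-\ii\sqrt{2/\pi}+\bigO(|L-2\pi|)\neq0$, from Proposition \ref{coro: expansion of E+ and E-}, together with the nondegenerate boundary coefficient of $E_1$. This is the step I expect to be the main obstacle. We must check that the system is uniformly invertible in $L$ even though $\E_{\pm1}'(L)$, $\lambda_{\pm1}$ and the factors $1/(\ii\lambda_{\pm1})$ produced by integrating by parts individually degenerate. We will try to avoid ever dividing by $\lambda_{\pm1}$ by working with the combinations $E_{\pm1}$ and the identity $\ii h_j\lambda_j=-\overline{\E_j'(L)}$.

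Next we impose the final requirement. By the computation in Lemma \ref{lem: defi of coeff-1-2-3}, $\poscalr{h_{\mu_j}}{\F_{\zeta_0}}$ is explicit. The condition
\[
\poscalr{\varrho E_{-1}+\textstyle\sum_j c_j e^{-\mu_jT/2}h_{\mu_j}}{\F_{\zeta_0}}_{(0,L)}=0
\]
is one scalar linear equation in $\varrho$. Its coefficient $\poscalr{E_{-1}}{\F_{\zeta_0}}$ converges, by Lemma \ref{lem: real eigenvalue close to 0} and Proposition \ref{coro: expansion of E+ and E-}, to a pairing of $\ii\sin x$ against $1-\cos x$ under the reflected duality \eqref{eq: duality relations}. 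We will verify that this limit is nonzero, which gives $\varrho=\bigO(e^{-\mu_{\min}T/2})$ uniformly in $L$. Choosing the free parameter accordingly closes the system.

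Finally, the regularity $v\in C^1$ and the uniform bounds will follow verbatim from Lemma \ref{lem: H^m-estimate of v} and Corollary \ref{coro: uniform and singular estimates for derivative-z}, now applied to the hyperbolic indices only. The elliptic part has been replaced by $\beta\vartheta_0$ with $\beta$ uniformly bounded.
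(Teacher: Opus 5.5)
Your architecture is the paper's: the moment method, $\phi_j$ for $|j|\ge2$, the compensating $\vartheta_0$ for the pair $\pm1$, and $\varrho$ fixed afterwards by the $H_{\A}$-condition. However, two of your key steps fail.

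\textbf{First gap: the coefficient of $\varrho$ vanishes in the limit.} The coefficient $\poscalr{E_{-1}}{\F_{\zeta_0}}_{(0,L)}$ in the $H_{\A}$-condition does \emph{not} have a nonzero limit. At $L=2\pi$ the reflected pairing of $\ii\sin x$ with $1-\cos x$ is $\ii\int_0^{2\pi}\sin x\,(1-\cos(2\pi-x))\,dx=0$, because $\sin x$ is odd and $1-\cos x$ is even about $x=\pi$. Quantitatively, the integration by parts carried out in the paper writes this coefficient as $\F'_{\zeta_0}(0)$ times $\bigl(\zeta_0E'_{-1}(L)-\ii\lambda_1E'_1(L)\bigr)/(\zeta_0^2+\lambda_1^2)$, the quantity appearing in \eqref{eq: defi of varrho}. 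The last factor is $\bigO(1)$ and $|\F'_{\zeta_0}(0)|\sim|L-2\pi|$, so the coefficient is $\bigO(|L-2\pi|)$. Your argument therefore gives neither solvability for $\varrho$ near $2\pi$ nor a uniform bound.

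What makes $\varrho$ bounded in the paper (Lemma \ref{lem: est for varrho-2pi}) is two facts together:
\begin{itemize}
\item the data term $\poscalr{h_{\mu_j}}{\F_{\zeta_0}}_{(0,L)}=\F'_{\zeta_0}(0)h'_{\mu_j}(L)/(\zeta_0+\mu_j)$ carries the same small factor $\F'_{\zeta_0}(0)$, which cancels;
\item a \emph{second-order} nondegeneracy $|\zeta_0E'_{-1}(L)-\ii\lambda_1E'_1(L)|\gtrsim(L-2\pi)^2$ balances $\zeta_0^2+\lambda_1^2\sim(L-2\pi)^2$.
\end{itemize}
That lower bound comes from the precise expansions of $\zeta_0$, $\lambda_1$ and $E'_{\pm1}(L)$, not from any leading-order pairing.

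\textbf{Second gap: the parameter count in the $\pm1$ block does not close.} The function $\vartheta_0$ has moment $1$ at both $\lambda_{\pm1}$, and every $\phi_j$ with $|j|\ge2$ has zero moments there. Hence the $\E_{\pm1}$-components of $z(T)$ depend on the single coefficient $\beta$. With $z^0_{-1}=-z^0_1$ and $z^T=\varrho E_{-1}$, your two moment equations form a $2\times2$ system in $(\beta,\varrho)$. Using $E'_1(L)\in\R$ and $E'_{-1}(L)\in\ii\R$, its determinant is
\[
2\Re\bigl(\E'_1(L)e^{-\ii\lambda_1T/2}\bigr)=E'_1(L)\cos(\lambda_1T/2)-\ii E'_{-1}(L)\sin(\lambda_1T/2).
\]
This is generically nonzero for $L\ne2\pi$. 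So $\varrho$ is already determined by $z^0_1$ and cannot also be chosen to satisfy the $H_{\A}$-condition. The ``free parameter'' you invoke does not exist in this ansatz, and by the same count an arbitrary $z^T\in H_{\B}^2$ is not reachable with it.

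Moreover, both terms of the determinant are $\bigO(|L-2\pi|)$, so it tends to $0$ and is not controlled by $E'_{-1}(L)\neq0$.

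To genuinely prescribe $\varrho$, you need a second function in the $\pm1$ block that separates $\lambda_1$ from $\lambda_{-1}$; its size is of order $1/|\lambda_1|$. You must then check that the coefficient it receives is $\bigO(|\lambda_1|)$, which uses $z^0,z^T\perp E_1$ and $E'_1(L)=\bigO(|L-2\pi|)$.

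Note that the paper keeps $\varrho$ free by asserting that, for $z^0_-,z^T_-\in\ii\R$, the resulting $v_0$ is real. But reality of $v_0$ is itself one real condition tying $z^T_-$ to $z^0_-$, so this point needs the same care there.
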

\begin{proof}
Without loss of generality, we can require that $z^0_-:=\poscals{z^0}{E_{-1}}_{L^2(0,L)}\in \ii \R$ and $z^T_-:=\poscals{z^T}{E_{-1}}_{L^2(0,L)}\in \ii \R$. In fact, if we obtain the exact controllability for $z^0_-\in\ii\R$, it is standard to obtain the exact controllability in the general case by considering the real part and imaginary part of $z^0$. Hence, in the following proof, we only consider the case that $z^0_-:=\poscals{z^0}{E_{-1}}_{L^2(0,L)}\in \ii \R$ and $z^T_-:=\poscals{z^T}{E_{-1}}_{L^2(0,L)}\in \ii \R$.  Suppose that $c_1$, $c_2$ and $c_3$ are fixed. As we presented in Section \ref{sec: construction of the control}, we aim to construct the control function $v\in C^1(0,T)$ for the  KdV system \eqref{eq: KdV boundary derivative difference} 
with the constraints $v(0)=v(T)=0$. Thanks to the family $\{\vartheta_j\}_{j\in\Z\backslash\{\pm 1\}}$, we construct $v$ as follows
\begin{equation}\label{eq: defi of control v-vartheta-2pi}
    v(t)=\sum_{k\neq\pm 1}v_k\vartheta_k(t-\frac{T}{2}).
\end{equation}
Here $h_k$ remains the same as in Section \ref{sec: construction of the control} and 
\begin{align*}
z^0(x)=z^0_-E_{-1}(x)+\sum_{k\in\Z\setminus\{0,\pm 1\}}z^0_k\E_k(x),\;
z(T,x)=z^T_-E_{-1}(x)+\sum_{k\in\Z\setminus\{0,\pm 1\}}z^T_k\E_k(x).
\end{align*}
Then the solution $z$ to the system \eqref{eq: KdV-z-v-2pi} has the following expansion 
\begin{equation}\label{eq: construction-z-expansion-2pi}
\begin{aligned}
z(t,x)&=e^{\ii \lambda_{1}t}z^0_-\E_{1}(x)-e^{\ii \lambda_{-1}t}z^0_-\E_{-1}(x)+\sum_{k\in\Z\setminus\{0,\pm 1\}}z^0_ke^{\ii\lambda_k t}\E_k(x)\\
&-\sum_{k\neq0} \sum_{j\neq\pm 1}\ii\lambda_kh_k\int_0^te^{\ii(t-s)\lambda_k}v_j\vartheta_j(s-\frac{T}{2})ds\E_k(x).
\end{aligned}
\end{equation}
In particular, at $t=T$, 
\begin{align*}
z(T,x)&=e^{\ii \lambda_{1}T}z^0_-\E_{1}(x)-e^{\ii \lambda_{-1}T}z^0_-\E_{-1}(x)+\sum_{k\in\Z\setminus\{0,\pm 1\}}z^0_ke^{\ii\lambda_k T}\E_k(x)\\
&-\sum_{k\neq0} \sum_{j\neq\pm 1}\ii\lambda_kh_k\int_0^Te^{\ii(T-s)\lambda_k}v_j\vartheta_j(s-\frac{T}{2})ds\E_k(x)\\
&=z^T_-E_{-1}(x)+\sum_{k\in\Z\setminus\{0,\pm 1\}}z^T_k\E_k(x).
\end{align*}
Using the property \ref{eq: biorthorgonal-2pi} in Lemma \ref{lem: bi-orthogonal family-2pi}, as a consequence, we obtain the following equations
\begin{gather*}
e^{\ii \lambda_{1}T}z^0_- -\ii\lambda_{1}h_{1}e^{\ii\frac{T\lambda_{1}}{2}}v_0=z^T_-,\;
-e^{\ii \lambda_{-1}T}z^0_--\ii\lambda_{-1}h_{-1}e^{\ii\frac{T\lambda_{-1}}{2}}v_0=-z^T_-,\\
z^0_ke^{\ii\lambda_k T}-\ii\lambda_kh_ke^{\ii\frac{\lambda_k T}{2}}v_k=z^T_k,k\neq 0,\pm 1.
\end{gather*}
For the last equation, we obtain  $v_k=\frac{z^0_ke^{\ii\frac{\lambda_k T}{2}}-z^T_ke^{-\ii\frac{\lambda_k T}{2}}}{\ii\lambda_kh_k},k\neq 0,\pm 1$. 
By Lemma \ref{lem: size of h_j}, we know that $-\ii\lambda_{1}h_{1}=\overline{\E'_{1}(L)}$ and $-\ii\lambda_{-1}h_{-1}=\overline{\E'_{-1}(L)}$. Hence, the first two equations are equivalent to 
\begin{align*}
e^{\ii \lambda_{1}T}z^0_- +\overline{\E'_{1}(L)}e^{\ii\frac{T\lambda_{1}}{2}}v_0=z^T_-,\;
-e^{\ii \lambda_{-1}T}z^0_- +\overline{\E'_{-1}(L)}e^{\ii\frac{T\lambda_{-1}}{2}}v_0=-z^T_-.
\end{align*}
Using the fact that $\lambda_{1}=-\lambda_{-1}$ and $\overline{\E_{1}}=\E_{-1}$, we obtain the following equations
\begin{align*}
e^{\ii \lambda_{1}T}z^0_- +\overline{\E'_{1}(L)}e^{\ii\frac{T\lambda_{1}}{2}}v_0=z^T_-,\;
-e^{-\ii \lambda_{1}T}z^0_- +\E'_{1}(L)e^{-\ii\frac{T\lambda_{1}}{2}}v_0=-z^T_-.
\end{align*}
Due to the conditions that $z^0_-\in\ii\R$ and $z^T_-\in\ii\R$, we deduce that 
\begin{equation*}
    v_0=\frac{z^T_-e^{-\ii\frac{\lambda_{1} T}{2}}-z^0_-e^{\ii\frac{\lambda_{1} T}{2}}}{\overline{\E'_{1}(L)}}=\frac{z^0_-e^{\ii\frac{\lambda_{1} T}{2}}-z^T_-e^{-\ii\frac{\lambda_{1} T}{2}}}{\ii\lambda_{1}h_{1}}\in\R,
\end{equation*}
In particular, we can choose that $z(T,x)=\varrho E_{-1}(x)\in \h^2$. Indeed, $z^T_-=\varrho\in\ii\R$ and $z^T_j=0$ for $j\neq 0,\pm 1$. To achieve this final target, we construct a special control function $v$ as follows:
\begin{equation}\label{eq: E_-construction of v-2pi}
v(t):=\frac{z^0_-e^{\ii\frac{\lambda_{1} T}{2}}-\varrho(T)e^{-\ii\frac{\lambda_{1} T}{2}}}{\ii\lambda_{1}h_{1}}\vartheta_0(t-\frac{T}{2})+\sum_{k\neq0,\pm 1}\frac{z^0_ke^{\ii\frac{\lambda_k T}{2}}}{\ii\lambda_kh_k}\vartheta_k(t-\frac{T}{2})
\end{equation}
For this specific final target $z(T)=\varrho E_{-1}$, we aim to prove that $ z(T)+ c_1 h_{\mu_1} e^{-\mu_1 T/2}+ c_2 h_{\mu_2} e^{-\mu_2 T/2}+c_3 h_{\mu_3} e^{-\mu_3 T/2}\in  H_{\A}$, which is equivalent to 
\begin{equation}
    \poscalr{z(T)+ c_1 h_{\mu_1} e^{-\mu_1 T/2}+ c_2 h_{\mu_2} e^{-\mu_2 T/2}+c_3 h_{\mu_3} e^{-\mu_3 T/2}}{\F_{\zeta_0}}_{(0,L)}= 0.
\end{equation}
By integration by parts, we obtain that
\begin{align*}
 \int_0^Lh_{\mu_j}(x)\F_{\zeta_0}(L-x)dx=\frac{\F_{\zeta_0}'(0)h_{\mu_j}'(L)}{ \zeta_0+\mu_j},\;
 \int_0^L\E_{\pm1}(x)\F_{\zeta_0}(L-x)dx=\frac{\F_{\zeta_0}'(0)\E_{\pm1}'(L)}{ \zeta_0+\ii\lambda_{\pm1}}.
\end{align*}
Therefore, we know that
\begin{equation}\label{eq: defi of varrho}
\varrho(T)=-\frac{\zeta_0^2+\lambda_{1}^2}{\zeta_0\F_{\zeta_0}'(0)E_{-1}'(L)-\ii\lambda_{1}\F_{\zeta_0}'(0)E_1'(L) }\sum_{j=1}^3c_j e^{-\mu_j T/2}\frac{\F_{\zeta_0}'(0)h_{\mu_j}'(L)}{ \zeta_0+\mu_j}.
\end{equation}
As we observed in Remark \ref{rem: defi-of E+ and E-}, $E_{-1}'(L)\in\ii\R$ and $E_1'(L)\in\R$. By Lemma \ref{lem: defi of coeff-1-2-3}, the term 
\begin{equation*}
c_1 e^{-\mu_1 T/2}\frac{\F_{\zeta_0}'(0)h_{\mu_1}'(L)}{ \zeta_0+\mu_1}+c_2 e^{-\mu_2 T/2}\frac{\F_{\zeta_0}'(0)h_{\mu_2}'(L)}{ \zeta_0+\mu_2}+c_3 e^{-\mu_3 T/2}\frac{\F_{\zeta_0}'(0)h_{\mu_3}'(L)}{ \zeta_0+\mu_3}\in\R,
\end{equation*}
which implies that $\varrho\in\ii\R$.
\end{proof}
\begin{rem}
Here we notice that $\varrho\in\ii\R$. This yields that the final target $z^T$ is always a real-valued function. This is a particular case for Remark \ref{rem: rho} in $L_0=2\pi$. In the current setup, we have $E_{-1}=2\ii\Im{\E_1}\sim2\ii\sin{x}$.
\end{rem}
\begin{lem}\label{lem: est for varrho-2pi}
Let $T\in (0, 2)$, and three distinct positive parameters be $\mu_1$, $\mu_2$ and $\mu_3$ with $\mu_j>|\zeta_0|+1$, $j=1,2,3$. We fix three real constants $c_1$, $c_2$, and $c_3$. Suppose that Assumption \ref{ass: Compact intervals including 2pi} holds.  For every $L\in I\setminus\{2\pi\}$, $\varrho(T)$, as we defined in Lemma \ref{lem: construction of v-2pi} is uniformly bounded by a effectively computable constant $C_{\varrho}=C_{\varrho}(\mu_1,\mu_2,\mu_3,c_1,c_2,c_3)$, i.e. $|\varrho|\leq C_{\varrho}$.
\end{lem}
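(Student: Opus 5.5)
The plan is to read off an explicit formula for $\varrho(T)$ from \eqref{eq: defi of varrho} and bound the numerator and denominator separately. The overall scale is $|L-2\pi|^2$ on both sides; after the common factor $\F_{\zeta_0}'(0)$ is cancelled, the cancellation becomes visible. Cancelling this factor, which is nonzero by Lemma \ref{lem: real eigenvalue close to 0} since $|\F_{\zeta_0}'(0)|\sim|L-2\pi|$, we write
\begin{equation*}
\varrho(T)=-\frac{\zeta_0^2+\lambda_1^2}{\zeta_0E_{-1}'(L)-\ii\lambda_1E_1'(L)}\sum_{j=1}^3c_je^{-\mu_jT/2}\frac{h_{\mu_j}'(L)}{\zeta_0+\mu_j}.
\end{equation*}
Thus the proof reduces to three estimates: an upper bound for the sum, an upper bound $\zeta_0^2+\lambda_1^2\lesssim|L-2\pi|^2$, and a lower bound $|\zeta_0E_{-1}'(L)-\ii\lambda_1E_1'(L)|\gtrsim|L-2\pi|^2$.

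\textbf{Step 1: the sum.} Since $\mu_j>|\zeta_0|+1$, we have $|\zeta_0+\mu_j|\ge 1$. Also $e^{-\mu_jT/2}\le1$. By Proposition \ref{prop: h_mu-est-uniform} (or Lemma \ref{lem: existence of modulated functions} together with continuity in $L$), $|h_{\mu_j}'(L)|\le\|h_{\mu_j}'\|_{L^\infty}\lesssim1$ uniformly for $L\in I$. Hence the sum is bounded by $C\sum|c_j|$, and this depends only on $(\mu_j,c_j)$.

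\textbf{Step 2: the prefactor $\zeta_0^2+\lambda_1^2$.} By Lemma \ref{lem: real eigenvalue close to 0}, $\zeta_0\sim(L-2\pi)^2$. By the expansion $\lambda_1=\frac{1}{\sqrt3\pi}|L-2\pi|+\bigO(|L-2\pi|^2)$, we get $\zeta_0^2+\lambda_1^2=\frac{1}{3\pi^2}(L-2\pi)^2+\bigO(|L-2\pi|^3)$.

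\textbf{Step 3: the denominator, which is the main obstacle.} By Proposition \ref{coro: expansion of E+ and E-}, $E_{-1}'(L)=-\ii\sqrt{2/\pi}+\bigO(|L-2\pi|)$, so $\zeta_0E_{-1}'(L)=\bigO((L-2\pi)^2)$. Likewise $E_1'(L)=\epsilon_0(L-2\pi)+\bigO((L-2\pi)^2)$, so
\begin{equation*}
-\ii\lambda_1E_1'(L)=-\frac{\ii\epsilon_0}{\sqrt3\pi}|L-2\pi|(L-2\pi)+\bigO(|L-2\pi|^3).
\end{equation*}
Both terms are of order $(L-2\pi)^2$, so we must rule out cancellation between them. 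For this we need the sharp leading coefficient of $\zeta_0$ from Proposition \ref{prop: asymptotic expansion for A0} with $k=l=1$, namely $\zeta_0=\kappa_0(L-2\pi)^2+\bigO(|L-2\pi|^3)$ with $\kappa_0=\frac{9\sqrt3\cdot 4}{8\pi\cdot 3^{7/2}}$, up to sign. We then check that
\begin{equation*}
-\ii\sqrt{2/\pi}\,\kappa_0-\frac{\ii\epsilon_0}{\sqrt3\pi}\,\mathrm{sgn}(L-2\pi)\neq0
\end{equation*}
for both signs of $L-2\pi$.

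The reality structure helps here. The proof of Lemma \ref{lem: construction of v-2pi} shows $E_{-1}'(L)\in\ii\R$ and $E_1'(L)\in\R$, so both terms of the denominator are purely imaginary. Nonvanishing therefore reduces to comparing two real numbers, $\sqrt{2/\pi}\,\kappa_0$ and $\pm\epsilon_0/(\sqrt3\pi)$, where $\epsilon_0$ should be read as real consistently with $E_1'(L)\in\R$. This numerical comparison is the step I expect to be delicate. I would carry it out with the explicit constants, and if needed recompute $E_1'(L)$ at second order from the formula for $\E_j$ in the elliptic regime.

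\textbf{Conclusion.} Once $|\text{denominator}|\ge c_*|L-2\pi|^2$ holds for $|L-2\pi|\le\delta$ with $\delta$ small, Steps 1–2 give $|\varrho|\le \frac{C}{c_*}\sum_j|c_j|=:C_\varrho$. For the remaining compact part of $I\setminus\{2\pi\}$, if $\delta$ is not taken small, continuity in $L$ and the nonvanishing of the denominator for $L\notin\mathcal N$ give a uniform bound as well.
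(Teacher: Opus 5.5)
Your reduction is the paper's. You cancel $\F_{\zeta_0}'(0)$ in \eqref{eq: defi of varrho}, bound the sum by $C\sum_j|c_j|$ via $|\zeta_0+\mu_j|\ge 1$ and $|h_{\mu_j}'(L)|\lesssim 1$, use $\zeta_0^2+\lambda_1^2\sim (L-2\pi)^2$, and reduce everything to $|\zeta_0E_{-1}'(L)-\ii\lambda_1E_1'(L)|\gtrsim (L-2\pi)^2$. The paper obtains this last bound by inserting the expansions of Proposition \ref{coro: expansion of E+ and E-} and asserting the resulting coefficient is nonzero.

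You stop exactly at this step, which is the only non-routine content of the lemma, so the proof is incomplete. Your planned way to finish it, comparing explicit constants with $\epsilon_0$ read as real, cannot settle the question. The order-$|L-2\pi|$ part of $E_1'(L)$ is not intrinsic: it depends on how the phase of $\E_1$ is fixed at order $|L-2\pi|$. Concretely, suppose $\E_1'(L)=\ii\beta$ with $\beta\in\mathbb{R}$ (a normalization achieving this is given below). Replacing $\E_1$ by $e^{\ii\theta}\E_1$, and $\E_{-1}=\overline{\E_1}$ accordingly, gives $E_1'(L)=-2\beta\sin\theta$ and $E_{-1}'(L)=2\ii\beta\cos\theta$, hence
\[
\zeta_0E_{-1}'(L)-\ii\lambda_1E_1'(L)=2\ii\beta\,(\zeta_0\cos\theta+\lambda_1\sin\theta).
\]
This vanishes for $\tan\theta=-\zeta_0/\lambda_1=\bigO(|L-2\pi|)$, a rotation that leaves every leading-order expansion of $E_{\pm1}$ intact. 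So the uniform bound on $\varrho$ depends on the normalization, and no numerical check can substitute for fixing it.

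The missing idea is the reflection $R:g\mapsto g(L-\cdot)$, which preserves $D(\B)$ and satisfies $\B R=-R\B$. Hence $\overline{R\E_1}$ lies in the one-dimensional $\ii\lambda_1$-eigenspace. The phase of $\E_1$ can therefore be chosen so that $\E_1(L-x)=\overline{\E_1(x)}=\E_{-1}(x)$. This choice is unique up to sign and consistent with the limit $\frac{1}{\sqrt{6\pi}}(1-\cos x\pm\sqrt{3}\,\ii\sin x)$, and it is the normalization the paper itself invokes in the proof of Proposition \ref{prop: estimates on fixed interval-iteration preparation-2pi}.

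With this normalization, $E_1=\E_1+\E_{-1}$ is even about $L/2$, so $E_1'(0)=-E_1'(L)$. Since $E_1\in D(\B)$, also $E_1'(0)=E_1'(L)$. Therefore $E_1'(L)=0$ exactly, which is the case $\theta=0$ above, i.e.\ $\epsilon_0=0$. In particular, the nonzero, non-real $\epsilon_0$ of Proposition \ref{coro: expansion of E+ and E-} should not be used as input. The denominator is then exactly $\zeta_0E_{-1}'(L)$. Combining $|\zeta_0|\sim (L-2\pi)^2$ (Lemma \ref{lem: real eigenvalue close to 0}) with $|E_{-1}'(L)|\to\sqrt{2/\pi}$ gives the lower bound with no cancellation to rule out, and your Steps 1--2 then conclude.

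Two side remarks. Your $|\kappa_0|=\frac{1}{6\pi}$ is the right size, and its sign is negative, since $\zeta_0=-\frac12|\F_{\zeta_0}'(0)|^2$ by Lemma \ref{lem: basic prop of eigen of bad op}. Your closing remark about $L$ far from $2\pi$ is unnecessary, and the nonvanishing it claims is unjustified, since Assumption \ref{ass: Compact intervals including 2pi} lets you take $\delta$ small.
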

\begin{proof}
As we defined in \eqref{eq: defi of varrho} in Lemma \ref{lem: construction of v-2pi}, 
\begin{equation*}
\varrho(T)=-\frac{\zeta_0^2+\lambda_{1}^2}{\zeta_0\F_{\zeta_0}'(0)E_{-1}'(L)-\ii\lambda_{1}\F_{\zeta_0}'(0)E_1'(L) }\sum_{j=1}^3c_j e^{-\mu_j T/2}\frac{\F_{\zeta_0}'(0)h_{\mu_j}'(L)}{ \zeta_0+\mu_j}
\end{equation*}
For $\frac{\zeta_0^2+\lambda_{1}^2}{\zeta_0\F_{\zeta_0}'(0)E_{-1}'(L)-\ii\lambda_{1}\F_{\zeta_0}'(0)E_1'(L) }$, due to the estimates in Proposition \ref{prop: Asymp in L-low} and Lemma \ref{lem: real eigenvalue close to 0}, 
\begin{align*}
|\zeta_0|\sim |L-2\pi|^2,\;|\lambda_{1}|\sim |L-2\pi|,\; |\F_{\zeta_0}'(0)|\sim |L-2\pi|.
\end{align*}
Applying Proposition \ref{prop: Low-frequency behaviors: Singular limits}, combining with \eqref{rem: defi-of E+ and E-} and Proposition \ref{coro: expansion of E+ and E-}, we derive that
\begin{align*}
|\zeta_0E_{-1}'(L)-\ii\lambda_{1}E_1'(L)|&=\left|\left(-\frac{1}{3\pi}(L-2\pi)^2+\bigO((L-2\pi)^3)\right)\left(-\ii\sqrt\frac{2}{\pi}+\bigO(|L-2\pi|)\right)\right.\\
&\left.-\ii\left(\frac{1}{\sqrt{3}\pi}|L-2\pi|+\bigO(|L-2\pi|^2)\right)\left(\epsilon_0(L-2\pi)+\bigO((L-2\pi)^2)\right)\right|\\
&=\left|\ii\frac{1}{\sqrt{3}\pi}(\frac{2}{3}-\epsilon_0)(L-2\pi)^2+\bigO(|L-2\pi|^3)\right|\\
&=\frac{1}{\sqrt{3}\pi}|\frac{2}{3}-\epsilon_0|(L-2\pi)^2+\bigO(|L-2\pi|^3).
\end{align*}
Note that $|\frac{2}{3}-\epsilon_0|>0$. Therefore, $|\zeta_0\F_{\zeta_0}'(0)E_{-1}'(L)-\ii\lambda_{1}\F_{\zeta_0}'(0)E_1'(L)|\sim |\F_{\zeta_0}'(0)||L-2\pi|^2$, which deduces that 
\begin{equation*}
|\zeta_0\F_{\zeta_0}'(0)E_{-1}'(L)-\ii\lambda_{1}\F_{\zeta_0}'(0)E_1'(L)|\sim |\F_{\zeta_0}'(0)||L-2\pi|^2.
\end{equation*}
Thus, 
\begin{equation*}
\left|\frac{\zeta_0^2+\lambda_{1}^2}{\zeta_0\F_{\zeta_0}'(0)E_{-1}'(L)-\ii\lambda_{1}\F_{\zeta_0}'(0)E_1'(L) }\right|\lesssim \frac{1}{|\F_{\zeta_0}'(0)|}.
\end{equation*}
For $c_j e^{-\mu_j T/2}\frac{\F_{\zeta_0}'(0)h_{\mu_j}'(L)}{ \zeta_0+\mu_j}$, $j=1,2,3$, we know that $|c_j e^{-\mu_j T/2}\frac{\F_{\zeta_0}'(0)h_{\mu_j}'(L)}{ \zeta_0+\mu_j}|\leq |c_j||\F_{\zeta_0}'(0)h_{\mu_j}'(L)|$.
Using Proposition \ref{prop: h_mu-est-uniform}, we know that $|h_{\mu_j}'(L)|\lesssim 1$. Therefore, we obtain that $\varrho$ is uniformly bounded by $C_{\varrho}=C_0(|c_1|+|c_2|+|c_3|)$, where $C_0>1$ is a constant which is independent of $L$ and $T$, i.e. $ |\varrho|\leq C_{\varrho}:= C_0(|c_1|+|c_2|+|c_3|)$.
\end{proof}
\subsubsection{Revised transition-stabilization method}
\textbf{A priori estimates for the intermediate system}
\begin{lem}\label{lem: est-v-2pi}
Let $v\in C^1(0,T)$ be the control function that we construct in the formula \eqref{eq: E_-construction of v-2pi} in Lemma \ref{lem: construction of v-2pi}. We have the following estimate for $v$
\begin{equation}
    \|v\|_{L^{\infty}(0,T)}+\|v'\|_{L^{\infty}(0,T)}\leq Ce^{\frac{K}{\sqrt{T}}}\|z^0\|_{H^3(0,L)}.
\end{equation}
\end{lem}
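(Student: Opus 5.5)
We estimate the two pieces of the explicit formula \eqref{eq: E_-construction of v-2pi} separately. Write $v=v_0\vartheta_0(\cdot-\frac{T}{2})+\sum_{k\neq0,\pm1}v_k\vartheta_k(\cdot-\frac{T}{2})$, with
\[
v_0=\frac{z^0_-e^{\ii\frac{\lambda_1T}{2}}-\varrho(T)e^{-\ii\frac{\lambda_1T}{2}}}{\ii\lambda_1h_1},\qquad v_k=\frac{z^0_ke^{\ii\frac{\lambda_kT}{2}}}{\ii\lambda_kh_k}.
\]
By Lemma \ref{lem: size of h_j}, $\ii\lambda_kh_k=-\overline{\E'_k(L)}$ for every $k$. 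Both denominators are therefore boundary derivatives of eigenfunctions, and every estimate reduces to a lower bound on $|\E_k'(L)|$.

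\textbf{The sum over $k\neq0,\pm1$.} Proceed exactly as in Lemma \ref{lem: H^m-estimate of v} and its uniform corollary. For $L_0=2\pi$, Proposition \ref{prop: Index set M-E} gives $\Lambda_E=\{\pm1\}$, so every $k\neq\pm1$ lies in the hyperbolic regime.
\begin{itemize}
\item For $|k|>J$, Proposition \ref{prop: High-frequency behaviors: uniform estimates} gives $|\E_k'(L)|\geq\gamma|k|$.
\item For the finitely many hyperbolic $k$ with $|k|\leq J$, Lemma \ref{lem: bound of eigenvalues} keeps $|\lambda_k|\leq K$, and Proposition \ref{prop: Low-frequency behaviors: uniform estimates} gives $|\E_k'(L)|\geq\gamma$.
\end{itemize}
Lemma \ref{lem: bi-orthogonal family-2pi} gives $\|\vartheta_k^{(m)}\|_{L^\infty}\leq Ce^{K/\sqrt T}|\lambda_k|^m$ for $m=0,1$, uniformly in $L$. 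Hence
\[
\sum_{k\neq0,\pm1}|v_k|\bigl(\|\vartheta_k\|_{L^\infty}+\|\vartheta_k'\|_{L^\infty}\bigr)\lesssim e^{K/\sqrt T}\sum_k\frac{(1+|\lambda_k|)|z^0_k|}{\max(1,|k|)}\lesssim e^{K/\sqrt T}\Bigl(\sum_k(1+|\lambda_k|^2)|z^0_k|^2\Bigr)^{1/2}.
\]
The last step is Cauchy–Schwarz. The final quantity is $\lesssim e^{K/\sqrt T}\|z^0\|_{H^3}$, since $\sum_k|\lambda_k|^2|z_k^0|^2\lesssim\|z^0\|_{H^3}^2$ for $z^0\in D(\B)$.

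\textbf{The $\vartheta_0$ term (main obstacle).} The danger is that $\lambda_1=\bigO(|L-2\pi|)$. The denominator, however, is $|\ii\lambda_1h_1|=|\E_1'(L)|$, and this is not small. By Proposition \ref{coro: expansion of E+ and E-}, $E_{-1}'(L)=\E_1'(L)-\E_{-1}'(L)=-\ii\sqrt{2/\pi}+\bigO(|L-2\pi|)$ and $E_1'(L)=\bigO(|L-2\pi|)$. Since $\overline{\E_1}=\E_{-1}$ (which also gives $\E_{-1}'(L)=\overline{\E_1'(L)}$), this forces $|\E_1'(L)|\geq\frac{1}{2}\sqrt{2/\pi}$ for $\delta$ small; this is the $\mathcal{S}_1$ case of Proposition \ref{prop: Low-frequency behaviors: Singular limits}. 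The numerator is controlled as follows:
\begin{itemize}
\item $|z^0_-|\lesssim\|z^0\|_{L^2}$, because $E_{-1}$ has norm $\sqrt2$.
\item $|\varrho(T)|\leq C_\varrho=C_0\sum_j|c_j|$ by Lemma \ref{lem: est for varrho-2pi}. In the scheme, $c_j$ are the transition coefficients of Lemma \ref{lem: defi of coeff-1-2-3}, bounded by $|\p_xz(0)|+|\p_x\mathcal{P}z(0)|+|\langle z,E_1\rangle|$. For the purposes of this lemma, I would state that $C_\varrho$ is absorbed into $\|z^0\|_{H^3}$, i.e. $|\varrho|\lesssim\|z^0\|_{H^3}$ (or treat $\varrho$ as part of the data).
\end{itemize}
Finally, we need the bound $\|\vartheta_0\|_{W^{1,\infty}}\leq Ce^{K/\sqrt T}$ uniformly in $L$. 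The bound in Lemma \ref{lem: bi-orthogonal family-2pi} carries a factor $|\lambda_0|^m$, which we read as an $\bigO(1)$ constant.

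The step I expect to be delicate is verifying this uniform bound for $\vartheta_0$. In a naive biorthogonal construction with two nearby frequencies $\pm\lambda_1$, one gets a factor $1/|\lambda_1-\lambda_{-1}|$, as in Corollary \ref{coro: uniform est for bi-family}. The compensated family avoids this because $\vartheta_0$ only needs to equal $1$ at both $\pm\lambda_1$, not to separate them. So I would check, via its construction in Appendix \ref{sec: app-compensate-bi-2pi}, that $\vartheta_0$ is built from a multiplier vanishing at $\lambda_k$, $k\neq0,\pm1$, normalised by a quantity bounded below uniformly as $\lambda_1\to0$. Summing the two pieces then gives the stated estimate.
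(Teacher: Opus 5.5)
Your argument is the paper's proof, step for step. It uses the same split into the $\vartheta_0$ term and the sum over $k\neq0,\pm1$. It uses the same lower bounds on $|\ii\lambda_kh_k|=|\E_k'(L)|$ from Propositions \ref{prop: High-frequency behaviors: uniform estimates}, \ref{prop: Low-frequency behaviors: uniform estimates} and \ref{prop: Low-frequency behaviors: Singular limits}. It also uses the uniform bounds of Lemma \ref{lem: bi-orthogonal family-2pi}, whose appendix proof gives exactly the normalisation $\tfrac12\le\Sigma_\beta(2\lambda_1)\le1$ you anticipated for $\vartheta_0$.

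Your flag on $\varrho$ is apt, since the paper's proof also silently drops $|\varrho(T)|$. However, only your second option is sound. With the $c_j$ fixed independently of $z^0$, as in Lemma \ref{lem: construction of v-2pi}, the choice $z^0=0$, $\varrho\neq0$ gives $v\neq0$. So the correct conclusion is $\|v\|_{L^\infty(0,T)}+\|v'\|_{L^\infty(0,T)}\le Ce^{K/\sqrt T}\bigl(\|z^0\|_{H^3(0,L)}+|\varrho|\bigr)$. This suffices downstream, because $|\varrho|$ is estimated separately in Proposition \ref{prop: estimates on fixed interval-iteration preparation-2pi}.
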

\begin{proof}
We only prove the estimate for $\|v\|_{L^{\infty}(0,T)}$. The estimate for the derivative follows similarly. 
\begin{align*}
\|v\|_{L^{\infty}(0,T)}&=\left|\frac{z^0_-e^{\ii\frac{\lambda_{1} T}{2}}-\varrho(T)e^{-\ii\frac{\lambda_{1} T}{2}}}{\ii\lambda_{1}h_{1}}\vartheta_0(t-\frac{T}{2})+\sum_{k\neq0,\pm 1}\frac{z^0_ke^{\ii\frac{\lambda_k T}{2}}}{\ii\lambda_kh_k}\vartheta_k(t-\frac{T}{2})\right|\\
&\leq \left|\frac{z^0_-e^{\ii\frac{\lambda_{1} T}{2}}-\varrho(T)e^{-\ii\frac{\lambda_{1} T}{2}}}{\ii\lambda_{1}h_{1}}\vartheta_0(t-\frac{T}{2})\right|+\sum_{k\neq0,\pm 1}\frac{|z^0_k|}{|\lambda_kh_k|}\|\vartheta_k\|_{L^{\infty(\R)}}\\
&\leq \frac{|z^0_-|+|\varrho(T)|}{|\lambda_{1}h_{1}|}\|\vartheta_0\|_{L^{\infty}(\R)}+\sum_{k\neq0,\pm 1}\frac{|z^0_k|}{|\lambda_kh_k|}\|\vartheta_k\|_{L^{\infty(\R)}}\\
&\leq \frac{|z^0_-|+|\varrho(T)|}{|\lambda_{1}h_{1}|}\|\vartheta_0\|_{L^{\infty}(\R)}+\sum_{k\neq 0,\pm 1,|k|\leq J}\frac{|z_k^0|}{|\E_k'(L)|}\|\vartheta_k\|_{L^{\infty}(\R)}+\sum_{|k|>J}\frac{|z_k^0|}{|\E_k'(L)|}\|\vartheta_k\|_{L^{\infty}(\R)}
\end{align*}
By Lemma \ref{lem: bi-orthogonal family-2pi}, we know that for $k\neq\pm 1$, $\|\vartheta_k\|_{L^{\infty}(\R)}\leq Ce^{\frac{K}{\sqrt{T}}}$. Thanks to Proposition \ref{prop: Low-frequency behaviors: uniform estimates}, we know that $|\E_j'(L)|>\gamma$ for $j\neq0,\pm 1$ and $|j|\leq J$. Applying Proposition \ref{prop: Low-frequency behaviors: Singular limits}, we know that $|\E_{1}'(L)|>\gamma$. For high-frequencies, by Proposition \ref{prop: High-frequency behaviors: uniform estimates}, $|\E_j'(L)|>\gamma|j|$.  Combining all these estimates, we conclude that
\begin{equation*}
    \|v\|_{L^{\infty}(0,T)}\leq Ce^{\frac{K}{\sqrt{T}}}\|z^0\|_{H^3(0,L)}
\end{equation*}
\end{proof}
Based on the lemma above, we have the following estimates on the boundary derivative.
\begin{lem}\label{lem: control-toy-2pi}
Suppose that Assumption \ref{ass: Compact intervals including 2pi} holds. Suppose that $z^0\in H_{\B}^2$ and $\poscals{z^0}{E_{-1}}\in\ii\R$. There exists a unique solution $z\in C([0,T];L^2(0,L))$ to the equation \eqref{eq: KdV boundary derivative difference}
such that $z(T,x)=\varrho E_{-1}(x)$ and $\varrho\in\ii\R$. Then for every $L\in I\setminus\{L_0\}$, there are two constants $K_1$ and $K_2$, independent of $L$, such that 
$\p_x z(\cdot,L)\in L^2(0,T)$ and 
\begin{equation}\label{eq: L^2-estimate of derivative of z-2pi}
\|\p_x z(\cdot,L)\|_{L^{\infty}(0,T)}\leq K_1e^{\frac{2K}{\sqrt{T}}}\|z^0\|_{H^{6}(0,L)}.
\end{equation}
and for any $t\in(0,T]$, we have the following estimate
\begin{equation}\label{eq: L^2-estimate of z(t)-2pi}
\|z(t,\cdot)\|_{L^2(0,L)}\leq K_2 e^{\frac{2K}{\sqrt{T}}}\|z^0\|_{H^{3}(0,L)}
\end{equation}
\end{lem}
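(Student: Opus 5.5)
We take $v$ to be the control \eqref{eq: E_-construction of v-2pi} from Lemma \ref{lem: construction of v-2pi}. We write the solution through the expansion \eqref{eq: construction-z-expansion-2pi} and integrate by parts in time, exactly as in Lemma \ref{lem: H^m-estimate tilde_z} and Corollary \ref{coro: uniform and singular estimates for derivative-z}. What changes relative to Section \ref{sec: A transition-stabilization method} is that the singular modes $k=\pm1$ no longer appear through coefficients $z^0_k/(h_k\lambda_k)$ with $|\E_k'(L)|\to 0$. Instead they enter only through the single compensating function $\vartheta_0$. Its coefficient $v_0=(z^0_-e^{\ii\lambda_1T/2}-\varrho e^{-\ii\lambda_1T/2})/(\ii\lambda_1h_1)$ has denominator $-\overline{\E_1'(L)}$, which stays away from zero by Proposition \ref{prop: Low-frequency behaviors: Singular limits} (the $\mathcal{S}_1$ case). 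The numerator is bounded by $|z^0_-|+C_\varrho$ via Lemma \ref{lem: est for varrho-2pi}. Lemma \ref{lem: est-v-2pi} already gives uniform bounds on $v$ and $v'$, and we extend it to $v''$ with the $H^6$ norm, using the bound $\|\vartheta_k^{(m)}\|\le Ce^{K/\sqrt T}|\lambda_k|^m$ from Lemma \ref{lem: bi-orthogonal family-2pi}.

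For the boundary derivative, we write $z=\tilde z+vh$ and differentiate the series for $\tilde z$. We split the sum over the output index $j$ into $j\neq\pm1$ and $j=\pm1$.

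For $j\ne\pm1$, we integrate by parts twice in $s$, giving terms $\tilde I_1,\tilde I_2$. These are bounded as before, using
\begin{itemize}
\item $|\E_j'(L)|\lesssim|j|$ and $|\lambda_j|\gtrsim|j|^3$ (uniform in $L$ by Proposition \ref{prop: Asymp in L}),
\item the lower bounds of Propositions \ref{prop: High-frequency behaviors: uniform estimates} and \ref{prop: Low-frequency behaviors: uniform estimates} on $|\E_k'(L)|$ for $k\ne\pm1$,
\item and the $\vartheta_0$ bound above.
\end{itemize}
This produces $e^{2K/\sqrt T}\|z^0\|_{H^6}$.

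For $j=\pm1$, integration by parts is \emph{not} allowed, since it would create the divergent factor $1/\lambda_{\pm1}$. We keep the form $\ii\lambda_jh_j\int_0^t e^{\ii(t-s)\lambda_j}v(s)\,ds\,\E_j'(L)=-\overline{\E_j'(L)}\E_j'(L)\int_0^t\cdots$ and bound it directly by $T\|v\|_{L^\infty}|\E_{\pm1}'(L)|^2\lesssim e^{K/\sqrt T}\|z^0\|_{H^3}$. The free part $z^0_-(e^{\ii\lambda_1t}\E_1'(L)-e^{-\ii\lambda_1t}\E_{-1}'(L))$ is trivially bounded. Finally, $v(t)h'(L)$ is bounded by Lemma \ref{lem: formula h}. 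One caveat: $h$ itself degenerates near $2\pi$, where $e^{\ii L}\to1$. So we should instead regroup $vh$ with the $j=\pm1$ part of $\tilde z$, or equivalently never split off $vh$ for those two modes but work directly with the formula $z_j(t)=e^{\ii\lambda_jt}z^0_j-\ii\lambda_jh_j\int_0^te^{\ii(t-s)\lambda_j}v\,ds$. In that formula $\lambda_jh_j=-\overline{\E_j'(L)}$ is bounded, so no small divisor appears.

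The $L^2$ estimate of $z(t)$ is proved the same way. For $j\ne\pm1$ we use the representation with $\phi'_k$ and $\sum_j|h_j|<\infty$. For $j=\pm1$ we use the direct formula, which gives $|z_{\pm1}(t)|\le|z^0_-|+T|\E_{\pm1}'(L)|\,\|v\|_\infty$. The summability $\sum_{j\ne\pm1}|\E_j'(L)|/|\lambda_j|$ is uniform because the only eigenvalues near zero are $\lambda_{\pm1}$.

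The main obstacle is precisely the handling of the $j=\pm1$ modes and of the degenerate lifting $h$ at $L\approx2\pi$. Every estimate must be arranged so that only the products $\lambda_{\pm1}h_{\pm1}$ appear, never $h_{\pm1}$ or $1/\lambda_{\pm1}$ alone. We would check this first by rewriting the whole solution modewise without the $vh$ splitting.
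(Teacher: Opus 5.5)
Your proposal is correct and is essentially the paper's own proof. The paper likewise works from the modewise formula \eqref{eq: expansion-z-2pi} for $z$ itself, with no $vh$ splitting. It estimates the $j=\pm1$ modes directly, without integrating by parts, via $|\lambda_{\pm1}h_{\pm1}|=|\E'_{\pm1}(L)|$. It integrates by parts twice only for $j\neq\pm1$. It controls the $\vartheta_0$ coefficient through $|\E'_1(L)|\gtrsim1$ together with the uniform lower bounds on $|\E'_k(L)|$ for $k\neq\pm1$.

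Your caveat about the lifting is well founded. Since $\|h\|_{L^2}\sim|L-2\pi|^{-1}$, the term $|v(t)|\,\|h\|_{L^2}$ used in Lemma \ref{lem: control-toy} is not uniform in $L$. Your regrouping is therefore what actually justifies the paper's unexplained ``same procedure'' for the $L^2$ bound. It costs nothing at the boundary, because $h_1\E'_1(L)+h_{-1}\E'_{-1}(L)=0$, so the leftover lifting still has derivative $\tfrac12$ at $x=L$.

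One more point, which you share with the paper. The $\vartheta_0$ coefficient contributes an additive $|\varrho|$, exactly as in the paper's own computation. So both estimates really hold with $|\varrho|$ added to the right-hand sides, not with $\|z^0\|$ alone.
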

\begin{proof}
Recalling that the expansion for the solution $z$ in \eqref{eq: construction-z-expansion-2pi} and using the property \ref{eq: biorthorgonal-2pi} in Lemma \ref{lem: bi-orthogonal family-2pi}, we know that
\begin{equation}\label{eq: expansion-z-2pi}
\begin{aligned}
z(t,x)&=\left(e^{\ii \lambda_{1}t}z^0_--\sum_{k\neq0,\pm 1}\frac{z^0_k\lambda_{1}h_{1}e^{\ii\frac{\lambda_k T}{2}}}{\lambda_kh_k}\int_0^te^{\ii(t-s)\lambda_{1}}\vartheta_k(s-\frac{T}{2})ds\right)\E_{1}(x)\\
&-\left(e^{\ii \lambda_{-1}t}z^0_-+\sum_{k\neq0,\pm 1}\frac{z^0_k\lambda_{-1}h_{-1}e^{\ii\frac{\lambda_k T}{2}}}{\lambda_kh_k}\int_0^te^{\ii(t-s)\lambda_{-1}}\vartheta_k(s-\frac{T}{2})ds\right)\E_{-1}(x)\\
&+\sum_{j\neq0,\pm 1}\sum_{k\neq0,\pm 1}\frac{z^0_k\lambda_jh_je^{\ii\frac{\lambda_k T}{2}}}{\lambda_kh_k}\int_t^Te^{\ii(t-s)\lambda_j}\vartheta_k(s-\frac{T}{2})ds\E_j(x)\\
&-\sum_{j\neq0}\frac{\lambda_jh_j(z^0_-e^{\ii\frac{\lambda_{1} T}{2}}-\varrho(T)e^{-\ii\frac{\lambda_{1} T}{2}})}{\lambda_{1}h_{1}}\int_0^te^{\ii(t-s)\lambda_j}\vartheta_0(s-\frac{T}{2})ds\E_j(x).
\end{aligned}
\end{equation}
Thus, we write $\p_x z(t,L)=I_1+I_2+I_3+I_4$, with
\begin{align*}
I_1&=e^{\ii \lambda_{1}t}z^0_--\sum_{k\neq0,\pm 1}\frac{z^0_k\lambda_{1}h_{1}e^{\ii\frac{\lambda_k T}{2}}}{\lambda_kh_k}\int_0^te^{\ii(t-s)\lambda_{1}}\vartheta_k(s-\frac{T}{2})ds\E'_{1}(L)\\
I_2&=-e^{\ii \lambda_{-1}t}z^0_--\sum_{k\neq0,\pm 1}\frac{z^0_k\lambda_{-1}h_{-1}e^{\ii\frac{\lambda_k T}{2}}}{\lambda_kh_k}\int_0^te^{\ii(t-s)\lambda_{-1}}\vartheta_k(s-\frac{T}{2})ds\E'_{-1}(L)\\
I_3&=\sum_{j\neq0,\pm 1}\sum_{k\neq0,\pm 1}\frac{z^0_k\lambda_jh_je^{\ii\frac{\lambda_k T}{2}}}{\lambda_kh_k}\int_t^Te^{\ii(t-s)\lambda_j}\vartheta_k(s-\frac{T}{2})ds\E'_j(L)\\
I_4&=-\sum_{j\neq0}\frac{\lambda_jh_j(z^0_-e^{\ii\frac{\lambda_{1} T}{2}}-\varrho(T)e^{-\ii\frac{\lambda_{1} T}{2}})}{\lambda_{1}h_{1}}\int_0^te^{\ii(t-s)\lambda_j}\vartheta_0(s-\frac{T}{2})ds\E'_j(L)
\end{align*}
Then, we estimate term by term. For $I_1$,
\begin{align*}
|I_1|
&\leq |z^0_-|+\sum_{k\neq0,\pm 1}\frac{T|z^0_k||\lambda_{1}h_{1}|}{|\lambda_kh_k|}\|\vartheta_k\|_{L^{\infty}(\R)}|\E'_{1}(L)|\\
&\leq |z^0_-|+\sum_{k\neq0,\pm 1,|k|\leq J}\frac{T|z^0_k||\E'_{1}(L)|^2}{|\E'_k(L)|}\|\vartheta_k\|_{L^{\infty}(\R)}+\sum_{|k|>J}\frac{T|z^0_k||\E'_{1}(L)|^2}{|\E'_k(L)|}\|\vartheta_k\|_{L^{\infty}(\R)}.
\end{align*}
For high-frequencies, by Proposition \ref{prop: High-frequency behaviors: uniform estimates}, we know that $|\E_k'(L)|>\gamma|k|$ for $|k|>J$. For low-frequencies, by Proposition \ref{prop: Low-frequency behaviors: uniform estimates}, we know that $|\E_k'(L)|>\gamma$, for $|k|\leq J$ and $k\neq 0,\pm 1$. Thanks to Proposition \ref{lem: bi-orthogonal family-2pi}, we know that $ \|\vartheta_k\|_{L^{\infty}(\R)}\leq Ce^{\frac{K}{\sqrt{T}}},k\neq 0,\pm 1$. 
Combining all these estimates, we obtain 
\begin{align*}
|I_1|
\leq Ce^{\frac{K}{\sqrt{T}}}\left(|z^0_-|+\sum_{k\neq 0,\pm 1,|k|\leq J}\frac{|z_k^0|}{ \gamma}|\E'_{1}(L)|^2+\sum_{|k|>J}\frac{|z_k^0|}{\gamma |k|}|\E'_{1}(L)|^2\right)
\leq Ce^{\frac{K}{\sqrt{T}}}\|z^0\|_{L^2(0,L)}.
\end{align*}
The same procedure holds for $I_2$. Thus, $|I_2|\leq Ce^{\frac{K}{\sqrt{T}}}\|z^0\|_{L^2(0,L)}$. For $I_3$, integrating by parts twice, we write $I_3=I_3^1+I_3^2+I_3^3$ with
\begin{align*}
I_3^1&=-\sum_{j,k\in\Z\backslash\{0,\pm 1\}}e^{\ii \frac{T}{2}\lambda_k}\frac{h_j z_k^0}{ h_k\lambda_k\lambda_j}\vartheta'_k(t-\frac{T}{2})\E'_j(L),\\
I_3^2&=-\sum_{j,k\in\Z\backslash\{0,\pm 1\}}e^{\ii \frac{T}{2}\lambda_k}\frac{h_j z_k^0}{ h_k\lambda_k\lambda_j}\int_t^Te^{\ii(t-s)\lambda_j}\vartheta''_k(s-\frac{T}{2})ds\E'_j(L),\\
I_3^3&=\sum_{j,k\in\Z\backslash\{0,\pm 1\}}e^{\ii \frac{T}{2}\lambda_k}\frac{h_j z_k^0}{\ii h_k\lambda_k}\vartheta_k(t-\frac{T}{2})\E'_j(L).
\end{align*}
As usual, we compute one by one. We begin with the term $I_3^3$, it is easy to see that $|I_3^3|\leq \|v\|_{L^{\infty}(\R)}|h'(L)|+\frac{|z^0_-|+|\varrho(T)|}{|\lambda_{1}h_{1}|}\|\vartheta_0\|_{L^{\infty}(\R)}$.
By Lemma \ref{lem: est-v-2pi} and Lemma \ref{lem: bi-orthogonal family-2pi}, we know that $|I_3^3|\leq Ce^{\frac{K}{\sqrt{T}}}\|z^0\|_{L^2(0,L)}$. 
For the term $I_3^1$, we split the sum into high-frequency and low-frequency parts 
\begin{align*}
|I_3^1|
&\leq\sum_{j,k\in\Z\backslash\{0,\pm 1\},|k|\leq J}\frac{|h_j| |z_k^0|}{ |h_k\lambda_k||\lambda_j|}\|\vartheta'_k\|_{L^(\infty)(\R)}|\E'_j(L)|+\sum_{j,k\in\Z\backslash\{0,\pm 1\},|k|> J}\frac{|h_j| |z_k^0|}{ |h_k\lambda_k||\lambda_j|}\|\vartheta'_k\|_{L^(\infty)(\R)}|\E'_j(L)|.
\end{align*}
As what we present in the proof of estimating $I_1$, we use once again Proposition \ref{prop: High-frequency behaviors: uniform estimates}, Proposition \ref{prop: Low-frequency behaviors: uniform estimates}, and Proposition \ref{lem: bi-orthogonal family-2pi} to derive the following estimates. 
\begin{align*}
|I_3^1|
&\leq Ce^{\frac{K}{\sqrt{T}}}\left(\sum_{j,k\in\Z\backslash\{0,\pm 1\},|k|\leq J}\frac{|\E'_j(L)|^2|\lambda_k| |z_k^0|}{ \gamma|\lambda_j|^2}+\sum_{j,k\in\Z\backslash\{0,\pm 1\},|k|> J}\frac{|\E'_j(L)|^2|\lambda_k| |z_k^0|}{ \gamma|k||\lambda_j|^2}\right)\\
&\leq Ce^{\frac{K}{\sqrt{T}}}\left(\sum_{j,k\in\Z\backslash\{0,\pm 1\},|j|\leq J,|k|\leq J}\frac{|\E'_j(L)|^2|\lambda_k| |z_k^0|}{ \gamma|\lambda_j|^2}+\sum_{j,k\in\Z\backslash\{0,\pm 1\},|j|> J,|k|\leq J}\frac{|\E'_j(L)|^2|\lambda_k| |z_k^0|}{ \gamma|\lambda_j|^2}\right.\\
&\left.+\sum_{j,k\in\Z\backslash\{0,\pm 1\},|j|\leq J,|k|> J}\frac{|\E'_j(L)|^2|\lambda_k| |z_k^0|}{ \gamma|k||\lambda_j|^2}+\sum_{j,k\in\Z\backslash\{0,\pm 1\},|j|> J,|k|> J}\frac{|\E'_j(L)|^2|\lambda_k| |z_k^0|}{ \gamma|k||\lambda_j|^2}\right).
\end{align*}
Using Proposition \ref{prop: Asymp in L} and Proposition \ref{prop: High-frequency behaviors: uniform estimates}, we know that $\frac{|\E'_j(L)|^2}{|\lambda_j|^2}\sim \frac{1}{|j|^4}$ for $|j|>J$. We take the term $\sum_{j,k\in\Z\backslash\{0,\pm 1\},|j|> J,|k|> J}\frac{|\E'_j(L)|^2|\lambda_k| |z_k^0|}{ \gamma|k||\lambda_j|^2}$ for example. Other terms can be treated similarly. 
\begin{align*}
\sum_{j,k\in\Z\backslash\{0,\pm 1\},|j|> J,|k|> J}\frac{|\E'_j(L)|^2|\lambda_k| |z_k^0|}{ \gamma|k||\lambda_j|^2}&=\sum_{|j|> J}\frac{|\E'_j(L)|^2}{ |\lambda_j|^2}\sum_{|k|> J}\frac{|\lambda_k| |z_k^0|}{ \gamma|k|}\\
&\leq \|z^0\|_{H^3(0,L)}\sum_{|j|> J}\frac{|\E'_j(L)|^2}{ |\lambda_j|^2}\left(\sum_{|k|> J}\frac{1}{ \gamma^2|k|^2}\right)^{\frac{1}{2}}\\
&\leq C\|z^0\|_{H^3(0,L)}.
\end{align*}
Then the sums are bounded by $|I_3^1|\leq Ce^{\frac{K}{\sqrt{T}}}\|z^0\|_{H^3(0,L)}$.
The term $I_3^2$ follows the same procedure and we could find $|I_3^2|\leq Ce^{\frac{K}{\sqrt{T}}}\|z^0\|_{H^6(0,L)}$. 
Therefore, we obtain the estimate for $\|\p_x z(\cdot,L)\|_{L^{\infty}(0,T)}$ as follows,  \begin{align*}
\|\p_x z(\cdot,L)\|_{L^{\infty}(0,T)}&\leq K_1e^{\frac{2K}{\sqrt{T}}}\|z^0\|_{H^{6}(0,L)}.   
\end{align*}
Same procedure could give the estimate for $\|z(t,\cdot)\|_{L^2(0,L)}\leq K_2 e^{\frac{2K}{\sqrt{T}}}\|z^0\|_{H^{3}(0,L)}$.
\end{proof}
\textbf{Iteration Schemes with uniform constants}
\begin{prop}\label{prop: estimates on fixed interval-iteration preparation-2pi}
Let $T\in (0, 2)$, and three distinct positive parameters $\mu_1=\mu$, $\mu_2=2\mu$ and $\mu_3=3\mu$ with $\mu>0$. Suppose that Assumption \ref{ass: Compact intervals including 2pi} holds.  For every $L\in I\setminus\{2\pi\}$, and every real initial state $y^0\in H_{\A}$, there exists a function $u\in L^2(0, T)$ satisfying $u= u_1+ u_2+u_3+u_4$ in $(0, T)$ with $u_1(t)=u_2(t)=u_3(t)=u_4(t)=0, \forall t\in (0, T/2)$, and
\begin{equation*}
 \|u_1\|_{L^{\infty}(0, T)}\leq \mathcal{K}e^{\frac{2\sqrt{2}K}{\sqrt{T}}}\frac{\mu^4}{T^3}\|y^0\|_{L^2(0,L)},\;  
    \|u_{j+1}\|_{L^{\infty}(0,T)}\leq\mathcal{K}\frac{e^{-\frac{\mu^{\frac{1}{3}}}{4}L}}{T^3}\|y^0\|_{L^2(0,L)},j=1,2,3,   
\end{equation*}
such that the unique solution $y(t)=
\left\{
\begin{array}{ll}
    S(t) y^0, & t\in (0, T/2), \\
     y_1(t)+ y_2(t)+y_3(t)+y_4(t),& t\in (T/2, T),
\end{array}
\right.
$ to \eqref{eq: linearized KdV system-control-intro},  where $y_j(t)$ solves the equation \eqref{eq: defi-y_j-2pi}, 
satisfies 
\begin{gather*}
\|y_1(t, \cdot)\|_{L^2(0, L)}\leq \mathcal{K}e^{\frac{2\sqrt{2}K}{\sqrt{T}}}\frac{\mu^{4}}{T^3}\|y^0\|_{L^2(0,L)},  y_1(T, x)= \varrho E_{-1}(x), \text{ where }  |\varrho|\leq \mathcal{K}\frac{e^{-\frac{T}{4}\mu}}{T^3}\|y^0\|_{L^2(0,L)},\\
\|y_{j+1}(t, \cdot)\|_{L^2(0, L)}\leq  \mathcal{K}\frac{\mu^{\frac{1}{2}}e^{-\mu_j(t-\frac{T}{2})}}{T^3}\|y^0\|_{L^2(0,L)},j=1,2,3,\forall t\in (T/2, T).
\end{gather*}
All constants appearing in this proposition are independent of $L$ and $T$.
\end{prop}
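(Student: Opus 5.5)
The plan is to mimic the proof of Proposition \ref{prop: estimates on fixed interval-iteration preparation}, now with three modulated functions and the compensated final target. On $(0,T/2)$ we let the free flow act, $y(t)=S(t)y^0$. Since $H_{\A}$ is the $\poscalr{\cdot}{\F_{\zeta_0}}_{(0,L)}$-orthogonal of an eigenfunction of $\A$, it is invariant under $S(t)$; the transpose-type pairing is preserved because the adjoint flow under this duality is again generated by $\A$. The smoothing estimate of Proposition \ref{prop: Xiang-Krieger gain 1} then gives $y(T/2)\in H^2_{\A}$ with
\begin{equation*}
\|y(T/2)\|_{H^6}+|\p_x y(T/2,0)|+|\p_x\mathcal{P}y(T/2,0)|\leq \frac{C}{T^3}\|y^0\|_{L^2}.
\end{equation*}
The constant is uniform in $L\in I$: we apply the proposition on the rescaled interval, or note that its constants depend continuously on $L$.

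Next we apply Lemma \ref{lem: defi of coeff-1-2-3} with $\mu_j=j\mu$ and write $y(T/2)=z^0+\sum_{j=1}^3c_jh_{\mu_j}$ with $z^0\in H^2_{\B}$ and $c_j$ real. The key quantitative point is a bound $|c_j|\lesssim \mu^{a}T^{-3}\|y^0\|_{L^2}$ uniform in $L$. For this we invert the $3\times3$ matrix $M$ explicitly. Its third row equals $\poscals{h_{\mu_j}}{E_1}=(\mu_jE_1'(L)+\ii\lambda_1E'_{-1}(L))/(\mu_j^2+\lambda_1^2)$, and by Proposition \ref{coro: expansion of E+ and E-} this is $O(|L-2\pi|/\mu)$. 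So the third row degenerates as $L\to2\pi$, and this is the main obstacle. To handle it, we solve the first two equations using the Vandermonde structure. The one remaining equation has a coefficient of size $\sim|L-2\pi|\mu^{-1}\cdot(\text{Vandermonde factors})$, and it must be matched by the right-hand side $\poscals{z^0}{E_1}=\poscals{y(T/2)}{E_1}$. The latter is also small, because $y(T/2)\in H_{\A}$ and $E_1=\sqrt{2/(3\pi)}\,\F_{\zeta_0}$-direction $+O(|L-2\pi|)$. More precisely, we pair $y(T/2)$ against $E_1$ and use integration by parts twice, exploiting the boundary conditions together with $\poscalr{y(T/2)}{\F_{\zeta_0}}=0$. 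This should give $|\poscals{y(T/2)}{E_1}|\lesssim|L-2\pi|\,\|y(T/2)\|_{H^3}$, so the ratio stays bounded. I expect this cancellation to be the delicate step; if the direct pairing argument fails, we would instead choose $E_1$ normalized against $\F_{\zeta_0}$ via the $\poscalr{}{}$ duality.

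With the $c_j$ in hand, we set $z_{\mu_j}=c_je^{-\mu_j(t-T/2)}h_{\mu_j}$, giving $y_{j+1}$ and $u_{j+1}=\p_xz_{\mu_j}(\cdot,L)$. Proposition \ref{prop: h_mu-est-uniform} gives $\|h_{\mu}\|_{L^2}\lesssim\mu^{-1/2}$ and $|h'_\mu(L)|\lesssim e^{-\mu^{1/3}L/4}$ (the latter from the explicit decay behind item 3). For the $z$-part, Lemma \ref{lem: construction of v-2pi} with target $\varrho E_{-1}$ is applied on $(T/2,T)$. The bounds on $u_1=\p_xz(\cdot,L)$ and on $y_1=z$ follow from Lemma \ref{lem: control-toy-2pi} and $\|z^0\|_{H^6}\lesssim T^{-3}(1+\sum|c_j|\,\|h_{\mu_j}\|_{H^6})\|y^0\|$, using $\|h_\mu\|_{H^6}\lesssim\mu^{5/2}$; this produces the power $\mu^4$ after collecting the $c_j$ bounds. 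For $\varrho$, formula \eqref{eq: defi of varrho} together with the bound from Lemma \ref{lem: est for varrho-2pi} gives $|\varrho|\lesssim\sum_j|c_j|e^{-\mu_jT/4}$, since each term carries $e^{-\mu_j T/4}$ when the time interval has length $T/2$. Absorbing the polynomial factors of $\mu$ and the bounds on $c_j$ into $\mathcal{K}$ (possibly after slightly reducing the exponent), we obtain $|\varrho|\leq\mathcal{K}e^{-T\mu/4}T^{-3}\|y^0\|$. Summing the four pieces by linearity and uniqueness, exactly as in the earlier proposition, yields the stated decomposition.
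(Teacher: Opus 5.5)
Your proposal is correct and follows the paper's proof essentially step for step: free flow plus smoothing on $(0,T/2)$, then the three-term compensation of Lemma \ref{lem: defi of coeff-1-2-3}. There the $O(|L-2\pi|)$ degeneracy of the third row of $M$ is offset by $\poscals{y(T/2)}{E_1}=O(|L-2\pi|)$, which the paper gets directly from $E_1(x)=\tilde r_1\F_{\zeta_0}(L-x)+\bigO(|L-2\pi|)$ and $\poscalr{y(T/2)}{\F_{\zeta_0}}_{(0,L)}=0$; this yields $|c_j|\lesssim \mu T^{-3}\|y^0\|$. After that come Lemmas \ref{lem: construction of v-2pi} and \ref{lem: control-toy-2pi} on $(T/2,T)$ with target $\varrho E_{-1}$, together with the $h_\mu$ bounds. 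The only differences are cosmetic. The paper inverts $M$ by explicit cofactor formulas rather than your Vandermonde-kernel reduction. Also, the factor $h'_{\mu_j}(L)/(\zeta_0+\mu_j)$ in \eqref{eq: defi of varrho} already absorbs the factor $\mu$ coming from $c_j$, so no reduction of the exponent is needed in the bound for $\varrho$.
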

\begin{proof}
We first split our time interval $[0,T]$ into two parts $[0,\frac{T}{2}]$ and $[\frac{T}{2},T]$. In the first part, we only use the free KdV flow $y(t)=S(t)y^0$. By smoothing effects, we know that $y(\frac{T}{2},\cdot)\in H_{\A}^2\subset H^6(0,L)$ is a real function and we have the following estimate
\begin{equation}\label{eq: smoothing estimate-toy-2pi}
\|y(\frac{T}{2},\cdot)\|_{H^6(0,L)}+|\p_x y(\frac{T}{2},0)|+|\p_x\mathcal{P} y(\frac{T}{2},0)|\leq \frac{C_1}{T^3}\|y^0\|_{L^2(0,L)}.    
\end{equation}
Then we consider the second time interval $[\frac{T}{2},T]$. We notice that our initial datum $y(\frac{T}{2},\cdot)\in H_{\A}^2$. Thanks to Lemma \ref{lem: defi of coeff-1-2-3}, we are able to define $z^0_{\frac{T}{2}}(x)=y(\frac{T}{2},x)- c_1 h_{\mu_1}(x)- c_2 h_{\mu_2}(x)-c_3 h_{\mu_3}(x)\in H_{\B}^2$,
where $c_1$, $c_2$, and $c_3$ solve the equations
\begin{equation*}
M\left(\begin{array}{c}
    c_1  \\
    c_2\\
    c_3
\end{array}\right):=\left(\begin{array}{ccc}
     1&1&1  \\
     \mu&2\mu&3\mu\\
     \frac{\mu\overline{E'_1(L)}-\ii\lambda_{1}\overline{E'_{-1}(L)}}{\mu^2+\lambda_{1}^2}&\frac{2\mu\overline{E'_1(L)}-\ii\lambda_{1}\overline{E'_{-1}(L)}}{4\mu^2+\lambda_{1}^2}&\frac{3\mu\overline{E'_1(L)}-\ii\lambda_{1}\overline{E'_{-1}(L)}}{9\mu^2+\lambda_{1}^2}
\end{array}
\right)
\left(\begin{array}{c}
     c_1  \\
     c_2\\
     c_3
\end{array}\right)=
\left(\begin{array}{c}
     -\p_xy(\frac{T}{2},0)  \\
     -\p_x(\mathcal{P}y)(\frac{T}{2},0)\\
     -\poscals{y(\frac{T}{2},\cdot)}{E_1}_{L^2(0,L)}
\end{array}\right)
\end{equation*}
Without loss of generality, as usual, we treat the case $\poscals{z^0_{\frac{T}{2}}}{E_{-1}}_{L^2(0,L)}\in \ii\R$. We write the solutions as 
\begin{align*}
c_1&=F_1\left(-\p_xy(\frac{T}{2},0)(30 \ii E'_1(L) \mu^{3} -E'_{-1}(L) \lambda_{1} (\lambda_{1}^{2} + 19 \mu^{2})) -\ii \p_x(\mathcal{P}y)(\frac{T}{2},0)  (E'_1(L) (\lambda_{1}^{2} - 6 \mu^{2}) - 5 \ii E'_{-1}(L) \lambda_{1} \mu)\right.\\
&\left.+ i \poscals{y(\frac{T}{2},\cdot)}{E_1}_{L^2(0,L)} (\lambda_{1}^{2} + 4 \mu^{2}) (\lambda_{1}^{2} + 9 \mu^{2})\right),\\
c_2&=F_2\left(-\p_xy(\frac{T}{2},0)(12 \ii E'_1(L) \mu^{3} -E'_{-1}(L) \lambda_{1} (\lambda_{1}^{2} + 13 \mu^{2})) +\p_x(\mathcal{P}y)(\frac{T}{2},0)(- \ii E'_1(L) (\lambda_{1}^{2} - 3 \mu^{2}) - 4 E'_{-1}(L) \lambda_{1} \mu)\right.\\
&\left. \ii\poscals{y(\frac{T}{2},\cdot)}{E_1}_{L^2(0,L)} (\lambda_{1}^{2} + \mu^{2}) (\lambda_{1}^{2} + 9 \mu^{2})\right),\\
c_3&=F_3\left(\p_xy(\frac{T}{2},0) (6 \ii E'_1(L) \mu^{3} -E'_{-1}(L) \lambda_{1} (\lambda_{1}^{2} + 7 \mu^{2}))+\ii \p_x(\mathcal{P}y)(\frac{T}{2},0)
  (E'_1(L) (\lambda_{1}^{2} - 2 \mu^{2}) - 3 \ii E'_{-1}(L) \lambda_{1} \mu)\right.\\
&\left.-\ii\poscals{y(\frac{T}{2},\cdot)}{E_1}_{L^2(0,L)} (\lambda_{1}^{2} + \mu^{2}) (\lambda_{1}^{2} + 4 \mu^{2}) \right).
\end{align*}
where $F_1=\frac{\lambda_{1}^{2} + \mu^{2}}{2 \mu^{2} (\ii E'_1(L) (6 \lambda_{1}^{2} \mu - 6 \mu^{3}) -E'_{-1}(L)\lambda_{1} (\lambda_{1}^{2} - 11 \mu^{2}))}$, $F_2=\frac{\lambda_{1}^{2} + 4 \mu^{2}}{\mu^{2} (- \ii E'_1(L) (6 \lambda_{1}^{2} \mu - 6 \mu^{3}) -E'_{-1}(L) \lambda_{1} (- \lambda_{1}^{2} + 11 \mu^{2}))}$, and $F_3=\frac{(\lambda_{1}^{2} + 9 \mu^{2}) }{2 \mu^{2} (- \ii E'_1(L) (6 \lambda_{1}^{2} \mu - 6 \mu^{3}) -E'_{-1}(L) \lambda_{1} (- \lambda_{1}^{2} + 11 \mu^{2}))}$.
Due to the estimates in Proposition \ref{prop: Asymp in L-low} and Proposition \ref{prop: Low-frequency behaviors: Singular limits}, we know that $|E'_1(L)|\sim |L-2\pi|,\;|\lambda_{1}|\sim |L-2\pi|$. 
Thus, 
\begin{equation}\label{eq: est-F_j-2pi}
|F_j|\sim \frac{1}{\mu^3|L-2\pi|},j=1,2,3.
\end{equation}
Moreover, we can verify the following estimates
\begin{align*}
|-\p_xy(\frac{T}{2},0)(30 \ii E'_1(L) \mu^{3} -E'_{-1}(L) \lambda_{1} (\lambda_{1}^{2} + 19 \mu^{2}))|&\lesssim |L-2\pi|\mu^3 |\p_xy(\frac{T}{2},0)|,\\
|\p_x(\mathcal{P}y)(\frac{T}{2},0)  (E'_1(L) (\lambda_{1}^{2} - 6 \mu^{2}) - 5 \ii E'_{-1}(L) \lambda_{1} \mu)|&\lesssim |L-2\pi|\mu^2|\p_x(\mathcal{P}y)(\frac{T}{2},0)|.
\end{align*}
We need to be careful with the term $\poscals{y(\frac{T}{2},\cdot)}{E_1}_{L^2(0,L)}$. Since $\E_{\pm1}$ are eigenfunctions of $\B$ in $(0,L)$, we know that $\B(\E_{\pm1})=\ii\lambda_{\pm1}\E_{\pm1}$. Let $f_{\pm1}(x)=\E_{\pm1}(L-x)$ for $x\in(0,L)$. It is easy to see that
\begin{equation*}
\B(f_{\pm1})=-\ii\lambda_{\pm1}f_{\pm1}.
\end{equation*}
Then $f_{-1}$ is the eigenfunction of $\B$ associated with $\ii\lambda_{1}$. Since every eigenvalue of $\B$ is simple, we know that $f_{-1}(x)=\E_{-1}(L-x)\in\mathrm{Span}\E_1$. Considering that $\Im f'_{-1}(0)=-\Im\E_{-1}'(L)=-\Im\E_{-1}'(0)=\Im\E'_{1}(0)\neq0$. We know that $f_{-1}(x)=\E_{-1}(L-x)=\E_1(x)$. Similarly, $\E_{1}(L-x)=\E_{-1}(x)$. This implies that $E_1(L-x)=E_(x)$. Due to Lemma \ref{lem: real eigenvalue close to 0}, $\F_{\zeta_0}(L-x)=-2r_1(1-\cos{(L-x)})+\bigO((L-2\pi))$. Combining with Proposition \ref{coro: expansion of E+ and E-}, we deduce that
\begin{equation*}
E_1(x)=\widetilde{r}_1\F_{\zeta_0}(L-x)+\bigO((L-2\pi)).   
\end{equation*}
Then, we look at the term $\poscals{y(\frac{T}{2},\cdot)}{E_1}_{L^2(0,L)}$. 
\begin{align*}
|\poscals{y(\frac{T}{2},\cdot)}{E_1}_{L^2(0,L)}|
&\leq\widetilde{r}_1|\int_0^Ly(\frac{T}{2},x)\F_{\zeta_0}(L-x)dx|+\|y(\frac{T}{2},\cdot)\|_{L^1(0,L)}\bigO(|L-2\pi|)\\
&\leq \widetilde{r}_1|\poscalr{y(\frac{T}{2},\cdot)}{\F_{\zeta_0}}_{(0,L)}|+\|y(\frac{T}{2},\cdot)\|_{L^1(0,L)}\bigO(|L-2\pi|).
\end{align*}
Since $y(\frac{T}{2},\cdot)\in\h^2_0$, $\poscalr{y(\frac{T}{2},\cdot)}{\F_{\zeta_0}}_{(0,L)}=0$ by the definition of the space $H_{\A}^2$ in \eqref{eq: defi for space h0 and h}. Due to H\"older inequality, we know that $\|y(\frac{T}{2},\cdot)\|_{L^1(0,L)}\leq L\|y(\frac{T}{2},\cdot)\|_{L^{\infty}(0,L)}\leq 3\pi\|y(\frac{T}{2},\cdot)\|_{L^{\infty}(0,L)}$. 
Hence,
\begin{equation*}
|\poscals{y(\frac{T}{2},\cdot)}{E_1}_{L^2(0,L)} (\lambda_{1}^{2} + 4 \mu^{2}) (\lambda_{1}^{2} + 9 \mu^{2})|\lesssim |L-2\pi|\mu^4\|y(\frac{T}{2},\cdot)\|_{L^{\infty}(0,L)}.   
\end{equation*}
Using the estimate \eqref{eq: smoothing estimate-toy-2pi}, there exists a constant $\Tilde{C}_1$, which is independent of $T$ and $L$ such that
\begin{equation*}
|c_1|\leq |F_1||L-2\pi|\mu^4 \frac{\Tilde{C}_1}{T^3}\|y^0\|_{L^2(0,L)}.   
\end{equation*}
Therefore, combining with the estimates of $F_1$ in \eqref{eq: est-F_j-2pi}, we deduce that $|c_1|\leq \frac{\Tilde{C}_1\mu}{T^3}\|y^0\|_{L^2(0,L)}$.
We perform same estimates for $c_2$ and $c_3$ with constants $\Tilde{C}_2$ and $\Tilde{C}_3$. Let $\Tilde{C}:=\Tilde{C}_1+\Tilde{C}_2+\Tilde{C}_3+1$. We obtain 
\begin{equation}\label{eq: est-c_j}
|c_j|\leq \frac{\Tilde{C}\mu}{T^3}\|y^0\|_{L^2(0,L)}, j=1,2,3   
\end{equation}
By the definition of $\varrho$ in \eqref{eq: defi of varrho}, we know that $\varrho\in\ii\R$ and $|\varrho|\leq\frac{C_3e^{-\frac{T}{4}\mu}}{T^3}\|y^0\|_{L^2(0,L)}$.
This implies that $z(T,x)=\varrho E_{-1}(z)$ is a real function. Using Lemma \ref{lem: control-toy-2pi}, we are able to construct a continuous function $v$ such that 
\begin{equation}
\left\{
\begin{array}{lll}
    \p_tz+\p_x^3z+\p_xz=0 & \text{ in }(\frac{T}{2},T)\times(0,L), \\
     z(t,0)=z(t,L)=0&  \text{ in }(\frac{T}{2},T),\\
     \p_xz(t,L)-\p_xz(t,0)=v(t)&\text{ in }(\frac{T}{2},T),\\
     z(\frac{T}{2},x)=z^0_{\frac{T}{2}}(x)&\text{ in }(0,L),
\end{array}
\right.
\end{equation}
such that $z(T,x)=\varrho E_{-1}(x)$ and
\begin{align*}
\|\p_x z(\cdot,L)\|_{L^{\infty}(\frac{T}{2},T)}\leq  K_3e^{\frac{2\sqrt{2}K}{\sqrt{T}}}\|z^0_{\frac{T}{2}}\|_{H^{6}},\;
\|z(t,\cdot)\|_{L^2(\frac{T}{2},T)}\leq  K_2e^{\frac{2\sqrt{2}K}{\sqrt{T}}}\|z^0_{\frac{T}{2}}\|_{H^{3}}\leq K_2e^{\frac{2\sqrt{2}K}{\sqrt{T}}}\|z^0_{\frac{T}{2}}\|_{H^{6}} .
\end{align*}
By Proposition \ref{prop: h_mu-est-uniform}, there exists a constant $C_h$ such that 
\begin{align*}
\|h_{\mu_j}\|_{L^2(0,L)}\leq C_h|\mu_j|^{-\frac{1}{2}},\;
\|h_{\mu_j}\|_{H^6(0,L)}\leq C_h|\mu_j|^{\frac{5}{2}},j=1,2,3.
\end{align*}
Using the estimates \eqref{eq: smoothing estimate-toy-2pi} and \eqref{eq: est-c_j},
\begin{align*}
\|z^0_{\frac{T}{2}}\|_{H^{6}}
\leq \frac{C_1}{T^3}\|y^0\|_{L^2(0,L)}+3\frac{\Tilde{C}\mu}{T^3}C_h\mu^{\frac{5}{2}}\|y^0\|_{L^2(0,L)}
\leq \frac{C_1+3\Tilde{C}C_h\mu^{4}}{T^3}\|y^0\|_{L^2(0,L)}.
\end{align*}
Therefore, we obtain
\begin{align*}
\|\p_x z(\cdot,L)\|_{L^{\infty}(\frac{T}{2},T)}\leq K_3e^{\frac{2\sqrt{2}K}{\sqrt{T}}}\frac{C_1+3\Tilde{C}C_h\mu^{4}}{T^3}\|y^0\|_{L^2(0,L)}, \\
\|z(t,\cdot)\|_{L^{2}(0,L)}\leq K_2e^{\frac{2\sqrt{2}K}{\sqrt{T}}}\frac{C_1+3\Tilde{C}C_h\mu^{4}}{T^3}\|y^0\|_{L^2(0,L)}.
\end{align*}
For the other part, for $j=1,2,3$, we define a function $z_{\mu_j}(t,x)$ by 
$$z_{\mu_j}(t,x)=e^{-\mu_j(t-\frac{T}{2})}c_jh_{\mu_j}(x).$$ 
Then $z_{\mu_j}$ satisfies the equation
\begin{equation}
\left\{
\begin{array}{lll}
    \p_tz_{\mu_j}+\p_x^3z_{\mu_j}+\p_xz_{\mu_j}=-\mu_j z_{\mu_j}+\mu_j z_{\mu_j}=0 & \text{ in }(\frac{T}{2},T)\times(0,L), \\
     z_{\mu_j}(t,0)=z_{\mu_j}(t,L)=0&  \text{ in }(\frac{T}{2},T),\\
     \p_xz_{\mu_j}(t,L)=e^{-\mu_j(t-\frac{T}{2})}c_jh'_{\mu_j}(L)&\text{ in }(\frac{T}{2},T),\\
     z_{\mu_j}(\frac{T}{2},x)=c_jh_{\mu_j}(x)&\text{ in }(0,L),
\end{array}
\right.
\end{equation}
Then it is easy to see 
$$\|z_{\mu_j}(t,\cdot)\|_{L^2(0,L)}\leq \|e^{-\mu_j(t-\frac{T}{2})}c_jh_{\mu_j}(\cdot)\|_{L^2(0,L)}
\leq C_he^{-\mu_j(t-\frac{T}{2})}\frac{\Tilde{C}\mu^{\frac{1}{2}}}{T^3}\|y^0\|_{L^2(0,L)}.$$
Using the estimate \eqref{eq: est-h_mu'(L)}, the control cost 
\begin{align*}
\|\p_xz_{\mu_j}(\cdot,L)\|_{L^{\infty}(\frac{T}{2},T)}
\leq\frac{\Tilde{C}\mu}{T^3}\|y^0\|_{L^2(0,L)}|h'_{\mu_j}(L)|
\leq \frac{C_4e^{-\frac{\mu^{\frac{1}{3}}}{4}L}}{T^3}\|y^0\|_{L^2(0,L)}.
\end{align*}
We set 
\begin{equation}
\begin{array}{ll}
  w_1(t)=\left\{
  \begin{array}{ll}
     0  &t\in[0,\frac{T}{2}), \\
     \p_x z(t,L)  &t\in[\frac{T}{2},T],
  \end{array}
\right.   &w_{j+1}(t)=\left\{
  \begin{array}{ll}
     0  &t\in[0,\frac{T}{2}), \\
     \p_x z_{\mu_j}(t,L)  &t\in[\frac{T}{2},T], 
  \end{array}
\right. j=1,2,3.
\end{array}
\end{equation}
We consider the solution $y_j$ to 
\begin{equation}\label{eq: defi-y_j-2pi}
\left\{
\begin{array}{lll}
    \p_ty_j+\p_x^3y_j+\p_xy_j=0 & \text{ in }(\frac{T}{2},T)\times(0,L), \\
     y_j(t,0)=y_j(t,L)=0&  \text{ in }(\frac{T}{2},T),\\
     \p_x y_j(t,L)= w_j(t)&\text{ in }(\frac{T}{2},T),\\
     y_j(\frac{T}{2},x)=y_j^0(x)&\text{ in }(0,L),
\end{array}
\right.    
\end{equation}
where $y_1^0(x)=z^0_{\frac{T}{2}}(x)$, $y_{j+1}^0(x)=c_jh_{\mu_j}(x)$, $j=1,2,3$. We have the following properties:
\begin{enumerate}
    \item $y(\frac{T}{2},x)=y_1^0(x)+y_2^0(x)+y_3^0(x)+y_4^0(x)$;
    \item By the uniqueness, we know that $y_1(t,x)=z(t,x)$, $y_{j+1}(t,x)=z_{\mu_j}(t,x)$, $j=1,2,3$, in $(\frac{T}{2},T)\times(0,L)$.
\end{enumerate}
Now let us consider the solutions in the time interval $[0,T]$. Define
\begin{equation}
  Y(t,x)=\left\{
  \begin{array}{ll}
     y(t,x)  &(t,x)\in[0,\frac{T}{2}]\times(0,L), \\
     y_1(t,x)+y_2(t,x)+y_3(t,x)+y_4(t,x)  &(t,x)\in[\frac{T}{2},T]\times(0,L).
  \end{array}
\right. 
\end{equation}
Then $Y$ solves the equation \eqref{eq: linearized KdV system-control-intro}
where $u(t)=u_1(t)+u_2(t)+u_3(t)+u_4(t)$. Indeed, $Y\in C([0,T],L^2(0,L))$ and in particular, $Y$ is continuous at the time $t=\frac{T}{2}$. For $t\in[0,\frac{T}{2}]$, by energy estimates, $\|Y(t,\cdot)\|_{L^2(0,L)}\leq \|y^0\|_{L^2(0,L)}$.
We define a new constant $\mathcal{K}$, independent of $L$,
\begin{equation*}
    \mathcal{K}:=(K_2+K_3)(C_1+6\Tilde{C}C_h)+2\Tilde{C}C_h+2C_4+2C_3+1.
\end{equation*}
 Since $|\lambda_{1}|=c_0|L-2\pi|+\bigO((L-2\pi)^2)$, choosing $\delta$ sufficiently small and $\mu>\max\{2c_0\pi,1\}$, for $t\in[\frac{T}{2},T]$, we collect the estimates above,
\begin{align}
\|y_1(t,\cdot)\|_{L^2(0,L)}&\leq K_2e^{\frac{2\sqrt{2}K}{\sqrt{T}}}\frac{C_1+3\Tilde{C}C_h\mu^{4}}{T^3}\|y^0\|_{L^2(0,L)}\leq \mathcal{K}e^{\frac{2\sqrt{2}K}{\sqrt{T}}}\frac{\mu^{4}}{T^3}\|y^0\|_{L^2(0,L)},\label{eq: L^2-y_1-single-2pi}\\
\|y_{j+1}(t,\cdot)\|_{L^2(0,L)}&\leq C_he^{-\mu_j(t-\frac{T}{2})}\frac{\Tilde{C}\mu^{\frac{1}{2}}}{T^3}\|y^0\|_{L^2(0,L)}\leq \mathcal{K}\frac{\mu^{\frac{1}{2}}e^{-\mu_j(t-\frac{T}{2})}}{T^3}\|y^0\|_{L^2(0,L)},j=1,2,3.\label{eq: L^2-y_j+1-single-2pi}
\end{align}
As for the control cost
\begin{align}
\|w_1\|_{L^{\infty}(0,T)}&\leq K_3e^{\frac{2\sqrt{2}K}{\sqrt{T}}}\frac{C_1+3\Tilde{C}C_h\mu^{4}}{T^3}\|y^0\|_{L^2(0,L)}\leq \mathcal{K}e^{\frac{2\sqrt{2}K}{\sqrt{T}}}\frac{\mu^4}{T^3}\|y^0\|_{L^2(0,L)} ,\label{eq: cost estimate-w-1-2pi}\\
\|w_{j+1}\|_{L^{\infty}(0,T)}&\leq \frac{C_4e^{-\frac{\mu^{\frac{1}{3}}}{4}L}}{T^3}\|y^0\|_{L^2(0,L)}\leq\mathcal{K}\frac{e^{-\frac{\mu^{\frac{1}{3}}}{4}L}}{T^3}\|y^0\|_{L^2(0,L)} ,j=1,2,3.
\end{align}
\end{proof}
\begin{prop}[Iteration schemes]\label{prop: iteration schemes-2pi}
Let $T>0$. Suppose that Assumption \ref{ass: Compact intervals including 2pi} holds.  For every $L\in I\setminus\{2\pi\}$, and $\forall y^0\in H_{\A}$. There exists a function $u(t) \in L^2(0,T)$ such that the solution $y$ to the system \eqref{eq: linearized KdV system-control-intro} 
satisfies $\lim_{t\rightarrow T^-}\|y(t, \cdot)\|_{L^2(0, L)}= 0$
    and there exists a constant $\mathcal{K}$ such that
\begin{equation}\label{eq: est-control-L-infty-2pi}
 \|u\|_{L^{\infty}(0,T)}\leq \Tilde{\mathcal{K}}e^{\frac{\mathcal{K}}{\sqrt{T}}}\|y^{0}\|_{L^2(0,L)}.   
\end{equation}
\end{prop}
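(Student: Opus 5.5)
The plan is to mimic the proof of Proposition \ref{prop: iteration schemes}, now using the $L$-uniform one-step estimate of Proposition \ref{prop: estimates on fixed interval-iteration preparation-2pi} in place of Corollary \ref{coro: simplified estimates for iteration scheme}. This removes the factor $|L-L_0|^{-1}$ and hence the loss $\epsilon$. Without loss of generality I take $T=2^{-n_0}\in(0,1)$, set $T_n=T(1-2^{-n})$ and $\mathcal{I}_n=[T_{n-1},T_n)$, and choose $\mu_n=Q\,2^{\frac32(n_0+n)}$, with $Q$ a large constant independent of $L$ and $T$ to be fixed at the end. I also reduce to real initial data $y^0\in H_{\A}$, treating real and imaginary parts separately as in Lemma \ref{lem: construction of v-2pi}.

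\textbf{Invariance of $H_{\A}$.} The point that makes iteration possible is that each step maps $H_{\A}$ into itself. On $\mathcal{I}_n$ I apply Proposition \ref{prop: estimates on fixed interval-iteration preparation-2pi} with time length $2^{-n}T$, parameter $\mu_n$, and initial state $y^{n-1}:=y(T_{n-1})$. The final state is
\[
y^n=\varrho_nE_{-1}+\sum_{j=1}^3c_je^{-j\mu_n 2^{-n-1}T}h_{j\mu_n}.
\]
By Lemma \ref{lem: construction of v-2pi}, $\varrho_n$ was chosen exactly so that this combination lies in $H_{\A}$, and it is real. So $y^n\in H_{\A}$ is again real and admissible for the next step. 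I will check that the free flow on the first half of each interval also preserves $H_{\A}$. This holds because $\F_{\zeta_0}$ is an eigenfunction and the pairing $\poscalr{\cdot}{\F_{\zeta_0}}_{(0,L)}$ evolves by the adjoint relation, giving $\poscalr{S(t)y}{\F_{\zeta_0}}=e^{\zeta_0 t}\poscalr{y}{\F_{\zeta_0}}$, which is what the HUM duality of Section \ref{sec: duality arguments and HUM} encodes.

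\textbf{One-step decay and summation.} Next I bound $\|y^n\|_{L^2}$. From Proposition \ref{prop: estimates on fixed interval-iteration preparation-2pi}:
\begin{itemize}
\item $|\varrho_n|\le \mathcal{K}\,2^{3(n_0+n)}e^{-\mu_n2^{-n-2}T}\|y^{n-1}\|$, and $\|E_{-1}\|$ is bounded;
\item the $h$-parts are bounded by $\mathcal{K}\mu_n^{1/2}2^{3(n_0+n)}e^{-\mu_n2^{-n-1}T}\|y^{n-1}\|$.
\end{itemize}
Since $\mu_n2^{-n}T=Q2^{\frac{n_0+n}{2}}$, this gives
\[
\|y^n\|\le \mathcal{K}'Q^{1/2}2^{4(n_0+n)}e^{-Q2^{\frac{n_0+n}{2}-2}}\|y^{n-1}\|.
\]
Inductively,
\[
\|y^n\|\le e^{-cQ2^{\frac{n_0}{2}}(2^{n/2}-1)}\,\mathcal{K}'^n 2^{O(n_0n+n^2)}\|y^0\|.
\]
Choosing $Q$ large, as in Appendix \ref{sec: app-choice-Q}, and independent of $L$, the exponential beats the polynomial and geometric factors. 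Hence $\|y^n\|\le e^{-c'Q2^{\frac{n_0}{2}}(2^{\frac{n-1}{2}}-1)}\|y^0\|$.

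The controls satisfy
\[
\|u^n\|_{L^\infty(\mathcal{I}_n)}\le \mathcal{K}\,2^{3(n_0+n)}\left(e^{2\sqrt2K2^{\frac{n_0+n}{2}}}\mu_n^4+e^{-\mu_n^{1/3}L/4}\right)\|y^{n-1}\|.
\]
For $n\ge2$, the growth factor $e^{2\sqrt2K2^{(n_0+n)/2}}$ is absorbed by the decay $e^{-c'Q2^{(n_0+n-1)/2}}$ of $\|y^{n-1}\|$, once $Q\gg K$. So these terms are uniformly bounded by $2\|y^0\|$. For $n=1$, the bound is $\mathcal{K}2^{3n_0}Q^42^{6n_0}e^{2\sqrt2K2^{n_0/2}}\|y^0\|\le\Tilde{\mathcal{K}}e^{\mathcal{K}/\sqrt T}\|y^0\|$, since $2^{n_0/2}=T^{-1/2}$ and the polynomial factors in $T^{-1}$ are absorbed by enlarging the constant. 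Within each interval, $\|y(t)\|\to0$ as $t\to T^-$ by the same bounds. This yields \eqref{eq: est-control-L-infty-2pi}.

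\textbf{Main obstacle.} The delicate point is ensuring that every constant is truly $L$-independent, in particular the bound $|\varrho|$ and the constants hidden in $c_j$, which involve $1/(L-2\pi)$ cancellations. I will rely on Proposition \ref{prop: estimates on fixed interval-iteration preparation-2pi} as stated, and verify that the required largeness of $\mu$ ($\mu>\max\{2c_0\pi,1\}$) is compatible with $\mu_n\ge Q$ for $Q$ large.
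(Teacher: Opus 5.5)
Your proposal is correct and follows the paper's route: the dyadic partition $T_n=2^{-n_0}(1-2^{-n})$, the choice $\mu_n=Q2^{\frac32(n_0+n)}$ (with $2\mu_n,3\mu_n$), repeated application of Proposition \ref{prop: estimates on fixed interval-iteration preparation-2pi} on each $\mathcal{I}_n$ with the $\varrho$ of Lemma \ref{lem: construction of v-2pi} keeping the real state in $H_{\A}$, an $L$-independent large $Q$, and a separate treatment of $n=1$ that produces the factor $e^{\mathcal{K}/\sqrt{T}}$. Your only slips are bookkeeping, and a large $Q$ absorbs them: the inductive product should carry an extra factor $Q^{n/2}$, and your restated bound $\|y^n\|\le e^{-c'Q2^{n_0/2}(2^{(n-1)/2}-1)}\|y^0\|$ is shifted by one index (it gives no decay for $y^1$), so for $n\ge 2$ the factor $e^{2\sqrt2K2^{(n_0+n)/2}}$ must instead be absorbed by the unweakened inductive bound $\|y^{n-1}\|\le e^{-cQ2^{n_0/2}(2^{(n-1)/2}-1)}\|y^0\|$ (up to polynomial factors), exactly as the paper does.
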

\begin{proof}
For every $L\in I\setminus \{2\pi\}$, without loss of generality, we set $T\in(0,1)$. Suppose that $T=2^{-n_0}$. Let $T_n=2^{-n_0}(1-2^{-n})$ and $\mathcal{I}_n=[T_{n-1},T_n)$, $n\in\N$. Let us also take a constant $Q>0$ that is independent of $T$ and $L$. And $Q$ will be fixed later on. Now our concerned time interval $[0,T)$ has a partition as $\bigcup_{n=1}^{\infty}\mathcal{I}_n$.
We fix our choice of $\mu_{1,n}= Q 2^{\frac{3}{2}(n_0+ n)}, n= 1,2,...$ and $\mu_{2,n}=2\mu_{1,n}$ and $\mu_{3,n}=3\mu_{1,n}$. On each time interval $\mathcal{I}_n$, we construct the control function $u^n(t)\in L^2(\mathcal{I}_n)$ the unique solution $y_n$ of the Cauchy problem 
\begin{equation}
\left\{
\begin{array}{lll}
    \p_ty_n+\p_x^3y_n+\p_xy_n=0 & \text{ in }\mathcal{I}_n\times(0,L), \\
     y_n(t,0)=y_n(t,L)=0&  \text{ in }\mathcal{I}_n,\\
     \p_x y_n(t,L)= u^n(t)&\text{ in }\mathcal{I}_n,\\
     y_n(T_{n-1},x)=y^{n-1}(x)&\text{ in }(0,L),
\end{array}
\right.
\end{equation}
satisfies 
\begin{align*}
\|y(t,\cdot)\|_{L^2(0,L)}&\leq \|y^{n-1}\|_{L^(0,L)},t\in(T_{n-1},\frac{T_{n-1}+T_n}{2}],\\
\|y_n(t,\cdot)\|_{L^2(0,L)}&\leq \mathcal{K}\left(e^{2\sqrt{2}K2^{\frac{n_0+n}{2}}}Q^{4}2^{6(n_0+n)}+ Q^{\frac{1}{2}}2^{\frac{3}{4}(n_0+n)}\right)2^{3(n_0+n)}\|y^{n-1}\|_{L^2(0,L)},t\in (\frac{T_{n-1}+T_n}{2},T_{n})\\
\|u^n\|_{L^{\infty}(\mathcal{I}_n)}&\leq \mathcal{K}\left(e^{2\sqrt{2}K2^{\frac{n_0+n}{2}}}Q^{4}2^{6(n_0+n)}+e^{-\frac{Q^{\frac{1}{3}}2^{\frac{n_0+n}{2}}}{4}L}\right)2^{3(n_0+n)}\|y^{n-1}\|_{L^2(0,L)},\\
\|y^n\|_{L^2(0,L)}&\leq 2\mathcal{K}e^{-Q2^{\frac{n_0+n}{2}-2}}2^{3(n_0+n)} \|y^{n-1}\|_{L^2(0,L)}
\end{align*}
With the help of the good choice of $Q$, we simplify the estimates for $n>1$,
\begin{align}
\|y_n(t,\cdot)\|_{L^2(0,L)}&\leq  e^{-\frac{Q2^{\frac{n_0}{2}}(2^{\frac{n-1}{2}}-1)}{4(2-\sqrt{2})}}\|y^{0}\|_{L^2(0,L)},t\in(T_{n-1},\frac{T_{n-1}+T_n}{2}],\\
\|y_n(t,\cdot)\|_{L^2(0,L)}&\leq (Q^{4}+Q^{\frac{1}{2}})e^{-\frac{Q2^{\frac{n_0}{2}}(2^{\frac{n-1}{2}}-1)}{4(2-\sqrt{2})}} \|y^{0}\|_{L^2(0,L)},t\in (\frac{T_{n-1}+T_n}{2},T_{n})\\
\|u^n\|_{L^{\infty}(\mathcal{I}_n)}&\leq 2e^{-\frac{Q2^{\frac{n_0}{2}}(2^{\frac{n-1}{2}}-1)}{4(2-\sqrt{2})}} \|y^{0}\|_{L^2(0,L)}\label{eq: final estimate for w_n-2pi}. 
\end{align}
Therefore, we know that for any $L\in I\setminus\{L_0\}$, $\lim_{t\rightarrow T^{-}}\|y(t,\cdot)\|_{L^2(0,L)}=0$.
Moreover, combining with the estimates for $n=1$,
there exists a constant $\Tilde{\mathcal{K}}$ such that $\|\p_xy_1(\cdot,L)\|_{L^{\infty}(\mathcal{I}_1)}\leq \Tilde{\mathcal{K}}e^{\frac{\mathcal{K}}{\sqrt{T}}}\|y^{0}\|_{L^2(0,L)} $,
and $\|u\|_{L^{\infty}(0,T)}\leq \Tilde{\mathcal{K}}e^{\frac{\mathcal{K}}{\sqrt{T}}}\|y^{0}\|_{L^2(0,L)}$.
\end{proof}
\subsubsection{Proof of main results}
In this section, we present our proof of Theorem \ref{thm: main theorem linear version} in the model case $L_0=2\pi$.

\begin{proof}[Proof of Theorem \ref{thm: main result in modal case-2pi}]
In the following, we focus on the situation $L\in [2\pi-\delta,2\pi+\delta]$ with some explicit and sufficiently small $\delta$. Indeed, for $L\in [2\pi-1, 2\pi+1]\setminus (2\pi-\delta,2\pi+\delta)$, we get a uniform exponential decay rate thanks to Theorem \ref{thm: main result with L-L0}. Using once again the HUM on $H_{\A}$, it suffices to analyze the controlled KdV system
\begin{equation}
\left\{
\begin{array}{lll}
    \p_t\Tilde{y}+\p_x^3\Tilde{y}+\p_x\Tilde{y}=0 & \text{ in }(0,T)\times(0,L), \\
     \Tilde{y}(t,0)=\Tilde{y}(t,L)=0&  \text{ in }(0,T),\\
     \p_x\Tilde{y}(t,L)=u(t)&\text{ in }(0,T),\\
     \Tilde{y}(0,x)=\Tilde{y}^0(x)&\text{ in }(0,L).
\end{array}
\right.
\end{equation} 
The quantitative observability \eqref{eq: quantitative-ob-main-2pi} is equivalent to the following estimate $\|u\|_{L^2(0,T)}\leq C\|\Tilde{y}^0\|_{L^2(0,L)}$. 
Using Proposition \ref{prop: iteration schemes-2pi} and the estimate \eqref{eq: est-control-L-infty-2pi}, we obtain 
\begin{equation*}
 \|u\|_{L^2(0,T)}\leq T \|u\|_{L^{\infty}(0,T)}\leq \Tilde{\mathcal{K}}e^{\frac{\mathcal{K}}{\sqrt{T}}}\|\Tilde{y}^0\|_{L^2(0,L)}.
\end{equation*}
\end{proof}
To complete the full picture of the stability result, we continue to analyze the behaviors of the solution in the subspace $\C\F_{\zeta_0}$. As we defined in Lemma \ref{lem: real eigenvalue close to 0}, we know that
\begin{align*}
    \zeta_0=-\frac{1}{3\pi}(L-2\pi)^2+\bigO(|L-2\pi|^3),\;
    |\F_{\zeta_0}'(0)|=3r_1|L-2\pi|+\bigO(|L-2\pi|^2),
\end{align*}
where $r_1$ is a normalized constant such that $\|\F_{\zeta_0}\|_{L^2(0,L)}=1$, and we know that $|r_1|$ is uniformly bounded as we presented in Proposition \ref{prop: Asymp in L}.
\begin{prop}
Let $T>0$. Let $I=[2\pi- 1,2\pi+ 1]$.  For every $L\in I\setminus\{2\pi\}$, and $y^0=\F_{\zeta_0}$, there exists a constant $C=C(T,L)$ such that the following quantitative observability inequality 
\begin{equation}\label{eq: quantitative-ob-main-2pi-zeta0}
   \|S(T)\F_{\zeta_0}\|^2_{L^2(0,L)} \leq  C^2\int_0^T|\p_x y(t,0)|^2dt
\end{equation}
holds for any solution $y$ to \eqref{eq: linear KdV-stability-intro} with initial datum $y^0=\F_{\zeta_0}$.
\end{prop}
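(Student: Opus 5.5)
Since $y^0=\F_{\zeta_0}$ is an eigenfunction of $\A$ with the real eigenvalue $\zeta_0$, the solution of \eqref{eq: linear KdV-stability-intro} is explicit: $y(t,x)=e^{\zeta_0 t}\F_{\zeta_0}(x)$. I would therefore not go through any control construction and instead compute both sides of \eqref{eq: quantitative-ob-main-2pi-zeta0} directly. The left-hand side is $\|S(T)\F_{\zeta_0}\|^2_{L^2(0,L)}=e^{2\zeta_0 T}$, using the normalization $\|\F_{\zeta_0}\|_{L^2(0,L)}=1$. The right-hand side is
\begin{equation*}
\int_0^T|\p_x y(t,0)|^2dt=|\F_{\zeta_0}'(0)|^2\int_0^T e^{2\zeta_0 t}dt=|\F_{\zeta_0}'(0)|^2\,\frac{1-e^{2\zeta_0T}}{-2\zeta_0},
\end{equation*}
and it is strictly positive because $\zeta_0<0$ by Lemma \ref{lem: basic prop of eigen of bad op} and $\F_{\zeta_0}'(0)\neq0$ by Lemma \ref{lem: real eigenvalue close to 0}. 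The inequality therefore holds with
\begin{equation*}
C(T,L)^2=\frac{2|\zeta_0|\,e^{2\zeta_0T}}{|\F_{\zeta_0}'(0)|^2(1-e^{2\zeta_0T})}=\frac{2|\zeta_0|}{|\F_{\zeta_0}'(0)|^2(e^{2|\zeta_0|T}-1)},
\end{equation*}
which proves existence.

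To quantify this constant, I would insert the asymptotics recalled just before the statement: $\zeta_0=-\frac{1}{3\pi}(L-2\pi)^2+\bigO(|L-2\pi|^3)$ and $|\F_{\zeta_0}'(0)|=3r_1|L-2\pi|+\bigO(|L-2\pi|^2)$, with $r_1$ uniformly bounded above and below. Since $e^{2|\zeta_0|T}-1\geq 2|\zeta_0|T$, we get $C^2\leq \frac{1}{T|\F_{\zeta_0}'(0)|^2}\lesssim\frac{1}{T|L-2\pi|^2}$. This matches the rate in item 2 of Theorem \ref{thm: main theorem linear version}. Conversely, when $|\zeta_0|T\lesssim1$, $C^2\sim\frac{1}{T|L-2\pi|^2}$, which shows that the rate is sharp.

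The only delicate point is ensuring that $r_1$ and the implied constants stay uniform on $I$. Near $2\pi$ this follows from Lemma \ref{lem: real eigenvalue close to 0} and Proposition \ref{prop: asymptotic expansion for A0}. Away from a small neighborhood $(2\pi-\delta,2\pi+\delta)$ of $2\pi$, we have $L\notin\mathcal{N}$, so $\zeta_0\neq0$ and $\F'_{\zeta_0}(0)\neq0$, and continuity in $L$ on the compact remaining set gives bounds. Alternatively, Theorem \ref{thm: main result with L-L0} can be invoked there. Finally, the energy decay $E(y(t))=e^{2\zeta_0t}\geq e^{-M_0|L-2\pi|^2t}$ follows immediately from the same formula.
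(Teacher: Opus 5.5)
Your proof is correct and is the same direct argument as in the paper: write the solution as $y(t,x)=e^{\zeta_0 t}\F_{\zeta_0}(x)$, compute $\int_0^T|\p_x y(t,0)|^2dt=|\F_{\zeta_0}'(0)|^2\frac{1-e^{2\zeta_0T}}{-2\zeta_0}$, and read off the constant. As a side remark, the uniformity issue you flag disappears: the identity $2\Re\zeta\,\|\F\|^2_{L^2(0,L)}=-|\F'(0)|^2$ from the proof of Lemma \ref{lem: basic prop of eigen of bad op} gives $|\F_{\zeta_0}'(0)|^2=2|\zeta_0|$ for the normalized real eigenfunction, so your constant is exactly $C^2=(e^{2|\zeta_0|T}-1)^{-1}$ and depends only on $\zeta_0$.
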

\begin{proof}
We present a direct proof of this proposition. Since $\F_{\zeta_0}$ is the eigenfunction associated with the eigenvalue $\zeta_0$. We know that the solution $y$ to the KdV system is in the form $y(t,x)=e^{\zeta_0t}\F_{\zeta_0}(x)$. Hence, $|\p_xy(t,0)|=e^{\zeta_0t}|\F'_{\zeta_0}(0)|$ and 
\begin{equation*}
\int_0^T|\p_x y(t,0)|^2dt=|\F'_{\zeta_0}(0)|^2\int_0^Te^{2\zeta_0t}dt=-\frac{|\F'_{\zeta_0}(0)|^2}{2\zeta_0}(1-e^{2\zeta_0T})=18r_1T(L-2\pi)^2+\bigO(|L-2\pi|^3).    
\end{equation*}
There exists a constant $C^2\gtrsim \frac{e^{2\zeta_0 T}}{T(L-2\pi)^2}$ such that the observability \eqref{eq: quantitative-ob-main-2pi-zeta0} holds.
\end{proof}

\subsection{Around Type II unreachable pair $(k, l)$}\label{sec: Around Type II unreachable pair}
\subsubsection{Quasi-invariant subspaces and transition projectors}
In this section, we aim to prove a similar result as we presented in Section \ref{sec: Around Type I unreachable pair}. We assume that $L_0=2\pi\sqrt{7}$ in this sequel. 
\begin{prop}
For $L_0=2\pi\sqrt{7}$, there two eigenvalues $\lambda_{c,\pm1}=\pm\frac{6\sqrt{7}}{49}$ with the associated eigenfunctions $\G_{c,1}(x)= \frac{1}{\sqrt{84\pi\sqrt{7}}}\left(-e^{\ii x \frac{3\sqrt{7}}{7}}-4e^{-\ii x \frac{2\sqrt{7}}{7}}+5e^{-\ii x \frac{\sqrt{7}}{7}}\right)$ and $\G_{c,-1}=\overline{\G}_{c,1}$ such that 
\begin{equation*}
\left\{
\begin{array}{l}
     \G_{c,\pm1}'''(x)+\G_{c,\pm1}'(x)\pm\ii\frac{6\sqrt{7}}{49}\G_{c,\pm1}=0,x\in(0,2\pi\sqrt{7}),  \\
     \G_{c,\pm1}(0)=\G_{c,\pm1}(2\pi\sqrt{7})=0,\\
     \G_{c,\pm1}'(0)=\G_{c,\pm1}'(2\pi\sqrt{7})=0.
\end{array}
\right.
\end{equation*}
\end{prop}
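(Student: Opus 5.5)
This is a direct verification, specializing Proposition \ref{prop: type-1-2} to $L_0=2\pi\sqrt{7}$, i.e.\ $\mathcal{I}_C(L_0)=21$. The plan has three steps.

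\textbf{Step 1: the pair $(k,l)$.} Solve $k^2+kl+l^2=21$ with $k\ge l\ge 1$. Running through $l=1,2$ shows that $(k,l)=(4,1)$ is the unique solution, since $l=2$ gives $k^2+2k=17$, which has no integer root. Hence $N_0=2$. Moreover $k-l=3\equiv 0 \bmod 3$, so this pair lies in $\mathcal{S}_2$ and $L_0\in\mathcal{N}^3$, consistent with the intro.

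\textbf{Step 2: the eigenvalue.} Plug $(4,1)$ into \eqref{eq: defi of lambda_c}:
\[
\lambda_c=\frac{9\cdot 3\cdot 6}{3\sqrt3\cdot 21^{3/2}}=\frac{6}{\sqrt3\cdot 21\sqrt{21}}=\frac{6}{63\sqrt7}\cdot 3=\frac{6\sqrt7}{49}.
\]
The conjugate mode gives $-\lambda_c$.

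\textbf{Step 3: the eigenfunction and boundary conditions.} Write $\sqrt{3}/(3\sqrt{21})=1/(3\sqrt7)$. Then the frequencies in \eqref{eq: exact formula for critical eigenfunctions} are
\[
\frac{2k+l}{3\sqrt7}=\frac{3}{\sqrt7}=\frac{3\sqrt7}{7},\qquad \frac{k+2l}{3\sqrt7}=\frac{2\sqrt7}{7},\qquad \frac{k-l}{3\sqrt7}=\frac{\sqrt7}{7}.
\]
The coefficients are $-l=-1$, $-k=-4$ and $k+l=5$, which gives exactly the stated combination $-e^{\ii 3x/\sqrt7}-4e^{-\ii 2x/\sqrt7}+5e^{-\ii x/\sqrt7}$. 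The prefactor $\pi\sqrt{2/(3L_0^3)}$ should then be recomputed and compared with $1/\sqrt{84\pi\sqrt7}$. Alternatively, I will normalize directly: the three exponentials are orthonormal up to scale on $(0,L_0)$ because their frequency differences $\sqrt7/7\cdot\{1,3,4\}$ times $L_0=2\pi\sqrt7$ are multiples of $2\pi$. This gives $\|\cdot\|^2=(1+16+25)L_0=84\pi\sqrt7$, which matches the stated prefactor.

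For the ODE, each root $\xi\in\{3/\sqrt7,-2/\sqrt7,-1/\sqrt7\}$ should satisfy $-\xi^3+\xi+\lambda_c=0$. Indeed the three roots sum to zero, their pairwise product sum is $(-6-3+2)/7=-1$, and their product is $6/(7\sqrt7)=\lambda_c$. So $(\ii\xi)^3+\ii\xi+\ii\lambda_c=0$ for each root.

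For the boundary values, every exponential equals $1$ at $x=0$ and at $x=L_0$, because $\xi L_0\in 2\pi\Z$. So $\G(0)=\G(L_0)=-1-4+5=0$. For the derivatives, $\G'(0)=\G'(L_0)$ is proportional to $-3+8-5=0$, so both vanish.

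The case $\G_{c,-1}=\overline{\G}_{c,1}$ follows by conjugating the equation. The only real obstacle is bookkeeping of the normalization constant and the sign convention on the eigenvalue, which the orthogonality computation above settles.
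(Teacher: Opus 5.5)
Your proof is correct and follows essentially the paper's route, which treats this as the special case $(k,l)=(4,1)$ of \eqref{eq: eigenvalue problem with critical length} and Proposition~\ref{prop: type-1-2}; your direct checks (Vieta for the characteristic roots, the boundary values, and the norm $(1+16+25)L_0=84\pi\sqrt7$) make the verification self-contained. Two arithmetic slips do not affect the conclusion: the chain for $\lambda_c$ should read $\frac{162}{189\sqrt7}=\frac{6\sqrt7}{49}$ (your intermediate $\frac{6}{63\sqrt7}\cdot 3$ equals $\frac{2\sqrt7}{49}$), and the frequency differences are $\{1,4,5\}/\sqrt7$ rather than $\{1,3,4\}/\sqrt7$, which still gives the orthogonality you need.
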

This is just a particular case of the system \eqref{eq: eigenvalue problem with critical length} at $L_0=2\pi\sqrt{7}$. Moreover, in this case, we find Type 2 eigenfunctions $\widetilde{\G}_{c,\pm1}$ defined by
\begin{equation*}
\widetilde{\G}_{c,1}(x):=\frac{1}{\sqrt{28\sqrt{7}\pi}}\left(3e^{\ii x \frac{3\sqrt{7}}{7}}-2e^{-\ii x \frac{2\sqrt{7}}{7}}-e^{-\ii x \frac{\sqrt{7}}{7}}\right).
\end{equation*}
We notice that $\poscals{\widetilde{\G}_{c,1}}{\G_{c,1}}=0$ and $\widetilde{\G}'_{c,1}(0)=\widetilde{\G}'_{c,1}(2\pi\sqrt{7})=\frac{\ii}{\sqrt{\sqrt{7}\pi}}\neq0$.
\begin{lem}\label{lem: eigenvalue close to 7}
Suppose that $0<\delta<\pi\sqrt{7}$ is sufficiently small. Let $I=[2\pi\sqrt{7}-\delta,2\pi\sqrt{7}+\delta]$ be a small compact interval such that $I\cap \mathcal{N}=\{2\pi\sqrt{7}\}$. Then there are two eigenvalues $\zeta_{\pm}$ and their associated real eigenfunctions $\F_{\zeta_{\pm}}$ satisfying that
\begin{align*}
|\zeta_{\pm}-\ii\lambda_{c,\pm1}|\sim(L-2\pi\sqrt{7})^2,\;|\F_{\zeta_{\pm}}'(0)|\sim|L-2\pi\sqrt{7}|. 
\end{align*}
\end{lem}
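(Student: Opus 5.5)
This lemma is the case $L_0=2\pi\sqrt{7}$, $(k,l)=(4,1)$ of Proposition \ref{prop: asymptotic expansion for A0}. I would first check that $(4,1)$ is the only pair solving $k^2+kl+l^2=21$ with $k\geq l$, so that $N_0=2$ and $\lambda_{c,1}=\frac{(9)(3)(6)}{3\sqrt3\cdot 21^{3/2}}=\frac{6\sqrt7}{49}$, matching the eigenvalue above. The eigenvalue problem for $\A$ is posed with real coefficients, so if $(\zeta,\F_\zeta)$ is an eigenmode then so is $(\overline\zeta,\overline{\F_\zeta})$. I would therefore construct only the branch near $\ii\lambda_{c,1}$ (equivalently, with the paper's sign convention, near $\tau_{c}=\frac{\pi}{L_0}\frac{2k+l}{3}=\frac{3\pi}{L_0}$) and obtain $\zeta_-$ by conjugation. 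The phrase ``real eigenfunctions'' should then be read as: the span $\{\Re\F_{\zeta_+},\Im\F_{\zeta_+}\}$ is real and invariant. Note that $k-l=3\equiv0\bmod 3$, so this is a Type II pair.

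For the branch itself, I write $\ii\zeta=2\tau(4\tau^2-1)$ and use the characteristic equation $G(t_r,t_i,L)=0$ from the proof of Proposition \ref{prop: asymptotic expansion for A0}. At $(t_r,t_i,L)=(3\pi/L_0,0,L_0)$ the Jacobian in $(t_r,t_i)$ is $\begin{pmatrix}0&-4\pi\cdot 21\\ 4\pi\cdot 21&0\end{pmatrix}$, which is invertible. Unlike the $\B$-problem, no Morse-type degeneracy occurs here, even for Type II pairs. The implicit function theorem then gives a $C^2$ branch $\tau(L)$.

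Next, $\partial_LG_r=\partial_LG_i=0$ at that point, so $\tau(L)=\tau_c+\bigO((L-L_0)^2)$. This gives the upper bound $|\zeta_+-\ii\lambda_{c,1}|\lesssim (L-L_0)^2$. For the matching lower bound I would compute the second-order coefficient, $t_i(L)=(-1)^{l+1}\frac{\pi^2 kl^2(k+l)}{2(k^2+kl+l^2)}(L-L_0)^2+\dots$, which is nonzero for $k=4,l=1$. Hence $\Re\zeta_+\sim -c(L-L_0)^2$ with $c\neq0$, and this gives $\sim$ rather than just $\lesssim$. The nonvanishing is also consistent with Lemma \ref{lem: basic prop of eigen of bad op}, which gives $\Re\zeta<0$ off $\mathcal N$.

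For the boundary derivative, I insert $\tau(L)$ into the explicit form \eqref{eq: general form for eigenfunction-bad op}, normalized in $L^2$. At $L=L_0$ the eigenfunction is $\G_{c,1}$, for which $\G_{c,1}'(0)=0$, and the expansion is smooth in $L$. So $\F'_{\zeta_+}(0)=a(L-L_0)+\bigO((L-L_0)^2)$ with
\[
|a|=\Bigl|\tfrac{3l(k+l)(k^2+kl+l^2-6\ii(-1)^lk(k+l))}{2(k^2+kl+l^2)^2}\Bigr|\,|r_1|.
\]
For $(4,1)$ this is $\frac{15|21+120\ii|}{2\cdot 441}|r_1|\neq0$, and $|r_1|$ is bounded above and below uniformly in $I$ by the normalization.

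The main obstacle is to make sure that the first-order coefficient of $\F'_\zeta(0)$ really does not vanish, and that the normalization constant $r_1$ stays uniformly bounded away from $0$ and $\infty$, given the near-degeneracy of the denominator in \eqref{eq: general form for eigenfunction-bad op}. I would check this directly by substituting the explicit exponents $3\sqrt7/7$, $-2\sqrt7/7$, $-\sqrt7/7$, and expanding numerator and denominator to first order in $L-L_0$. Finally, I would shrink $\delta$ so that the implicit-function neighbourhood and the remainder bounds hold on all of $I$.
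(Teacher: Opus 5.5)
Your proposal is correct and follows the paper. The paper's proof simply invokes Proposition~\ref{prop: asymptotic expansion for A0} at $(k,l)=(4,1)$, and the two-sided bounds come from the nonvanishing second-order coefficients in that proposition's proof, exactly as you extract them.

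One simplification is available. The identity $2\Re\zeta\,\|\F_\zeta\|_{L^2(0,L)}^2=-|\F_\zeta'(0)|^2$ from the proof of Lemma~\ref{lem: basic prop of eigen of bad op} turns your bound $\Re\zeta_+\sim-(L-2\pi\sqrt7)^2$ directly into $|\F_{\zeta_+}'(0)|\sim|L-2\pi\sqrt7|$ for the normalized eigenfunction. So the expansion of \eqref{eq: general form for eigenfunction-bad op} that you flag as the main obstacle is unnecessary. Note also that its numerator and denominator both vanish at $L=2\pi\sqrt7$, so extracting the coefficient $a$ that way would need the numerator to second order in $L-2\pi\sqrt7$, not first.
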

As presented before, this is a particular case $L_0=2\pi\sqrt{7}$ of Proposition \ref{prop: asymptotic expansion for A0}. By Proposition \ref{prop: Index set M-E}, we know that for each $\lambda_{c,\pm}$, there are two eigenvalues of $\B$ in $(0,L)$ approaching the same limit. We use the following notations:
\begin{align*}
    \lambda_{1}&:=\lambda_{c,1}-\frac{9}{49\pi}(L-2\pi\sqrt{7})-\frac{\sqrt{21}}{21\pi}|L-2\pi\sqrt{7}|+\bigO((L-2\pi\sqrt{7})^2);\\
    \lambda_{2}&:=\lambda_{c,1}-\frac{9}{49\pi}(L-2\pi\sqrt{7})+\frac{\sqrt{21}}{21\pi}|L-2\pi\sqrt{7}|+\bigO((L-2\pi\sqrt{7})^2);\\
     \lambda_{-1}&:=\lambda_{c,-1}+\frac{9}{49\pi}(L-2\pi\sqrt{7})+\frac{\sqrt{21}}{21\pi}|L-2\pi\sqrt{7}|+\bigO((L-2\pi\sqrt{7})^2),\\
     \lambda_{-2}&:=\lambda_{c,-1}+\frac{9}{49\pi}(L-2\pi\sqrt{7})-\frac{\sqrt{21}}{21\pi}|L-2\pi\sqrt{7}|+\bigO((L-2\pi\sqrt{7})^2).
\end{align*}
In particular, we denote the corresponding eigenfunctions by $\E_j$, $j=\pm1,\pm2$. 
\begin{coro}\label{coro: expansion of E^g and E^b}
Suppose that $0<\delta<\pi\sqrt{7}$ is sufficiently small. Let $I=[2\pi\sqrt{7}-\delta,2\pi\sqrt{7}+\delta]$ be a small compact interval such that $I\cap \mathcal{N}=\{2\pi\sqrt{7}\}$. Let $\E_{1,+}= \frac{\sqrt{98+18\sqrt{21}}}{14}\E_1+\frac{\sqrt{98-18\sqrt{21}}}{14}\E_2$ and $\E_{1,-}=\frac{\sqrt{98- 18\sqrt{21}}}{14}\E_1-\frac{\sqrt{98+ 18\sqrt{21}}}{14}\E_2$.
Then, the following statements hold: 
\begin{enumerate}
    \item $\E_{1}$ and $\E_2$ have the asymptotic expansions near $2\pi\sqrt{7}$:
    \begin{gather*}
\E_1= \frac{\sqrt{98 + 18\sqrt{21}}}{14}\G_{c,1}+ \frac{\sqrt{98 - 18\sqrt{21}}}{14}\widetilde{\G}_{c,1}+\bigO(|L-2\pi\sqrt{7}|),\\
\E_2= \frac{\sqrt{98 - 18\sqrt{21}}}{14}\G_{c,1} -\frac{\sqrt{98 + 18\sqrt{21}}}{14}\widetilde{\G}_{c,1}+\bigO(|L-2\pi\sqrt{7}|).
    \end{gather*}
    \item $\E_{1,+}$ and $\E_{1,-}(x)$ have the following asymptotic expansions near $2\pi\sqrt{7}$ 
    \begin{align*}
    \E_{1,+}(x)=\G_{c,1}+\bigO(L-2 \pi\sqrt{7}),\;
    \E_{1,-}(x)=\Tilde{\G}_{c,1}+\bigO(L-2 \pi\sqrt{7}).
    \end{align*}
    \item $(\E_{1,+})'(L)=\bigO(|L-2\pi\sqrt{7}|)$ and $(\E_{1,-})'(L)=\frac{i}{\sqrt{\pi\sqrt{7}}}+\bigO(L-2\pi\sqrt{7})$.
\end{enumerate}
\end{coro}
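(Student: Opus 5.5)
The plan is to specialize Proposition \ref{prop: Low-frequency behaviors: Singular limits}, Case 2, to $k=4$, $l=1$, where $k-l=3\equiv 0 \pmod 3$. Here $L_0=2\pi\sqrt{7}$, $k^2+kl+l^2=21$, $k+l=5$ and $k-l=3$, and the two $\B$-eigenfunctions $\E_1,\E_2$ (the pair $\E_j^{\pm}$) converge to $\ii\lambda_{c,1}$. Rather than the non-orthogonal Type 2 function $\widetilde{\G}$ of \eqref{eq: defi of tilde-G}, I would use the rotated form from Remark \ref{rem: rotation of eigenmodes}. It expresses $(\E_j^+,\E_j^-)$ as a rotation matrix with angle $\tfrac{3\theta}{2}$ acting on $(\G_{c,1},\widetilde{G})$, up to $\bigO(|L-L_0|)$. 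Here $\widetilde{G}$ is the normalized Type 2 eigenfunction orthogonal to $\G_{c,1}$ given by \eqref{eq: defi-orthogonal-G}, and $\cos\theta=\frac{\pi(k-l)}{\sqrt3 L_0}$.

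The first step is to check that the $\widetilde{\G}_{c,1}$ in the present section coincides with $\widetilde{G}$, possibly up to a unimodular phase, which I would absorb into the normalization of $\E_j$. This means computing \eqref{eq: defi-orthogonal-G} with $k=4$, $l=1$ and comparing it with the explicit formula. Orthogonality $\langle \widetilde{\G}_{c,1},\G_{c,1}\rangle=0$ is already stated, and the normalization can be checked directly since the three exponentials are orthogonal on $(0,L_0)$.

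Next I compute the angle. We have $\cos\theta=\frac{3\pi}{\sqrt3\cdot 2\pi\sqrt7}=\frac{\sqrt{21}}{14}$. Then $\cos 3\theta=4\cos^3\theta-3\cos\theta$, and the half-angle formulas give
$$\cos^2\tfrac{3\theta}{2}=\frac{1+\cos3\theta}{2},$$
which should reduce to $\frac{98\pm18\sqrt{21}}{196}$. This yields the coefficients $\frac{\sqrt{98\pm18\sqrt{21}}}{14}$ in item 1. The signs and the labels $\E_1=\E^-$ or $\E^+$ are fixed by matching with the eigenvalue ordering $\lambda_1<\lambda_2$ and the $\pm|L-L_0|$ terms in \eqref{eq: pm-eigenvalues}.

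Item 2 follows from item 1 because the coefficient matrix $R$ is orthogonal and symmetric, so $R^{-1}=R$. Hence $\E_{1,+}=\G_{c,1}+\bigO(|L-L_0|)$ and $\E_{1,-}=\widetilde{\G}_{c,1}+\bigO(|L-L_0|)$. The remainders stay $\bigO(|L-L_0|)$ because the coefficients are constants.

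For item 3, I would not differentiate the $L^2$ expansion. Instead I would use the boundary-derivative formula from the proof of Proposition \ref{prop: Low-frequency behaviors: Singular limits}, which gives $\E_j'(L)=c_j+\bigO(|L-L_0|)$ with explicit limits $c_j$, equivalently the derivatives at $L_0$ of the limiting combinations. The same linear combination then gives $(\E_{1,+})'(L)\to\G_{c,1}'(L_0)=0$, since the Type 1 function satisfies $\G_{c,1}'(0)=\G_{c,1}'(L_0)=0$, and $(\E_{1,-})'(L)\to\widetilde{\G}'_{c,1}(L_0)=\frac{\ii}{\sqrt{\sqrt7\pi}}$. The $\bigO(|L-L_0|)$ error needs $C^1$ convergence of the eigenfunctions. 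This holds because they are explicit exponentials whose parameters $\tau_j^{\pm}$ are Lipschitz in $L$ by the Morse-lemma expansion.

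The main obstacle I expect is consistency of phases and signs: the normalization of $\E_j$, the choice of $\widetilde{G}$ versus $\widetilde{\G}_{c,1}$, and which branch is $\lambda_1$ versus $\lambda_2$ when $L>L_0$ or $L<L_0$. I would fix this by one explicit evaluation at $L>L_0$ and then use the symmetry $L\mapsto$ branch swap for $L<L_0$.
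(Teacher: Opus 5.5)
Your route is the paper's own: the paper simply specializes Proposition \ref{prop: Low-frequency behaviors: Singular limits}, Case 2, to $(k,l)=(4,1)$. The explicit pieces you list are correct:
\begin{itemize}
\item $\widetilde{\G}_{c,1}$ is exactly the function $\widetilde G$ of \eqref{eq: defi-orthogonal-G}, with no phase needed;
\item $\cos\theta=\frac{\sqrt{21}}{14}$ and $\cos3\theta=-\frac{9\sqrt{21}}{49}$, so $\cos^2\frac{3\theta}{2}=\frac{98-18\sqrt{21}}{196}$;
\item the coefficient matrix is a symmetric reflection, hence its own inverse;
\item $\G_{c,1}'(L_0)=0$ and $\widetilde{\G}_{c,1}'(L_0)=\frac{\ii}{\sqrt{\sqrt7\pi}}$.
\end{itemize}

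The gap is the step you defer to the end, deciding which of $\E_1,\E_2$ carries which limit. It cannot be closed the way you propose. Near $(t_0,L_0)$, with $t_0=\frac{3}{2\sqrt7}$, the zero set of $F$ is two smooth curves crossing transversally. Along each curve the normalized eigenfunction converges, from both sides of $L_0$, to one fixed vector of $\ker(\B-\ii\lambda_{c,1})$. The reason is that the unnormalized eigenfunction is a $0/0$ quotient at $(t_0,L_0)$, and its first-order limit depends only on the slope $(\tau-t_0)/(L-L_0)$. Equivalently, apply Rellich's theorem to the analytic self-adjoint family $-\ii\B(L)$ rescaled to $(0,1)$.

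But $\lambda_1<\lambda_2$ are defined with $|L-L_0|$, so $\lambda_1$ lies on one curve for $L>L_0$ and on the other for $L<L_0$. Hence item 1 can hold on at most one side of $L_0$. Carrying out your ``branch swap for $L<L_0$'' yields the expansions with $\E_1$ and $\E_2$ interchanged on that side, not the stated ones. The ``without loss of generality $L>L_0$'' in the proof of Proposition \ref{prop: Low-frequency behaviors: Singular limits} is exactly this unjustified step. So quoting that proposition and Remark \ref{rem: rotation of eigenmodes} as black boxes does not close the gap.

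You can decide the side by first-order degenerate perturbation theory.
\begin{itemize}
\item After rescaling, $\partial_L(-\ii\B)$ acts on $\ker(\B-\ii\lambda_{c,1})$ as $-\frac{3\lambda_{c,1}}{L_0}+\frac{2\ii}{L_0}\partial_x=-\frac{3\lambda_{c,1}}{L_0}-\frac{2}{L_0}\Omega$, where $\Omega e^{\ii\omega x}=\omega e^{\ii\omega x}$.
\item In the orthonormal basis $(\G_{c,1},\widetilde{\G}_{c,1})$, $\Omega$ compresses to $\frac17\begin{pmatrix}-9/\sqrt7&-10/\sqrt{21}\\-10/\sqrt{21}&9/\sqrt7\end{pmatrix}$, with eigenvalues $\pm\frac{1}{\sqrt3}$.
\item This reproduces the slopes $\frac1\pi\bigl(-\frac{9}{49}\pm\frac{\sqrt{21}}{21}\bigr)$ of \eqref{eq: pm-eigenvalues}.
\item The eigenvector $c_+\G_{c,1}+c_-\widetilde{\G}_{c,1}$, with $c_\pm=\frac{\sqrt{98\pm18\sqrt{21}}}{14}$, belongs to the slope with the $+$ sign.
\end{itemize}
So that combination is the limit of $\E_1$ for $L<L_0$, but of $\E_2$ for $L>L_0$. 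The statement, and with it items 2 and 3, holds as written only for $L<L_0$.

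To get it on all of $I\setminus\{L_0\}$, label by analytic branch. Take $\lambda_1=\lambda_{c,1}-\frac{9}{49\pi}(L-L_0)+\frac{\sqrt{21}}{21\pi}(L-L_0)+\bigO((L-L_0)^2)$, give $\lambda_2$ the opposite sign on the last linear term, and fix the phases of $\E_1,\E_2$ accordingly. With that relabeling, the rest of your argument goes through unchanged.
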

\begin{proof}
Here we simply apply Proposition \ref{prop: Low-frequency behaviors: Singular limits} in our particular case $L_0=2\pi\sqrt{7}$. We point out that $\poscals{\E_{1,+}}{\E_{1,-}}_{L^2(0,L)}=0$.
\end{proof}Similar results hold for $\E_{-1,+}$ and $\E_{-1,-}$. Here we omit the details and only take $\E_{1,+}$ and $\E_{1,-}$ as an example. 
For $s\geq0$, we define the following Hilbert subspaces 
\begin{equation}\label{eq: defi for space h0 and h-2pi-7}
   H_{\A}^s:=\{u\in D(\A^s):\poscalr{u}{\F_{\zeta_+}}_{(0,L)}=\poscalr{u}{\F_{\zeta_-}}_{(0,L)}=0\},\;    H_{\B}^s:=\{u\in D(\B^s):\poscals{u}{\E_{\pm1,+}}_{L^2(0,L)}=0\}.
\end{equation}
\textbf{Compensate bi-orthogonal family}
We perform a similar procedure as we prove Theorem \ref{thm: main result with L-L0}. First, we recall the bi-orthogonal family to $e^{-\ii t\lambda_j}$ defined in Proposition \ref{prop: biorthogonal family}. Then, recalling Proposition \ref{prop: Index set M-E}, for the modal case $L_0=2\pi\sqrt{7}$, there exist two eigenvalues $\lambda_{\pm 1}$ such that $\lim_{L\rightarrow 2\pi\sqrt{7}}\lambda_{1}=\lim_{L\rightarrow 2\pi\sqrt{7}}\lambda_{2}=\lambda_{c,1}$ and $\lim_{L\rightarrow 2\pi\sqrt{7}}\lambda_{-1}=\lim_{L\rightarrow 2\pi\sqrt{7}}\lambda_{-2}=\lambda_{c,-1}$.

\begin{lem}\label{lem: bi-orthogonal family-2pi-7}
Let $\mathcal{J}=\{\pm\}\cup(\Z\setminus\{0,\pm1,\pm2\})$ be a countable index set. There exists a family of functions $\{\vartheta_j\}_{j\in\mathcal{J}}$ such that 
\begin{enumerate}
    \item For $j\neq 0,\pm 1,\pm2$, $\vartheta_j=\phi_j$, where $\phi_j$ is defined in Proposition \ref{prop: biorthogonal family}.
    \item $\supp{\vartheta_j}\subset [-\frac{T}{2},\frac{T}{2}],\forall j\in\Z\backslash\{\pm 1,\pm2\}$.\label{eq: compact support of theta_j-2}
    \item $\int_{-\frac{T}{2}}^{\frac{T}{2}}\vartheta_j(s)e^{-\ii \lambda_ks}ds=\delta_{jk},\forall j\in\Z\backslash\{0,\pm 1,\pm2\}$ and $\forall k\in\Z\backslash\{0\}$. Moreover, 
    \begin{gather*}
    \int_{-\frac{T}{2}}^{\frac{T}{2}}\vartheta_{\pm}(s)e^{-\ii \lambda_k s}ds=0,\forall k\neq 0, \pm 1,\pm2\\         \int_{-\frac{T}{2}}^{\frac{T}{2}}\vartheta_{+}(s)e^{-\ii s\lambda_{1}}ds=1,\;\int_{-\frac{T}{2}}^{\frac{T}{2}}\vartheta_+(s)e^{-\ii s\lambda_{2}}ds=\prod_{k\neq1,2}\left(\frac{\lambda_k-\lambda_{2}}{\lambda_k-\lambda_{1}}\right)\\
     \int_{-\frac{T}{2}}^{\frac{T}{2}}\vartheta_{-}(s)e^{-\ii s\lambda_{-1}}ds=1,\;\int_{-\frac{T}{2}}^{\frac{T}{2}}\vartheta_-(s)e^{-\ii s\lambda_{-2}}ds=\prod_{k\neq1,2}\left(\frac{\lambda_k-\lambda_{2}}{\lambda_k-\lambda_{1}}\right).
    \end{gather*}
    \label{eq: biorthorgonal-2pi-7}
    \item For $N\in \N$, there exists a constant $K=K(N)$ such that $\|\phi^{(m)}_j\|_{L^{\infty}(\R)}\leq Ce^{\frac{K}{\sqrt{T}}}|\lambda_j|^m,\forall j\in\Z\backslash\{\pm 1\}, \forall m\in\{0,1,\cdots,N\}$. Here the constant $C$ appearing in the inequality might depend on $N$ but not on $T$, $L$ and $j$.\label{eq: bound for derivative of theta-2}
\end{enumerate}
\end{lem}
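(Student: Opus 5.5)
We will build $\vartheta_\pm$ from the classical family $\{\phi_j\}$ of Proposition \ref{prop: biorthogonal family}, following the multiplier (Paley--Wiener) construction used there. Recall that $\phi_j$ is obtained as the inverse Fourier transform of an entire function of exponential type $T/2$,
\[
\widehat{\phi_j}(z)=\Phi_j(z)\,\frac{M(z)}{M(\lambda_j)},\qquad \Phi_j(z)=\prod_{k\neq j}\Bigl(1-\frac{z}{\lambda_k}\Bigr)\Big/\prod_{k\neq j}\Bigl(1-\frac{\lambda_j}{\lambda_k}\Bigr),
\]
where $M$ is a compactly supported multiplier giving the decay and the $e^{K/\sqrt{T}}$ bound. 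The properties for $j\notin\{0,\pm1,\pm2\}$ are then literally those of $\phi_j$, with constants uniform in $L$ by Corollary \ref{coro: uniform est for bi-family}, since these indices lie outside $\Lambda_E$ (Proposition \ref{prop: Index set M-E} for $L_0\in\mathcal N^3$).

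For $\vartheta_+$, the idea is to avoid dividing by the small gap $\lambda_1-\lambda_2=\bigO(|L-2\pi\sqrt7|)$. We take the canonical product vanishing at every $\lambda_k$ with $k\neq 1,2$ (including $k=-1,-2$), and normalize it at $\lambda_1$ only:
\[
\widehat{\vartheta_+}(z)=\prod_{k\neq 1,2}\frac{1-z/\lambda_k}{1-\lambda_1/\lambda_k}\cdot\frac{M(z)}{M(\lambda_1)}.
\]
Evaluating at $z=\lambda_k$ gives $0$ for $k\neq1,2$ and $1$ at $\lambda_1$. At $\lambda_2$ it gives $\prod_{k\neq1,2}\frac{\lambda_k-\lambda_2}{\lambda_k-\lambda_1}\cdot\frac{M(\lambda_2)}{M(\lambda_1)}$. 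To obtain exactly the product stated in Lemma \ref{lem: bi-orthogonal family-2pi-7}, we either choose $M$ real-even with $M(\lambda_1)=M(\lambda_2)$, or we absorb the ratio by a slight redefinition of $M$. The simplest option is to multiply by the affine correction $\bigl(1+\beta(z-\lambda_1)\bigr)$ with $\beta$ chosen to cancel the ratio; since $M$ is smooth, $\beta=\bigO(1)$. The function $\vartheta_-$ is built symmetrically, replacing $\lambda_{1},\lambda_2$ by $\lambda_{-1},\lambda_{-2}$ and using $\lambda_{-j}$ as the reflection of $\lambda_j$. Paley--Wiener then gives support in $[-T/2,T/2]$, and the moment identities follow by evaluating the Fourier transform at $z=\lambda_k$.

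For the derivative bounds, the $m$-th derivative corresponds to multiplying by $z^m$. The standard estimate of \cite{Tenenbaum-Tucsnak2007,Lissy-2014} bounds $\int|z|^m|\widehat{\vartheta}|$ by $Ce^{K/\sqrt T}|\lambda_j|^m$, provided the normalizing denominator $\prod_{k\neq1,2}|1-\lambda_1/\lambda_k|$ is bounded below uniformly in $L$. This is the main point to check. The excluded pair $\lambda_1,\lambda_2$ is the only one accumulating at $\lambda_{c,1}$, so every remaining $\lambda_k$ stays at a distance from $\lambda_1$ bounded below by a constant independent of $L$. Indeed, $\lambda_{-1},\lambda_{-2}\to-\lambda_{c,1}\neq\lambda_{c,1}$, and the hyperbolic eigenvalues satisfy $|\lambda_k|\ge 2\sqrt3/9>|\lambda_{c,1}|$ by Lemma \ref{lem: bound of eigenvalues}. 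Together with the uniform asymptotics of Proposition \ref{prop: Asymp in L}, this gives uniform convergence and lower bounds for the infinite products, with the gap condition $|\lambda_{k+1}-\lambda_k|\gtrsim k^2$ holding uniformly. The same bound also shows that the $\lambda_2$-value product is $1+\bigO(|L-2\pi\sqrt7|)$, hence uniformly bounded.

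The main obstacle is therefore the uniform-in-$L$ lower bound for the product excluding a cluster, together with the matching upper bound on $|\widehat{\vartheta_\pm}|$ on the real line. For these we will reuse the argument of Corollary \ref{coro: uniform est for bi-family}, now with the cluster $\{\lambda_1,\lambda_2\}$ removed rather than divided by its gap.
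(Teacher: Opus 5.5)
Your proposal is correct and is essentially the paper's construction. Write $P_i(z)=\prod_{k\neq1,2}\frac{\lambda_k-z}{\lambda_k-\lambda_i}$. Then $P_2=P_1/a(L)$ and $C_2=a(L)C_1$, so the paper's two-term $\psi_+$ collapses to $P_1(z)\,\frac{\Sigma_\beta(\lambda_1-z)+\Sigma_\beta(\lambda_2-z)}{1+\Sigma_\beta(\lambda_1-\lambda_2)}$, which is your first option: a multiplier even about $\frac{\lambda_1+\lambda_2}{2}$ that equals $1$ at both $\lambda_1$ and $\lambda_2$; uniformity in $L$ rests, as you say, on $\lambda_1$ staying uniformly separated from $\{\lambda_k\}_{k\neq1,2}$.
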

\begin{proof}
We put the proof in Appendix \ref{sec: app-bi-7pi}.
\end{proof}
In this section, we always denote $a(L):=\prod_{k\neq1,2}\left(\frac{\lambda_k-\lambda_{2}}{\lambda_k-\lambda_{1}}\right)$. Then, we consider the KdV system
\begin{equation}\label{eq: Linear KdV-2pi}
\left\{
\begin{array}{lll}
    \p_ty+\p_x^3y+\p_xy=0 & \text{ in }(0,T/2)\times(0,L), \\
     y(t,0)=y(t,L)=\p_xy(t,L)=0&  \text{ in }(0,T/2),\\
     y(0,x)=y^0(x)&\text{ in }(0,L).
\end{array}
\right.
\end{equation} 
Using the smoothing effects, for $y^0\in \h^0_0$, we obtain that $y(\frac{T}{2})\in H_{\A}^2$. Then we have the following lemma to compensate for the boundary condition of the KdV operator.
\begin{lem}\label{lem: defi of coeff-1-2-3-4-pi-7}
Let $\mu_1$, $\mu_2$, $\mu_3$, and $\mu_4$ be four distinct real positive constants, for any function $z\in  H_{\A}^2\subset H_{\A}$, there exist constants $c_1$, $c_2$, $c_3$, and $c_4$ such that 
\begin{equation}
    z- c_1 h_{\mu_1}- c_2 h_{\mu_2}-c_3 h_{\mu_3}-c_4 h_{\mu_4}\in \h^2.
\end{equation}
Here $h_{\mu_j}(1\leq j\leq4)$ are modulated functions defined in Section \ref{sec: preliminary}.
\end{lem}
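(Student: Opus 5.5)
I would follow the template of Lemma \ref{lem: defi of coeff-1-2-3} and reduce the statement to a $4\times4$ linear system for $(c_1,c_2,c_3,c_4)$. Set $f:=z-\sum_{j=1}^4 c_jh_{\mu_j}$. Since $z\in H_{\A}^2\subset D(\A^2)$ and each $h_{\mu_j}$ is smooth with $h_{\mu_j}(0)=h_{\mu_j}(L)=0$, the Dirichlet conditions and those coming from $\mathcal{P}f$ at the endpoints hold automatically. Membership of $f$ in $D(\B^2)$ then requires only the two derivative-matching conditions. Using $h_{\mu_j}'(L)-h_{\mu_j}'(0)=1$, $\mathcal{P}h_{\mu_j}=\mu_jh_{\mu_j}$ and $\p_xz(L)=0$, these become
\begin{equation*}
\sum_{j=1}^4 c_j=-\p_xz(0),\qquad \sum_{j=1}^4 c_j\mu_j=-\p_x(\mathcal{P}z)(0).
\end{equation*}
The space $H_{\B}^2$ of \eqref{eq: defi for space h0 and h-2pi-7} imposes two further conditions, $\poscals{f}{\E_{1,+}}_{L^2(0,L)}=0$ and $\poscals{f}{\E_{-1,+}}_{L^2(0,L)}=0$. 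This gives four equations in four unknowns.

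To compute the new coefficients I integrate by parts against eigenfunctions of $\B$, exactly as in Lemma \ref{lem: defi of coeff-1-2-3}:
\begin{equation*}
\poscals{h_{\mu_j}}{\E_k}_{L^2(0,L)}=\frac{\overline{\E_k'(L)}}{\mu_j+\ii\lambda_k}.
\end{equation*}
By linearity,
\begin{equation*}
\poscals{h_{\mu_j}}{\E_{1,+}}=\alpha\frac{\overline{\E_1'(L)}}{\mu_j+\ii\lambda_1}+\beta\frac{\overline{\E_2'(L)}}{\mu_j+\ii\lambda_2},
\end{equation*}
where $\alpha,\beta$ are the real coefficients from Corollary \ref{coro: expansion of E^g and E^b}. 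The row for $\E_{-1,+}$ is obtained the same way from $\E_{\pm1}=\overline{\E_{\mp1}}$-type symmetry ($\lambda_{-k}=-\lambda_k$, $\E_{-k}=\overline{\E_k}$), so it is essentially the complex conjugate of the previous row. The system matrix therefore has rows $(1)_j$, $(\mu_j)_j$, $(R_j)_j$ and $(\overline{R_j})_j$, with right-hand side $(-\p_xz(0),\,-\p_x(\mathcal{P}z)(0),\,\poscals{z}{\E_{1,+}},\,\poscals{z}{\E_{-1,+}})$.

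The remaining task is to show that this matrix is invertible for distinct positive $\mu_j$ and for $L\ne L_0$ in $I$. I would view $R_j=r(\mu_j)$ as the evaluation at $\mu_j$ of the rational function
\begin{equation*}
r(\mu)=\frac{A}{\mu+\ii\lambda_1}+\frac{B}{\mu+\ii\lambda_2}.
\end{equation*}
Suppose $(d_1,\dots,d_4)$ is a nonzero vector annihilating all four rows. Then $\sum_j d_j\,p(\mu_j)=0$ for every $p$ in the span $W$ of $1$, $\mu$, $r(\mu)$ and $\overline{r}(\mu)$. If these four functions are linearly independent, clearing denominators turns $W$ into a four-dimensional space of polynomials of degree at most $5$. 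I would then argue that $\{p\in W\}$ is a Chebyshev-type system on $(0,\infty)$ (poles at $-\ii\lambda_{1,2}$ and their conjugates lie off the positive axis). Alternatively, the fallback is to compute the determinant in closed form, as was done in Lemma \ref{lem: defi of coeff-1-2-3}, and check that it is nonzero.

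Linear independence reduces to $(A,B)\ne0$ together with the condition that $r$ and $\overline r$ are not proportional modulo affine functions. This follows from the distinct poles $-\ii\lambda_{1}, -\ii\lambda_2$ and their conjugates $\ii\lambda_{1},\ii\lambda_2$, since $\lambda_{1,2}\to\lambda_{c,1}\neq0$. It also needs $A=\alpha\overline{\E_1'(L)}\ne0$, which holds because $|\E_j'(L)|$ is of order one by Proposition \ref{prop: Low-frequency behaviors: Singular limits}.

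The main obstacle is the nondegeneracy of this determinant, and in particular its uniformity in $L$, since the later uniform bounds on the $c_j$ depend on it. Near $L_0$ the combination $\E_{1,+}$ has $(\E_{1,+})'(L)=\bigO(|L-L_0|)$, so $r$ degenerates toward zero at order $|L-L_0|$. My plan is to factor this order out, expanding $r(\mu)=(L-L_0)\,\tilde r(\mu)+\bigO(|L-L_0|^2)$ using $\lambda_2-\lambda_1=\bigO(|L-L_0|)$. I would then check that the leading determinant is nonzero, mirroring the $(\frac23-\epsilon_0)$ computation in Lemma \ref{lem: est for varrho-2pi}.
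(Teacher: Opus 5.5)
Your reduction to the $4\times4$ system is the paper's proof: two derivative-matching conditions, orthogonality to $\E_{\pm1,+}$, $\poscals{h_{\mu_j}}{\E_k}$ by integration by parts, and the fourth row conjugate to the third. The paper then simply asserts solvability for distinct positive $\mu_j$. The gap is in your justification of that assertion.

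Clearing denominators turns $W$ into a four-dimensional space of real polynomials of degree at most $5$, and nothing forces such a space to be a Chebyshev system on $(0,\infty)$. The poles $\pm\ii\lambda_{1,2}$ lying off the positive axis only make the rows finite there; it says nothing about the number of positive zeros. The property genuinely fails for the shape you meet at leading order, a simple plus a double pole at one point of the imaginary axis (normalised to $-\ii$). For $\tilde r(\mu)=\frac{-1+\ii}{\mu+\ii}+\frac{1}{(\mu+\ii)^2}$ one has
\[
(\mu^2+1)^2\bigl(1+9\,\Re\tilde r(\mu)\bigr)=\mu^4-9\mu^3+20\mu^2-9\mu+1=(\mu^2-3\mu+1)(\mu^2-6\mu+1).
\]
This has the four distinct positive roots $\frac{3\pm\sqrt5}{2}$ and $3\pm2\sqrt2$, so the determinant with rows $1,\mu_j,\Re\tilde r(\mu_j),\Im\tilde r(\mu_j)$ vanishes there. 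Since the roots are simple, the same persists for two nearby simple poles. Hence invertibility for every distinct positive quadruple depends on the actual constants $\E_{1,2}'(L)$, $\lambda_{1,2}$, and your argument cannot deliver it. Linear independence of $1,\mu,r,\overline r$ only gives invertibility for generic quadruples. Your fallback ("compute the determinant and check it is nonzero") is exactly the open question, with no reason to expect a positive answer for all choices.

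What can be proved is invertibility for a concrete choice such as $\mu_j=j\mu$ with $\mu$ large, which is the choice made in the iteration schemes. Write $r(\mu)=S_1\mu^{-1}+S_2\mu^{-2}+\bigO(\mu^{-3})$ with $S_1=A+B=\overline{(\E_{1,+})'(L)}$ and $S_2=-\ii(A\lambda_1+B\lambda_2)$. The determinant with real rows $1,\mu_j,\Re r(\mu_j),\Im r(\mu_j)$ then equals $\mu^{-2}\,\Im(\overline{S_1}S_2)\,V+\bigO(\mu^{-3})$. Here $V:=\det\bigl(j^{p}\bigr)_{p\in\{0,1,-1,-2\},\,1\le j\le 4}\neq0$, because $t^2\cdot\{1,t,t^{-1},t^{-2}\}$ spans the cubics. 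Since $A+B$ and $\lambda_2-\lambda_1$ are $\bigO(|L-L_0|)$, each of the two degenerate rows, remainder included, is $\bigO(|L-L_0|)$. Uniform nondegeneracy therefore reduces to the single scalar check $|\Im(\overline{S_1}S_2)|\gtrsim|L-L_0|^2$. This is the analogue of $\tfrac23-\epsilon_0\neq0$ in Lemma \ref{lem: est for varrho-2pi}, to be verified from the first-order expansions of $\E_{1,2}'(L)$. Finally, the uniform bound on the $c_j$ you mention also needs the right-hand side to match the degenerate rows: $\poscals{z}{\E_{\pm1,+}}=\bigO(|L-L_0|)\|z\|$. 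That comes from $z\in H_{\A}$, by comparing $\E_{1,+}$ with a reflected eigenfunction $\F_{\zeta}(L-\cdot)$ as in the $2\pi$ case, not from the matrix.
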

\begin{proof}
For any $\mu_1$, $\mu_2$, $\mu_3$, and $\mu_4$,  we are able to construct the real functions $h_{\mu_1}$, $h_{\mu_2}$, $h_{\mu_3}$, and $h_{\mu_4}$ as 
\begin{equation}
\left\{
\begin{aligned}
     &h_{\mu_j}'''+h_{\mu_j}'=\mu_j h_{\mu_j}(x),  \\
     &h_{\mu_j}(0)=h_{\mu_j}(L)=0,\\
     &h_{\mu_j}'(L)-h_{\mu_j}'(0)=1,
\end{aligned}
\right.
\end{equation}
Let $f:= z- c_1 h_{\mu_1}- c_2 h_{\mu_2}-c_3 h_{\mu_3}-c_4 h_{\mu_4}$. Then it is easy to check that $f(0)=f(L)=0$. For the boundary derivative, we ask that
\begin{align*}
    c_1+ c_2+c_3+c_4&=-\p_xz(0),\\
    c_1\mu_1+ c_2\mu_2+c_3\mu_3+c_4\mu_4&=-\p_x(\mathcal{P}z)(0)
\end{align*}
    Next, for the projection on the direction $\E^g_{+}$, we have 
    \begin{gather}
        \poscals{z- c_1 h_{\mu_1}- c_2h_{\mu_2}-c_3 h_{\mu_3}-c_4 h_{\mu_4}}{\E_{1,+}}_{L^2(0,L)}= 0,
    \end{gather}
    which is equivalent to 
    \begin{gather}
        c_1  \poscals{h_{\mu_1}}{\E_{1,+}}_{L^2(0,L)}+ c_2 \poscals{h_{\mu_2}}{\E_{1,+}}_{L^2(0,L)}+c_3\poscals{h_{\mu_3}}{\E_{1,+}}_{L^2(0,L)}+c_4  \poscals{h_{\mu_4}}{\E_{1,+}}_{L^2(0,L)}=  \poscals{z}{\E_{1,+}}_{L^2(0,L)}
    \end{gather}
   For the term $\poscals{h_{\mu_j}}{\E_1}_{L^2(0,L)}$, we obtain $\poscals{h_{\mu_j}}{\E_{1}}_{L^2(0,L)}=\frac{\overline{\E_{1}'(L)}}{\mu_j+\ii\lambda_{1}}$. We deduce that
\begin{align}
\poscals{h_{\mu_j}}{\E_{1,+}}_{L^2(0,L)}&=\frac{\sqrt{98+18\sqrt{21}}}{14}\frac{\overline{\E_{1}'(L)}}{\mu_j+\ii\lambda_{1}}+\frac{\sqrt{98- 18\sqrt{21}}}{14}\frac{\overline{\E_{2}'(L)}}{\mu_j+\ii\lambda_{2}},\\
\poscals{h_{\mu_j}}{\E_{-1,+}}_{L^2(0,L)}&=\frac{\sqrt{98+ 18\sqrt{21}}}{14}\frac{\overline{\E_{-1}'(L)}}{\mu_j+\ii\lambda_{-1}}+\frac{\sqrt{98-18\sqrt{21}}}{14}\frac{\overline{\E_{-2}'(L)}}{\mu_j+\ii\lambda_{-2}}.
\end{align}
Therefore, we obtain a linear system of the coefficients
\begin{equation}
\left\{
\begin{array}{rl}
     c_1+ c_2+c_3+c_4&=-\p_xz(0)  \\
     c_1\mu_1+ c_2\mu_2+c_3\mu_3+c_4\mu_4&=-\p_x(\mathcal{P}z)(0),\\
     \sum_{j=1}^4c_j\left( \frac{\sqrt{98+18\sqrt{21}}}{14}\frac{\overline{\E_{1}'(L)}}{\mu_j+\ii\lambda_{1}}+\frac{\sqrt{98- 18\sqrt{21}}}{14}\frac{\overline{\E_{2}'(L)}}{\mu_j+\ii\lambda_{2}}\right)&= -\poscals{z}{\E_{1,+}}_{L^2(0,L)},\\
     \sum_{j=1}^4c_j \left(\frac{\sqrt{98+ 18\sqrt{21}}}{14}\frac{\overline{\E_{-1}'(L)}}{\mu_j+\ii\lambda_{-1}}+\frac{\sqrt{98-18\sqrt{21}}}{14}\frac{\overline{\E_{-2}'(L)}}{\mu_j+\ii\lambda_{-2}}\right)&= -\poscals{z}{\E_{-1,+}}_{L^2(0,L)}.
\end{array}
\right.
\end{equation}
Choosing the distinct positive real numbers $(\mu_1,\mu_2,\mu_3,\mu_4)$, then we find the real coefficients $(c_1,c_2,c_3,c_4)$ that compensate the boundary conditions.
\end{proof}

\begin{lem}\label{lem: construction of v-2pi-sqrt-7}
For any initial state $z^0\in H_{\B}^2$ and any final state $z^T\in H_{\B}^2$ (as defined in \eqref{eq: defi for space h0 and h}), there exists a function $v\in C^1(0,T)$ such that the solution $z$ to the controlled KdV system 
\begin{equation}\label{eq: KdV-z-v-2pi}
\left\{
\begin{array}{lll}
    \p_tz+\p_x^3z+\p_xz=0 & \text{ in }(0,T)\times(0,L), \\
     z(t,0)=z(t,L)=0&  \text{ in }(0.T),\\
     \p_xz(t,L-\p_xz(t,0))=v(t)&\text{ in }(0,T),\\
     z(0,x)=z^0\in H_B^2&\text{ in }(0,L),
\end{array}
\right.
\end{equation}
satisfies that $z(T)=z^T\in \h^2$. In particular, we can choose that $z(T,x)=\varrho_1(T)\E_{1,-}(x)+\varrho_{-1}(T)\E_{-1,-}(x)$. Furthermore, $\varrho_{\pm1}$ satisfies that $\varrho_1=\overline{\varrho_{-1}}$ and 
\begin{equation*}
     z(T)+ \sum_{j=1}^4c_j h_{\mu_j} e^{-\mu_j \frac{T}{2}}\in H_{\A}.
\end{equation*}
\end{lem}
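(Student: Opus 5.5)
We follow the proof of Lemma \ref{lem: construction of v-2pi}, now with the compensated family $\{\vartheta_j\}_{j\in\mathcal{J}}$ of Lemma \ref{lem: bi-orthogonal family-2pi-7}. We look for the control in the form
\begin{equation*}
v(t)=v_+\vartheta_+(t-\tfrac{T}{2})+v_-\vartheta_-(t-\tfrac{T}{2})+\sum_{k\neq0,\pm1,\pm2}v_k\vartheta_k(t-\tfrac{T}{2}).
\end{equation*}
We expand $z^0$ and $z^T$ in the orthonormal basis $\{\E_j\}$. Since $z^0,z^T\in H_{\B}^2$, their components along $\E_{\pm1,+}$ vanish. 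Hence in each two–dimensional block $\mathrm{Span}\{\E_{\pm1},\E_{\pm2}\}$ they only have a component along $\E_{\pm1,-}$; call these $z^0_{\pm}$ and $z^T_{\pm}$.

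Inserting this ansatz into the Duhamel representation \eqref{eq: formula for tilde_z} and using the biorthogonality relations gives the moment equations.
\begin{itemize}
\item For $k\neq0,\pm1,\pm2$ the equations decouple as before, so
\begin{equation*}
v_k=\frac{z^0_ke^{\ii\lambda_kT/2}-z^T_ke^{-\ii\lambda_kT/2}}{\ii\lambda_kh_k}.
\end{equation*}
For the special target we take $z^T_k=0$.
\item In the block of $\lambda_1,\lambda_2$, only $\vartheta_+$ contributes. Its moments there are $1$ at $\lambda_1$ and $a(L)$ at $\lambda_2$. Using $-\ii\lambda_jh_j=\overline{\E_j'(L)}$ (Lemma \ref{lem: size of h_j}), we get two scalar equations, one for the $\E_1$-coefficient and one for the $\E_2$-coefficient. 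The unknowns are $v_+$ and $\varrho_1$, with $z^T_+=\varrho_1$ and the coefficients of $\E_{1,-}$ in $\E_1,\E_2$ read off from Corollary \ref{coro: expansion of E^g and E^b}.
\item The same holds in the $\lambda_{-1},\lambda_{-2}$ block with $v_-$ and $\varrho_{-1}$.
\end{itemize}

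The key observation is that the general target $z^T\in H_{\B}^2$ must be reachable. Here the $2\times2$ block system must be solvable with $z^T$ prescribed and one free scalar. This is where $a(L)$ matters: we need the vector $(\overline{\E_1'(L)}e^{\ii\lambda_1T/2},\,a(L)\overline{\E_2'(L)}e^{\ii\lambda_2T/2})$ to be compatible with the constraint that the $\E_{1,+}$-component stays zero. We will instead impose that constraint through the projection onto $\E_{1,+}$, giving one equation that determines $v_+$; the $\E_{1,-}$ equation then determines $\varrho_1$ or checks $z^T_+$. This is the step I expect to be delicate. Its nondegeneracy requires $\E_{1,-}'(L)\to \ii/\sqrt{\pi\sqrt7}\neq0$ (Corollary \ref{coro: expansion of E^g and E^b}) together with the behaviour $a(L)\to1$, to be checked from the product formula and Proposition \ref{prop: Asymp in L} since $\lambda_2-\lambda_1=\bigO(|L-L_0|)$. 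If this compatibility fails for prescribed $z^T$, we fall back to the special target, where $\varrho_1$ is a free unknown and the system is genuinely solvable.

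Finally, we fix $\varrho_{\pm1}$ by requiring
\begin{equation*}
\poscalr{z(T)+\sum_jc_je^{-\mu_jT/2}h_{\mu_j}}{\F_{\zeta_\pm}}_{(0,L)}=0.
\end{equation*}
By the integration by parts identities used for \eqref{eq: defi of varrho}, this is a $2\times2$ linear system in $(\varrho_1,\varrho_{-1})$. Its entries are
\begin{equation*}
\frac{\F_{\zeta_\pm}'(0)\E_{\pm1,-}'(L)}{\zeta_\pm+\ii\lambda}\quad\text{(appropriately combined over the $\E_1,\E_2$ parts)},
\end{equation*}
and its right-hand side is $\sum_jc_je^{-\mu_jT/2}\F_{\zeta_\pm}'(0)h_{\mu_j}'(L)/(\zeta_\pm+\mu_j)$.

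Solvability follows from the asymptotics as $L\to L_0$:
\begin{itemize}
\item the off-diagonal entries pair $\F_{\zeta_+}$ with the $-$ block, whose eigenvalues stay near $-\ii\lambda_{c}$, far from $\zeta_+$, and are small;
\item the diagonal entries are nonzero by Lemma \ref{lem: eigenvalue close to 7} and Corollary \ref{coro: expansion of E^g and E^b}.
\end{itemize}
The conjugation symmetry $\E_{-j}=\overline{\E_j}$, $\F_{\zeta_-}=\overline{\F_{\zeta_+}}$, together with the reality of the $c_j$ and $h_{\mu_j}$, then yields $\varrho_1=\overline{\varrho_{-1}}$, so that $z(T)$ is real.
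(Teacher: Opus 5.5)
The gap is a degree-of-freedom count in the $(\lambda_1,\lambda_2)$ block. Because $\vartheta_+$ has the prescribed moments $1$ and $a(L)$ at $\lambda_1,\lambda_2$, your control moves this block only along one fixed complex direction. Write $\alpha:=\frac{\sqrt{98+18\sqrt{21}}}{14}$ and $\beta:=\frac{\sqrt{98-18\sqrt{21}}}{14}$, so that $\E_{1,+}=\alpha\E_1+\beta\E_2$ and $\E_{1,-}=\beta\E_1-\alpha\E_2$. Using $-\ii\lambda_jh_j=\overline{\E_j'(L)}$ (Lemma \ref{lem: size of h_j}), the $\E_{1,+}$- and $\E_{1,-}$-components of $z(T)$ are
\begin{gather*}
\alpha\beta\bigl(e^{\ii\lambda_1T}-e^{\ii\lambda_2T}\bigr)z^0_{1,-}+D(L)\,v_+,\qquad D(L):=\alpha\,\overline{\E_1'(L)}e^{\ii\lambda_1T/2}+\beta\,a(L)\,\overline{\E_2'(L)}e^{\ii\lambda_2T/2},\\
\bigl(\beta^2e^{\ii\lambda_1T}+\alpha^2e^{\ii\lambda_2T}\bigr)z^0_{1,-}+\bigl(\beta\,\overline{\E_1'(L)}e^{\ii\lambda_1T/2}-\alpha\,a(L)\,\overline{\E_2'(L)}e^{\ii\lambda_2T/2}\bigr)v_+ .
\end{gather*}
Setting the first to zero fixes $v_+$ (when $D(L)\neq0$), and the second then fixes $\varrho_1$. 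Nothing is left for the condition $z(T)+\sum_jc_je^{-\mu_jT/2}h_{\mu_j}\in H_{\A}$. That condition is one more complex equation, since the $\F_{\zeta_-}$-equation is the conjugate of the $\F_{\zeta_+}$-equation. Your final step ``fix $\varrho_{\pm1}$'' therefore reuses a value already determined, and generically the two requirements cannot both be met with this control family. The same count shows, as you suspected, that a prescribed $z^T\in H_{\B}^2$ is generically unreachable, so the first assertion is not proved either.

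The paper's proof takes the same route and does not close this count. It solves $v_+$ from the $\E_1$-equation alone and dismisses the other equations by conjugation. But conjugation only pairs the $\lambda_{\pm1},\lambda_{\pm2}$ equations with each other, which still leaves two complex equations for one complex unknown. For the statement as literally written (fixed $L\neq L_0$, no bounds claimed) you could instead use the family $\{\phi_j\}$ of Proposition \ref{prop: biorthogonal family}. It has independent moments at $\lambda_1,\lambda_2$ and reaches any $z^T$. Its cost is the factor $|\lambda_1-\lambda_2|^{-1}$ of Corollary \ref{coro: uniform est for bi-family}, which is exactly what $\vartheta_\pm$ were introduced to avoid.

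Second, the nondegeneracy you expect for the $\E_{1,+}$-equation fails. We have $\E_{1,+}'(L)=\alpha\E_1'(L)+\beta\E_2'(L)=\bigO(|L-L_0|)$, $a(L)\to1$ and $\lambda_2-\lambda_1\to0$. Hence
\begin{equation*}
D(L)=\overline{\E_{1,+}'(L)}\,e^{\ii\lambda_1T/2}+\beta\,\overline{\E_2'(L)}\bigl(a(L)e^{\ii\lambda_2T/2}-e^{\ii\lambda_1T/2}\bigr)\longrightarrow0 ,
\end{equation*}
while the right-hand side $\alpha\beta(e^{\ii\lambda_2T}-e^{\ii\lambda_1T})z^0_{1,-}$ vanishes too. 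So $v_+$ is a $0/0$ quotient. Bounding it uniformly needs $|D(L)|\gtrsim|L-L_0|$, i.e.\ first-order information on $\E'_{1,+}(L)$ and on $a(L)-1$, which the cited results do not give. This reflects the near-unreachability of $\E_{1,+}\approx\G_{c,1}$. The limit $\E_{1,-}'(L)\to\ii/\sqrt{\pi\sqrt7}$ that you invoke only makes the $\E_{1,-}$-coefficient of the control nondegenerate.

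The $2\times2$ system for $\varrho_{\pm1}$ has the same defect. The functions $\E_{\pm1,-}\to\widetilde{\G}_{c,\pm1}$ are orthogonal to both $\G_{c,1}$ and $\G_{c,-1}$. The reflections $x\mapsto\F_{\zeta_\pm}(L-x)$ converge to multiples of $\G_{c,\mp1}$. Hence all four pairings $\poscalr{\E_{\pm1,-}}{\F_{\zeta_+}}_{(0,L)}$ and $\poscalr{\E_{\pm1,-}}{\F_{\zeta_-}}_{(0,L)}$ tend to $0$. The right-hand side is also $\bigO(|L-L_0|)$. Note too that, because of the reflection, the small denominators $\zeta_\pm+\ii\lambda_j$ sit in the entries you call off-diagonal. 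Invertibility, and the uniform bound on $\varrho_{\pm1}$ used afterwards, therefore require a first-order expansion of these entries, which your argument does not supply.
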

\begin{proof}
Suppose that $c_1$, $c_2$, $c_3$, and $c_4$ are fixed. As we presented in Section \ref{sec: construction of the control}, we aim to construct the control function $v\in C^1(0,T)$ for 
\begin{equation*}
\left\{
\begin{array}{lll}
    \p_tz+\p_x^3z+\p_xz=0 & \text{ in }(0,T)\times(0,L), \\
     z(t,0)=z(t,L)=0&  \text{ in }(0,T),\\
     \p_xz(t,L)-\p_xz(t,0)=v(t)&\text{ in }(0,T),\\
     z(0,x)=z^0(x)&\text{ in }(0,L),
\end{array}
\right.
\end{equation*}
with the constraints $v(0)=v(T)=0$. Thanks to the family $\{\vartheta_j\}_{j\in\mathcal{J}}$, we construct $v$ as follows
\begin{equation}\label{eq: defi of control v-vartheta-2pi7}
    v(t)=v_-\vartheta_-(t-\frac{T}{2})+v_+\vartheta_+(t-\frac{T}{2})+\sum_{k\neq\pm 1,\pm2}v_k\vartheta_k(t-\frac{T}{2}), v_+=\overline{v_-}.
\end{equation}
Here $h_k$ remains the same as in Section \ref{sec: construction of the control} and we focus on the real solutions ,i.e.,  $z^0_{1,-}=\overline{z^0_{-1,-}}$ and $z^T_{1,-}=\overline{z^T_{-1,-}}$ and we have the expansions 
\begin{gather*}
z^0(x)=z^0_{1,-}\E_{1,-}(x)+z^0_{-1,-}\E_{-1,-}(x)+\sum_{k\in\Z\setminus\{0,\pm 1,\pm2\}}z^0_k\E_k(x),\\
z(T,x)=z^T_{1,-}\E_{1,-}(x)+z^T_{-1,-}\E_{-1,-}(x)+\sum_{k\in\Z\setminus\{0,\pm 1,\pm2\}}z^T_k\E_k(x).
\end{gather*}
Then the solution $z$ to the system \eqref{eq: KdV-z-v-2pi} has the following expansion 
\begin{align*}
z(t,x)&=\frac{\sqrt{98 - 18\sqrt{21}}}{14}e^{\ii \lambda_{1}t}z^0_{1,-}\E_{1}(x)-\frac{\sqrt{98+18\sqrt{21}}}{14}e^{\ii \lambda_{2}t}z^0_{-1,-}\E_{2}(x)+\frac{\sqrt{98 - 18\sqrt{21}}}{14}e^{\ii \lambda_{-1}t}z^0_{-1,-}\E_{-1}(x)\notag\\
&-\frac{\sqrt{98+18\sqrt{21}}}{14}e^{\ii \lambda_{-2}t}z^0_{-1,-}\E_{-2}(x)+\sum_{k\in\Z\setminus\{0,\pm 1\}}z^0_ke^{\ii\lambda_k t}\E_k(x)\notag\\
&-\sum_{k\neq0} \sum_{j\neq\pm 1}\ii\lambda_kh_k\int_0^te^{\ii(t-s)\lambda_k}v_j\vartheta_j(s-\frac{T}{2})ds\E_k(x).\label{eq: construction-z-expansion-2pi7}
\end{align*}
In particular, at $t=T$, 
\begin{align*}
z(T,x)&=\frac{\sqrt{98-18\sqrt{21}}}{14}e^{\ii \lambda_{1}T}z^0_{1,-}\E_{1}(x)-\frac{\sqrt{98+18\sqrt{21}}}{14}e^{\ii \lambda_{2}T}z^0_{-1,-}\E_{2}(x)+\frac{\sqrt{98-18\sqrt{21}}}{14}e^{\ii \lambda_{-1}T}z^0_{-1,-}\E_{-1}(x)\\
&-\frac{\sqrt{98+18\sqrt{21}}}{14}e^{\ii \lambda_{-2}T}z^0_{-1,-}\E_{-2}(x)+\sum_{k\in\Z\setminus\{0,\pm 1,\pm2\}}z^0_ke^{\ii\lambda_k T}\E_k(x)\\
&-\sum_{k\neq0} \sum_{j\neq\pm 1}\ii\lambda_kh_k\int_0^Te^{\ii(T-s)\lambda_k}v_j\vartheta_j(s-\frac{T}{2})ds\E_k(x).
\end{align*}
As a consequence, we use the property \ref{eq: biorthorgonal-2pi-7} in Lemma \ref{lem: bi-orthogonal family-2pi-7}. Thus,
\begin{align*}
\frac{\sqrt{98-18\sqrt{21}}}{14}e^{\ii \lambda_{1}T}z^0_{1,-} -\ii\lambda_{1}h_{1}e^{\ii\frac{T\lambda_{1}}{2}}v_+&=\frac{\sqrt{98-18\sqrt{21}}}{14}z^T_{1,-},\\
-\frac{\sqrt{98+18\sqrt{21}}}{14}e^{\ii \lambda_{2}T}z^0_{1,-}-\ii\lambda_{2}h_{2}e^{\ii\frac{T\lambda_{2}}{2}}v_+a(L)&=-\frac{\sqrt{98+ 18\sqrt{21}}}{14}z^T_{1,-},\\
\frac{\sqrt{98-18\sqrt{21}}}{14}e^{\ii \lambda_{-1}T}z^0_{-1,-} -\ii\lambda_{-1}h_{-1}e^{\ii\frac{T\lambda_{-1}}{2}}v_-&=\frac{\sqrt{98-18\sqrt{21}}}{14}z^T_{-1,-},\\
-\frac{\sqrt{98+18\sqrt{21}}}{14}e^{\ii \lambda_{-2}T}z^0_{-1,-}-\ii\lambda_{-2}h_{-2}e^{\ii\frac{T\lambda_{-2}}{2}}v_-a(L)&=-\frac{\sqrt{98+18\sqrt{21}}}{14}z^T_{-1,-},\\
z^0_ke^{\ii\lambda_k T}-\ii\lambda_kh_ke^{\ii\frac{\lambda_k T}{2}}v_k&=z^T_k,k\neq 0,\pm 1,\pm2.
\end{align*}
We notice that if we take the conjugate of the first equation, we obtain the third one. At the same time, if we take the conjugate of the second equation, we obtain the fourth one. This implies that there are two independent equations above and we can find $\Re(v_+)$ and $\Im(v_+)$ accordingly.
Therefore ,we obtain
\begin{gather*}
    v_k=\frac{z^0_ke^{\ii\frac{\lambda_k T}{2}}-z^T_ke^{-\ii\frac{\lambda_k T}{2}}}{\ii\lambda_kh_k},k\neq 0,\pm 1,\pm2,\\
    v_+=\frac{\sqrt{98-18\sqrt{21}}}{14}\frac{z^0_{1,-}e^{\ii\frac{\lambda_1 T}{2}}-z^T_{1,-}e^{-\ii\frac{\lambda_1 T}{2}}}{\ii\lambda_1h_1},\;\;
    v_-=\frac{\sqrt{98-18\sqrt{21}}}{14}\frac{z^0_{-1,-}e^{\ii\frac{\lambda_{-1} T}{2}}-z^T_{-1,-}e^{-\ii\frac{\lambda_{-1} T}{2}}}{\ii\lambda_{-1}h_{-1}}
\end{gather*}
In particular, we can choose that $z(T,x)=\varrho_{1}\E_{1,-}(x)+\varrho_{-1}\E_{-1,-}(x)\in \h^2$. Indeed, $z^T_{\pm,-}=\varrho_{\pm1}$ and $z^T_j=0$ for $j\neq 0,\pm 1,\pm2$. To achieve this final target, we construct a special control function $v$ as follows:
\begin{multline*}
v(t):=\frac{\sqrt{98-18\sqrt{21}}}{14}(\frac{z^0_{1,-}e^{\ii\frac{\lambda_1 T}{2}}-\varrho_{1}e^{-\ii\frac{\lambda_1 T}{2}}}{\ii\lambda_1h_1}\vartheta_+(t-\frac{T}{2})+\frac{z^0_{-1,-}e^{\ii\frac{\lambda_{-1} T}{2}}-\varrho_{-1}e^{-\ii\frac{\lambda_{-1} T}{2}}}{\ii\lambda_{-1}h_{-1}}\vartheta_-(t-\frac{T}{2}))\\
+\sum_{k\neq0,\pm 1}\frac{z^0_ke^{\ii\frac{\lambda_k T}{2}}}{\ii\lambda_kh_k}\vartheta_k(t-\frac{T}{2}).
\end{multline*}
For this specific final target $z(T)=\varrho_{1}\E_{1,-}+\varrho_{-1}\E_{-1,-}$, we aim to prove that $ z(T)+ \sum_{j=1}^4c_j h_{\mu_j} e^{-\mu_j T/2}\in  H_{\A}$, which is equivalent to 
\begin{equation}
    \poscalr{ z(T)+ \sum_{j=1}^4c_j h_{\mu_j} e^{-\mu_j T/2}}{\F_{\zeta_{\pm}}}_{(0,L)}= 0.
\end{equation}
By integration by parts, we obtain that
\begin{align*}
 \int_0^Lh_{\mu_j}(x)\overline{\F_{\zeta_{\pm}}(L-x)}dx&=\frac{\overline{\F_{\zeta_{\pm}}'(0)}h_{\mu_j}'(L)}{ \zeta_{\pm}+\mu_j},\\
 \int_0^L\E_{\pm1}(x)\overline{\F_{\zeta_{\pm}}(L-x)}dx&=\frac{\overline{\F_{\zeta_{\pm}}'(0)}\E_{\pm1}'(L)}{ \zeta_{\pm}+\ii\lambda_{\pm1}},\\
 \int_0^L\E_{\pm2}(x)\overline{\F_{\zeta_{\pm}}(L-x)}dx&=\frac{\overline{\F_{\zeta_{\pm}}'(0)}\E_{\pm2}'(L)}{ \zeta_{\pm}+\ii\lambda_{\pm2}}.
\end{align*}
Therefore, we know that $\varrho_1(T)$ and $\varrho_{-1}(T)$ satisfy the following linear equations
\begin{equation*}
M
\begin{pmatrix}
\varrho_1 \\
\varrho_{-1}
\end{pmatrix} =
-\begin{pmatrix}
\sum_{j=1}^4 c_j e^{-\frac{1}{2}\mu_jT} \frac{h_{\mu_j}'(L)}{\mu_j+\zeta_+} \\
\sum_{j=1}^4 c_j e^{-\frac{1}{2}\mu_jT} \frac{h_{\mu_j}'(L)}{\mu_j+\overline{\zeta_+}}
\end{pmatrix},
\end{equation*}
where we define $M$ as
\begin{equation*}
\begin{pmatrix}
\frac{\sqrt{98-18\sqrt{21}}}{14} \frac{\mathcal{E}_1'(L)}{\mathrm{i}\lambda_1+\zeta_+}- \frac{\sqrt{98+18\sqrt{21}}}{14}\frac{\mathcal{E}_2'(L)}{\mathrm{i}\lambda_2+\zeta_+} & \frac{\sqrt{98-18\sqrt{21}}}{14}\frac{\overline{\mathcal{E}_1'(L)}}{-\mathrm{i}\lambda_1+\zeta_+} -\frac{\sqrt{98+18\sqrt{21}}}{14}\frac{\overline{\mathcal{E}_2'(L)}}{-\mathrm{i}\lambda_2+\zeta_+} \\
\frac{\sqrt{98-18\sqrt{21}}}{14} \frac{\mathcal{E}_1'(L)}{\mathrm{i}\lambda_1+\overline{\zeta_+}} -\frac{\sqrt{98+18\sqrt{21}}}{14} \frac{\mathcal{E}_2'(L)}{\mathrm{i}\lambda_2+\overline{\zeta_+}} & \frac{\sqrt{98-18\sqrt{21}}}{14} \frac{\overline{\mathcal{E}_1'(L)}}{-\mathrm{i}\lambda_1+\overline{\zeta_+}} -\frac{\sqrt{98+18\sqrt{21}}}{14} \frac{\overline{\mathcal{E}_2'(L)}}{-\mathrm{i}\lambda_2+\overline{\zeta_+}}
\end{pmatrix}
\end{equation*}
So it is easy to check that $\overline{\varrho_1}=\varrho_{-1}$.
\end{proof}
\begin{rem}
Here $\varrho_1$ and $\varrho_{-1}$ are chosen such that $z^T$ is real-valued, which is a particular case for Remark \ref{rem: rho} in $L_0=2\pi\sqrt{7}$. Here the directions $E_1\sim \Re{\widetilde{\G_1}}$ and $E_{-1}\sim 2\ii\Im{\widetilde{\G_1}}$.
\end{rem}
\subsubsection{Revised transition-stabilization method}
\textbf{Iteration schemes}
\begin{prop}\label{prop: estimates on fixed interval-iteration preparation-2pi7}
Let $T\in (0, 2)$, and four distinct positive parameters $\mu_j=\mu$, $j=1,2,3,4$ with $\mu>0$. Suppose that $0<\delta<\pi$ is sufficiently small. Let $I=[2\pi\sqrt{7}-\delta,2\pi\sqrt{7}+\delta]$ be a small compact interval such that $I\cap \mathcal{N}=\{2\pi\sqrt{7}\}$.  For every $L\in I\setminus\{2\pi\sqrt{7}\}$, and every real initial state $y^0\in H_{\A}$, there exists a function $w\in L^2(0, T)$ satisfying 
\begin{itemize}
    \item $w= w_1+ w_2+w_3+w_4+w_5$ in $(0, T)$;
    \item $w_1(t)=w_2(t)=w_3(t)=w_4(t)=w_5(t)=0, \forall t\in (0, T/2)$;
    \item $\|w_1\|_{L^{\infty}(0, T)}\leq \mathcal{K}e^{\frac{2\sqrt{2}K}{\sqrt{T}}}\frac{\mu^N}{T^3}\|y^0\|_{L^2(0,L)}$; 
    \item $\|w_{j+1}\|_{L^{\infty}(0,T)}\leq\mathcal{K}\frac{e^{-\frac{\mu^{\frac{1}{3}}}{4}L}}{T^3}\|y^0\|_{L^2(0,L)},j=1,2,3$.
\end{itemize}
such that the unique solution $y$ of the Cauchy problem 
\begin{equation}\label{eq: controlled KdV-single interval-2pi}
\left\{
\begin{array}{lll}
    \p_ty+\p_x^3y+\p_xy=0 & \text{ in }(0,T)\times(0,L), \\
     y(t,0)=y(t,L)=0&  \text{ in }(0,T),\\
     \p_x y(t,L)= w(t)&\text{ in }(0,T),\\
     y(0,x)=y^0(x)&\text{ in }(0,L),
\end{array}
\right.
\end{equation}
satisfies 
\begin{itemize}
    \item $y= S(t) y^0, \forall t\in (0, T/2)$. In this period we have $\|y(t, \cdot)\|_{L^2(0, L)}\leq \|y^0\|_{L^2(0, L)}$, $\forall t\in (0, T/2)$; 
    \item $y= y_1+ y_2+y_3+y_4+y_5$ in $(T/2, T)$;
    \item $\|y_1(t, \cdot)\|_{L^2(0, L)}\leq \mathcal{K}e^{\frac{2\sqrt{2}K}{\sqrt{T}}}\frac{\mu^{N}}{T^3}\|y^0\|_{L^2(0,L)};$ 
    \item $y_1(T, x)= \varrho_1\E_{1,-}(x)+\varrho_{-1}\E_{-1,-}(x)$, where $|\varrho_{\pm1}|$ uniformly bounded; 
    \item $\|y_{j+1}(t, \cdot)\|_{L^2(0, L)}\leq  \mathcal{K}\frac{\mu^{\frac{1}{2}}e^{-\mu_j(t-\frac{T}{2})}}{T^3}\|y^0\|_{L^2(0,L)},j=1,2,3,4$, $\forall t\in (T/2, T)$.
\end{itemize}
All constants appearing in this proposition are independent of $L$ and $T$.
\end{prop}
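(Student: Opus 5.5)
I will follow the template of Proposition \ref{prop: estimates on fixed interval-iteration preparation-2pi} almost verbatim, replacing the one-dimensional quasi-invariant direction $E_1$ by the two directions $\E_{\pm1,+}$. I read the four parameters as distinct, $\mu_j=j\mu$ for $j=1,\dots,4$, since the statement's $\mu_j=\mu$ would make the system in Lemma \ref{lem: defi of coeff-1-2-3-4-pi-7} singular.

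On $[0,T/2]$ I use the free flow $y=S(t)y^0$. Energy dissipation gives $\|y(t)\|_{L^2}\le\|y^0\|_{L^2}$. By Proposition \ref{prop: Xiang-Krieger gain 1}, $y(T/2)\in H_{\A}^2$ is real and satisfies
\begin{equation*}
\|y(T/2)\|_{H^6}+|\p_xy(T/2,0)|+|\p_x\mathcal{P}y(T/2,0)|\le C_1T^{-3}\|y^0\|_{L^2}.
\end{equation*}

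Next I invoke Lemma \ref{lem: defi of coeff-1-2-3-4-pi-7} to write $y(T/2)=z^0+\sum_{j=1}^4c_jh_{\mu_j}$ with $z^0\in H_{\B}^2$. The key quantitative step, and the main obstacle, is a bound on the $c_j$ that is uniform in $L$, say $|c_j|\lesssim \mu^{N'}T^{-3}\|y^0\|_{L^2}$. The difficulty is that the last two rows of the $4\times4$ system involve
\begin{equation*}
\poscals{h_{\mu_j}}{\E_{\pm1,+}}=a\frac{\overline{\E_{\pm1}'(L)}}{\mu_j+\ii\lambda_{\pm1}}+b\frac{\overline{\E_{\pm2}'(L)}}{\mu_j+\ii\lambda_{\pm2}},
\end{equation*}
where $a=\frac{\sqrt{98+18\sqrt{21}}}{14}$ and $b=\frac{\sqrt{98-18\sqrt{21}}}{14}$. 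Expanding in $1/\mu_j$, the leading term is $\overline{(\E_{\pm1,+})'(L)}/\mu_j=\bigO(|L-L_0|)$ by Corollary \ref{coro: expansion of E^g and E^b}. The next term involves $\lambda_1-\lambda_2=\bigO(|L-L_0|)$ together with $\E_{\pm1,-}'(L)=\bigO(1)$. Hence the last two rows are $\bigO(|L-L_0|)$, the determinant is $\sim|L-L_0|^2\mu^{-k}$, and I must check that its leading coefficient is nonzero, as for $|\frac23-\epsilon_0|>0$ in the $2\pi$ case.

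To keep the $c_j$ bounded I need the right-hand sides $\poscals{y(T/2)}{\E_{\pm1,+}}$ to be $\bigO(|L-L_0|)$. I get this as in the $2\pi$ case. The reflection symmetry $x\mapsto L-x$ intertwines eigenfunctions of $\B$, so $\E_{\pm1,+}(x)$ equals a multiple of $\overline{\F_{\zeta_\pm}(L-x)}$ up to $\bigO(|L-L_0|)$, since both are close to $\G_{c,\pm1}$ (Lemma \ref{lem: eigenvalue close to 7}). The condition $y(T/2)\in H_{\A}$ kills the leading part, and the remainder is bounded by $|L-L_0|\,\|y(T/2)\|_{L^\infty}$. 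Cramer's rule then yields the uniform bound on $c_j$.

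On $[T/2,T]$ I run two pieces in parallel. The first is the control of Lemma \ref{lem: construction of v-2pi-sqrt-7}, which steers $z^0$ to $\varrho_1\E_{1,-}+\varrho_{-1}\E_{-1,-}$ with $\varrho_{-1}=\overline{\varrho_1}$ so that the output is real. The second consists of the free decaying modes $z_{\mu_j}=e^{-\mu_j(t-T/2)}c_jh_{\mu_j}$.

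For the uniform bound on $\varrho_{\pm1}$ I will solve the $2\times2$ system displayed in Lemma \ref{lem: construction of v-2pi-sqrt-7}. Its matrix entries are $\frac{\E_{\pm1,-}'(L)+\bigO(|L-L_0|)}{\ii\lambda+\zeta_+}$, with denominators either $\bigO(|L-L_0|)$ or bounded away from $0$. The right-hand side carries the factor $\F_{\zeta}'(0)\sim|L-L_0|$, which cancels on both sides after the common factor is removed. I therefore expect $|\varrho_{\pm1}|\lesssim\sum|c_j|e^{-\mu T/2}$; verifying this cancellation is the second delicate point.

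For the estimates on $w_1=\p_xz(\cdot,L)$ and on $y_1=z$, I adapt Lemma \ref{lem: control-toy-2pi}, using the family $\vartheta_\pm$ from Lemma \ref{lem: bi-orthogonal family-2pi-7}. The control coefficients $v_\pm$ have denominators $\lambda_1h_1=-\overline{\E_1'(L)}$, which is $\bigO(1)$ since $\E_1$ has a nonzero $\widetilde{\G}$ component, and $a(L)$ stays bounded because it tends to $1$. With the uniform derivative bounds of Propositions \ref{prop: High-frequency behaviors: uniform estimates} and \ref{prop: Low-frequency behaviors: uniform estimates}, this gives
\begin{equation*}
\|w_1\|_{L^\infty},\ \|y_1(t)\|_{L^2}\lesssim e^{2\sqrt2K/\sqrt T}\|z^0\|_{H^6}\lesssim e^{2\sqrt2K/\sqrt T}\mu^{N}T^{-3}\|y^0\|_{L^2}.
\end{equation*}
The pieces $y_{j+1}=z_{\mu_j}$ and $w_{j+1}=c_je^{-\mu_j(t-T/2)}h_{\mu_j}'(L)$ are then bounded directly by Proposition \ref{prop: h_mu-est-uniform}. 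Summing all the pieces and using uniqueness of the KdV solution gives the stated decomposition.
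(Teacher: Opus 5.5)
Your plan is the argument the paper intends. The paper gives no separate proof of this proposition; it is the $2\pi$ proof with $E_1$ replaced by $\E_{\pm1,+}$. Your treatment of the first transition map is sound up to the determinant check you flag: reading $\mu_j=j\mu$ is right, and so are the reflection argument giving $\poscals{y(\frac T2)}{\E_{\pm1,+}}_{L^2(0,L)}=\bigO(|L-L_0|)\|y^0\|_{L^2(0,L)}$ for real $y(\frac T2)\in H_{\A}$ and the use of Cramer's rule.

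The step that fails is the exact steering on $[\frac T2,T]$, which you import from Lemma~\ref{lem: construction of v-2pi-sqrt-7}; the defect sits in that lemma. The modes $\E_1,\E_2$ are acted on only by $v_+\vartheta_+$, whose moments at $\lambda_1,\lambda_2$ are in the fixed ratio $1:a(L)$. Once $\varrho_1$ is prescribed by the $H_{\A}$-condition, the equations for the $\E_1$- and $\E_2$-coefficients of $z(T)$ are two complex equations in the single complex unknown $v_+$. The lemma's $v_+$ solves only the first. With that $v_+$, the $\E_2$-coefficient misses its target by $\bigO(|L-L_0|)(|z^0_{1,-}|+|\varrho_1|)$. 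This defect is generically nonzero, since it contains a multiple of $(e^{\ii\lambda_2T}-e^{\ii\lambda_1T})z^0_{1,-}$. Hence $y_1(T)\neq\varrho_1\E_{1,-}+\varrho_{-1}\E_{-1,-}$.

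Moreover, $\poscalr{\E_{-2}}{\F_{\zeta_+}}_{(0,L)}$ is of order one: its denominator is $\bigO(|L-L_0|)$ and it carries the factor $\F_{\zeta_+}'(0)\sim|L-L_0|$. So also $y(T)\notin H_{\A}$, which is exactly the invariance the iteration needs. The same miscount already occurs in Lemma~\ref{lem: construction of v-2pi}, where a real $v_0$ is asked to solve a complex equation.

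To close the gap you must restore the lost complex degree of freedom. One way is to use the genuine $\phi_{\pm1},\phi_{\pm2}$ of Proposition~\ref{prop: biorthogonal family}. Their norms are at most of size $|\lambda_1-\lambda_2|^{-1}$ by Corollary~\ref{coro: uniform est for bi-family}. Because $z^0$ and the target both lie along $\E_{1,-}$ and $(\E_{1,+})'(L)=\bigO(|L-L_0|)$, the required coefficients satisfy $v_1-v_2=\bigO(|L-L_0|)(|z^0_{1,-}|+|\varrho_1|)$. Then $v_1\phi_1+v_2\phi_2=v_1(\phi_1+\phi_2)+(v_2-v_1)\phi_2$ is bounded uniformly in $L$, since $\phi_1+\phi_2$ is (it is a divided difference on the Fourier side). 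This is where $H_{\B}(L)$ really enters, and the same regrouping must be carried through your bounds on $z(t)$ and $\p_xz(t,L)$.

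Separately, the two nondegeneracies you flag are genuinely open, and the $\varrho$-system is subtler than your remark about denominators suggests.
\begin{itemize}
\item Let $g_\zeta$ be the solution of the modulated problem with $\mu=-\zeta$. The map $\zeta\mapsto g_\zeta'(L)$ vanishes at $\zeta_+$. Near $\ii\lambda_{c,1}$ it equals $\sum_{m=1,2}|\E_m'(L)|^2/(\zeta-\ii\lambda_m)$ up to a term bounded uniformly in $L$.
\item Consequently, in the pairing of $\E_{-1,-}$ with $\F_{\zeta_+}$, the two contributions of size $|L-L_0|^{-1}$ coming from $\E_{-1}$ and $\E_{-2}$ cancel. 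After removing $\overline{\F_{\zeta_+}'(0)}$, the $2\times2$ matrix is $\bigO(1)$.
\item Its uniform invertibility is therefore a next-order computation, the analogue of $|\frac23-\epsilon_0|>0$ in the $2\pi$ case.
\item Likewise, the $|L-L_0|^2$-coefficient of your $4\times4$ determinant involves the first-order term of $(\E_{1,+})'(L)$, which the paper does not provide.
\end{itemize}
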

\begin{prop}[Iteration schemes]\label{prop: iteration schemes-2pi-7}
Let $T>0$. Suppose that $0<\delta<\pi$ is sufficiently small. Let $I=[2\pi\sqrt{7}-\delta,2\pi\sqrt{7}+\delta]$ be a small compact interval such that $I\cap \mathcal{N}=\{2\pi\sqrt{7}\}$.  For every $L\in I\setminus\{2\pi\sqrt{7}\}$, and $\forall y^0\in H_{\A}$. There exists a function $u(t) \in L^2(0,T)$ such that the solution $y$ to the system 
\begin{equation}\label{eq: controlled-Kdv with control u-2pi}
\left\{
\begin{array}{lll}
    \p_ty+\p_x^3y+\p_xy=0 & \text{ in }(0,T)\times(0,L), \\
     y(t,0)=y(t,L)=0&  \text{ in }(0,T),\\
     \p_x y(t,L)= u(t)&\text{ in }(0,T),\\
     y(0,x)=y^0(x)&\text{ in }(0,L),
\end{array}
\right.    
\end{equation}    
satisfies 
    \begin{equation}
        \lim_{t\rightarrow T^-}\|y(t, \cdot)\|_{L^2(0, L)}= 0.
    \end{equation}
    and there exists a constant $\mathcal{K}$ such that
\begin{equation}\label{eq: est-control-L-infty-2pi7}
 \|u\|_{L^{\infty}(0,T)}\leq \Tilde{\mathcal{K}}e^{\frac{\mathcal{K}}{\sqrt{T}}}\|y^{0}\|_{L^2(0,L)}.   
\end{equation}
\end{prop}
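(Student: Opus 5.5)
The plan is to copy the proof of Proposition \ref{prop: iteration schemes-2pi}, using Proposition \ref{prop: estimates on fixed interval-iteration preparation-2pi7} as the one-interval building block in place of Proposition \ref{prop: estimates on fixed interval-iteration preparation-2pi}. The argument has three ingredients: a dyadic time partition, an invariance check that each step maps $H_{\A}$ back into $H_{\A}$, and a telescoping estimate.

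\textbf{Time partition.} Without loss of generality take $T=2^{-n_0}\in(0,1)$. Set $T_n=2^{-n_0}(1-2^{-n})$ and $\mathcal{I}_n=[T_{n-1},T_n)$. On $\mathcal{I}_n$ choose the frequencies
\[
\mu_{j,n}=jQ\,2^{\frac32(n_0+n)},\qquad j=1,2,3,4,
\]
which are distinct, so that Lemma \ref{lem: defi of coeff-1-2-3-4-pi-7} applies. The constant $Q$ will be fixed later, independently of $T$ and $L$.

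\textbf{One step and invariance.} On $\mathcal{I}_n$, apply Proposition \ref{prop: estimates on fixed interval-iteration preparation-2pi7} with initial state $y^{n-1}=y(T_{n-1})\in H_{\A}$, real valued.

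The point to verify is that the endpoint
\[
y(T_n)=\varrho_1\E_{1,-}+\varrho_{-1}\E_{-1,-}+\sum_{j=1}^4 c_j e^{-\mu_{j,n}|\mathcal{I}_n|/2}h_{\mu_{j,n}}
\]
again lies in $H_{\A}$ and is real. This is exactly the choice of $\varrho_{\pm1}$ in Lemma \ref{lem: construction of v-2pi-sqrt-7}, together with $\overline{\varrho_1}=\varrho_{-1}$. Hence the iteration closes.

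\textbf{One-step bounds.} Plugging $|\mathcal{I}_n|=2^{-n_0-n}$ into the one-step estimates gives
\begin{align*}
\|y^n\|_{L^2} &\le 2\mathcal{K}\,e^{-Q2^{\frac{n_0+n}{2}-2}}\,2^{3(n_0+n)}\,\|y^{n-1}\|_{L^2},\\
\|u^n\|_{L^\infty(\mathcal{I}_n)} &\le \mathcal{K}\Big(e^{2\sqrt2K2^{\frac{n_0+n}{2}}}Q^N2^{\frac{3N}{2}(n_0+n)}+1\Big)\,2^{3(n_0+n)}\,\|y^{n-1}\|_{L^2}.
\end{align*}
The main obstacle sits in the first of these bounds. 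It needs $|\varrho_{\pm1}|$ to be bounded by $\mathcal{K}e^{-\mu_{1,n}|\mathcal{I}_n|/4}|\mathcal{I}_n|^{-3}\|y^{n-1}\|$ uniformly in $L$, which is the analogue of Lemma \ref{lem: est for varrho-2pi}.

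I would obtain this by solving the $2\times2$ system for $\varrho_{\pm1}$ in Lemma \ref{lem: construction of v-2pi-sqrt-7}. Using Corollary \ref{coro: expansion of E^g and E^b} and Lemma \ref{lem: eigenvalue close to 7}, the factor $\overline{\F_{\zeta_\pm}'(0)}\sim|L-2\pi\sqrt7|$ cancels on both sides. The remaining matrix $M$ has entries built from $\E_{1,-}'(L)\to \ii/\sqrt{\pi\sqrt7}\ne0$, divided by $\pm\ii\lambda_{1}+\zeta_+$ or $\pm\ii\lambda_{2}+\zeta_+$. I would check that $\det M$ stays bounded away from zero, or at least that $M^{-1}$ times the right-hand side stays bounded, using the expansions of $\lambda_{1},\lambda_2$ and $\zeta_+$. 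The right-hand side then carries the factor $e^{-\mu T/2}$ times $|c_j|$.

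The coefficients $|c_j|$ are bounded by $C\mu/T^3$ exactly as in \eqref{eq: est-c_j}. For this I use the near-identity between $\E_{\pm1,+}$ and the reflected $\F_{\zeta_\pm}(L-\cdot)$ up to $\bigO(|L-L_0|)$, which compensates the degenerate $\bigO(|L-L_0|)$ row of the linear system.

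\textbf{Summation.} Iterating the bound on $\|y^n\|$ gives
\[
\|y^n\|\le e^{-\frac{Q2^{n_0/2}(2^{n/2}-1)}{4(2-\sqrt2)}}\,\|y^0\|
\]
once $Q$ is large enough to absorb $(2\mathcal{K})^n2^{3n_0n+3n(n+1)/2}$. Hence $\|y(t)\|\to0$ as $t\to T^-$.

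For the control, the super-exponential decay of $\|y^{n-1}\|$ dominates the growth factor $e^{2\sqrt2K2^{(n_0+n)/2}}$ for $n\ge2$, provided $Q\gg K$. So $\sup_{n\ge2}\|u^n\|_{L^\infty}\le 2\|y^0\|$. The first interval contributes $\Tilde{\mathcal{K}}e^{\mathcal{K}/\sqrt T}\|y^0\|$ because $2^{n_0/2}=T^{-1/2}$. Together these give \eqref{eq: est-control-L-infty-2pi7}, with all constants independent of $L$.
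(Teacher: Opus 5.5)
Your iteration is the one the paper intends. The paper gives no separate argument here, only the template of Proposition \ref{prop: iteration schemes-2pi} fed with Proposition \ref{prop: estimates on fixed interval-iteration preparation-2pi7}. You are also right that ``$|\varrho_{\pm1}|$ uniformly bounded'' is too weak. The recursion needs $|\varrho_{\pm1}|\lesssim e^{-\mu|\mathcal{I}_n|/4}|\mathcal{I}_n|^{-3}\|y^{n-1}\|_{L^2}$, that is, a bound on $M^{-1}$ that is uniform in $L$.

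The gap is that your sketch for this step rests on a wrong picture of $M$. The entries are not ``$\E_{1,-}'(L)$ over a denominator''. The resonant ones, whose denominators are $\bigO(|L-2\pi\sqrt7|)$, are differences $a\,\E_1'(L)/(\cdots)-b\,\E_2'(L)/(\cdots)$ of two terms each of size $|L-2\pi\sqrt7|^{-1}$, and these two terms cancel at leading order. To see this, note that $\overline{\G_{c,1}(L_0-x)}=\G_{c,1}(x)$ for $L_0=2\pi\sqrt7$. Hence
\[
\poscalr{\widetilde{\G}_{c,1}}{\G_{c,1}}_{(0,L_0)}=\poscals{\widetilde{\G}_{c,1}}{\G_{c,-1}}=0 \quad\text{(disjoint frequencies)},\qquad
\poscalr{\widetilde{\G}_{c,1}}{\G_{c,-1}}_{(0,L_0)}=\poscals{\widetilde{\G}_{c,1}}{\G_{c,1}}=0 .
\]
So all four pairings $\poscalr{\E_{\pm1,-}}{\F_{\zeta_\pm}}_{(0,L)}$ are $\bigO(|L-L_0|)$, the same order as $\overline{\F'_{\zeta_\pm}(0)}$, and every entry of $M$ is $\bigO(1)$. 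By conjugation symmetry $M=\begin{pmatrix}P&Q\\ \overline{Q}&\overline{P}\end{pmatrix}$, so $\det M=|P|^2-|Q|^2$. The non-resonant entry has an explicit limit, $|P|\to|\widetilde{\G}'_{c,1}(L_0)|/(2\lambda_{c,1})$. The limit of the resonant entry $Q$, however, depends on data the paper does not provide: the $\bigO((L-L_0)^2)$ terms of $\lambda_1,\lambda_2,\zeta_+$ and the $\bigO(|L-L_0|)$ terms of $\E_1'(L),\E_2'(L)$.

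Consequently, the first-order expansions you plan to use cannot decide whether $\det M$ stays away from $0$. What is needed is a second-order computation of the kind behind $\frac23-\epsilon_0\neq0$ in Lemma \ref{lem: est for varrho-2pi}. Plugging in the paper's formulas literally is even misleading: for $L>L_0$ they produce a nonzero $|L-L_0|^{-1}$ term, which the orthogonality above rules out.

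This affects more than the size estimate. The very existence of $\varrho_{\pm1}$ in Lemma \ref{lem: construction of v-2pi-sqrt-7}, which you use for the invariance $y(T_n)\in H_{\A}$, presupposes $\det M\neq0$; Proposition \ref{prop: estimates on fixed interval-iteration preparation-2pi7} only asserts it. The same kind of check is missing behind your ``exactly as in \eqref{eq: est-c_j}''. The $4\times4$ system for the $c_j$ has two rows of size $\bigO(|L-L_0|)$, and after rescaling their limits involve the unknown leading coefficient of $(\E_{1,+})'(L)$.

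For the $\varrho$ part, one way to avoid the resonance is to use non-resonant target directions, e.g.\ $z^T=\varrho_+\E_s+\varrho_-\E_{-s}$ with $s\notin\Lambda_E$ as in Lemma \ref{lem: construction of v-7/3pi}. Then $|\E'_{\pm s}(L)|\ge\gamma$ and all denominators stay away from $0$. At leading order $|\det M|$ equals $|\E_s'(L)|^2$ times $4|\lambda_s\lambda_{c,1}|/(\lambda_s^2-\lambda_{c,1}^2)^2$, which is bounded below uniformly in $L$.

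Once these nondegeneracy checks are in place, the rest of your argument goes through as you describe. That covers the dyadic partition, the choice of $Q$, the absorption of $e^{2\sqrt2K2^{(n_0+n)/2}}$ for $n\ge2$, and the first interval producing $\Tilde{\mathcal{K}}e^{\mathcal{K}/\sqrt T}$.
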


\subsection{Around Type III unreachable pair $(k, l)$}\label{sec: Around Type III unreachable pair}
\subsubsection{Quasi-invariant subspaces and transition projectors}
In this section, we look at a different case for $L_0=2\pi\sqrt{\frac{7}{3}}$.
\begin{prop}
For $L_0=2\pi\sqrt{\frac{7}{3}}$, there are two eigenvalues $\lambda_{c,1}=\ii\frac{20}{21\sqrt{21}}$ and $\lambda_{c,-1}=-\ii\frac{20}{21\sqrt{21}}$. We denote the associated eigenfunctions by $\G_{c,\pm1}$.
\end{prop}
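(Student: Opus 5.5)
The statement is the specialization of Proposition~\ref{prop: type-1-2} to $L_0=2\pi\sqrt{7/3}$, so I will verify it by direct substitution. First I determine the integer pairs: $\mathcal{I}_C(L_0)=3(L_0/2\pi)^2=7$, and the only solution of $k^2+kl+l^2=7$ with $k\ge l\ge1$ is $(k,l)=(2,1)$. Hence $N_0$ is determined by this single pair and its conjugate. Since $k-l=1\not\equiv0\pmod 3$, the pair lies in $\mathcal{S}_3$, so $L_0\in\mathcal{N}^2$ and no Type 2 eigenfunction occurs.

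Next I substitute $(k,l)=(2,1)$ into \eqref{eq: defi of lambda_c}. This gives $\lambda_{c}=\frac{(5)(1)(4)}{3\sqrt3\cdot 7^{3/2}}=\frac{20}{21\sqrt{21}}$, using $3\sqrt3\cdot7\sqrt7=21\sqrt{21}$. The second eigenvalue $-\lambda_c$ comes from the conjugate mode: if $\G$ solves \eqref{eq: eigenvalue problem with critical length} with $\ii\lambda_c$, then $\overline{\G}$ solves it with $-\ii\lambda_c$, because the coefficients are real. I then set $\G_{c,1}:=\G_1$ from \eqref{eq: exact formula for critical eigenfunctions}, which is explicitly $e^{5\ii x/\sqrt{21}}$, $e^{-4\ii x/\sqrt{21}}$, $e^{-\ii x/\sqrt{21}}$ combined with coefficients $-1,-2,3$ up to normalization. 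I also set $\G_{c,-1}:=\overline{\G_{c,1}}$.

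To check the boundary conditions, note that the frequencies $5/\sqrt{21}$, $-4/\sqrt{21}$, $-1/\sqrt{21}$ sum to $0$. Each satisfies $\xi^3-\xi=\lambda_c$, which gives the ODE. At $x=L_0=2\pi\sqrt{7/3}$ the phases are $2\pi\cdot5/3$, $-2\pi\cdot4/3$ and $-2\pi/3$, which all agree modulo $2\pi$. As a result, $\G(L_0)$ equals a common phase times $\G(0)=0$, and likewise $\G'(L_0)$ equals that phase times $\G'(0)=0$. The only computation that needs care is confirming $\G'(0)=0$, i.e.\ $-5+8-3=0$, together with the sign conventions relating $\A$ and $\B$. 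I do not expect any real obstacle.
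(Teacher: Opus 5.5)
Your proposal is correct and takes essentially the paper's route: the paper inserts the ansatz $r_1e^{-4\ii x/\sqrt{21}}+r_2e^{-\ii x/\sqrt{21}}+r_3e^{5\ii x/\sqrt{21}}$ into the $\A$-boundary conditions and uses the same coincidence of the phases $\xi_jL_0$ modulo $2\pi$ to solve for $(r_1,r_2,r_3)\propto(2,-3,1)$, which is your $\G_1$ up to sign. The only difference is that you verify the formula of Proposition~\ref{prop: type-1-2} specialized to $(k,l)=(2,1)$ instead of solving the linear system; this also gives $\G'(0)=0$, so the function is an eigenfunction of both $\A$ and $\B$, and the absence of Type~2 eigenfunctions for $k-l\not\equiv 0 \pmod 3$ supplies the uniqueness the paper asserts.
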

\begin{proof}
We only consider the eigenfunction $\G_{c,1}$ associated with the eigenvalue $\lambda_{c,1}=\ii\frac{20}{21\sqrt{21}}$. The other case with $\lambda_{c,-1}=-\ii\frac{20}{21\sqrt{21}}$ can be dealt with in the same way. We start with 
\begin{equation*}
\left\{
\begin{array}{l}
     \G_{c,+}'''(x)+\G_{c,+}'(x)+\ii\frac{20}{21\sqrt{21}}\G_{c,+}(x)=0,x\in(0,2\pi\sqrt{\frac{7}{3}}),  \\
     \G_{c,+}(0)=\G_{c,+}(2\pi\sqrt{\frac{7}{3}})=0,\\
     \G_{c,+}'(2\pi\sqrt{\frac{7}{3}})=0.
\end{array}
\right.
\end{equation*}
It is easy to find that the solution is in the form:
\begin{equation*}
   \G_{c,+}(x)=r_1 e^{-\frac{4 \ii x}{\sqrt{21}}} +r_2 e^{-\frac{\ii x}{\sqrt{21}}} +r_3 e^{\frac{5 \ii x}{\sqrt{21}}}.
\end{equation*}
Using the boundary conditions, 
\begin{equation*}
\left\{\begin{array}{l}
      r_1+r_2+r_3=0,\\
     -\frac{4\ii}{\sqrt{21}}r_1 e^{-\frac{8\pi\ii}{3}} -\frac{\ii x}{\sqrt{21}}r_2 e^{-\frac{2\pi\ii}{3}} +\frac{5\ii x}{\sqrt{21}}r_3 e^{\frac{10\pi \ii}{3}}=0.
\end{array}
\right.
\end{equation*}
Hence, we know that $\G_{c,+1}(x)=\sqrt{\frac{3}{28\pi\sqrt{21}}}\left(2e^{-\frac{4 \ii x}{\sqrt{21}}} -3 e^{-\frac{\ii x}{\sqrt{21}}} + e^{\frac{5 \ii x}{\sqrt{21}}}\right)$ is normalized eigenfunction. This gives us the unique solution associated with the eigenvalue $\lambda_{c,1}$.
\end{proof}
\begin{lem}\label{lem: pertubation eigenvalues-7/3}
Suppose that $0<\delta<\pi$ is sufficiently small. Let $I=[2\pi\sqrt{\frac{7}{3}}-\delta,2\pi\sqrt{\frac{7}{3}}+\delta]$ be a small compact interval such that $I\cap \mathcal{N}=\{2\pi\sqrt{\frac{7}{3}}\}$. Suppose that $\zeta_{\pm}\in\C$ are the two eigenvalues close to $\lambda_{c,\pm1}$, then $\zeta_{\pm}$ and its associated eigenfunction $\E_{\zeta_{\pm}}$ satisfy that
\begin{align*}
    \zeta_{\pm}&=\ii\lambda_{c,\pm1}+\bigO( |L-2\pi|^2),\\
    |\F_{\zeta_{\pm}}'(0)|&\sim|L-2\pi|.   
\end{align*}
\end{lem}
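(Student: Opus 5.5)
This is the special case $L_0=2\pi\sqrt{\frac{7}{3}}$, $(k,l)=(2,1)$, of Proposition \ref{prop: asymptotic expansion for A0}, so I would follow that proof and make its constants explicit. (The ``$L-2\pi$'' in the statement should be read as $L-2\pi\sqrt{\frac{7}{3}}$.)

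\textbf{Eigenvalue equation.} I would parametrize $\ii\zeta=2\tau(4\tau^2-1)$ with $\tau=t_r+\ii t_i\in\C$. I would then write $\F=r_1e^{\ii(\tau+\sqrt{1-3\tau^2})x}+r_2e^{\ii(\tau-\sqrt{1-3\tau^2})x}+r_3e^{-2\ii\tau x}$. Imposing $\F(0)=\F(L)=\F'(L)=0$ gives the scalar equation $G(t_r,t_i,L)=0$ of Proposition \ref{prop: asymptotic expansion for A0}.

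\textbf{Base point.} At $L=L_0$ the point is $\tau_c=\frac{\pi}{L_0}\frac{2k+l}{3}=\frac{5}{2\sqrt{21}}$, for which $2\tau_c(4\tau_c^2-1)=\frac{20}{21\sqrt{21}}$. I would first check by direct substitution that $G(\tau_c,0,L_0)=0$. Next I would check that $\p_LG_r=\p_LG_i=0$ there; this holds because the exponents at $\tau_c$ are commensurate with $2\pi/L_0$, so all phases $e^{\ii\xi_jL_0}$ coincide. The Jacobian in $(t_r,t_i)$ is $\begin{pmatrix}0&-28\pi\\ 28\pi&0\end{pmatrix}$, using $k^2+kl+l^2=7$ and $l=1$, which is invertible.

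\textbf{Expansions.} The implicit function theorem then gives a $C^2$ branch $\tau(L)$ whose first derivative vanishes at $L_0$. Hence $\tau(L)=\tau_c+\bigO((L-L_0)^2)$, and so $\zeta_+=\ii\lambda_{c,1}+\bigO((L-L_0)^2)$. Conjugation symmetry of the real operator $\A$ gives $\zeta_-=\overline{\zeta_+}$ and $\F_{\zeta_-}=\overline{\F_{\zeta_+}}$.

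\textbf{Boundary derivative.} I would insert $\tau(L)$ into the closed form \eqref{eq: general form for eigenfunction-bad op}, normalized in $L^2(0,L)$. The normalization $r_1$ stays bounded above and below, since the limit $\G_{c,1}$ is nonzero. Differentiating at $x=0$ gives $\F'_{\zeta}(0)=r_1\,\Phi(\tau(L),L)$ for a smooth $\Phi$ with $\Phi(\tau_c,L_0)=\G_{c,1}'(0)=0$. Because $\tau'(L_0)=0$, only $\p_L\Phi(\tau_c,L_0)$ contributes at first order, so $|\F'_{\zeta_\pm}(0)|=|r_1\,\p_L\Phi(\tau_c,L_0)|\,|L-L_0|+\bigO((L-L_0)^2)$. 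Specializing the general formula of Proposition \ref{prop: asymptotic expansion for A0} to $k=2$, $l=1$, the coefficient $\frac{3l(k+l)(k^2+kl+l^2-6\ii(-1)^lk(k+l))}{2(k^2+kl+l^2)^2}=\frac{9(7+36\ii)}{98}$ is nonzero, which yields $|\F'_{\zeta_\pm}(0)|\sim|L-L_0|$.

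\textbf{Main obstacle.} The hard step is this last nondegeneracy: computing $\p_L\Phi(\tau_c,L_0)$ reliably, since the denominator $-e^{-3\ii L\tau}+e^{-\ii L\sqrt{1-3\tau^2}}$ must stay nonzero and its $L$-derivative enters. I would first verify numerically at $(k,l)=(2,1)$ that this denominator does not vanish at $(\tau_c,L_0)$. After that the differentiation is explicit.
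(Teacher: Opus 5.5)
Your eigenvalue analysis is correct and follows the paper's route: the paper proves this lemma only by invoking Proposition~\ref{prop: asymptotic expansion for A0} with $(k,l)=(2,1)$. The gap is in the boundary-derivative step.

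\textbf{The denominator vanishes at the base point.} The denominator $D(\tau,L):=-e^{-3\ii L\tau}+e^{-\ii L\sqrt{1-3\tau^2}}$ of \eqref{eq: general form for eigenfunction-bad op} vanishes at the base point for every critical pair, since $e^{-3\ii L_0\tau_c}=e^{-\ii\pi(2k+l)}=(-1)^l=e^{-\ii\pi l}=e^{-\ii L_0\sqrt{1-3\tau_c^2}}$. Here $D(\tau_c,L_0)=-e^{-5\ii\pi}+e^{-\ii\pi}=0$, so the numerical check you planned would fail. This is the same coincidence of phases you used to get $\p_LG=0$. At $(\tau_c,L_0)$ the conditions $\F(0)=0$ and $\F(L_0)=0$ become dependent, so the elimination of $r_2,r_3$ degenerates. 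The numerator of \eqref{eq: general form for eigenfunction-bad op} then vanishes identically in $x$ as well. Hence your $\Phi$ is a $0/0$ quotient, not even continuous in $(\tau,L)$ at $(\tau_c,L_0)$, let alone smooth.

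\textbf{Why $\tau''(L_0)$ enters at first order.} Write $\Gamma(\tau,L)$ for the holomorphic function with real and imaginary parts $G_r,G_i$. Then $\F_\zeta'(0)=2\ii r_1\,\Gamma(-\tau,L)/D(\tau,L)$. At the base point, $\Gamma(-\tau,L)$ has zero $L$-derivative but nonzero $\tau$-derivative, while $D(\tau(L),L)$ vanishes only to first order in $L-L_0$. So the shift $\tau(L)-\tau_c=\tfrac12\tau''(L_0)(L-L_0)^2+\dots$ does contribute at order $L-L_0$ to $\F'_\zeta(0)$. This contradicts your claim that $\tau'(L_0)=0$ leaves only $\p_L\Phi$. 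In fact the discarded term equals the retained one, so your mechanism gives exactly half the true first-order coefficient. Likewise, your bound on $r_1$ needs the limit of the quotient along the branch, where it equals $\p_L(\text{numerator})/\p_LD=\G_c$ of Proposition~\ref{prop: asymptotic expansion for A0}.

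\textbf{How to repair it.} Use the identity $2\Re\zeta\,\|\F_\zeta\|_{L^2(0,L)}^2=-|\F_\zeta'(0)|^2$ from the proof of Lemma~\ref{lem: basic prop of eigen of bad op}; this bypasses \eqref{eq: general form for eigenfunction-bad op} entirely. It needs more than $\zeta-\ii\lambda_{c,\pm1}=\bigO((L-L_0)^2)$: it needs a nonzero $(L-L_0)^2$-coefficient in $\Re\zeta$, which is exactly the information in $\tau''(L_0)$ that you discarded.
\begin{itemize}
\item Implicit differentiation gives $\tau''(L_0)=-\p_L^2\Gamma/\p_\tau\Gamma$ at the base point.
\item Your Jacobian says $\p_\tau\Gamma=28\pi\ii$, and $\p_L^2\Gamma=(-1)^l\frac{4\pi^3kl(k+l)}{L_0^3}=-\frac{24\pi^3}{L_0^3}$ is real, so $\tau''(L_0)=-\ii\frac{6\pi^2}{7L_0^3}$.
\item Since $\frac{d}{d\tau}\bigl[2\tau(4\tau^2-1)\bigr]=24\tau_c^2-2=\frac{36}{7}$ is real, $\Re\zeta=-\frac{108\pi^2}{49L_0^3}(L-L_0)^2+\bigO(|L-L_0|^3)$.
\item This equals $-\frac{81\sqrt{21}}{4802\pi}(L-L_0)^2$ at leading order, the coefficient printed in the paper's proof.
\end{itemize}
Hence $|\F'_{\zeta_\pm}(0)|=\frac{6\sqrt6\,\pi}{7L_0^{3/2}}|L-L_0|+\bigO((L-L_0)^2)$ for normalized eigenfunctions, and $\zeta_-=\overline{\zeta_+}$ handles the other sign.

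In the normalization $\F_\zeta=r_1\G_c+\bigO(L-L_0)$, this modulus is $\frac97|r_1|\,|L-L_0|$. That does not agree with $\bigl|\frac{9(7+36\ii)}{98}\bigr|\,|r_1|$, the constant you quote, so do not rely on it. Only its nonvanishing was ever needed, and the identity supplies that directly.
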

\begin{proof}
This lemma is just a particular case of our general result Proposition \ref{prop: asymptotic expansion for A0}. The proof remains the same. We just point out that, in this particular case, we have the following asymptotic expansion for $\zeta_-$ and $\E_{\zeta_-}$ for instance:
\begin{align}
\zeta_+&=\ii\frac{20}{21\sqrt{21}}-\frac{81 \sqrt{21}}{4802\pi}(L-2\pi\sqrt{\frac{7}{3}})^2+\bigO((L-2\pi\sqrt{\frac{7}{3}})^3),\\
\F_{\zeta_+}(x)&=\G_{c,1}(x) +\bigO(|L-2\pi\sqrt{\frac{7}{3}}|),
\end{align}
Moreover,
\begin{equation}
|\E'_{\zeta_-}(0)|\sim |L-2\pi\sqrt{\frac{7}{3}}|.
\end{equation}
Similar results hold for $\zeta_+$ and $\E_{\zeta_+}$.
\end{proof}
By Proposition \ref{prop: Index set M-E}, there exist two eigenvalues $\lambda_{\pm1}$ of $\B$ in the interval $(0,L)$ with $L\in [2\pi\sqrt{\frac{7}{3}}-\delta,2\pi\sqrt{\frac{7}{3}}+\delta]\setminus \{2\pi\sqrt{\frac{7}{3}}\}$, as defined in Lemma \ref{lem: pertubation eigenvalues-7/3}. Moreover, we know that 
\begin{equation*}
\lambda_{1}=\lambda_{c,1}-\frac{9}{392 \sqrt{7} \pi}
(L-2\pi\sqrt{\frac{7}{3}})^2+\bigO((L-2\pi)^3),\;\lambda_{-1}=\lambda_{c,-1}+\frac{9}{392 \sqrt{7} \pi}(L-2\pi\sqrt{\frac{7}{3}})^2+\bigO((L-2\pi)^3).
\end{equation*} 
\begin{coro}
Suppose that $0<\delta<\pi$ is sufficiently small. Let $I=[2\pi\sqrt{\frac{7}{3}}-\delta,2\pi\sqrt{\frac{7}{3}}+\delta]$ be a small compact interval such that $I\cap \mathcal{N}=\{2\pi\sqrt{\frac{7}{3}}\}$. Then, the following statements hold: 
\begin{enumerate}
    \item $\E_{\pm1}(x)$ have the following asymptotic expansions near $2\pi\sqrt{\frac{7}{3}}$:
    \begin{align*}
    \E_{1}(x)=\G_{c,1}+\bigO(|L-2\pi\sqrt{\frac{7}{3}}|),\;
    \E_{-1}(x)=\G_{c,-1}+\bigO(|L-2\pi\sqrt{\frac{7}{3}}|).
    \end{align*}
    \item $\E'_{\pm1}(L)= \sqrt{\frac{3}{7\pi\sqrt{21}}} \left( \frac{9}{14} +\pm\frac{25}{576\pi}\ii \right)(L-2\pi\sqrt{\frac{7}{3}})+\bigO((L-2\pi\sqrt{\frac{7}{3}})^2)$.
\end{enumerate}
\end{coro}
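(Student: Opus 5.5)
The corollary is the specialization of Proposition \ref{prop: Low-frequency behaviors: Singular limits}, Case~1, to $L_0=2\pi\sqrt{7/3}$ with $(k,l)=(2,1)$. Here $k-l=1\not\equiv0 \bmod 3$, so the pair lies in $\mathcal{S}_3$ and $L_0\in\mathcal{N}^2$. I will work directly from the explicit elliptic formulas for $\E_{\pm1}$ and $\E'_{\pm1}(L)$ given there, insert the expansion of $\tau_{\pm1}(L)$, and expand to first order in $L-L_0$. It is enough to treat $\E_1$: since $\lambda_{-1}=-\lambda_1$, and the eigenvalue problem for $\B$ is invariant under complex conjugation combined with $\lambda\mapsto-\lambda$, we have $\E_{-1}=\overline{\E_1}$ up to a unimodular normalization. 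That normalization can be fixed so that $\E_{-1}\to\G_{c,-1}=\overline{\G_{c,1}}$.

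\textbf{Step 1.} By Case~2 of Proposition \ref{prop: Asymp in L-low}, i.e.\ the implicit function theorem applied to $F(t,L)$ at $\tau_c=\frac{\pi}{L_0}\frac{2k+l}{3}=\frac{5\pi}{3L_0}$, we have $\tau_1(L)=\tau_c+c_1(L-L_0)^2+\bigO(|L-L_0|^3)$. Here $c_1$ is the explicit coefficient $-\frac{l(2k+l)^2\pi}{216k^2L_0^3}$, and it is consistent with the stated expansion of $\lambda_1$. At $\tau=\tau_c$ the three frequencies $-(\tau+\sqrt{1-3\tau^2})$, $\sqrt{1-3\tau^2}-\tau$, $2\tau$ become $-4/\sqrt{21}$, $-1/\sqrt{21}$, $5/\sqrt{21}$. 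Substituting into the closed formula for $\E_j(x)$ and letting $L\to L_0$ then gives a nonzero multiple of $2e^{-4\ii x/\sqrt{21}}-3e^{-\ii x/\sqrt{21}}+e^{5\ii x/\sqrt{21}}$. After $L^2$-normalization and a phase choice this is $\G_{c,1}$. Because $\tau_1$ is smooth in $L$ and the denominator $e^{\ii(\sqrt{1-3\tau^2}-\tau)L}-e^{2\ii\tau L}$ stays away from zero at $(\tau_c,L_0)$, the remainder is $\bigO(|L-L_0|)$ uniformly in $x$. This gives item~1.

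\textbf{Step 2.} Next I expand the explicit formula for $\E_1'(L)$ from the same proposition. Its numerator
\[
-e^{-2\ii L\tau}\sqrt{1-3\tau^2}+e^{\ii L\tau}\sqrt{1-3\tau^2}\cos\bigl(L\sqrt{1-3\tau^2}\bigr)-3\ii e^{\ii L\tau}\tau\sin\bigl(L\sqrt{1-3\tau^2}\bigr)
\]
vanishes at $(\tau_c,L_0)$, which is consistent with $\G_{c,1}'(L_0)=0$. Since $\tau_1-\tau_c=\bigO((L-L_0)^2)$, the first-order term in $L-L_0$ comes only from $\partial_L$ of the numerator at fixed $\tau=\tau_c$. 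Dividing by the denominator and multiplying by the normalization constant $\alpha_1(L_0)$ then yields the coefficient $\sqrt{\frac{3}{7\pi\sqrt{21}}}\bigl(\frac{9}{14}\pm\frac{25}{576\pi}\ii\bigr)$. The sign $\pm$ comes from conjugation in passing to $\E_{-1}$. This is also the specialization of the general formula $\E_j'(L)=-\alpha_j\frac{e^{-\frac23\ii(k-l)\pi}(2kl^2(k+l)\pi^4+3\ii c_j L_0^5)}{kl\pi^2L_0^2}(L-L_0)+\dots$ at $k=2$, $l=1$, which serves as a cross-check.

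\textbf{Main obstacle.} The main difficulty is the bookkeeping of constants in Step~2: the normalization $\alpha_1(L_0)$, the phase convention fixing $\E_1\to\G_{c,1}$, which changes the argument of $\E_1'(L)$, and the precise value of $c_1$. I would first pin the phase by matching the $e^{5\ii x/\sqrt{21}}$ coefficient to the coefficient in $\G_{c,1}$. I would then evaluate the $L$-derivative symbolically at $(\tau_c,L_0)$ and check the result against the general formula above. Only the order $|\E'_{\pm1}(L)|\sim|L-L_0|$ is used later, so any discrepancy in the exact constant would not affect the subsequent arguments.
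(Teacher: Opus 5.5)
Your route, specializing Case~1 of Proposition~\ref{prop: Low-frequency behaviors: Singular limits} to $(k,l)=(2,1)$, is the paper's route (its proof is that one line). The gap is that your execution rests on a false claim: the denominator $e^{\ii(\sqrt{1-3\tau^2}-\tau)L}-e^{2\ii\tau L}$ does vanish at $(\tau_c,L_0)$. Write $s=\sqrt{1-3\tau^2}$, $\xi_1=-(\tau+s)$, $\xi_2=s-\tau$, $\xi_3=2\tau$ and $E_m=e^{\ii\xi_mL}$. At $\tau=\tau_c$ one has $(\xi_3-\xi_2)L_0=2k\pi$ and $(\xi_3-\xi_1)L_0=2(k+l)\pi$, so all three $E_m$ equal $E_0=e^{-2\pi\ii/3}$; this coincidence is exactly what makes $L_0$ critical. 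The closed formula is $\E_1=\alpha_1\bigl[e^{\ii\xi_1x}+\frac{E_3-E_1}{E_2-E_3}e^{\ii\xi_2x}+\frac{E_1-E_2}{E_2-E_3}e^{\ii\xi_3x}\bigr]$, so letting $L\to L_0$ gives $0/0$, not a multiple of $\G_{c,1}$. Item~1 is still true, but it needs a L'H\^opital step. Put $\epsilon=L-L_0$. Since $\tau_1-\tau_c=\bigO(\epsilon^2)$, one has $E_m-E_n=\ii E_0(\xi_m-\xi_n)\epsilon+\bigO(\epsilon^2)$. Hence the two ratios tend to $\frac{\xi_3-\xi_1}{\xi_2-\xi_3}=-\frac32$ and $\frac{\xi_1-\xi_2}{\xi_2-\xi_3}=\frac12$ with $\bigO(\epsilon)$ error, and with $\alpha_1\to\sqrt{3/(7\pi\sqrt{21})}$ this gives $\G_{c,1}$.

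The same degeneracy breaks Step~2 more seriously. The formula for $\E_1'(L)$ is $2\ii\alpha_1N/D$ with the same $D=E_2-E_3$. At $(\tau_c,L_0)$ not only $N=0$ but also $\p_LN=0$: its three surviving terms are $2\ii\tau s\overline{E_0}+\ii\tau s\overline{E_0}-3\ii\tau s\overline{E_0}$. So your recipe of taking $\p_LN$ at fixed $\tau$ and dividing by the denominator is again $0/0$. The correct leading coefficient is $2\ii\alpha_1\bigl(\tfrac12\p_L^2N+c_1\p_\tau N\bigr)/\p_LD$, and $\p_\tau N=3\ii L_0\overline{E_0}/s\neq0$. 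So the coefficient depends on the exact value of $c_1$, which is why $c_j$ appears in the general formula you quote.

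The printed $c_j$ cannot be trusted here. At $(\tau_c,L_0)$ one computes $\p_tF=28\sqrt3\pi$ and $\p_L^2F=24\pi^3/L_0^3$, hence $c_1=-\frac{3\pi^2}{7\sqrt3L_0^3}$. Contrary to your remark, the printed value is not consistent with the stated expansion of $\lambda_1$ either.

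A cleaner route avoids $c_1$ altogether. The boundary condition $\E_1'(0)=\E_1'(L)$ reads $\sum_mr_m\xi_m=\sum_mr_m\xi_mE_m$, which gives the exact identity $\E_1'(L)=\frac{\ii}{1-E_0}\sum_mr_m\xi_m(E_m-E_0)$. Hence $\E_1'(L)=-\frac{E_0}{1-E_0}\bigl(\sum_mr_m^{(0)}\xi_m^2\bigr)\epsilon+\bigO(\epsilon^2)$, where $r^{(0)}$ are the coefficients of $\G_{c,1}$. Evaluating this yields $\E_1'(L)=\sqrt{\frac{3}{7\pi\sqrt{21}}}\bigl(\frac{9}{14}+\frac{3\sqrt3}{14}\ii\bigr)\epsilon+\bigO(\epsilon^2)$; the $c_1$-route gives the same once the corrected $c_1$ is used. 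The real part agrees with the statement. The imaginary part $\frac{25}{576\pi}$ in the statement is exactly what the bracket $2kl^2(k+l)\pi^4+3\ii c_jL_0^5$ produces with the printed $c_j$, so your planned cross-check would only reproduce that misprint. As you note, the order $|\E_{\pm1}'(L)|\sim|L-L_0|$, which is all that is used later, is unaffected.
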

\begin{proof}
Here we simply apply Proposition \ref{prop: Low-frequency behaviors: Singular limits} in our particular case $L_0=2\pi\sqrt{\frac{7}{3}}$.
\end{proof}
For $s\geq0$, we define the following Hilbert subspaces 
\begin{equation}\label{eq: defi for space K0 and K}
\begin{aligned}
H_{\A}^s&:=\{u\in D(\A^s):\poscalr{u}{\F_{\zeta_+}}_{(0,L)}=\poscalr{u}{\F_{\zeta_-}}_{(0,L)}=0\},\\    H_{\B}^s&:=\{u\in D(\B^s):\poscals{u}{\E_{1}}_{L^2(0,L)}=\poscals{u}{\E_{-1}}_{L^2(0,L)}=0\}.
\end{aligned}
\end{equation}
We aim to prove the uniform quantitative observability
\begin{thm}
Suppose that $0<\delta<\pi$ is sufficiently small. Let $I=[2\pi\sqrt{\frac{7}{3}}-\delta,2\pi\sqrt{\frac{7}{3}}+\delta]$ be a small compact interval such that $I\cap \mathcal{N}=\{2\pi\sqrt{\frac{7}{3}}\}$. For every $L\in I\setminus\{2\pi\sqrt{\frac{7}{3}}\}$, and $\forall y^0\in H_{\A}$, there exists a constant $C$ such that the following quantitative observability inequality 
\begin{equation}\label{eq: quantitative-ob-7/3}
   \|S(T)y^0\|^2_{L^2(0,L)} \leq  C^2\int_0^T|\p_x y(t,0)|^2dt
\end{equation}
holds for any solution $y$ to the KdV system
\begin{equation}
\left\{
\begin{array}{lll}
    \p_ty+\p_x^3y+\p_xy=0 & \text{ in }(0,T)\times(0,L), \\
     y(t,0)=y(t,L)=0&  \text{ in }(0,T),\\
     \p_xy(t,L)=0&\text{ in }(0,T),\\
     y(0,x)=y^0(x)&\text{ in }(0,L).
\end{array}
\right.
\end{equation} 
Here $S(t)$ is the semi-group generated by $\A$, and we point out that $C$ is independent of $L$.
\end{thm}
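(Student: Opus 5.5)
By the projective HUM duality (Proposition \ref{prop: control and ob} with the projector onto $H_{\A}$), I reduce \eqref{eq: quantitative-ob-7/3} to a quantitative null controllability statement. For every real $y^0\in H_{\A}$ I will construct $u\in L^2(0,T)$ such that the solution of \eqref{eq: linearized KdV system-control-intro} satisfies $\Pi_{H_{\A}}y(T)=0$, indeed $y(T)=0$, with $\|u\|_{L^2(0,T)}\le \Tilde{\mathcal K}e^{\mathcal K/\sqrt T}\|y^0\|_{L^2}$ and constants independent of $L$. The construction is the refined transition-stabilization scheme already carried out at $2\pi$ and $2\pi\sqrt7$. This case is the simplest of the three, since $(k,l)=(2,1)\in\mathcal S_3$, so $M_{\B}(L)=U_E(L)=\mathrm{Span}\{\E_1,\E_{-1}\}$. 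I will therefore use the standard bi-orthogonal family $\{\phi_j\}$ of Proposition \ref{prop: biorthogonal family}, which by Corollary \ref{coro: uniform est for bi-family}(1) has uniform bounds in $L$. No compensated family is needed.

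\textbf{Step 1 (regularization and transition map $\mathcal T_c$).} On $[0,T/2]$ I let the free flow act, so that $y(T/2)\in H_{\A}^2$ with an $H^6$ bound of order $T^{-3}\|y^0\|$. I then choose four modulated functions $h_{\mu_j}$, with $\mu_j=j\mu$, and real (conjugate-symmetric) coefficients $c_1,\dots,c_4$. They must solve the two boundary-compensation equations of Lemma \ref{lem: defi of coeff-1-2-3-4-pi-7} together with
\[
\sum_j c_j\poscals{h_{\mu_j}}{\E_{\pm1}}=\poscals{y(T/2)}{\E_{\pm1}},\qquad \poscals{h_{\mu_j}}{\E_{\pm1}}=\frac{\overline{\E'_{\pm1}(L)}}{\mu_j+\ii\lambda_{\pm1}},
\]
so that $z^0=y(T/2)-\sum c_jh_{\mu_j}\in H_{\B}^2$.

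\textbf{Main obstacle: uniform boundedness of the $c_j$.} Since $\E'_{\pm1}(L)=\bigO(|L-L_0|)$, the last two rows of the $4\times4$ matrix are of size $|L-L_0|$. Its determinant is therefore $\sim \mu^{-N}|L-L_0|^2$, because the leading coefficients $\sqrt{3/(7\pi\sqrt{21})}\,(9/14\pm \tfrac{25}{576\pi}\ii)$ are non-collinear. The right-hand sides $\poscals{y(T/2)}{\E_{\pm1}}$ must then also be shown to be $\bigO(|L-L_0|)$. I would argue as in the $2\pi$ case, using the reflection symmetry. Because $\B$ commutes with $x\mapsto L-x$ composed with conjugation, the function $x\mapsto\overline{\E_{\pm1}(L-x)}$ is again an eigenfunction. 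Comparing with the expansion $\F_{\zeta_\pm}=\G_{c,\pm1}+\bigO(|L-L_0|)$ of Lemma \ref{lem: pertubation eigenvalues-7/3}, I get $\E_{\pm1}(x)=\kappa_\pm\overline{\F_{\zeta_\pm}(L-x)}+\bigO(|L-L_0|)$ with $|\kappa_\pm|=1$. Hence
\[
\poscals{y(T/2)}{\E_{\pm1}}=\overline{\kappa_\pm}\,\poscalr{y(T/2)}{\F_{\zeta_\pm}}_{(0,L)}+\bigO(|L-L_0|)\|y(T/2)\|,
\]
and the first term vanishes because $y(T/2)\in H_{\A}$. Cramer's rule then gives $|c_j|\lesssim \mu^{N}T^{-3}\|y^0\|$ uniformly in $L$. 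I expect this cancellation to be the delicate point; if the naive first-order comparison fails, I would expand to second order using the explicit formula for $\F_{\zeta_\pm}$.

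\textbf{Step 2 (exact control and transition map $\mathcal T_\varrho$).} On $[T/2,T]$ I steer $z^0$ to the target $z^T=\varrho_1\E_2+\varrho_{-1}\E_{-2}$, taking $E_{\pm1}=\E_{\pm2}$ as the well-prepared hyperbolic directions in $H_{\B}$ (case $\mathcal N^2$ of the definition). The control is
\[
v=\sum_k \frac{z^0_ke^{\ii\lambda_kT/2}-z^T_ke^{-\ii\lambda_kT/2}}{\ii\lambda_kh_k}\,\phi_k .
\]
The modes $k=\pm1$ have $z^0_{\pm1}=z^T_{\pm1}=0$, so the singular factor $1/\E'_{\pm1}(L)$ never appears. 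The coefficients $\varrho_{\pm1}$ are fixed by the two conditions $\poscalr{z^T+\sum c_je^{-\mu_jT/2}h_{\mu_j}}{\F_{\zeta_\pm}}=0$. After integration by parts every term carries the common factor $\overline{\F'_{\zeta_\pm}(0)}$, which cancels. The resulting $2\times2$ system has entries $\E'_{\pm2}(L)/(\zeta_\pm+\ii\lambda_{\pm2})$, which are uniformly nondegenerate by Proposition \ref{prop: Low-frequency behaviors: uniform estimates}, so $|\varrho_{\pm1}|\lesssim e^{-\mu T/2}\max|c_j|$ and $\varrho_{-1}=\overline{\varrho_1}$.

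\textbf{Step 3 (estimates and iteration).} The a priori bounds of Lemma \ref{lem: control-toy} apply with only the regular parts $v_b,z_b$, since the elliptic modes carry zero data. Together with Propositions \ref{prop: High-frequency behaviors: uniform estimates}–\ref{prop: Low-frequency behaviors: uniform estimates}, they give $\|\p_xz(\cdot,L)\|_\infty+\|z\|_{L^2}\lesssim e^{2\sqrt2K/\sqrt T}\mu^NT^{-3}\|y^0\|$ uniformly in $L$. This yields the analogue of Proposition \ref{prop: estimates on fixed interval-iteration preparation-2pi7}: the final state $y(T)=z^T+\sum c_je^{-\mu_jT/2}h_{\mu_j}$ lies in $H_{\A}$ and has norm $\lesssim e^{-\mu T/2}\mu^NT^{-3}\|y^0\|$. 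I then run the dyadic iteration of Proposition \ref{prop: iteration schemes-2pi}, with $\mu_n=Q2^{3(n_0+n)/2}$ and $Q$ large. This gives null controllability in $H_{\A}$ with $\|u\|_\infty\le\Tilde{\mathcal K}e^{\mathcal K/\sqrt T}\|y^0\|$, and HUM concludes the proof.
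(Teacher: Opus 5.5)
Your proposal is correct and follows the paper's route for this $\mathcal N^2$ model. The steps are the same: HUM on $H_{\A}$; four modulated functions with the standard, $L$-uniform bi-orthogonal family; a target along a hyperbolic pair $\E_{\pm s}$ whose coefficients are fixed by the $H_{\A}$ constraints once the common factor $\overline{\F'_{\zeta_\pm}(0)}$ cancels; and the dyadic iteration. Your reflection argument giving $\poscals{y(T/2)}{\E_{\pm1}}_{L^2(0,L)}=\bigO(|L-L_0|)\|y(T/2)\|$ supplies the uniform bound on the $c_j$ that the paper carries out only at $2\pi$ and leaves implicit here, where it merely records $|\det M^{-1}|\sim|L-L_0|^{-2}$. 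One small correction: $|\det M|\sim|L-L_0|^2$ holds because $\overline{\E'_{\pm1}(L)}$ factor out of rows three and four and $\lambda_{c,1}\neq0$ keeps the remaining determinant nonzero, not because their leading coefficients are non-collinear.
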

As we already performed in the case $L_0=2\pi$, we first recall the bi-orthogonal family to $e^{-\ii t\lambda_j}$ defined in Proposition \ref{prop: biorthogonal family}. In the current case $L_0=2\pi\sqrt{\frac{7}{3}}$, by Remark \ref{rem: uniform sequence-eigenvalues}, we know that $\{\lambda_j(L)\}_{j\in\Z,j\neq0}$ is a uniform regular $3-$sequence. Then we are able to utilize the bi-orthogonal family defined in Proposition \ref{prop: biorthogonal family} to construct our control function $v$. To apply the same method, we need the following lemma to compensate for boundary conditions between different KdV operators.
\begin{lem}\label{lem: defi of coeff-7/3}
Let $\mu_j$, $j=1,2,3,4$ be four distinct real positive constants, for any function $z\in  H_{\A}^2\subset H_{\A}$, there exist  constants $c_1$, $c_2$, $c_3$ and $c_4$ such that 
\begin{equation}
    z- c_1 h_{\mu_1}- c_2 h_{\mu_2}-c_3 h_{\mu_3}-c_4 h_{\mu_4}\in H_{\B}^2.
\end{equation}
Here $h_{\mu_j}(1\leq j\leq4)$ are transition functions defined in Subsection \ref{sec: preliminary}.
\end{lem}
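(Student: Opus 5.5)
We follow the template of Lemma \ref{lem: defi of coeff-1-2-3} and Lemma \ref{lem: defi of coeff-1-2-3-4-pi-7}. Put $f:=z-\sum_{j=1}^4 c_jh_{\mu_j}$.

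\textbf{Boundary conditions.} Since $z\in H^2_{\A}\subset D(\A^2)$ and every $h_{\mu_j}$ vanishes at $0$ and $L$, we get $f(0)=f(L)=0$. By the same computation as in the proof of Proposition \ref{prop: estimates on fixed interval-iteration preparation}, the two conditions $\p_xf(L)=\p_xf(0)$ and $\p_x(\mathcal{P}f)(L)=\p_x(\mathcal{P}f)(0)$ become the linear equations
\[
\sum_{j=1}^4 c_j=-\p_xz(0),\qquad \sum_{j=1}^4 c_j\mu_j=-\p_x(\mathcal{P}z)(0).
\]
Here we use $z'(L)=0$, $(\mathcal{P}z)'(L)=0$ (since $z\in D(\A^2)$), $h_{\mu_j}'(L)-h_{\mu_j}'(0)=1$ and $\mathcal{P}h_{\mu_j}=\mu_jh_{\mu_j}$. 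The evenness conditions $f''(0)=f''(L)$ hold automatically, as they do in the earlier lemmas. Together these give $f\in D(\B^2)$.

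\textbf{Orthogonality conditions.} We also need $\poscals{f}{\E_{\pm1}}_{L^2(0,L)}=0$. Integrating by parts exactly as in Lemma \ref{lem: size of h_j} gives
\[
\poscals{h_{\mu_j}}{\E_{\pm1}}_{L^2(0,L)}=\frac{\overline{\E'_{\pm1}(L)}}{\mu_j+\ii\lambda_{\pm1}}.
\]
This turns the orthogonality into the two equations
\[
\sum_j c_j\,\frac{\overline{\E'_{\pm1}(L)}}{\mu_j+\ii\lambda_{\pm1}}=\poscals{z}{\E_{\pm1}}_{L^2(0,L)}.
\]
Together with the boundary equations we obtain a $4\times4$ linear system $Mc=b$.

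\textbf{Invertibility.} This is the main step. Because $L\notin\mathcal{N}$, the corollary above gives $\E'_{\pm1}(L)\neq0$, so we may divide the last two rows by $\overline{\E'_{\pm1}(L)}$. The resulting matrix has rows $(1)$, $(\mu_j)$, $\bigl((\mu_j+\ii\lambda_1)^{-1}\bigr)$ and $\bigl((\mu_j+\ii\lambda_{-1})^{-1}\bigr)$.

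Multiply by $\prod_j(\mu_j+\ii\lambda_1)(\mu_j+\ii\lambda_{-1})$ and consider the polynomial picture. A vanishing combination $\sum_j a_j(\text{column } j)=0$ would mean that the linear functional $p\mapsto\sum_j a_j p(\mu_j)$ annihilates $1$, $x$, $(x+\ii\lambda_1)^{-1}$ and $(x+\ii\lambda_{-1})^{-1}$. These four functions form a Chebyshev (Cauchy–Vandermonde) system on $(0,\infty)$, since $-\ii\lambda_{\pm1}$ are distinct and purely imaginary, hence not in $(0,\infty)$. So the generalized Cauchy–Vandermonde determinant is nonzero for distinct positive $\mu_j$; concretely it equals
\[
\prod_{i<j}(\mu_i-\mu_j)\,(\lambda_1-\lambda_{-1})\Big/\prod_{j}\prod_{\pm}(\mu_j+\ii\lambda_{\pm1}),
\]
up to a constant factor. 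This is the determinant we check directly, and the one computation I would verify carefully.

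\textbf{Real-valuedness.} For real $z$ we have $\E_{-1}=\overline{\E_1}$, $\lambda_{-1}=-\lambda_1$ and $\poscals{z}{\E_{-1}}=\overline{\poscals{z}{\E_1}}$. The last two equations are therefore complex conjugates of each other, so the system is equivalent to a real one and the solution $c$ is real.

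If uniform bounds in $L$ are wanted, as in Proposition \ref{prop: transition projection-c}, two size effects must be tracked. The factor $|\E'_{\pm1}(L)|\sim|L-L_0|$ is compensated by the smallness of $\poscals{z}{\E_{\pm1}}$: it is $O(|L-L_0|)\|z\|$, because $\E_{\pm1}(x)$ is $O(|L-L_0|)$-close to the reflected eigenfunction $\F_{\zeta_\mp}(L-x)$, which is orthogonal to $z$ in $H_{\A}$. This is the same argument as in the proof of Proposition \ref{prop: estimates on fixed interval-iteration preparation-2pi}.
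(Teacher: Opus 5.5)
Your proposal is correct and follows the paper's proof essentially step for step. You use the same two boundary equations and the same two orthogonality equations, and you get invertibility from the same Cauchy--Vandermonde-type determinant; its modulus $2|\lambda_1|\,|\E_1'(L)|^2\prod_{m<n}|\mu_m-\mu_n|/\prod_j(\mu_j^2+\lambda_1^2)$ is exactly the value the paper writes for $|\det M|^{-1}$ inverted.

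Two small corrections:
\begin{itemize}
\item The conditions of $D(\B^2)$ that hold automatically are $(\mathcal{P}f)(0)=(\mathcal{P}f)(L)=0$. These follow from $(\A z)(0)=(\A z)(L)=0$ and $\mathcal{P}h_{\mu_j}=\mu_jh_{\mu_j}$. The condition you cite, $f''(0)=f''(L)$, is not part of $D(\B^2)$ and is not true in general.
\item Integration by parts actually gives $\poscals{h_{\mu_j}}{\E_{\pm1}}_{L^2(0,L)}=-\overline{\E'_{\pm1}(L)}/(\mu_j+\ii\lambda_{\pm1})$. This sign slip is shared with the paper and does not affect invertibility.
\end{itemize}
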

\begin{proof}
For any $\mu_1$, $\mu_2$, $\mu_3$ and $\mu_4$,  we are able to construct the transition functions $h_{\mu_1}$, $h_{\mu_2}$, $h_{\mu_3}$, and $h_{\mu_4}$ as 
\begin{equation}
\left\{
\begin{aligned}
     &h_{\mu_j}'''+h_{\mu_j}'=\mu_j h_{\mu_j}(x),  \\
     &h_{\mu_j}(0)=h_{\mu_j}(L)=0,\\
     &h_{\mu_j}'(L)-h_{\mu_j}'(0)=1,
\end{aligned}
\right.
\end{equation}
Let $f:= z- c_1 h_{\mu_1}- c_2 h_{\mu_2}-c_3 h_{\mu_3}-c_4 h_{\mu_4}$. Then it is easy to check that $f(0)=f(L)=0$. For the boundary derivative, we ask that
\begin{align*}
    c_1+ c_2+c_3+c_4&=-\p_xz(0),\\
    c_1\mu_1+ c_2\mu_2+c_3\mu_3+c_4\mu_4&=-\p_x(\mathcal{P}z)(0)
\end{align*}
    Next, for the projection on the direction $\E_{1}$ and $\E_{-1}$, we have 
    \begin{align*}
        \poscals{z- c_1 h_{\mu_1}- c_2h_{\mu_2}-c_3 h_{\mu_3}}{\E_{1}}_{L^2(0,L)}&= 0,\\
        \poscals{z- c_1 h_{\mu_1}- c_2h_{\mu_2}-c_3 h_{\mu_3}}{\E_{-1}}_{L^2(0,L)}&= 0.
    \end{align*}
    which is equivalent to 
    \begin{equation*}
        \sum_{j=1}^4c_j\poscals{h_{\mu_j}}{\E_{1}}_{L^2(0,L)}=\poscals{z}{\E_{1}}_{L^2(0,L)},\;
        \sum_{j=1}^4c_j\poscals{h_{\mu_j}}{\E_{-1}}_{L^2(0,L)}=\poscals{z}{\E_{-1}}_{L^2(0,L)}.
    \end{equation*}
Thus, we obtain
\begin{equation*}
\poscals{h_{\mu_j}}{\E_{1}}_{L^2(0,L)}=\frac{\overline{\E'_{1}(L)}}{\mu_j+\ii\lambda_{1}},\; \poscals{h_{\mu_j}}{\E_{-1}}_{L^2(0,L)}=\frac{\overline{\E'_{-1}(L)}}{\mu_j+\ii\lambda_{-1}}.
\end{equation*}
Therefore, we obtain a linear system of the real coefficients
\begin{equation}
\left\{
\begin{array}{rl}
      c_1+ c_2+c_3+c_4&=-\p_xz(0),\\
    c_1\mu_1+ c_2\mu_2+c_3\mu_3+c_4\mu_4&=-\p_x(\mathcal{P}z)(0),\\
     \sum_{j=1}^4c_j\frac{\overline{\E'_{1}(L)}}{\mu_j+\ii\lambda_{1}}&=\poscals{z}{\E_{1}}_{L^2(0,L)},\\
        \sum_{j=1}^4c_j\frac{\overline{\E'_{-1}(L)}}{\mu_j+\ii\lambda_{-1}}&=\poscals{z}{\E_{-1}}_{L^2(0,L)}.
\end{array}
\right.
\end{equation}
Let 
\begin{equation}
    M(c_1,c_2,c_3,c_4)=\left(\begin{array}{cccc}
     1&1&1 &1 \\
     \mu_1&\mu_2&\mu_3&\mu_4\\
     \frac{\overline{\E'_{1}(L)}}{\mu_1+\ii\lambda_{1}}&\frac{\overline{\E'_{1}(L)}}{\mu_2+\ii\lambda_{1}}&\frac{\overline{\E'_{1}(L)}}{\mu_3+\ii\lambda_{1}}&\frac{\overline{\E'_{1}(L)}}{\mu_4+\ii\lambda_{1}}\\
     \frac{\overline{\E'_{-1}(L)}}{\mu_1+\ii\lambda_{-1}}&\frac{\overline{\E'_{-1}(L)}}{\mu_2+\ii\lambda_{-1}}&\frac{\overline{\E'_{-1}(L)}}{\mu_3+\ii\lambda_{-1}}&\frac{\overline{\E'_{-1}(L)}}{\mu_4+\ii\lambda_{-1}}
\end{array}
\right).
\end{equation}
Rewriting the system above into the matrix form, for the coefficients $(c_1,c_2,c_3)$, we obtain
\begin{equation}
M
\left(\begin{array}{c}
     c_1  \\
     c_2\\
     c_3\\
     c_4
\end{array}\right)=
\left(\begin{array}{c}
     -\p_xz(0)  \\
     -\p_x(\mathcal{P}z)(0)\\
     -\poscals{z}{\E_{j_0}}_{L^2(0,L)}\\
     -\poscals{z}{\E_{-j_0}}_{L^2(0,L)}
\end{array}\right).
\end{equation}
Using the fact that $\lambda_{1}=-\lambda_{-1}$, it is easy to compute that 
\begin{equation*}
|\det{M^{-1}}|=\frac{\prod_{j=1}^4(\lambda_{1}^2+\mu_j^2)}{2 |\E'_{1}(L)|^2 \lambda_{1} \prod_{1\leq m<n\leq4}(\mu_m-\mu_n)}.
\end{equation*} 
Choosing three distinct positive real numbers $(\mu_1,\mu_2,\mu_3,\mu_4)$, the matrix $M$ is invertible and we find the coefficients $(c_1,c_2,c_3,c_4)$ that compensate the boundary conditions.
\end{proof}

\begin{lem}\label{lem: construction of v-7/3pi}
For any initial state $z^0\in H_{\B}^2$ and any final state $z^T\in H_{\B}^2$ (as defined in \eqref{eq: defi for space K0 and K}), there exists a function $v\in C^1(0,T)$ such that the solution $z$ to the controlled KdV system 
\begin{equation}\label{eq: KdV-z-v-7/3pi}
\left\{
\begin{array}{lll}
    \p_tz+\p_x^3z+\p_xz=0 & \text{ in }(0,T)\times(0,L), \\
     z(t,0)=z(t,L)=0&  \text{ in }(0.T),\\
     \p_xz(t,L-\p_xz(t,0))=v(t)&\text{ in }(0,T),\\
     z(0,x)=z^0\in H_{\B}^2&\text{ in }(0,L),
\end{array}
\right.
\end{equation}
satisfies that $z(T)=z^T\in H_{\B}^2$. In particular, we can choose that $z(T,x)=\varrho_+(T)\E_s(x)+\varrho_-(T)\E_{-s}(x)$, for $s>1$ and $s\in \N^*$. Furthermore, $\varrho$ such that 
\begin{equation*}
     z(T)+ c_1 h_{\mu_1} e^{-\mu_1 \frac{T}{2}}+ c_2 h_{\mu_2} e^{-\mu_2 \frac{T}{2}}+c_3 h_{\mu_3} e^{-\mu_3 \frac{T}{2}}+c_4 h_{\mu_4} e^{-\mu_4 \frac{T}{2}}\in\K_0.
\end{equation*}
\end{lem}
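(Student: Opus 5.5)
The plan is to follow the $2\pi$ and $2\pi\sqrt7$ cases, using the fact that here $M_{\B}(L)=U_E(L)=\mathrm{Span}\{\E_1,\E_{-1}\}$. By Proposition \ref{prop: Index set M-E}, every other eigenvalue of $\B$ is simple and uniformly separated. The ordinary bi-orthogonal family $\{\phi_j\}$ of Proposition \ref{prop: biorthogonal family} should therefore suffice, with constants uniform in $L$ (Corollary \ref{coro: uniform est for bi-family}, case $k\not\equiv l \bmod 3$).

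\textbf{Step 1: solve the moments away from the elliptic modes.} Expand
\[
z^0=\sum_{k\neq 0,\pm1} z^0_k\E_k, \qquad z^T=\varrho_+\E_s+\varrho_-\E_{-s},
\]
where $z^0_{\pm1}=0$ because $z^0\in H^2_{\B}$. Choose a fixed hyperbolic index $s>1$ and seek $v=\sum_{k\neq0,\pm1}v_k\phi_k(t-\frac T2)$, deliberately omitting the $\phi_{\pm1}$ terms. The Duhamel representation \eqref{eq: formula for tilde_z} and bi-orthogonality then give:
\begin{itemize}
\item for $k\neq 0,\pm1$: $v_k=\dfrac{z^0_ke^{\ii\lambda_kT/2}-z^T_ke^{-\ii\lambda_kT/2}}{\ii\lambda_kh_k}$;
\item for $k=\pm1$: the coefficient of $\E_{\pm1}$ at time $T$ is $-\ii\lambda_{\pm1}h_{\pm1}\int_0^Te^{\ii(T-s)\lambda_{\pm1}}v(s)\,ds=0$, since $\int\phi_k e^{-\ii\lambda_{\pm1}s}ds=\delta_{k,\pm1}=0$.
\end{itemize}
Hence $z(T)\in H^2_{\B}$ automatically, and $z(T)=z^T$ exactly. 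For real data, reality is preserved by taking $\varrho_-=\overline{\varrho_+}$, using $\lambda_{-k}=-\lambda_k$ and $\E_{-k}=\overline{\E_k}$. The coefficients $v_k$ for $k\neq0,\pm1$ carry no singular factor, because $|\E_k'(L)|\ge\gamma$ by Propositions \ref{prop: Low-frequency behaviors: uniform estimates} and \ref{prop: High-frequency behaviors: uniform estimates}. The regularity $v\in C^1$ and $v(0)=v(T)=0$ follow as in Lemma \ref{lem: H^m-estimate of v}.

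\textbf{Step 2: choose $\varrho_\pm$ so that $y(T)\in H_{\A}$.} Impose
\[
\poscalr{z^T+\sum_{j=1}^4 c_jh_{\mu_j}e^{-\mu_jT/2}}{\F_{\zeta_\pm}}_{(0,L)}=0 .
\]
The integration-by-parts identities used in the $2\pi\sqrt7$ case give
\[
\poscalr{\E_{\pm s}}{\F_{\zeta}}_{(0,L)}=\frac{\overline{\F_{\zeta}'(0)}\,\E_{\pm s}'(L)}{\zeta+\ii\lambda_{\pm s}},
\qquad
\poscalr{h_{\mu_j}}{\F_{\zeta}}_{(0,L)}=\frac{\overline{\F_{\zeta}'(0)}\,h'_{\mu_j}(L)}{\zeta+\mu_j}.
\]
The common factor $\overline{\F_{\zeta_\pm}'(0)}=\bigO(|L-L_0|)$ appears on both sides and cancels, leaving a $2\times2$ linear system
\[
N\begin{pmatrix}\varrho_+\\ \varrho_-\end{pmatrix}=-\Bigl(\sum_j c_je^{-\mu_jT/2}\frac{h'_{\mu_j}(L)}{\zeta_\pm+\mu_j}\Bigr)_{\pm},
\qquad
N_{\pm,\pm'}=\frac{\E'_{\pm' s}(L)}{\zeta_\pm+\ii\lambda_{\pm' s}} .
\]

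\textbf{Main obstacle: uniform invertibility of $N$.} This is the step I expect to be hardest. Since $\zeta_\pm\to\pm\ii\lambda_{c,1}$, $\lambda_{\pm s}$ stays away from $\pm\lambda_{c,1}$, and $|\E'_{\pm s}(L)|\ge\gamma$, all entries of $N$ are bounded uniformly in $L$. The difficulty is that $N$ must also be nondegenerate uniformly. My plan is to compute $\det N$ at $L=L_0$ using the explicit limits of $\lambda_{\pm s}$, $\E'_{\pm s}(L)$ and $\zeta_\pm$, and to show it is nonzero. Using $\E_{-s}=\overline{\E_s}$, $\lambda_{-s}=-\lambda_s$ and $\zeta_-=\overline{\zeta_+}$, the determinant reduces to
\[
|\E'_s(L_0)|^2\Bigl(\frac{1}{(\ii\lambda_{c,1}+\ii\lambda_s)(-\ii\lambda_{c,1}-\ii\lambda_s)}-\frac{1}{(\ii\lambda_{c,1}-\ii\lambda_s)(-\ii\lambda_{c,1}+\ii\lambda_s)}\Bigr).
\]
This is nonzero exactly when $\lambda_{c,1}\lambda_s\neq0$, which holds. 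Continuity in $L$ then gives uniform invertibility on a small $I$. If this computation fails for a particular $s$, the freedom to choose $s$ leaves room to pick another index.

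With $\det N$ bounded away from zero, $|\varrho_\pm|\lesssim\sum_j|c_j|e^{-\mu_jT/2}$ uniformly in $L$. It then remains to bound the coefficients $c_j$ from Lemma \ref{lem: defi of coeff-7/3}, where $\det M^{-1}$ carries a factor $1/(|\E'_1(L)|^2\lambda_1)$ that blows up as $L\to L_0$. I would treat this as the $2\pi$ case treats it: the right-hand sides $\poscals{z}{\E_{\pm1}}$ are $\bigO(|L-L_0|)\|z\|$, because $\E_{\pm1}(x)$ is close to a multiple of $\F_{\zeta_\mp}(L-x)$ and $z\in H_{\A}$. This estimate is the step I expect to be delicate, since it has to absorb the full singular factor.
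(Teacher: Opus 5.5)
Your Steps 1--2 are the paper's proof: the standard family $\{\phi_k\}_{k\neq0,\pm1}$ makes the $\E_{\pm1}$-components of $z(T)$ vanish, the $v_k$ are solved mode by mode, and $\varrho_\pm$ come from the $2\times2$ system obtained by pairing with $\F_{\zeta_\pm}$; the paper simply writes that solution out, and your $\det N\neq0$ argument and the bounds on $\varrho_\pm$ and $c_j$ go beyond this lemma (they correspond to the later lemmas). One small slip, inherited from the paper: integrating by parts actually gives $\poscalr{\E_{\pm s}}{\F_\zeta}_{(0,L)}=\overline{\F_\zeta'(0)}\,\E_{\pm s}'(L)/(\overline{\zeta}-\ii\lambda_{\pm s})$ and $\poscalr{h_{\mu_j}}{\F_\zeta}_{(0,L)}=\overline{\F_\zeta'(0)}\,h_{\mu_j}'(L)/(\overline{\zeta}+\mu_j)$; this changes the entries of $N$ and the explicit $\varrho_\pm$, but since $\zeta_-=\overline{\zeta_+}$ the determinant is unchanged, namely $2\ii\lambda_s(\zeta_+-\zeta_-)\E_s'(L)\E_{-s}'(L)/\bigl((\zeta_+^2+\lambda_s^2)(\zeta_-^2+\lambda_s^2)\bigr)\neq0$, so your conclusion stands.
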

\begin{proof}
Suppose that $c_1$, $c_2$, $c_3$ and $c_4$ are fixed. As we presented in Section \ref{sec: construction of the control}, we aim to construct the control function $v\in C^1(0,T)$ for 
\begin{equation*}
\left\{
\begin{array}{lll}
    \p_tz+\p_x^3z+\p_xz=0 & \text{ in }(0,T)\times(0,L), \\
     z(t,0)=z(t,L)=0&  \text{ in }(0,T),\\
     \p_xz(t,L)-\p_xz(t,0)=v(t)&\text{ in }(0,T),\\
     z(0,x)=z^0(x)&\text{ in }(0,L),
\end{array}
\right.
\end{equation*}
with the constraints $v(0)=v(T)=0$. Thanks to the family $\{\phi_j\}_{j\in\Z\backslash\{0,\pm 1\}}$, we construct $v$ as follows
\begin{equation}\label{eq: defi of control v-vartheta-2pi7-3}
    v(t)=\sum_{k\neq0,\pm 1}v_k\phi_k(t-\frac{T}{2}).
\end{equation}
Here $h_k$ remains the same as in Section \ref{sec: construction of the control} and 
\begin{equation*}
z^0(x)=\sum_{k\in\Z\setminus\{0,\pm 1\}}z^0_k\E_k(x),\;z(T,x)=\sum_{k\in\Z\setminus\{0,\pm 1\}}z^T_k\E_k(x).
\end{equation*}
Then the solution $z$ to the system \eqref{eq: KdV-z-v-7/3pi} has the following expansion 
\begin{equation*}
z(t,x)=\sum_{k\in\Z\setminus\{0,\pm 1\}}z^0_ke^{\ii\lambda_k t}\E_k(x)-\sum_{k\neq0} \sum_{j\neq\pm 1}\ii\lambda_kh_k\int_0^te^{\ii(t-s)\lambda_k}v_j\phi_j(s-\frac{T}{2})ds\E_k(x).
\end{equation*}
In particular, at $t=T$, 
\begin{align*}
z(T,x)&=\sum_{k\in\Z\setminus\{0,\pm 1\}}z^0_ke^{\ii\lambda_k T}\E_k(x)-\sum_{k\neq0} \sum_{j\neq\pm 1}\ii\lambda_kh_k\int_0^Te^{\ii(T-s)\lambda_k}v_j\phi_j(s-\frac{T}{2})ds\E_k(x)\\
&=\sum_{k\in\Z\setminus\{0,\pm 1\}}z^T_k\E_k(x).
\end{align*}
As a consequence, we obtain the following equations
\begin{align*}
-\sum_{j\neq\pm 1}\ii\lambda_{1}h_{1}\int_0^Te^{\ii\lambda_{1}(T-s)}v_j\phi_j(s-\frac{T}{2})ds&=0,\\
-\sum_{j\neq\pm 1}\ii\lambda_{-1}h_{-1}\int_0^Te^{\ii\lambda_{-1}(T-s)}v_j\phi_j(s-\frac{T}{2})ds&=0,\\
z^0_ke^{\ii\lambda_k T}-\sum_{j\neq\pm 1}\ii\lambda_kh_k\int_0^Te^{\ii(T-s)\lambda_k}v_j\phi_j(s-\frac{T}{2})ds&=z^T_k,k\neq 0,\pm 1.
\end{align*}
The first two equations are direct consequences of the fact that $\{\phi_j\}_{j\neq0,\pm 1}$ is a bi-orthogonal family to $e^{-\ii t\lambda_{\pm 1}}$. Furthermore, for the last equation, we obtain 
\begin{equation*}
v_k=\frac{z^0_ke^{\ii\frac{\lambda_k T}{2}}-z^T_ke^{-\ii\frac{\lambda_k T}{2}}}{\ii\lambda_kh_k},k\neq 0,\pm 1.
\end{equation*}
In particular, we can choose that $z(T,x)=\varrho_+\E_s(x)+\varrho_-\E_{-s}(x)\in H_{\B}^2$.  To achieve this final target, we construct a special control function $v$ as follows:
\begin{equation*}
v(t):=\frac{z^0_se^{\ii\frac{\lambda_s T}{2}}-\varrho_+(T)e^{-\ii\frac{\lambda_s T}{2}}}{\ii\lambda_sh_s}\phi_s(t-\frac{T}{2})+\frac{z^0_{-s}e^{\ii\frac{\lambda_{-s} T}{2}}-\varrho_-(T)e^{-\ii\frac{\lambda_{-s} T}{2}}}{\ii\lambda_{-s}h_{-s}}\phi_{-s}(t-\frac{T}{2})+\sum_{|k|\neq0,1,s}\frac{z^0_ke^{\ii\frac{\lambda_k T}{2}}}{\ii\lambda_kh_k}\phi_k(t-\frac{T}{2})
\end{equation*}
For this specific final target $z(T)=\varrho_+\E_s+\varrho_-\E_{-s}$, we aim to prove that $ z(T)+ c_1 h_{\mu_1} e^{-\mu_1 T/2}+ c_2 h_{\mu_2} e^{-\mu_2 T/2}+c_3 h_{\mu_3} e^{-\mu_3 T/2}+c_4 h_{\mu_4} e^{-\mu_4 T/2}\in  H_{\A}$, which is equivalent to 
\begin{equation*}
\begin{aligned}
\poscalr{z(T)+ c_1 h_{\mu_1} e^{-\mu_1 T/2}+ c_2 h_{\mu_2} e^{-\mu_2 T/2}+c_3 h_{\mu_3} e^{-\mu_3 T/2}+c_4 h_{\mu_4} e^{-\mu_4 T/2}}{\F_{\zeta_+}}_{(0,L)}&= 0,\\
\poscalr{z(T)+ c_1 h_{\mu_1} e^{-\mu_1 T/2}+ c_2 h_{\mu_2} e^{-\mu_2 T/2}+c_3 h_{\mu_3} e^{-\mu_3 T/2}+c_4 h_{\mu_4} e^{-\mu_4 T/2}}{\F_{\zeta_-}}_{(0,L)}&= 0.
\end{aligned}    
\end{equation*}
By integration by parts, we obtain that
\begin{align*}
\int_0^Lh_{\mu_j}(x)\overline{\F_{\zeta_{\pm}}(L-x)}dx&=\frac{\overline{\F_{\zeta_{\pm}}'(0)}h_{\mu_j}'(L)}{ \zeta_{\pm}+\mu_j},j=1,2,3,4,\\
 \int_0^L\E_{\pm s}(x)\overline{\F_{\zeta_{\pm}}(L-x)}dx&=\frac{\overline{\F_{\zeta_{\pm}}'(0)}\E_{\pm s}'(L)}{ \zeta_{\pm}+\ii\lambda_{\pm s}}.
\end{align*}
Therefore, we know that
\begin{equation*}
\begin{aligned}
\varrho_+(T)&=-\frac{(\zeta_+^2+\lambda_s^2)(\zeta_-^2+\lambda_s^2)}{2\ii\lambda_s\E'_s(L)}\sum_{j=1}^4c_j e^{-\mu_j\frac{T}{2}}\frac{(\mu_j+\ii\lambda_s)h_{\mu_j}'(L)}{(\zeta_+-\ii\lambda_s)(\zeta_++\mu_j)(\zeta_-+\mu_j)(\zeta_--\ii\lambda_s)},\\
\varrho_-(T)&=\frac{(\zeta_+^2+\lambda_s^2)(\zeta_-^2+\lambda_s^2)}{2\ii\lambda_s\E'_{-s}(L)}\sum_{j=1}^4c_j e^{-\mu_j\frac{T}{2}}\frac{(\mu_j-\ii\lambda_s)h_{\mu_j}'(L)}{(\zeta_++\ii\lambda_s)(\zeta_++\mu_j)(\zeta_-+\mu_j)(\zeta_-+\ii\lambda_s)}.
\end{aligned}
\end{equation*}
\end{proof}
\begin{rem}
Here $\varrho_{\pm}$ are chosen such that $z^T$ is real-valued. Here we consider a particular situation where $L_0=2\pi\sqrt{\frac{7}{3}}$ and the directions $E_{1}=\Re\E_{1}$ and $E_{-1}=2\ii\Im\E_{1}$.
\end{rem}
\subsubsection{Revised transition-stabilization method}
\textbf{A priori estimates for the intermediate system}
\begin{lem}\label{lem: est for varrho-2pi-7/3}
Let $T\in (0, 2)$, and four distinct positive parameters be $\mu_j$, with $\mu_j>|\zeta_+|+|\zeta_-|+1$, $j=1,2,3,4$. We fix four constants $c_j$, $j=1,2,3,4$. Suppose that $0<\delta<\pi$ is sufficiently small. Let $I=[2\pi\sqrt{\frac{7}{3}}-\delta,2\pi\sqrt{\frac{7}{3}}+\delta]$ be a small compact interval such that $I\cap \mathcal{N}=\{2\pi\sqrt{\frac{7}{3}}\}$.  For every $L\in I\setminus\{2\pi\sqrt{\frac{7}{3}}\}$, $\varrho_{\pm}(T)$, as we defined in Lemma \ref{lem: construction of v-7/3pi} is uniformly bounded by a effectively computable constant $C_{\varrho}=C_{\varrho}(\mu_1,\mu_2,\mu_3,c_1,c_2,c_3)$, i.e. $|\varrho_{\pm}|\leq C_{\varrho}$
\end{lem}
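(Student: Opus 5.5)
We bound the explicit formula for $\varrho_{\pm}(T)$ from Lemma \ref{lem: construction of v-7/3pi} factor by factor, in the same spirit as Lemma \ref{lem: est for varrho-2pi}. Write $\varepsilon:=L-2\pi\sqrt{\frac{7}{3}}$. We treat $\varrho_+$; $\varrho_-$ is symmetric.

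\textbf{Prefactor.} The prefactor is
\[
P_+:=-\frac{(\zeta_+^2+\lambda_s^2)(\zeta_-^2+\lambda_s^2)}{2\ii\lambda_s\E'_s(L)}.
\]
Here $s>1$, so the index $s$ lies outside $\Lambda_E$. Lemma \ref{lem: bound of eigenvalues} gives $\frac{2\sqrt{3}}{9}\le|\lambda_s|\le K(s)$ uniformly in $L\in I$. Proposition \ref{prop: Low-frequency behaviors: uniform estimates} gives $|\E'_s(L)|\ge\gamma>0$ uniformly. Lemma \ref{lem: pertubation eigenvalues-7/3} shows that $\zeta_\pm$ are bounded. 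Hence $|P_+|\le C$, independent of $L$.

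\textbf{Factors in the sum.} Each summand of the $j$-sum contains $c_j e^{-\mu_jT/2}h'_{\mu_j}(L)(\mu_j+\ii\lambda_s)$. Since $e^{-\mu_jT/2}\le1$ and $|h'_{\mu_j}(L)|\lesssim1$ by Proposition \ref{prop: h_mu-est-uniform}, this factor is at most $C|c_j|(\mu_j+K)$. The assumption $\mu_j>|\zeta_+|+|\zeta_-|+1$ gives $|\zeta_\pm+\mu_j|\ge1$.

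\textbf{Main obstacle.} What remains are the denominators $\zeta_+-\ii\lambda_s$ and $\zeta_--\ii\lambda_s$, and we need a uniform lower bound for them. By Lemma \ref{lem: pertubation eigenvalues-7/3}, $\zeta_\pm=\pm\ii\frac{20}{21\sqrt{21}}+\bigO(\varepsilon^2)$, so $\Re\zeta_\pm=\bigO(\varepsilon^2)$. We must therefore keep $\lambda_s$ away from $\pm\frac{20}{21\sqrt{21}}$, and here we use that $s$ is in the hyperbolic regime.

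\emph{If $|\lambda_s|>\frac{2\sqrt{3}}{9}$ strictly with margin:} since $\frac{20}{21\sqrt{21}}<\frac{2\sqrt{3}}{9}$, we get $|\lambda_s\mp\frac{20}{21\sqrt{21}}|\ge\frac{2\sqrt3}{9}-\frac{20}{21\sqrt{21}}=:d>0$.

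\emph{Uniformity of the margin:} the hyperbolic roots $\tau$ of \eqref{eq: t-L defined eq-2} depend continuously on $L$ in the compact interval $I$. Case 2 of Proposition \ref{prop: asymptotic eigenvalues} excludes $3\tau^2=1$, so $|\lambda_s|$ is bounded below by $\frac{2\sqrt3}{9}$ uniformly, which suffices. If needed, one can shrink $\delta$ so that the distance is at least $d/2$. Then $|\zeta_\pm-\ii\lambda_s|\ge d/2-\bigO(\delta^2)\ge d/4$.

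\textbf{Conclusion.} Combining the three bounds gives
\[
|\varrho_\pm|\le C\sum_{j=1}^4|c_j|(\mu_j+1)=:C_\varrho ,
\]
with $C$ independent of $L$ and $T$. The bound is explicit in the listed quantities.
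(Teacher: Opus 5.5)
Your argument is correct. It is the same factor-by-factor estimate of the explicit formula that the paper carries out for the $2\pi$ analogue (Lemma \ref{lem: est for varrho-2pi}); the paper states the present lemma without a separate proof.

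Your ``main obstacle'' is in fact unnecessary. Since $\zeta_\pm^2+\lambda_s^2=(\zeta_\pm+\ii\lambda_s)(\zeta_\pm-\ii\lambda_s)$, the prefactor cancels the denominators $\zeta_\pm-\ii\lambda_s$ in $\varrho_+$ exactly (and $\zeta_\pm+\ii\lambda_s$ in $\varrho_-$). Only the lower bounds on $|\lambda_s|$, $|\E'_{\pm s}(L)|$ and $|\zeta_\pm+\mu_j|$ are then needed, though your separation bound via $\frac{2\sqrt3}{9}>\frac{20}{21\sqrt{21}}$ is also valid.
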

\textbf{Iteration Schemes with uniform constants}
\begin{prop}\label{prop: estimates on fixed interval-iteration preparation-2pi-7/3}
Let $T\in (0, 2)$, and four distinct positive parameters $\mu_j=j\mu$, $j=1,2,3,4$ with $\mu>0$. Suppose that $0<\delta<\pi$ is sufficiently small. Let $I=[2\pi\sqrt{\frac{7}{3}}-\delta,2\pi\sqrt{\frac{7}{3}}+\delta]$ be a small compact interval such that $I\cap \mathcal{N}=\{2\pi\sqrt{\frac{7}{3}}\}$.  For every $L\in I\setminus\{2\pi\sqrt{\frac{7}{3}}\}$, and every real initial state $y^0\in H_{\A}$, there exists a function $w\in L^2(0, T)$ satisfying 
\begin{itemize}
    \item $w= w_1+ w_2+w_3+w_4+w_5$ in $(0, T)$;
    \item $w_1(t)=w_2(t)=w_3(t)=w_4(t)=w_5(t)=0, \forall t\in (0, T/2)$;
    \item $\|w_1\|_{L^{\infty}(0, T)}\leq \mathcal{K}e^{\frac{2\sqrt{2}K}{\sqrt{T}}}\frac{\mu^N}{T^3}\|y^0\|_{L^2(0,L)}$; 
    \item $\|w_{j+1}\|_{L^{\infty}(0,T)}\leq\mathcal{K}\frac{e^{-\frac{\mu^{\frac{1}{3}}}{4}L}}{T^3}\|y^0\|_{L^2(0,L)},j=1,2,3$.
\end{itemize}
such that the unique solution $y$ of the Cauchy problem 
\begin{equation}\label{eq: controlled KdV-single interval-2pi-7-3}
\left\{
\begin{array}{lll}
    \p_ty+\p_x^3y+\p_xy=0 & \text{ in }(0,T)\times(0,L), \\
     y(t,0)=y(t,L)=0&  \text{ in }(0,T),\\
     \p_x y(t,L)= w(t)&\text{ in }(0,T),\\
     y(0,x)=y^0(x)&\text{ in }(0,L),
\end{array}
\right.
\end{equation}
satisfies 
\begin{itemize}
    \item $y= S(t) y^0, \forall t\in (0, T/2)$. In this period we have $\|y(t, \cdot)\|_{L^2(0, L)}\leq \|y^0\|_{L^2(0, L)}$, $\forall t\in (0, T/2)$; 
    \item $y= y_1+ y_2+y_3+y_4+y_5$ in $(T/2, T)$;
    \item $\|y_1(t, \cdot)\|_{L^2(0, L)}\leq \mathcal{K}e^{\frac{2\sqrt{2}K}{\sqrt{T}}}\frac{\mu^{N}}{T^3}\|y^0\|_{L^2(0,L)};$ 
    \item $y_1(T, x)= \varrho_+\E_s(s)+\varrho_-\E_{-s}(x) $, where $\varrho_+=\overline{\varrho_-}$; 
    \item $\|y_{j+1}(t, \cdot)\|_{L^2(0, L)}\leq  \mathcal{K}\frac{\mu^{\frac{1}{2}}e^{-\mu_j(t-\frac{T}{2})}}{T^3}\|y^0\|_{L^2(0,L)},j=1,2,3,4$, $\forall t\in (T/2, T)$.
\end{itemize}
All constants appearing in this proposition are independent of $L$ and $T$.
\end{prop}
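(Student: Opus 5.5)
The plan is to follow the template of Proposition~\ref{prop: estimates on fixed interval-iteration preparation-2pi}, replacing the $2\pi$ ingredients with their $2\pi\sqrt{7/3}$ analogues, namely Lemma~\ref{lem: defi of coeff-7/3}, Lemma~\ref{lem: construction of v-7/3pi} and Lemma~\ref{lem: est for varrho-2pi-7/3}.

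\textbf{Step 1: regularisation.} On $[0,T/2]$ I use the free flow $y=S(t)y^0$, so $\|y(t)\|_{L^2}\le\|y^0\|_{L^2}$. By Proposition~\ref{prop: Xiang-Krieger gain 1}, $y(T/2)\in H_{\A}^2$ is real and satisfies
\[
\|y(\tfrac T2)\|_{H^6(0,L)}+|\p_xy(\tfrac T2,0)|+|\p_x\mathcal{P}y(\tfrac T2,0)|\le C_1T^{-3}\|y^0\|_{L^2(0,L)}.
\]

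\textbf{Step 2: transition map $\mathcal{T}_c$.} I apply Lemma~\ref{lem: defi of coeff-7/3} with $\mu_j=j\mu$ to obtain $c_1,\dots,c_4$ such that $z^0:=y(T/2)-\sum_j c_jh_{\mu_j}\in H_{\B}^2$. The key task is the uniform bound $|c_j|\lesssim \mu^{a}T^{-3}\|y^0\|_{L^2}$ for some fixed power $a$, with the constant independent of $L$. By the explicit determinant in Lemma~\ref{lem: defi of coeff-7/3}, $\det M$ is of order $|\E_1'(L)|^2\lambda_1\mu^{-8}\mu^{6}$. Since $|\E'_{\pm1}(L)|\sim|L-L_0|$, the entries of $M^{-1}$ carry a factor $|L-L_0|^{-1}$ coming from the last two columns. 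That factor multiplies only the right-hand sides $\poscals{y(T/2)}{\E_{\pm1}}$.

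\textbf{Main obstacle: showing these projections are $\bigO(|L-L_0|)$.} As in the $2\pi$ case, I use the reflection identity $\E_{\pm1}(L-x)$. Here $\B f=-\ii\lambda f$ for $f(x)=\E_{\mp1}(L-x)$, and simplicity of the eigenvalues gives $\E_{\pm1}(x)=\theta_\pm\E_{\mp1}(L-x)$. Combined with Lemma~\ref{lem: pertubation eigenvalues-7/3} and the corollary, which give $\E_{\pm1}=\G_{c,\pm1}+\bigO(|L-L_0|)$ and $\F_{\zeta_\pm}=\G_{c,\pm1}+\bigO(|L-L_0|)$, this yields
\[
\overline{\E_{\pm1}(x)}=\tilde r_\pm\,\overline{\F_{\zeta_\pm}(L-x)}+\bigO(|L-L_0|)
\]
for suitable indices and conjugations. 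Since $y(T/2)\in H_{\A}$ annihilates $\F_{\zeta_\pm}$ under $\poscalr{\cdot}{\cdot}_{(0,L)}$, we get $|\poscals{y(T/2)}{\E_{\pm1}}|\lesssim|L-L_0|\,\|y(T/2)\|_{L^\infty}$. This cancels the singular factor and gives $|c_j|\le \tilde C\mu^{a}T^{-3}\|y^0\|$. Realness of the $c_j$ follows from $\E_{-1}=\overline{\E_1}$ and $\lambda_{-1}=-\lambda_1$.

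\textbf{Step 3: exact control of $z$.} On $[T/2,T]$ I apply Lemma~\ref{lem: construction of v-7/3pi} with a fixed $s$ such that $\E_{\pm s}$ is hyperbolic. This gives a control $v$ steering $z^0$ to $\varrho_+\E_s+\varrho_-\E_{-s}$, with $\varrho_+=\overline{\varrho_-}$ so that the target is real. Lemma~\ref{lem: est for varrho-2pi-7/3} bounds $\varrho_\pm$ uniformly. In its explicit formula the factor $\overline{\F'_{\zeta_\pm}(0)}$ cancels, and $\E'_{\pm s}(L)$ is bounded below by Proposition~\ref{prop: Low-frequency behaviors: uniform estimates}. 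Together with the factor $e^{-\mu T/4}$ this gives $|\varrho_\pm|\lesssim e^{-\mu T/4}\mu^aT^{-3}\|y^0\|$.

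Since $\lambda_{\pm1}$ are excluded from the moment problem and the remaining eigenvalues are uniformly separated (Remark on the uniform regular $3$-sequence, Corollary~\ref{coro: uniform est for bi-family}), the argument of Lemma~\ref{lem: control-toy} goes through without the $|L-L_0|^{-1}$ loss. All modes $k\ne\pm1$ satisfy $|\E_k'(L)|\ge\gamma$ (or $\ge\gamma|k|$) by Propositions~\ref{prop: Low-frequency behaviors: uniform estimates} and \ref{prop: High-frequency behaviors: uniform estimates}. Hence
\[
\|\p_xz(\cdot,L)\|_{L^\infty}+\|z(t)\|_{L^2}\le Ke^{2\sqrt2K/\sqrt T}\|z^0\|_{H^6}.
\]
Using $\|h_{\mu_j}\|_{H^6}\lesssim\mu^{5/2}$ from Proposition~\ref{prop: h_mu-est-uniform}, we get $\|z^0\|_{H^6}\lesssim\mu^{N}T^{-3}\|y^0\|$, which yields the bounds for $w_1$ and $y_1$.

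\textbf{Step 4: assembly.} I set $y_1=z$ and $y_{j+1}=e^{-\mu_j(t-T/2)}c_jh_{\mu_j}$, with $w_1=\p_xz(\cdot,L)$ and $w_{j+1}=c_je^{-\mu_j(t-T/2)}h'_{\mu_j}(L)$. The bounds $\|h_{\mu_j}\|_{L^2}\lesssim\mu^{-1/2}$ and $|h'_{\mu}(L)|\lesssim e^{-\mu^{1/3}L/4}$ from Proposition~\ref{prop: h_mu-est-uniform} give the stated estimates for $y_{j+1}$ and $w_{j+1}$. Finally, uniqueness shows that the sum solves \eqref{eq: controlled KdV-single interval-2pi-7-3}.
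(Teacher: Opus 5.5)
Your proposal is correct and follows the route the paper intends; the paper gives no separate proof of this proposition. It transplants the argument of Proposition~\ref{prop: estimates on fixed interval-iteration preparation-2pi} through Lemmas~\ref{lem: defi of coeff-7/3}, \ref{lem: construction of v-7/3pi} and \ref{lem: est for varrho-2pi-7/3}, and uses the reflection identity $\E_{\pm1}(x)=\theta_\pm\E_{\mp1}(L-x)$ to get $\poscals{y(T/2)}{\E_{\pm1}}_{L^2(0,L)}=\bigO(|L-L_0|)$, which cancels the $|L-L_0|^{-1}$ in the last two columns of $M^{-1}$.

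The only correction is bookkeeping in $\mu$, and it affects your final claim. With two projection constraints, the cofactors for those columns are of size $|L-L_0|$ against $|\det M|\sim|L-L_0|^2\mu^{-2}$, so your argument gives $a=2$, not $a=1$ as in the $2\pi$ case. This extra power is absorbed into $e^{-\mu^{1/3}L/4}$ for $w_{j+1}$, but for $y_{j+1}$ with $t$ near $T/2$ it yields $\mu^{3/2}$ rather than the stated $\mu^{1/2}$, so you cannot claim the stated exponent. In addition, the bounds on $\p_x z(\cdot,L)$ and $z(t)$ must carry a term $e^{\frac{2\sqrt{2}K}{\sqrt{T}}}(|\varrho_+|+|\varrho_-|)$ coming from the nonzero target. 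Both corrections are polynomial in $\mu$ and harmless for the subsequent iteration.
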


\begin{prop}[Iteration schemes]\label{prop: iteration schemes-2pi-7/3}
Let $T>0$. Suppose that $0<\delta<\pi$ is sufficiently small. Let $I=[2\pi\sqrt{\frac{7}{3}}-\delta,2\pi\sqrt{\frac{7}{3}}+\delta]$ be a small compact interval such that $I\cap \mathcal{N}=\{2\pi\sqrt{\frac{7}{3}}\}$.  For every $L\in I\setminus\{2\pi\sqrt{\frac{7}{3}}\}$, and $\forall y^0\in H_{\A}$. There exists a function $u(t) \in L^2(0,T)$ such that the solution $y$ to the system 
\begin{equation}\label{eq: controlled-Kdv with control u-2pi7-3}
\left\{
\begin{array}{lll}
    \p_ty+\p_x^3y+\p_xy=0 & \text{ in }(0,T)\times(0,L), \\
     y(t,0)=y(t,L)=0&  \text{ in }(0,T),\\
     \p_x y(t,L)= u(t)&\text{ in }(0,T),\\
     y(0,x)=y^0(x)&\text{ in }(0,L),
\end{array}
\right.    
\end{equation}    
satisfies 
    \begin{equation}
        \lim_{t\rightarrow T^-}\|y(t, \cdot)\|_{L^2(0, L)}= 0.
    \end{equation}
    and there exists a constant $\mathcal{K}$ such that
\begin{equation}\label{eq: est-control-L-infty-2pi-7/3}
 \|u\|_{L^{\infty}(0,T)}\leq \Tilde{\mathcal{K}}e^{\frac{\mathcal{K}}{\sqrt{T}}}\|y^{0}\|_{L^2(0,L)}.   
\end{equation}
\end{prop}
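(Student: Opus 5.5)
The proof of Proposition \ref{prop: iteration schemes-2pi-7/3} will follow the iteration argument of Proposition \ref{prop: iteration schemes-2pi}, with the single-interval estimate now supplied by Proposition \ref{prop: estimates on fixed interval-iteration preparation-2pi-7/3}. It suffices to take $T\in(0,1)$ with $T=2^{-n_0}$. Set $T_n=2^{-n_0}(1-2^{-n})$ and $\mathcal{I}_n=[T_{n-1},T_n)$, and choose $\mu_{1,n}=Q2^{\frac32(n_0+n)}$ and $\mu_{j,n}=j\mu_{1,n}$ for $j=1,\dots,4$, with $Q$ independent of $T$ and $L$. On each $\mathcal{I}_n$ we apply Proposition \ref{prop: estimates on fixed interval-iteration preparation-2pi-7/3} with initial state $y^{n-1}=y(T_{n-1})$ and time length $2^{-(n_0+n)}$. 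This produces $u^n=w_1+\dots+w_5$ and the final state
\[
y^n=\varrho_+\E_s+\varrho_-\E_{-s}+\sum_{j=1}^4c_je^{-\mu_{j,n}2^{-(n_0+n)-1}}h_{\mu_{j,n}}.
\]
By Lemma \ref{lem: construction of v-7/3pi}, this final state lies in $H_{\A}$, and it is real since $\varrho_+=\overline{\varrho_-}$. This is what allows the scheme to be restarted on $\mathcal{I}_{n+1}$, with all constants independent of $L$.

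The key estimate is the contraction $\|y^n\|\le C\,e^{-c\mu_{1,n}2^{-(n_0+n)}}2^{3(n_0+n)}\|y^{n-1}\|$. The modulated part obeys this directly by the decay $e^{-\mu_j(t-T/2)}$ and the bound $|c_j|\lesssim \mu/T^3$. The $\varrho_\pm$ part needs more care. Lemma \ref{lem: est for varrho-2pi-7/3} only gives uniform boundedness of $\varrho_\pm$ for fixed $c_j$. We need the refined version, analogous to the $2\pi$ case, namely $|\varrho_\pm|\lesssim T^{-3}e^{-\mu T/4}\|y^0\|$. To obtain it, use the explicit formula for $\varrho_\pm$ from Lemma \ref{lem: construction of v-7/3pi}. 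Each summand carries the factor $e^{-\mu_jT/2}$. The prefactor $(\zeta_+^2+\lambda_s^2)(\zeta_-^2+\lambda_s^2)/(2\ii\lambda_s\E_s'(L))$ stays bounded uniformly in $L$, because $s\neq\pm1$ is a hyperbolic or non-degenerate index, so $|\E_s'(L)|\ge\gamma$ by Proposition \ref{prop: Low-frequency behaviors: uniform estimates}, $|\lambda_s|$ is bounded below, and $\zeta_\pm$ converge to $\ii\lambda_{c,\pm1}\neq\pm\ii\lambda_s$. Finally, $|c_j|\lesssim\mu^{N}T^{-3}\|y^{n-1}\|$ holds uniformly. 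Here the determinant in Lemma \ref{lem: defi of coeff-7/3} degenerates only like $|\E_1'(L)|^2\lambda_1\sim|L-L_0|^2$, and this is compensated on the right-hand side because $\langle z,\E_{\pm1}\rangle=\langle z,\G_{c,\pm1}\rangle+\bigO(|L-L_0|)\|z\|$ and $z\in H_{\A}$ is orthogonal (in the $\poscalr{\cdot}{\cdot}$ sense) to $\F_{\zeta_\pm}=\G_{c,\pm1}+\bigO(|L-L_0|)$. This mirrors the treatment of $\langle y(T/2),E_1\rangle$ in the $2\pi$ case.

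I expect this uniform bound on $c_j$ to be the main obstacle. The matrix entries $\overline{\E'_{\pm1}(L)}/(\mu_j\pm\ii\lambda_1)$ are all of size $|L-L_0|$, while the third and fourth right-hand entries are only $\bigO(|L-L_0|)$ after the orthogonality cancellation. One must therefore check, by Cramer's rule, that the cofactors contribute another factor $|L-L_0|$ so that the net power is nonnegative. I would first divide rows three and four by $\overline{\E'_{\pm1}(L)}$, which reduces the system to a perturbation of a Cauchy-type Vandermonde matrix in the variables $\mu_j\pm\ii\lambda_1$. The condition that $\lambda_1$ stays away from $0$ (Type III, $\lambda_{c,1}\ne0$) then keeps the two rows independent uniformly in $L$.

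Once the contraction holds, induction gives $\|y^n\|\le \exp\!\big(-cQ2^{n_0/2}(2^{n/2}-1)\big)\|y^0\|$ for $Q$ large; the choice of $Q$ absorbs the polynomial factors $2^{3(n_0+n)}\mu^N$ and the factor $e^{2\sqrt2K2^{(n_0+n)/2}}$. Hence $\|y(t)\|\to0$ as $t\to T^-$. The controls satisfy $\|u^n\|_{L^\infty}\le 2e^{-cQ2^{n_0/2}(2^{(n-1)/2}-1)}\|y^0\|$ for $n>1$, and the first step gives $\Tilde{\mathcal{K}}e^{\mathcal{K}/\sqrt T}\|y^0\|$. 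Summing over $n$ then yields \eqref{eq: est-control-L-infty-2pi-7/3}.
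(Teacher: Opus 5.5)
Your proposal is correct and takes the same route the paper intends: the dyadic iteration of Proposition~\ref{prop: iteration schemes-2pi}, driven by Proposition~\ref{prop: estimates on fixed interval-iteration preparation-2pi-7/3}. Your extra steps fill in what the paper leaves implicit for this length, namely the $e^{-\mu_j T/2}$ decay of $\varrho_\pm$ read off from the explicit formula, and the $L$-uniform bound on $c_j$ obtained by dividing rows three and four by $\overline{\E'_{\pm1}(L)}$, which leaves a Cauchy--Vandermonde matrix whose determinant is a nonzero multiple of $\lambda_1\to\lambda_{c,1}\neq0$. The one step you should write out rather than leave to the analogy with the $2\pi$ case is the reflection identity $\G_{c,1}(L_0-x)=\alpha\,\overline{\G_{c,1}(x)}$ with $|\alpha|=1$; this identity turns $\poscalr{z}{\F_{\zeta_\pm}}=0$ into $|\poscals{z}{\E_{\mp1}}_{L^2(0,L)}|\lesssim|L-L_0|\,\|z\|_{L^2(0,L)}$.
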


\subsection{Sketch proof of Theorem \ref{thm: main theorem linear version}: general case}\label{sec: general case-proof}
Recall the classification in Section \ref{sec: Sharp spectral analysis and classification of critical lengths}. Our proof of Theorem \ref{thm: main theorem linear version} is divided into the following three distinguished cases.
\begin{enumerate}
    \item Case: $L_0\in\mathcal{N}^1$. \\
In this situation, we are exactly in the same case as Subsection \ref{sec: Around Type I unreachable pair}. There is a unique pair $(k,l)$ satisfying that $k=l$. We just repeat the procedure presented in Subsection \ref{sec: Around Type I unreachable pair}, and for the directions $\{E_m\}$ generating the second transition map, we have
    \begin{align*}
    E_0\sim2\ii\sin{k x}.
    \end{align*}
    \item Case: $L_0\in\mathcal{N}^3$.\\
    In this situation, we have all pairs $(k,l)$ satisfying that $k\equiv l\mod 3$ and we may have a special pair $k=l$. In Subsection \ref{sec: Around Type I unreachable pair}, we present how we adjust the construction of the bi-orthogonal family for $k=l$. Moreover, in Subsection \ref{sec: Around Type II unreachable pair}, we show how we compensate for the unreachable pair of Type II. Since we have a finite number of pairs to deal with, we can construct a suitable bi-orthogonal family $\{\vartheta_j\}_{j\in\mathcal{J}}$ with all the uniform estimates in Subsection \ref{sec: Around Type I unreachable pair} and Subsection \ref{sec: Around Type II unreachable pair} holding true. There are two possibilities:
    \begin{itemize}
        \item If the dimension $N_0$ is odd, this means that we include a special pair $k=l$ in this case. For the directions $\{E_m\}$ generating the second transition map, we have
    \begin{align*}
    E_0\sim2\ii\sin{k x}, &\text{ as the }2\pi \text{ case;}\\
    E_m\sim2\Re{\widetilde{\G}_m},E_{-m}\sim 2\ii\Im{\widetilde{\G}_m}, &\text{ similar to the }2\pi\sqrt{7} \text{ case.}
    \end{align*}
    Then, $z^T=\sum_{|j|\leq\frac{N_0-1}{2}}\varrho_mE_m$ is real-valued.  Then we repeat the revised transition-stabilization method based on this well-prepared bi-orthogonal family $\{\vartheta_j\}_{j\in\mathcal{J}}$ and we are able to conclude in this situation.
    \item If $N_0$ is even, then in this case, we only have Type II unreachable pairs. Hence, we just repeat the procedure in Subsection \ref{sec: Around Type II unreachable pair}. For the directions $\{E_m\}$ generating the second transition map, we only have
    \begin{align*}
    E_m\sim2\Re{\widetilde{\G}_m},E_{-m}\sim 2\ii\Im{\widetilde{\G}_m}, &\text{ similar to the }2\pi\sqrt{7} \text{ case.}
    \end{align*}
    Then, $z^T=\sum_{0<|j|\leq\frac{N_0}{2}}\varrho_mE_m$ is real-valued.
    \end{itemize}
    \item Case: $L_0\in\mathcal{N}^2$.\\
    In this case, we only have Type III unreachable pairs. We just repeat the procedure in Subsection \ref{sec: Around Type III unreachable pair}.
    
    In this situation, we do not have the Type 2 eigenfunctions. For the directions $\{E_m\}$ generating the second transition map, they are just real parts and imaginary parts of hyperbolic eigenfunctions.
    \begin{align*}
    E_m\sim2\Re{\E_m},E_{-m}\sim 2\ii\Im{\E_m}, &\text{ similar to the }2\pi\sqrt{\frac{7}{3}} \text{ case}, |j|\geq \frac{N_0}{2}.
    \end{align*}
    Then, $z^T=\sum_{\frac{N_0}{2}+1\leq|j|\leq N_0}\varrho_mE_m$ is real-valued.
\end{enumerate}

\section{Nonlinear case}
\subsection{Existence and smoothness of the invariant manifold}
This section is devoted to showing the existence and smoothness of the invariant manifold for the nonlinear KdV system \eqref{eq: KdV system-stability-intro} by following the ideas given in
\cite{Minh-Wu2004,Chu-Coron-Shang}. The first step is to show
that the nonlinear perturbation has a small global Lipschitz constant. To that end, we modify the nonlinear part of the original system \eqref{eq: KdV system-stability-intro} by using some smooth cut-off mapping, and consider the following equation
\begin{equation}
\left\{
\begin{array}
[c]{l}%
\p_t y+\p_x^3y+\p_x y+\Phi_{\varepsilon}(\left\Vert y\right\Vert _{L^{2}%
(0,L)})y\p_xy=0,\\
y(t,0)=y(t,L)=\p_x y(t,L)=0,\\
y(0,x)=y_{0}(x)\in L^{2}(0,L).
\end{array}
\right.  \label{new system}%
\end{equation}
Here $\varepsilon>0$ is small enough,  and
$\Phi_{\varepsilon}:\left[
0,+\infty\right)  \rightarrow\left[  0,1\right]  $ is defined by%
\[
\Phi_{\varepsilon}\left(  x\right)  =\Phi\left(
\frac{x}{\varepsilon}\right) ,\,\forall x\in\lbrack0,+\infty),
\]
where $\Phi\in C^{\infty}\left(  \left[  0,+\infty\right)  ;\left[
0,1\right]  \right)  $ satisfies
\[
\Phi(x)=\left\{
\begin{array}
[c]{l}%
1,\text{ when }x\in [  0,\displaystyle\frac{1}{2}]  ,\\
\\
0,\text{ when }x\in\left[  1,+\infty\right)  ,
\end{array}
\right.
\]
and
\[
\Phi^{\prime}\leq0.
\]
It can be readily checked that
\begin{align}
&\Phi_{\varepsilon}(x)=1,\text{ when }x\in [0,\displaystyle\frac{1}{2}] ,
\nonumber\\
&\Phi_{\varepsilon}(x)=0,\text{ when }x\in\left[  \varepsilon,+\infty\right)  .
\label{Phi=0}%
\end{align}
Moreover, there exists some constant $C>0$ such that
\begin{equation}
0\leq-\Phi_{\varepsilon}^{\prime}(x)\leq\frac{C}{\varepsilon},\quad\forall
x\in\left[  0,+\infty\right)  . \label{derivative of Phi}%
\end{equation}
In (\ref{derivative of Phi}) and in the following, $C$ denotes various
positive constants, which may vary from line to line, but do not depend on
$\varepsilon\in\left(  0,1\right]  $ and $y_{0}\in L^{2}(0,L).$

For the well-posedness of \eqref{new system}, we prove the following proposition on the global (in positive
time) existence and uniqueness of the solution to system (\ref{new system}).

\begin{prop}
\label{Glocal well-posedness}For every $y_{0}\in L^{2}\left(
0,L\right) $, there exists a unique mild solution $
y\!\in\!C([0,+\infty);\!L^{2}( 0,L) )\cap L_{loc}^{2}\left(
[0,+\infty );H_{0}^{1}\left(  0,L\right)  \right) $ of \eqref{new
system}. Moreover, there exists $C>0$ such that for every
$\varepsilon>0,$ for every $y_{0}\in L^{2}\left(  0,L\right)  $ and
for every $T>0,$ the unique solution of (\ref{new system}) satisfies
\begin{equation} \left\Vert y\right\Vert _{L^{2}\left(
0,T;H_{0}^{1}\left(  0,L\right) \right)
}^{2}\leq\frac{8T+2L}{3}\left\Vert y_{0}\right\Vert _{L^{2}\left(
0,L\right)  }^{2}+CT\left\Vert y_{0}\right\Vert
_{L^{2}\left(
0,L\right)  }^{4}. \label{Prop-local well-posedness-inequality}%
\end{equation}
\end{prop}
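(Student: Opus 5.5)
We prove Proposition \ref{Glocal well-posedness} with the classical scheme of Rosier and of Chu--Coron--Shang: local existence by a fixed point in the Kato-smoothing space, then the a priori estimate \eqref{Prop-local well-posedness-inequality} to globalize. Set
\[
\mathcal{B}_T:=C([0,T];L^2(0,L))\cap L^2(0,T;H_0^1(0,L)),
\]
normed by the sum of the two norms. By Rosier's Kato-type smoothing (recalled before Proposition \ref{prop: Xiang-Krieger gain 1}), the semigroup $S(t)$ generated by $\A$ maps $L^2$ boundedly into $\mathcal{B}_T$. Moreover, the Duhamel map $f\mapsto \int_0^t S(t-s)f(s)\,ds$ is bounded from $L^1(0,T;L^2(0,L))$ into $\mathcal{B}_T$, with norm uniform for $T\le 1$. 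We write the mild formulation
\[
y(t)=S(t)y_0-\int_0^t S(t-s)\bigl[\Phi_\varepsilon(\|y(s)\|_{L^2})\,y\p_x y(s)\bigr]ds .
\]

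For the fixed point we use the Gagliardo--Nirenberg bound
\[
\|y\p_x y\|_{L^1(0,T;L^2)}\le C\,T^{1/4}\,\|y\|_{C(L^2)}^{1/2}\,\|y\|_{L^2(H^1)}^{3/2},
\]
together with the analogous bound for differences. The cut-off needs care. We split
\[
\Phi_\varepsilon(\|y\|)\,y y_x-\Phi_\varepsilon(\|z\|)\,z z_x=\Phi_\varepsilon(\|y\|)\,(yy_x-zz_x)+\bigl(\Phi_\varepsilon(\|y\|)-\Phi_\varepsilon(\|z\|)\bigr)\,z z_x .
\]
For the second term we use \eqref{derivative of Phi}, which gives $|\Phi_\varepsilon(a)-\Phi_\varepsilon(b)|\le C\varepsilon^{-1}|a-b|$. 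We also use that $\Phi_\varepsilon(\|z\|)z z_x$ vanishes unless $\|z\|_{L^2}\le\varepsilon$. This gives a contraction on a ball of $\mathcal{B}_T$ for $T$ small, depending on $\|y_0\|_{L^2}$ and $\varepsilon$. Uniqueness in the stated class follows from the same difference estimate combined with Gronwall.

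The a priori estimates are done first for smooth data $y_0\in D(\A)$, then extended by density and continuous dependence. Multiplying by $y$ and integrating gives
\[
\frac{d}{dt}\|y\|^2_{L^2}=-|\p_x y(t,0)|^2 .
\]
The nonlinear term $\int y^2 y_x=\frac13[y^3]_0^L=0$ drops out, and so does the cut-off factor, since it is a scalar. Hence $\|y(t)\|_{L^2}\le\|y_0\|_{L^2}$, and the local solution extends to all $t\ge0$. For the $H^1$ bound we multiply by $x\,y$ and integrate over $(0,T)\times(0,L)$. The third-order term gives $\frac32\int\!\!\int y_x^2$, the linear term gives $-\frac12\int\!\!\int y^2$, and the time derivative gives the boundary-in-time terms $\frac12\int x y^2\big|_0^T$. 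The nonlinear term gives
\[
-\Phi_\varepsilon\int_0^L x\,y^2 y_x\,dx=\frac{\Phi_\varepsilon}{3}\int_0^L y^3\,dx ,
\]
which we bound by $\|y\|_{L^\infty}\|y\|^2_{L^2}$. Since $y(t,0)=0$, we have $\|y\|_{L^\infty}\le\sqrt L\,\|y_x\|_{L^2}$. Young's inequality then absorbs $\frac32\int\!\!\int y_x^2$ at the cost of $C\int_0^T\|y\|_{L^2}^4\le CT\|y_0\|^4$. With $0\le\Phi_\varepsilon\le1$, this yields \eqref{Prop-local well-posedness-inequality} with constants independent of $\varepsilon$, namely $\frac{8T+2L}{3}\|y_0\|^2+CT\|y_0\|^4$ after tracking the coefficients.

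The main obstacle is making the constants genuinely independent of $\varepsilon$. The Lipschitz constant of the cut-off blows up like $\varepsilon^{-1}$, and it only enters the local existence time, not the final estimate. So we take care to derive \eqref{Prop-local well-posedness-inequality} purely from the energy identities, which use only $0\le\Phi_\varepsilon\le1$. The fixed-point step then serves only existence and uniqueness, and the global existence uses the $L^2$ nonincrease to iterate it.
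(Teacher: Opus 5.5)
Your proposal is correct. The paper states this proposition without proof (it is taken over from Chu--Coron--Shang), and your argument is the standard one behind it. That argument is a contraction in $C([0,T];L^2(0,L))\cap L^2(0,T;H_0^1(0,L))$ based on Rosier's Kato smoothing, globalization through the dissipation identity $\frac{d}{dt}\|y\|_{L^2}^2=-|\p_x y(t,0)|^2$, and the multiplier $xy$ combined with $\|y\|_{L^\infty}\le\sqrt{L}\,\|\p_x y\|_{L^2}$ and $0\le\Phi_\varepsilon\le 1$ for the $\varepsilon$-independent bound. Your bookkeeping (absorbing half of $\frac32\int_0^T\!\int_0^L|\p_x y|^2$ and adding $\|y\|_{L^2(0,T;L^2)}^2\le T\|y_0\|_{L^2}^2$) even gives the smaller coefficient $\frac{5T+2L}{3}$, which implies the stated one.
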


\subsubsection{Properties of the semigroup generated by
(\ref{new system})}
Let
\(
\mathcal{S}(t):L^{2}\left(  0,L\right)  \rightarrow L^{2}\left(  0,L\right)
\)
be the semigroup on $L^{2}\left(  0,L\right)  $ defined by
\(
S_N(t)(y_{0}):=y\left(  t,x\right) 
\), where $y\left(  t,x\right)  $ is the unique solution of (\ref{new
system}) with respect to the initial value $y_{0}\in L^{2}\left(
0,L\right) $. Let $T>0$. Then, for every $t\in\left[ 0,T\right]  $,
$S_N(t)=S(t)+R(t)$, or equivalently, $y(t,x)=w\left(  t,x\right)  +\alpha\left(  t,x\right)$ where, as above, for every $y_{0}\in L^{2}\left(  0,L\right)  $, $w\left(
t,\cdot\right)  :=S\left(  t\right)  y_{0}$ is the unique solution of
\eqref{eq: linear KdV-stability-intro}.
and $\alpha\left(  t,\cdot\right)  :=R(t)y_{0}$ is the unique solution of
\[
\left\{
\begin{array}
[c]{l}%
\p_t\alpha+\p_x^3\alpha+\p_x\alpha+\Phi_{\varepsilon}\left(  \left\Vert
w+\alpha\right\Vert _{L^{2}\left(  0,L\right)  }\right)  \left(  \p_x w%
\alpha+\p_x\alpha w+w\p_x w +\alpha\p_x\alpha\right)  =0,\\
\alpha\left(  t,0\right)  =\alpha\left(  t,L\right)=\p_x\alpha\left(
t,L\right)  =0,\\
\alpha\left(  0,x\right)  =0.
\end{array}
\right.
\]
Let $M:=\oplus_{|j|\leq N_0}\textrm{Span}\{\F_{\zeta_j}: \zeta_j=\ii\lambda_{c,j}(k,l)+\bigO((L-L_0)^2)\}$
where $\F_{\zeta_j}$ is an eigenfunction of $\A$ corresponding to the eigenvalue $\zeta_j$. Then we can do the following
decomposition of $L^{2}\left(  0,L\right):=H\oplus M $, where $H:=\{f\in D(\A):\poscalr{f}{\F_{\zeta_j}}_{0,L}=0\}$. 
Let $P_M$ and $P_{H}$ be the canonical projections on $M$ and $H$, respectively, with $P_H+P_M=\mathrm{Id}$. It is clear that $S(t)$ leaves $M$ and $H$ invariant and
$S(t)$ commutes with $P_H$ and $P_M$. Denote by $S_{M}(t):M\rightarrow
M$ and $S_{H}(t):H\rightarrow H$ the restriction of
$S(t)$ on $M$ and $H$ respectively. Then, by Theorem \ref{thm: main theorem linear version}, there exists $C_0>0$ such that $\left\Vert S_{H}(t)\right\Vert
\leq e^{-C_0 t}, \forall t\geq0.$

\subsubsection{ Global Lipschitzianity of the map $R\left(  t\right)  :L^{2}\left(
0,L\right)  \rightarrow
L^{2}\left(  0,L\right) $}

The aim of this part is to prove and estimate the global
Lipschitzianity of the map $R\left(  t\right)  :L^{2}\left(
0,L\right)  \rightarrow
L^{2}\left(  0,L\right)  $. To that end, we consider%
\[
\left\{
\begin{array}
[c]{l}%
\p_t\alpha+\p_x^3\alpha+\p_x\alpha+\Phi_{\varepsilon}\left(  \left\Vert
w+\alpha\right\Vert _{L^{2}\left(  0,L\right)  }\right)  \left(  \p_x w%
\alpha+\p_x\alpha w+w\p_x w +\alpha\p_x\alpha\right)  =0,\\
\alpha\left(  t,0\right)  =\alpha\left(  t,L\right)=\p_x\alpha\left(
t,L\right)  =0,\\
\alpha\left(  0,x\right)  =0.
\end{array}
\right.
\]
and%
\[
\left\{
\begin{array}
[c]{l}%
\p_t\Tilde{\alpha}+\p_x^3\Tilde{\alpha}+\p_x\Tilde{\alpha}+\Phi_{\varepsilon}\left(  \left\Vert
\Tilde{w}+\Tilde{\alpha}\right\Vert _{L^{2}\left(  0,L\right)  }\right)  \left(  \p_x \Tilde{w}%
\Tilde{\alpha}+\p_x\Tilde{\alpha} \Tilde{w}+\Tilde{w}\p_x \Tilde{w} +\Tilde{\alpha}\p_x\Tilde{\alpha}\right)  =0,\\
\Tilde{\alpha}\left(  t,0\right)  =\Tilde{\alpha}\left(  t,L\right)=\p_x\Tilde{\alpha}\left(
t,L\right)  =0,\\
\Tilde{\alpha}\left(  0,x\right)  =0.
\end{array}
\right.
\]
where $w$ is the solution of
\[
\left\{
\begin{array}
[c]{l}%
\p_tw+\p_x^3w+\p_xw=0,\\
w\left(  t,0\right)  =w\left(  t,L\right)=\p_xw\left(
t,L\right)  =0,\\
w\left(  0,x\right)  =0.
\end{array}
\right.
\]
and $\Tilde{w}$ is the solution of
\[
\left\{
\begin{array}
[c]{l}%
\p_t\Tilde{w}+\p_x^3\Tilde{w}+\p_x\Tilde{w}=0,\\
\Tilde{w}\left(  t,0\right)  =\Tilde{w}\left(  t,L\right)=\p_x\Tilde{w}\left(
t,L\right)  =0,\\
\Tilde{w}\left(  0,x\right)  =0.
\end{array}
\right.
\]
Set
\begin{align*}
\Delta & :=\alpha-\Tilde{\alpha},\quad
y:=\alpha+w,\quad\Tilde{y}:=\Tilde
{w}+\Tilde{\alpha},\\
\Phi_{1}  &  :=\Phi_{\varepsilon}\left(  \left\Vert y\right\Vert
_{L^{2}\left(  0,L\right)  }\right)
,\quad\Phi_{2}:=\Phi_{\varepsilon}\left( \left\Vert
\Tilde{y}\right\Vert _{L^{2}\left(  0,L\right)  }\right)  .
\end{align*}
Then we obtain%
\begin{equation}
\left\{
\begin{array}
[c]{l}%
\p_t\Delta+\p_x^3\Delta+\p_x\Delta=-\Phi_{1}y\p_xy+\Phi_{2}\Tilde
{y}\p_x\Tilde{y}=\Phi_{1}\left[  -\left(  \alpha+w\right) \p_x  \Delta
-\left(  \p_x\Tilde{\alpha}+\p_x w\right) \Delta-\Tilde{\alpha
}\p_x\left(  w-\Tilde{w}\right)  \right. \\
 \quad \quad\quad\quad\quad\quad\quad \quad\,\left. -\p_x\Tilde{\alpha}\left( w-\Tilde{w}\right)
-w\p_xw+\Tilde{w}\p_x\Tilde{w}\right] -\left(  \Phi_{1}-\Phi
_{2}\right)  \left(  \Tilde{\alpha}\p_x\Tilde{w}+
\Tilde{w}\p_x\Tilde{\alpha}+\Tilde{w}\p_x\Tilde{w}+\Tilde{\alpha}\p_x
\Tilde{\alpha}\right)  ,\\
\Delta\left(  t,0\right)  =\Delta\left(  t,L\right) =\p_x\Delta\left(
t,L\right)  =0,\\
\Delta\left(  0,x\right)  =0.
\end{array}
\right.  \label{lambda1}%
\end{equation}
Moreover, by the definition of $\Phi_{1},\,\Phi_{2}$ and
(\ref{Phi=0}), we get
\begin{equation}
\Phi_{1}=\Phi_{2}=0,\text{ \ \ \ }\forall\left\Vert y\right\Vert
_{L^{2}\left(  0,L\right)  }\geq\varepsilon,\, \forall \left\Vert \Tilde{y}\right\Vert _{L^{2}\left(
0,L\right)  }\geq\varepsilon. \label{Phi1=Phi2=0}%
\end{equation}

Now we are in a position to prove the following proposition on the global
Lipschitzianity of the map $R(t)$. With our notation, we have
\(R(t)y_{0}-R(t)\Tilde{y}_{0}=\alpha\left(  t,\cdot\right)  -\Tilde
{\alpha}\left(  t,\cdot\right)  =\Delta\).

\begin{prop}
\label{global Lipschitz}Let $T>0$. There exists $\varepsilon_{0}\in(0,1]$ and
$\tilde C :(0,\varepsilon_{0}]\rightarrow(0,+\infty)$ such that
\begin{gather}
\label{estRliptendvers0}\left\Vert \Delta\right\Vert _{L^{2}\left(
0,L\right)  }\leq\tilde C\left(  \varepsilon\right)  \left\Vert y_{0}%
-\Tilde{y}_{0}\right\Vert _{L^{2}\left(  0,L\right)
},\quad \forall\,\Tilde{y}_{0},y_{0}\in L^{2}\left(  0,L\right)
,\forall t\in\left[  0,T\right]  ,
\forall\varepsilon\in(0,\varepsilon_{0}] ,\\
\tilde C \left(  \varepsilon\right)  \rightarrow0\text{ as }\varepsilon
\rightarrow0^{+}.
\end{gather}

\end{prop}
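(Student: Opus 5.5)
We prove the bound by an energy estimate on the difference equation \eqref{lambda1} for $\Delta$. The strategy is to make every nonlinear coefficient carry a factor $\varepsilon$ (or $\varepsilon^{1/2}$) through the cut-off, and then close the estimate with Gronwall. This follows the scheme of \cite{Chu-Coron-Shang}, adapted to our boundary conditions $\Delta(t,0)=\Delta(t,L)=\p_x\Delta(t,L)=0$.

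\textbf{Reductions.} The truncation is the whole point. By \eqref{Phi1=Phi2=0} we may assume at least one of $\|y(t)\|_{L^2},\|\tilde y(t)\|_{L^2}$ is below $\varepsilon$; otherwise the right-hand side vanishes at that time. Without loss of generality take $\|y(t)\|<\varepsilon$. The free parts satisfy
\[
\|w(t)\|\le\|y_0\|,\qquad \|w-\tilde w\|_{C([0,T];L^2)}+\|w-\tilde w\|_{L^2(0,T;H^1)}\le C\|y_0-\tilde y_0\|,
\]
by the energy identity and Kato smoothing (Proposition \ref{prop: Xiang-Krieger gain 1}). The large-data case, where the cut-off is off, needs care. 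The standard device is that once $\|y_0\|$ is large, $\|y(t)\|$ stays large, since Proposition \ref{Glocal well-posedness} and $L^2$ nonincrease of the cut-off flow give $\|y(t)\|\le\|y_0\|$. So we split according to whether $\|y_0\|,\|\tilde y_0\|\le 2\varepsilon$. If both exceed this, then on the sub-interval where both solutions exceed $\varepsilon$ we have $\Delta$ solving the linear equation, and we concatenate.

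In the genuinely small regime, \eqref{Prop-local well-posedness-inequality} gives
\[
\|y\|_{L^2(0,T;H^1_0)}+\|\alpha\|_{L^2(0,T;H^1_0)}\le C(T)\,\varepsilon,
\]
and similarly for $\tilde y,\tilde\alpha$ on the relevant times.

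\textbf{Energy estimate.} Multiply \eqref{lambda1} by $\Delta$ and integrate over $(0,L)$. The dispersive part gives $\frac12\frac{d}{dt}\|\Delta\|^2+\frac12|\p_x\Delta(t,0)|^2\le \text{RHS}$. The bilinear terms are handled with
\[
\int (\alpha+w)\p_x\Delta\,\Delta=-\tfrac12\int \p_x(\alpha+w)\Delta^2,
\]
bounded by $\|\p_x y\|_{L^2}\|\Delta\|_{L^\infty}\|\Delta\|$ and then by Gagliardo–Nirenberg $\|\Delta\|_{L^\infty}\le C\|\Delta\|^{1/2}\|\Delta\|_{H^1}^{1/2}$. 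The energy identity does not control $\|\Delta\|_{H^1}$, so we instead run the multiplier $x\Delta$ simultaneously, Rosier's Kato trick, to obtain $\int_0^T\|\p_x\Delta\|^2$. Terms involving $w-\tilde w$ are sources of size $\varepsilon\|y_0-\tilde y_0\|$, with time-integrability supplied by the $L^2H^1$ bounds. The term $(\Phi_1-\Phi_2)(\cdots)$ is controlled by \eqref{derivative of Phi}:
\[
|\Phi_1-\Phi_2|\le \tfrac{C}{\varepsilon}\|y-\tilde y\|\le\tfrac{C}{\varepsilon}\bigl(\|\Delta\|+\|w-\tilde w\|\bigr),
\]
multiplied by $\|\tilde y\,\p_x\tilde y\|_{L^1}\le\|\tilde y\|\|\tilde y\|_{H^1}\lesssim \varepsilon\|\tilde y\|_{H^1}$ on the support. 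The $1/\varepsilon$ cancels one $\varepsilon$, leaving an $\varepsilon\|\tilde y\|_{H^1}$-type coefficient.

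\textbf{Closing and main obstacle.} Collecting everything yields an integral inequality of the form
\[
\|\Delta\|_{C([0,T];L^2)}^2+\|\Delta\|_{L^2(0,T;H^1)}^2\le C(T)\,\varepsilon^{\theta}\Bigl(\|\Delta\|_{C L^2}^2+\|\Delta\|_{L^2H^1}^2+\|y_0-\tilde y_0\|^2\Bigr)
\]
for some $\theta>0$. An equivalent route is the Duhamel/fixed-point formulation with the estimate $\|\int_0^t S(t-s)(fg)_x\,ds\|\le C\|f\|_{L^2H^1}\|g\|_{L^2H^1}$ from \cite{Rosier-1997}. Choosing $\varepsilon_0$ so that $C(T)\varepsilon_0^{\theta}<1/2$ absorbs the $\Delta$ terms and gives \eqref{estRliptendvers0} with $\tilde C(\varepsilon)=C\varepsilon^{\theta/2}\to0$.

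The expected main obstacle is the $(\Phi_1-\Phi_2)$ term. We must show that the $1/\varepsilon$ loss is always compensated by an $\varepsilon^2$ gain, one factor from each of $\|\tilde y\|$ and $\|\tilde y\|_{L^2H^1}$. This requires the $L^2H^1$ bound on $\tilde y$ to be $O(\varepsilon)$ precisely on the times where $\Phi_2\neq0$. We would first try to obtain this by restarting \eqref{Prop-local well-posedness-inequality} from the first time $\|\tilde y\|$ drops below $\varepsilon$.
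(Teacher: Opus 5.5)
The paper states this proposition without proof, deferring to \cite{Chu-Coron-Shang}. Your overall scheme is the standard one: a linear smoothing/energy estimate for \eqref{lambda1}, the bound $|\Phi_1-\Phi_2|\le \frac{C}{\varepsilon}\|y-\tilde y\|_{L^2}$ from \eqref{derivative of Phi} compensated by $\|y\|_{L^\infty_tL^2_x}\|y\|_{L^2_tH^1_x}=O(\varepsilon^2)$, then absorption. However, there is a genuine gap in the regime where exactly one trajectory is inside the $\varepsilon$-ball, which is where the proposition has content.

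First, your reduction rests on a false inference. The bound $\|y(t)\|\le\|y_0\|$ is an upper bound and says nothing about $\|y(t)\|$ staying large. While $\|y\|\ge\varepsilon$ the trajectory is $S(t)y_0$, and $S(T)$ is compact on $L^2$ by the smoothing effect. Hence there are data of arbitrarily large norm with $\|S(T)y_0\|<\varepsilon$, so the cut-off can switch on inside $[0,T]$ for large data. Your split ``both $\le 2\varepsilon$ / both $>2\varepsilon$'' leaves uncovered the mixed case, and also the situation right after your concatenation time.

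Second, consider times where $\|y(t)\|<\varepsilon\le\|\tilde y(t)\|$. There $\Phi_2=0$ but $\Phi_1-\Phi_2=\Phi_1\neq0$, and $\tilde y$ need not be small. Neither need $\tilde w$ be, since it coincides with $\tilde y$ as long as $\Phi_2$ has been off. So your bound $\|\tilde y\,\p_x\tilde y\|_{L^1}\lesssim\varepsilon\|\tilde y\|_{H^1}$ ``on the support'' fails. Likewise the terms $\Phi_1\bigl(-w\p_xw+\tilde w\p_x\tilde w-\dots\bigr)$ of \eqref{lambda1} are not $O(\varepsilon)\|y_0-\tilde y_0\|$. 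The large pieces of the two brackets of \eqref{lambda1} cancel only after being recombined into $-\Phi_1 y\p_xy$. Your closing remark locates the difficulty where $\Phi_2\ne0$, but the problematic times are those where $\Phi_2=0\ne\Phi_1$. Also, with your own choice $\|y(t)\|<\varepsilon$, the cut-off difference should be attached to $y\p_xy$, not to $\tilde y\p_x\tilde y$.

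The missing ingredient is monotonicity of the truncated flow. Multiplying \eqref{new system} by $y$ gives $\frac{d}{dt}\|y(t)\|^2=-|\p_xy(t,0)|^2$, since $\int_0^L y^2\p_xy\,dx=0$. Let $t_1$ (resp.\ $\tilde t_1$) be the first time $\|y\|$ (resp.\ $\|\tilde y\|$) reaches $\varepsilon$, and by symmetry take $t_1\le\tilde t_1$. Then:
\begin{itemize}
\item $\Phi_1=\Phi_2=0$ on $[0,t_1]$, so $\Delta\equiv0$ there. Moreover $w=y$ on $[0,t_1]$ and $\|y\|\le\varepsilon$ on $[t_1,T]$.
\item Restarting \eqref{Prop-local well-posedness-inequality} at $t_1$ gives $\|y\|_{L^2(t_1,T;H^1)}+\|w\|_{L^2(t_1,T;H^1)}\le C(T)\varepsilon$.
\item On $[t_1,\tilde t_1]$ the right-hand side of \eqref{lambda1} is exactly $-\Phi_1y\p_xy$, with $0\le\Phi_1=\Phi_1-\Phi_2\le\frac{C}{\varepsilon}\bigl(\|\Delta\|+\|w-\tilde w\|\bigr)$. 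Hence
\[
\|\Phi_1y\p_xy\|_{L^1(t_1,\tilde t_1;L^2)}\le \frac{C}{\varepsilon}\sup_t\|y-\tilde y\|\,\|y\|_{L^2(t_1,T;H^1)}^2\le C\varepsilon\bigl(\|\Delta\|_{C([0,T];L^2)}+\|y_0-\tilde y_0\|\bigr).
\]
\item On $[\tilde t_1,T]$ all of $y,\tilde y,w,\tilde w,\alpha,\tilde\alpha$ are $O(\varepsilon)$ in $L^\infty_tL^2\cap L^2_tH^1$, and your estimates go through.
\end{itemize}
Feeding these $L^1(0,T;L^2)$ source bounds into the linear KdV estimate closes the argument with $\tilde C(\varepsilon)=C(T)\varepsilon$.
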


\subsubsection{Smoothness of the semigroup}

\begin{lem}
\label{lemma0} Let $\varepsilon>0$ and $T>0$ be given. Then the nonlinear map
$S_N(t)$ defined by the unique solution of (\ref{new system}) is of class
$C^{3}$ from $L^{2}\left(  0,L\right)  $ to $C\left(  \left[  0,T\right]
;L^{2}\left(  0,L\right)  \right)  $. Moreover, its derivative $S_N^{(1)}$ at
$y_{0}\in L^{2}\left(  0,L\right)  $ is given by
\begin{equation}
S_N^{(1)}(y_{0})(h):=\mathcal{D}^{(1)}(y)(h),\, \forall
h\in L^{2}(0,L),
\end{equation}
where $\mathcal{D}^{(1)}(y)(h)$ is defined by the following system \eqref{e1}
with $y=S_N(y_{0})$.
\begin{equation}
\label{e1}%
\begin{cases}
\p_t\Delta+\p_x^3\Delta+\p_x\Delta+\Phi^{\prime}_{\varepsilon}(\|y\|_{L^{2}%
(0,L)})\displaystyle\frac{\int_{0}^{L} y\Delta\, dx}{\|y\|_{L^{2}(0,L)}}y\p_x y
+\Phi_{\varepsilon}(\|y\|_{L^{2}(0,L)})(y\p_x\Delta+\Delta \p_x y)=0,\\
\Delta(t,0)=\Delta(t,L)=\p_x\Delta(t,L)=0,\\
\Delta(0,x)=h(x),
\end{cases}
\end{equation}
\end{lem}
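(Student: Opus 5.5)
The plan is to follow the standard route for smooth dependence of mild solutions on initial data, as in \cite{Chu-Coron-Shang}: view \eqref{new system} as a fixed point problem and apply the implicit function theorem in a suitable Banach space. Fix $T>0$ and set $\mathcal{X}_T:=C([0,T];L^2(0,L))\cap L^2(0,T;H^1_0(0,L))$. Every $y_0\in L^2(0,L)$ produces, by Proposition \ref{Glocal well-posedness}, a unique solution $y\in\mathcal{X}_T$. We write it through Duhamel's formula:
\[
y(t)=S(t)y_0-\int_0^t S(t-s)\bigl[\Phi_\varepsilon(\|y(s)\|_{L^2(0,L)})\,y\p_x y(s)\bigr]\,ds .
\]
Accordingly, define $\mathcal{F}:L^2(0,L)\times\mathcal{X}_T\to\mathcal{X}_T$ by $\mathcal{F}(y_0,y):=y-S(\cdot)y_0+\int_0^{\cdot}S(\cdot-s)\,G(y(s))\,ds$, where $G(y):=\Phi_\varepsilon(\|y\|_{L^2})\,y\p_xy$.

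The first step is the inhomogeneous estimate from Rosier's Kato smoothing \cite[Proposition 3.2]{Rosier-1997}. It states that $f\mapsto\int_0^\cdot S(\cdot-s)f(s)ds$ is bounded from $L^1(0,T;L^2(0,L))$ into $\mathcal{X}_T$. Since $\|y\p_xy\|_{L^1_tL^2_x}\le C\|y\|_{L^2_tH^1}^{2}$ by $H^1\subset L^\infty$, the map $y\mapsto y\p_xy$ is a continuous bilinear map $\mathcal{X}_T\times\mathcal{X}_T\to L^1(0,T;L^2)$. It is therefore $C^\infty$.

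The second step treats the cutoff factor. The map $y\mapsto\|y(\cdot)\|_{L^2}$ is not smooth at $y(t)=0$. However, $\Phi_\varepsilon\equiv1$ near $0$, so $y\mapsto\Phi_\varepsilon(\|y(t)\|_{L^2})=\Psi_\varepsilon(\|y(t)\|^2_{L^2})$ with $\Psi_\varepsilon(r):=\Phi_\varepsilon(\sqrt r)$, and $\Psi_\varepsilon$ is smooth on $[0,\infty)$. Since $y\mapsto\|y(\cdot)\|^2_{L^2}$ is a continuous quadratic map $\mathcal{X}_T\to C([0,T])$, composing with $\Psi_\varepsilon$ gives a $C^\infty$ map into $C([0,T])$. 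Multiplying by the bilinear term then shows that $G:\mathcal{X}_T\to L^1(0,T;L^2)$ is $C^\infty$, in particular $C^3$. Its derivative in direction $\Delta$ is
\[
\Phi'_\varepsilon(\|y\|)\frac{\int_0^L y\Delta\,dx}{\|y\|}\,y\p_xy+\Phi_\varepsilon(\|y\|)(y\p_x\Delta+\Delta\p_xy),
\]
which is exactly the linearized nonlinearity in \eqref{e1}. Hence $\mathcal{F}$ is $C^3$. Moreover, $\p_y\mathcal{F}(y_0,y)=\mathrm{Id}+K_y$ with $K_y\Delta=\int_0^\cdot S(\cdot-s)\,DG(y)\Delta\,ds$.

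The main obstacle is the invertibility of $\mathrm{Id}+K_y$ on $\mathcal{X}_T$, i.e. well-posedness of the linear system \eqref{e1} with $\Delta(0)=h$ and an arbitrary source. The coefficients $y,\p_xy$ are only in $L^2_tH^1_x$ and $L^2_tL^2_x$. The term $y\p_x\Delta+\Delta\p_xy$ is controlled by $C\|y\|_{L^2_tH^1}(\|\Delta\|_{L^2_tH^1}+\|\Delta\|_{L^\infty_tL^2})$ in $L^1_tL^2_x$, restricted to a short interval $[t_0,t_0+\tau]$. There the $L^2(t_0,t_0+\tau;H^1)$ norm of $y$ is small, so $K_y$ is a contraction. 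The nonlocal term $\Phi'_\varepsilon$ contributes at most $C\varepsilon^{-1}\|y\p_xy\|_{L^1(t_0,t_0+\tau;L^2)}\|\Delta\|_{L^\infty L^2}$, which is also small for $\tau$ small. Since $y$ is fixed, $\tau$ depends only on the absolute continuity of $\int\|y\|_{H^1}^2$, so finitely many steps cover $[0,T]$. By a Neumann-series/gluing argument, $\mathrm{Id}+K_y$ is therefore an isomorphism.

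The implicit function theorem now gives that $y_0\mapsto y=S_N(\cdot)y_0$ is $C^3$ from $L^2(0,L)$ into $\mathcal{X}_T\subset C([0,T];L^2)$. Differentiating $\mathcal{F}(y_0,y(y_0))=0$ identifies $S_N^{(1)}(y_0)h$ as the solution $\mathcal{D}^{(1)}(y)(h)$ of \eqref{e1}.
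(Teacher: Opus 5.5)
Your argument is correct, and it is the standard route the paper relies on. The paper gives no proof of its own; it only points to \cite{Zhang1995} and \cite[Theorem 5.4]{Bona-Sun-Zhang}, where smoothness of the KdV flow map is obtained in the same way you propose. That is, one works with the Duhamel formulation in $C([0,T];L^2(0,L))\cap L^2(0,T;H^1_0(0,L))$, using Kato smoothing, the bound $\|y\p_x y\|_{L^1(0,T;L^2(0,L))}\le C\|y\|^2_{L^2(0,T;H^1(0,L))}$, and an implicit-function/contraction argument.

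Your two additions supply exactly the details needed to carry those references over to the truncated system \eqref{new system}. The first is writing $\Phi_\varepsilon(\|y\|_{L^2(0,L)})=\Psi_\varepsilon(\|y\|^2_{L^2(0,L)})$, where $\Psi_\varepsilon$ is smooth because $\Phi_\varepsilon\equiv 1$ near $0$. The second is inverting $\mathrm{Id}+K_y$ by a short-time contraction followed by gluing.
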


\begin{proof}
We refer to \cite{Zhang1995} and \cite[Theorem 5.4]{Bona-Sun-Zhang} for a
detailed argument in related circumstances.
\end{proof}

\subsubsection{Invariant manifolds}
Combining \cite[Remark 2.3]{Minh-Wu2004}, and Proposition \ref{global Lipschitz}, we are in
a position to apply \cite[Theorem 2.19]{Minh-Wu2004} and \cite[Theorem
2.28]{Minh-Wu2004}. This gives, if $\varepsilon>0$ is small enough which will
be always assumed from now on, the existence of an invariant manifold
for (\ref{new system}) which is of class $C^{3}$. More precisely, there exists a map
$g:M\rightarrow M^{\bot}$ of class $C^{3}$ satisfying $g(0)=0$ and
$g^{\prime}(0)=0$, such that, if
\[
G:=\left\{  x_{1}+g\left(  x_{1}\right)  :x_{1}\in M\right\}  ,
\]
then, for every $y_{0}\in G$ and for every $t\in[0,+\infty)$, $\mathcal{S}%
(t)y_{0}\in G$. Moreover, Theorem \ref{thm: nonlinear stability result} holds, if (and only if),
\begin{gather}
\label{dyncenter}\mathcal{S}(t)y_{0}\rightarrow0 \text{ as } t \rightarrow
+\infty, \, \forall y_{0}\in G \text{ such that } \|y_{0}\|_{L^{2}(0,L)}
\text{ is small enough.}%
\end{gather}
(For this last statement, see (2.42) in \cite{Minh-Wu2004}.) We
prove \eqref{dyncenter} in the next section.

\subsection{Dynamic on the invariant manifold: proof of Theorem \ref{thm: nonlinear stability result}}\label{sec: Dynamic on the center manifold}
In this section, we prove \eqref{dyncenter}, which concludes the
proof of Theorem \ref{thm: nonlinear stability result}.

\begin{proof}[Proof of Theorem \ref{thm: nonlinear stability result}]
Let $y_{0}\in \mathcal{M}$. Let, for $t\in[0,+\infty)$, $y(t)(x):=y(t,x):=(\mathcal{S}%
(t)y_{0})(x)$. We write
\begin{equation}
y(t,x)=\sum_{j}p_j(t)\F_{\zeta_j}(x)+y^{\star}(t,x)\text{,} \label{decomposition-y}%
\end{equation}
where $y^{\star}(t,x)\in
M^{\bot}$. Let $P(t)=\left(\begin{array}{c}
     p_1(t)  \\
     \vdots\\
     p_{N_0}(t)
\end{array}\right)$, $\F(x)=\left(\begin{array}{c}
     \F_{\zeta_{1}}(x)  \\
     \vdots\\
    \F_{\zeta_{N_0}}(x)
\end{array}\right)$ and the Gramian matrix with respect to the duality $\poscalr{\cdot}{\cdot}_{L^2(0,L)}$ be $D(L):=\left(\poscalr{\F_{\zeta_j}}{\F_{\zeta_k}}_{L^2(0,L)}\right)_{jk}$. Thus, the coefficients $P(t)$ is defined by the matrix equation $D(L)P(t)=\left(\begin{array}{c}
     \poscalr{y}{\F_{\zeta_1}}_{L^2(0,L)}  \\
     \vdots\\
     \poscalr{y}{\F_{\zeta_{N_0}}}_{L^2(0,L)}
\end{array}\right)$. We notice that $D(L)=\mathrm{Id}+\bigO(|L-L_0|)$. Therefore, we obtain $p_j(t)=\poscalr{y}{\F_{\zeta_j}}_{L^2(0,L)}\left(1+\bigO(|L-L_0|)\right)$. 
\begin{align}
\frac{dp_j(t)}{dt}  &  =\int_{0}^{L}\p_t y(t,x)  \overline{\F_{\zeta_j}(L-x)}dx\nonumber\\
&=\int_{0}^{L}\left( -\p_x^3y(t,x)-\p_xy(t,x)- y(t,x) \p_x y(t,x)\right)  \overline{\F_{\zeta_j}(L-x)}
(x)dx+\bigO(|L-L_0|)\nonumber\\
&  =-\int_{0}^{L}y(t,x)  \overline{(\p_x^3+\p_x)\F_{\zeta_j}(L-x)}
(x)dx-\int_{0}^{L} y \p_x y \overline{\F_{\zeta_j}(L-x)}
(x)dx+\bigO(|L-L_0|)\nonumber\\
&  =-\overline{\zeta_j}p_j(t)-\frac{1}{2}\int_{0}^{L} y^2 \overline{\p_x \F_{\zeta_j}(L-x)}dx+\bigO(|L-L_0|^2)\nonumber\\
&=-\overline{\zeta_j}p_j(t)+\bigO(|L-L_0|^2). \label{1}
\end{align}
This concludes the proof of \eqref{dyncenter} and the proof of Theorem
\ref{thm: nonlinear stability result}.
\end{proof}

\appendix

\section{Spectral analysis of $\B$ and $\A$}\label{sec: appendix-spectrum-A-B}
In this section, we demonstrate the results in Section \ref{sec: spectral analysis of A}.
\subsection{Asymptotic analysis on the operator $\B$}
\begin{proof}[Proof of Proposition \ref{prop: type-1-2}]
Let $\G$ satisfy the equation 
\begin{equation}
\left\{
\begin{array}{ll}
     \G'''+\G'+\ii\lambda_{c}\G=0,  & \text{ in }(0,L_0),\\
     \G(0)=\G(L_0)=0,& \\
     \G'(0)=\G'(L_0),& 
\end{array}
\right.       
\end{equation}
with $\lambda_c=\frac{(2k+l)(k-l)(2l+k)}{3\sqrt{3}(k^2+k l+l^2)^{\frac{3}{2}}}$ and $L_0=2\pi\sqrt{\frac{k^2+k l+l^2}{3}}$. Then the general form of eigenfunctions is as follows
\begin{equation*}
\G(x):= e^{\ii x\frac{\sqrt{3} (2 k + l) }{3\sqrt{k^2 + k l + l^2}}}  +C_1 e^{-\ii x\frac{\sqrt{3} (k + 2 l) }{3 \sqrt{k^2 + k l + l^2}}}  +C_2 e^{\ii x\frac{\sqrt{3} (-k + l)}{ \sqrt{k^2 + k l + l^2}}}.
\end{equation*}
Using the boundary conditions, we obtain
\begin{equation*}
\left\{
\begin{array}{l}
     1+C_1+C_2=0,  \\
   -\frac{\ii((-2+C_1+C_2)k++(-1+2C_1-C_2) l)}{\sqrt{3}\sqrt{k^2 + k l + l^2}}=\frac{\ii e^{-\ii\frac{2}{3}(k+2l)\pi}((-2+C_1+C_2)k++(-1+2C_1-C_2) l)}{\sqrt{3}\sqrt{k^2 + k l + l^2}} .
\end{array}
\right.
\end{equation*}
There are two situations,
\begin{enumerate}
    \item If $e^{-\ii\frac{2}{3}(k+2l)\pi}\neq1\Leftrightarrow (k-l)\not\equiv0\mod{3}$, we know that $C_1=\frac{l}{k}$ and $C_2=-\frac{l+k}{k}$. In this case, we have
    \begin{equation*}
    \G(x)= e^{\ii x\frac{\sqrt{3} (2 k + l) }{3\sqrt{k^2 + k l + l^2}}}  +\frac{l}{k} e^{-\ii x\frac{\sqrt{3} (k + 2 l) }{3 \sqrt{k^2 + k l + l^2}}}  -\frac{l+k}{k} e^{\ii x\frac{\sqrt{3} (-k + l)}{ \sqrt{k^2 + k l + l^2}}},    
    \end{equation*}
    with $\G'(0)=\G'(L_0)=0$.
    \item If $e^{-\ii\frac{2}{3}(k+2l)\pi}=1\Leftrightarrow (k-l)\equiv0\mod{3}$, we know $C_2=-(1+C_1)$ and the second equation is trivial. In fact, we obtain two linearly independent solutions $\G$ as before and $\Tilde{\G}$ in the following form
    \begin{equation*}
    \Tilde{\G}(x)= e^{\ii x\frac{\sqrt{3} (2 k + l) }{3\sqrt{k^2 + k l + l^2}}}  - e^{-\ii x\frac{\sqrt{3} (k + 2 l) }{3 \sqrt{k^2 + k l + l^2}}},     
    \end{equation*}
    with $\Tilde{\G}'(0)=\Tilde{\G}'(L_0)=\ii\frac{\sqrt{3}(k+l)}{\sqrt{k^2+k l+l^2}}\neq0$.
\end{enumerate}
\end{proof}

\begin{rem}\label{rem: pm-sign for eigenvalues}
One can easily deduce from \eqref{eq: t-L defined eq-1} and \eqref{eq: t-L defined eq-2} that $\tau_{-j}=-\tau_j$ and $\lambda_{-j}=-\lambda_j$, $\forall j\in \Z\backslash\{0\}$. 
\end{rem}

\subsection{Modulated functions}\label{sec: modulated functions-appendix}
\begin{proof}[Proof of Lemma \ref{lem: existence of modulated functions}]
For the algebraic equation $\xi^3+\xi-\mu=0$, we know that there exists a real root denoted by $\omega$, with $\omega>0$, which indicates that $\mu=\omega^3+\omega$. 
Moreover, we obtain the three roots for $\xi^3+\xi+\mu=0$, which are given by
\begin{equation}\label{eq: 3 roots}
\xi_1=\omega,\quad \xi_2=-\frac{\omega}{2}+\frac{\sqrt{4+3\omega^2}}{2}\ii, \text{ and}\quad \xi_3=  -\frac{\omega}{2}-\frac{\sqrt{4+3\omega^2}}{2}\ii.  
\end{equation} 
We construct the solution to \eqref{eq: static KdV with nonvanishing boundary condition} in the following form:
\begin{equation}\label{eq: formal solution h}
h_{\mu}(x)=C_1 e^{\omega x}+C_2 e^{-\frac{\omega}{2}x}\cos{\frac{\sqrt{4+3\omega^2}}{2} x}+C_3 e^{-\frac{\omega}{2}x}\sin{\frac{\sqrt{4+3\omega^2}}{2} x}.
\end{equation}
In addition, we obtain the first derivative of $h_{\omega}$,
\begin{align*}
h'_{\mu}(x)=C_1\omega e^{\omega x}+\frac{-C_2\omega+C_3\sqrt{4+3\omega^2}}{2}e^{-\frac{\omega}{2}x}\cos{\frac{\sqrt{4+3\omega^2}}{2} x}-\frac{C_2\sqrt{4+3\omega^2}+C_3\omega}{2}e^{-\frac{\omega}{2}x}\sin{\frac{\sqrt{4+3\omega^2}}{2} x}.
\end{align*} 
For the boundary condition, we plug \eqref{eq: formal solution h} into the system \eqref{eq: static KdV with nonvanishing boundary condition} and obtain 
{\footnotesize
\begin{equation}\label{eq: equations for coeff}
\left\{
\begin{aligned}
&C_1+C_2=0,\\
&C_1 e^{\omega L}+C_2 e^{-\frac{\omega}{2}L}\cos{\frac{\sqrt{4+3\omega^2}}{2} L}+C_3 e^{-\frac{\omega}{2}L}\sin{\frac{\sqrt{4+3\omega^2}}{2} L}=0,\\
&C_1\omega e^{\omega L}+\frac{-C_2\omega+C_3\sqrt{4+3\omega^2}}{2}e^{-\frac{\omega}{2}L}\cos{\frac{\sqrt{4+3\omega^2}}{2} L}-\frac{C_2\sqrt{4+3\omega^2}+C_3\omega}{2}e^{-\frac{\omega}{2}L}\sin{\frac{\sqrt{4+3\omega^2}}{2} L}\\
-&C_1\omega -\frac{-C_2\omega+C_3\sqrt{4+3\omega^2}}{2}=1.
\end{aligned}
\right.
\end{equation}
}
Therefore, we solve the linear equations for $(C_1,C_2,C_3)$ and obtain the solution $h_{\mu}$ to the equation \eqref{eq: static KdV with nonvanishing boundary condition}, i.e.
\begin{equation}
h_{\mu}(x)=\frac{e^{\frac{\omega}{2}(2x-L)}\sin{\frac{\sqrt{4+3\omega^2}}{2} L}-e^{-\frac{\omega}{2}(L+x)}\sin{\frac{\sqrt{4+3\omega^2}}{2} (L-x)}-e^{\frac{\omega}{2}(2L-x)}\sin{\frac{\sqrt{4+3\omega^2}}{2} x}}{\sqrt{4+3\omega^2}\cosh{\omega L}+3\omega\sinh{\frac{\omega}{2}L} \sin{\frac{\sqrt{4+3\omega^2}}{2} L}-\sqrt{4+3\omega^2}\cosh{\frac{\omega}{2}L}\cos{\frac{\sqrt{4+3\omega^2}}{2} L}}
\end{equation}  
\end{proof}
\begin{proof}[Proof of Proposition \ref{prop: h_mu-est-uniform}]
First, we concentrate on the $\|h_{\omega}\|_{L^{\infty}(0,L)}$. As $\omega\rightarrow+\infty$, for the denominator terms,
\begin{equation*}
\cosh{\omega L}\sim e^{\omega L},\quad \sinh{\frac{\omega}{2}L} \sim\cosh{\frac{\omega}{2}L}\sim e^{\frac{\omega}{2}L}.
\end{equation*}
And for the numerator terms, since $x\in(0,L)$, we know that $-(L+x)<-L<0$, which implies that $|e^{-\frac{\omega}{2}(x+L)}\sin{\frac{\sqrt{4+3\omega^2}}{2} (L-x)}|\leq e^{-\frac{\omega}{2}L}$. Moreover, $e^{\frac{\omega}{2}(2x-L)}<e^{\frac{\omega}{2}(2L-x)}$, $\forall x\in(0,L)$. Therefore, we deduce that as $\omega\rightarrow+\infty$,
\begin{equation*}
|h_{\mu}(x)|\sim \frac{e^{\frac{\omega}{2}(2L-x)}}{\omega e^{\omega L}}\sim\frac{e^{-\frac{\omega}{2}x}}{\omega}\sim \frac{1}{\omega} ,\forall x\in(0,L).
\end{equation*}
Hence, we obtain $\|h_{\mu}\|_{L^{\infty}(0,L)}\sim  \frac{1}{\omega}$ as $\omega\rightarrow+\infty$. 
For the $L^2-$norm, 
\begin{align*}
\|h_{\mu}\|_{L^2(0,L)}^2&=\int_0^L  |h_{\mu}(x)|^2dx
                           \sim \int_0^L  \frac{e^{-\omega x}}{\omega^2}dx
                           \sim \frac{1}{\omega^3}(1-e^{-\omega L})\sim \frac{1}{\omega^3}.
\end{align*}
Hence, we know that as $\omega\rightarrow+\infty$, $\|h_{\mu}\|_{L^2(0,L)}\sim \omega^{-\frac{3}{2}}\sim \mu^{-\frac{1}{2}}$. 
Now we look at the first derivative of $h_{\mu}$. By simple computation, we know its expression is as follows:
\begin{align*}
h'_{\mu}(x)&=\frac{\omega e^{\frac{\omega}{2}(2x-L)}\sin{\frac{\sqrt{4+3\omega^2}}{2} L}+\frac{\omega}{2}e^{-\frac{\omega}{2}(L+x)}\sin{\frac{\sqrt{4+3\omega^2}}{2} (L-x)}+\frac{\sqrt{4+3\omega^2}}{2}e^{-\frac{\omega}{2}(L+x)}\cos{\frac{\sqrt{4+3\omega^2}}{2} (L-x)}}{\sqrt{4+3\omega^2}\cosh{\omega L}+3\omega\sinh{\frac{\omega}{2}L} \sin{\frac{\sqrt{4+3\omega^2}}{2} L}-\sqrt{4+3\omega^2}\cosh{\frac{\omega}{2}L}\cos{\frac{\sqrt{4+3\omega^2}}{2} L}}\\
+&\frac{\frac{\omega}{2}e^{\frac{\omega}{2}(2L-x)}\sin{\frac{\sqrt{4+3\omega^2}}{2} x}-\frac{\sqrt{4+3\omega^2}}{2}e^{\frac{\omega}{2}(2L-x)}\cos{\frac{\sqrt{4+3\omega^2}}{2} x}}{\sqrt{4+3\omega^2}\cosh{\omega L}+3\omega\sinh{\frac{\omega}{2}L} \sin{\frac{\sqrt{4+3\omega^2}}{2} L}-\sqrt{4+3\omega^2}\cosh{\frac{\omega}{2}L}\cos{\frac{\sqrt{4+3\omega^2}}{2} L}}.
\end{align*}
As $\omega\rightarrow+\infty$, 
\begin{align*}
|h'_{\mu}(x)|\sim\frac{\omega e^{\frac{\omega}{2}(2x-L)}+\omega e^{-\frac{\omega}{2}(L+x)}+\omega e^{\frac{\omega}{2}(L+x)}+\omega e^{\frac{\omega}{2}(2L-x)}+\omega e^{\frac{\omega}{2}(2L-x)}}{\omega e^{\omega L}}
\sim \frac{\omega e^{\frac{\omega}{2}(2L-x)}}{\omega e^{\omega L}}
\sim e^{-\frac{\omega}{2}x}.
\end{align*}
Thus, we obtain $\|h'_{\mu}\|_{L^{\infty}(0,L)}\sim  1$ as $\omega\rightarrow+\infty$. 
And similarly, for $L^2-$norm, as $\omega\rightarrow+\infty$,
\begin{align*}
\|h'_{\mu}\|_{L^2(0,L)}^2&=\int_0^L  |h'_{\mu}(x)|^2dx                           \sim \int_0^L  e^{-\omega x}dx
                           \sim \frac{1}{\omega}(1-e^{-\omega L})\sim \frac{1}{\omega}.
\end{align*}
Consequently, we obtain $\|h'_{\mu}\|_{L^2(0,L)}\sim \omega^{-\frac{1}{2}}\sim \mu^{-\frac{1}{6}}$ as $\omega\rightarrow+\infty$. 
At the endpoint $x=0$ and $x=L$,
\begin{equation}
\begin{aligned}
h_{\mu}'(0)&=\frac{\frac{3}{2}\omega e^{-\frac{\omega}{2}L}\sin{\frac{\sqrt{4+3\omega^2}}{2} L}+\frac{\sqrt{4+3\omega^2}}{2}e^{-\frac{\omega}{2}L}\cos{\frac{\sqrt{4+3\omega^2}}{2} L}-\frac{\sqrt{4+3\omega^2}}{2}e^{\omega L}}{\sqrt{4+3\omega^2}\cosh{\omega L}+3\omega\sinh{\frac{\omega}{2}L} \sin{\frac{\sqrt{4+3\omega^2}}{2} L}-\sqrt{4+3\omega^2}\cosh{\frac{\omega}{2}L}\cos{\frac{\sqrt{4+3\omega^2}}{2} L}}\\
&\sim 1, \text{ as }\omega \rightarrow+\infty,
\end{aligned}
\end{equation}
and
\begin{equation}\label{eq: est-h_mu'(L)}
\begin{aligned}
h_{\mu}'(L)&=\frac{\frac{3}{2}\omega e^{\frac{\omega}{2}L}\sin{\frac{\sqrt{4+3\omega^2}}{2} L}+\frac{\sqrt{4+3\omega^2}}{2}e^{-\omega L}-\frac{\sqrt{4+3\omega^2}}{2}e^{\frac{\omega}{2}L}\cos{\frac{\sqrt{4+3\omega^2}}{2} L}}{\sqrt{4+3\omega^2}\cosh{\omega L}+3\omega\sinh{\frac{\omega}{2}L} \sin{\frac{\sqrt{4+3\omega^2}}{2} L}-\sqrt{4+3\omega^2}\cosh{\frac{\omega}{2}L}\cos{\frac{\sqrt{4+3\omega^2}}{2} L}}\\
&\sim e^{-\frac{\omega}{2}L}, \text{ as }\omega \rightarrow+\infty.
\end{aligned}
\end{equation}
In addition, we know that
\begin{equation}
\lim_{\omega\rightarrow+\infty}h_{\mu}'(0)=1,\quad \lim_{\omega\rightarrow+\infty}h_{\mu}'(L)=0.
\end{equation}
\end{proof}
\begin{lem}
For $L\notin\mathcal{N}$, there exists a unique solution to the equation \eqref{eq: defi of h}. Moreover, the unique solution is 
\begin{equation}
    h(x)=-\frac{e^{\ii L}+1}{2\ii (e^{\ii L}-1)}+\frac{e^{\ii x}}{2\ii (e^{\ii L}-1)}+\frac{e^{\ii(L-x)}}{2\ii (e^{\ii L}-1)}.
\end{equation}
\end{lem}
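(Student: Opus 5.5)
The plan is to solve the constant-coefficient ODE explicitly and then impose the three boundary conditions of \eqref{eq: defi of h}. The characteristic polynomial of $h'''+h'=0$ is $\xi^3+\xi=\xi(\xi^2+1)$, with roots $0,\pm\ii$. Every solution therefore has the form $h(x)=a+be^{\ii x}+ce^{-\ii x}$, and it is convenient to rewrite this as $h(x)=a+be^{\ii x}+c'e^{\ii(L-x)}$ with $c'=ce^{-\ii L}$, which matches the proposed formula. The conditions $h(0)=h(L)=0$ and $h'(L)-h'(0)=1$ then become a $3\times 3$ linear system in $(a,b,c')$.

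Next I write the system out. The two Dirichlet conditions read
\[
a+b+c'e^{\ii L}=0,\qquad a+be^{\ii L}+c'=0.
\]
Since $h'(x)=\ii be^{\ii x}-\ii c'e^{\ii(L-x)}$, the derivative jump condition reads
\[
\ii b(e^{\ii L}-1)-\ii c'(1-e^{\ii L})=\ii(b+c')(e^{\ii L}-1)=1.
\]
Subtracting the two Dirichlet equations gives $(b-c')(1-e^{\ii L})=0$.

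Then I use that $L\notin\mathcal{N}$. Every element of $\mathcal{N}$ that is a multiple of $2\pi$ falls into $\mathcal{N}^1=2\pi\N^*$, so I still need $e^{\ii L}\neq 1$ for arbitrary $L\notin\mathcal{N}$. If $L=2\pi m$ with $m\in\N^*$, then $k=l=m$ gives $L=2\pi\sqrt{3m^2/3}\in\mathcal{N}$. Hence $L\notin\mathcal{N}$ forces $e^{\ii L}\neq1$, and this is the only place where the hypothesis enters. With $e^{\ii L}\neq 1$ the subtracted equation gives $b=c'$, and the jump equation gives
\[
b=c'=\frac{1}{2\ii(e^{\ii L}-1)}.
\]
Finally the first Dirichlet equation gives $a=-b(1+e^{\ii L})$, which is exactly the stated formula. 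The system is square and uniquely solvable, so existence and uniqueness hold together.

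For the remaining claims, smoothness is immediate since $h$ is a finite combination of exponentials. The structure becomes transparent after simplification. Write $h(x)=a+2b\,e^{\ii L/2}\cos(x-L/2)$ and use $e^{\ii L}-1=2\ii e^{\ii L/2}\sin(L/2)$. This gives
\[
h(x)=\frac{\cos(L/2)-\cos(x-L/2)}{2\sin(L/2)}.
\]
This form is symmetric under $x\mapsto L-x$, so even derivatives are symmetric and odd derivatives antisymmetric about $L/2$.

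It then follows that $h^{(2m)}(x)=(-1)^{m+1}\cos(x-L/2)/(2\sin(L/2))$ for $m\ge1$, while $h^{(0)}(0)=h^{(0)}(L)=0$ holds by construction. For odd orders, $h^{(2m+1)}(x)=(-1)^m\sin(x-L/2)/(2\sin(L/2))$, so $h^{(2m+1)}(L)-h^{(2m+1)}(0)=(-1)^m$, as claimed.

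Two points remain to check. First, the stated vanishing of $h^{(2m)}$ at the endpoints only follows from the explicit formula for $m=0$; for $m\ge1$ I would instead record $h^{(2m)}(L)=h^{(2m)}(0)$, which is the property actually used to place $z^0$ in $D(\B^2)$. This discrepancy is the only delicate point I expect in the argument. Second, the exclusion $e^{\ii L}\neq1$ is the sole genuine obstruction, and I have already verified it through the $k=l$ pairs.
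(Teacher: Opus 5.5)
Your proof is correct; it differs from the paper's mainly in how uniqueness is obtained.

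The paper checks that the displayed $h$ satisfies $h'''+h'=0$, $h(0)=h(L)=0$, $h'(L)=\tfrac12$, $h'(0)=-\tfrac12$. It then says that the difference $\tilde h$ of two solutions is an eigenfunction for the eigenvalue $0$, which is excluded. It names $\A$ there, but $\tilde h$ satisfies $\tilde h'(0)=\tilde h'(L)$ rather than $\tilde h'(L)=0$, so the relevant operator is really $\B$. The fact $0\notin\sigma(\B)$ for $L\notin\mathcal N$ is invoked rather than checked.

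You instead derive the formula from the $3\times 3$ system, which gives existence, uniqueness and the exact nondegeneracy condition $e^{\ii L}\neq 1$ in one computation. In effect you prove $\ker\B=\{0\}$ by hand. You also show the lemma only needs $L\notin 2\pi\N^*$, which is weaker than $L\notin\mathcal N$. The paper's version is shorter and fits its spectral bookkeeping; yours is self-contained and avoids the $\A$/$\B$ slip.

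Two small remarks on the extras.
\begin{enumerate}
\item Your aside that every multiple of $2\pi$ in $\mathcal N$ lies in $\mathcal N^1$ is false: $14\pi$ also arises from $(k,l)=(11,2)$ and belongs to $\mathcal N^3$. Your argument only uses $2\pi m\in\mathcal N$ via $k=l=m$, which is fine.
\item Your correction of the companion claim on even derivatives is right, since $h''(0)=h''(L)=\tfrac12\cot(L/2)$, which is generally nonzero. However, in the transition step it is the modulated functions $h_\mu$ (with $(\partial_x^3+\partial_x)h_\mu=\mu h_\mu$), not $h$, that place $z^0$ in $D(\B^2)$. So the property you record is not the one ``actually used'' there.
\end{enumerate}
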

\begin{proof}
The proof is straightforward. It is easy to verify that $h(x)=-\frac{e^{\ii L}+1}{2\ii (e^{\ii L}-1)}+\frac{e^{\ii x}}{2\ii (e^{\ii L}-1)}+\frac{e^{\ii(L-x)}}{2\ii (e^{\ii L}-1)}$ solves the equation \eqref{eq: defi of h}. Indeed,
\begin{align*}
h'''(x)&=\frac{-e^{\ii x}}{2 (e^{\ii L}-1)}+\frac{e^{\ii(L-x)}}{2(e^{\ii L}-1)},\\
h'(x)&=\frac{e^{\ii x}}{2 (e^{\ii L}-1)}-\frac{e^{\ii(L-x)}}{2(e^{\ii L}-1)}.
\end{align*}
Hence, $h'''+h'=0$. As for boundary conditions, 
\begin{align*}
h(0)&=-\frac{e^{\ii L}+1}{2\ii (e^{\ii L}-1)}+\frac{1}{2\ii (e^{\ii L}-1)}+\frac{e^{\ii L}}{2\ii (e^{\ii L}-1)}=0,\\
h(L)&=-\frac{e^{\ii L}+1}{2\ii (e^{\ii L}-1)}+\frac{e^{\ii L}}{2\ii (e^{\ii L}-1)}+\frac{1}{2\ii (e^{\ii L}-1)}=0,\\
h'(L)&=\frac{e^{\ii L}}{2 (e^{\ii L}-1)}-\frac{1}{2(e^{\ii L}-1)}=\frac{1}{2},\\
h'(0)&=\frac{1}{2(e^{\ii L}-1)}-\frac{e^{\ii L}}{2 (e^{\ii L}-1)}=-\frac{1}{2}.
\end{align*}
Hence, $h(0)=h(L)=0$, and $h'(L)-h'(0)=1$. In particular, we point out that $h(L-x)=h(x)$, $\forall x\in [0,L]$. For the uniqueness, assume that $h_1$ and $h_2$ are both solutions to the equation \eqref{eq: defi of h}, define $\Tilde{h}=h_1-h_2$. Then $\Tilde{h}$ is the eigenfunction of the operator $\A$ corresponding to the eigenvalue $0$. We know that $0$ is not an eigenvalue of $\A$, which implies that $\Tilde{h}\equiv0$. Therefore, we obtain a unique solution to the equation \eqref{eq: defi of h},
\begin{equation*}
h(x)=-\frac{e^{\ii L}+1}{2\ii (e^{\ii L}-1)}+\frac{e^{\ii x}}{2\ii (e^{\ii L}-1)}+\frac{e^{\ii(L-x)}}{2\ii (e^{\ii L}-1)}.    
\end{equation*}
\end{proof}
Thanks to the expression of $h$, we have the following corollary.
\begin{coro}\label{coro: smooth h}
$h\in C^{\infty}(0,L)$. $h^{(2m)}(L)=h^{(2m)}(0)=0$, and $h^{(2m+1)}(L)-h^{(2m+1)}(0)=(-1)^m$, $\forall m\in\N$.
\end{coro}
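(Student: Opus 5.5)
The statement is Corollary~\ref{coro: smooth h}. The plan is to read everything off the closed form of $h$ from the preceding lemma.

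Write $a:=\frac{1}{2\ii (e^{\ii L}-1)}$, which is finite because $L\notin\mathcal N$ and hence $L\notin 2\pi\N^*$. Then
\begin{equation*}
h(x)=-(e^{\ii L}+1)a+a\,e^{\ii x}+a\,e^{\ii(L-x)} .
\end{equation*}
This is a finite combination of exponentials, so $h\in C^\infty([0,L])$, which in particular gives $h\in C^{\infty}(0,L)$.

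Differentiating $n\geq 1$ times removes the constant and gives
\begin{equation*}
h^{(n)}(x)=a\,\ii^n e^{\ii x}+a(-\ii)^n e^{\ii(L-x)} .
\end{equation*}

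\textbf{Even orders.} For $n=2m$ with $m\geq1$ we have $\ii^{2m}=(-\ii)^{2m}=(-1)^m$, so
\begin{equation*}
h^{(2m)}(x)=(-1)^m a\left(e^{\ii x}+e^{\ii(L-x)}\right).
\end{equation*}
At both endpoints this equals $(-1)^m a(1+e^{\ii L})$. The case $m=0$ is the boundary condition $h(0)=h(L)=0$ already verified in the lemma.

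Here the claim $h^{(2m)}(0)=h^{(2m)}(L)=0$ for $m\geq1$ conflicts with the computation, since $1+e^{\ii L}\neq0$ in general. The computation instead gives $h^{(2m)}(L)=h^{(2m)}(0)$ and $h^{(2m)}(L)-h^{(2m)}(0)=0$. I would state that matching version, because it is what membership of $h$-corrections in $D(\B^k)$ actually uses.

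As a consistency check, $h''(0)=-a(1+e^{\ii L})=\frac{1}{2}\cot(L/2)$, which is typically nonzero. Indeed $h'''=-h'$ and $h''=-h+\mathrm{const}$, with $\mathrm{const}=(e^{\ii L}+1)a\cdot(-1)\neq0$.

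\textbf{Odd orders.} For $n=2m+1$ we have $\ii^{2m+1}=(-1)^m\ii$ and $(-\ii)^{2m+1}=-(-1)^m\ii$, so
\begin{equation*}
h^{(2m+1)}(x)=(-1)^m\ii a\left(e^{\ii x}-e^{\ii(L-x)}\right).
\end{equation*}
This gives $h^{(2m+1)}(L)-h^{(2m+1)}(0)=(-1)^m\ii a\cdot 2(e^{\ii L}-1)=(-1)^m$, using the definition of $a$.

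An alternative route avoids the explicit formula. From $h'''=-h'$ one gets by induction $h^{(2m+1)}=(-1)^m h'$, which yields the odd claim directly from $h'(L)-h'(0)=1$. The even derivatives satisfy $h^{(2m)}=(-1)^{m-1}h''$ for $m\geq1$, so everything reduces to $h''(0)$ versus $h''(L)$. The symmetry $h(L-x)=h(x)$ gives $h''(L-x)=h''(x)$ and hence $h''(0)=h''(L)$.

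The only obstacle is this reconciliation of the even-order statement. I would present the odd-order identity as stated and prove the even-order identity in the corrected form $h^{(2m)}(0)=h^{(2m)}(L)$. The vanishing version holds only for $m=0$, or when $\cos(L/2)=0$.
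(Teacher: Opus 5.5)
Your computation is correct and is exactly what the paper intends: its only justification is ``thanks to the expression of $h$''. You are also right that the even-order claim is false as printed for $m\ge 1$. Since $(h''+h)'=h'''+h'=0$, the function $h''+h$ is a constant $c$, and $h(0)=h(L)=0$ gives $h''(0)=h''(L)=c=\tfrac12\cot(L/2)$. Indeed $h(x)=\tfrac12\cot(L/2)(1-\cos x)-\tfrac12\sin x$, and this constant is nonzero unless $L$ is an odd multiple of $\pi$. So the correct statement is $h^{(2m)}(0)=h^{(2m)}(L)$, and the same misprint appears in Lemma~\ref{lem: formula h}.

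The slip does not affect the rest of the paper, which only uses $h\in L^2$, $h(0)=h(L)=0$ and $h'(L)-h'(0)=1$ (e.g.\ in Lemma~\ref{lem: size of h_j}). Your side remark tying the corrected version to membership in $D(\B^k)$ is not needed: the $D(\B^2)$ corrections in the paper are built from the modulated functions $h_\mu$, not from $h$.
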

\subsection{Basic propertis of the operator $\A$}\label{sec: proof-basic-properties-A}
\begin{proof}[Proof of Lemma \ref{lem: basic prop of eigen of bad op}]
For the first statement, using simple integration by parts, 
    \begin{multline*}
        \lambda \int_0^L \F(x)\overline{\F(x)} dx=   \int_0^L \A\F(x) \overline{\F(x)} dx=- \int_0^L \F(x) \A \overline{\F(x)} dx- |\F'(0)|^2\\
       =       - \overline{\lambda} \int_0^L \F(x)  \overline{\F(x)} dx-|\F'(0)|^2,
    \end{multline*}
thus $2\Re{\lambda}\|\F\|^2_{L^2(0,L)}= (\lambda+ \overline{\lambda} )\int_0^L \F(x) \overline{\F(x)} dx= -|\F'(0)|^2$. Therefore, using that $\F$ is normalized, we know that 
\begin{equation}
\Re{\lambda}=-\frac{1}{2}|\F'(0)|^2\leq 0.   
\end{equation}
If we further require that $L\notin \mathcal{N}$, we know that $\F'(0)\neq 0$, thus $\Re{\lambda}<0$. This means that $\Re \lambda_k\neq 0$. The second statement follows by taking conjugation to the system \eqref{eq: eigenvalue problem with Neumann}. We prove the third statement by the argument of contradiction. Suppose that $\F_1$ and $\F_2$ are both eigenfunctions of the same eigenvalue $\lambda$. Then we define a function $u$ by 
\begin{equation*}
    u(x)=\F_1(x)-\frac{\F_1'(0)}{\F_2'(0)}\F_2(x),\;x\in[0,L].
\end{equation*}
Then $u$ is also an eigenfunction associated with the eigenvalue $\lambda$ with $u(0)=u(L)=u'(0)=u'(L)=0$, which is a contradiction to the fact that $L\notin\mathcal{N}$. Thus, all eigenvalues are simple for $L\notin\mathcal{N}$. Now we turn to the last statement. We know that $\lambda\in \R$ and $\lambda<0$. Then, there is a unique $\tau\in \R$ such that $-\lambda=2\tau(4\tau^2+1)$ and $\tau>0$. Therefore, the three roots of
\begin{equation*}
    (\ii\xi)^3+\ii\xi+ \lambda=0,
\end{equation*}
read as 
\begin{equation*}
    \xi_1=\ii\tau+\sqrt{3\tau^2+1},\xi_2=\ii\tau-\sqrt{3\tau^2+1},\xi_3=-2\ii\tau.
\end{equation*}
Hence, we obtain the eigenfunctions in the following form:
\begin{equation}
\F(x)=r_1 e^{(-\tau+\ii\sqrt{1+3\tau^2})x}+r_2 e^{(-\tau-\ii\sqrt{1+3\tau^2})x}+r_3e^{2\tau x}.
\end{equation}
In addition, we are able to compute the derivative of $\E$ in the following form:
\begin{equation}
\F'(x)= r_1(-\tau+\ii\sqrt{1+3\tau^2})e^{(-\tau+\ii\sqrt{1+3\tau^2})x}- r_2(\tau+\ii\sqrt{1+3\tau^2}) e^{(-\tau-\ii\sqrt{1+3\tau^2})x}+2\tau r_3e^{2\tau x}.
\end{equation}
Combining with the boundary conditions, the coefficients $r_1$, $r_2$, and $r_3$ satisfy the following equations:
\begin{equation*}
\left\{
\begin{array}{l}
     \F(0)=r_1+r_2+r_3=0,  \\
     \F(L)=r_1 e^{(-\tau+\ii\sqrt{1+3\tau^2})L}+r_2 e^{(-\tau-\ii\sqrt{1+3\tau^2})L}+r_3e^{2\tau L}=0,\\
     \F'(L)=r_1(-\tau+\ii\sqrt{1+3\tau^2})e^{(-\tau+\ii\sqrt{1+3\tau^2})L}- r_2(\tau+\ii\sqrt{1+3\tau^2}) e^{(-\tau-\ii\sqrt{1+3\tau^2})L}+2\tau r_3e^{2\tau L}=0.
\end{array}
\right.
\end{equation*}
From the first and second equations, we obtain that
\begin{equation*}
     r_2=-r_1\frac{e^{(-\tau+\ii\sqrt{1+3\tau^2})L}-e^{2\tau L}}{e^{-(\tau+\ii\sqrt{1+3\tau^2})L}-e^{2\tau L}},\quad  r_3=-r_1\frac{e^{-(\tau+\ii\sqrt{1+3\tau^2})L}-e^{(-\tau+\ii\sqrt{1+3\tau^2})L}}{e^{-(\tau+\ii\sqrt{1+3\tau^2})L}-e^{2\tau L}}.
\end{equation*}
Thus, after simplifying the third equation, we obtain
\begin{equation}
    -e^{-3L\tau} \sqrt{1 + 3\tau^2} + \sqrt{1 + 3\tau^2} \cos\left( L \sqrt{1 + 3\tau^2} \right) - 3t \sin\left( L \sqrt{1 + 3\tau^2} \right)=0.
\end{equation}
It is equivalent to 
\begin{equation*}
\cos{(L \sqrt{1 + 3\tau^2}+\theta(\tau))}=e^{-3L\tau}\frac{\sqrt{1 + 3\tau^2}}{\sqrt{1 + 12\tau^2}}\in (0,1),   
\end{equation*}
where $\tan{\theta(\tau)}=\frac{3\tau}{\sqrt{1 + 3\tau^2}}>0$. There exists a unique solution such that in each interval $L \sqrt{1 + 3\tau^2}+\theta(\tau)\in(-\frac{\pi}{2}+2k\pi,\frac{\pi}{2}+2k\pi)$. While $0<L<L\sqrt{1 + 3\tau^2}+\theta(\tau)$, thus, we know that $k\geq0$.
\end{proof}

\subsection{Computation details}
\subsubsection{Proof of Lemma \ref{lem: critical eigenvalues}}\label{sec: Proof of Lemma-lem: critical eigenvalues}
\begin{proof}[Proof of Lemma \ref{lem: critical eigenvalues}]
Let us define a function $g(t)=2t(4t^2-1)$. It is easy to check that the following properties hold for $g$.
\begin{itemize}
    \item $|g(t)|<\frac{2\sqrt{3}}{9}$ is equivalent to $|t|<\frac{\sqrt{3}}{3}$, i.e. $3t^2<1$.
    \item For any $s_0\in (-\frac{2\sqrt{3}}{9},\frac{2\sqrt{3}}{9})$, $g(t)=s_0$ has three different roots.
\end{itemize}
We first claim that $\lambda_{c}(k_j,l_j)<\frac{2\sqrt{3}}{9}$. In fact, this is deduced by 
\begin{equation*}
\frac{(2k+l)(k-l)(2l+k)}{3\sqrt{3}(k^2+k l+l^2)^{\frac{3}{2}}}<\frac{2\sqrt{3}}{9} ,k\geq l.   
\end{equation*}
Let $x=\frac{l}{k}\in (0,1]$. It suffices to prove that 
\begin{equation}
    (2+x)^2(1-x)^2(2x+1)^2<4(1+x+x^2)^3, x\in (0,1].
\end{equation}
Indeed, $4(1+x+x^2)^3-(2+x)^2(1-x)^2(2x+1)^2=27 x^2 + 54 x^3 + 27 x^4>0$ for $x\in (0,1]$. Now since $|\lambda_c(k_j,l_j)|<\frac{2\sqrt{3}}{9}$, we know that $g(\tau_c)=\lambda_{c}(k_j,l_j)$ has three different roots and $3|\tau_c|^2<1$.
\end{proof}

\subsubsection{Remainders of the proof of Proposition \ref{prop: Asymp in L-low}}\label{sec: Remainders of the proof of Proposition-prop: Asymp in L-low}
Following the analysis of the high frequencies, we continue to study the asymptotic behaviors at low frequencies. The most technical part of the proof of Proposition \ref{prop: Asymp in L-low} is to verify the condition of the implicit function theorem, which involves a detailed and sophisticated computation of many derivatives of the function $F(t,L)$ defined in \eqref{eq: defi of function F(t,L)}. Here we complete the details of the proof of Proposition \ref{prop: Asymp in L-low}.
\begin{proof}[Remainders of the proof of Proposition \ref{prop: Asymp in L-low}]
Let recall the definition of $F(t,L)$,
\begin{equation*}
F(t,L)=2\sqrt{1-3t^2}\cos{(2t L)}-(\sqrt{1-3t^2}+3t)\cos{((\sqrt{1-3t^2}-t)L)}+(3t-\sqrt{1-3t^2})\cos{((\sqrt{1-3t^2}+t)L)}.
\end{equation*}
Then the first derivatives are the following:
\begin{align*}
\p_t F(t,L)&=-\frac{6t}{\sqrt{1-3t^2}}\cos{2t L}-4L\sqrt{1-3t^2}\sin{2tL}-\frac{3\sqrt{1-3t^2}-3t}{\sqrt{1-3t^2}}\cos{(\sqrt{1-3t^2}-t)L}\\
-&L\frac{(\sqrt{1-3t^2}+3t)^2}{\sqrt{1-3t^2}}\sin{((\sqrt{1-3t^2}-t)L)}+\frac{3\sqrt{1-3t^2}+3t}{\sqrt{1-3t^2}}\cos{((\sqrt{1-3t^2}+t)L)}\\
+&L\frac{(\sqrt{1-3t^2}-3t)^2}{\sqrt{1-3t^2}}\sin{((\sqrt{1-3t^2}+t)L)},\\
\p_L F(t,L)&=-4 t \sqrt{1 - 3 t^2}
   \sin{2 L t} + (1-6t^2 +2t\sqrt{1 - 3 t^2}) \sin{(
   (-t + \sqrt{1-3t^2})L)}\\
   -& (6t^2-1 +2t \sqrt{1-3t^2}) \sin{((t + \sqrt{1-3t^2})L)}.
\end{align*}   
Then we plug $(t,L)=(\frac{\pi}{L_0}\frac{2k+l}{3}, L_0)$ in the expressions above and obtain
\begin{align*}
(\p_t F)(\frac{\pi}{L_0}\frac{2k+l}{3}, L_0)&=-\frac{4k+2l}{l}\cos{\frac{2\pi(2k+l)}{3}}-4l\pi\sin{\frac{2\pi(2k+l)}{3}}-\frac{2(l-k)}{l}\cos{\frac{2\pi(l-k)}{3}}\\
-&\frac{4\pi(k+l)^2}{l}\sin{\frac{2\pi(l-k)}{3}}+\frac{2k+4l}{l}\cos{\frac{2\pi(k+2l)}{3}}+\frac{4\pi k^2}{l}\sin{\frac{2\pi(k+2l)}{3}},\\
\p_L F(\frac{\pi}{L_0}\frac{2k+l}{3}, L_0)&=-4 \frac{\pi}{L_0}\frac{2k+l}{3} \frac{l\pi}{L_0}
   \sin{\frac{2\pi(2k+l)}{3}} + (-3(\frac{\pi}{L_0}\frac{2k+l}{3})^2+(\frac{l\pi}{L_0})^2 +2\frac{\pi}{L_0}\frac{2k+l}{3}\frac{l\pi}{L_0}) \sin{
   \frac{2\pi(l-k)}{3}}\\
   -& (3(\frac{\pi}{L_0}\frac{2k+l}{3})^2-(\frac{l\pi}{L_0})^2 +2\frac{\pi}{L_0}\frac{2k+l}{3} \frac{l\pi}{L_0}) \sin{\frac{2\pi(k+2l)}{3}}\\
   &=-\frac{\pi^2}{L_0^2}\frac{4l(2k+l)}{3} 
   \sin{\frac{2\pi(2k+l)}{3}} - \frac{\pi^2}{L_0^2}\frac{(2k+l)^2-3l^2-2l(2k+l)}{3}  \sin{
   \frac{2\pi(l-k)}{3}}\\
   -& \frac{\pi^2}{L_0^2}\frac{(2k+l)^2-3l^2+2l(2k+l)}{3} \sin{\frac{2\pi(k+2l)}{3}}\\
   &=-\frac{4\pi^2}{3L_0^2}\left(l(2k+l)
   \sin{\frac{2\pi(2k+l)}{3}} + (k^2-l^2)\sin{
   \frac{2\pi(l-k)}{3}}
   + k(k+2l)\sin{\frac{2\pi(k+2l)}{3}}\right).
\end{align*}
For the second derivatives, we obtain
\begin{align*}
     \p_L^2 &F(t,L)=-8t^2 \sqrt{1 - 3 t^2}
   \cos{2 L t} + (-t + \sqrt{1-3t^2})^2(3t+\sqrt{1 - 3 t^2}) \cos{(
   (-t + \sqrt{1-3t^2})L)}\\
   -& (t + \sqrt{1-3t^2})^2(3t-\sqrt{1 - 3 t^2}) \cos{((t + \sqrt{1-3t^2})L)},\\
   \p_{tL}&F(t,L)=-8tL \sqrt{1 - 3 t^2}\cos{2 L t}+(\frac{12t^2}{\sqrt{1-3t^2}}-4\sqrt{1-3t^2})\sin{2 L t}\\
   -&\frac{L(\sqrt{1-3t^2}-t)(3t + \sqrt{1-3t^2})^2}{\sqrt{1-3t^2}}\cos{((\sqrt{1-3t^2}-t)L)}-\frac{2(-1+6t^2 +6t\sqrt{1-3t^2})}{\sqrt{1-3t^2}} \sin{((\sqrt{1-3t^2}-t)L)}\\
   +&\frac{L(\sqrt{1-3t^2}+t)(3t-\sqrt{1-3t^2})^2}{\sqrt{1-3t^2}}\cos{((\sqrt{1-3t^2}+t)L)}-\frac{2(1-6t^2 +6t\sqrt{1-3t^2})}{\sqrt{1-3t^2}} \sin{((\sqrt{1-3t^2}+t)L)}.
\end{align*}
\begin{enumerate}
    \item In the case $k\equiv l\mod{3}$, we know that 
    $F(\frac{\pi}{L_0}\frac{2k+l}{3}, L_0)=\p_t F(\frac{\pi}{L_0}\frac{2k+l}{3}, L_0)=\p_L F(\frac{\pi}{L_0}\frac{2k+l}{3}, L_0)=0$. Indeed,
    \begin{align*}
(\p_t F)(\frac{\pi}{L_0}\frac{2k+l}{3}, L_0)&=-\frac{4k+2l}{l}-\frac{2(l-k)}{l}+\frac{2k+4l}{l}=0.\\
\p_L F(\frac{\pi}{L_0}\frac{2k+l}{3}, L_0)&=-\frac{4\pi^2}{3L_0^2}\left(l(2k+l)
   \sin{\frac{2\pi(2k+l)}{3}} + (k^2-l^2)\sin{
   \frac{2\pi(l-k)}{3}}
   + k(k+2l)\sin{\frac{2\pi(k+2l)}{3}}\right)\\
   &=0.
\end{align*}
    Thus, we look at the second derivatives
    \begin{align*}
    \p^2_L F(\frac{\pi}{L_0}\frac{2k+l}{3}, L_0)&=-8(\frac{(2k+l)\pi}{3L_0})^2 \frac{l\pi}{L_0}
   \cos{2 \frac{(2k+l)\pi}{3}}\\
   +& (-\frac{(2k+l)\pi}{3L_0} + \frac{l\pi}{L_0})^2(\frac{(2k+l)\pi}{L_0}+\frac{l\pi}{L_0}) \cos{(
   -\frac{(2k+l)\pi}{3} + l\pi)}\\
   -& (\frac{(2k+l)\pi}{3L_0} + \frac{l\pi}{L_0})^2(\frac{(2k+l)\pi}{L_0}-\frac{l\pi}{L_0}) \cos{(\frac{(2k+l)\pi}{3} + l\pi)}\\
   &=-8\frac{l(2k+l)^2\pi^3}{9L_0^3}
   \cos{\frac{(4k+2l)\pi}{3}} + \frac{8(l-k)^2(k+l)\pi^3}{9L_0^3} \cos{(
   \frac{2(l-k)\pi}{3})}\\
   -&\frac{8k(k+2l)^2\pi^3}{9L_0^3}) \cos{(\frac{2(k+2l)\pi}{3})}\\
   &=\frac{8\pi^3}{L_0^3}(-l(2k+l)^2+ (k-l)^2(k+l) - k(k+2l)^2 )\\
   &=-\frac{8\pi^3kl(k+l)}{L_0^3},
   \end{align*}
\begin{align*}
   \p_{tL}F(\frac{\pi}{L_0}\frac{2k+l}{3}, L_0)&=-8\frac{(2k+l)\pi}{3} \frac{l\pi}{L_0}\cos{2 \frac{(2k+l)\pi}{3}}+(\frac{12(\frac{\pi}{L_0}\frac{2k+l}{3})^2}{\frac{l\pi}{L_0}}-4\frac{l\pi}{L_0})\sin{2 \frac{(2k+l)\pi}{3}}\\
   -&\frac{(l\pi-\frac{(2k+l)\pi}{3})(\frac{(2k+l)\pi}{L_0} + \frac{l\pi}{L_0})^2}{\frac{l\pi}{L_0}}\cos{(l\pi-\frac{(2k+l)\pi}{3})}\\
   -&\frac{2(-1+6(\frac{\pi}{L_0}\frac{2k+l}{3})^2 +6\frac{\pi}{L_0}\frac{2k+l}{3}\frac{l\pi}{L_0})}{\frac{l\pi}{L_0}} \sin{(l\pi-\frac{(2k+l)\pi}{3})}\\
   +&\frac{(l\pi+\frac{(2k+l)\pi}{3})(\frac{(2k+l)\pi}{L_0}-\frac{l\pi}{L_0})^2}{\frac{l\pi}{L_0}}\cos{(l\pi+\frac{(2k+l)\pi}{3})}\\
   -&\frac{2(1-6(\frac{\pi}{L_0}\frac{2k+l}{3})^2 +6\frac{\pi}{L_0}\frac{2k+l}{3}\frac{l\pi}{L_0})}{\frac{l\pi}{L_0}} \sin{((l\pi+\frac{(2k+l)\pi}{3}))}\\
   &=-8\frac{l(2k+l)\pi^2}{3L_0} \cos{ \frac{(4k+2l)\pi}{3}}+\frac{4l\pi}{L_0}(\frac{(2k+l)^2}{3l^2 }-1)\sin{ \frac{(4k+2l)\pi}{3}}\\
   -&\frac{8\pi^2(l-k)(k+l)^2}{3l L_0}\cos{\frac{2(l-k)\pi}{3}}
   +\frac{8\pi^2k^2(k+2l)}{3l L_0}\cos{\frac{(2k+4l)\pi}{3}}\\
   -&2\frac{2\pi(2k+l)^2+6l(2k+l)\pi-4\pi(k^2+kl+l^2)}{3l L_0} \sin{\frac{2(l-k)\pi}{3}}\\
   -&2\frac{-2\pi(2k+l)^2+6l(2k+l)\pi+4\pi(k^2+kl+l^2)}{3l L_0} \sin{\frac{(2k+4l)\pi}{3}}\\
   &=-\frac{8\pi^2}{3l L_0}\left(l^2(2k+l)+(l-k)(k+l)^2-k^2(k+2l)\right)\\
   &=\frac{8\pi^2(k-l)(k+2l)(2k+l)}{3lL_0}.
    \end{align*}
    Thus, we know that the Hessian $\nabla^2 F$ at the point $(\frac{\pi}{L_0}\frac{2k+l}{3}, L_0)$ is 
    \begin{equation*}
   \nabla^2 F(\frac{\pi}{L_0}\frac{2k+l}{3}, L_0)=\left(
   \begin{array}{cc}
        24\pi\frac{(k^2+kl)L_0}{l}&\frac{8\pi^2(k-l)(k+2l)(2k+l)}{3lL_0}  \\
        \frac{8\pi^2(k-l)(k+2l)(2k+l)}{3lL_0}& -\frac{8\pi^3kl(k+l)}{L_0^3}
   \end{array}
   \right).
    \end{equation*}
    \item In the case $k\not\equiv l\mod{3}$, we need to check that $\p^2_L F$ is not vanishing at the point $(\frac{\pi}{L_0}\frac{2k+l}{3}, L_0)$. We take $k=3m+1+l$ for example, where $m\in\Z$ with $m\geq0$.  By the computation above, we know that in this situation, 
    \begin{align*}
    \p^2_L F(\frac{\pi}{L_0}\frac{2k+l}{3}, L_0)&=-8\frac{l(2k+l)^2\pi^3}{9L_0^3}
   \cos{\frac{(4k+2l)\pi}{3}} + \frac{8(l-k)^2(k+l)\pi^3}{9L_0^3} \cos{(
   \frac{2(l-k)\pi}{3})}\\
   -&\frac{8k(k+2l)^2\pi^3}{9L_0^3}) \cos{(\frac{2(k+2l)\pi}{3})}\\
   &=-8\frac{l(2k+l)^2\pi^3}{9L_0^3}
   \cos{\frac{(6l+12m+4)\pi}{3}} + \frac{8(l-k)^2(k+l)\pi^3}{9L_0^3} \cos{(
   -\frac{2(3m+1)\pi}{3})}\\
   -&\frac{8k(k+2l)^2\pi^3}{9L_0^3}) \cos{(\frac{2(3m+3l+1)\pi}{3})}\\
   &=8\frac{l(2k+l)^2\pi^3}{9L_0^3}
   \cos{\frac{\pi}{3}} - \frac{8(l-k)^2(k+l)\pi^3}{9L_0^3} \cos{
   \frac{\pi}{3}}+\frac{8k(k+2l)^2\pi^3}{9L_0^3}) \cos{\frac{\pi}{3}}\\
   &=8\frac{l(2k+l)^2\pi^3}{9L_0^3}\cos{\frac{\pi}{3}}\neq0.
   \end{align*}
\end{enumerate}
\end{proof}
\subsubsection{Proof of Proposition \ref{prop: Index set M-E}}\label{sec: proof of corollary index-lables}
\begin{proof}[Proof of Proposition \ref{prop: Index set M-E}]
We consider case by case. Indeed, for the detailed classification information, we refer to Section \ref{sec: Sharp spectral analysis and classification of critical lengths}.
\begin{enumerate}
     \item \textit{Case 1: $k=l$.} In this case, the eigenvalues at the critical length $L_0$ include $0$ as an eigenvalue. Moreover, $N_0$ is odd and all eigenvalues $\{\ii\lambda_{c,j}\}$ are labeled with $j\in\{\frac{1-N_0}{2},\cdots,-1,0,1,\cdots, \frac{N_0-1}{2}\}$. By Proposition \ref{prop: Asymp in L}, we know that for every $\lambda_{c,j}$, there exist two different $\lambda_{\sigma^+(j)}$ and $\lambda_{\sigma^-(j)}$ such that
    \begin{equation*}
        \lambda_{\sigma^{\pm}(j)}(L)=\lambda_{c,j}-\frac{(k_j-l_j)(k_j+2l_j)(2k_j+l_j)}{2\pi (k_j^2+k_jl_j+l_j^2)^2}(L-L_0)\pm\frac{\left|L-L_0\right|}{\pi \sqrt{k_j^2+k_jl_j+l_j^2}}+\bigO((L-L_0)^2).
    \end{equation*}
    Hence, $ \lim_{L\rightarrow L_0}\lambda_{\sigma^+(j)}(L)=\lim_{L\rightarrow L_0}\lambda_{\sigma^-(j)}(L)=\lambda_{c,j}$. In particular, when $j=0$, $\lambda_{\sigma^{\pm}(0)}(L)=\pm\frac{2\left|L-L_0\right|}{3L_0}+\bigO((L-L_0)^2)$. We define $\sigma^+$ by $\sigma^+(j)=2j+1$, for $0\leq j\leq \frac{N_0-1}{2}$ and $\sigma^+(j)=2j$, $\frac{1-N_0}{2}\leq j\leq-1$. We define $\sigma^-$ by $\sigma^-(j)=-\sigma^+(-j)$.
    Then, $\mathcal{M}_E=\{-N_0,\cdots,-1,1,\cdots,N_0\}$ and $\#\mathcal{M}_E=2N_0<\infty$.

    \item \textit{Case 2: $k\equiv l\mod{3}$ but $k\neq l$.} 
     In this case, $0$ is NOT an eigenvalue at the critical length $L_0$. Thus, $N_0$ is even and all eigenvalues $\{\ii\lambda_{c,j}\}$ are listed with $0<|j|\leq \frac{N_0}{2}$. By Proposition \ref{prop: Asymp in L}, we know that for every $\lambda_{c,j}$, there exist two different $\lambda_{\sigma^+(j)}$ and $\lambda_{\sigma^-(j)}$ such that $\lambda_{\sigma^{\pm}(j)}(L)=\lambda_{c,j}-\frac{(k_j-l_j)(k_j+2l_j)(2k_j+l_j)}{2\pi (k_j^2+k_jl_j+l_j^2)^2}(L-L_0)\pm\frac{\left|L-L_0\right|}{\pi \sqrt{k_j^2+k_jl_j+l_j^2}}+\bigO((L-L_0)^2)$. 
    Hence, $ \lim_{L\rightarrow L_0}\lambda_{\sigma^+(j)}(L)=\lim_{L\rightarrow L_0}\lambda_{\sigma^-(j)}(L)=\lambda_{c,j}$. We define $\sigma^+$ by $\sigma^+(j)=2j$, for $1\leq j\leq \frac{N_0}{2}$ and $\sigma^+(j)=2j+1$, $\frac{-N_0}{2}\leq j\leq-1$. We define $\sigma^-$ by $\sigma^-(j)=-\sigma^+(-j)$. Then, $\mathcal{M}_E=\{-N_0,\cdots,-1,1.\cdots,N_0\}$ and $\#\mathcal{M}_E=2N_0<\infty$. 
    \item \textit{Case 3: $k_1\not\equiv l_1\mod{3}$.} 
    In this case, $0$ is NOT an eigenvalue at the critical length $L_0$. Thus, $N_0$ is even and all eigenvalues $\{\ii\lambda_{c,j}\}$ are listed with $0<|j|\leq \frac{N_0}{2}$. By Proposition \ref{prop: Asymp in L}, we know that for every $\lambda_{c,j}$, there exists a unique $\lambda_{j}$ such that $\lambda_j(L)=\lambda_{c,j}-\frac{l(k+l)(2k+l)^2}{27kL_0^5}(L-L_0)^2+\bigO((L-L_0)^3)$.    Hence, $ \lim_{L\rightarrow L_0}\lambda_{j}(L)=\lambda_{c,j}$. Thus, $\mathcal{M}_E=\{-\frac{N_0}{2},\cdots,-1,1.\cdots,\frac{N_0}{2}\}$ and  $\#\mathcal{M}_E=N_0<\infty$.
\end{enumerate}
\end{proof}

\subsubsection{Remainders of the proof of Proposition \ref{prop: Low-frequency behaviors: uniform estimates}}\label{sec: Remainders of the proof-uniform-low}
\begin{proof}[Proof of Proposition \ref{prop: Low-frequency behaviors: uniform estimates}]
Before we begin our proof, we first notice that for any $L\in I\setminus\{L_0\}$, $\frac{L_0}{2}< L< 2L_0$ since that $0<\delta< \frac{L_0}{2}$. 
We argue by contradiction. Suppose that there exists $j_0\notin \mathcal{M}_E$ and $|\lambda_{j_0}|< K$ such that $|\E'_{j_0}(0)|=|\E'_{j_0}(L)|\leq\gamma$. For simplicity, we denote by $\Lambda=\lambda_{j_0}$ in this proof. We first extend the function $\E_{j_0}$ trivially past the endpoints of the interval $[0,L]$, we obtain a function 
$f(x)=\E_{j_0}(x)$, for $x\in[0,L]$, and $f(x)=0$, for $x\notin[0,L]$. We know that $f\in H^{\frac{3}{2}^-}(\R)$ and satisfies the equation for all $x\in\R$
\begin{equation}
    f'''+f'+\ii\Lambda f=2\E_{j_0}''(0)\delta_0-2\E_{j_0}''(L)\delta_L+\E_{j_0}'(0)\delta'_0-\E_{j_0}'(L)\delta'_L.
\end{equation}
Then, the extended function $f$, via Fourier transform, further satisfies the equation:
\begin{equation}
\Hat{f}(\xi)\cdot((\ii\xi)^3+\ii \xi+\ii\Lambda)=2\alpha-2\beta e^{-\ii\xi L}+\ii\theta\xi(1-e^{-\ii\xi L}),
\end{equation}
where $\alpha=\E_{j_0}''(0),\beta=\E_{j_0}''(L),\theta=\E_{j_0}'(0)=\E_{j_0}'(L)$. In addition, we know that $|\theta|\leq \gamma$. By Palay-Wiener theorem, it is easy to see that $\Hat{f}$ is a holomorphic function when we extend $\xi$ to complex values, as $f$ is compactly supported in $\R$. Therefore, away from the zeros of the polynomial $(\ii\xi)^3+\ii \xi+\ii\Lambda$, we have the following expression
\begin{equation}
    \Hat{f}(\xi)=\ii\frac{2\alpha-2\beta e^{-\ii\xi L}+\ii\theta\xi(1-e^{-\ii\xi L})}{\xi^3-\xi-\Lambda}.
\end{equation}
We claim that $(\alpha,\beta)\neq(0,0)$. In fact, suppose that $\alpha=\beta=0$, then $\ii\frac{\ii\theta\xi(1-e^{-\ii\xi L})}{\xi^3-\xi-\Lambda}\in L^2(\R)$. This implies that the quotient is an entire function. However, we know that $|\Lambda|< K$, all the roots of this polynomial $\xi^3-\xi-\Lambda$ lie in a disc of radius $R=R(K):=(1+\frac{3}{2}K)^{\frac{1}{3}}>1$ in the complex plane centered at the origin:
\begin{equation*}
|\xi|^3=|\xi+\Lambda|<K+|\xi|<K+\frac{1}{3}|\xi|^3+\frac{2}{3}.    
\end{equation*}
Let $\eta\in D_{2R}$ and $\Gamma_{3R}=\p D_{3R}$, by Cauchy's integral formula, we obtain
\begin{align*}
    |\Hat{f}(\eta)|&=|\ii\frac{\ii\theta\eta(1-e^{-\ii\eta L})}{\eta^3-\eta-\Lambda}|=|\frac{1}{2\pi\ii}\int_{\Gamma_{3R}}\ii\frac{\ii\theta\zeta(1-e^{-\ii\zeta L})}{(\zeta^3-\zeta-\Lambda)(\zeta-\eta)}d\zeta|.
\end{align*}
On the one hand, for both the numerator and the denominator, we have the following estimates:
\begin{align*}
  |\theta\zeta(1-e^{-\ii\zeta L})|&\leq (1+e^{3R L})|\zeta|\gamma\leq 3 R(1+e^{3R L})\gamma ,\forall\zeta\in \p D_{3R}\\
  |(\zeta^3-\zeta-\Lambda)(\zeta-\eta)|&\geq R(26K+\frac{52}{3})=\frac{52}{3}R^4>17R^4,\forall\zeta\in \p D_{3R},\eta\in D_{2R}.
\end{align*}
Hence, we know that $|\Hat{f}(\eta)|\leq |\frac{3 R(1+e^{3R L})\gamma}{17R^4}|\cdot 3R\leq \frac{9(1+e^{3R L})\gamma}{17R^2}$. On the other hand, for $\xi\in (D_{2R})^c$, we have the following estimates
\begin{align*}
    |\xi^3-\xi-\Lambda|\geq \frac{2}{3}|\xi|^3-\frac{1}{3}|\xi|\geq \frac{16}{3}R^3-\frac{2}{3}R>4R^3,\;
    \frac{|\xi|}{|\xi^3-\xi-\Lambda|}\leq \frac{|\xi|}{\frac{2}{3}|\xi|^3-\frac{1}{3}|\xi|}<\frac{2}{|\xi|^2}
\end{align*}
thus $|\Hat{f}(\xi)|\leq |\ii\frac{\ii\theta\xi(1-e^{-\ii\xi L})}{\xi^3-\xi-\Lambda}|<\frac{2\gamma(1+e^{3R L})}{|\xi|^2}$. As a consequence,
\begin{align*}
    \int_{\R}|\Hat{f}(\xi)|^2d\xi&=\int_{|\xi|>2R}|\Hat{f}(\xi)|^2d\xi+\int_{|\xi|\leq 2R}|\Hat{f}(\xi)|^2d\xi\\
    &\leq \int_{|\xi|>2R}(\frac{2\gamma(1+e^{3R L})}{|\xi|^2})^2d\xi+\int_{|\xi|\leq 2R} \frac{81(1+e^{3R L})^2\gamma^2}{17^2R^4}d\xi\\
    &\leq \left(\frac{(1+e^{3R L})^2}{3R^3}+4R\cdot\frac{81(1+e^{3R L})^2}{17^2R^4}\right)\gamma^2
    \leq \frac{2(1+e^{3R L})^2\gamma^2}{3R^3}.
\end{align*}
Hence, provided $\gamma< \frac{\sqrt{3}R^{\frac{3}{2}}}{\sqrt{2}(1+e^{6R L_0})}$ small enough, we know that $\int_{\R}|\Hat{f}(\xi)|^2d\xi<1$, which contradicts to our assumption that $\|f\|_{L^2}=1$. Moreover, by a simple variation of the proceeding argument, there exists a constant $\gamma_*=\gamma_*(K)$ such that $|\alpha|+|\beta|\geq \gamma_*$, which is forced by the normalized condition of $f$. Indeed, $\forall\eta\in D_{2R}$, we obtain the following estimate
\begin{align*}
  |\Hat{f}(\eta)|
    =|\frac{1}{2\pi}\int_{\Gamma_{3R}}\frac{2\alpha-2\beta e^{-\ii\zeta L}+\ii\theta\zeta(1-e^{-\ii\zeta L})}{(\zeta^3-\zeta-\Lambda)(\zeta-\eta)}d\zeta|
    \leq (\frac{9}{17R^2}+\frac{9R e^{3R L}}{17R^3})\gamma+\frac{3}{2R^3}(|\alpha|+e^{3L R}|\beta|).
\end{align*}
And for $\xi\in (D_{2R})^c\cap\R$, we have $ |\Hat{f}(\xi)|=\frac{|2\alpha-2\beta e^{-\ii\xi L}+\ii\theta\xi(1-e^{-\ii\xi L})|}{|\xi^3-\xi-\Lambda|}<\frac{4\gamma}{|\xi|^2}+\frac{6(|\alpha|+|\beta|)}{2|\xi|^3-|\xi|}<\frac{4\gamma}{|\xi|^2}+\frac{6(|\alpha|+|\beta|)}{|\xi|^3}$. 
Therefore, 
\begin{align*}
    \int_{\R}|\Hat{f}(\xi)|^2d\xi&=\int_{|\xi|>2R}|\Hat{f}(\xi)|^2d\xi+\int_{|\xi|\leq 2R}|\Hat{f}(\xi)|^2d\xi\\
     &\leq \int_{|\xi|>2R}\left(\frac{32\gamma^2}{|\xi|^4}+\frac{72(|\alpha|+|\beta|)^2}{|\xi|^6}\right)d\xi+\int_{|\xi|\leq 2R}\left(\frac{162(1+e^{3R L})^2\gamma^2}{289R^4}+\frac{9}{2R^6}(|\alpha|+e^{3L R}|\beta|)^2\right)d\xi\\
     &\leq \frac{3(3+2e^{6R L})\gamma^2}{R^3}+\frac{1+18e^{6L R}}{R^5}(|\alpha|+|\beta|)^2.
\end{align*}
Let $\gamma_*=\frac{R^{\frac{5}{2}}}{(1+18e^{12 L_0 R})^{\frac{1}{2}}}$. Thus, provided that $\gamma<\frac{(2\pi-1)R^{\frac{3}{2}}}{\sqrt{3(3+2e^{12R L_0})}}$, 
\begin{equation*}
\frac{1+18e^{12L_0 R}}{R^5}(|\alpha|+|\beta|)^2\geq \frac{1+18e^{6L R}}{R^5}(|\alpha|+|\beta|)^2
\geq\int_{\R}|\Hat{f}(\xi)|^2d\xi-\frac{3(3+2e^{6R L})\gamma^2}{R^3} >2\pi-(2\pi-1)=1.
\end{equation*}
 This implies that $|\alpha|+|\beta|>\gamma_*$. We define another constant $\varepsilon_*=\varepsilon_*(K)=\frac{1}{2}e^{-\frac{2}{L_0}\sqrt{\frac{3}{4}(1+3/2K)^{\frac{2}{3}}-1}}$.
In addition, we assume that $\varepsilon_*|\beta|\leq |\alpha| \leq \varepsilon_*^{-1}|\beta|$ and $\min\{|\alpha|,|\beta|\}\geq \frac{\varepsilon_*}{\varepsilon_*+1}\gamma_*$. Otherwise, either 
\begin{equation*}
    |\alpha|<\varepsilon_*|\beta|, \text{ with }|\beta|\geq \frac{1}{1+\varepsilon_*}\gamma_*,\;
\text{or }|\alpha| >\varepsilon_*^{-1}|\beta|, \text{ with }|\alpha|\geq \frac{1}{1+\varepsilon_*}\gamma_*.
\end{equation*}
If we are in the first case, the zeros of the numerator that lie in $D_R$ satisfy that $2\alpha-2\beta e^{-\ii\xi L}+\ii\theta\xi(1-e^{-\ii\xi L})=0$. 
This implies that $2|\beta|e^{L\Im{\xi}}\leq 2|\alpha|+\gamma R(1+e^{\Im{\xi} L})$.
Thus, we obtain 
\begin{equation*}
e^{L\Im{\xi}}\leq \frac{2|\alpha|+\gamma R}{2|\beta|-\gamma R}\leq \frac{2\varepsilon_*|\beta|+\gamma R}{2|\beta|-\gamma R}\leq\frac{2\varepsilon_*\gamma_*+(1+\varepsilon_*)\gamma R}{2\gamma_*-(1+\varepsilon_*)\gamma R}.   
\end{equation*}
While for the latter case, we know that $2|\alpha| \leq 2|\beta|e^{L\Im{\xi}}+\gamma R(1+e^{\Im{\xi} L})$. This implies that
\begin{equation*}
e^{L\Im{\xi}}\geq \frac{2|\alpha|-\gamma R}{2|\beta|+\gamma R}\geq \frac{2|\alpha|-\gamma R}{2\varepsilon_*|\alpha|+\gamma R}\geq \frac{2\gamma_*-(1+\varepsilon_*)\gamma R}{2\varepsilon_*\gamma_*+(1+\varepsilon_*)\gamma R} .   
\end{equation*}
Provided that $\gamma$ satisfies that $(1+\varepsilon_*)\gamma R<\frac{\gamma_*}{1+\varepsilon_*}$, we get the estimates 
\begin{equation}\label{eq: geq-Im-xi}
    |\Im{\xi}|\geq \frac{2}{L_0}\ln{\frac{2\gamma_*-(1+\varepsilon_*)\gamma R}{2\varepsilon_*\gamma_*+(1+\varepsilon_*)\gamma R}}\geq \frac{2}{L_0}\ln{\frac{1+2\varepsilon_*}{1+2\varepsilon_*+2\varepsilon_*^2}}.
\end{equation}
On the other hand, we turn to the zeros of the denominator, which all lie in $D_R$, $\xi^3-\xi-\Lambda=0$. Suppose that $\xi_0$ is a real root of $\xi^3-\xi-\Lambda=0$, and $|\Lambda|>\frac{2\sqrt{3}}{9}$, we know that all three roots are as follows
\begin{equation*}
    \xi_1=\xi_0,\xi_2=-\frac{1}{2}\xi_0+\ii\sqrt{\frac{3}{4}\xi_0^2-1},\xi_3=-\frac{1}{2}\xi_0-\ii\sqrt{\frac{3}{4}\xi_0^2-1}.
\end{equation*}
Thus, $|\Im{\xi}|=\sqrt{\frac{3}{4}\xi_0^2-1}\leq \sqrt{\frac{3}{4}(1+\frac{3}{2}K)^{\frac{2}{3}}-1}=\frac{2}{L_0}\ln{\frac{1}{2\varepsilon_*}}$. This is a contradiction to the estimates \eqref{eq: geq-Im-xi}. Thanks to the fact that $\varepsilon_*|\beta|\leq |\alpha|\leq \varepsilon_*^{-1}|\beta|$, all the roots of $2\alpha-2\beta e^{-\ii L\xi}$ are of the form:
\begin{equation*}
    \mu_0+\frac{2\pi n}{L},\text{ with }|\Re{\mu_0}|\leq \frac{\pi}{L},\Im{\mu_0}\leq \frac{1}{L}\ln{\frac{1}{\varepsilon_*}}.
\end{equation*}
Furthermore, if $\mu$ is such a zero of $2\alpha-2\beta e^{-\ii L\xi}$ that is in $D_R$, we pick a circle $\Gamma_r(\mu)$ in the complex plane centered at $\mu$ with radius $r\leq\min\{\frac{\pi}{L},R\}$. We claim that under certain conditions, which will be chosen later, there is only one solution of $2\alpha-2\beta e^{-\ii\xi L}+\ii\theta\xi(1-e^{-\ii\xi L})$ that lies in the domain $[-\frac{\pi}{8L}+\Re{\mu},\frac{\pi}{L}+\Re{\mu})\times \R$, in particular this root lies inside $\Gamma_r(\mu)$. \par
For any $\xi\in\Gamma_r(\mu)$, we have
\begin{align*}
|2\alpha-2\beta e^{-\ii L\xi}|^2&=|(2\alpha-2\beta e^{-\ii L\xi})-(2\alpha-2\beta e^{-\ii L\mu})|^2\\
&=|2\beta e^{-\ii L\mu}(e^{-\ii L\xi_r}-1)|^2\\
&=4|\alpha|^2\left((e^{Lrb}\cos{ra L}-1)^2+(e^{Lrb}\sin{raL})^2\right),
\end{align*}
where $\xi_r=\xi-\mu=r(a+\ii b)$ with $a^2+b^2=1$. 
\begin{enumerate}
    \item If $|a|\geq\frac{1}{8}$, we know that $\frac{Lr}{8}\leq |aLr|\leq \frac{\pi}{8}$. Thus, we obtain
$(e^{Lrb}\sin{raL})^2\geq (e^{-Lr}\frac{|raL|}{2})^2\geq (\frac{1}{3}\cdot\frac{Lr}{16})^2\geq (\frac{Lr}{48})^2$.
    \item If $|a|<\frac{1}{8}$, it is easy to see that $|b|\geq\frac{7}{8}$. If $b\leq-\frac{7}{8}$, then $(e^{Lrb}\cos{ra L}-1)^2\geq (1-e^{-\frac{7Lr}{8}})^2\geq (\frac{Lr}{48})^2$. Else $b\geq \frac{7}{8}$, $(e^{Lrb}\cos{ra L}-1)^2\geq  (\frac{Lr}{2})^2$
\end{enumerate}
Therefore, we know that $|2\alpha-2\beta e^{-\ii L\xi}|\geq 2|\alpha|\frac{Lr}{48}\geq \frac{\varepsilon_*\gamma_*Lr}{24(1+\varepsilon_*)},\forall\xi\in\Gamma_r(\mu)$. Provided that $\gamma R(1+e^{2L_0R})<\frac{\varepsilon_*\gamma_*L_0r}{96(1+\varepsilon_*)}$, which implies that $\gamma R(1+e^{LR})\leq \frac{\varepsilon_*\gamma_*Lr}{48(1+\varepsilon_*)},\forall L\in(L_0-\delta,L_0+\delta)$, we obtain
\begin{equation}\label{eq: no root at the circle}
|2\alpha-2\beta e^{-\ii\xi L}+\ii\theta\xi(1-e^{-\ii\xi L})|\geq    \frac{\varepsilon_*\gamma_*Lr}{48(1+\varepsilon_*)},  \forall\xi\in\Gamma_r(\mu).  
\end{equation}
Moreover, under the condition above, we claim that there is no root in $[-\frac{\pi}{8L}+\Re{\mu},\frac{\pi}{L}+\Re{\mu})\times \R\setminus D_r(\mu)$. In fact, for any $\xi\in D_R\cap [-\frac{\pi}{8L}+\Re{\mu},\frac{\pi}{L}+\Re{\mu})$, we have the following two cases.
\begin{enumerate}
    \item If $|\Im(\xi-\mu)|\geq \frac{r}{2}$, then 
$|2\alpha-2\beta e^{-\ii L\xi}|=2|\alpha||e^{-\ii L(\xi-\mu)}-1|\geq \frac{\varepsilon_*\gamma_*Lr}{24(1+\varepsilon_*)}$.
    \item If $|\Re(\xi-\mu)|\geq \frac{r}{2}$ and $|\Im(\xi-\mu)|< \frac{r}{2}$, then we know that 
    \begin{equation*}
     |2\alpha-2\beta e^{-\ii L\xi}|=2|\alpha||e^{ L\Im(\xi-\mu)}\sin{(\Re(\xi-\mu)L)}|\geq \frac{\varepsilon_*\gamma_*Lr}{24(1+\varepsilon_*)}.
    \end{equation*}
\end{enumerate}
Next, we aim to show that, shrinking the upper bound of $\gamma$ if necessary, there is exactly one root inside $\Gamma_r(\mu)$. As shown in \eqref{eq: no root at the circle}, there is no solution on $\Gamma_r(\mu)$. Therefore, the number of solutions (counting multiplicity) inside $\Gamma_r(\mu)$ is given by
\begin{equation*}
    \frac{1}{2\pi\ii}\int_{\Gamma_r(\mu)}\frac{\left(2\alpha-2\beta e^{-\ii\xi L}+\ii\theta\xi(1-e^{-\ii\xi L})\right)'}{2\alpha-2\beta e^{-\ii\xi L}+\ii\theta\xi(1-e^{-\ii\xi L})}d\xi=\frac{1}{2\pi\ii}\int_{\Gamma_r(\mu)}\frac{2\ii L \beta e^{-\ii\xi L}+\ii\theta(1-e^{-\ii\xi L})-\theta\xi L e^{-\ii\xi L}}{2\alpha-2\beta e^{-\ii\xi L}+\ii\theta\xi(1-e^{-\ii\xi L})}d\xi.
\end{equation*}
Since $\mu$ is the only solution for $2\alpha-2\beta e^{-\ii L\xi}=0$, we know that
\begin{equation*}
    1=\frac{1}{2\pi\ii}\int_{\Gamma_r(\mu)}\frac{\left(2\alpha-2\beta e^{-\ii\xi L}\right)'}{2\alpha-2\beta e^{-\ii\xi L}}d\xi=\frac{1}{2\pi\ii}\int_{\Gamma_r(\mu)}\frac{2\ii L \beta e^{-\ii\xi L}}{2\alpha-2\beta e^{-\ii\xi L}}d\xi.
\end{equation*}
Define an integer $N_r$ by 
\begin{equation*}
   N_r:= \left|\frac{1}{2\pi\ii}\int_{\Gamma_r(\mu)}\frac{2\ii L \beta e^{-\ii\xi L}+\ii\theta(1-e^{-\ii\xi L})-\theta\xi L e^{-\ii\xi L}}{2\alpha-2\beta e^{-\ii\xi L}+\ii\theta\xi(1-e^{-\ii\xi L})}d\xi-\frac{1}{2\pi\ii}\int_{\Gamma_r(\mu)}\frac{2\ii L \beta e^{-\ii\xi L}}{2\alpha-2\beta e^{-\ii\xi L}}d\xi\right|.
\end{equation*}
It suffices to find a sufficient condition such that $N_r<1$.
\begin{align*}
N_r&=\left|\frac{1}{2\pi\ii}\int_{\Gamma_r(\mu)}\frac{2 L \beta \theta\xi e^{-\ii\xi L}(1-e^{-\ii\xi L})}{\left(2\alpha-2\beta e^{-\ii\xi L}+\ii\theta\xi(1-e^{-\ii\xi L})\right)\left(2\alpha-2\beta e^{-\ii\xi L}\right)}d\xi+\frac{1}{2\pi\ii}\int_{\Gamma_r(\mu)}\frac{\ii\theta(1-e^{-\ii\xi L})-\theta\xi L e^{-\ii\xi L}}{2\alpha-2\beta e^{-\ii\xi L}}d\xi\right|\\
&\leq \frac{1}{2\pi}\int_{\Gamma_r(\mu)}\left|\frac{2 L \beta \theta\xi e^{-\ii\xi L}(1-e^{-\ii\xi L})}{\left(2\alpha-2\beta e^{-\ii\xi L}+\ii\theta\xi(1-e^{-\ii\xi L})\right)\left(2\alpha-2\beta e^{-\ii\xi L}\right)}\right|d\xi+\frac{1}{2\pi}\int_{\Gamma_r(\mu)}\left|\frac{\ii\theta(1-e^{-\ii\xi L})-\theta\xi L e^{-\ii\xi L}}{2\alpha-2\beta e^{-\ii\xi L}}\right|d\xi\\
&\leq r\left(\frac{2 L|\beta| \gamma R e^{RL}(1+e^{R L})}{\frac{\varepsilon_*\gamma_*Lr}{48(1+\varepsilon_*)}\cdot\frac{|\alpha|Lr}{24}}+\frac{\gamma(1+e^{R L})+\gamma R L e^{R L}}{\frac{\varepsilon_*\gamma_*L}{24(1+\varepsilon_*)}}\right)\\
&\leq 48(1+\varepsilon_*)\gamma\left(\frac{96 R e^{2RL_0}(1+e^{2R L_0})}{r\varepsilon_*^2\gamma_*L_0}+\frac{1+e^{2R L_0}+ 2R L_0 e^{2R L_0}}{\varepsilon_*\gamma_*L_0}\right).
\end{align*}
Thus, it suffices to require that $\gamma$ satisfies $48(1+\varepsilon_*)\gamma\left(\frac{96 R e^{2RL_0}(1+e^{2R L_0})}{r\varepsilon_*^2\gamma_*L_0}+\frac{1+e^{2R L_0}+ 2R L_0 e^{2R L_0}}{\varepsilon_*\gamma_*L_0}\right)<1$.
Hence, we conclude that all zeros of the numerator $2\alpha-2\beta e^{-\ii\xi L}+\ii\theta\xi(1-e^{-\ii\xi L})$ in $D_R$ are of the form $\mu_0+\frac{2k\pi}{L}+\bigO(r),k\in\Z$, where $|\mu_0|\leq \frac{2\pi}{L}$. Because all zeros of the denominator should also be the zeros of the numerator, assuming that $\xi_0^3-\xi_0=\Lambda$, we know that the roots of the denominator are of the form
\begin{equation*}
     \xi_1=\xi_0,
    \xi_2=\xi_0+\frac{2k\pi}{L}+2\bigO(r),
    \xi_3=\xi_0+\frac{2(k+l)\pi}{L}+2\bigO(r),
\end{equation*}
where $k,l$ are positive integers. 
Since $\xi_1+\xi_2+\xi_3=0,\xi_1\xi_2+\xi_2\xi_3+\xi_3\xi_1=-1,\xi_1\xi_2\xi_3=-\Lambda$, therefore, we obtain $3\xi_0+(2k+l)\frac{2\pi}{L}+4\bigO(r)=0$.
Therefore, we know that $|\xi_0+(2k+l)\frac{2\pi}{3L}|\leq \frac{4}{3}r$. Since $j\notin \mathcal{M}_E$, we know that $\min_{k,l}|\xi_0+(2k+l)\frac{2\pi}{3L}|>0$.
In particular, if we require that $\frac{4}{3}r< \min_{k,l}|\xi_0+(2k+l)\frac{2\pi}{3L}|$. Then, we obtain a contradiction, which implies that there exists a constant $\gamma=\gamma(K)>0$ such that for any eigenfunction $\E_j$ associated to $\lambda_j$ satisfying $|\lambda_j|\leq K$ and $j\notin\mathcal{M}_E$, we obtain $|\E_j'(0)|= |\E_j'(L)|\geq \gamma$.
\end{proof}
\subsubsection{Rotation structure of Type 1 and Type 2 eigenfunctions}\label{sec: rotation structure of eigenmodes}
In this part, we include some computation details in Remark \ref{rem: rotation of eigenmodes}. Recall that
\begin{equation*}
\begin{pmatrix}
\E_j^+\\
\E_j^-
\end{pmatrix}=
\begin{pmatrix}
C^+_1&C^+_2\\
C^-_1&C^-_2
\end{pmatrix}
\begin{pmatrix}
\G_j\\
G_j
\end{pmatrix}+\bigO(|L-L_0|),
\end{equation*}
\begin{gather*}
C_{1}^{\pm}:=-\frac{-2\pi^2(k^2 + 4kl + l^2) \pm \sqrt{3}\pi L_0(k-l)}{\sqrt{3} L_0 \sqrt{6L_0^2 \pm 2\sqrt{3}\pi L_0(k-l)}},\;\;C^{\pm}_{2}:=- \frac{\pi(k+l)(2\pi(k-l) \pm \sqrt{3}L_0)}{L_0 \sqrt{6L_0^2 \pm 2\sqrt{3}\pi L_0(k-l)}}.
\end{gather*}
First, we show that the coefficient matrix $C_{Rot}:=\begin{pmatrix}
C^+_1&C^+_2\\
C^-_1&C^-_2
\end{pmatrix}\in O(2)$. 
\begin{lem}
$C_{Rot}$ is an orthogonal matrix.
\end{lem}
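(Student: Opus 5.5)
To show $C_{Rot}$ is orthogonal, I would verify directly that its rows are unit vectors and mutually orthogonal; both checks are algebraic identities in $k,l,L_0$. The cleanest route uses the angle $\theta=\theta(k,l)$ with $\cos\theta=\frac{\pi(k-l)}{\sqrt3 L_0}$. Since $L_0^2=\frac{4\pi^2}{3}(k^2+kl+l^2)$, we also get $\sin\theta=\frac{\pi(k+l)}{L_0}$, because
\[
\frac{\pi^2(k-l)^2}{3L_0^2}+\frac{\pi^2(k+l)^2}{L_0^2}=\frac{4\pi^2(k^2+kl+l^2)}{3L_0^2}=1 .
\]
I would substitute $k-l=\frac{\sqrt3L_0}{\pi}\cos\theta$, $k+l=\frac{L_0}{\pi}\sin\theta$ and $k^2+4kl+l^2=\frac{L_0^2}{\pi^2}\bigl(\frac32\sin^2\theta\cdot 2-\frac12\cos^2\theta\cdot\ldots\bigr)$. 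The exact combination for $k^2+4kl+l^2$ comes from $\frac32(k+l)^2-\frac12(k-l)^2$, which gives $\frac{L_0^2}{2\pi^2}(3\sin^2\theta-3\cos^2\theta)$.

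With these substitutions the radical becomes $\sqrt{6L_0^2\pm 6L_0^2\cos\theta}=\sqrt6 L_0\sqrt{1\pm\cos\theta}$. The half-angle formulas then apply: $\sqrt{1+\cos\theta}=\sqrt2\cos\frac\theta2$ and $\sqrt{1-\cos\theta}=\sqrt2\sin\frac\theta2$, using $\theta\in(\frac\pi3,\frac\pi2)$ for positivity. The numerators simplify as follows:
\[
2\pi(k-l)\pm\sqrt3L_0=2\sqrt3L_0\bigl(\cos\theta\pm\tfrac12\bigr),
\]
\[
-2\pi^2(k^2+4kl+l^2)\pm\sqrt3\pi L_0(k-l)=-3L_0^2(\sin^2\theta-\cos^2\theta)\pm 3L_0^2\cos\theta .
\]

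Each entry then reduces to a trigonometric expression in $\theta$. For example,
\[
C_2^{+}=-\frac{\sin\theta\,(\cos\theta+\frac12)}{\cos\frac\theta2}.
\]
Writing $\cos\theta+\frac12$ via product-to-sum formulas should give $-\sin\frac{3\theta}{2}$, since $\sin\theta(\cos\theta+\frac12)=\cos\frac\theta2\sin\frac{3\theta}2$. I would check this using $2\sin\frac\theta2\cos\frac\theta2(2\cos^2\frac\theta2-\frac12)$ and the triple-angle formula. Similarly I expect $C_1^+=-\cos\frac{3\theta}2$, $C_1^-=\sin\frac{3\theta}{2}$ and $C_2^-=-\cos\frac{3\theta}2$, matching the matrix displayed in Remark~\ref{rem: rotation of eigenmodes}. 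Once this holds, $C_{Rot}^TC_{Rot}=I$ is immediate because $\cos^2+\sin^2=1$.

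The main obstacle is bookkeeping: getting the $\pm$ signs and the half-angle denominators consistent, especially in $C_1^\pm$, where the numerator $3(\cos^2\theta-\sin^2\theta)\pm3\cos\theta$ must factor as $\cos\frac\theta2\cdot(\text{cubic in half-angles})$. If the trigonometric identification proves stubborn, I would instead verify $(C_1^\pm)^2+(C_2^\pm)^2=1$ and $C_1^+C_1^-+C_2^+C_2^-=0$ directly. In that route I would clear the common denominator $3L_0^2(6L_0^2\pm2\sqrt3\pi L_0(k-l))$ and reduce with $3L_0^2=4\pi^2(k^2+kl+l^2)$, which is a finite polynomial identity in $k,l$. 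For the cross term, the product of the two radicals is $\sqrt{36L_0^4-12\pi^2L_0^2(k-l)^2}=6L_0^2\sin\theta$, which keeps that computation rational.
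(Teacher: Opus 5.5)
Your proof is correct. With $\cos\theta=\frac{\pi(k-l)}{\sqrt3L_0}$ and $\sin\theta=\frac{\pi(k+l)}{L_0}$, the two radicals become $2\sqrt3L_0\cos\frac{\theta}{2}$ and $2\sqrt3L_0\sin\frac{\theta}{2}$. The identities
\[
\cos2\theta+\cos\theta=2\cos\frac{3\theta}{2}\cos\frac{\theta}{2},\qquad \cos2\theta-\cos\theta=-2\sin\frac{3\theta}{2}\sin\frac{\theta}{2},
\]
\[
\sin\theta\left(\cos\theta+\frac12\right)=\cos\frac{\theta}{2}\sin\frac{3\theta}{2},\qquad \sin\theta\left(\cos\theta-\frac12\right)=\sin\frac{\theta}{2}\cos\frac{3\theta}{2}
\]
then give exactly the four entries you predict. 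Two cosmetic fixes are needed. Delete the garbled first attempt at $k^2+4kl+l^2$, since the combination $\frac32(k+l)^2-\frac12(k-l)^2$ that follows it is correct. Also, in $C_1^-$ the numerator factors through $\sin\frac{\theta}{2}$, not $\cos\frac{\theta}{2}$, which is what cancels the denominator $\sqrt{1-\cos\theta}$.

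Your main argument takes a different route from the paper's proof of this lemma; the paper uses your fallback. It clears denominators and checks $C_1^+C_1^-+C_2^+C_2^-=0$ and $(C_1^+)^2+(C_2^+)^2=1$ as polynomial identities via $3L_0^2=4\pi^2(k^2+kl+l^2)$, dismissing the second row with ``similarly''. Only in the subsequent proposition does it derive the closed form in $\frac{3\theta}{2}$, by the same half-angle computation you perform.

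Your ordering makes the lemma an immediate corollary of that stronger statement. A single computation yields both results and handles both rows uniformly. The paper's direct check, by contrast, is purely algebraic and needs no trigonometric parametrization.

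The sign bookkeeping you flag as the main obstacle is in fact irrelevant for orthogonality. Each radical multiplies an entire row of $C_{Rot}$, so a wrong sign choice would only flip a row and $C_{Rot}^TC_{Rot}=I$ would survive. Positivity of $\cos\frac{\theta}{2}$ and $\sin\frac{\theta}{2}$ matters only for identifying $C_{Rot}$ as the rotation by $\pi-\frac{3\theta}{2}$. It holds for all $\theta\in(0,\pi)$, including the case $k=l$ where $\theta=\frac{\pi}{2}$, which lies outside the open interval $(\frac{\pi}{3},\frac{\pi}{2})$ you quote.
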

\begin{proof}
At first glance, we could expand $L_0=2\pi\sqrt{\frac{k^2+kl+l^2}{3}}$, then check that $(C^+_1)^2+(C^+_2)^2=(C^-_1)^2+(C^-_2)^2=1$ and $C^+_1C^-_1+C^-_2C^-_2=0$. Let us first check the orthogonality. 
\begin{align*}
C^+_1C^-_1&=\frac{-2\pi^2(k^2 + 4kl + l^2) + \sqrt{3}\pi L_0(k-l)}{\sqrt{3} L_0 \sqrt{6L_0^2 +2\sqrt{3}\pi L_0(k-l)}}\frac{-2\pi^2(k^2 + 4kl + l^2) -\sqrt{3}\pi L_0(k-l)}{\sqrt{3} L_0 \sqrt{6L_0^2- 2\sqrt{3}\pi L_0(k-l)}}\\
&=\frac{4\pi^4(k^2 + 4k l + l^2)^2 - 3\pi^2 L_0^2(k-l)^2}{3 L_0^2 \sqrt{36L_0^4 -12\pi^2 L_0^2(k-l)^2}},\\
C^+_2C^-_2&=\frac{\pi(k+l)(2\pi(k-l) + \sqrt{3}L_0)}{L_0 \sqrt{6L_0^2+ 2\sqrt{3}\pi L_0(k-l)}}\frac{\pi(k+l)(2\pi(k-l)-\sqrt{3}L_0)}{L_0 \sqrt{6L_0^2-2\sqrt{3}\pi L_0(k-l)}}\\
&=\frac{\pi^2(k+l)^2(4\pi^2(k-l)^2- 3L_0^2)}{L_0^2 \sqrt{36L_0^4-12 \pi^2 L_0^2(k-l)^2}}.
\end{align*}
It suffices to prove that $4\pi^4(k^2 + 4k l + l^2)^2 - 3\pi^2 L_0^2(k-l)^2+3\pi^2(k+l)^2(4\pi^2(k-l)^2- 3L_0^2)=0$. We directly compute this term, using that $3L_0^2=4\pi^2(k^2 + k l + l^2)$:
\begin{align*}
4\pi^4(k^2 + 4k l + l^2)^2 - 3\pi^2 L_0^2(k-l)^2+3\pi^2(k+l)^2(4\pi^2(k-l)^2- 3L_0^2)\\
=4\pi^4\left((k^2 + 4k l + l^2)^2+3(k^2-l^2)^2\right)-3\pi^2L_0^2((k-l)^2+3(k+l)^2)\\
=16\pi^4(k^2+l^2+kl)^2-12\pi^2L_0^2(k^2+l^2+kl)\\
=9L_0^4-3L_0^2\times3L_0^2=0.
\end{align*}
Then, we check the norm of the first row. Let 
\begin{align*}
D=\sqrt{3} L_0 \sqrt{6L_0^2 + 2\sqrt{3}\pi L_0(k-l)},\\
N_1=-2\pi^2(k^2 + 4kl + l^2)+ \sqrt{3}\pi L_0(k-l),\\
N_2=\sqrt{3}\pi(k+l)(2\pi(k-l) + \sqrt{3}L_0).
\end{align*}
It suffices to check that $N_1^2+N_2^2=D^2$. For the term involving $\pi^4$, we have
\begin{equation*}
4\pi^4 \left((k^2+4kl+l^2)^2 + 3(k^2-l^2)^2 \right)=9L_0^4.
\end{equation*}
For $\pi^2$ term: $3\pi^2 L_0^2 \left( (k-l)^2 + 3(k+l)^2 \right)= 12\pi^2 L_0^2(k^2+kl+l^2) = 9L_0^4$. For the mixing term involving $\pi^3$, we have
\begin{align*}
-4\sqrt{3}\pi^3L_0(k-l)(k^2 + 4kl + l^2)+ 12\sqrt{3}\pi^3L_0(k-l)(k+l)^2\\
=-4\sqrt{3}\pi^3L_0(k-l)\left(k^2 + 4kl + l^2-3(k+l)^2\right)\\
=8\sqrt{3}\pi^3L_0(k-l)(k^2+kl+l^2)\\
=6\sqrt{3}\pi L_0^3(k-l).
\end{align*}
Therefore, we obtain $N_1^2+N_2^2=18L_0^4+6\sqrt{3}\pi L_0^3(k-l)=D^2$. Similarly, $(C^-_1)^2+(C^-_2)^2=1$.
\end{proof}
Then, we have the following result.
\begin{prop}
$C_{Rot}=\begin{pmatrix}
-\cos\left(\frac{3\theta}{2}\right) & -\sin\left(\frac{3\theta}{2}\right) \\
\sin\left(\frac{3\theta}{2}\right) & -\cos\left(\frac{3\theta}{2}\right)
\end{pmatrix}$ is a rotation matrix where $\cos{\theta}=\frac{\pi(k-l)}{\sqrt{3}L_0}=\poscals{\G_j}{\widetilde{\G}_j}_{L^2(0,L_0)}$. 
\end{prop}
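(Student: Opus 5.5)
Having already shown that $C_{Rot}$ is orthogonal, the plan is to identify its entries through the half-angle formulas. Set $s:=\sqrt{3}L_0$ and $d:=2\pi(k-l)$. Then $\cos\theta=d/(2s)$, and $3L_0^2=4\pi^2(k^2+kl+l^2)$ gives $s^2=4\pi^2(k^2+kl+l^2)$. With this notation the common denominator becomes $L_0\sqrt{6L_0^2\pm2\sqrt3\pi L_0(k-l)}=\frac{s}{\sqrt3}\sqrt{2s^2\pm sd}$. Since $2s^2\pm sd=2s^2(1\pm\cos\theta)$, the radicals evaluate to
\[
\sqrt{2s^2\pm sd}=2s\cos(\theta/2) \quad\text{or}\quad 2s\sin(\theta/2),
\]
where both choices are positive because $\theta\in(\pi/3,\pi/2)$.

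Next I write the numerators in terms of $\theta$. Take $C_2^{\pm}$ first. From $(k+l)^2=\frac{4}{3}(k^2+kl+l^2)-\frac13(k-l)^2$ one gets $\pi(k+l)=\frac{s}{\sqrt3}\sin\theta$. Also $d\pm s=s(2\cos\theta\pm1)$, and I will use the identities $2\cos\theta+1=\frac{\sin(3\theta/2)}{\sin(\theta/2)}$ and $2\cos\theta-1=\frac{\cos(3\theta/2)}{\cos(\theta/2)}$. Combined with $\sin\theta=2\sin(\theta/2)\cos(\theta/2)$, these should make $C_2^+=-\sin(3\theta/2)$ and $C_2^-=-\cos(3\theta/2)$ drop out directly.

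For $C_1^{\pm}$ the numerator is $2\pi^2(k^2+4kl+l^2)\mp\sqrt3\pi L_0(k-l)$. I rewrite $k^2+4kl+l^2=2(k^2+kl+l^2)-(k-l)^2$, which gives $2\pi^2(k^2+4kl+l^2)=s^2-\frac12 d^2=s^2(1-2\cos^2\theta)$ and $\sqrt3\pi L_0(k-l)=\frac{sd}{2}=s^2\cos\theta$. The numerator is therefore $s^2(1-2\cos^2\theta\mp\cos\theta)$. Factoring this expression should give $-s^2(2\cos\theta-1)(\cos\theta+1)$ for the upper sign and $s^2(2\cos\theta+1)(1-\cos\theta)$ for the lower sign. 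After dividing by $s\sqrt{2s^2(1\pm\cos\theta)}$ and applying the same half-angle identities, the results should be $C_1^{+}=-\cos(3\theta/2)$ and $C_1^-=\sin(3\theta/2)$, up to checking signs.

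The main obstacle is sign and labeling bookkeeping. I have to decide which of $1\pm\cos\theta$ pairs with which radical, and then check that the four signs produce exactly the stated matrix rather than one of its reflections. I will use the orthogonality lemma as a consistency check, since it already forces $(C_1^\pm)^2+(C_2^\pm)^2=1$, so a single magnitude computation per row is enough. Last, I will verify $\cos\theta=\langle\G_j,\widetilde{\G}_j\rangle$ from the inner product stated in Remark \ref{rem: rotation of eigenmodes}, noting the normalizations there.
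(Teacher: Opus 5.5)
Your proposal is correct and follows essentially the paper's route. You use $3L_0^2=4\pi^2(k^2+kl+l^2)$ to write $\pi(k-l)$ and $k^2+4kl+l^2$ in terms of $\cos\theta$, reduce the radicals to $\sqrt{12}\,L_0\cos\frac{\theta}{2}$ or $\sqrt{12}\,L_0\sin\frac{\theta}{2}$, and collapse the numerators with half- and triple-angle identities. The paper instead uses $\cos 2\theta+\cos\theta=2\cos\frac{3\theta}{2}\cos\frac{\theta}{2}$ for $C_1^+$ only and says the other entries are analogous, while your factorizations carry out all four. Your signs reproduce exactly the stated matrix, which is rotation by $\pi-\frac{3\theta}{2}$, and the inner-product identity $\cos\theta=\frac{\pi(k-l)}{\sqrt3 L_0}$ holds with the normalizations of Proposition \ref{prop: type-1-2}.
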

\begin{proof}
Using this $\theta$, we simplify the notation of $C^+_1$. By $3L_0^2=4\pi^2(k^2 + k l + l^2)$, we know that
\begin{equation*}
2\pi^2(k^2+4kl+l^2) = 4\pi^2\left(\frac{3L_0^2}{4\pi^2}\right) - 2\pi^2\left(\frac{3L_0^2}{\pi^2}\cos^2\theta\right) = 3L_0^2(1 - 2\cos^2\theta) = -3L_0^2\cos{2\theta}
\end{equation*}
For the denominator, we have
\begin{equation*}
\sqrt{6L_0^2+ 2\sqrt{3}\pi L_0(k-l)} = \sqrt{6L_0^2(1 +\cos\theta)} = \sqrt{12}L_0 
\cos\left(\frac{\theta}{2}\right).
\end{equation*}
So we deduce that 
\begin{equation*}
C_1^+ = -\frac{3L_0^2\cos(2\theta)+ 3L_0^2\cos\theta}{\sqrt{3}L_0 \times \sqrt{12}L_0 \times \cos(\frac{\theta}{2})} = -\frac{\cos(2\theta) + \cos\theta}{2 \cos(\frac{\theta}{2})}=-\frac{2\cos(\frac{3\theta}{2})\cos(\frac{\theta}{2})}{2\cos(\frac{\theta}{2})} = -\cos\left(\frac{3\theta}{2}\right)
\end{equation*}
All three other terms can be treated similarly. Then we conclude
$C_{Rot}=\begin{pmatrix}
-\cos\left(\frac{3\theta}{2}\right) & -\sin\left(\frac{3\theta}{2}\right) \\
\sin\left(\frac{3\theta}{2}\right) & -\cos\left(\frac{3\theta}{2}\right)
\end{pmatrix}$ is a rotation matrix where $\cos{\theta}=\frac{\pi(k-l)}{\sqrt{3}L_0}=\poscals{\G_j}{\widetilde{\G}_j}_{L^2(0,L_0)}$. 
\end{proof}

\subsubsection{Growth bounds for $C^0-$semigroup}
In order to consider the spectral localization properties of the operator $\A$, we give the definition of growth bound and essential growth bound of the infinitesimal generator of a linear $C^0-$semigroup. We refer to \cite[Definition 4.15]{Webb-book} or \cite[Definition 2.1]{Chu-Coron-Shang} for this definition.
\begin{defi}\label{defi: growth bounds}
Let $K:D(K)\subset X\rightarrow X$ be the infinitesimal generator of a linear $C^0-$semigroup $\{S_K(t)\}_{t\geq0}$ on a Banach space $X$. We define $\omega_0(K)\in[-\infty,+\infty)$ the growth bound of $K$ by
\begin{equation*}
\omega_0(K):=\lim_{t\rightarrow+\infty}\frac{\ln{\|S_K(t)\|_{\mathcal{L}(X)}}}{t}.
\end{equation*}
The essential growth bound of $\omega_{ess}(K)\in[-\infty,+\infty)$ of $K$ is defined by 
\begin{equation*}
\omega_{ess}(K):=\lim_{t\rightarrow+\infty}\frac{\ln{\|S_K(t)\|_{ess}}}{t},
\end{equation*}
where $\|S_K(t)\|_{ess}$ is the essential norm of $S_K(t)$ defined by 
\begin{equation*}
\|S_K(t)\|_{ess}:=\kappa(S_K(t)B_X(0,1)), 
\end{equation*}
where $B_X(0,1)$ is the unit ball in $X$ and, for each bounded set $B$, 
\begin{equation*}
\kappa(B):=\inf\{\varepsilon>0: B \text{ can be covered by a finite number of balls of radius }\leq \varepsilon \}
\end{equation*}
is the Kuratovsky measure of non-compactness.
\end{defi}
The following result is proved by Webb \cite[Proposition 4.11]{Webb-book} and by Engel and Nagel \cite[Corollary 2.11]{Engel-Nagel-book}.
\begin{thm}\label{thm: finite eigenvalues in the strip}
Let $K:D(K)\subset X\rightarrow X$ be the infinitesimal generator of a linear $C^0-$semigroup $\{S_K(t)\}_{t\geq0}$ on a Banach space $X$. Then $\omega_0(K)=\max(\omega_{ess}(K),\max_{\lambda\in\sigma(K)\setminus\sigma_{ess}(K)}\Re{\lambda})$. Furthermore, if $\omega_{ess}(K)<\omega_0(K)$, then for each $\gamma\in(\omega_{ess}(K),\omega_0(K)] $, $\{\lambda\in\sigma(K):\Re{\lambda}\geq\gamma\}\subset\sigma_p(K)$ is non-empty, finite, and contains only poles of the resolvent of $K$.
\end{thm}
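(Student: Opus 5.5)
The statement just above is the spectral characterization of the growth bound, $\omega_0(K)=\max(\omega_{ess}(K),\sup_{\lambda\in\sigma(K)\setminus\sigma_{ess}(K)}\Re\lambda)$, together with the finiteness of the spectrum in the strip $\{\Re\lambda\ge\gamma\}$ for $\gamma>\omega_{ess}(K)$. The plan is to transfer everything to the semigroup operators at a fixed time and use the spectral theory of bounded operators relative to the Kuratowski measure of noncompactness.

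\textbf{Step 1: spectral radii at a fixed time.} By submultiplicativity of $t\mapsto\|S_K(t)\|$ and of $t\mapsto\|S_K(t)\|_{ess}$, Fekete's lemma gives, for every $t>0$,
\[
r(S_K(t))=e^{t\omega_0(K)},\qquad r_{ess}(S_K(t))=e^{t\omega_{ess}(K)}.
\]
Here the second identity uses Nussbaum's formula $r_{ess}(T)=\lim_n\kappa(T^n B_X(0,1))^{1/n}$.

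\textbf{Step 2: structure of the spectrum outside the essential radius.} By the Gohberg--Nussbaum theory, the part of $\sigma(S_K(t))$ lying in $\{|\mu|>r_{ess}(S_K(t))\}$ consists of isolated eigenvalues of finite algebraic multiplicity, i.e. poles of the resolvent. These can accumulate only on the circle of radius $r_{ess}(S_K(t))$. Consequently
\[
r(S_K(t))=\max\bigl(r_{ess}(S_K(t)),\ \sup\{|\mu|:\mu\in\sigma(S_K(t))\setminus\sigma_{ess}(S_K(t))\}\bigr).
\]

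\textbf{Step 3: transfer to the generator.} The spectral mapping theorem for the point spectrum gives $\sigma_p(S_K(t))\setminus\{0\}=e^{t\sigma_p(K)}$. The mapping fails for the full spectrum in general, but it does hold for isolated eigenvalues of finite multiplicity, where the Riesz projections of $S_K(t)$ and of $K$ coincide. This is the main obstacle, and it is handled by the standard argument in Engel--Nagel.

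Concretely, for $|\mu|>r_{ess}(S_K(t))$ one builds the finite-rank Riesz projection of $S_K(t)$. Its range is invariant under the whole semigroup, so the restriction of the semigroup to it is a finite-dimensional semigroup. That restriction has the form $e^{tK_0}$ with $K_0=K|_{\mathrm{range}}$, whence $\mu=e^{t\lambda}$ for some eigenvalue $\lambda$ of $K$ that is a pole of the resolvent of $K$. Conversely, every such $\lambda$ gives a point $e^{t\lambda}\in\sigma(S_K(t))$. Taking logarithms, $|e^{t\lambda}|=e^{t\Re\lambda}$, turns the identity of Step 2 into the claimed formula for $\omega_0(K)$.

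\textbf{Step 4: finiteness in the strip.} Fix $\gamma\in(\omega_{ess}(K),\omega_0(K)]$. All $\lambda\in\sigma(K)$ with $\Re\lambda\ge\gamma$ then satisfy $|e^{t\lambda}|\ge e^{t\gamma}>r_{ess}(S_K(t))$, so they correspond to finitely many eigenvalues of $S_K(t)$ lying outside a disk of radius strictly larger than the essential radius, each of finite multiplicity.

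Finiteness of the set of $\lambda$ itself needs an extra check, since the map $\lambda\mapsto e^{t\lambda}$ is not injective ($\lambda+2\pi\ii k/t$ has the same image). This is handled by using two rationally independent times $t_1,t_2$. The total dimension of the generalized eigenspaces is bounded by the finite rank of the Riesz projection. Since the eigenspaces of $K$ for distinct $\lambda$ are linearly independent and all lie in that range, only finitely many such $\lambda$ occur.

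Nonemptiness follows from Step 3: because $\omega_0(K)>\omega_{ess}(K)$, the maximum in the formula is attained at some $\lambda$ with $\Re\lambda=\omega_0(K)\ge\gamma$.
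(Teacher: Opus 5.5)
Your outline is the standard argument from the references the paper cites (Webb, Prop.~4.11; Engel--Nagel, Cor.~IV.2.11). The paper gives no proof of its own. One claim of the theorem, however, is never actually proved in your proposal.

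\textbf{The gap.} Step~3 shows only that each $\mu\in\sigma(S_K(t))$ with $|\mu|>r_{ess}(S_K(t))$ has \emph{some} preimage $\lambda$ that is an eigenvalue and a pole of $R(\cdot,K)$. The theorem asserts this for \emph{every} $\lambda\in\sigma(K)$ with $\Re\lambda\ge\gamma$. For such a $\lambda$, spectral inclusion gives $e^{t\lambda}\in\sigma(S_K(t))$. The point-spectral mapping theorem then yields an eigenvalue of $K$ only somewhere in $\lambda+2\pi\ii\Z/t$. It does not exclude that $\lambda$ itself lies in the residual or continuous spectrum of $K$. So the inclusion $\{\lambda\in\sigma(K):\Re\lambda\ge\gamma\}\subset\sigma_p(K)$ and the ``only poles'' claim are left unproved. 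This also makes the linear-independence sentence in Step~4 circular, since it presupposes that these $\lambda$ are eigenvalues. The two-times device, if carried out, does give finiteness, but nothing more.

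\textbf{The fix.} Reduce $K$ itself, not only the semigroup. Let $P$ be the Riesz projection of $S_K(t)$ for the finite spectral set $F=\sigma(S_K(t))\cap\{|\mu|\ge e^{t\gamma}\}$.
\begin{itemize}
\item $P$ commutes with every $S_K(s)$, hence with $R(\cdot,K)$. So $K=K_0\oplus K_1$ on $PX\oplus\ker P$, with $K_0$ a matrix and $\sigma(K)=\sigma(K_0)\cup\sigma(K_1)$.
\item Spectral inclusion for the restricted semigroup on $\ker P$ gives $e^{t\sigma(K_1)}\subset\sigma(S_K(t))\setminus F\subset\{|\mu|<e^{t\gamma}\}$. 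Hence $\sigma(K_1)\cap\{\Re\lambda\ge\gamma\}=\emptyset$, and therefore $\{\lambda\in\sigma(K):\Re\lambda\ge\gamma\}=\sigma(K_0)\cap\{\Re\lambda\ge\gamma\}$.
\item This set is finite and consists of eigenvalues. Each is a pole with finite-rank residue, because $R(\cdot,K)=R(\cdot,K_0)P+R(\cdot,K_1)(I-P)$ and the second term is holomorphic on the open set $\rho(K_1)$, which contains the closed half-plane.
\end{itemize}
The finite-rank residue is also what justifies $\lambda\notin\sigma_{ess}(K)$ in the growth-bound formula, a point your Step~3 leaves implicit. With this reduction, finiteness is immediate and the rationally independent times are unnecessary.
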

\subsubsection{Remainders of the proof of Proposition \ref{prop: asymptotic expansion for A0}}\label{sec: proof of expansion of A0}
Now we are in a position to prove the asymptotic behaviors for eigenmodes of $\A$ near the eigenmodes $(\ii\lambda_c(L_0),\E_c)$ with $L$ close to $L_0$. As we presented in Proposition \ref{prop: asymptotic expansion for A0}, the crucial part is to verify the conditions of the implicit function theorem. Here we complete the details of the proof.
\begin{proof}[Remainders of the proof of Proposition \ref{prop: asymptotic expansion for A0}]
We need to check the invertibility of the Jacobian matrix $J_{G,\tau}$. First, It is easy to notice that at the point $(t_r,t_i,L)=(\frac{\pi}{L_0}\frac{2k+l}{3},0,L_0)$,
\begin{align*}
\sqrt{1-3(t_r+\ii t_i)^2}|_{t_r=\frac{\pi}{L_0}\frac{2k+l}{3},t_i=0}&=\frac{\pi l}{L_0},\\
(t_r+\ii t_i)L|_{t_r=\frac{\pi}{L_0}\frac{2k+l}{3},t_i=0,L=L_0}&=\frac{\pi(2k+l)}{3},\\
\sqrt{1-3(t_r+\ii t_i)^2}L|_{t_r=\frac{5}{2\sqrt{21}},t_i=0,L=L_0}&=\pi l.
\end{align*}
Therefore, we deduce that 
\begin{equation*}
\begin{aligned}
G_r(\frac{\pi}{L_0}\frac{2k+l}{3},0,L_0)&=\Re{\left(-\frac{\pi l}{L_0}\cos{3\cdot \frac{\pi(2k+l)}{3}}+\frac{\pi l}{L_0}\cos{\pi l}-\ii\frac{\pi l}{L_0}\sin{3\cdot \frac{\pi(2k+l)}{3}}+3\ii \frac{\pi}{L_0}\frac{2k+l}{3}\sin{\pi l} \right)}\\
&=-\frac{\pi l}{L_0}(-1)^{2k+l}+\frac{\pi l}{L_0}(-1)^l\\
&=0.\\
G_i(\frac{\pi}{L_0}\frac{2k+l}{3},0,L_0)&=\Im{\left(-\frac{\pi l}{L_0}\cos{3\cdot \frac{\pi(2k+l)}{3}}+\frac{\pi l}{L_0}\cos{\pi l}-\ii\frac{\pi l}{L_0}\sin{3\cdot \frac{\pi(2k+l)}{3}}+3\ii \frac{\pi}{L_0}\frac{2k+l}{3}\sin{\pi l} \right)}\\
&=0.
\end{aligned}
\end{equation*}
By the definition, we know that
{\footnotesize
\begin{align*}
\frac{\p G_r}{\p L}&=\Re{\left(3\tau\sqrt{1-3\tau^2} \sin{3L\tau} - (1-3\tau^2)\sin{L\sqrt{1-3\tau^2}}- 3\ii\tau\sqrt{1-3\tau^2}\cos{3L\tau} + 3\ii \tau\sqrt{1-3\tau^2}\cos{L \sqrt{1-3\tau^2}}\right)},\\
\frac{\p G_i}{\p L}&=\Im{\left(3\tau\sqrt{1-3\tau^2} \sin{3L\tau} - (1-3\tau^2)\sin{L\sqrt{1-3\tau^2}}- 3\ii\tau\sqrt{1-3\tau^2}\cos{3L\tau} + 3\ii \tau\sqrt{1-3\tau^2}\cos{L \sqrt{1-3\tau^2}}\right)}.
\end{align*}
}
Then, we have
{\footnotesize
\begin{equation*}
\begin{aligned}
\frac{\p G_r}{\p L}(\frac{\pi}{L_0}\frac{2k+l}{3},0,L_0)&=\Re\left(\frac{\pi^2(2k+l) l}{L_0^2} \sin{(2k+l)\pi} - (\frac{\pi l}{L_0})^2\sin{l\pi}- \ii\frac{\pi^2(2k+l) l}{L_0^2}\cos{(2k+l)\pi}\right.\\
&\left.+ \ii \frac{\pi^2(2k+l) l}{L_0^2}\cos{l\pi}\right)\\
&=0.\\
\frac{\p G_i}{\p L}(\frac{\pi}{L_0}\frac{2k+l}{3},0,L_0)&=\Im\left(\frac{\pi^2(2k+l) l}{L_0^2} \sin{(2k+l)\pi} - (\frac{\pi l}{L_0})^2\sin{l\pi}- \ii\frac{\pi^2(2k+l) l}{L_0^2}\cos{(2k+l)\pi}\right.\\
&\left. + \ii \frac{\pi^2(2k+l) l}{L_0^2}\cos{l\pi}\right)\\
&=- \frac{\pi^2(2k+l) l}{L_0^2}(-1)^{2k+l} +  \frac{\pi^2(2k+l) l}{L_0^2}(-1)^l\\
&=0.
\end{aligned}
\end{equation*}
}
In addition, we compute the second derivative of $G$ with respect to $L$,
{\footnotesize
\begin{align*}
\frac{\p^2 G_r}{\p L^2}&=\Re\left(9 \tau^2 \sqrt{1 - 3 \tau^2} \cos{3 L \tau} - (1 - 3 \tau^2)^{\frac{3}{2}} \cos{L \sqrt{1 - 3 \tau^2}} + 3 \ii \tau(3 \tau \sqrt{1 - 3 \tau^2} \sin(3 L \tau)\right.\\
&\left. + (-1 + 3 \tau^2) \sin{L \sqrt{1 - 3 \tau^2}})\right),\\
\frac{\p^2 G_i}{\p L^2}&=\Im\left(9 \tau^2 \sqrt{1 - 3 \tau^2} \cos{3 L \tau} - (1 - 3 \tau^2)^{\frac{3}{2}} \cos{L \sqrt{1 - 3 \tau^2}} + 3 \ii \tau(3 \tau \sqrt{1 - 3 \tau^2} \sin(3 L \tau)\right.\\
&\left. + (-1 + 3 \tau^2) \sin{L \sqrt{1 - 3 \tau^2}})\right).
\end{align*}}
Thus, {\footnotesize
\begin{align*}
\frac{\p^2 G_r}{\p L^2}(\frac{\pi}{L_0}\frac{2k+l}{3},0,L_0)&=\Re\left( \frac{\pi^3(2k+l)^2 l}{L_0^3} \cos{(2k+l)\pi} - \frac{\pi^3 l^3}{L_0^3} \cos{l\pi} + \ii \frac{\pi^3(2k+l)^2 l^3}{L_0^3} \sin((2k+l)\pi)\right.\\
&\left. +\ii \frac{\pi^3(2k+l) l^3}{L_0^3} \sin{l\pi}\right)\\
&= \frac{\pi^3(2k+l)^2 l}{L_0^3}(-1)^{2k+l} - \frac{\pi^3 l^3}{L_0^3} (-1)^l\\
&= \frac{4\pi^3kl(k+l) }{L_0^3}(-1)^{l},\\
\frac{\p^2 G_i}{\p L^2}((\frac{\pi}{L_0}\frac{2k+l}{3},0,L_0)&=\Im\left( \frac{\pi^3(2k+l)^2 l}{L_0^3} \cos{(2k+l)\pi} - \frac{\pi^3 l^3}{L_0^3} \cos{l\pi}
+ \ii \frac{\pi^3(2k+l)^2 l^3}{L_0^3} \sin((2k+l)\pi)\right.\\
&\left.+\ii \frac{\pi^3(2k+l) l^3}{L_0^3} \sin{l\pi}\right)\\
&=0.
\end{align*}
}
To apply the implicit function theorem, we also need to calculate the Jacobian matrix of $G$ with respect to $t_r$ and $t_i$. Then, we need to compute $\frac{\p G_r}{\p t_r}$, $\frac{\p G_r}{\p t_i}$, $\frac{\p G_i}{\p t_r}$, and $\frac{\p G_i}{\p t_i}$. For the real part, we have the following expressions:
\begin{align*}
\frac{\p G_r}{\p t_r}&=\Re{\left(\frac{3\left( \tau\cos{3 L \tau} - (\tau + 3\ii L\tau^2)\cos{L\sqrt{1-3\tau^2}} +\ii \tau\sin{3L\tau}\right)}{\sqrt{1-3\tau^2}}\right.}\\
&{\left.-3\ii L\sqrt{1-3\tau^2}\cos{3L\tau}+3L\sqrt{1-3\tau^2}\sin{3L\tau}+3(\ii+ L\tau)\sin{L\sqrt{1-3\tau^2}}\right)},\\
\frac{\p G_r}{\p t_i}&=\Re{\left(\frac{3\ii\left( \tau\cos{3 L \tau} - (\tau + 3\ii L\tau^2)\cos{L\sqrt{1-3\tau^2}} +\ii \tau\sin{3L\tau}\right)}{\sqrt{1-3\tau^2}}\right.}\\
&{\left.+3L\sqrt{1-3\tau^2}\cos{3L\tau}+3\ii L\sqrt{1-3\tau^2}\sin{3L\tau}+3\ii(\ii+ L\tau)\sin{L\sqrt{1-3\tau^2}}\right)}.
\end{align*}
Similarly, for the imaginary part, the derivatives are as follows:
\begin{align*}
\frac{\p G_i}{\p t_r}&=\Im{\left(\frac{3\left( \tau\cos{3 L \tau} - (\tau + 3\ii L\tau^2)\cos{L\sqrt{1-3\tau^2}} +\ii \tau\sin{3L\tau}\right)}{\sqrt{1-3\tau^2}}\right.}\\
&{\left.-3\ii L\sqrt{1-3\tau^2}\cos{3L\tau}+3L\sqrt{1-3\tau^2}\sin{3L\tau}+3(\ii+ L\tau)\sin{L\sqrt{1-3\tau^2}}\right)}\\
\frac{\p G_i}{\p t_i}&=\Im{\left(\frac{3\ii\left( \tau\cos{3 L \tau} - (\tau + 3\ii L\tau^2)\cos{L\sqrt{1-3\tau^2}} +\ii \tau\sin{3L\tau}\right)}{\sqrt{1-3\tau^2}}\right.}\\
&{\left.+3L\sqrt{1-3\tau^2}\cos{3L\tau}+3\ii L\sqrt{1-3\tau^2}\sin{3L\tau}+3\ii(\ii+ L\tau)\sin{L\sqrt{1-3\tau^2}}\right)}.
\end{align*}
Then, we plug $t_r=\frac{\pi}{L_0}\frac{2k+l}{3},t_i=0,L=L_0$ in the formulas above and we obtain
{\footnotesize
\begin{align*}
\frac{\p G_r}{\p t_r}(\frac{\pi}{L_0}\frac{2k+l}{3},0,L_0)&=\Re{\left(\frac{3\left(\frac{\pi}{L_0}\frac{2k+l}{3}\cos{(2k+l)\pi} - (\frac{\pi}{L_0}\frac{2k+l}{3} + 3\ii \frac{\pi(2k+l)}{3}\frac{\pi}{L_0}\frac{2k+l}{3})\cos{l\pi} +\ii \frac{\pi}{L_0}\frac{2k+l}{3}\sin{(2k+l)\pi}\right)}{\frac{\pi l}{L_0}}\right.}\\
&{\left.-3\ii\pi l\cos{(2k+l)\pi}+3\pi l\sin{(2k+l)\pi}+3(\ii+ \frac{\pi(2k+l)}{3})\sin{l\pi}\right)}\\
&=\Re{\left(\frac{\ii\frac{\pi^2(2k+l)^2}{L_0}(-1)^{l+1}}{\frac{l\pi}{L_0}}+3\ii l\pi(-1)^{l+1}\right)}\\
&=0,
\end{align*}
\begin{align*}
\frac{\p G_i}{\p t_r}(\frac{\pi}{L_0}\frac{2k+l}{3},0,L_0)&=\Im{\left(\frac{3\left(\frac{\pi}{L_0}\frac{2k+l}{3}\cos{(2k+l)\pi} - (\frac{\pi}{L_0}\frac{2k+l}{3} + 3\ii \frac{\pi(2k+l)}{3}\frac{\pi}{L_0}\frac{2k+l}{3})\cos{l\pi} +\ii \frac{\pi}{L_0}\frac{2k+l}{3}\sin{(2k+l)\pi}\right)}{\frac{\pi l}{L_0}}\right.}\\
&{\left.-3\ii\pi l\cos{(2k+l)\pi}+3\pi l\sin{(2k+l)\pi}+3(\ii+ \frac{\pi(2k+l)}{3})\sin{l\pi}\right)}\\
&=\Im{\left(\frac{\ii\frac{\pi^2(2k+l)^2}{L_0}(-1)^{l+1}}{\frac{l\pi}{L_0}}+3\ii l\pi(-1)^{l+1}\right)}\\
&=4\pi\frac{k^2+kl+l^2}{l},
\end{align*}
\begin{align*}
\frac{\p G_r}{\p t_i}(\frac{\pi}{L_0}\frac{2k+l}{3},0,L_0)&=\Re{\left(\ii\frac{3\left(\frac{\pi}{L_0}\frac{2k+l}{3}\cos{(2k+l)\pi} - (\frac{\pi}{L_0}\frac{2k+l}{3} + 3\ii \frac{\pi(2k+l)}{3}\frac{\pi}{L_0}\frac{2k+l}{3})\cos{l\pi} +\ii \frac{\pi}{L_0}\frac{2k+l}{3}\sin{(2k+l)\pi}\right)}{\frac{\pi l}{L_0}}\right.}\\
&{\left.+3\pi l\cos{(2k+l)\pi}+3\ii\pi l\sin{(2k+l)\pi}+3\ii(\ii+ \frac{\pi(2k+l)}{3})\sin{l\pi}\right)}\\
&=-4\pi\frac{k^2+kl+l^2}{l},\\
\frac{\p G_i}{\p t_i}(\frac{\pi}{L_0}\frac{2k+l}{3},0,L_0)&=\Im{\left(\ii\frac{3\left(\frac{\pi}{L_0}\frac{2k+l}{3}\cos{(2k+l)\pi} - (\frac{\pi}{L_0}\frac{2k+l}{3} + 3\ii \frac{\pi(2k+l)}{3}\frac{\pi}{L_0}\frac{2k+l}{3})\cos{l\pi} +\ii \frac{\pi}{L_0}\frac{2k+l}{3}\sin{(2k+l)\pi}\right)}{\frac{\pi l}{L_0}}\right.}\\
&{\left.+3\pi l\cos{(2k+l)\pi}+3\ii\pi l\sin{(2k+l)\pi}+3\ii(\ii+ \frac{\pi(2k+l)}{3})\sin{l\pi}\right)}\\
&=0.
\end{align*}
}
In summary, the Jacobian matrix of $G$ with respect to $(\tau_r,\tau_i)$ at the point $(\frac{\pi}{L_0}\frac{2k+l}{3},0,L_0)$ is 
\begin{equation*}
J_{G,\tau}(\frac{\pi}{L_0}\frac{2k+l}{3},0,L_0)=\left(
\begin{array}{cc}
    \frac{\p G_r}{\p t_r} &\frac{\p G_r}{\p t_i}  \\
     \frac{\p G_i}{\p t_r}& \frac{\p G_i}{\p t_i}
\end{array}
\right)|_{t_r=\frac{\pi}{L_0}\frac{2k+l}{3},t_i=0,L=L_0}=\left(
\begin{array}{cc}
    0&-4\pi\frac{k^2+kl+l^2}{l}  \\
    4\pi\frac{k^2+kl+l^2}{l}& 0
\end{array}
\right).
\end{equation*} 
The derivatives with respect to $L$ are
\begin{equation*}
\begin{aligned}
\left(
\begin{array}{c}
     \frac{\p G_r}{\p L}  \\
     \frac{\p G_i}{\p L}
\end{array}
\right)|_{t_r=\frac{\pi}{L_0}\frac{2k+l}{3},t_i=0,L=L_0}=\left(
\begin{array}{c}
    0\\
    0
\end{array}
\right),\quad \left(
\begin{array}{c}
     \frac{\p^2 G_r}{\p L^2}  \\
     \frac{\p^2 G_i}{\p L^2}
\end{array}
\right)|_{t_r=\frac{\pi}{L_0}\frac{2k+l}{3},t_i=0,L=L_0}=\left(
\begin{array}{c}
    \frac{4\pi^3kl(k+l) }{L_0^3}(-1)^{l}\\
    0
\end{array}
\right).
\end{aligned}
\end{equation*}
Then we obtain the following expansions :
\begin{equation*}
t_r(L)=\frac{\pi}{L_0}\frac{2k+l}{3}+\bigO((L-L_0)^3),\;t_i(L)=(-1)^{l+1}\frac{\pi^2kl^2(k+l)}{2(k^2+kl+l^2)}(L-L_0)^2+\bigO((L-L_0)^3).
\end{equation*}
We plug $\tau=\frac{\pi}{L_0}\frac{2k+l}{3}+(-1)^{l+1}\frac{\pi^2kl^2(k+l)}{2(k^2+kl+l^2)}(L-L_0)^2+\bigO((L-L_0)^3)$ in the formula \eqref{eq: general form for eigenfunction-bad op}, then we derive the following expansion for the eigenfunction
\begin{equation}
\F_{\zeta}(x)=r_1\G_{c}(x) +  r_1 \Tilde{f}(x)(L-L_0)+\bigO((L-L_0)^2),
\end{equation}
where $\G_{c}(x)=\frac{1}{k}e^{-\frac{\ii (2k + l) x}{\sqrt{3}\sqrt{k^2 + kl + l^2}}}\left(l -(k+l) e^{\frac{\ii \sqrt{3} k x}{\sqrt{k^2 + kl + l^2}}} + k e^{\frac{\ii \sqrt{3} (k + l) x}{\sqrt{k^2 + kl + l^2}}}\right)$ is the eigenfunction associated with $\lambda_{c}=-\ii\frac{(2k+l)(k-l)(2l+k)}{3\sqrt{3}(k^2+k l+l^2)^{\frac{3}{2}}}$ at the critical length $L_0$ and
\begin{align*}
\Tilde{f}(x)&=-\frac{\sqrt{3} e^{-\frac{\ii (2k + l) x}{\sqrt{3}\sqrt{k^2 + kl + l^2}}}(k + l)}{2 (k^2 + kl + l^2)^{\frac{5}{2}}} \left(-3 (-1)^l k l (k + l) (2k + l) + e^{\frac{\ii \sqrt{3} k x}{\sqrt{k^2 + kl + l^2}}} (3 (-1)^l k (k - l) l (k + l) - \ii (k^2 + kl + l^2)^2)\right.\\
&\left.+ e^{\frac{\ii \sqrt{3} (k + l) x}{\sqrt{k^2 + kl + l^2}}} (3 (-1)^l k l (k + l) (k + 2l) + \ii (k^2 + kl + l^2)^2)\right).
\end{align*}
Now we complete the proof of Proposition \ref{prop: asymptotic expansion for A0}.
\end{proof}

\section{Bi-orthogonal family}\label{sec: appendix bi-orthogonal family}
\subsection{Preparations}
Before we present our concrete construction, we introduce some notations and definitions.
\begin{defi}
Let $\mathcal{J}$ be a countable index set. We say that a sequence  of real numbers $\{\mu_j(L)\}_{j\in\mathcal{J}}$, which depends on a parameter $L$,  is \textbf{uniformly regular} if the following gap condition holds uniformly in $L$, i.e.
\begin{equation}\label{eq: uniform gap condition}
    \mathbf{g}(\{\mu_j(L)\}_{j\in\mathcal{J}}):=\inf_L\inf_{m\neq n}|\mu_m(L)-\mu_n(L)|>0.
\end{equation} 
We can also define uniform regular $3-$sequences if we require that the following asymptotic behaviors of $\mu_j$ are uniformly in $L$:
 the sequence $\{\mu_j\}_{j\in\Z\backslash\{0\}}$ is a regular increasing sequence: $\cdots< \mu_{-2}< \mu_{-1}<0<\mu_{1}< \mu_{2}<\cdots$. Moreover, there exists $C>0$ such that $\mu_{\pm j}=\pm C|j|^{3}+\bigO_{j\rightarrow\infty}(|j|^{\alpha-1})$, for $j\in\N^*$.
\end{defi}
\begin{rem}\label{rem: uniform sequence-eigenvalues}
By Proposition \ref{prop: Asymp in L} and Proposition \ref{prop: Asymp in L-low}, we know that there are two different cases for the localization of the eigenvalues of the operator $\B$.
\begin{enumerate}
    \item If $k\not\equiv l\mod 3$, the sequence  $\{\lambda_j(L)\}_{j\in\Z\backslash\{0\}}$ is a uniform regular $3-$sequence.
    \item If $k\equiv l\mod 3$, for $j\in\Lambda_E$, there are $2N_0$ pairs of eigenvalues $(\lambda_{\sigma^+(j)},\lambda_{\sigma^-(j)})$, $j\in\{-N_0,\cdots,-1,1,\cdots,N_0\}$ such that $\lim_{L\rightarrow L_0}|\lambda_{\sigma^+(j)}-\lambda_{\sigma^-(j)}|=0$. Hence, Therefore, the sequence  $\{\lambda_j(L)\}_{j\notin\Lambda_E}$ is a uniform regular $3-$sequence.
\end{enumerate}
\end{rem}
Based on the definition above, we introduce the following two special functions (see more details in \cite{Lissy-2014,Tucsnak-Weiss-2009}). 
\begin{lem}{\cite[Lemma 2.4]{Lissy-2014}}\label{lem: defi of phi}
For each $L\notin\mathcal{N}$, let $\Psi_j$ be defined as follows:
\begin{equation}\label{eq: defi of function Psi}
    \Psi_j(z):=\prod_{k\neq j}\left(1-\frac{z}{\lambda_k-\lambda_j}\right).
\end{equation}
Let $I$ satisfy the condition {\bf (C)}. Then for every $L\in I\setminus\{L_0\}$, 
\begin{enumerate}
    \item if $k\not\equiv l\mod 3$, the sequence  $\{\lambda_j(L)\}_{j\in\Z\backslash\{0\}}$ is a uniform regular $3-$sequence. Moreover, 
    \begin{equation}\label{eq: estimate of Psi}
    |\Psi_j(z)|\leq K_1e^{K_2|z|^{\frac{1}{3}}}P(|z|),
\end{equation}
where $K_1$ is independent of $z$, $l$, and $L$, $K_2=\frac{2^{\frac{5}{2}}L_0^{\frac{3}{2}}}{\sqrt{6\pi}}$, and $P$ is a polynomial in $|z|$. 
\item if $k\equiv l\mod 3$, for $j\in\Lambda_E$, there are $2N_0$ pairs of eigenvalues $(\lambda_{\sigma^+(j)},\lambda_{\sigma^-(j)})$, $j\in\{-N_0,\cdots,N_0\}$ such that $\lim_{L\rightarrow L_0}|\lambda_{\sigma^+(j)}-\lambda_{\sigma^-(j)}|=0$. Therefore, the sequence  $\{\lambda_j(L)\}_{j\notin\Tilde{\Lambda_E}}$ is a uniform regular $3-$sequence with the following uniform estimates 
    \begin{equation}\label{eq: estimate of Psi-2}
|\Psi_j(z)|\leq K_1e^{K_2|z|^{\frac{1}{3}}}P(|z|),j\notin\Lambda_E,\; 
      |\Psi_{\sigma^{\pm}(j)}(z)|\leq \frac{1}{|\lambda_{\sigma^+(j)}-\lambda_{\sigma^-(j)}|}K_1e^{K_2|z|^{\frac{1}{3}}}P(|z|).    
\end{equation}
where $K_1$ is independent of $z$, $l$, and $L$, $K_2=\frac{2^{\frac{5}{2}}L_0^{\frac{3}{2}}}{\sqrt{6\pi}}$, and $P$ is a polynomial in $|z|$. 
\end{enumerate}
\end{lem}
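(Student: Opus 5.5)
The plan is to follow the strategy of \cite[Lemma 2.4]{Lissy-2014}, as in the Paley--Wiener approach of \cite{Tucsnak-Weiss-2009}, and to keep track of where the constants depend on $L$. First we need the uniform regularity claims. By Proposition \ref{prop: Asymp in L}, the hyperbolic eigenvalues satisfy $\lambda_j=(2j\pi/L)^3+\bigO(j^2)$ uniformly for $L\in I$, so for $|j|>J$ the gaps $|\lambda_{j+1}-\lambda_j|\gtrsim j^2$ are uniform. The finitely many low eigenvalues are handled as follows. Their limits as $L\to L_0$ are given by Proposition \ref{prop: Asymp in L-low} and Proposition \ref{prop: Index set M-E}. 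Away from $L_0$ the eigenvalues are simple and depend continuously on $L$, so on the compact set $I$ the gaps stay bounded below, except for pairs that coalesce at $L_0$. For $k\not\equiv l\mod 3$ no two eigenvalues share a limit (Case 3 of Proposition \ref{prop: Index set M-E}), which gives a uniform gap for the whole sequence. For $k\equiv l\mod 3$ only the pairs $(\lambda_{\sigma^+(j)},\lambda_{\sigma^-(j)})$ coalesce, and their separation is of order $|L-L_0|$ by \eqref{eq: pm-eigenvalues}. Removing one member of each pair therefore leaves a uniformly regular $3$-sequence.

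Next, the product estimate. We write $\log|\Psi_j(z)|\le\sum_{k\neq j}\log\bigl(1+|z|/|\lambda_k-\lambda_j|\bigr)$ and bound the sum by a counting-function argument. Let $n(r)=\#\{k:|\lambda_k-\lambda_j|\le r\}$. The uniform asymptotics give $n(r)\le c\,L r^{1/3}/(2\pi)+C$, with $C$ depending only on the uniform gap and on $J$. Then
\[
\sum_{k\ne j}\log\Bigl(1+\frac{|z|}{|\lambda_k-\lambda_j|}\Bigr)=\int_0^\infty \frac{|z|\,n(r)}{r(r+|z|)}\,dr .
\]
The $r^{1/3}$ part contributes $\frac{cL}{2\pi}\,\frac{\pi}{\sin(\pi/3)}\,|z|^{1/3}$; bounding $L\le 2L_0$ and collecting constants should yield $K_2=\frac{2^{5/2}L_0^{3/2}}{\sqrt{6\pi}}$ or a comparable explicit value. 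The bounded part of $n(r)$ contributes at most $C\log(1+|z|/g)$, which produces the polynomial factor $P(|z|)$. The constant $K_1$ depends only on the gap $\mathbf g$ and on $J$. Since both are uniform in $L$ and $j$, this proves \eqref{eq: estimate of Psi}.

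For case 2, we apply the same argument to the sequence with one member of each coalescing pair removed, and then reinsert the removed factor. If $j\notin\Lambda_E$, the missing factor is $1-z/(\lambda_{\sigma^\pm}-\lambda_j)$, whose denominator is uniformly bounded below. It is therefore absorbed into $P$, giving the first bound in \eqref{eq: estimate of Psi-2}. If $j=\sigma^\pm(q)$, the partner factor is $|1-z/(\lambda_{\sigma^\mp}-\lambda_{\sigma^\pm})|\le (1+|z|)/|\lambda_{\sigma^+}-\lambda_{\sigma^-}|$ whenever that gap is $\le 1$. This gives the extra factor $|\lambda_{\sigma^+(q)}-\lambda_{\sigma^-(q)}|^{-1}$ in the second bound.

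The main obstacle is making the counting bound for $n(r)$ genuinely uniform in $L$ near the transition between the elliptic and hyperbolic regimes, where Proposition \ref{prop: Asymp in L} is only asymptotic. We would handle this by using Lemma \ref{lem: bound of eigenvalues} to cap the number of eigenvalues with $|\lambda|\le K$ uniformly in $L$. The $\bigO(j^2)$ remainder enters only through $C$ and never through the leading coefficient of $r^{1/3}$.
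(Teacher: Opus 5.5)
Your route is the one the paper relies on: the paper gives no argument beyond citing \cite[Lemma 2.4]{Lissy-2014} and asserting uniform regularity in Remark~\ref{rem: uniform sequence-eigenvalues}. Your counting-function bound on $\log|\Psi_j|$, the reinsertion of the finitely many removed factors, and the extra factor $|\lambda_{\sigma^+(q)}-\lambda_{\sigma^-(q)}|^{-1}$ from the partner are all sound. You can also drop the hedge on $K_2$. The worst window $[\lambda_j-r,\lambda_j+r]$ is the one centred at $0$, so $n(r)\le \frac{L}{\pi}r^{1/3}+C$, which produces the exponent $\frac{2L}{\sqrt3}|z|^{1/3}$. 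Since $L<\frac32 L_0$ and $L_0\ge 2\pi$, this is smaller than the stated $\frac{2^{5/2}L_0^{3/2}}{\sqrt{6\pi}}|z|^{1/3}$.

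The step that fails as written is the high-frequency gap. From $\lambda_j=(2j\pi/L)^3+\bigO(j^2)$ alone you cannot conclude $|\lambda_{j+1}-\lambda_j|\gtrsim j^2$. The leading difference $(2\pi/L)^3(3j^2+3j+1)$ is of the same order as the difference of two $\bigO(j^2)$ remainders, which may cancel it. That asymptotic does suffice for the upper bound on $n(r)$, since it gives $\lambda_k^{1/3}=2\pi k/L+\bigO(1)$. But the lower cutoff $n(r)=0$ for $r<\mathbf{g}$ is exactly where uniform regularity is used, and uniform regularity is one of the claims you have to prove. The paper's Remark~\ref{rem: uniform sequence-eigenvalues} also passes over this point, so it must be supplied rather than cited.

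The fix is to work with the roots $\tau$ rather than with $\lambda$. Dividing \eqref{eq: t-L defined eq-2} by $\cosh(\sqrt{3\tau^2-1}L)$ gives, uniformly in $L\in I$, $\cos(\tau L\mp\frac{\pi}{3})=\bigO(|\tau|^{-2})$ as $\tau\to\pm\infty$. The proof of Proposition~\ref{prop: asymptotic eigenvalues} gives one root near each zero, so $\tau_{j+1}-\tau_j=\frac{\pi}{L}+o(1)$ uniformly. Since $\frac{d\lambda}{d\tau}=24\tau^2-2$, this yields $|\lambda_{j+1}-\lambda_j|\gtrsim j^2$ for $|j|>J$, with constants independent of $L$.

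Similarly, $I\setminus\{L_0\}$ is not compact, so your argument for $|j|\le J$ should state two things explicitly. First, each branch $\lambda_j(L)$ extends continuously to $L=L_0$, by Propositions~\ref{prop: Asymp in L-low} and \ref{prop: Index set M-E} or by analytic perturbation of the rescaled operator. Second, two branches can share a limit only at a double eigenvalue of $\B(L_0)$, which happens only when $k\equiv l\bmod 3$.
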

\begin{lem}{\cite[Lemma 2.3]{Lissy-2014}}\label{lem: defi of sigma-beta}
We define a function $\sigma_{\beta}(x)=e^{-\frac{\beta}{1-x^2}}$ for $|x|<1$ and $\sigma_{\beta}(x)=0$ for $|x|\geq1$. 
Let $\Sigma_{\beta}(z):=\frac{1}{\|\sigma_{\beta}\|_{L^1(-1,1)}}\int_{-1}^1\sigma_{\beta}(x)e^{-\ii\nu_{\delta}(\beta) xz}dx$, with a small parameter $\delta>0$ and $\nu_{\delta}(\beta)=\frac{64K_2^3(1+\delta)^3}{81\beta^2}$. Then 
\begin{equation}\label{eq: growth of Sigma}
    |\Sigma_{\beta}(z)|\leq e^{\nu_{\delta}(\beta)|\Im z|}.
\end{equation}
In addition, for $x\in\R$, there exists a constant $K_3$, independent of $\beta$, such that
\begin{equation}\label{eq: estimate of Sigma}
    |\Sigma_{\beta}(x)|\leq K_3\sqrt{\beta+1}e^{\frac{3\beta}{4}-K_2(1+\frac{\delta}{2})|x|^{\frac{1}{3}}}.
\end{equation}
\end{lem}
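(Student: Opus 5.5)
The statement is Lemma~\ref{lem: defi of sigma-beta}, which is quoted from \cite[Lemma 2.3]{Lissy-2014}. I would reproduce that argument rather than invoke the citation as a black box, because only two facts are really needed: a trivial growth bound and a Paley--Wiener-type decay estimate.

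\textbf{Bound \eqref{eq: growth of Sigma}.} This follows directly from the definition. For $z\in\C$ and $x\in[-1,1]$ we have $|e^{-\ii\nu_\delta(\beta)xz}|=e^{\nu_\delta(\beta)x\Im z}\le e^{\nu_\delta(\beta)|\Im z|}$. Since $\sigma_\beta\ge 0$, integrating against $\sigma_\beta$ and dividing by $\|\sigma_\beta\|_{L^1}$ gives the bound with constant exactly $1$.

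\textbf{Bound \eqref{eq: estimate of Sigma}, set-up.} For real $x$, put $\xi=\nu_\delta(\beta)x$, so that $\Sigma_\beta(x)=\hat\sigma_\beta(\xi)/\|\sigma_\beta\|_{L^1}$. I would first bound the normalization from below. On $|s|\le 1/2$ one has $\sigma_\beta(s)\ge e^{-4\beta/3}$, hence $\|\sigma_\beta\|_{L^1}\ge e^{-4\beta/3}$.

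\textbf{Bound \eqref{eq: estimate of Sigma}, decay of $\hat\sigma_\beta$.} Next I would estimate $\hat\sigma_\beta(\xi)$ by a steepest-descent contour shift. Write $\hat\sigma_\beta(\xi)=\int_{-1}^1 e^{-\beta/(1-s^2)-\ii\xi s}\,ds$ and move the contour into the complex half-plane where $e^{-\ii\xi s}$ decays. Near the endpoint $s=1$, set $1-s=\epsilon$; the phase is then approximately $-\beta/(2\epsilon)+\ii\xi\epsilon$. The saddle point sits at $\epsilon\sim\sqrt{\beta/|\xi|}$ and produces the factor $e^{-c\sqrt{\beta|\xi|}}$, with explicit constant $c=\sqrt2$ coming from the saddle value $\sqrt{2\beta|\xi|}\cdot e^{\mp\ii\pi/4}$. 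The result should be the standard Ingham-type bound
\[
|\hat\sigma_\beta(\xi)|\le C\sqrt{\beta+1}\,e^{-\sqrt{\beta|\xi|}}
\]
up to an absolute constant. Alternatively, I could cite the explicit estimate from \cite{Tenenbaum-Tucsnak2007}.

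\textbf{Bound \eqref{eq: estimate of Sigma}, exponent bookkeeping.} Substituting $\xi=\nu_\delta x$, the decay becomes
\[
\sqrt{\beta\nu_\delta|x|}=\frac{8K_2^{3/2}(1+\delta)^{3/2}}{9}\,|x|^{1/2}.
\]
This does not yet look like $K_2(1+\delta/2)|x|^{1/3}$, and the two have to be reconciled using the relation between $\beta$ and $|x|$. The $\tfrac34\beta$ in the claimed bound absorbs the loss from $\|\sigma_\beta\|^{-1}$ together with the gap between the $|x|^{1/2}$ and $|x|^{1/3}$ exponents. Concretely, I would use Young's inequality
\[
a\,|x|^{1/3}\le \tfrac{\beta}{4}\cdot(\text{power}) + \sqrt{\beta\nu_\delta|x|}\cdot(\text{fraction}),
\]
optimized in $\beta$. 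The precise choice of $\nu_\delta=\frac{64K_2^3(1+\delta)^3}{81\beta^2}$ is exactly what makes the optimal constant come out to $K_2(1+\delta)$, leaving a margin $(1+\delta/2)$. This constant matching is the main obstacle: $\nu_\delta$ is tuned so that $\sup_\beta\bigl(\sqrt{\beta\nu_\delta|x|}-\tfrac{3}{4}\beta\bigr)$ relates to $|x|^{1/3}$ with the right coefficient.

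\textbf{Order of work.} I would first do the elementary lower bound on $\|\sigma_\beta\|_{L^1}$, then the Fourier decay estimate, and finally the algebra. I expect the decay estimate with explicit $\beta$-dependence and an absolute constant $K_3$ to be the delicate analytic step. If the contour argument gets too messy, I would fall back on the cited Lissy and Tenenbaum--Tucsnak estimates.
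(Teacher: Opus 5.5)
You prove the growth bound correctly, with constant exactly $1$. The gap is in the real-axis bound, at the ``exponent bookkeeping'' step you leave unexecuted; it cannot be carried out.

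\textbf{Your two ingredients lose too much.} The bounds $\|\sigma_\beta\|_{L^1}\ge e^{-4\beta/3}$ and $|\hat\sigma_\beta(\xi)|\le C\sqrt{\beta+1}\,e^{-\sqrt{\beta|\xi|}}$ combine only to $|\Sigma_\beta(x)|\le C\sqrt{\beta+1}\,e^{\frac{4\beta}{3}-\sqrt{\beta\nu_\delta|x|}}$. That is already worse than $e^{3\beta/4}$. The Tenenbaum--Tucsnak constant $\frac34=1-\frac14$ comes from two sharp facts:
\begin{itemize}
\item the normalization $\|\sigma_\beta\|_{L^1}\sim\sqrt{\pi/\beta}\,e^{-\beta}$;
\item the constant term in $\frac{\beta}{1-s^2}=\frac{\beta}{2\epsilon}+\frac{\beta}{4}+O(\beta\epsilon)$, which your local phase $-\frac{\beta}{2\epsilon}+\mathrm{i}\xi\epsilon$ drops.
\end{itemize}
Also, the saddle value $-\sqrt{2\beta|\xi|}\,e^{-\mathrm{i}\pi/4}$ has real part $-\sqrt{\beta|\xi|}$, so $c=1$, not $\sqrt2$.

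\textbf{The passage from $|x|^{1/2}$ to $|x|^{1/3}$ costs a fixed multiple of $\beta$.} This holds even if you use the sharp bound $|\Sigma_\beta(x)|\le c\sqrt{\beta+1}\,e^{\frac{3\beta}{4}-\sqrt{\beta\nu_\delta|x|}}$. Since $\beta\nu_\delta=\frac{64K_2^3(1+\delta)^3}{81\beta}$, the optimal Young inequality for $b>0$ is $\sqrt{\beta\nu_\delta r}\ge bK_2r^{1/3}-\frac{3\beta}{16}\bigl(\frac{b}{1+\delta}\bigr)^{3}$, with equality at one $r$. For $b=1+\frac{\delta}{2}$ the loss is $\frac{3\beta}{16}\bigl(\frac{1+\delta/2}{1+\delta}\bigr)^3\neq 0$. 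The choice of $\nu_\delta$ fixes the size of this loss; it does not remove it. Interpolating with $|\Sigma_\beta|\le1$ does not help either. At $\nu_\delta|x|=\frac{9}{16}\beta$ both available bounds are $O(\sqrt{\beta+1})$, while the claimed right-hand side is about $K_3\sqrt{\beta+1}\,e^{-(0.143-O(\delta))\beta}$.

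\textbf{The estimate as quoted is false for large $\beta$ and small $\delta$.} No $\beta$-independent $K_3$ works. Put $\nu_\delta x=a\beta$ and $\phi(s)=-\frac{1}{1-s^2}-\mathrm{i}as$, so that $\Sigma_\beta(x)=\|\sigma_\beta\|_{L^1}^{-1}\int_{-1}^1e^{\beta\phi(s)}\,ds$.
\begin{itemize}
\item For $a<\frac{3\sqrt3}{8}$ there is a saddle at $s=-\mathrm{i}t$ with $\frac{2t}{(1+t^2)^2}=a$ and $t<\frac1{\sqrt3}$.
\item For $a=\frac{9}{16}$ one checks that the level curve $\Im\phi=0$ through this saddle joins $-1$ to $1$, entering the valleys at $\pm1$. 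You can therefore deform $[-1,1]$ onto it.
\item Laplace's method then gives $|\Sigma_\beta(x)|\asymp e^{\beta(\frac{t^2}{1+t^2}-at)}$. With $t\approx0.358$ this is about $e^{-0.088\beta}$, which exceeds the claimed bound by a factor $e^{c\beta}$ with $c>0$.
\end{itemize}

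\textbf{What you can prove instead.} Your route, with the Tenenbaum--Tucsnak estimate replacing your two crude bounds, gives $|\Sigma_\beta(x)|\le c\sqrt{\beta+1}\,\exp\bigl(\frac{15}{16}\beta-K_2(1+\delta)|x|^{1/3}\bigr)$. This is all the proof of Proposition~\ref{prop: biorthogonal family} needs. The extra $\frac{3}{16}\beta$ only changes the constant $K$ in $e^{K/\sqrt T}$, and the leftover factor $e^{-\delta K_2|x-\lambda_j|^{1/3}}$ still dominates the polynomial $P$. State and prove that version rather than the one quoted.
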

We omit the proof of these two lemmas, one can see \cite{Lissy-2014} for more details. As a consequence of these two Lemmas, we are able to construct the bi-orthogonal family to $e^{-\ii\lambda_js}$.
\subsection{Construction of bi-orthogonal family}
\begin{proof}[Proof of Proposition \ref{prop: biorthogonal family}]
Let $\psi_j(z):=\Psi_j(z-\lambda_j)\Sigma_{\beta}(-z+\lambda_j)$. Clearly, since 
$$
\Sigma_{\beta}(0)=\frac{1}{\|\sigma_{\beta}\|_{L^1(-1,1)}}\int_{-1}^1\sigma_{\beta}(x)dx=1,\text{ and }\Psi_j(0)=1,
$$
we have $\psi(\lambda_j)=1$. For $k\neq j$, $\Psi_j(\lambda_k-\lambda_j)=0$ by the definition \eqref{eq: defi of function Psi}. We conclude that $\psi_j(\lambda_k)=\delta_{jk}$. Set $\nu_{\delta}(\beta)=\frac{T(1-\delta)}{2}$, which implies that $\beta=\frac{8\sqrt{2}K_2^{\frac{3}{2}}(1+\delta)^{\frac{3}{2}}}{9T^{\frac{1}{2}}(1-\delta)^{\frac{1}{2}}}$ and $\nu_{\delta}\rightarrow\frac{T}{2}$ as $\delta\rightarrow0$. By the estimates \eqref{eq: estimate of Psi} and \eqref{eq: estimate of Sigma}, we obtain $|\psi_j(x)|\leq \frac{C(N)K_1K_3\sqrt{\beta+1}e^{\frac{3\beta}{4}}}{1+|x-\lambda_j|^{2N}}$.
Now we fix $N>2$ and set $K>\max\{C(N)K_1K_3,\frac{16\sqrt{2}K_2^{\frac{3}{2}}}{9}\}$, and $\delta$ as close to $0$ as needed. Then we obtain
\begin{equation}\label{eq: L^2-es-psi-K}
|\psi_j(x)|\leq \frac{K e^{\frac{K}{\sqrt{T}}}}{1+|x-\lambda_j|^{2N}}.
\end{equation}
This implies that $\psi_j\in L^2(\R)$. By the estimates \eqref{eq: estimate of Psi} and \eqref{eq: growth of Sigma}, we also obtain $|\psi_j(z)|\lesssim e^{\frac{T|z|}{2}}$. Hence, using the Paley-Wiener Theorem, $\psi_j$ is the Fourier transform of a function $\phi_j\in L^2(\R)$ with compact support in $[-\frac{T}{2},\frac{T}{2}]$, which proves the first statement. The second statement comes from $\delta_{jk}=\psi_j(\lambda_k)=\int_{-\frac{T}{2}}^{\frac{T}{2}}e^{-\ii s\lambda_k}\phi_j(s)ds$. 
For the last statement, for $0\leq m\leq N$, since $\psi_j$ is the Fourier transform of $\phi_j$, we know that $\|\phi^{(m)}_j\|_{L^{\infty}(\R)}\leq \|x^{m}\psi_j(x)\|_{L^1(\R)}\leq \|\frac{K e^{\frac{K}{\sqrt{T}}}x^m}{1+|x-\lambda_j|^{2N}}\|_{L^1(\R)}$.
By changing of variable, we obtain
\begin{equation}\label{eq: m-th derivative of phi_j}
    \|\phi^{(m)}_j\|_{L^{\infty}(\R)}\leq C e^{\frac{K}{\sqrt{T}}}|\lambda_j|^m,
\end{equation}
where $C$ is a universal constant independent of $T$ and $j$.  
\end{proof}
\subsection{Compensate bi-orthogonal family for $2\pi$}\label{sec: app-compensate-bi-2pi}
\begin{proof}[Proof of Lemma \ref{lem: bi-orthogonal family-2pi}]
First, as we presented in Remark \ref{rem: uniform sequence-eigenvalues}, we notice that
\begin{align}
    &\inf_L\inf_{j\neq k,k\neq \pm 1}|\lambda_j(L)-\lambda_k(L)|>0,\label{eq: uniform-gap-condition-2pi-1} \\
    &\inf_L\inf_{j\neq \pm 1}|\lambda_j(L)-\lambda_{1}(L)|>0,\label{eq: uniform-gap-condition-2pi-2} \\
    &\inf_L\inf_{j\neq \pm 1}|\lambda_j(L)-\lambda_{-1}(L)|>0,\label{eq: uniform-gap-condition-2pi-3}.
\end{align}
Recalling the definitions of $\psi_j$ in Proposition \ref{prop: biorthogonal family}, for $j\neq 0,\pm 1$,
\begin{equation*}
\psi_j(z):=\Psi_j(z-\lambda_j)\Sigma_{\beta}(-z+\lambda_j)=\prod_{k\neq j}\left(1-\frac{z-\lambda_j}{\lambda_k-\lambda_j}\right)\Sigma_{\beta}(-z+\lambda_j) 
\end{equation*}
The uniform condition \eqref{eq: uniform-gap-condition-2pi-1} ensures that $\psi_j\in L^2(\R)$ and $|\psi_j(z)|\lesssim e^{\frac{T|z|}{2}}$, same as we presented in the proof of Proposition \ref{prop: biorthogonal family} based on the estimates  \eqref{eq: estimate of Psi} and \eqref{eq: growth of Sigma}. Thus, by Paley-Wiener's Theorem, we obtain the same function $\phi_j$, whose Fourier transform is $\psi_j$, for $j\neq 0,\pm 1$.  
Let $\vartheta_j=\phi_j$, for $j\neq 0,\pm 1$. Then, 
\begin{align*}
\supp{\vartheta_j}\subset [-\frac{T}{2},\frac{T}{2}],\;\|\vartheta^{(m)}_j\|_{L^{\infty}(\R)}\leq Ce^{\frac{K}{\sqrt{T}}}|\lambda_j|^m,\forall j\neq0, \pm 1,\;
\int_{-\frac{T}{2}}^{\frac{T}{2}}\vartheta_j(s)e^{-\ii \lambda_ks}ds=\delta_{jk},\forall j,k\in\Z\backslash\{0,\pm 1\},
\end{align*}
holds in the same way as $\phi_j$ in Proposition \ref{prop: biorthogonal family}. Then, we focus on defining the function $\vartheta_0$. We begin by defining the function
\begin{equation}\label{eq: defi-psi-0-bi-orthogonal family-2pi7}
\psi_0(z):=C_1\prod_{k\neq\pm 1}\left(1-\frac{z-\lambda_{1}}{\lambda_k-\lambda_{1}}\right)\Sigma_{\beta}(-z+\lambda_{1})+C_2\prod_{k\neq\pm 1}\left(1-\frac{z-\lambda_{-1}}{\lambda_k-\lambda_{-1}}\right)\Sigma_{\beta}(-z+\lambda_{-1}),
\end{equation}
where $C_1$ and $C_2$ are two coefficients to be chosen later. Using the uniform gap conditions \eqref{eq: uniform-gap-condition-2pi-2} and \eqref{eq: uniform-gap-condition-2pi-3}, by Lemma \ref{lem: defi of phi}, we know that $z\in\C$, (similarly for $\lambda_{-1}$)
\begin{equation}\label{eq: prod-growth-2pi}
|\prod_{k\neq\pm 1}\left(1-\frac{z-\lambda_{1}}{\lambda_k-\lambda_{1}}\right)|\leq K_1e^{K_2|z-\lambda_{1}|^{\frac{1}{3}}}P(|z-\lambda_{1}|),
\end{equation}
where $K_1$ and $K_2$ are the same as in Lemma \ref{lem: defi of phi} and $P$ is a polynomial. Thus, in particular, for $x\in \R$,
\begin{align*}
|\psi_0(x)|\leq |C_1|K_1e^{K_2|x-\lambda_{1}|^{\frac{1}{3}}}P(|x-\lambda_{1}|)|\Sigma_{\beta}(-z+\lambda_{1})|+|C_2|K_1e^{K_2|x-\lambda_{-1}|^{\frac{1}{3}}}P(|x-\lambda_{-1}|)|\Sigma_{\beta}(-z+\lambda_{-1})|.
\end{align*}
Using the estimates \eqref{eq: estimate of Sigma}, set $\nu_{\delta}(\beta)=\frac{T(1-\delta)}{2}$ and we obtain, with the same $K$ and $N$ as in the estimate \eqref{eq: L^2-es-psi-K}, 
\begin{equation}\label{eq: growth-psi_0-2pi}
|\psi_0(x)|\leq \frac{K|C_1| e^{\frac{K}{\sqrt{T}}}}{1+|x-\lambda_{1}|^{2N}}+\frac{K|C_2| e^{\frac{K}{\sqrt{T}}}}{1+|x-\lambda_{-1}|^{2N}}.    
\end{equation}
Moreover, by the estimates \eqref{eq: prod-growth-2pi} and \eqref{eq: growth of Sigma}, we deduce that $|\psi_0(z)|\lesssim (|C_1|+|C_2|)e^{\frac{T|z|}{2}}$. 
Assume that $|C_1|$ and $|C_2|$ are uniformly bounded with respect to $L$. Then, applying Paley-Wiener's Theorem, we obtain a function $\vartheta_0\in L^2(\R)$, whose Fourier transform is $\psi_0$ and $\supp{\vartheta_0}\subset[-\frac{T}{2},\frac{T}{2}]$. We shall verify the uniform bound assumption at the end of the proof for well-chosen coefficients $C_1$ and $C_2$. As a consequence of our choice of $\vartheta_0$, we know that $\int_{-\frac{T}{2}}^{\frac{T}{2}}e^{-\ii s\lambda_k}\vartheta_0(s)ds=\psi_0(\lambda_k)$.
For $k\neq \pm 1$, it is easy to verify that $\psi_0(\lambda_k)=0$. 
For $k=\pm 1$, using the fact that $\lambda_{1}=-\lambda_{-1}$,
\begin{align*}
\psi_0(\lambda_{1})&=C_1\prod_{l\neq\pm 1}\left(1-\frac{\lambda_{1}-\lambda_{1}}{\lambda_l-\lambda_{1}}\right)\Sigma_{\beta}(0)+C_2\prod_{l\neq \pm 1}\left(1-\frac{\lambda_{1}-\lambda_{-1}}{\lambda_l+\lambda_{1}}\right)\Sigma_{\beta}(-\lambda_{1}+\lambda_{-1})\\
&=C_1+C_2\prod_{l\neq \pm 1}\left(1-\frac{2\lambda_{1}}{\lambda_l+\lambda_{1}}\right)\Sigma_{\beta}(-2\lambda_{1}),\\
\psi_0(\lambda_{-1})
&=C_1\prod_{l\neq\pm 1}\left(1+\frac{2\lambda_{1}}{\lambda_l-\lambda_{1}}\right)\Sigma_{\beta}(2\lambda_{1})+C_2.
\end{align*}
For the infinite product, we use the fact that $\lambda_{-l}=-\lambda_l$ as we observed in Proposition \ref{prop: eigenvalues of A},
\begin{align*}
\prod_{l\neq \pm 1}\left(1-\frac{2\lambda_{1}}{\lambda_l+\lambda_{1}}\right)&=\prod_{l\neq 1,l>0}\left(1-\frac{2\lambda_{1}}{\lambda_l+\lambda_{1}}\right)\prod_{l\neq -1,l<0}\left(1-\frac{2\lambda_{1}}{\lambda_l+\lambda_{1}}\right)\\
&=\prod_{l\neq 1,l>0}\left(\frac{\lambda_l-\lambda_{1}}{\lambda_l+\lambda_{1}}\right)\left(\frac{\lambda_l+\lambda_{1}}{\lambda_{l}-\lambda_{1}}\right)=1.
\end{align*}
Similarly, $\prod_{l\neq\pm 1}\left(1+\frac{2\lambda_{1}}{\lambda_l-\lambda_{1}}\right)=1$. 
After simplifying the equation, the coefficients $C_1$ and $C_2$ satisfy that
\begin{align*}
\psi_0(\lambda_{1})=C_1+C_2\Sigma_{\beta}(-2\lambda_{1}),\;
\psi_0(\lambda_{-1})=C_1\Sigma_{\beta}(2\lambda_{1})+C_2.    \end{align*}
Since $\Sigma_{\beta}(2\lambda_{1})=\frac{1}{\|\sigma_{\beta}\|_{L^1(-1,1)}}\int_{-1}^1\sigma_{\beta}(x)e^{-2\ii\nu_{\delta}(\beta) x\lambda_{1}}dx$, changing the variable $y=-x$, we obtain that
\begin{align*}
\frac{1}{\|\sigma_{\beta}\|_{L^1(-1,1)}}\int_{-1}^1\sigma_{\beta}(x)e^{-2\ii\nu_{\delta}(\beta) x\lambda_{1}}dx=\frac{1}{\|\sigma_{\beta}\|_{L^1(-1,1)}}\int_{-1}^1\sigma_{\beta}(-y)e^{2\ii\nu_{\delta}(\beta) y\lambda_{1}}dy.
\end{align*}
Using the fact that $\sigma_{\beta}(-y)=e^{-\frac{\beta}{1-y^2}}=\sigma_{\beta}(y)$, we know that $\Sigma_{\beta}(2\lambda_{1})=\Sigma_{\beta}(-2\lambda_{1})$. Therefore, we choose that $C_1=C_2=\frac{1}{1+\Sigma_{\beta}(2\lambda_{1})}$.
Then, we conclude that $\psi_0(\lambda_{1})=\psi_0(\lambda_{-1})=1$. This implies that
\begin{equation*}
\int_{-\frac{T}{2}}^{\frac{T}{2}}\vartheta_0(s)e^{-\ii \lambda_k s}ds=0,\forall k\neq 0, \pm 1,\;         \int_{-\frac{T}{2}}^{\frac{T}{2}}\vartheta_0(s)e^{-\ii s\lambda_{1}}ds=\int_{-\frac{T}{2}}^{\frac{T}{2}}\vartheta_0(s)e^{-\ii s\lambda_{-1}}ds=1.    
\end{equation*}
Now we check the uniform bound of $|C_1|$ and $|C_2|$. As we set before, $\nu_{\delta}(\beta)=\frac{T(1-\delta)}{2}$.  Since $\lim_{L\rightarrow2\pi}\lambda_{1}=0$, for $0<\delta<\frac{1}{2}$ sufficiently small, and $L\in [2\pi-\delta,2\pi+\delta]\setminus\{2\pi\}$, we obtain $0<|\lambda_{1}(L)|\leq T^{-1}$. Hence, let $s_0=s_0(\nu,\lambda_{1})=2\nu_{\delta}(\beta)\lambda_{1}$, thus $|s_0|< 2T^{-1}\frac{T}{2}=1$.
By definition of $\Sigma_{\beta}(2\lambda_{1})$,
\begin{equation*}
\Sigma_{\beta}(2\lambda_{1})=\frac{\int_{-1}^1\sigma_{\beta}(x)e^{-2\ii\nu_{\delta}(\beta) x\lambda_{1}}dx}{\|\sigma_{\beta}\|_{L^1(-1,1)}}= \frac{\int_{-1}^1\sigma_{\beta}(x)e^{-\ii x s_0}dx}{\|\sigma_{\beta}\|_{L^1(-1,1)}}= \frac{\int_{-1}^1\sigma_{\beta}(x)\cos{s_0x}dx-\ii\int_{-1}^1\sigma_{\beta}(x)\sin{s_0x}dx}{\|\sigma_{\beta}\|_{L^1(-1,1)}}.
\end{equation*}
Since $\sigma_{\beta}(x)$ is even and $\sin{s_0x}$ is odd, we know that $\int_{-1}^1\sigma_{\beta}(x)\sin{s_0x}dx=0$. Therefore,
\begin{align*}
\Sigma_{\beta}(2\lambda_{1})
&=\frac{\int_{-1}^1\sigma_{\beta}(x)\cos{s_0x}dx}{\|\sigma_{\beta}\|_{L^1(-1,1)}}
\geq \frac{\int_{-1}^1\sigma_{\beta}(x)(1-\frac{s_0^2x^2}{2})dx}{\|\sigma_{\beta}\|_{L^1(-1,1)}}
\geq 1-\frac{1}{2}\frac{\int_{-1}^1x^2\sigma_{\beta}(x)dx}{\|\sigma_{\beta}\|_{L^1(-1,1)}}.
\end{align*}
It is easy to see that $\int_{-1}^1x^2\sigma_{\beta}(x)dx\leq \int_{-1}^1\sigma_{\beta}(x)dx=\|\sigma_{\beta}\|_{L^1(-1,1)}$. Thus, we know that $\frac{1}{2}\leq \Sigma_{\beta}(2\lambda_{1})\leq 1$. 
We conclude that $\frac{1}{2}\leq |C_1|=|C_2|\leq \frac{2}{3}$. At the end, by the estimate \eqref{eq: growth-psi_0-2pi}, we obtain for $m\in\{0,\cdots,N\}$
\begin{equation*}
    \|\vartheta_0^{(m)}\|_{L^{\infty}(\R)}\leq \|x^m\psi_0(x)\|_{L^1(\R)}\leq \|\frac{K C_1 e^{\frac{K}{\sqrt{T}}}x^m}{1+|x-\lambda_{1}|^{2N}}\|_{L^1(\R)}+ \|\frac{K C_2 e^{\frac{K}{\sqrt{T}}}x^m}{1+|x-\lambda_{-1}|^{2N}}\|_{L^1(\R)}\leq C e^{\frac{K}{\sqrt{T}}}|\lambda_{1}|^m.
\end{equation*}
\end{proof}
\subsection{Compensate bi-orthogonal family for $2\sqrt{7}\pi$}\label{sec: app-bi-7pi}
\begin{proof}[Proof of Lemma \ref{lem: bi-orthogonal family-2pi-7}]
We only concentrate on the construction of $\vartheta_{\pm}$. Other statements are similar to Section \ref{sec: app-compensate-bi-2pi}. We begin by defining the function
\begin{equation}\label{eq: defi-psi-0-bi-orthogonal family-2pi}
\begin{aligned}
\psi_+(z)&:=C_1\prod_{k\neq1,2}\left(1-\frac{z-\lambda_1}{\lambda_k-\lambda_1}\right)\Sigma_{\beta}(-z+\lambda_1)+C_2\prod_{k\neq1,2}\left(1-\frac{z-\lambda_2}{\lambda_k-\lambda_2}\right)\Sigma_{\beta}(-z+\lambda_2),\\
\psi_-(z)&:=C_{-1}\prod_{k\neq-1,-2}\left(1-\frac{z-\lambda_{-1}}{\lambda_k-\lambda_{-1}}\right)\Sigma_{\beta}(-z+\lambda_{-1})+C_{-2}\prod_{k\neq-1,-2}\left(1-\frac{z-\lambda_{-2}}{\lambda_k-\lambda_{-2}}\right)\Sigma_{\beta}(-z+\lambda_{-2})
\end{aligned}
\end{equation}
where $\{C_m\}_{m=\pm1,\pm2}$ are coefficients to be chosen later. Then, applying Paley-Wiener's Theorem, we obtain a function $\vartheta_+\in L^2(\R)$, whose Fourier transform is $\psi_{\pm}$ and $\supp{\vartheta_{\pm}}\subset[-\frac{T}{2},\frac{T}{2}]$. We shall verify the uniform bound assumption at the end of the proof for well-chosen coefficients $C_m$. As a consequence of our choice of $\vartheta_{\pm}$, we know that $\int_{-\frac{T}{2}}^{\frac{T}{2}}e^{-\ii s\lambda_k}\vartheta_{\pm}(s)ds=\psi_{\pm}(\lambda_k)$. Now we focus on the properties for $\vartheta_+$ and $\psi_+$. $\vartheta_-$ and $\psi_-$ can be treated similarly. For $k\neq 1,2$, it is easy to verify that $\psi_+(\lambda_k)=0$. After simplifying the equation, the coefficients $C_1$ and $C_2$ satisfy that
\begin{align*}
\psi_+(\lambda_{1})&=C_1+C_{2}\Sigma_{\beta}(-\lambda_1+\lambda_{2})\prod_{k\neq1,2}\left(\frac{\lambda_k-\lambda_{1}}{\lambda_k-\lambda_{2}}\right),\\
\psi_+(\lambda_{2})&=C_1\Sigma_{\beta}(-\lambda_2+\lambda_{1})\prod_{k\neq1,2}\left(\frac{\lambda_k-\lambda_{2}}{\lambda_k-\lambda_{1}}\right)+C_{2}.
\end{align*}
Therefore, we choose that 
\begin{gather}
C_1=\frac{1}{1+\Sigma_{\beta}(-\lambda_2+\lambda_{1})},\;
C_2=\frac{\prod_{k\neq1,2}\left(\frac{\lambda_k-\lambda_{2}}{\lambda_k-\lambda_{1}}\right)}{1+\Sigma_{\beta}(-\lambda_2+\lambda_{1})}.
\end{gather}
Then, we conclude that $\psi_+(\lambda_{1})=1$, and $\psi_+(\lambda_{2})=\prod_{k\neq1,2}\left(\frac{\lambda_k-\lambda_{2}}{\lambda_k-\lambda_{1}}\right)$.  This implies that
\begin{equation*}
\int_{-\frac{T}{2}}^{\frac{T}{2}}\vartheta_+(s)e^{-\ii \lambda_k s}ds=0,\forall k\neq 0,1,2\;         \int_{-\frac{T}{2}}^{\frac{T}{2}}\vartheta_+(s)e^{-\ii s\lambda_{1}}ds=\int_{-\frac{T}{2}}^{\frac{T}{2}}\vartheta_+(s)e^{-\ii s\lambda_{2}}ds=\prod_{k\neq1,2}\left(\frac{\lambda_k-\lambda_{2}}{\lambda_k-\lambda_{1}}\right).    
\end{equation*}
Now we check the uniform bound of $|C_1|$ and $|C_{2}|$. As we presented before, since $\lim_{L\rightarrow2\pi\sqrt{7}}\lambda_{1}=\lim_{L\rightarrow2\pi\sqrt{7}}\lambda_{2}=\lambda_{c,1}$, $\Sigma_{\beta}(-\lambda_2+\lambda_{1})$ satisfies $\frac{1}{2}\leq \Sigma_{\beta}(-\lambda_2+\lambda_{1})\leq 1$. For $\prod_{k\neq1,2}\left(\frac{\lambda_k-\lambda_{2}}{\lambda_k-\lambda_{1}}\right)$, since $\lim_{L\rightarrow2\pi\sqrt{7}}\lambda_1=\lim_{L\rightarrow2\pi\sqrt{7}}\lambda_2$, we know that $\lim_{L\rightarrow2\pi\sqrt{7}}\prod_{k\neq1,2}\left(\frac{\lambda_k-\lambda_{2}}{\lambda_k-\lambda_{1}}\right)=1$. Thus, for $\delta$ sufficiently small, $L\in[2\pi\sqrt{7}-\delta,2\pi\sqrt{7}+\delta]$, $\frac{1}{4}\leq \prod_{k\neq1,2}\left(\frac{\lambda_k-\lambda_{2}}{\lambda_k-\lambda_{1}}\right)\leq 1$.

We conclude that $\frac{1}{2}\leq |C_1|\leq \frac{2}{3}$ and $\frac{1}{4}\leq |C_1|\leq \frac{2}{3}$. 
\end{proof}

\section{Proof for duality arguments}\label{sec: proof in duality arguments}
\begin{proof}[Proof of Proposition \ref{prop: stability to ob}]
This is a direct consequence of the energy estimates. 
Using the equation \eqref{eq: adjoint linear KdV-HUM} and integrating by parts, we obtain $\frac{d}{dt}E(w(t))=-2 |\p_xw(t,0)|^2$. 
This implies that $ E(w^0)-E(w(t))=2\int_0^t |\p_xw(s,0)|^2ds$. 
Since the system \eqref{eq: adjoint linear KdV-HUM} is exponentially stable, we know that
\begin{equation*}
2\int_0^t |\p_xw(s,0)|^2ds=E(w^0)-E(w(t)) \geq (C_1^{-1}e^{C_2t}-1)E(w(t))  
\end{equation*}
We choose $T_0$ such that $e^{C_2T_0}-C_1>0$. For $T>T_0$, set $C^2=\frac{2C_1}{e^{C_2T}-C_1}$. We obtain the quantitative observability inequality $\|S(T)w^0\|^2_{L^2(0,L)}\leq C^2\int_0^T|\p_xw(t,0)|^2dt$.
\end{proof}
\begin{proof}[Proof of Proposition \ref{prop: ob to stability}]
Without loss of generality, we set $T=1$. Thus, thanks to the quantitative observability, there exists an effectively computable constant $C$ such that the solution $w$ to the system \eqref{eq: adjoint linear KdV-HUM} satisfies $\|S(1)w^0\|^2_{L^2(0,L)}\leq C^2\int_0^1|\p_xw(t,0)|^2dt$. 
Using again $\frac{d}{dt}E(w(t))=-2 |\p_xw(t,0)|^2$, we know that $E(w(1))\leq C^2(E(w^0)-E(w(1)))$.
This is equivalent to $E(w(1))\leq \frac{C^2}{C^2+1}E(w^0)$. 
Combining with $E(w(s))\leq E(w(t))$, $\forall s\leq t$, we obtain $ E(w(t))\leq e^{-t\ln{\frac{C^2+1}{C^2}}}E(w^0)$. 
Here we choose $C_1=1$ and $C_2=\ln{\frac{C^2+1}{C^2}}$.
\end{proof}
First, we introduce the following inclusion lemma. For this lemma and its proof, one can refer to \cite[Pages 194-195]{dolecki-russell-1977}. 
\begin{lem}\label{lem: inclusion lemma}
Let $H_1$, $H_2$, and $H_3$ be three Hilbert spaces. Let $C_2$ be a continuous linear map from $H_2$ to $H_1$. Let $C_3$ be a densely defined closed linear operator from $D(C_3)\subset H_3$ to $H_1$. Then the following two statements are equivalent
\begin{enumerate}
    \item $C_2(H_2)\subset C_3(D(C_3))$.
    \item There exists a constant $M$ such that
    \begin{equation}\label{eq: inclusion inequality}
        \|C^*_2h_1\|_{H_2}\leq M\|C_3^*h_1\|_{H_3},\forall h_1\in H_1.
    \end{equation}
\end{enumerate}
Moreover, if the inclusion inequality holds, there exists a continuous linear map $C_1$ from $H_2$ to $H_3$ such that
\begin{equation}\label{eq: control map est}
C_1(H_2)\subset D(C_3),\; C_2=C_3C_1,\; \|C_1\|_{\mathscr{L}(H_2,H_3)}\leq M.    
\end{equation}
\end{lem}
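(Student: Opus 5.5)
The plan is to prove the two implications separately, working throughout with the adjoints $C_2^*\colon H_1\to H_2$ and $C_3^*\colon D(C_3^*)\subset H_1\to H_3$. Since $C_3$ is densely defined and closed, $C_3^*$ is densely defined and closed and $C_3^{**}=C_3$. Inequality \eqref{eq: inclusion inequality} is read for $h_1\in D(C_3^*)$, with the right-hand side interpreted as $+\infty$ otherwise.

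\textbf{(2)$\Rightarrow$(1), with the construction of $C_1$.} Fix $h_2\in H_2$ and define a linear functional on the range $R(C_3^*)\subset H_3$ by
$$\ell(C_3^*h_1):=\langle h_2, C_2^*h_1\rangle_{H_2}.$$
This is well defined: if $C_3^*h_1=0$, then \eqref{eq: inclusion inequality} gives $C_2^*h_1=0$. It is also bounded by $M\|h_2\|\,\|C_3^*h_1\|$.

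Extend $\ell$ by continuity to $\overline{R(C_3^*)}$, and by zero on its orthogonal complement. The Riesz representation theorem then gives a unique $h_3\in\overline{R(C_3^*)}$ with $\|h_3\|\le M\|h_2\|$. Set $C_1h_2:=h_3$; uniqueness of the Riesz representative makes $C_1$ linear. For every $h_1\in D(C_3^*)$ we get
$$\langle C_3^*h_1,h_3\rangle=\langle C_2^*h_1,h_2\rangle=\langle h_1,C_2h_2\rangle .$$
The map $h_1\mapsto\langle C_3^*h_1,h_3\rangle$ is therefore bounded, so $h_3\in D(C_3^{**})=D(C_3)$ and $C_3h_3=C_2h_2$. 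This yields $C_2=C_3C_1$ with $\|C_1\|\le M$, and hence (1).

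\textbf{(1)$\Rightarrow$(2).} Consider the closed subspace $N=\ker C_3$ of $H_3$. The restriction of $C_3$ to $D(C_3)\cap N^\perp$ is a closed injective operator whose range is $C_3(D(C_3))$. Given (1), define $C_1:=(C_3|_{N^\perp})^{-1}C_2$, which maps all of $H_2$ into $N^\perp$.

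The key step is to show that $C_1$ is closed. Suppose $h_n\to h$ and $C_1h_n\to g$. Then $C_3C_1h_n=C_2h_n\to C_2h$ by continuity of $C_2$. Closedness of $C_3$ gives $g\in D(C_3)$ with $C_3g=C_2h$, and $g\in N^\perp$ because $N^\perp$ is closed. Injectivity of $C_3$ on $N^\perp$ then forces $g=C_1h$.

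The closed graph theorem now shows that $C_1$ is bounded. Set $M:=\|C_1\|$. For $h_1\in D(C_3^*)$ and $h_2\in H_2$,
$$\langle C_2^*h_1,h_2\rangle=\langle h_1,C_3C_1h_2\rangle=\langle C_3^*h_1,C_1h_2\rangle\le M\|C_3^*h_1\|\,\|h_2\|.$$
Taking the supremum over $\|h_2\|\le1$ gives \eqref{eq: inclusion inequality}.

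The main technical points are the closedness argument for $C_1$ and the identification $C_3^{**}=C_3$. Both rely on $C_3$ being closed and densely defined. Everything else is Riesz representation and the closed graph theorem.
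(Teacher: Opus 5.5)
Your proof is correct. For (2)$\Rightarrow$(1), the Riesz construction combined with $C_3^{**}=C_3$ gives the map $C_1$ with $C_2=C_3C_1$ and $\|C_1\|\le M$. For (1)$\Rightarrow$(2), the closed graph theorem applied to $(C_3|_{(\ker C_3)^\perp})^{-1}C_2$ gives the inequality, and you were right to read it only for $h_1\in D(C_3^*)$.

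The paper does not prove this lemma itself; it refers to Dolecki--Russell (pp.~194--195). Your argument is the standard Hilbert-space proof of this range-inclusion result, and the only thing missing is absolute values on the left-hand side of your final estimate.
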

\begin{proof}[Proof of Lemma \ref{lem: trajectoory controllability}]
Assume that the system \eqref{eq: linearized KdV system-control-intro} is null controllable. Thus, there exists a control function $f\in L^2(0,T)$ such that for $\forall y^0\in L^2(0,L)$, the solution $y$ to the system \eqref{eq: linearized KdV system-control-intro} satisfies that $\Pi_H y(T,x)=0$. Let $\Hat{y}(t,x)=S(t)y^0(x)-y(t,x)$. Then, $\Hat{y}(0,x)=y^0(x)-y^0(x)=0$. And $\Hat{y}$ is a solution to 
\begin{equation*}
\left\{
\begin{array}{lll}
    \p_t\Hat{y}+\p_x^3\Hat{y}+\p_x\Hat{y}=0 & \text{ in }(0,T)\times(0,L), \\
     \Hat{y}(t,0)=\Hat{y}(t,L)=0&  \text{ in }(0,T),\\
     \p_x\Hat{y}(t,L)=-f(t)&\text{ in }(0,T),\\
     \Hat{y}(0,x)=0&\text{ in }(0,L).
\end{array}
\right.   
\end{equation*}
Here $\Hat{f}(t)=-f(t)$ with $\|\Hat{f}\|_{L^2(0,T)}\leq C\|y^0\|_{L^2(0,L)}$. Moreover, $\Pi_H\Hat{y}(T,x)=\Pi_HS(T)y^0(x)$. On the other hand, the proof of the converse statement can be treated similarly using $\Hat{y}(t,x)=S(t)y^0(x)-y(t,x)$.

For $T>0$, let us define a linear operator $\mathscr{F}_T$ by 
\begin{equation}
\begin{aligned}
    \mathscr{F}_T: L^2(0,T)&\rightarrow H,\\
    f(t)&\mapsto \Pi_H \Hat{y}(T,\cdot),
\end{aligned}
\end{equation}
where $\Hat{y}$ is a solution to the system \eqref{eq: controlled linear KdV-HUM-zero-initial}. By energy estimates, it is easy to verify that
\begin{equation*}
    \|\mathscr{F}_T(f)\|^2_{L^2(0,L)}= \|\Pi_H \Hat{y}(T,\cdot)\|^2_{L^2(0,L)}\leq 2\|f\|^2_{L^2(0,T)}.
\end{equation*}
This implies that $\mathscr{F}_T$ is a bounded operator from $L^2(0,T)$ to $H$. Thus, the system \eqref{eq: controlled linear KdV-HUM} is null controllable if and only if $\Pi_HS(T)(L^2(0,L))\subset \mathscr{F}_T(L^2(0,T))$. Then the following theorem is a direct conclusion of Lemma \ref{lem: inclusion lemma}.
\end{proof}
\begin{proof}[Proof of Proposition \ref{prop: control and ob}]
Let $H_1=H$, $H_2=L^2(0,L)$, and $H_3=L^2(0,T)$. Furthermore, set $C_2=\Pi_HS(T): H_2\rightarrow H_1$ and $C_3=\mathscr{F}_T: H_3\rightarrow H_1$. Thus, $C_2\in\mathscr{L}(H_2,H_1)$ and $C_3\in \mathscr{L}(H_3,H_1)$. By Lemma \ref{lem: inclusion lemma} and Lemma \ref{lem: trajectoory controllability}, we obtain 
\begin{equation*}
\|(\Pi_HS(T))^*w^0\|_{L^2(0,L)}\leq C\|\mathscr{F}^*_Tw^0\|_{L^2(0,T)}, \forall w^0\in H    
\end{equation*}
We need to specify the term $\mathscr{F}^*_Tw^0$ and $(\Pi_HS(T))^*w^0$. We consider the system 
\begin{equation}
\left\{
\begin{array}{lll}
    \p_t\Hat{y}+\p_x^3\Hat{y}+\p_x\Hat{y}=0 & \text{ in }(0,T)\times(0,L), \\
     \Hat{y}(t,0)=\Hat{y}(t,L)=0&  \text{ in }(0,T),\\
     \p_x\Hat{y}(t,L)=\Hat{f}(t)&\text{ in }(0,T),\\
     \Hat{y}(0,x)=0&\text{ in }(0,L).
\end{array}
\right.
\end{equation} 
Using the solution $w$ as a test function and multiplying on both sides of the equation above, we apply the duality relation defined in \eqref{eq: duality relations} and integrate by parts,
\begin{align*}
0&=\poscalr{\p_t\Hat{y}+\p_x^3\Hat{y}+\p_x\Hat{y}}{w}_{(0,T)\times(0,L)}\\
&=\poscalr{\Hat{y}(T.\cdot)}{w^0}_{(0,L)}-\poscalr{\Hat{y}(0.\cdot)}{w(T,\cdot)}_{(0,L)}+\poscalr{\p_x\Hat{y}(\cdot,L)}{\p_x w(\cdot,0)}_{(0,T)}-\poscalr{\p_x\Hat{y}(\cdot,L)}{\p_x w(\cdot,L)}_{(0,T)}.
\end{align*}
Using that $\Hat{y}(0.\cdot)=0$, $\p_x\Hat{y}(\cdot,L)=\Hat{f}(t)$ and $\p_x w(\cdot,L)=0$, we obtain 
\begin{equation*}
\poscalr{\Hat{y}(T.\cdot)}{w^0}_{(0,L)}=- \poscalr{\Hat{f}}{\p_x w(\cdot,0)}_{(0,T)}.  
\end{equation*}
By the decomposition $\Hat{y}(T.\cdot)=\Pi_H\Hat{y}(T.\cdot)+(Id-H)\Hat{y}(T.\cdot)$ and $w_0\in H$, we know that
\begin{equation*}
\poscalr{\mathscr{F}_T\Hat{f}}{w^0}_{(0,L)}=\poscalr{\Pi_H\Hat{y}(T.\cdot)}{w^0}_{(0,L)}=-\poscalr{\Hat{f}}{\p_x w(\cdot,0)}_{(0,T)}.     
\end{equation*}
This implies that $\mathscr{F}^*_Tw^0=-\p_x w(\cdot,0)$. For the term $(\Pi_HS(T))^*w^0$, we consider the system 
\begin{equation}
\left\{
\begin{array}{lll}
    \p_t y-\A y=0 & \text{ in }(0,T)\times(0,L), \\
     y(0,x)=y^0(x)&\text{ in }(0,L),
\end{array}
\right.    
\end{equation}
tested by the solution $w$. Similarly, integrating by parts, we obtain 
\begin{align*}
0&=\poscalr{\p_t y-\A y}{w}_{(0,T)\times(0,L)}\\
&=\poscalr{y(T,\cdot)}{w^0}_{(0,L)}-\poscalr{y^0}{w(T,\cdot)}_{(0,L)}+\poscalr{\p_x y(\cdot,L)}{\p_x w(\cdot,0)}_{(0,T)}-\poscalr{\p_x y(\cdot,0)}{\p_x w(\cdot,L)}_{(0,T)}.
\end{align*}
Using that $\p_x w(\cdot,L)=\p_x y(\cdot,L)=0$, we obtain 
\begin{equation*}
\poscalr{y(T,\cdot)}{w^0}_{(0,L)}=\poscalr{y^0}{w(T,\cdot)}_{(0,L)}.    
\end{equation*}
Using again the decomposition $y(T,\cdot)=\Pi_HS(T)y^0+(Id-\Pi_H)S(T)y^0$, we know that
\begin{equation*}
\poscalr{\Pi_HS(T)y^0}{w^0}_{(0,L)}=\poscalr{y^0}{S(T)w^0}_{(0,L)}.    
\end{equation*}
This implies that $(\Pi_HS(T))^*w^0=S(T)w^0$t. In summary, we know that the null controllability is equivalent to 
\begin{equation*}
    \|S(T)w^0\|_{L^2(0,L)}\leq C\|\p_x w(t,0)\|_{L^2(0,T)}, \forall w^0\in H,
\end{equation*}
where $w$ is a solution to the system \eqref{eq: adjoint linear KdV-HUM}. Next, we prove the inequality \eqref{eq: control-cost-HUM} and the inequality \eqref{eq: ob-cost-HUM} share the same constant $C$. Assume that the observability \eqref{eq: ob-cost-HUM} holds. Applying Lemma \ref{lem: inclusion lemma}, there exists a linear continuous map $\mathscr{G}\in \mathscr{L}(L^2(0,L);L^2(0,T))$ such that $\Pi_HS(T)=\mathscr{F}_T\circ\mathscr{G}$, i.e. the control function $f=\mathscr{G}y^0$. Thanks to the estimate \eqref{eq: control map est}, we know that 
\begin{equation}
    \|f\|_{L^2(0,T)}=\|\mathscr{G}y^0\|_{L^2(0,T)}\leq C\|y^0\|_{L^2(0,L)}.
\end{equation}
For the converse, assume that the quantitative control estimate \eqref{eq: control-cost-HUM} holds. We consider the system 
\begin{equation*}
\left\{
\begin{array}{lll}
    \p_ty+\p_x^3y+\p_xy=0 & \text{ in }(0,T)\times(0,L), \\
     y(t,0)=y(t,L)=0&  \text{ in }(0,T),\\
     \p_xy(t,L)=f(t)&\text{ in }(0,T),\\
     y(0,x)=y^0(x)&\text{ in }(0,L),\\
     \Pi_Hy(T,\cdot)=0,
\end{array}
\right.
\end{equation*}
tested with the solution $w$. Using the duality relations and integrating by parts, we obtain 
\begin{align*}
0&=\poscalr{\p_ty+\p_x^3y+\p_xy}{w}_{(0,T)\times(0,L)}\\
&=\poscalr{y(T,\cdot)}{w^0}_{(0,L)}-\poscalr{y^0}{w(T,\cdot)}_{(0,L)}+\poscalr{\p_x y(\cdot,L)}{\p_x w(\cdot,0)}_{(0,T)}-\poscalr{\p_x y(\cdot,0)}{\p_x w(\cdot,L)}_{(0,T)}.
\end{align*}
Using that $\p_x w(\cdot,L)=0$ and $\p_x y(\cdot,L)=f$, we know that
\begin{align*}
0=\poscalr{y(T,\cdot)}{w^0}_{(0,L)}-\poscalr{y^0}{S(T)w^0}_{(0,L)}+\poscalr{f}{\p_x w(\cdot,0)}_{(0,T)}.
\end{align*}
By the decomposition $y(T,\cdot)=\Pi_Hy(T,\cdot)+(Id-\Pi_H)y(T,\cdot)=(Id-\Pi_H)y(T,\cdot)$, we deduce that $\poscalr{y(T,\cdot)}{w^0}_{(0,L)}=0$. Thus,
\begin{equation*}
\poscalr{S(T)w^0}{y^0}_{(0,L)}=\poscalr{\p_x w(\cdot,0)}{f}_{(0,T)}.    
\end{equation*}
Since for every $y^0\in L^2(0,L)$, $\poscalr{S(T)w^0}{y^0}_{(0,L)}$ defines a linear functional on $L^2(0,L)$, we can choose a specific $\Tilde{y}^0$ such that $\|\Tilde{y}^0\|_{L^2(0,L)}=1$ and $\poscalr{S(T)w^0}{\Tilde{y}^0}_{(0,L)}=\|S(T)w^0\|_{L^2(0,L)}$. Therefore, 
\begin{align*}
\|S(T)w^0\|_{L^2(0,L)}&=|\poscalr{\p_x w(\cdot,0)}{f}_{(0,T)}|\\
&\leq \|f\|_{L^2(0,T)}\|\p_x w(\cdot,0)\|_{L^2(0,T)}\\
&\leq C\|\Tilde{y}^0\|_{L^2(0,L)} \|\p_x w(\cdot,0)\|_{L^2(0,T)}\\
&\leq C\|\p_x w(\cdot,0)\|_{L^2(0,T)}.  
\end{align*}
\end{proof}

\section{Part I: A transition-stabilization method}
\subsection{Proof of Corollary \ref{coro: simplified estimates for iteration scheme} }\label{sec: Proof of Corollary coro: simplified estimates for iteration scheme}
\begin{proof}
We plug $\mu_2=2\mu_1$ into the estimate \eqref{eq: L^2-y_1-single}, then $|\mu_1-\mu_2|=\mu_1$ and 
\begin{equation*}
\|y_1(t,\cdot)\|_{L^2(0,L)}\leq \frac{K_2}{|L-L_0|}e^{\frac{2\sqrt{2}K}{\sqrt{T}}}\frac{C_1\mu_1+2^{\frac{5}{2}}C_2C^h_2\mu_1^{\frac{7}{2}}+2^{\frac{5}{2}}C_3C^h_2\mu_1^{\frac{5}{2}}+2C_2C^h_2\mu_1^{\frac{7}{2}}+C_3C^h_2\mu_1^{\frac{5}{2}}}{\mu_1T^3}\|y^0\|_{L^2(0,L)}.
\end{equation*}
Thanks to $\mu_1>1$, we know that
\begin{equation*}
C_1\mu_1+2^{\frac{5}{2}}C_2C^h_2\mu_1^{\frac{7}{2}}+2^{\frac{5}{2}}C_3C^h_2\mu_1^{\frac{5}{2}}+2C_2C^h_2\mu_1^{\frac{7}{2}}+C_3C^h_2\mu_1^{\frac{5}{2}}\leq \left(C_1+2^{\frac{5}{2}}C_2C^h_2+2^{\frac{5}{2}}C_3C^h_2+2C_2C^h_2+C_3C^h_2\right)\mu_1^{\frac{7}{2}}  
\end{equation*}
Define a new constant $\mathcal{C}_1:=K_2\left(C_1+2^{\frac{5}{2}}C_2C^h_2+2^{\frac{5}{2}}C_3C^h_2+2C_2C^h_2+C_3C^h_2\right)$. Then, we simplify the estimate \eqref{eq: L^2-y_1-single}
\begin{equation*}
\begin{aligned}
\|y_1(t,\cdot)\|_{L^2(0,L)}&\leq \frac{K_2}{|L-L_0|}e^{\frac{2\sqrt{2}K}{\sqrt{T}}}\frac{C_1\mu_1+2^{\frac{5}{2}}C_2C^h_2\mu_1^{\frac{7}{2}}+2^{\frac{5}{2}}C_3C^h_2\mu_1^{\frac{5}{2}}+2C_2C^h_2\mu_1^{\frac{7}{2}}+C_3C^h_2\mu_1^{\frac{5}{2}}}{\mu_1T^3}\|y^0\|_{L^2(0,L)}\\
&\leq \frac{K_2}{|L-L_0|}e^{\frac{2\sqrt{2}K}{\sqrt{T}}}\frac{\mathcal{C}_1\mu_1^{\frac{7}{2}}}{K_2\mu_1T^3}\|y^0\|_{L^2(0,L)}\\
&\leq \frac{\mathcal{C}_1}{|L-L_0|}e^{\frac{2\sqrt{2}K}{\sqrt{T}}}\frac{\mu_1^{\frac{5}{2}}}{T^3}\|y^0\|_{L^2(0,L)}.
\end{aligned}
\end{equation*}
Similarly, we define the following constants:
\begin{align*}
\mathcal{C}_2&=C^h_1(2C_2+C_3),\\
\mathcal{C}_3&=C^h_1\frac{C_2+C_3}{\sqrt{2}}.
\end{align*}
Using these constants above, we obtain
\begin{align*}
\|y_2(t,\cdot)\|_{L^2(0,L)}&\leq C^h_1e^{-\mu_1(t-\frac{T}{2})}\frac{2C_2\mu_{1}+C_3}{\mu_1^{\frac{3}{2}}T^3}\|y^0\|_{L^2(0,L)},\\
&\leq e^{-\mu_1(t-\frac{T}{2})}\frac{\mathcal{C}_2\mu_1}{\mu_1^{\frac{3}{2}}T^3}\|y^0\|_{L^2(0,L)}\\
&\leq \mathcal{C}_2 e^{-\mu_1(t-\frac{T}{2})}\frac{1}{\mu_1^{\frac{1}{2}}T^3}\|y^0\|_{L^2(0,L)}.
\end{align*}
\begin{align*}
\|y_3(t,\cdot)\|_{L^2(0,L)}&\leq C^h_1e^{-2\mu_1(t-\frac{T}{2})}\frac{C_2\mu_{1}+C_3}{\sqrt{2}\mu_1^{\frac{3}{2}}T^3}\|y^0\|_{L^2(0,L)}\\
&\leq e^{-2\mu_1(t-\frac{T}{2})}\frac{\mathcal{C}_3\mu_1}{\mu_1^{\frac{3}{2}}T^3}\|y^0\|_{L^2(0,L)}\\
&\leq \mathcal{C}_3 e^{-2\mu_1(t-\frac{T}{2})}\frac{1}{\mu_1^{\frac{1}{2}}T^3}\|y^0\|_{L^2(0,L)}.
\end{align*}
By $\mu_1(t-\frac{T}{2})\geq0$ for $t\in(\frac{T}{2},T)$, hence, 
\begin{align*}
\|y_3(t,\cdot)\|_{L^2(0,L)}\leq \mathcal{C}_3 e^{-\mu_1(t-\frac{T}{2})}\frac{1}{\mu_1^{\frac{1}{2}}T^3}\|y^0\|_{L^2(0,L)}.
\end{align*}
Then we look at the control cost for $w_j(t)$, $t\in[0,T],j=1,2,3$. For instance, we estimate $w_1$. We plug $\mu_2=2\mu_1$ into the estimate \eqref{eq: cost estimate-w-1}, 
\begin{align*}
\|w_1\|_{L^{\infty}(0,T)}\leq \frac{K_3}{|L-L_0|}e^{\frac{2\sqrt{2}K}{\sqrt{T}}}\frac{C_1\mu_1+2^{\frac{5}{2}}C_2C^h_2\mu_1^{\frac{7}{2}}+2^{\frac{5}{2}}C_3C^h_2\mu_1^{\frac{5}{2}}+2C_2C^h_2\mu_1^{\frac{7}{2}}+C_3C^h_2\mu_1^{\frac{5}{2}}}{\mu_1 T^3}\|y^0\|_{L^2(0,L)}.
\end{align*}
Define a constant $\mathcal{C}_4:=\frac{K_3}{K_2}\mathcal{C}_1$. Therefore, thanks to $\mu_1>1$, we obtain
\begin{align*}
\|w_1\|_{L^{\infty}(0,T)}&\leq \frac{K_3}{|L-L_0|}e^{\frac{2\sqrt{2}K}{\sqrt{T}}}\frac{C_1\mu_1+2^{\frac{5}{2}}C_2C^h_2\mu_1^{\frac{7}{2}}+2^{\frac{5}{2}}C_3C^h_2\mu_1^{\frac{5}{2}}+2C_2C^h_2\mu_1^{\frac{7}{2}}+C_3C^h_2\mu_1^{\frac{5}{2}}}{\mu_1 T^3}\|y^0\|_{L^2(0,L)}\\
&\leq \frac{K_3}{|L-L_0|}e^{\frac{2\sqrt{2}K}{\sqrt{T}}}\frac{\left(C_1+2^{\frac{5}{2}}C_2C^h_2+2^{\frac{5}{2}}C_3C^h_2+2C_2C^h_2+C_3C^h_2\right)\mu_1^{\frac{7}{2}}}{\mu_1 T^3}\|y^0\|_{L^2(0,L)}\\
&\leq \frac{K_3}{|L-L_0|}e^{\frac{2\sqrt{2}K}{\sqrt{T}}}\frac{\mathcal{C}_1\mu_1^{\frac{5}{2}}}{K_2 T^3}\|y^0\|_{L^2(0,L)}\\
&\leq \frac{\mathcal{C}_4}{|L-L_0|}e^{\frac{2\sqrt{2}K}{\sqrt{T}}}\frac{\mu_1^{\frac{5}{2}}}{T^3}\|y^0\|_{L^2(0,L)}.
\end{align*}
Similarly, define 
\begin{align*}
\mathcal{C}_5&=C_4(2C_2+C_3),\\
\mathcal{C}_6&=C_4(C_2+C_3).
\end{align*}
Using the two constants above, we know that
\begin{align*}
\|w_2\|_{L^{\infty}(0,T)}&\leq\mathcal{C}_5e^{-\frac{\mu_1^{\frac{1}{3}}}{2}L}\frac{1}{T^3}\|y^0\|_{L^2(0,L)},\\
\|w_3\|_{L^{\infty}(0,T)}&\leq \mathcal{C}_6e^{-\frac{\mu_2^{\frac{1}{3}}}{2}L}\frac{1}{T^3}\|y^0\|_{L^2(0,L)}\leq \mathcal{C}_6e^{-\frac{\mu_1^{\frac{1}{3}}}{2}L}\frac{1}{T^3}\|y^0\|_{L^2(0,L)}.
\end{align*}
In summary, define $\mathcal{C}:=\mathcal{C}_1+(\frac{L_0}{2}+1)\mathcal{C}_2+(\frac{L_0}{2}+1)\mathcal{C}_3+\mathcal{C}_4+(\frac{L_0}{2}+1)\mathcal{C}_5+(\frac{L_0}{2}+1)\mathcal{C}_6+1$. We obtain the estimates for the solution
\begin{equation}\label{eq: L^2-estimate for solution y-package}
\begin{aligned}
\|y(t,\cdot)\|_{L^2(0,L)}&\leq \|y^0\|_{L^(0,L)},t\in(0,\frac{T}{2}],\\
\|y(t,\cdot)\|_{L^2(0,L)}&\leq \|y_1(t,\cdot)\|_{L^2(0,L)}+\|y_2(t,\cdot)\|_{L^2(0,L)}+\|y_3(t,\cdot)\|_{L^2(0,L)}\\
&\leq \left(\frac{\mathcal{C}_1}{|L-L_0|}e^{\frac{2\sqrt{2}K}{\sqrt{T}}}\frac{\mu_1^{\frac{5}{2}}}{T^3}+\mathcal{C}_2 e^{-\mu_1(t-\frac{T}{2})}\frac{1}{\mu_1^{\frac{1}{2}}T^3}+\mathcal{C}_3 e^{-\mu_1(t-\frac{T}{2})}\frac{1}{\mu_1^{\frac{1}{2}}T^3}\right)\|y^0\|_{L^2(0,L)}\\
&\leq \frac{\mathcal{C}}{|L-L_0|}\frac{e^{\frac{2\sqrt{2}K}{\sqrt{T}}}\mu_1^{\frac{5}{2}}+ e^{-\mu_1(t-\frac{T}{2})}}{T^3} \|y^0\|_{L^2(0,L)},t\in (\frac{T}{2},T).
\end{aligned}    
\end{equation}
In particular, $y_1(T,\cdot)=0$. Therefore,
\begin{equation}\label{eq: L^2-estimate for y at T}
\begin{aligned}
\|y(T,\cdot)\|_{L^2(0,L)}&\leq \|y_2(T,\cdot)\|_{L^2(0,L)}+\|y_3(T,\cdot)\|_{L^2(0,L)}\\
&\leq \left(\mathcal{C}_2 e^{-\mu_1(t-\frac{T}{2})}\frac{1}{\mu_1^{\frac{1}{2}}T^3}+\mathcal{C}_3 e^{-\mu_1(t-\frac{T}{2})}\frac{1}{\mu_1^{\frac{1}{2}}T^3}\right)\|y^0\|_{L^2(0,L)}\\
&\leq \mathcal{C}\frac{ e^{-\mu_1(t-\frac{T}{2})}}{T^3} \|y^0\|_{L^2(0,L)}
\end{aligned}
\end{equation}
In addition, for the cost of the control function $w$, we obtain
\begin{equation}
\begin{aligned}
\|w\|_{L^{\infty}(0,T)}&\leq \|w_1\|_{L^{\infty}(0,T)}+ \|w_2\|_{L^{\infty}(0,T)}+\|w_3\|_{L^{\infty}(0,T)} \\
&\leq \left(\frac{\mathcal{C}_4}{|L-L_0|}e^{\frac{2\sqrt{2}K}{\sqrt{T}}}\frac{\mu_1^{\frac{5}{2}}}{T^3}+\mathcal{C}_5e^{-\frac{\mu_1^{\frac{1}{3}}}{2}L}\frac{1}{T^3}+\mathcal{C}_6e^{-\frac{\mu_1^{\frac{1}{3}}}{2}L}\frac{1}{T^3}\right)\|y^0\|_{L^2(0,L)}\\
&\leq \frac{\mathcal{C}}{|L-L_0|}\frac{e^{\frac{2\sqrt{2}K}{\sqrt{T}}}\mu_1^{\frac{5}{2}}+e^{-\frac{\mu_1^{\frac{1}{3}}}{2}L}}{T^3}\|y^0\|_{L^2(0,L)}.
\end{aligned}
\end{equation}
\end{proof}

\subsection{Choice of $Q$}\label{sec: app-choice-Q}
Now we choose a good constant $Q$ such that for $n>1$
\begin{equation}
\begin{aligned}
\frac{\mathcal{C}^{n-1}}{|L-L_0|^{n-1}}e^{-\frac{Q2^{\frac{n_0}{2}}(2^{\frac{n-1}{2}}-1)}{2(2-\sqrt{2})}}2^{3n_0(n-1)+\frac{3n(n-1)}{2}}&\leq 1,\\
\frac{\mathcal{C}^{n}}{|L-L_0|^{n}}e^{2\sqrt{2}K2^{\frac{n_0+n}{2}}-\frac{Q2^{\frac{n_0}{2}}(2^{\frac{n-1}{2}}-1)}{2(2-\sqrt{2})}}2^{\frac{15}{4}(n_0+n)+3n(n_0+1)+\frac{3n(n-1)}{2}}&\leq1,\\
\frac{\mathcal{C}^{n}}{|L-L_0|^{n}} e^{-\frac{Q2^{\frac{n_0}{2}}(2^{\frac{n-1}{2}}-1)}{2(2-\sqrt{2})}}2^{3n(n_0+1)+\frac{3n(n-1)}{2}}&\leq1,\\
\frac{\mathcal{C}^{n}}{|L-L_0|^{n}}e^{-\frac{Q^{\frac{1}{3}}2^{\frac{n_0+n}{2}}}{2}L}e^{-\frac{Q2^{\frac{n_0}{2}}(2^{\frac{n-1}{2}}-1)}{2(2-\sqrt{2})}}2^{3n(n_0+1)+\frac{3n(n-1)}{2}}&\leq1.
\end{aligned}
\end{equation}
Since $2^{-n_0}=T=\frac{\mathcal{K}^2}{\epsilon^2(\ln{|L-L_0|})^2}$, it suffices to choose $Q$ that satisfies the following conditions,
\begin{align*}
    \mathcal{C}^{n-1}e^{2^{\frac{n_0(n-1)}{2}}\frac{\mathcal{K}}{\epsilon}}e^{-\frac{Q2^{\frac{n_0}{2}}(2^{\frac{n-1}{2}}-1)}{2(2-\sqrt{2})}}2^{3n_0(n-1)+\frac{3n(n-1)}{2}}&\leq 1,\\
e^{2^{\frac{n_0n}{2}}\frac{\mathcal{K}}{\epsilon}}\mathcal{C}^{n}e^{2\sqrt{2}K2^{\frac{n_0+n}{2}}-\frac{Q2^{\frac{n_0}{2}}(2^{\frac{n-1}{2}}-1)}{2(2-\sqrt{2})}}2^{\frac{15}{4}(n_0+n)+3n(n_0+1)+\frac{3n(n-1)}{2}}&\leq1,\\
e^{2^{\frac{n_0n}{2}}\frac{\mathcal{K}}{\epsilon}}\mathcal{C}^{n} e^{-\frac{Q2^{\frac{n_0}{2}}(2^{\frac{n-1}{2}}-1)}{2(2-\sqrt{2})}}2^{3n(n_0+1)+\frac{3n(n-1)}{2}}&\leq1,\\
e^{2^{\frac{n_0n}{2}}\frac{\mathcal{K}}{\epsilon}}\mathcal{C}^{n}e^{-\frac{Q^{\frac{1}{3}}2^{\frac{n_0+n}{2}}}{2}L}e^{-\frac{Q2^{\frac{n_0}{2}}(2^{\frac{n-1}{2}}-1)}{2(2-\sqrt{2})}}2^{3n(n_0+1)+\frac{3n(n-1)}{2}}&\leq1.
\end{align*}
\cqfd

\noindent {\bf Acknowledgements} \; 
The authors would like to thank Ludovick Gagnon for valuable discussions during the preparation of this manuscript. Shengquan Xiang is partially supported by NSFC under grant 12571474. Part of this work was carried out while Jingrui Niu was visiting Peking University in August 2023 and July–August 2024. He would like to thank Peking University for its hospitality and financial support.

    \medskip
 \noindent\textbf{Conflict of interest} \; The authors have no competing interests to declare that are relevant to the content of this article.

    \medskip

    \noindent\textbf{Data availability} \; Data sharing not applicable to this article as no datasets were generated or analyzed during the current study.

\bibliographystyle{alpha}
\bibliography{ref}
\end{document}